\documentclass[11pt]{article}
\usepackage{amssymb,amsmath}
\usepackage{enumerate,latexsym}

\usepackage{graphicx}
\usepackage{color}
\usepackage{latexsym}
%\usepackage{amsmath,amssymb}CUIDADO: Si uso estos packages,
%$\widetilde{}$ no funciona a veces

%\usepackage[latin1]{inputenc}
%\usepackage[spanish]{babel}
%\input setbmp
%\input seteps
\usepackage{soul}
\newfont{\bb}{msbm10 at 11pt}
\newfont{\bbsmall}{msbm8 at 8pt}
\textwidth 15cm \oddsidemargin .8cm \evensidemargin .8cm

\newcommand{\TM}{{\cal T}(M)}
\newcommand{\TS}{{\cal T}(\S)}

\def\R{\mathbb{R}}
\def\N{\mathbb{N}}
\def\B{\mathbb{B}}
\def\D{\mathbb{D}}
\def\E{\mathbb{E}}
\def\Z{\mathbb{Z}}

\newcommand{\C}{\mathbb{C}}
\newcommand{\Q}{\mathbb{Q}}

\newcommand{\HH}{\mathbb{H}}

\newcommand{\esf}{\mathbb{S}}
\newcommand{\Te}{\mathbb{T}}

\newcommand{\Hip}{\mathbb{H}}

\newcommand{\Jac}{\mbox{\rm Jac}}

\newcommand{\Div}{\mbox{\rm div}}
\newcommand{\Ric}{\mbox{\rm Ric}}
\newcommand{\rth}{\R^3}

\def\cL{\mathcal{L}}
\def\cF{\mathcal{F}}
\def\cC{\mathcal{C}}
\def\cP{\mathcal{P}}
\def\cD{\mathcal{D}}
\def\cH{\mathcal{H}}
\def\cM{\mathcal{M}}
\def\cS{\mathcal{S}}
\def\cR{\mathcal{R}}
\newcommand{\wt}{\widetilde}
\newcommand{\wh}{\widehat}
\newcommand{\ov}{\overline}
\newcommand{\Int}{\mbox{\rm Int}}

\def\a{{\alpha}}
\def\lc{{\cal L}}
\def\t{{\theta}}

\def\g{{\gamma}}
\def\G{{\Gamma}}
\def\l{{\lambda}}

\def\de{{\delta}}
\def\be{{\beta}}
\def\ve{{\varepsilon}}

\def\centerbmp#1#2#3{\vskip#2\relax\centerline{\hbox to#1{\special
    {bmp:#3 x=#1, y=#2}\hfil}}}

\newtheorem{theorem}{Theorem}[section]
\newtheorem{lemma}[theorem]{Lemma}
\newtheorem{proposition}[theorem]{Proposition}

\newtheorem{remark}[theorem]{Remark}
\newtheorem{corollary}[theorem]{Corollary}
\newtheorem{definition}[theorem]{Definition}
\newtheorem{conjecture}[theorem]{Conjecture}

\newenvironment{proof}
{\smallskip\noindent{\it Proof.}\hskip \labelsep}
                          {\hfill\penalty10000\raisebox{-.09em}{$\Box$}
                          \par\medskip}

\newcommand{\ben}{\begin{enumerate}}
\newcommand{\bit}{\begin{itemize}}
\newcommand{\een}{\end{enumerate}}
\newcommand{\eit}{\end{itemize}}
\renewcommand{\S}{\Sigma}

\textheight 22.5cm
\topmargin -1.4cm

\newcommand{\su}{{\rm SU}(2)}

\renewcommand{\sl}{\wt{\mathrm{SL}}(2,\R)}
\newcommand{\sol}{\mathrm{Sol}_3}

\newcommand{\ed}{\end{document})}

\begin{document}

\begin{title}
{Constant mean curvature surfaces}
\end{title}
%\vskip .5in

\begin{author}
{William H. Meeks III,    \and Joaqu\'\i n P\' erez,
   \and Giuseppe Tinaglia}
\end{author}
\maketitle

\nocite{me21,gny1,dhkw1}

\begin{abstract}
In this article we survey recent developments in the
theory of constant mean curvature surfaces in homogeneous 3-manifolds, as well as
some related aspects on existence and descriptive results for $H$-laminations and
CMC foliations of Riemannian $n$-manifolds.

\vspace{.15cm} \noindent{\it Mathematics Subject Classification:}
Primary 53A10,
 Secondary 49Q05, 53C42.

\vspace{.1cm} \noindent{\it Key words and phrases:}
Minimal surface, constant mean curvature,
 minimal lamination,  $H$-lamination, CMC foliation, locally simply connected,
Jacobi function, stability, index
of stability, Shiffman function,
curvature estimates, parking garage structure,
local picture on the scale of topology, Willmore energy.
\end{abstract}

\tableofcontents

\section{Introduction.}
\label{secintrod}

We present here a survey of some   recent  developments  in the
theory of constant mean curvature surfaces in homogeneous 3-manifolds
and some related topics on the geometry and  existence of $H$-laminations and
CMC foliations of Riemannian $n$-manifolds.  For the most part, the results
presented in this manuscript are  related to work of the authors.
However, in Section~\ref{SectionReferee}, we  include a brief discussion of some outstanding
results described below:\ben
\item The solution of the Lawson Conjecture {(the Clifford Torus is the unique embedded minimal torus
up to congruencies in the 3-sphere $\esf^3$)}
by Brendle~\cite{bren1,bren2}. More generally,
Brendle~\cite{bren3} proved that Alexandrov embedded constant mean curvature tori in $\esf^3$
are rotational (also see Andrews and Li~\cite{anli1}).
\item The result of Marques and Neves~\cite{mane1,mane2} that a closed embedded  minimal
surface in $\esf^3$ of positive genus has area at least $2\pi^2$, which  is a key tool
in their proof of the Willmore Conjecture (the Clifford Torus is the unique minimizer of the Willmore energy among tori in $\esf^3$).
\item The classification of properly embedded minimal annuli in $\esf^2 \times \R$
 by Hauswirth, Kilian and Schmidt~\cite{hauks1}, from which it follows that such annuli intersect
each level set sphere $\esf^2 \times \{t\}$ in a circle.
\een

We begin by pointing out two
theorems in the classical setting of $\rth$.
The first theorem concerns the classification of properly embedded
minimal planar domains in {3-dimensional Euclidean space}
$\rth$:

\begin{theorem}
\label{thm1}
The plane, the helicoid, the catenoid and the one-parameter family
$\{{\cal R}_t\}_{t>0}$
of  Riemann minimal examples are the only complete,
properly embedded, minimal planar domains in $\rth$.
\end{theorem}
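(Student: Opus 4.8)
The plan is to argue by cases on the number of ends of $M$, which, being a planar domain, has genus zero. The finite-ended cases are by now classical; the case of infinitely many ends is the genuinely hard part and is precisely where the one-parameter family $\{\cR_t\}_{t>0}$ must be recovered.

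First I would dispose of the one-ended case. A genus-zero surface with a single end is conformally a sphere minus a point, hence simply connected, so the uniqueness theorem of Meeks--Rosenberg (itself resting on the Colding--Minicozzi curvature estimates for embedded minimal disks) applies and shows $M$ is the plane or the helicoid. Next, if $M$ has a finite number $k\geq 2$ of ends, Collin's theorem forces $M$ to have finite total curvature; then $M$ is conformally a finitely-punctured sphere with meromorphic Gauss map, and the genus-zero, finite-total-curvature classification of L\'opez--Ros identifies $M$ as the catenoid (with $k=2$). This settles the entire finite-topology case, producing exactly the plane, the helicoid and the catenoid.

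The remaining case, in which $M$ has infinitely many ends, is the main obstacle. The strategy I would follow has two stages. First, analyze the asymptotic geometry of $M$ through the local picture on the scale of topology and the associated parking-garage/limit-lamination structure, in order to show that $M$ admits a well-defined limit tangent plane at infinity and that its ends organize themselves (top, bottom and middle ends) exactly as those of a Riemann example. Second, and this is the crux, upgrade this structure to genuine periodicity: prove that $M$ is invariant under a translation $T$, so that the quotient $M/T$ is a properly embedded minimal surface of finite total curvature in the flat manifold $\rth/T$. Establishing this periodicity is the hardest step, since a priori the asymptotic analysis only yields quasiperiodicity, and ruling out nonperiodic limits requires careful control of the moduli of the ends.

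Once periodicity is secured, I would bring in the Shiffman Jacobi function $S_M$ of $M$: the normal variation that measures the failure of the intersections of $M$ with horizontal planes to be circles or straight lines, which is a Jacobi function and vanishes identically precisely when every such planar section is a circle or a line. Using that $M/T$ has finite total curvature, together with the integrable-systems structure of the moduli space of such quotient surfaces (the holomorphic extension of $S_M$ and its interpretation in the KdV/spectral-curve framework), I would show that $S_M\equiv 0$. By Riemann's classical classification of minimal surfaces foliated by circles and straight lines in parallel planes, a surface with $S_M\equiv 0$ and infinitely many ends must be one of the $\cR_t$, completing the classification.
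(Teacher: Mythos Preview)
Your treatment of the finite-topology cases is correct and matches the paper: one end gives the plane or helicoid via Meeks--Rosenberg, and two or more ends forces finite total curvature by Collin, whence L\'opez--Ros yields the catenoid.

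In the infinite-ends case, however, your plan has the two main steps inverted, and this inversion hides the real difficulty. You propose to first establish periodicity of $M$ under a translation $T$, pass to the finite-total-curvature quotient $M/T$, and only then invoke the Shiffman function and the KdV structure. There is no known way to carry out that first step independently; proving periodicity directly from the quasiperiodic asymptotic picture is essentially equivalent to the full theorem, and your proposal gives no mechanism for it beyond ``careful control of the moduli of the ends.'' The actual argument of Meeks, P\'erez and Ros runs the other way around. One first uses Colding--Minicozzi theory and the ordering of ends to show that $M$ has exactly two limit ends, bounded curvature, and is conformally a cylinder $\C/\langle i\rangle$ punctured at a discrete set of points corresponding to the planar middle ends. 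This quasi-periodic structure (not actual periodicity) is already enough to set up the Shiffman Jacobi function $S_M$ and to show, via the KdV hierarchy, that $S_M$ integrates holomorphically to a one-parameter family of genus-zero properly embedded minimal surfaces containing $M$. Compactness of the relevant moduli space then forces $S_M\equiv 0$, so every horizontal section of $M$ is a circle or line, and Riemann's classical theorem identifies $M$ as one of the $\cR_t$. Periodicity is a \emph{consequence} of this identification, not an input to it.

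In short: drop the intermediate periodicity step and apply the Shiffman/KdV machinery directly to $M$ using only the quasi-periodic conformal and curvature data that the asymptotic analysis actually provides.
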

The proof of Theorem~\ref{thm1} depends primarily on work of Colding and
Minicozzi~\cite{cm23,cm35}, Collin~\cite{col1}, L\'opez and Ros~\cite{lor1},
Meeks, P\'erez and Ros~\cite{mpr6} and Meeks and
Rosenberg~\cite{mr8}. The second theorem concerns the classification of complete,
simply connected surfaces embedded in $\rth$ with non-zero constant mean curvature:

\begin{theorem}
\label{thm2}
Complete, simply connected surfaces  embedded in
$\rth$ with non-zero constant mean curvature are compact, and thus, by the
classical results of Hopf~\cite{hf1} or Alexandrov~\cite{aa1}, are round spheres.
\end{theorem}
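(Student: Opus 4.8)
The plan is to prove that a complete, simply connected, embedded surface $\S$ in $\R^3$ with non-zero constant mean curvature must be compact; once compactness is established, the classical theorems of Hopf and Alexandrov cited in the statement immediately force $\S$ to be a round sphere, so the entire difficulty is concentrated in the compactness claim. The strategy I would follow is to assume for contradiction that $\S$ is non-compact and to derive strong geometric control on such a hypothetical surface, ultimately contradicting either its embeddedness or its simple connectivity.

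First I would normalize so that the mean curvature is $H=1$, and I would try to establish a \emph{uniform curvature estimate} for the surface: there should be a constant $C>0$, independent of the point, bounding the norm of the second fundamental form $|A|\le C$ everywhere on $\S$. Such an estimate is the analogue for embedded CMC surfaces of the Colding--Minicozzi-type bounds underlying Theorem~\ref{thm1}, and it would follow from a blow-up/compactness argument: if $|A|$ were unbounded, one rescales around a sequence of points where $|A|$ tends to infinity, extracts a limit that is a complete embedded minimal surface (the mean curvature scales to zero) of bounded curvature, and analyzes its structure. A uniform curvature bound yields, by standard arguments, a positive lower bound on the \emph{radius of the regular neighborhood} (the embedded tubular/normal neighborhood) of $\S$, so that $\S$ has genuinely embedded balls of a fixed size and cannot accumulate on itself.

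Next I would exploit simple connectivity together with the one-sided nature of an embedded CMC surface. Because $\S$ is embedded with $H=1$ and simply connected, it separates a neighborhood into a mean-convex side and the other side, and I would push $\S$ inward along the mean-convex direction using the maximum principle / Alexandrov reflection in combination with the uniform regular-neighborhood size. The key quantitative input is an \emph{intrinsic/extrinsic area-growth} or \emph{isoperimetric} control: a complete embedded $H=1$ surface with bounded geometry and a uniform embedded neighborhood has at most linear area growth in extrinsic balls, while simple connectivity forces a Euclidean-type lower bound on area growth; reconciling these forces the intrinsic diameter to be finite, hence $\S$ compact.

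The hard part, I expect, is establishing the uniform curvature estimate and, equivalently, ruling out the formation of non-compact ends with arbitrarily small necks or with fast area growth --- this is exactly where the theory of CMC laminations and their local pictures (the constant-mean-curvature analogues of the minimal-lamination machinery behind Theorem~\ref{thm1}) must be invoked, since a naive maximum-principle argument does not by itself preclude a simply connected non-compact embedded $H=1$ surface a priori. Concretely, I would need a structural classification of the possible limits of rescalings of $\S$: any such limit is a complete, embedded, simply connected CMC or minimal surface, and I must show that no such limit can serve as the blow-up model of a non-compact simply connected embedded $H=1$ surface. Once the curvature and neighborhood estimates are in hand, the compactness conclusion and the appeal to Hopf--Alexandrov are routine.
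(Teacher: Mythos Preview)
Your proposal has the right overall shape---prove curvature estimates, then use them to force compactness---but both of the key steps, as you have written them, contain genuine gaps that the paper's argument fills with specific machinery you do not mention.

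First, the blow-up argument for the curvature estimate does not work as stated. If $|A|$ is unbounded and you rescale at points of almost-maximal curvature, the limit is indeed a properly embedded minimal surface of bounded curvature---but there is no contradiction: the limit can perfectly well be a helicoid, and nothing about this rules out unbounded curvature on the original $\Sigma$. The paper's curvature estimates (Theorem~\ref{cest}) are obtained instead by a delicate extension argument for multi-valued graphs forming near points of concentrated curvature, showing that the two interleaved $2$-valued graphs with \emph{opposite} mean-curvature vectors collapse to a single graph, which is impossible.

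Second, and more seriously, your area-growth argument is both quantitatively wrong and logically incomplete. A bounded-curvature embedded $H$-surface with a one-sided regular neighborhood has area in extrinsic balls bounded by a constant times the \emph{volume} of the ball (Corollary~\ref{cor.max}, Theorem~\ref{cor*}), i.e.\ cubic, not linear; and even a correct extrinsic bound cannot be played off against intrinsic quadratic growth without chord-arc control, which you have not established. The paper closes this gap by an entirely different mechanism: once curvature is bounded, one applies the CMC Dynamics Theorem~\ref{T} and the Minimal Element Theorem~\ref{sp2} to a hypothetical non-compact limit $M$. These force $\Te(M)$ to contain a Delaunay surface, which has \emph{nonzero flux}---but every $1$-cycle on a simply connected surface (or a limit of disks) bounds, so the flux must vanish. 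This flux contradiction, not an area-growth mismatch, is what yields the radius estimates (Theorem~\ref{rest}) and hence compactness. Your sketch never invokes the flux invariant, and without it there is no mechanism to distinguish simply connected $H$-surfaces from, say, Delaunay surfaces at the level of your proposed estimates.
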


Theorem~\ref{thm2} was   proven by Meeks and Tinaglia~\cite{mt7} and depends on
obtaining curvature and radius estimates for embedded disks of non-zero
constant mean curvature. We will  cover in some detail the proof of this result
 and will explain how they lead to a deeper understanding of the
geometry of complete constant mean curvature surfaces embedded in Riemannian 3-manifolds.

In the setting of homogeneous 3-manifolds $X$,
we will cover  results on the uniqueness of constant mean curvature spheres, as described in the next problem.
\par
\vspace{.2cm}
\noindent{\em {\sc Hopf Uniqueness Problem:} If $S_1,S_2$ are immersed spheres
in $X$ with the same constant mean curvature, does there exist an isometry $I$ of $X$ with $I(S_1)=S_2$? }
\par
\vspace{.2cm}
This uniqueness question
gets its name from  Hopf~\cite{hf1}, who proved that an immersed sphere in $\rth$ of
constant mean
curvature $H$ is a round sphere of radius $1/|H|$. This  problem is further motivated  by the result
of Abresch and Rosenberg~\cite{AbRo1,AbRo2}
that constant mean curvature spheres in homogeneous 3-manifolds $X$ with a four-dimensional isometry group
are spheres of revolution, from which it can be shown that a positive answer to
the Hopf Uniqueness Problem holds in this special setting.
More recently, the combined results of Daniel and Mira~\cite{dm2} and of Meeks~\cite{me34}
gave a positive solution to the Hopf Uniqueness Problem in the
case that $X$ is isometric to the solvable Lie group
Sol$_3$ equipped with one of its most symmetric left invariant metrics.
In Section~\ref{sec:Hopf}, we will cover in some detail the approach of Meeks, Mira, P\'erez and Ros
in~\cite{mmpr1,mmpr4,mmpr2} to solving
the Hopf Uniqueness Problem
in the  remainder of the possible homogeneous geometries for $X$.
Their approach  includes classification theorems for the moduli space
 $\cM_X$ of immersed constant mean curvature spheres in $X$
in terms of the Cheeger constant of the universal cover of $X$.

Another fundamental
problem that we will cover is the
{\it Calabi-Yau problem} for complete, constant mean curvature
surfaces in locally homogeneous 3-manifolds $X$, especially in the classical case $X=\rth$.
This problem in the case that the surface is {\em embedded} asks the following question.

\par
\vspace{.2cm}
\noindent {\em {\sc Embedded Calabi-Yau Problem:} Does there exist a complete, non-compact surface of
fixed constant mean curvature that is embedded in a given compact subdomain $\Omega$ of $X$? }
\par
\vspace{.2cm}
\noindent Some versions of the Embedded Calabi-Yau Problem
also restrict the topology of the surface and/or assert that
such a surface can be chosen to be proper
in the interior of  $\Omega$  and/or  weaken the condition
that the surface be contained in a compact domain to that of being non-proper in  $X$.

We will also discuss the theory of constant mean curvature $H$-laminations and CMC foliations
of Riemannian $n$-manifolds. By {\em CMC foliation}, we mean a  transversely oriented, codimension-one
foliation $\cF$ of a   Riemannian $n$-manifold $X$ (not necessarily orientable), such that
all of the leaves of $\cF$ are two-sided hypersurfaces of constant mean curvature, and where the value of
the constant mean curvature can vary from leaf to leaf. Notable results on this subject include the
following ones by Meeks, P\'erez and Ros: the Stable Limit Leaf Theorem~\cite{mpr18}, the
Local Removable Singularity for $H$-laminations~\cite{mpr21,mpr10}, the Dynamics Theorem
for properly embedded minimal surfaces~\cite{mpr20},
curvature estimates for CMC foliations of Riemannian
3-manifolds~\cite{mpr21} and the
application of these results to  classify
the CMC foliations of $\rth$ and $\esf^3$ with a closed
countable set of singularities~\cite{mpr21}. In this final section we will give an
outline of the  proof by Meeks and P\'erez~\cite{mpe13}  that a smooth closed $n$-manifold
$X$ admits  a smooth CMC foliation for some Riemannian metric
if and only if its Euler characteristic vanishes, a result that was proved previously
when $X$ is orientable by Oshikiri~\cite{osh3}.

Henceforth for clarity of  exposition, we will call an oriented surface
$M$ immersed in a Riemannian 3-manifold $X$ an {\it $H$-surface} if it
is {\em connected},  {\it embedded} and it
has {\it non-negative constant mean curvature $H$}; our convention of mean curvature
gives that a sphere $\esf^2$ in $\rth$ of radius 1
has $H=1$ when oriented by the inward pointing unit normal
to the ball that it bounds.
If we say that $M$ is an {\it immersed} $H$-surface in $X$,
then that indicates that the surface might {\em not} be embedded.
We will  call an
$H$-surface an {\em $H$-disk} if the surface is homeomorphic
to a closed unit disk in the Euclidean plane.

We now elaborate further on the results mentioned so far
and on the organization of the paper.
The theory of $H$-surfaces in $\R^3$
has its
roots in the calculus of variations developed by Euler and Lagrange
in the 18-th century and in later investigations by, among others, Delaunay, Enneper, Scherk,
Schwarz, Riemann and Weierstrass in the 19-th century. During the
years, many great mathematicians have contributed to this theory: besides the above mentioned
names that belong to the 19-th
century, we find fundamental contributions by  Bernstein, Courant,
Douglas, Hopf, Morrey, Morse, Rad\'o and Shiffman in the first half of the
last century.
Several global questions and conjectures that arose in this
classical subject
have only recently been addressed.

The next two classification results give  solutions to
 long standing conjectures. Concerning the first one, several mathematicians pointed
out to us that Osserman was the first to ask the question
about whether the plane and the helicoid were the only
simply connected, complete 0-surfaces; Osserman described this
question as potentially the most beautiful extension and explanation
of Bernstein's Theorem. For a complete outline of the proof of the second
result below, including Riemann's original proof of the classification of minimal surfaces
foliated by circles and lines in parallel planes, see the historical account
by the first two authors presented in~\cite{mpe16}.

\begin{theorem}%[Meeks, Rosenberg~\cite{mr8}]
\label{helicoid}
A complete, simply connected $H$-surface in $\rth$
is a plane, a sphere
or a helicoid.
\end{theorem}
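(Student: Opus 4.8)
The plan is to separate the two cases permitted by the definition of an $H$-surface, namely $H>0$ and $H=0$, and to observe that the first is essentially immediate from a result already at hand.

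Suppose first that $H>0$. Then $M$ is a complete, simply connected surface embedded in $\rth$ with non-zero constant mean curvature, so Theorem~\ref{thm2} applies directly: $M$ is compact, and hence, by the classical theorems of Hopf and Alexandrov, $M$ is a round sphere. This settles the non-minimal case and reduces the theorem to showing that a complete, simply connected, embedded \emph{minimal} surface $M\subset\rth$ is a plane or a helicoid.

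For the minimal case I would first reduce to the properly embedded setting, since a complete embedded minimal surface of finite topology in $\rth$ is automatically proper (Colding--Minicozzi), and a simply connected $M$ has finite topology. A simply connected planar domain has genus zero and one end, so one tempting shortcut is to invoke Theorem~\ref{thm1} and then discard from its list the surfaces with the wrong topology---the catenoid (two ends) and the Riemann examples (infinitely many ends)---leaving only the plane and the helicoid. However, the uniqueness-of-the-helicoid content is precisely the hard input behind Theorem~\ref{thm1}, so I would instead outline the direct argument. If $M$ is flat it is a plane; otherwise the goal is to recover the Weierstrass data of the helicoid, and the decisive tool is the Colding--Minicozzi theory of embedded minimal disks, whereby at points of large Gaussian curvature $M$ contains multivalued graphs and looks locally like a piece of a helicoid, with the one-sided curvature estimate governing how adjacent sheets are stacked.

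The core of the proof is a blow-down analysis: rescaling $M$ by factors $\lambda_n\to 0$ and applying the lamination theorem extracts a limit minimal lamination of $\rth$, and embeddedness, simple connectivity and properness force this limit to be a foliation by parallel planes singular along a single line, the axis. One thereby learns that $M$ has exactly one end, is conformally $\C$, that the height differential $dh$ is a globally defined nowhere-zero holomorphic one-form, and that the Gauss map omits the two vertical directions of the axis; normalizing so that $dh=dz$ and $g=e^{z}$, the Weierstrass representation reconstructs $M$ as the helicoid. The main obstacle is exactly this determination of the conformal type and asymptotics---above all, ruling out that $M$ is conformally the disk $\D$ and confining the curvature blow-up to a single axis. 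Establishing parabolicity (recurrence) and the uniqueness of the limit lamination is what the one-sided curvature estimate is built to deliver; once that is in place, the remaining identification of $M$ as a helicoid is a rigidity computation with the Weierstrass data.
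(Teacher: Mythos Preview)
Your approach is essentially the same as the paper's: split into the cases $H>0$ and $H=0$, dispatch the first via Theorem~\ref{thm2} (equivalently, the radius estimates of Theorem~\ref{rest}), and for $H=0$ pass to properness via Colding--Minicozzi and then invoke the Meeks--Rosenberg uniqueness of the helicoid, whose proof you correctly sketch through the lamination/blow-down analysis, the one-sided curvature estimate, and the resulting conformal-type and Weierstrass-data rigidity. The paper packages the conclusion as Theorem~\ref{kks2}, but the logical content is the same as what you wrote.

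One small slip: the Weierstrass data $g=e^z$, $dh=dz$ give the catenoid (on its universal cover), not the helicoid; the helicoid is its conjugate, with $g=e^z$, $dh=i\,dz$ (compare Section~\ref{subsecexamples}). This does not affect the structure of your argument, but the normalization should be corrected.
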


\begin{theorem}[Meeks, P\'erez and Ros~\cite{mpr6}]
\label{classthm} Up to scaling and rigid motion, any connected,
properly embedded, minimal planar domain in $\rth$ is a plane, a
helicoid, a catenoid or one of the Riemann minimal
examples. In particular, for every such surface there exists
a foliation of $\R^3$ by parallel planes, each of which intersects
the surface transversely in a connected curve which is a circle or a
line.
\end{theorem}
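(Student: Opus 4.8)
The plan is to argue by cases according to the topology of the properly embedded minimal planar domain $M\subset\rth$, disposing of the finite-topology case by classical classification theorems and reserving the integrable-systems machinery for the infinite-topology case, where the Riemann examples $\cR_t$ arise. Throughout, note that properly embedded implies complete and that ``planar domain'' forces the genus of $M$ to be zero.

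First I would dispose of the finite-topology case. If $M$ has a single end, then genus zero plus one end makes $M$ simply connected, and Theorem~\ref{helicoid} applied with $H=0$ (the sphere option being vacuous since a minimal surface has $H=0$) shows $M$ is a plane or a helicoid. If $M$ has finitely many ends, at least two of them, I would invoke Collin's theorem~\cite{col1} to conclude that $M$ has finite total curvature, and then the L\'opez--Ros classification~\cite{lor1} of complete embedded minimal surfaces of genus zero and finite total curvature forces $M$ to be a plane or a catenoid. This settles every finite-topology case and yields exactly the plane, helicoid and catenoid.

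The substance of the theorem is the infinite-topology case, where I must show $M$ is a Riemann minimal example. The first step is to obtain strong a priori control on the geometry. Using the Colding--Minicozzi curvature and one-sided curvature estimates~\cite{cm23,cm35} together with the genus-zero hypothesis, I would prove that $M$ has uniformly bounded Gaussian curvature and quadratic area growth, hence that $M$ is \emph{quasiperiodic}: any sequence of divergent ambient translations of $M$ subconverges smoothly on compact sets to another properly embedded minimal planar domain. I would then analyze the ends, showing that $M$ has exactly two limit ends and a blow-down (parking garage) structure modeled on that of the $\cR_t$, which singles out intrinsically the foliating family of ``horizontal'' planes.

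The heart of the proof, and the step I expect to be the main obstacle, is the Shiffman function argument. I would introduce the Shiffman Jacobi function $S_M$, solving the linearized equation
\[
\Delta S_M + |A|^2\, S_M = 0,
\]
which vanishes identically exactly when every section of $M$ by a plane of the horizontal foliation is a circle or a straight line. Quasiperiodicity makes $S_M$ bounded; the difficulty is to force $S_M\equiv 0$. Here I would exploit the integrable-systems structure, realizing a holomorphic extension of $S_M$ as the infinitesimal generator of the Shiffman flow, a deformation of $M$ through minimal surfaces governed by the KdV hierarchy, and combine boundedness and quasiperiodicity of $S_M$ with the finite dimensionality of the relevant moduli and an analyticity/maximum-principle argument to conclude that the associated spectral data have genus zero, whence $S_M\equiv 0$. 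Once $S_M\equiv 0$, every horizontal section of $M$ is a circle or a line, so Riemann's classical classification of minimal surfaces foliated by circles and lines in parallel planes identifies $M$ with one of the $\cR_t$. Finally, the ``in particular'' clause is immediate from the resulting list: the horizontal planes meet the plane and the helicoid in lines, the catenoid in circles, and each $\cR_t$ in circles and lines, so in every case the asserted foliation of $\rth$ by parallel planes transverse to $M$ exists.
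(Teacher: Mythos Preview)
Your proposal is essentially correct and follows the same architecture the paper outlines: dispose of the finite-topology case via Meeks--Rosenberg (one end) and Collin plus L\'opez--Ros (two or more ends), then in the infinite-topology case use Colding--Minicozzi theory to obtain bounded curvature and the two-limit-end structure, and finally run the Shiffman function argument coupled with the KdV/integrable-systems machinery of~\cite{mpr6} to force circle foliation and invoke Riemann's classical classification. The paper itself is a survey and does not give a self-contained proof, but it explicitly identifies these same ingredients (see the attribution after Theorem~\ref{thm1}, the discussion preceding Theorem~\ref{classthm2}, and the remark that the Shiffman function ``an indispensable tool used by Meeks, P\'erez and Ros in their proof of Theorem~\ref{classthm}, the proof of which depended upon methods in the theory of integrable systems'').

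Two minor points worth tightening. First, the horizontal direction is not obtained from a parking-garage blow-down of $M$ itself but rather from the Frohman--Meeks ordering of ends and the structure of the two limit ends (cf.\ the discussion before Theorem~\ref{classthm2}); your phrasing conflates this with the Colding--Minicozzi limit picture of Theorem~\ref{t:t5.1CM}, which is a tool along the way rather than the mechanism that fixes the foliation. Second, your description of why $S_M\equiv 0$ is the one genuinely sketchy step: in~\cite{mpr6} the point is that the holomorphic integration of the Shiffman function via the KdV hierarchy produces an actual one-parameter family of properly embedded minimal planar domains, and then a compactness argument in the moduli space (using the quasiperiodicity you already established) forces the flow to be trivial. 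Your ``finite dimensionality of the relevant moduli and an analyticity/maximum-principle argument'' gestures at this but does not name the compactness step, which is where the work lies.
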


To understand the context and implications of the next theorem,
first note that every simply connected
homogenous 3-manifold $X$ that
is not isometric  to $\esf^2 (\kappa)\times \R$, where $\kappa$ is the
 non-zero Gaussian curvature of $\esf^2$, is isometric to a {\em metric Lie group}, i.e., a Lie group
equipped with a left invariant metric; see~\cite{mpe11} for a proof of this
fact. In particular, if $X$ is compact, simply connected and homogeneous,
then it is isometric to the Lie group
\begin{equation}
\label{eq:su2}
\su=\{ A\in \mathcal{M}_2(\C )\ | \ A^t\overline{A}=I_2, \ \det (A)=1\}
\end{equation}
with a left invariant metric. When $X$ is
homogenous and diffeomorphic to $\esf^2\times \R$ or more generally when $X$ has a four-dimensional
isometry group,
Abresch and Rosenberg~\cite{AbRo1,AbRo2}  proved that
for every  $H\geq0$, there exists a unique immersed $H$-sphere in $X$
and this sphere is embedded when $X$ is
diffeomorphic to $\esf^2\times \R$;
they  obtained these results  by first proving that
every such
sphere is a surface of revolution and then, using this symmetry property,
they classified the examples. In the classical
setting of $X=\rth$,  Hopf~\cite{hf1} earlier proved that
an immersed  $H$-sphere is a round sphere of radius $1/H$.  Motivated by these results,
the uniqueness up to ambient isometry question for immersed $H$-spheres in $X$ became known as the
previously mentioned Hopf Uniqueness
Problem in homogeneous 3-manifolds; since spheres are simply connected and lift to the universal cover
of $X$,
we henceforth will only consider this uniqueness problem with the additional condition that the
homogeneous 3-manifold $X$ be simply connected.

The next theorem by
Meeks, Mira, P\'erez and Ros~\cite{mmpr4} gives a complete solution to
the Hopf Uniqueness Problem and to the classification of immersed $H$-spheres
when the homogeneous manifold $X$ is diffeomorphic to $\esf^3$. These authors
are confident that they also
have a proof of the  classification  for the moduli space of
immersed $H$-spheres in a general simply connected, homogeneous 3-manifold $X$,
and this is work in progress in~\cite{mmpr1}.
Their proposed classification result depends
on their characterization in~\cite{mmpr2} of the Cheeger constant of $X$ as being twice the value of
the infimum of the mean curvatures of immersed closed $H$-surfaces in the space.
See Theorem~\ref{main5} in Section~\ref{sec:Hopf} for a more complete version of the next theorem
and for further explanations.

\begin{theorem}[Compact case of the Hopf Uniqueness Problem] \label{main3}
Let  $X$ be $\su$ equipped with a left invariant metric and let $\cM_X$ be the moduli space
of immersed constant mean curvature spheres in $X$ identified up to left translations.
Then for every  $H\in[0,\infty)$ there exists an oriented immersed  $H$-sphere $S_H$ in $X$
and  $S_H$  is the unique immersed $H$-sphere in $X$   up to
left translations. Hence, $\cM_X$ is naturally parameterized by the interval $[0,\infty)$
of all possible mean curvature values.
\end{theorem}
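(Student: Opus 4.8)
The plan is to prove existence and uniqueness simultaneously by analyzing the global structure of the moduli space $\cM_X$, rather than by seeking a holomorphic quadratic differential (the Hopf--Abresch--Rosenberg method), which is unavailable here because a generic left invariant metric on $\su$ has only a three-dimensional isometry group. Concretely, I would study the mean curvature map $H\colon \cM_X \to [0,\infty)$ sending the congruence class of an immersed $H$-sphere to its value $H$, and aim to show it is a homeomorphism: surjectivity is exactly existence, and injectivity is exactly uniqueness up to left translation.

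First I would establish the local theory. For an immersed $H$-sphere $S$ with unit normal $N$, the Jacobi operator is $L = \Delta + |A|^2 + \Ric(N,N)$, and the normal component $\langle V, N\rangle$ of each of the three right-invariant Killing fields $V$ that generate the left translations is a Jacobi field on $S$. I would prove that $S$ has \emph{index one} and that these three fields span the entire kernel of $L$ (nullity exactly three), so that $S$ is nondegenerate modulo the isometry group. The implicit function theorem then makes $\cM_X$ a smooth one-manifold near $[S]$ on which $H$ is a local diffeomorphism. This nondegeneracy step is the crux and the main obstacle: in $\rth$ and in the four-dimensional isometry group geometries one reads it off the Hopf differential, whereas here one must control $\ker L$ directly by spectral and variational arguments, exploiting that $S$ has genus zero and that any Jacobi field should arise from the first-order deformations produced by the ambient symmetries.

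Next I would supply the a priori estimates needed for global control. Uniform curvature estimates $\sup_S |A| \le C(H)$ together with diameter and area bounds (finite because $\su$ is compact) give compactness of the set of $H$-spheres for $H$ in compact subintervals, and hence properness of $H$ over $[0,\infty)$; a genus-zero limit of $H$-spheres with bounded geometry is again an $H$-sphere, so no bubbling or change of topology occurs. Combined with the local diffeomorphism property, $H$ is a proper local homeomorphism onto its image, whose fibers are therefore finite, and whose image is open and closed in $[0,\infty)$. Nonemptiness near $H=\infty$ follows from the boundaries of small isoperimetric regions, which are embedded, nearly round spheres of large mean curvature; by connectedness the image is all of $[0,\infty)$ and $H$ is a covering map.

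Finally, since the base $[0,\infty)$ is connected, every fiber of this covering has the same finite cardinality $k$, so it suffices to compute $k$ for one value. I would take $H\to\infty$ and rescale by $H$: the rescaled $H$-spheres converge to a solution of the constant mean curvature sphere problem in the tangent space $\rth$, which by Hopf's theorem is a unique round sphere, and the nondegeneracy established above upgrades this to uniqueness up to left translation for all large $H$, giving $k=1$. Hence $H\colon \cM_X \to [0,\infty)$ is a homeomorphism, which is precisely the asserted existence and uniqueness of $S_H$ and exhibits the claimed parametrization of $\cM_X$ by $[0,\infty)$.
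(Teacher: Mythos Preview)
Your overall continuity strategy---produce $H$-spheres for all $H$ by an open-closed argument starting from small isoperimetric spheres, and deduce uniqueness from a degree-one property---is broadly the same as the paper's.  But there are two genuine gaps, and the second one is exactly where the paper's key new idea enters.

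First, the area estimate.  You write that ``diameter and area bounds (finite because $\su$ is compact)'' give compactness.  Compactness of $X$ bounds the diameter of an \emph{immersed} sphere, but it does \emph{not} bound its area: an immersed $H$-sphere in a compact $3$-manifold can have arbitrarily large area (think of highly wrapped Delaunay-type pieces).  In the paper's outline this is singled out as ``the most delicate part of the proof'': one shows that if the areas of index-one $H_n$-spheres blow up along a sequence $H_n\to H_\infty$, then a suitable limit produces a complete \emph{stable} $H_\infty$-surface in $X$, and moreover this surface can be shown to be the $\Pi$-lift of an immersed curve under a certain fibration $\Pi\colon X\to\esf^2$; one then proves no such lifted surface can be stable, a contradiction.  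Without a mechanism of this sort your properness step fails.

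Second, and more structurally, you propose to prove directly that \emph{every} immersed $H$-sphere has index one and nullity three, so that $\cM_X$ is a one-manifold on which $H$ is a local diffeomorphism, and then run a covering-degree argument.  You acknowledge this step is ``the crux and the main obstacle'' but offer no method; indeed no such a-priori index argument is known in this generality.  The paper avoids this entirely by introducing the \emph{left invariant Gauss map} $G\colon S_H\to\esf^2\subset T_eX$.  The key lemma (Theorem~\ref{thm:index1}) is that if $S_H$ has index one, then $G$ is a diffeomorphism, and from this diffeomorphism property one deduces that $S_H$ is the unique $H$-sphere up to left translation among \emph{all} $H$-spheres (not just index-one ones).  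Thus index one for arbitrary spheres is an \emph{output} of the argument, not an input: one runs the open-closed continuity method only on the set of $H$ admitting an index-one sphere, and uniqueness then forces every $H$-sphere to coincide with the index-one one.  Your covering-degree computation at $H\to\infty$ is replaced by this Gauss-map mechanism, which is the genuinely new ingredient specific to the metric Lie group setting.
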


The proofs of Theorems~\ref{helicoid}, \ref{classthm} and~\ref{main3} depend on a series of new results
and theory that have been developed over the past decade. The
purpose of this article is two-fold. The first goal is to explain these
results and the history behind them in a manner accessible to a graduate student interested
in Differential Geometry or Geometric Analysis, and the second goal is to explain how these results
and theory transcend their application to the proofs of Theorems~\ref{helicoid},
\ref{classthm} and \ref{main3} and enhance the understanding of the theory,
giving rise to new theorems and conjectures. Since much of
this material for minimal surfaces is well-documented in the survey~\cite{mpe2} and book~\cite{mpe10} by
the first two authors, we will focus somewhat more of our attention here
on the case when $H>0$ and we refer the interested reader to~\cite{mpe16,mpe2,mpe10} for further
background on the minimal surface results that we mention here.

Before proceeding, we make a few general comments on the proof of Theorem~\ref{helicoid} that we feel can
suggest to the reader a visual idea of what is going on. The most natural motivation for
understanding this theorem, Theorem~\ref{classthm} and
other results presented in this survey is to try to answer the
following heuristic question:
\par
\vspace{.2cm}
{\it What are the possible shapes of surfaces
which satisfy a variational principle and have a given topology?}
\par
\vspace{.2cm}
\noindent For instance, if the variational equation expresses the critical
points of the area functional with respect to compactly supported volume preserving
variations, and the requested topology is the
simplest one of a disk, then Theorem~\ref{helicoid} says that the
possible shapes for complete non-compact examples are the trivial one given by
a plane and (after a rotation)
an infinite double spiral staircase, which is a visual description of a vertical helicoid;
in particular there are no non-compact examples which are not minimal.

A more precise description of the double spiral staircase nature of a
vertical helicoid is that this surface is the union of two infinite-sheeted
multi-valued graphs, which are glued along a vertical axis. Crucial in the proof of
Theorem~\ref{helicoid} are local and global results of Colding and Minicozzi
on  0-disks~\cite{cm21,cm22,cm23,cm35}, global results of Meeks and
Rosenberg for complete 0-disks~\cite{mr8}, and generalizations
of them by Meeks and Tinaglia to the $(H=1)$-setting~\cite{mt8,mt7,mt13,mt9}.
The local results of Colding and Minicozzi
describe  the structure of compact embedded  minimal disks (with boundary)
as essentially being modeled by the plane or the helicoid, i.e., either
they are graphs or pairs of finitely sheeted multi-valued graphs glued along
an ``axis''. In the case of 1-disks,  the recent work of Meeks and Tinaglia
demonstrates that 1-disks are modeled only by graphs away
from their boundary curves, in other words,
there exist curvature estimates for $1$-disks at points at any fixed positive
intrinsic distance  from their boundary curves.

\begin{theorem}[Curvature Estimates, Meeks, Tinaglia~\cite{mt7}]
\label{cest} Given $\delta$, $\cH>0$,
there exists a  $K(\delta,\cH)\geq\sqrt{2}\cH$ such that
any   $H$-disk $M$  in $\rth$ with
 $H\geq \cH$ satisfies
\[
\sup_{\large \{p\in {M} \, \mid \,
d_{M}(p,\partial M)\geq \delta\}} |A_{M}|(p)\leq  K(\delta,\cH),
\]
where $|A_{M}|$ is the norm of the second fundamental form
and $d_{M}$ is the intrinsic distance function of $M$.
\end{theorem}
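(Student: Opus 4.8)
The plan is to argue by contradiction using a rescaling (blow-up) argument, exploiting the scale behaviour of the hypotheses: if $M$ is an $H$-disk, then $\lambda M$ is an $(H/\lambda)$-disk with $|A_{\lambda M}|=\lambda^{-1}|A_M|$ and $d_{\lambda M}=\lambda\, d_M$. In particular, rescaling by a large factor drives the mean curvature to zero, so blow-up limits taken at points of large curvature will be \emph{minimal}. Suppose the estimate fails for some fixed $\delta,\cH>0$. Then there is a sequence of $H_n$-disks $M_n$ with $H_n\geq\cH$ and points $p_n\in M_n$ with $d_{M_n}(p_n,\partial M_n)\geq\delta$ and $\lambda_n:=|A_{M_n}|(p_n)\to\infty$. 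The lower bound $K\geq\sqrt2\,\cH$ in the statement merely records that a round sphere of mean curvature $H$ has $|A|=\sqrt2\,H$, so no smaller constant can work; one takes the maximum with $\sqrt2\,\cH$ at the end.

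First I would run a standard point-selection (\`a la Colding--Minicozzi) to replace $p_n$ by points $q_n$ at which $\mu_n:=|A_{M_n}|(q_n)\geq\lambda_n$ is almost maximal, in the sense that $|A_{M_n}|\leq 2\mu_n$ on the intrinsic ball $B_{M_n}(q_n,r_n)$ with $r_n\mu_n\to\infty$. Rescaling by $\mu_n$ then yields embedded surfaces $N_n:=\mu_n(M_n-q_n)$ of mean curvature $H_n/\mu_n$, with $|A_{N_n}|(0)=1$, with $|A_{N_n}|\leq 2$ on intrinsic balls of radius $s_n\to\infty$, and which are increasing unions of disks. By the uniform curvature bound these converge, after passing to a subsequence and in the pointed smooth topology, to a complete embedded surface $N_\infty$ through the origin with $|A_{N_\infty}|(0)=1$ and bounded curvature. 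There are then two regimes according to the behaviour of the rescaled mean curvature $H_n/\mu_n$.

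If $H_n/\mu_n\to0$, the limit $N_\infty$ is a complete, embedded, non-flat minimal surface of bounded curvature arising as a limit of disks; by the structure theory of Colding--Minicozzi~\cite{cm23,cm35} and Meeks--Rosenberg~\cite{mr8} such a surface is a helicoid (it cannot be a plane, since $|A_{N_\infty}|(0)=1$). This forces $M_n$, near $q_n$ and at the scale $\mu_n^{-1}$, to contain a double multivalued graph with an unbounded number of sheets, i.e. a double spiral staircase. The crux of the argument is to show that this cannot occur when $H_n$ is bounded away from zero: one establishes a one-sided curvature estimate for $H$-disks (the analogue of the Colding--Minicozzi one-sided estimate), to the effect that an $H$-disk which lies on one side of a plane and sufficiently close to it is a graph with an a priori curvature bound. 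Positive mean curvature forces the sheets of a would-be multigraph to be consistently ordered and to separate monotonically, so that the spiralling cannot persist; iterating the one-sided estimate rules out the helicoidal limit and contradicts $\mu_n\to\infty$.

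The remaining regime, $H_n/\mu_n\geq c>0$, forces $H_n\to\infty$; rescaling instead by $H_n$ produces unit-mean-curvature disks of uniformly bounded curvature whose interior points lie at intrinsic distance $\to\infty$ from the boundary, i.e. a complete, simply connected, embedded $H$-surface of bounded curvature with $H>0$. Excluding this is precisely a radius estimate (no such surface exists), and here one cannot simply invoke Theorems~\ref{thm2} or~\ref{helicoid}, since their proofs rely on the present estimate. The main obstacle, and the technical heart of~\cite{mt7}, is therefore to prove the curvature and radius estimates \emph{simultaneously}, by a delicate induction that feeds the one-sided estimate of the minimal regime into the exclusion of complete bounded-curvature $H$-surfaces in the positive-$H$ regime; this is exactly the step where positive mean curvature, rather than minimality, is indispensable.
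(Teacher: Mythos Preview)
Your outline has the right large-scale architecture (blow-up, helicoidal limit on the scale of curvature, a one-sided curvature estimate, and the acknowledgement that curvature and radius estimates must be proved together), but the contradiction mechanism you describe in the ``minimal regime'' is not the one that actually works, and the paper's proof is organized quite differently.

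The heart of the matter is your sentence ``positive mean curvature forces the sheets of a would-be multigraph to be consistently ordered and to separate monotonically, so that the spiralling cannot persist.'' This is not how the contradiction is reached, and as stated it does not clearly distinguish the $H>0$ case from the minimal one (where helicoidal limits are perfectly fine and no curvature estimate holds). In the paper's argument (Step~1 of Section~\ref{sec:MT}), after the blow-up produces a vertical helicoid, one finds on the original scale a pair of highly-sheeted multigraphs $\widehat G^{up}_j,\widehat G^{down}_j$ whose mean curvature vectors point in \emph{opposite} vertical directions. Using stable $0$-disks in the complement and the Colding--Minicozzi multigraph extension results, these are extended to $2$-valued graphs $G^{up}_j,G^{down}_j$ on a scale proportional to $\delta$, interleaved with one another. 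As $j\to\infty$ these two multigraphs collapse to a single graph over an annulus; but two nearby graphs with oppositely signed constant mean curvature $H>0$ cannot collapse together, and this is the contradiction. The one-sided estimate for $H$-disks (Theorem~\ref{th}) is an ingredient in controlling this picture, not a replacement for it.

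Two further structural differences are worth noting. First, the paper proves an \emph{extrinsic} curvature estimate (Lemma~\ref{lem:excest}) before the intrinsic one; passing from extrinsic to intrinsic is done via the Weak Chord Arc Estimate (Theorem~\ref{thm1.1}), which you do not mention. Second, the ``radius estimate'' step is not handled by simply invoking non-existence of complete $1$-planes: the paper derives extrinsic radius estimates (Step~2) from the extrinsic curvature estimates by passing to a strongly Alexandrov embedded $1$-limit with zero flux and then applying the Minimal Element Theorem~\ref{sp2} (via the CMC Dynamics Theorem~\ref{T}) to produce a Delaunay surface in $\cal T$ of this limit, contradicting zero flux. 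Your second regime correctly flags the circularity issue, but the actual resolution uses these flux and dynamics arguments rather than a direct appeal to a radius estimate.
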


We wish to emphasize  that the curvature estimates for
$H$-disks given in
Theorem~\ref{cest} depend {only} on the fixed {\em lower} positive bound
$\cH$ for their mean curvature, and we next explain a
simple but important consequence of this observation.
Recall that the {\em radius} of a compact Riemannian surface with
boundary is the maximum intrinsic distance of points in the surface to its boundary;
we claim that the  radius of a  1-disk
must be less than  ${K(1,1)}$,  where $K(1,1)$ is the constant given in the
above theorem with $\delta=1, \cH=1$. To see this, let $\S$ be a  1-disk
and let $\wh{\S}=\frac{1}{K(1,1)}\cdot \S$ be the homothetic scaling
of $\S$ by the factor $\frac{1}{K(1,1)}$.  Note that the mean curvature
of  $\wh{\S}$ is $K(1,1)\geq\sqrt 2>1$ and thus, the classical inequality
Trace$(A)\leq \sqrt{2}|A|$ valid for every symmetric $2\times 2$ real matrix $A$ implies that
\begin{equation}
\label{eq:a}
\inf_{ p\in \wh{\S}}|A_{\wh{\S}}|(p) \geq\sqrt{2}\cdot  K(1,1)>K(1,1).
\end{equation}
 Therefore, the radius of $\wh{\S}$ must be less than 1, otherwise $ \{p\in {\wh \S} \, \mid \,
d_{\wh\S}(p,\partial \wh\S)\geq 1\}\neq\O$,
and then Theorem~\ref{cest} with $\delta=1, \cH=1$ would give
\[
\sup_{\large \{p\in \wh{\S} \, \mid \,
d_{\wh \S}(p,\partial \wh \S)\geq 1\}} |A_{\wh\S}|(p)\leq  K(1,1),
\]
contradicting (\ref{eq:a}). This contradiction implies that the radius of $\S=K(1,1)\cdot \wh{\S}$
is less than  ${K(1,1)}$, which proves our claim.
With these considerations in mind, it is perhaps not too surprising
that the proof of the  curvature estimates in Theorem~\ref{cest}
is intertwined with the
proof of the following result on the existence of radius estimates for $(H>0)$-disks.

Notice that the next theorem implies that there do not exist
complete $(H>0)$-planes in $\rth$, since such planes contain topological disks
of arbitrarily large radius, which resolves the $H>0$ case in the proof of Theorem~\ref{helicoid}.

\begin{theorem}[Radius Estimates, Meeks, Tinaglia~\cite{mt7}]
\label{rest}
There exists an ${\mathcal R}\geq \pi$ such that any $H$-disk in $\rth$ with
$H>0$ has radius less than ${{\mathcal R}}/{H}$.
\end{theorem}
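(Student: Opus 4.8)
The plan is to reduce to a fixed value of the mean curvature and then argue by contradiction, extracting a limit surface to which the curvature estimates of Theorem~\ref{cest} apply. Since an $H$-disk scaled by the factor $1/H$ becomes a $1$-disk and its radius is multiplied by $H$, it suffices to produce a single universal constant $\mathcal{R}\geq\pi$ bounding the radius of every $1$-disk in $\rth$; the statement for general $H>0$ then follows by undoing the homothety. (The lower bound $\mathcal{R}\geq\pi$ is forced, for instance, by the hemisphere of the unit sphere, which is a $1$-disk of radius exactly $\pi/2$ and whose boundary can be pushed to the equator.)

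I would set up the contradiction as follows. Suppose no such $\mathcal{R}$ exists. Then there is a sequence of $1$-disks $M_n$ together with points $p_n\in M_n$ satisfying $d_{M_n}(p_n,\partial M_n)\geq n$. Applying Theorem~\ref{cest} with the fixed choices $\delta=1$ and $\cH=1$, the norm $|A_{M_n}|$ of the second fundamental form is bounded by the single constant $K(1,1)$ on the set of points at intrinsic distance at least $1$ from $\partial M_n$, and in particular on the intrinsic ball $B_{M_n}(p_n,n-1)$, whose radius tends to infinity. It is precisely here that the \emph{uniformity} of the curvature estimate—its dependence only on the lower bound $\cH$ and not on the global geometry of $M_n$—is indispensable.

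Next I would extract a limit. After translating each $p_n$ so that $p_n$ is the origin, the uniform bound on $|A_{M_n}|$, together with the standard higher-order elliptic estimates for the constant mean curvature equation that upgrade this bound to bounds on all derivatives of $A_{M_n}$, permits passing to a subsequence converging smoothly on compact subsets to a complete, embedded surface $M_\infty$ through the origin with constant mean curvature $1$ and $|A_{M_\infty}|\leq K(1,1)$. Because the balls $B_{M_n}(p_n,n-1)$ exhaust $M_n$ near $p_n$ and their radii diverge, $M_\infty$ is complete and \emph{non-compact}. One further expects $M_\infty$ to be simply connected: every loop in $M_\infty$ is a smooth limit of loops in the disks $M_n$, which bound compact subdisks of controlled geometry, so no essential topology should survive in the limit.

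The main obstacle is the final step, namely deriving a contradiction from the existence of such an $M_\infty$: a complete, simply connected, embedded, non-compact surface of constant mean curvature $1$ with bounded second fundamental form. This is exactly the point at which the radius estimate is intertwined with the curvature estimate, so a non-circular treatment cannot simply invoke Theorem~\ref{thm2} here. I would instead perform a secondary blow-up analysis in the spirit of Colding--Minicozzi: at points of $M_\infty$ where the geometry is most degenerate one rescales to obtain a minimal limit (the mean curvature being divided out under the rescaling), whose admissible models—planes and helicoids—are incompatible with the embedded $(H=1)$ structure of $M_\infty$ away from a compact set. In practice one runs the curvature and radius estimates together in a single inductive scheme, using the chord-arc and one-sided curvature estimates to control the ends of $M_\infty$; closing this loop, and thereby showing that no such $M_\infty$ can exist, is the genuine crux of the argument.
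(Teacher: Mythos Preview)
Your setup is fine and matches the paper's: normalize to $H=1$, assume a sequence of $1$-disks $M_n$ with $d_{M_n}(p_n,\partial M_n)\to\infty$, invoke Theorem~\ref{cest} to get a uniform bound $|A_{M_n}|\leq K(1,1)$ away from the boundary, and extract a complete non-compact limit $M_\infty$ with bounded second fundamental form. But from this point on your proposal diverges from the paper in two ways, and the second is a genuine gap.

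First, if you are willing to use Theorem~\ref{cest} as a black box, the paper already gives (in the discussion immediately following that theorem) a two-line derivation of the radius estimate that avoids any limit argument: rescale a $1$-disk $\Sigma$ by $1/K(1,1)$; the rescaled disk $\wh\Sigma$ has mean curvature $K(1,1)\geq\sqrt2$, so $|A_{\wh\Sigma}|\geq\sqrt2\,K(1,1)>K(1,1)$ everywhere, which forces the radius of $\wh\Sigma$ to be less than $1$ (else Theorem~\ref{cest} with $\delta=\cH=1$ gives a point where $|A_{\wh\Sigma}|\leq K(1,1)$). Your compactness route is therefore unnecessarily elaborate.

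Second, and more seriously, your proposed endgame does not work as written. The paper does \emph{not} argue by showing the limit is simply connected and then running a Colding--Minicozzi style blow-up on $M_\infty$; indeed your justification that $M_\infty$ is simply connected (``loops in $M_\infty$ are limits of loops in the disks $M_n$, which bound subdisks of controlled geometry'') is incomplete, since a priori those bounding subdisks could escape to infinity in the limit. What the paper actually uses is the \emph{flux} invariant together with the Dynamics/Minimal Element machinery of Section~\ref{sec:dynamics}: the limit $M_\infty$ is a strongly Alexandrov embedded $1$-surface with \emph{zero flux} on every $1$-cycle (this is what survives from being a limit of disks), and then the Minimal Element Theorem~\ref{sp2} produces, inside $\mathcal T(M_\infty)$, a Delaunay surface as a translational limit. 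Since Delaunay surfaces have non-zero flux, this is a contradiction. That flux argument is the missing idea in your proposal; the ``secondary blow-up to minimal models'' you sketch does not supply it and is not how the contradiction is obtained.
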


Another important result in the proof of Theorem~\ref{helicoid}, as well as
in the proofs of Theorems~\ref{classthm}, \ref{cest} and \ref{rest},
concerns global aspects of limits of $H$-disks and genus-zero $H$-surfaces, which were first
described by Colding and Minicozzi in their Lamination Theorem for 0-Disks and
more recently  by Meeks and Tinaglia in their Lamination Theorem for $H$-Disks, see
Theorem~\ref{thmlimitlaminCM} below. A last key ingredient in the proofs
of the aforementioned theorems is the following
chord-arc  result that allows one  to relate
intrinsic and extrinsic  distances on an $H$-disk at points far from
its boundary and at the same time near to points where the surface is not too flat;
this chord-arc result implies that a complete simply connected $H$-surface
must be properly embedded in $\rth$.  The proof of the next theorem by
Meeks and Tinaglia~\cite{mt8} depends
on results in~\cite{mt7,mt13,mt9} and the strategy of their proof follows and generalizes
the  proof of a  similar chord-arc estimate for 0-disks
by Colding and Minicozzi in~\cite{cm35}.

We will denote by $d_{\Sigma }$, $B_{\Sigma }(p,r)$ respectively the intrinsic distance function and the open intrinsic
ball of radius $r>0$ centered at a point $p$ in a Riemannian surface $\Sigma $.

%A Riemannian metric $g$ on a (connected) surface $\Sigma $ endows $\Sigma $ with a structure of
%a metric space, with related distance $d_g$; we will denote by $B_{\Sigma }(p,r)$ the open intrinsic
%ball of radius $r>0$ centered at a point $p\in \Sigma $ with respect to $d_g$.

\begin{theorem}[Chord-arc property  for $H$-disks]
\label{main2}
There exists a $C>1$ so that the following holds.
Suppose that $\S \subset \R^3$
 is an $H$-disk, $ \vec{0}\in \S$ and $R>r_0>0$.
If %the open intrinsic ball
${B}_\S(\vec 0,CR)\subset
%$
%of radius $CR$ centered at $\vec{0}$ is
% contained in $
\S-\partial \S$ and
$\sup_{B_\S(\vec 0,(1-\frac{\sqrt 2}2)r_0)}|A_\S|>r_0^{-1}$,
then
$$ \frac{1}{3}d_{\Sigma }
(x,\vec{0})\leq \frac{1}{2}\| x\| +r_0, \;
\mbox{\rm for all } x\in B_\S(\vec 0,R).$$
\end{theorem}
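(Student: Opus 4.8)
The plan is to adapt to the $H$-disk setting the strategy that Colding and Minicozzi used to prove the analogous chord-arc estimate for embedded minimal ($0$-)disks in~\cite{cm35}, replacing their structural results by the Lamination Theorem for $H$-disks (Theorem~\ref{thmlimitlaminCM}) and the uniform curvature estimates of Theorem~\ref{cest}. Since extrinsic distance never exceeds intrinsic distance, one always has $\|x\|\leq d_\S(x,\vec 0)$, so the content of the theorem is the reverse and genuinely harder bound $d_\S(x,\vec 0)\leq\frac32\|x\|+3r_0$. First I would normalize by the homothety $\S\mapsto\frac{1}{r_0}\S$, under which intrinsic and extrinsic distances are multiplied by $1/r_0$ while $|A_\S|$ and $H$ are multiplied by $r_0$, so that both hypotheses and conclusion are preserved. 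We may therefore assume $r_0=1$, the non-flatness hypothesis becoming $\sup_{B_\S(\vec 0,\,1-\frac{\sqrt 2}{2})}|A_\S|>1$ and the goal becoming $d_\S(x,\vec 0)\leq\frac32\|x\|+3$ for all $x\in B_\S(\vec 0,R)$.

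The central structural input is that, by Theorem~\ref{thmlimitlaminCM} together with the curvature estimates of Theorem~\ref{cest}, at every interior point whose intrinsic distance to $\partial\S$ is bounded below the curvature of $\S$ is controlled; consequently, on each such scale $\S$ is well approximated either by a graph over a planar domain or by a pair of multi-valued graphs glued along an axis, that is, by a piece of a helicoid. The role of the non-flatness hypothesis is to single out the unit scale as one at which $\S$ is genuinely curved near $\vec 0$, thereby pinning down the additive constant $r_0$; without it the estimate would hold only with $r_0$ replaced by the intrinsic scale at which $\S$ first fails to be flat.

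With this structure in hand I would prove the bound by induction on the extrinsic scale $\|x\|$. Writing $\rho_k=2^k$, the inductive claim is that every $x\in B_\S(\vec 0,R)$ with $\|x\|\leq\rho_k$ satisfies $d_\S(x,\vec 0)\leq A\,\rho_k$ for a fixed constant $A$; to pass from $\rho_k$ to $\rho_{k+1}$ one uses the graphical-or-helicoidal model of $\S$ on the extrinsic annulus $\rho_k\leq\|y\|\leq\rho_{k+1}$ to join $x$ to a point of extrinsic norm at most $\rho_k$ by an intrinsic path whose length is controlled by a fixed multiple of $\rho_{k+1}-\rho_k$, and then applies the inductive hypothesis. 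The assumption $B_\S(\vec 0,CR)\subset\S-\partial\S$, with $C$ chosen large, guarantees that enough interior surface is available at every intermediate scale for Theorem~\ref{cest} to apply. Summing the resulting geometric series of increments yields a linear estimate $d_\S(x,\vec 0)\leq A'\|x\|+B'$, and a more careful, non-dyadic accounting of the increments sharpens the constants to the stated $\frac13$ and $\frac12$.

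The hard part is controlling the \emph{spiraling}: a priori a point $x$ of small extrinsic norm $\|x\|$ could require a long intrinsic path from $\vec 0$, as happens when the sheets of $\S$ wind many times about an axis. Ruling this out is exactly where embeddedness and the full $H$-disk theory enter, through the one-sided curvature estimate and the forced separation of the sheets of a double-spiral staircase, which together make the intrinsic distance across each annulus comparable to its extrinsic width. A second difficulty, absent when $H=0$, is the lack of a positive lower bound on the rescaled mean curvature after normalization; this is resolved by the radius estimate of Theorem~\ref{rest}, which, compared with the hypothesis $B_\S(\vec 0,CR)\subset\S-\partial\S$, forces $H$ to be small whenever $R$ is large, so that for large $R$ the rescaled surface is uniformly close to a minimal disk and the estimate reduces, by a compactness argument, to the minimal chord-arc property of~\cite{cm35}, while for bounded $R$ the conclusion follows directly from the uniform curvature bounds of Theorem~\ref{cest}. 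Choosing $C$ large enough to feed all the intermediate curvature estimates then delivers a single universal constant.
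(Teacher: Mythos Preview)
The paper is a survey and does not itself prove Theorem~\ref{main2}; it attributes the result to Meeks--Tinaglia~\cite{mt8}, records that the proof depends on~\cite{mt7,mt13,mt9}, and says the strategy follows and generalizes Colding--Minicozzi's chord-arc argument for $0$-disks in~\cite{cm35}. At that level your plan --- adapt the Colding--Minicozzi scheme, replacing the minimal structure theory by the $H$-disk one --- matches what the paper describes.

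Where your sketch runs into trouble is in the specific inputs. Your direct appeal to Theorem~\ref{cest} to get that ``at every interior point whose intrinsic distance to $\partial\S$ is bounded below the curvature of $\S$ is controlled'' does not give what you need: that theorem requires a \emph{fixed positive lower bound} $H\geq\cH>0$ and its constant $K(\delta,\cH)$ depends on $\cH$, whereas after your normalization $r_0=1$ the rescaled mean curvature may be arbitrarily small or zero, so no $H$-independent bound is produced. Your large-$R$/bounded-$R$ dichotomy with a ``compactness reduction to the minimal case'' has the same defect: chord-arc is a global inequality that does not obviously pass through limits to yield uniform estimates on the approximating $H$-disks, and in the bounded-$R$ branch (say $R\leq 3$ after normalization) the conclusion $d_\S(x,\vec 0)\leq\frac32\|x\|+3$ is already trivial from the additive term, so Theorem~\ref{cest} is not doing any work there. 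Note also that Theorem~\ref{thmlimitlaminCM} is a statement about \emph{limits of sequences} of disks, not a local structure theorem for a single disk, so invoking it as ``the central structural input'' is at best heuristic.

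What actually makes the Colding--Minicozzi strategy go through uniformly in $H$ is the one-sided curvature estimate for $H$-disks (Theorem~\ref{th}), which the paper explicitly singles out as the input to the closely related weak chord-arc estimate Theorem~\ref{thm1.1}. That estimate gives $|A_\S|\leq C/R$ with universal constants \emph{independent of $H$} --- and simultaneously forces $H\leq C/R$ --- so it is the correct scale-by-scale replacement for the minimal one-sided estimate in the dyadic induction. You mention it only in passing as controlling spiraling, but in the actual argument it is the central engine rather than Theorem~\ref{cest} or a compactness reduction to $H=0$.
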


Our survey is organized as follows. We present the main definitions
and background material in the  introductory Section~\ref{BR}. In
that section we also briefly describe geometrically, as well as
analytically, some of the important classical examples of proper
 $H$-surfaces in $\rth$ that we will need later on; understanding
these key examples is crucial in obtaining a feeling for this subject
(as in many other branches of mathematics), as well as in
making important theoretical advances and
asking the right questions. Before going further,
the  reader will probably benefit by taking a few minutes to view and
identify the computer graphics images of these surfaces that
appear near the end of {Section}~\ref{subsecexamples}, and to read the
brief historical and
descriptive comments related to the individual images.

In Section~\ref{sec:flux} we cover conservation laws that pair Killing fields in
a Riemannian $n$-manifold $X$  with elements in the homology group $H_{n-2} (M)$ of any
 $H$-hypersurface $M$ in $X$. In our setting
of Riemannian 3-manifolds $X$, these conservation laws are interpreted as scalar fluxes induced
by a Killing field $K$ across 1-cycles $\g$ on
an $H$-surface $M$, and these fluxes only depend on the
homology class of the 1-cycle.  These flux invariants
play an important role in describing both local and global aspects of the
geometry of  $H$-surfaces in homogeneous 3-manifolds, as we will illustrate in later sections.

In Section~\ref{subsecends} we summarize
a number of results concerning proper
$H$-surfaces in $\rth$ of finite genus
as seen in the light of the  recent
contributions of Colding and Minicozzi~\cite{cm35}
and Meeks and Tinaglia~\cite{mt7} that demonstrate that complete
$H$-surfaces in $\rth$ of finite topology are proper.
Also we  briefly explain here the recent classification of proper  0-surfaces that
are planar domains of infinite topology by Meeks, P\' erez and Ros~\cite{mpr6},
as well as the description by these authors of
the asymptotic behavior of proper 0-surfaces with  finite genus and an infinite number of ends.
In particular, we will explain how Theorems~\ref{helicoid}
and \ref{classthm} follow from the results described
in this section. At the end of Section~\ref{subsecends} we explain in
some detail the
analytic construction by Meeks and P\' erez~\cite{mpe3}
of certain proper 0-annuli with boundary $E_{a,b}$, where $(a,b)\in [0,\infty)\times \R $,
that are models for ends of $0$-annuli in $\R^3$ with infinite total curvature;
in other words, every complete injective 0-immersion
$\psi\colon \esf^1\times [0,\infty) \to \rth$ with infinite total curvature
is, after a rigid motion, asymptotic to exactly one of the annuli $E_{a,b}$.
These special embedded 0-annular ends $E_{a,b}$ are called {\it canonical
ends} and their geometry is related to the distinct flux vectors of their
boundary curves, after making certain geometric normalizations.

In Section~\ref{SectionReferee} we cover recent results
of Brendle~\cite{bren2} on his solution of the Lawson Conjecture,
the classification of complete embedded minimal annuli in $\esf^2 \times\R$, all of which are
periodic and which intersect the level set spheres $\esf^2\times\{t\}$ in round circles
by Hauswirth, Kilian and Schmidt~\cite{hks1}, and
the area estimate from below by $2\pi^2$ for closed embedded minimal surfaces of positive
genus in $\esf^3$ by Marques and Neves~\cite{mane1,mane2},
 which led them to a proof of  the Willmore conjecture.

In Sections~\ref{sec4} and \ref{section6}
we study limits of sequences of $H$-surfaces. Depending on whether or
 not such a sequence has uniform local bounds for the area and/or for the
  second fundamental form, new objects can appear in the limit. For
   instance, in presence of local uniform bounds for the area and
   second fundamental form of the surfaces in the sequence, the
   classical Arzel\`{a}-Ascoli theorem implies subsequential
convergence to an $H$-surface. When the sequence has local uniform
bounds for the second fundamental form but it fails to have local uniform bounds for the area,
then (weak) $H$-laminations appear in the limit; this last notion will be studied in Section~\ref{section6}.
%In Sections~\ref{sec4}, \ref{section6} and  \ref{ttttmr},
%we cover many of the techniques and concepts
%needed to prove the theoretical results described
%in Sections~\ref{subsecends}, \ref{sec4}, \ref{section6}, \ref{ttttmr} and \ref{LRST}.
%One of these concepts is
%the notion of {\it weak  $H$-lamination}.
%In Section~\ref{section6}, we define this notion and develop
%the theory of {weak  $H$-laminations}, which is a
%natural limit object for the closure of certain non-proper $H$-surfaces
%or for the limit of certain sequences of
%$H$-surfaces when the sequence fails to have uniform
%local area estimates (but it has uniform local curvature bounds).
The reader not familiar with the subject of weak $H$-laminations
should think about a
geodesic $\g $ on a Riemannian surface.  If $\g $ is complete and
embedded (a one-to-one immersion), then its closure is a geodesic
 lamination ${\cal L}$ of the surface.
When $\g $ has no accumulation points, then it is proper and it is the unique leaf of ${\cal L}$.
Otherwise, there pass complete, embedded, pairwise disjoint geodesics through the
accumulation points, and these geodesics together with $\g $ form
the leaves of the geodesic lamination
${\cal L}$. A similar result is true for a complete
$H$-surface of locally  bounded curvature (i.e., whose norm of the second fundamental form is
bounded in compact extrinsic balls) in a
Riemannian 3-manifold~\cite{mr13}. However, when $H>0$, two leaves of the resulting lamination might
intersect non-transversely at some point $p$ where the unit normal
vectors to the leaves point in opposite directions, and in this case
we call this structure a {\em weak} $H$-lamination;
still it holds that nearby such a point
$p$ and on the mean convex side of each of the two intersecting leaves,
there is a lamination structure (no intersections). In Section~\ref{section6} we also
cover the Stable Limit Leaf Theorem of Meeks, P\' erez and Ros~\cite{mpr19,mpr18} and
the Limit Lamination Theorem for $0$-surfaces of
Finite Genus by Colding and Minicozzi~\cite{cm25}.

In Section~\ref{ttttmr} we explain some further local and global
results by Colding and Minicozzi  in~\cite{cm35}, where among
other things they prove that complete 0-surfaces of
finite topology in $\rth$ are proper. We explain here the results
of Meeks and Rosenberg~\cite{mr13} on
generalizations of the work of Colding and Minicozzi in~\cite{cm35}
to the Riemannian 3-manifold setting.

In Section~\ref{LRST}, we examine how the theoretical results
in the previous sections
lead to deep global results in the classical theory in $\R^3$, as well as to
a general understanding of the local geometry of any
complete $H$-surface $M$ in any homogeneously regular 3-manifold
(see Definition~\ref{defhomgreg} below for the concept of homogeneously regular
3-manifold). This local
description is given in two local picture theorems by Meeks, P\'erez
and Ros~\cite{mpr20,mpr14}, each of which describes
the local extrinsic geometry of $M$
 near points of concentrated curvature (the {\it Local Picture Theorem on
the Scale of Curvature}) or of concentrated topology
(the {\it Local Picture Theorem on the Scale  of Topology}). In order to understand the second
local picture theorem, we develop in this section
 the important notion of a {\it minimal
  parking garage structure} on
$\rth$, which is one of the possible limiting pictures in the
topological setting. Crucial in these local pictures is a local result that calculates
the rate of growth of the norm of the second fundamental form of an
$H$-lamination in a punctured ball of a Riemannian 3-manifold when approaching
a singularity of the lamination occurring at the center of the ball
(the {\it Local Removable Singularity Theorem}). Global applications of the Local Removable
Singularity Theorem to the classical theory are also discussed here; the most
important of these applications are the
{\it Quadratic Curvature Decay Theorem}
and the {\it Dynamics Theorem} for proper 0-surfaces in $\R^3$
by Meeks, P\'erez and Ros~\cite{mpr10}.

In Sections~\ref{sec:dynamics} and \ref{sec:MT} we cover some results of
Meeks and Tinaglia mentioned previously, as well as their Dynamics
and Minimal Elements Theorems
for complete strongly Alexandrov embedded $1$-surfaces in $\rth$ from~\cite{mt4}.
This  Minimal Elements Theorem
is needed in the proofs of  the curvature and radius estimates
stated previously in Theorems~\ref{cest}
and \ref{rest}.

In Section~\ref{seccy}, we briefly discuss what are usually referred to as
 the {\it Calabi-Yau problems} for
complete $H$-surfaces in $\rth$ and in homogeneous 3-manifolds.
These problems arose from questions
asked by Calabi~\cite{ca1} and Yau (see page 212 in~\cite{che4}
and problem 91
in~\cite{yau1}) concerning the existence of complete,
immersed 0-surfaces that are constrained to lie in a given
region of $\rth$, such as in a bounded domain. Various aspects of
the Calabi-Yau problems constitute
 an active field of research with an
interesting mix of positive and negative results. We include here a
few recent fundamental advances on this problem that are not covered adequately in
previous sections of the survey.  We end this section with the fundamental
existence Conjecture~{\ref{CY1}}
on the embedded Calabi-Yau problem for
complete 0-surfaces.

In Section~\ref{sec:Hopf} we discuss recent results on the Hopf Uniqueness Problem,
as the aforementioned Theorem~\ref{main3}.
Section~\ref{sec:CMC}  is devoted to material on the existence and geometry of
CMC foliations of Riemannian $n$-manifolds. This section includes results
by Meeks, P\'erez and Ros on the classification of
CMC foliations of $\rth$ or $\esf^3$ with a countable number of singularities given
in Theorem~\ref{thmspheres} in the general setting of weak CMC foliations
and their  curvature estimates given in Theorem~\ref{thm5.7} for weak CMC foliations of Riemannian
3-manifolds, as well as an existence theorem for Riemannian metrics together with
CMC foliations in compact $n$-dimensional manifolds $X$ with
the property that the Euler characteristic of
$X$ is zero (Meeks and P\'erez~\cite{mpe13}).

The final Section~\ref{sec:conj} of this survey is devoted to a discussion of
some of the outstanding conjectures on the geometry of $H$-surfaces in locally homogeneous
3-manifolds.

\vspace{.2cm} \noindent {\sc Acknowledgments:}  The authors would
like to thank  Matthias Weber for contributing the beautiful computer
graphics images of classical minimal surfaces to our {Section}~\ref{subsecexamples} of examples
of $H$-surfaces.
We also thank the referee for valuable comments on this article.

\vspace{.2cm} \noindent
First author's financial support: This material is based upon work for
the NSF under Award No. DMS-1309236. Any opinions, findings, and
conclusions or recommendations expressed in this publication are those
of the authors and do not necessarily reflect the views of the NSF.
Second  author's financial support: Research partially supported by a
MINECO/FEDER grant no. MTM2014-52368-P.
Third author's financial support:  Research
partially supported by
EPSRC grant no. EP/M024512/1.

\section{Basic results in theory of  $H$-surfaces in $\rth$.}
\label{BR}

We will devote this section to giving a fast tour through
the foundations of the theory, providing enough material for the reader
to understand the results to be explained in future sections.
While our exposition here emphasizes $H$-surfaces in the classical $\rth$ setting, we
will sometimes mention how the concept of  $H$-surface generalizes to the
Riemannian 3-manifold setting.
In the sequel, $B(p,r)$ will denote the open ball centered at a point
$p\in \R^3$ with radius $r>0$.

\subsection{Equivalent definitions of $H$-surfaces.}
One can define an $H$-surface from different points of view. The
equivalences between these starting points give  insight into the
richness of the classical theory of $H$-surfaces in $\rth$ and its
connections with other branches of mathematics.

Throughout the paper, all surfaces will be assumed to be
orientable unless otherwise stated. Consider the Gauss
map $N\colon M\to \esf ^2$ of a surface $M\subset \R^3$.
Then, the  tangent space $T_pM$ of~$M$ at
$p\in M$ can be identified as a subspace of $\R^3$ under parallel translation
with the tangent space $T_{N(p)}\esf^2$ to the unit sphere at $N(p)$. Hence,
one can view the
differential $A_M(p)=-dN_p$ as an endomorphism of $T_pM$, called the
{\it shape operator}. $A_M(p)$ is a symmetric linear transformation,
whose orthogonal eigenvectors are called the {\it principal directions} of
$M$ at $p$, and the corresponding eigenvalues are the {\it principal
curvatures} of $M$ at $p$. Since the (possibly non-constant)
 mean curvature function $H$ of $M$ equals the arithmetic mean of such principal curvatures
(or the average normal curvature), then we can write
\begin{equation}
\label{eq:AM}
A_M(p)=-dN_p=\left( \begin{array} {cc}
{H}+a & b \\ b &
{H} -a\end{array}\right)
\end{equation}
in an orthonormal tangent basis (here $H,a,b$ depend on $p$).

Note that by the Cauchy-Riemann equations,
when $H$ is identically zero, then the Gauss
map of $M$ is anticonformal when the sphere $\esf^2$ is
taken with its outward pointing normal, and it is
conformal when the sphere $\esf^2$ is taken with inward pointing normal,
which is the orientation induced  by stereographic
projection of $\esf^2$ from its north pole
$(0,0,1)$ to $\C\cup\{\infty\}$, and we denote this
meromorphic function by $g\colon M\to \C \cup \{ \infty \} $.

\begin{definition}
{\em
The formula $\langle A,B\rangle =\mbox{Trace}(AB)$ endows the space of $2\times 2$ real
symmetric matrices with a positive definite inner product, with associated
norm $|A|=\sqrt{\sum _{i,j}a_{ij}^2}$ if $A=(a_{ij})_{i,j}$.
The norm of the second fundamental form $|A_M|(p)$ of $M$
at the point $p$ is the norm of the matrix given by (\ref{eq:AM}), or equivalently,
$|A_M|(p)=\sqrt{\l_1^2 +\l_2^2}$, where $\l_1,\l_2$
are the principal curvatures of $M$ at $p$.

}
\end{definition}

\begin{definition} \label{2.2}
{\em
\ben \item A surface $M\subset \R^3$ is {\it minimal} if and only if its
mean curvature vanishes identically.
\item A surface $M\subset \R^3$ is an {\em $H$-surface} if and only if has
constant mean curvature $H\in \R $, which we will always assume is non-negative
after appropriately orienting $M$.
\een}
\end{definition}

Often, it is useful to identify a Riemannian surface
$M$ with its image under an isometric
embedding. Since minimality is a local concept, the notion
of minimality can be
applied to an isometrically immersed surface $\psi \colon M\to \R^3$.
Recall the well-known vector-valued formula
\[
\Delta \psi = 2H N,
\]
where $\Delta$ is the Riemannian Laplacian on $M$, and
$H\colon M\to \R$ is the mean curvature function of $M$ with respect to
the Gauss map $N$. In particular, the coordinate functions of
an immersed 0-surface are harmonic.

Let $\Omega $ be a subdomain with compact closure in a surface
$M\subset \R^3$. If we
perturb normally the inclusion map $\psi$ on $\Omega $ by a compactly
supported smooth function $u\in C^{\infty }_0(\Omega )$, then
$\psi +tuN$ is again an
immersion whenever $|t|<\ve $,
for some $\ve $ sufficiently small. The mean curvature function $H$
of $M$ relates to the infinitesimal variation of the area functional
$A(t)=\mbox{Area}[(\psi +tuN)(\Omega )]$ for compactly
supported normal variations by means of the {\it first variation of
area}
(see for instance~\cite{ni2}):
\begin{equation}
\label{eq:1vararea}
A'(0)=-2\int _{\Omega }uH\, dA,
\end{equation}
where $dA$ stands for the area element of $M$. Formula
(\ref{eq:1vararea}) implies that compact immersed 0-surfaces are critical points of the
area functional for compactly supported variations.  In fact,
a consequence of the {\it second variation of area}
is that any point in
a 0-surface has a
neighborhood with least-area relative to its boundary. This property
justifies the word ``minimal'' for these surfaces.

Another consequence of (\ref{eq:1vararea}) is that
when $M$ is a compact $H$-surface with boundary (now $H\in [0,\infty )$
is a constant), then $M$ is a critical point of the
area functional for compactly supported variations that infinitesimally
preserve the volume, i.e., for functions $u\in C^{\infty }_0(M)$ with
$\int_M u\, dA=0$.
This fact can be generalized to a
Riemannian 3-manifold $X$ and
explains why a compact smooth domain $W$ in
$X$ whose boundary surface area is critical
with respect to the areas of the boundaries of nearby smooth domains with
the same volume as $W$, must have boundary $M=\partial W$ with
constant mean curvature.  When such a domain  $W$
in $X$ has least area with respect to the boundaries of all smooth
compact subdomains in $X$ with volume $V$, then $\Omega $ is called a
{\em solution to the isoperimetric problem in $X$ for the volume $V$. }

The above discussion
establishes 0-surfaces
as the $2$-dimensional analog to geodesics in Riemannian geometry,
and connects the theory of $H$-surfaces with one of the most
important classical
branches of mathematics:
the calculus of variations. Coming back to our isometric immersion $\psi \colon
M\to \R^3$, another well-known functional in
the calculus of variations besides the area functional~$A$ is the {\it Dirichlet energy,}
\[
E=\int _{\Omega }|\nabla \psi |^2 dA,
\]
where again $\Omega \subset M$ is a subdomain with compact closure.
These
functionals are related by the inequality $E\geq 2A$, with equality
if and only if $\psi $  is conformal. This conformality condition is
not restrictive, as follows from the existence of local  isothermal or conformal
coordinates for any 2-dimensional Riemannian manifold, modeled on
domains of $\C $.

From a physical point of view, the mean curvature function of a
homogeneous membrane (surface) separating two media is equal, up to a non-zero
multiplicative
constant, to the
difference between the pressures at the two sides of the surface.
When this pressure
difference is zero, then the membrane has zero mean curvature.
Therefore, soap films
in space are physical realizations of the ideal concept of a
0-surface and soap bubbles are physical realizations of the ideal concept of an
($H>0$)-surface.

We now summarize these various properties for 0- and $H$-surfaces.

\begin{definition}
\label{defmin}
 {\rm
Let $\psi =(x_1,x_2,x_3)\colon M\to \R^3$ be an isometric immersion of a
Riemannian surface
into space and we identify $M$ with its image. Then, $M$ is {\it minimal},
or equivalently an immersed 0-surface, if and only if
any of the following equivalent properties hold:
\ben[1.]
\item The mean curvature function of $M$ vanishes identically.
\item The coordinate function $x_i$ is a harmonic function on~$M$ for each $i$.
In other words, $\Delta x_i = 0$, where $\Delta$ is the Riemannian Laplacian on $M$.
\item  $M$ is
a critical point of the {\it area functional} for all compactly
supported variations.
\item Every point $p\in M$ has a
neighborhood  $D_p$  with least area relative to its boundary.
\item $M$ is a critical point
of the Dirichlet
energy for all compactly supported variations, or equivalently
if any point $p\in M$ has a
neighborhood  $D_p$ with least energy relative to its boundary.
\item Every point $p\in M$ has a
neighborhood $D_p$ that is equal to the unique idealized soap
film with boundary $\partial D_p$.
\item The stereographically
projected Gauss map $g\colon M\to \C \cup \{ \infty \} $ is
meromorphic with respect to the underlying
 Riemann surface structure on $M$.
\een
}
\end{definition}

\begin{definition}
\label{defH}
 {\rm
 Let $\psi =(x_1,x_2,x_3)\colon M\to \R^3$ be an injective isometric immersion of a
  Riemannian surface
into space and we identify $M$ with its image. Then, $M$ is an {\it $H$-surface}
for some $H\geq 0$ if and only if
any of the following equivalent properties hold:
\ben[1.]
\item The mean curvature function of $M$ is constant.
\item  $M$ is a critical point of the {\it area functional} for all compactly
supported volume preserving normal variations.
\item Every point $p\in M$ has a
neighborhood $D_p$ that is equal to an idealized soap
bubble with boundary $\partial D_p$, i.e., considering $\partial D_p$
to be a wire, then $D_p$ is realizable by a soap bubble
bounding $\partial D_p$ where the air
pressure has a constant difference on its opposite sides.
\item Given a point $p\in M$, there exists a small $\ve>0$ such that
the component $D_p$  of $\B(p,\ve)\cap M$ containing $p$,
which is part of  the oriented boundary of a component $W$ of $\B(p,\ve)-D_p$,
satisfies the following constrained area-minimizing property.
For any compact embedded oriented surface $\S\subset \B(p,\ve)$
with $\partial \S=\partial D_p$ that
is homologous in $\B(p,\ve)$ to $D_p$ relative to its boundary and which
lies in the oriented boundary of a component $W_\S$ of
$\B(p,\ve)-\S$ with the same volume as
$W$, then the area of $\Sigma $ is not less than the area of $D_p$.
\een
}
\end{definition}

This concludes our discussion of the equivalent definitions of $H$-surfaces.
Returning to our background discussion, we note that Definition~\ref{defmin} and
the maximum principle for harmonic functions imply  that
no compact immersed 0-surfaces in $\R^3$
without boundary exist. On the contrary, there exist many immersed closed
surfaces with non-zero constant mean
curvature~\cite{kap2,kap3,we1} but by the next classical result
this is not possible for spheres.  We state the next theorem of Hopf in the
3-dimensional space form setting, where his original proof in~\cite{hf1}
can be adapted.

\begin{theorem}[Hopf Theorem] \label{hopf}
An immersed $H$-sphere  in a complete, simply
connected 3-dimensional manifold $\Q^3(c)$ of constant sectional
curvature $c$ is a round sphere.
\end{theorem}

Recall that $H$-surfaces are assumed to be embedded,
whereas {\em immersed} $H$-surfaces need not be.
Round spheres are also  the only  closed $H$-surfaces in $\rth$.
This uniqueness result follows from  the  classical result of Alexandrov below
and its proof is based on the so called {\em Alexandrov reflection principle,} which in turn
is based on the interior and boundary maximum principles for $H$-surfaces
given in Theorems~\ref{thmintmaxprin} and \ref{thmintmaxprinb} below.
Motivated by the importance
of the Alexandrov reflection principle, we will briefly explain Alexandrov's proof
of the next theorem; this proof appears immediately  after
the statements of maximum principles given in Theorems~\ref{thmintmaxprin} and \ref{thmintmaxprinb}.

\begin{theorem}[Alexandrov~\cite{aa1}]
\label{thmAlex}
Round spheres are
the only closed $H$-surfaces in $\rth$.
More generally, if $\psi \colon M\to \rth$ is a closed
immersed $H$-surface  that
extends as the boundary of a compact 3-manifold which is
immersed in $\R^3$, then $\psi (M)$ is a round sphere.
\end{theorem}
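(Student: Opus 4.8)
The plan is to prove this by the classical \emph{method of moving planes} (the Alexandrov reflection principle), reducing everything to the interior and boundary maximum principles of Theorems~\ref{thmintmaxprin} and~\ref{thmintmaxprinb}. Fix a unit direction, say $e_3=(0,0,1)$, and for $t\in\R$ let $P_t=\{x_3=t\}$ be the associated family of parallel planes. Since $\psi(M)$ is compact we may choose $t$ large enough that $P_t$ lies entirely above $\psi(M)$, and then decrease $t$. Denote by $M_t^+$ the part of the surface in the closed halfspace $\{x_3\ge t\}$ and by $M_t^{+,*}$ its reflection across $P_t$. In the embedded case we identify $M=\partial W$ with the boundary of the compact region $W$ it bounds, orient $M$ by the inward normal so that its mean curvature is the nonnegative constant $H$, and carry out the reflection inside $W$.

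As $t$ decreases from $+\infty$, the reflected cap $M_t^{+,*}$ initially sits inside $W$ and disjoint from the lower part of the surface; I would track the first value $t^\ast$ at which $M_{t^\ast}^{+,*}$ makes contact with $M\cap\{x_3\le t^\ast\}$. A standard compactness argument shows that at $t^\ast$ exactly one of two configurations occurs: either (i) $M_{t^\ast}^{+,*}$ is internally tangent to $M$ at an interior point $p$ with $x_3(p)<t^\ast$, or (ii) the surface $M$ meets $P_{t^\ast}$ orthogonally at some $p\in P_{t^\ast}$ and the reflected cap is tangent to $M$ there along the plane. The key point is that reflection across a plane is an isometry of $\R^3$, so the two tangent surfaces carry the \emph{same} constant mean curvature $H$ with compatible normals, one lying locally on one side of the other. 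In case (i) the interior maximum principle (Theorem~\ref{thmintmaxprin}) forces the two surfaces to coincide near $p$; in case (ii) the boundary maximum principle (Theorem~\ref{thmintmaxprinb}) does the same. A connectedness and unique-continuation argument then propagates the local coincidence globally, so $M$ is invariant under reflection across $P_{t^\ast}$.

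This argument produces, for \emph{every} direction $v\in\esf^2$, a plane orthogonal to $v$ across which $\psi(M)$ is symmetric. Each such plane must pass through the centroid $O$ of $\psi(M)$, since the centroid is fixed by any isometry leaving the surface invariant. Reflections across planes through the common point $O$ generate the orthogonal group fixing $O$, so $\psi(M)$ is invariant under $O(3)$ about $O$; being connected, it is a single round sphere centered at $O$ (of radius $1/H$). For the generalized statement, the only change is that $\psi\colon M\to\R^3$ need not be injective, but by hypothesis it bounds an immersed compact $3$-manifold $\widetilde W$; the moving-plane reflection is then carried out on $\widetilde W$ rather than on an embedded region, using the consistent coorientation supplied by $\widetilde W$ both to define the reflected cap and to compare orientations at the first contact point.

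I expect the main obstacle to lie precisely in the immersed case: one must verify that at the critical parameter $t^\ast$ the reflected piece genuinely lies locally on one side of the original surface \emph{as immersed sheets of $\widetilde W$}, so that the hypothesis of the maximum principle (two surfaces with the same mean curvature, one on one side of the other) is actually satisfied. In the embedded case this is immediate from the geometry of $W$, but for an immersed $\widetilde W$ it requires carefully tracking how the sheets of the immersion overlap near $p$ and checking the orientation compatibility there; this is exactly the step where the hypothesis that $M$ bounds an immersed compact $3$-manifold is essential, since without it the reflected cap could fail to be comparable to the surface and the principle would not apply.
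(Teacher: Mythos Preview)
Your proposal is correct and follows essentially the same approach as the paper: both run the moving-planes argument in a fixed direction, distinguish the interior-tangency and orthogonal-contact cases, apply Theorems~\ref{thmintmaxprin} and~\ref{thmintmaxprinb} respectively, and then conclude roundness from having a reflection plane in every direction. The paper's proof treats only the embedded statement explicitly and simply says ``after a translation\ldots\ $M$ is invariant under the action of the orthogonal group $O(3)$''; your centroid argument for the concurrence of the symmetry planes and your sketch of how to carry the reflection out inside the immersed $3$-manifold $\widetilde W$ for the general statement go slightly beyond what the paper spells out, but are in the same spirit.
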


In this survey we will
focus on the study of {\it complete}  $H$-surfaces
(possibly with boundary), in the sense
that all geodesics in them can  be indefinitely
extended up to the boundary of the surface. Note that with respect to the
intrinsic
Riemannian distance function between points on a surface, the property
of being ``geodesically complete'' is equivalent to the surface
being a complete metric space. A stronger global hypothesis, whose
relationship with completeness is an active field of research in
0-surface theory, is presented in the following definition.

\begin{definition}
{\rm A map $f\colon X\to Y$ between topological spaces is {\it
proper} if $f^{-1}(C)$ is compact in $X$ for any compact set
$C\subset Y$. {A subset $Y'\subset Y$ is called proper if the
inclusion map $i\colon Y'\to Y$ is proper.}
%An immersed surface $M\subset \R^3$ is proper when the
%inclusion map is proper.
}
\end{definition}

The Gaussian curvature function $K$ of an immersed surface
$M$ in $ \R^3$ is the product of its principal curvatures,
or equivalently, the determinant of the
shape operator~$A_M$. Thus  $|K|$ is the absolute value of
the Jacobian of the Gauss map~$N\colon M\to \esf^2$. If $M$ is minimal, then
its principal curvatures are oppositely signed and thus, $K$ is
 non-positive. Therefore, after integrating
$K$ on $M$ (note that this integral may be $-\infty $ or a
 non-positive number), we obtain the same quantity as when
computing the negative of the spherical area of $M$ through its
Gauss map, counting multiplicities. This quantity is called the {\it
total curvature} of the immersed 0-surface:
\begin{equation}
\label{eq:curvtot} C(M)=\int_MK\, dA = -{\rm Area}(N\colon
M\rightarrow \esf^2).
\end{equation}

\subsection{Weierstrass representation.}
\label{subsecWeiers}
Recall that the Gauss map  of an immersed 0-surface
$M$ can be viewed as a meromorphic
function $g\colon M\to \C \cup \{ \infty \} $
on the underlying Riemann surface. Furthermore,
the harmonicity of the
third coordinate function $x_3$ of $M$ lets us define (at least locally)
its harmonic conjugate function $x_3^*$; hence, the so-called {\it
height differential\/}\;  $dh=dx_3+idx_3^*$
is a holomorphic differential on~$M$. The pair $(g,dh)$ is usually
referred to as the {\it Weierstrass data} of the immersed 0-surface,
and the 0-immersion $\psi \colon M\to \R^3$ can be expressed up to
translation by $\psi (p_0)$, $p_0\in M$, solely in terms of this data as
\begin{equation}
\label{eq:repW}
 \psi (p)=\mbox{Re}\int _{p_0}^p\left( \frac{1}{2}\left(
 \frac{1}{g}-g\right)
,\frac{i}{2}\left( \frac{1}{g}+g\right) ,1\right) dh.
\end{equation}
The pair $(g,dh)$ satisfies certain compatibility conditions, stated in assertions
{\it i), ii)} of Theorem~\ref{thm3.1}
below. The key point is that this procedure has the
following converse, which gives a cookbook-type recipe for
analytically defining any immersed 0-surface.

\begin{theorem}[Osserman~\cite{os3}]
\label{thm3.1}
Let $M$ be a Riemann surface, $g\colon M\to \C \cup
\{ \infty \} $ a meromorphic function and $dh$ a holomorphic
one-form on $M$. Assume that:
\begin{enumerate}[i)]
    \item The zeros of $dh$ coincide with the
    poles and zeros of $g$, with the same order.
    \item For any closed curve $\g \subset M$,
    \begin{equation}
\label{eq:periodproblem}
    \overline{\int _{\g }g\, dh}=\int _{\g }\frac{dh}{g},\qquad
    \mbox{\rm Re} \int _{\g }dh=0,
    \end{equation}
\end{enumerate}
where $\overline{z}$ denotes the complex conjugate of $z \in \C$. Then,
the map $\psi \colon M\to \R^3$ given by {\rm (\ref{eq:repW})} is
a conformal 0-immersion with Weierstrass data $(g,dh)$.
\end{theorem}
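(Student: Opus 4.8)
The plan is to verify in turn that $\psi$ is (A) well defined on $M$, (B) a conformal immersion, (C) minimal, and that (D) its Weierstrass data is exactly $(g,dh)$. First I would record the three one-forms appearing in the integrand,
\[
\phi_1=\tfrac12\Big(\tfrac1g-g\Big)\,dh,\qquad
\phi_2=\tfrac{i}{2}\Big(\tfrac1g+g\Big)\,dh,\qquad
\phi_3=dh,
\]
so that $\psi(p)=\mathrm{Re}\int_{p_0}^p(\phi_1,\phi_2,\phi_3)$. Hypothesis \emph{i)} is used immediately: the only places where $1/g$ or $g$ could contribute poles to $\phi_1,\phi_2$ are the zeros and poles of $g$, and at each such point $dh$ has a zero of exactly the matching order, so $\phi_1,\phi_2,\phi_3$ are genuine \emph{holomorphic} one-forms on all of $M$. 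In particular the integrals make sense locally and define $\psi$ up to the choice of path.

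Next I would settle the period problem, the only place where the global hypothesis \emph{ii)} enters. Since $\psi$ takes real parts, it is single valued precisely when $\mathrm{Re}\oint_\gamma\phi_j=0$ for every closed $\gamma$ and each $j$. For $j=3$ this is the second equation in (\ref{eq:periodproblem}). For $j=1,2$ I would use the conjugation trick: the first equation in (\ref{eq:periodproblem}) says $\oint_\gamma \tfrac{dh}{g}=\overline{\oint_\gamma g\,dh}$, so
\[
\oint_\gamma\tfrac{dh}{g}-\oint_\gamma g\,dh=\overline{\oint_\gamma g\,dh}-\oint_\gamma g\,dh
\]
is purely imaginary, giving $\mathrm{Re}\oint_\gamma\phi_1=0$, while $\oint_\gamma\tfrac{dh}{g}+\oint_\gamma g\,dh=2\,\mathrm{Re}\oint_\gamma g\,dh$ is real, so multiplying by $i/2$ shows $\mathrm{Re}\oint_\gamma\phi_2=0$. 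Hence $\psi\colon M\to\R^3$ is well defined.

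The conformal-immersion statement (B) is an algebraic identity together with a nondegeneracy check. Writing $dh=\eta\,dz$ in a local conformal coordinate (so $\phi_j=f_j\,dz$) and using $(\tfrac1g-g)^2-(\tfrac1g+g)^2=-4$ one gets $\phi_1^2+\phi_2^2+\phi_3^2=0$; since $\partial_z\psi_j=\tfrac12 f_j$, this is exactly $|\psi_u|^2=|\psi_v|^2$ and $\langle\psi_u,\psi_v\rangle=0$, i.e. $z$ is isothermal. The same computation yields the induced metric $ds^2=\tfrac14\big(|g|+|g|^{-1}\big)^2\,|dh|^2$, and here hypothesis \emph{i)} is used a second time: at a zero or pole of $g$ of order $k$ the factor $\big(|g|+|g|^{-1}\big)^2$ blows up like $|z|^{-2k}$ and is cancelled exactly by the order-$k$ zero of $dh$, while $dh$ has no other zeros, so $0<ds^2<\infty$ everywhere and $\psi$ is a genuine immersion. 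For (C), each $\psi_j=\mathrm{Re}\,F_j$ is harmonic, so in the isothermal coordinate $\Delta_M\psi=\lambda^{-2}(\psi_{uu}+\psi_{vv})=0$; comparing with $\Delta\psi=2HN$ forces $H\equiv0$. Finally (D) follows by inverting the defining relations: $\phi_1-i\phi_2=\tfrac{1}{g}\,dh$ and $\phi_1+i\phi_2=-g\,dh$ recover $g$ as the stereographically projected Gauss map, and $\phi_3=dh$ is the height differential since $x_3=\mathrm{Re}\int dh$.

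The genuinely delicate point—and the one I would write out with care—is the regularity in (B): one must check that hypothesis \emph{i)} forces an \emph{exact} matching of orders, so that the metric neither degenerates (which would produce a branch point, had $dh$ a zero away from the zeros and poles of $g$) nor blows up (which would happen if a zero or pole of $g$ were left uncompensated). The remaining verifications—the period identities and the sum-of-squares computation—are routine, and the only further care needed is to fix the orientation so that $g$ is the inward-normal stereographic Gauss map, in accordance with the sign convention for $g$ introduced before the statement.
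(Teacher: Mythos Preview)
The paper does not supply its own proof of Theorem~\ref{thm3.1}; it is stated as a classical result, attributed to Osserman, and used as background for the Weierstrass representation. So there is no paper-proof to compare against.

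Your argument is the standard one and is correct. The breakdown into (A) well-definedness via the period conditions, (B) conformality and nondegeneracy of the metric, (C) harmonicity of the coordinate functions, and (D) identification of $(g,dh)$ with the Gauss map and height differential is exactly how this is usually done, and each step is handled properly. In particular your treatment of the period problem is clean: the manipulation showing that the first identity in (\ref{eq:periodproblem}) forces $\mathrm{Re}\oint_\gamma\phi_1=\mathrm{Re}\oint_\gamma\phi_2=0$ is correct, and your emphasis on the two distinct uses of hypothesis \emph{i)}---first to make the $\phi_j$ holomorphic, then to make the metric $ds^2=\tfrac14(|g|+|g|^{-1})^2|dh|^2$ everywhere finite and nonzero---is the right way to present the regularity. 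Nothing substantive is missing.
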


All local geometric invariants of an immersed 0-surface $M$ can be expressed
in terms of its Weierstrass data. For instance, the first and second
fundamental
 forms are
respectively (see~\cite{hk2,os1}):
\begin{equation}
\label{eq:I,II}
 ds^2=\left( \frac{1}{2}(|g|+|g|^{-1})|dh|\right) ^2,
  \qquad II(v,v)=\mbox{Re} \left(
\frac{dg}{g}(v)\cdot dh(v)\right) ,
\end{equation}
where $v$ is a tangent vector to $M$, and the Gaussian curvature is
\begin{equation}
\label{eq:K} K=-\left( \frac{4\left| dg/g\right| }{(|g|+
|g|^{-1})^2|dh|}\right) ^2.
\end{equation}

If $(g,dh)$ is the Weierstrass data of an immersed 0-surface $\psi \colon
M\to \R^3$, then for each $\l >0$ the pair $(\l g,dh)$ satisfies
condition {\it i)} of Theorem~\ref{thm3.1} and the second equation
in (\ref{eq:periodproblem}). The first equation in
(\ref{eq:periodproblem}) holds for this new Weierstrass data if and
only if
\[
\int _{\g }g\, dh=\int _{\g }\frac{dh}{g}=0
\]
for all homology
classes $\g $ in $M$, a condition that can be stated in terms
of the notion of flux, which we now define.
Given an immersed 0-surface $M$ with Weierstrass data $(g,dh)$, the
{\it flux vector} along a closed curve $\gamma \subset M$ is defined as
\begin{equation}
\label{eq:flux} F( \gamma) = \int_\gamma \mbox{Rot}_{90^\circ }(\g ')
= \mbox{Im} \int _{\g }\left( \frac{1}{2}\left(
\frac{1}{g}-g\right), \frac{i}{2}\left( \frac{1}{g}+g\right)
,1\right) dh\in \R^3,
\end{equation}
where $\mbox{Rot}_{90^\circ }$ denotes the rotation by angle $\pi /2$ in
the tangent plane of $M$ at any point.

\subsection{Some interesting examples of complete $H$-surfaces.}
\label{subsecexamples}

Throughout the
presentation of the  examples in this section, we will
freely use Collin's Theorem~\cite{col1} that states that
proper finite topology 0-surfaces in $\R^3$ with more than one
end have finite total curvature and  Theorem~\ref{thmCM} on the
properness of complete $H$-surfaces of finite topology; see Section~\ref{subsecends}
for further discussion of these important and deep results.

The most familiar examples of 1-surfaces in $\rth$ are spheres of radius one and cylinders
of radius $1/2$, both of which are surfaces of revolution. \vspace{.15cm}

\noindent
{\bf The Delaunay surfaces $\cD_t$, $t\in (0,\frac{\pi}{2}]$}.
In 1841,  Delaunay~\cite{de1} classified the immersed 1-surfaces of revolution in $\rth$.
We will call the embedded ones (unduloids)
{\it Delaunay surfaces,} see Figure~\ref{DelaunayFigure}.
The $\cD _t$, $t\in (0,\frac{\pi}{2}] $, form a
one-parameter family of proper 1-surfaces of
revolution that are invariant under a translation
along the  revolution axis. $\cD _t$ is a cylinder for $t=\frac{\pi }{2}$, ,
whereas with $t\to 0$, then $\cD _t$ converges
to a chain of tangential spheres with radius
$1$. In fact, the parameter
$t\in (0,\frac{\pi}{2}] $ can be viewed as the length
of the CMC flux vector of $\cD_t$,
computed on any of its circles; see Definition~\ref{def:flux} below where
the CMC flux is defined.
\begin{figure}
\begin{center}
\includegraphics[width=2.4in]{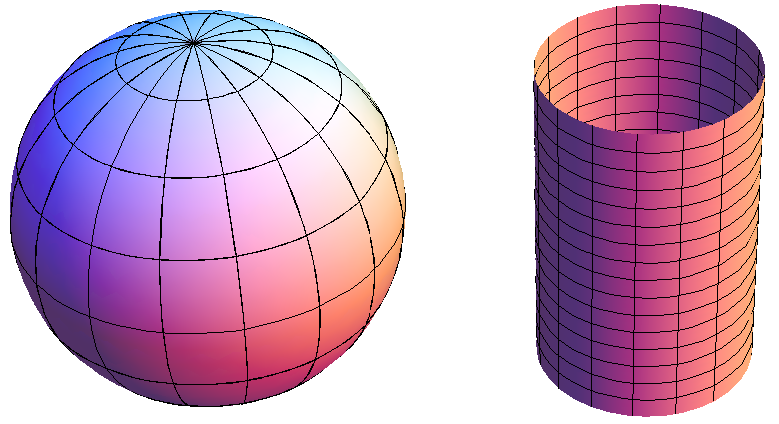}
\caption{A sphere and a cylinder.} %\label{DelaunayFigure}
 \end{center}
\end{figure}
\begin{figure}
\begin{center}
\includegraphics[width=4.5cm]{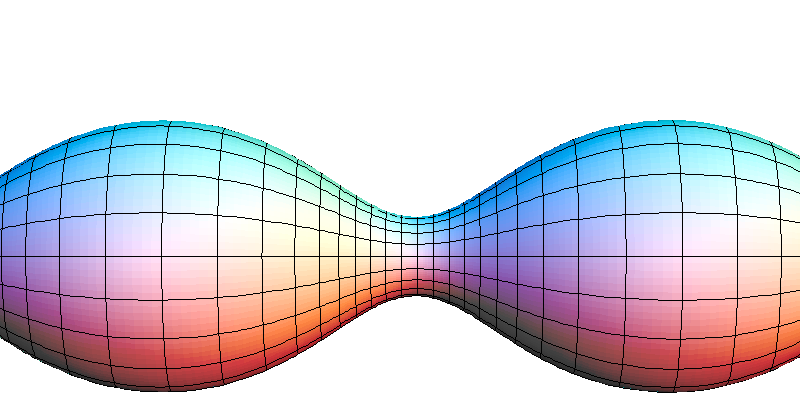}
\qquad \includegraphics[width=1.8in]{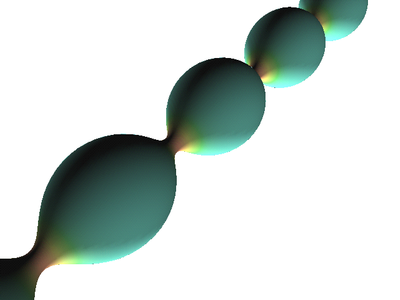}
\caption{Two different Delaunay surfaces.} \label{DelaunayFigure}
 \end{center}
\end{figure}
\vspace{.2cm}

We will next use the Weierstrass representation for introducing
some of the most celebrated complete immersed 0-surfaces.
\vspace{.15cm}

\noindent
{\bf The catenoid.} $M=\C -\{ 0\} $, $g(z)=z$,
 $dh=\frac{dz}{z}$, see Figure~\ref{cat-hel-Enn} Left.
 In 1741, Euler~\cite{eul} discovered that when a catenary $x_1=\cosh x_3$
is rotated around the $x_3$-axis, one obtains a surface
which minimizes area among surfaces of revolution
after prescribing boundary values for the generating curves. This surface was
called the {\it alysseid} or since Plateau's time,
the catenoid. In 1776, Meusnier verified that
the catenoid is locally a solution of Lagrange's equation,
which just means that it locally minimizes
area relative to local boundaries. This surface has
genus zero, two ends and total curvature $-4\pi $.
Together with the
plane, the catenoid is the only 0-surface of revolution
(Bonnet~\cite{Bonnet2}) and the unique complete 0-surface with
genus zero, finite topology and more than one end
(L\' opez and Ros~\cite{lor1}). Also,
the catenoid is  characterized as being the unique complete 0-surface with
finite topology and two ends
(Schoen~\cite{sc1}).
\begin{figure}
\begin{center}
  \includegraphics[width=8.6cm]{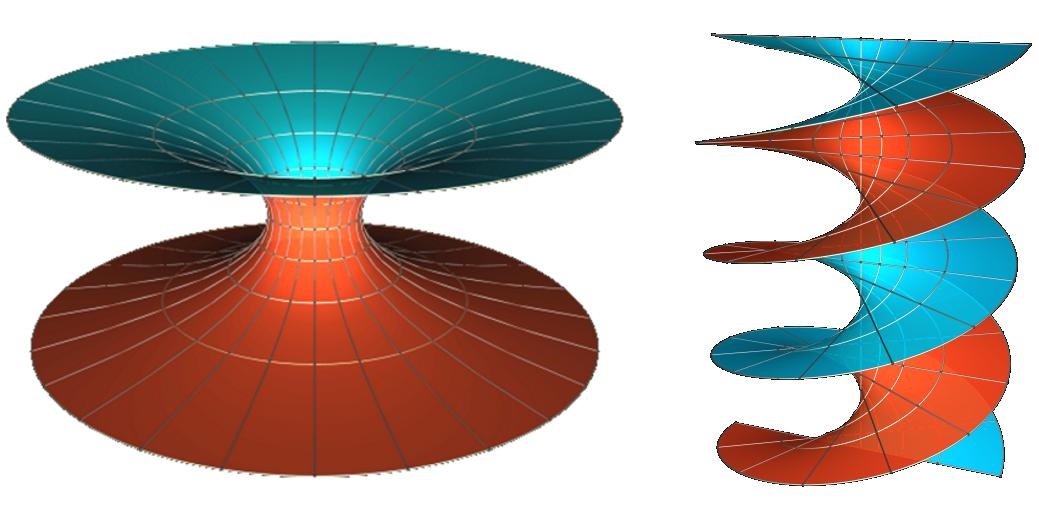}\\
  \caption{Left: The catenoid. Right: The helicoid.}
 \label{cat-hel-Enn}
 \end{center}
\end{figure}

\par
\vspace{.2cm}
\noindent
{\bf The helicoid.} $M=\C $, $g(z)=e^z$,
$dh=i\, dz$, see Figure~\ref{cat-hel-Enn} Right.
This surface was first proved to be minimal by Meusnier in 1776~\cite{meu1}.
When viewed in $\R^3$, the helicoid has genus zero, one end and
infinite total curvature. Together with the plane, the helicoid is
the only ruled 0-surface (Catalan~\cite{catalan1}) and the
unique simply connected, complete  0-surface (Meeks
and Rosenberg~\cite{mr8}, see also~\cite{bb1}). The vertical helicoid
can also
be viewed as a genus-zero surface with two ends in
a quotient of $\R^3$ by a
vertical translation or by a screw motion. The catenoid
and the helicoid are {\it
conjugate} 0-surfaces, in the sense of the following definition.

\begin{definition}
\label{defconjugate}
{\rm Two immersed 0-surfaces in $\R^3$ are said to be {\it conjugate} if
the coordinate functions of one of them are locally the harmonic conjugates
of the coordinate functions of the other one.
}
\end{definition}

\begin{remark}
{\rm
  There is also a notion of conjugate surface for $(H>0)$-surfaces;
see~\cite{mt2} for further discussion on the more general notion
of  associate surfaces to an $H$-surface.
}
\end{remark}
Note that in the case of the helicoid and catenoid, we consider
the catenoid to be
defined on its universal cover $e^z \colon \mathbb{C} \rightarrow
\mathbb{C}-\{0\}$ in order for the harmonic conjugate of $x_3$ to be
well-defined.  Equivalently, both surfaces share the Gauss map $e^z$
and their height differentials differ by multiplication by
$i=\sqrt{-1}$.

\vspace{.15cm}
\par
\noindent
{\bf The Meeks minimal M\"obius strip.} $M=\C - \{ 0 \}$,  $g(z)
= z^2 \left( \frac{z+1}{z-1} \right)$, $dh = i \left( \frac{z^2
-1}{z^2}\right) \, dz$, see  Figure~\ref{figmobius-benthel} Left.
Found by Meeks~\cite{me7}, the
0-surface defined by this Weierstrass data double covers a
complete, immersed 0-surface $M_1\subset \R^3$ which is
topologically a M\"obius strip. This is the unique complete,
minimally immersed surface in $\R^3$ of finite total curvature $-6 \pi$. It
contains a unique closed geodesic which is a planar circle, and also
contains a line bisecting the circle.
\vspace{.15cm}
\begin{figure}
\begin{center}
  \includegraphics[width=10.5cm]{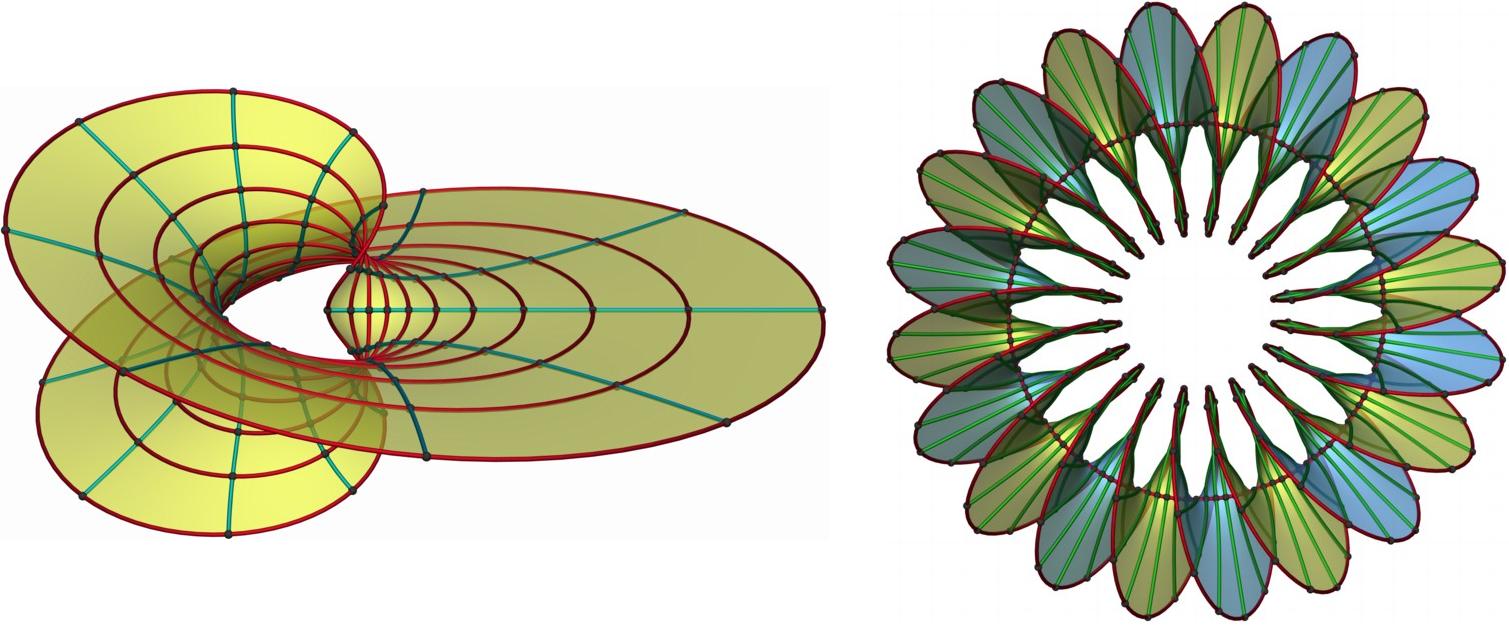}\\
  \caption{Left: The Meeks minimal M\"obius strip.
  Right: A bent helicoid near the
 circle $\esf ^1$, which is viewed from above along the $x_3$-axis.
 Images courtesy of M. Weber.}
\label{figmobius-benthel}
\end{center}
\end{figure}

\par
\noindent
{\bf The bent helicoids.}
$M=\C -\{ 0\} $, $g(z)=-z \frac{z^n+i}{iz^n+i}$, $dh = \frac{z^{n}+z^{-n}}{2z}dz$,
see  Figure~\ref{figmobius-benthel} Right. Discovered by
Meeks and Weber~\cite{mwe1} and independently by Mira~\cite{mira1}, these are complete,
immersed 0-annuli
$\widetilde{A}_n\subset \R^3$
with two non-embedded ends and finite total curvature; each
of the surfaces $\widetilde{A}_n$ contains the unit circle $\esf^1$ in the
$(x_1, x_2)$-plane, and a neighborhood of $\esf^1$ in $\widetilde{A}_n$ contains an embedded
annulus $A_n$ which  approximates, for $n$ large, a highly spinning helicoid whose usual
straight axis has been periodically bent into the
unit circle $\esf ^1$ (thus the name of bent helicoids). Furthermore,
the $A_n$ converge as $n\to \infty $ to the foliation of $\R^3$ minus the $x_3$-axis
by vertical half-planes
with boundary the $x_3$-axis, and with $\esf^1$ as the singular set of
$C^1$-convergence. The method applied by
Meeks, Weber and Mira to find the bent helicoids is the
classical {\it Bj\" orling formula}~\cite{ni2} with an orthogonal
 unit field along $\esf^1$ that spins an arbitrary number $n$ of times around the circle.
This construction also makes sense
when $n$ is half an integer; in the case $n=\frac{1}{2}$, $\widetilde{A}_{1/2}$
is the double cover of the Meeks minimal
M\"{o}bius strip described in the previous example. The bent helicoids $A_n$ play
an important role in proving the
converse of Meeks' $C^{1,1}$-Regularity Theorem (see Meeks and Weber~\cite{mwe1}
and also Theorems~\ref{thmregular} and~\ref{MeeksWeber} below)
for the singular set of convergence in a Colding-Minicozzi limit 0-lamination.

\vspace{.15cm}
\par
\noindent{\bf The singly-periodic Scherk surfaces.} $M=(\C \cup \{
\infty \} ) -\{ \pm e^{\pm i\t /2}\} $, $g(z)=z$, $dh=\frac{iz\,
 dz}{\prod (z\pm e^{\pm i\t /2})}$, for
fixed $\t \in (0,\pi /2]$, see Figure~\ref{scherk-figure} Left
for the case $\t =\pi /2$.
Discovered by Scherk~\cite{sche1} in 1835, these surfaces
denoted by ${\mathcal S}_\theta$ form a
$1$-parameter family of complete genus-zero 0-surfaces in a quotient of $\R^3$
by a translation, and have four annular ends. Viewed in $\R^3$,
each surface ${\mathcal S}_{\theta }$ is invariant
under reflection in the $(x_1,x_3)$ and $(x_2,x_3)$-planes and in
horizontal planes at integer heights, and
can be thought of geometrically as a desingularization of two vertical
planes forming
an angle of $\t $. The special case ${\mathcal S}_{\theta =\pi /2}$ also
contains pairs of orthogonal lines
at planes of half-integer heights, and has implicit equation $\sin z=\sinh x\sinh y$.
Together with the plane and catenoid, the surfaces
${\mathcal S}_{\t }$ are conjectured to be the only
connected, complete,
immersed, 0-surfaces in $\rth$ whose areas in balls of radius $R$ is less than $2
\pi R^2$; see Conjecture~\ref{conjScherk} in Section~\ref{sec:conj} for further discussion
on this open problem.  This conjecture was proved by Meeks and Wolf~\cite{mrw1}
under the additional hypothesis that the surface have an infinite symmetry group.
\begin{figure}
\begin{center}
  \includegraphics[width=7.6cm]{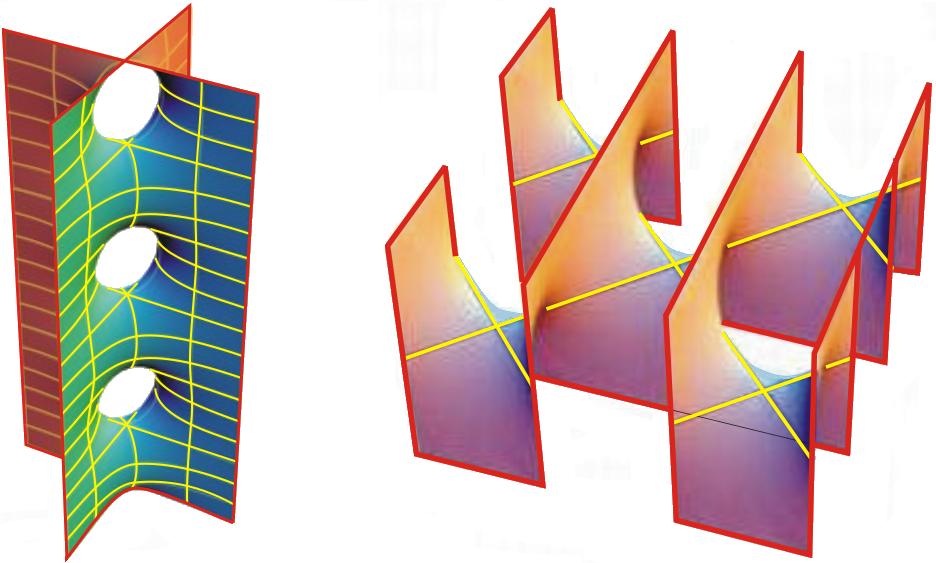}\\
  \caption{Singly-periodic Scherk surface with angle $\t =
\pi/2$ (left), and its conjugate surface,
the doubly-periodic Scherk surface (right). Images courtesy of M. Weber.}
\end{center}
\label{scherk-figure}
\end{figure}

\vspace{.15cm}
\par
\noindent
{\bf The doubly-periodic Scherk surfaces.}
$M=(\C \cup \{ \infty \} ) -\{
\pm e^{\pm i\t /2}\} $, $g(z)=z$, $dh=\frac{z\, dz}{\prod (z\pm e^{\pm
i\t /2})}$, where $\t \in (0,\pi /2]$ (the case $\t =\frac{\pi }{2}$ has implicit equation
$e^z\cos y=\cos x$), see Figure~\ref{scherk-figure} Right.
These surfaces, discovered by Scherk~\cite{sche1} in 1835, are the conjugate
surfaces of the singly-periodic Scherk
surfaces, and can be thought of geometrically as the
 desingularization of two families of equally
spaced vertical parallel half-planes in opposite half-spaces,
 with the half-planes in
the upper family making an angle of $\t $ with the half-planes in
the lower family. These surfaces are doubly-periodic with genus zero
in their corresponding quotient
$\Te ^2\times \R $, and were characterized by Lazard-Holly and
Meeks~\cite{lm2} as being the unique proper 0-surfaces with genus zero in
any $\Te ^2\times \R $. It has been conjectured by Meeks,
P\'erez and Ros (see Conjecture~\ref{conjFP})
that the singly and doubly-periodic
Scherk 0-surfaces are the only complete 0-surfaces in $\rth$ whose Gauss maps miss
four points on $\esf^2$. They also conjecture that the singly and
doubly-periodic Scherk 0-surfaces, together with the catenoid
and helicoid, are the only complete 0-surfaces of
negative curvature (see Conjecture~{\ref{NCC}}).

\vspace{.15cm}
\par
\noindent
{\bf The Riemann minimal examples.} These surfaces come in a
one-parameter family defined in terms of a parameter $\l >0$. Let $M_\lambda =\{ (z,w)\in
(\C\cup \{ \infty \}
)^2\ | \ w^2=z(z-\lambda )(\lambda z+1)\} -\{ (0,0),(\infty ,\infty
)\} $, $g(z,w)=z$, $dh=A_{\lambda }\frac{dz}{w}$, for each $\l >0$,
where $A_{\lambda }$ is a non-zero complex number satisfying
$A_{\lambda }^2\in \R $, see Figure~\ref{Riemann-fig}.
Discovered by Riemann (and posthumously published, Hattendorf and
Riemann~\cite{ri2,ri1}), these examples are invariant under reflection in the
$(x_1,x_3)$-plane and by a
translation $T_{\l }$. The induced
surfaces $M_{\l }/T_{\l }$  in the quotient spaces $\R^3/T_{\l }$
have genus one and two planar ends, see~\cite{mpr6} for a more precise description.
The Riemann minimal examples have the amazing property that every
horizontal plane intersects each of these surfaces in a circle or in a line.
The conjugate minimal surface of the Riemann minimal example for a
given $\l >0$ is the Riemann minimal example for the parameter value
$1/\l $ (the case $\l =1$ gives the only self-conjugate surface in
the family). Meeks, P\'erez and Ros~\cite{mpr6} showed that these surfaces
are the only proper 0-surfaces in $\rth$ of genus
zero and infinite topology. Assuming that Conjecture~\ref{CYconj1} below  holds,
then  these surfaces
are the only complete 0-surfaces in $\rth$ of genus
zero and infinite topology. Also  see~\cite{mpe16} for a complete outline of the
proof of uniqueness of the Riemann minimal examples and historical comments
on Riemann's original proof of the classification of his examples. \vspace{.15cm}
\begin{figure}
\begin{center}
  \includegraphics[width=5.3cm]{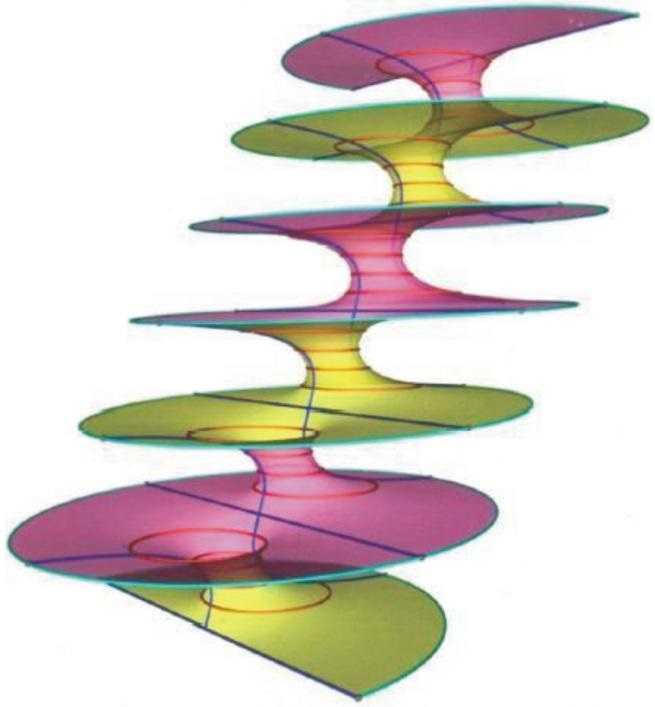}\\
  \caption{A Riemann minimal example.
 Image courtesy of M. Weber.}
\label{Riemann-fig}
\end{center}
\end{figure}

\subsection{Classical maximum principles.}

One of the consequences of the fact that $H$-surfaces can be
viewed locally as solutions of a partial differential
equation is that they satisfy certain
maximum principles. We will state them for $H$-surfaces in $\R^3$,
but they also hold when the ambient space is any Riemannian
3-manifold.

\begin{theorem}[Interior Maximum Principle~\cite{gt1}]
\label{thmintmaxprin}
For $i=1,2$, let $M_i$ be a connected $H_i$-surface in $\R^3$, and
$p$ an interior point to both surfaces. Suppose that $M_2$ lies
on the mean convex side of $M_1$
{near $p$}
$M_1=M_2$ in a neighborhood of~$p$.
\end{theorem}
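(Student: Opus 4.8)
The plan is to reduce the comparison of the two surfaces to a scalar quasilinear elliptic equation and then invoke the Hopf strong maximum principle, following the standard treatment of second-order elliptic operators in Gilbarg and Trudinger~\cite{gt1}. First I would normalize coordinates: after a rigid motion, place $p$ at the origin, arrange the common tangent plane $T_pM_1=T_pM_2$ to be the horizontal plane $\{x_3=0\}$, and orient both surfaces so that their common unit normal at $p$ is $(0,0,1)$. Since $p$ is interior to each $M_i$, on a small disk $D\subset\{x_3=0\}$ about the origin each surface is a graph $x_3=u_i(x_1,x_2)$ with $u_i(0)=0$ and $\nabla u_i(0)=0$. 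Constancy of the mean curvature then translates into the quasilinear elliptic equation
\begin{equation}
\label{eq:mcgraph}
Q[u_i]:=\mathrm{div}\!\left(\frac{\nabla u_i}{\sqrt{1+|\nabla u_i|^2}}\right)=2H_i\qquad\text{on }D,
\end{equation}
where divergence and gradient are taken in the plane coordinates and $H_i$ is the mean curvature of $M_i$ with respect to the upward normal.

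Next I would encode the geometric hypotheses analytically. Because the mean convex side of $M_1$ is the side into which its mean curvature vector points, the assumption that $M_2$ lies on that side near $p$ gives the one-sided comparison $u_2\geq u_1$ on $D$, so that $w:=u_1-u_2\leq 0$ with $w(0)=0$ and $\nabla w(0)=0$; thus $w$ attains an interior maximum value $0$ at the origin. The heart of the argument is to show that $w$ satisfies a \emph{linear} second-order elliptic equation. For this I would use the standard linearization device, writing the difference of the nonlinear operator values as
\[
Q[u_1]-Q[u_2]=\int_0^1\frac{d}{dt}\,Q[t\,u_1+(1-t)u_2]\,dt,
\]
which yields $Q[u_1]-Q[u_2]=Lw$, where $L=a_{ij}(x)\,\partial_{ij}+b_i(x)\,\partial_i$ is linear with \emph{no} zeroth-order term (the mean curvature operator depends only on $\nabla u$ and $D^2u$), and whose principal coefficients $a_{ij}$ are obtained by integrating the linearization of $Q$ along the segment from $u_2$ to $u_1$.

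Finally I would apply the strong maximum principle. From \eqref{eq:mcgraph} one has $Lw=2(H_1-H_2)$, and the ordering of the mean curvatures dictated by the hypotheses makes $Lw\geq 0$, so $w$ is a subsolution of a uniformly elliptic operator without zeroth-order term that attains an interior maximum at the origin. The Hopf strong maximum principle (Gilbarg and Trudinger~\cite{gt1}) then forces $w\equiv 0$ on a neighborhood of the origin, whence $u_1=u_2$ there and $M_1=M_2$ near $p$. The main technical obstacle is the linearization step: one must verify that the integrated coefficients $a_{ij},b_i$ are bounded and that $(a_{ij})$ is uniformly elliptic despite the quadratic nonlinearity in $\nabla u_i$. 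This is guaranteed precisely because the two surfaces are tangent at $p$, so $\nabla u_1$ and $\nabla u_2$ are uniformly small on a sufficiently small disk, keeping the denominator $\sqrt{1+|\nabla u_i|^2}$ close to $1$ and the matrix $(a_{ij})$ close to the identity. Once this uniform ellipticity is secured, the conclusion follows immediately from the classical strong maximum principle.
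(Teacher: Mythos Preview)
Your proposal is correct and follows the standard route to the interior maximum principle for constant mean curvature surfaces: write both surfaces as graphs over the common tangent plane, subtract, linearize the mean-curvature operator along the segment joining the two solutions to obtain a linear uniformly elliptic operator with no zeroth-order term, and apply the Hopf strong maximum principle. This is precisely the argument one extracts from the cited reference~\cite{gt1}.

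Note, however, that the paper does \emph{not} supply its own proof of this theorem; it is stated with a citation to Gilbarg--Trudinger and then used as a tool (for instance, in the proof of Alexandrov's theorem immediately afterward). So there is nothing in the paper to compare your argument against beyond the citation itself, and your write-up is exactly the proof that citation points to. One small remark: the theorem statement as printed appears to be missing the hypothesis $H_1\geq H_2$ (compare the companion Boundary Maximum Principle, Theorem~\ref{thmintmaxprinb}, where this inequality is explicit); you correctly infer and use this ordering when you assert $Lw=2(H_1-H_2)\geq 0$.
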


\begin{theorem}[Boundary Maximum Principle~\cite{gt1}]
\label{thmintmaxprinb}
For $i=1,2$, let $M_i$ be a connected $H_i$-surface with boundary in $\R^3$, and
$p$ a boundary point of both surfaces. Suppose
that $M_2$ lies on the mean convex side of $M_1$ near $p$,
and that the surfaces are locally tangent graphs over the same half disk in
$T_pM_1=T_pM_2$ with tangent boundaries and with the same normal at $p$.
If $H_1\geq H_2$, then $M_1=M_2$ in a neighborhood of~$p$.
\end{theorem}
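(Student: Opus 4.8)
The plan is to pass to local graphs, reduce the statement to a comparison of two solutions of the constant mean curvature equation, and then invoke the Hopf boundary point lemma from~\cite{gt1}. First I would normalize coordinates so that $p$ is the origin, the common tangent plane $T_pM_1=T_pM_2$ is the horizontal plane $\{x_3=0\}$, and the shared orientation normal $N_1(p)=N_2(p)$ points upward. By hypothesis the two surfaces are already graphs $M_i=\mathrm{graph}(u_i)$ over a common half disk $D^+\subset\{x_3=0\}$, say $D^+=\{(x_1,x_2):x_1^2+x_2^2<\rho^2,\ x_2\geq 0\}$, whose straight edge $\{x_2=0\}$ carries the projections of the two boundary curves. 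Since the tangent planes coincide and are horizontal, $\nabla u_1(0)=\nabla u_2(0)=0$, and since both surfaces pass through $p$, $u_1(0)=u_2(0)=0$. Hence $w:=u_2-u_1$ satisfies $w(0)=0$ and $\nabla w(0)=0$. (The tangency of the two boundary curves at $p$ is precisely what makes the common half disk $D^+$ a legitimate domain with $p$ lying on its smooth part.)

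Next I would record that, with the upward orientation normal, each graph satisfies the quasilinear divergence-form equation $\mathcal{M}[u_i]:=\mathrm{div}\!\big(\nabla u_i/\sqrt{1+|\nabla u_i|^2}\big)=2H_i$, where the sign convention (checked against the round sphere) is that $H_i>0$ means the mean curvature vector points to the upper, mean convex side. Subtracting the two equations and writing the difference as an integral over the segment $u_t=(1-t)u_1+tu_2$, the nonlinear difference $\mathcal{M}[u_2]-\mathcal{M}[u_1]$ becomes $Lw$ for a linear second-order operator $L=a^{ij}(x)\partial_{ij}+b^i(x)\partial_i$ with no zeroth-order term, because $\mathcal{M}$ depends on $u_i$ only through $\nabla u_i$ and $D^2u_i$. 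As $|\nabla u_i|$ is small on a neighborhood of $0$, the matrix $(a^{ij})$ is bounded and uniformly positive definite and the $b^i$ are bounded, so $L$ is uniformly elliptic with bounded coefficients and the maximum principles of~\cite{gt1} apply; moreover $Lw=2(H_2-H_1)\leq 0$ by the hypothesis $H_1\geq H_2$.

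The geometric hypothesis that $M_2$ lies on the mean convex side of $M_1$ reads, in these coordinates, $u_2\geq u_1$, i.e. $w\geq 0$ on $D^+$. Thus $w$ is a nonnegative supersolution ($Lw\leq 0$) attaining its minimum value $0$ at the boundary point $0$; equivalently, $-w$ is a subsolution with maximum value $0$ attained at $0\in\partial D^+$, a smooth boundary point at which the interior ball condition holds. If $w\not\equiv 0$, the strong maximum principle gives $w>0$ in the interior of $D^+$, and the Hopf boundary point lemma applied to $-w$ forces the outward normal derivative to satisfy $\partial(-w)/\partial\nu(0)>0$, i.e. $\partial w/\partial\nu(0)<0$. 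But the outward normal $\nu$ to $D^+$ at $0$ is the horizontal direction $-e_2$, along which $\partial w/\partial\nu(0)=-\partial_{x_2}w(0)=0$ since $\nabla w(0)=0$. This contradiction shows $w\equiv 0$ near $0$, that is, $M_1=M_2$ in a neighborhood of $p$.

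I expect the delicate step to be the sign bookkeeping running through the middle two paragraphs: one must verify that the choice of common orientation normal, the convention that positive mean curvature points to the mean convex side, the ordering forced by ``mean convex side,'' and the inequality $H_1\geq H_2$ all align so that $w=u_2-u_1$ is simultaneously nonnegative \emph{and} a supersolution of the homogeneous linearized equation. The second point requiring care is that this is genuinely a boundary statement: the interior strong maximum principle alone does not suffice at $p\in\partial D^+$, and one must use the Hopf boundary point lemma, whose conclusion (a strict sign for the normal derivative) is contradicted exactly by the vanishing of $\nabla w(0)$ coming from the tangency of the boundaries together with the coincidence of the tangent planes. Flattening the boundary edge by a local diffeomorphism, should $\partial D^+$ not already be straight, alters $L$ only by lower-order terms and preserves uniform ellipticity, so it does not affect the argument.
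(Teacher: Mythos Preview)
Your argument is correct and is exactly the standard route: write both surfaces as graphs over the common tangent half-disk, linearize the mean curvature operator along the segment between $u_1$ and $u_2$ to get a uniformly elliptic operator $L$ with no zeroth-order term, check that $w=u_2-u_1$ is a nonnegative supersolution vanishing to first order at the boundary point, and conclude via the Hopf boundary point lemma. Your attention to the sign bookkeeping and to the fact that the interior strong maximum principle alone is insufficient at a boundary point is exactly right.

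As for comparison: the paper does not supply its own proof of this statement. Theorem~\ref{thmintmaxprinb} is quoted as a known input from~\cite{gt1} (Gilbarg--Trudinger) and is used immediately afterward as a tool in the proof of Alexandrov's theorem. So there is nothing to compare your argument against in the paper itself; what you have written is essentially the proof one would extract from~\cite{gt1} adapted to the geometric setting of $H$-graphs, and it is the argument the citation is pointing to.
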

Before continuing with the exposition and as we announced just before
the statement of Theorem~\ref{thmAlex}, we pause to give a proof of the following
classical result as an application of the previous two theorems.

\begin{theorem}[Alexandrov~\cite{aa1}] Round spheres are
the only closed $H$-surfaces in $\rth$.
\end{theorem}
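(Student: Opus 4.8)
The plan is to prove the theorem by the Alexandrov reflection (or moving planes) method, which is precisely the technique the authors flag as motivating this theorem. Let $M$ be a closed $H$-surface in $\rth$; since $M$ is compact and embedded, it bounds a compact region $W$, and after orienting by the inward normal we have $H\ge 0$. If $H=0$ the maximum principle for harmonic coordinate functions (or the interior maximum principle Theorem~\ref{thmintmaxprin} comparing $M$ with planes) forces $M$ to be empty, so we may assume $H>0$ and show $M$ is a round sphere.

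First I would fix a direction, say $e_3$, and foliate $\rth$ by horizontal planes $P_t=\{x_3=t\}$. Starting from a plane above $M$ and sweeping downward, I consider the reflection $M_t$ of the portion of $M$ lying above $P_t$ through the plane $P_t$. The key step is to start the moving plane: for $t$ slightly below the top of $M$, the reflected cap $M_t$ is a small graph lying strictly inside $W$, so $M_t$ does not touch the lower part of $M$. I then decrease $t$ and track the first value $t_0$ at which the reflected cap $M_t$ first makes contact with the unreflected part $M\cap\{x_3<t\}$. At $t_0$ contact occurs either at an interior tangential point or at a boundary point where both surfaces meet $P_{t_0}$ orthogonally. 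In both cases $M_{t_0}$ and $M$ are tangent with the reflected cap lying on the mean convex side, and they share the same constant mean curvature $H$; the interior case is handled by Theorem~\ref{thmintmaxprin} and the boundary case by Theorem~\ref{thmintmaxprinb}. Either way the maximum principle yields that $M_{t_0}$ and $M$ coincide on a neighborhood of the contact point, and a standard connectedness-and-openness argument propagates this to show $M$ is symmetric under reflection through $P_{t_0}$.

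Since the direction $e_3$ was arbitrary, I conclude that $M$ admits a plane of reflective symmetry orthogonal to every direction in $\esf^2$. The final step is to deduce that $M$ is a round sphere from this wealth of symmetry: the planes of symmetry for all directions must pass through a common point (the center of mass, or the intersection of any two non-parallel symmetry planes consistently chosen), and invariance under reflection across every plane through that point forces $M$ to be invariant under the full orthogonal group $O(3)$ fixing the point, hence a round sphere.

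I expect the main obstacle to be the careful justification of the moving-plane argument rather than its conclusion: one must verify that the reflected caps are genuinely graphs near the top so the process can be initiated, that the first-contact value $t_0$ is well-defined and gives an interior or orthogonal-boundary tangency rather than some degenerate configuration, and that the local coincidence from the maximum principle globalizes correctly (using that $M$ is connected and that the set where $M$ agrees with its reflection is both open and closed). The delicate point is handling the boundary-contact case, where $M$ meets the plane $P_{t_0}$ tangentially along a curve and one invokes the boundary maximum principle Theorem~\ref{thmintmaxprinb} with matched normals; ensuring the hypotheses of that theorem are met is where the argument requires the most care.
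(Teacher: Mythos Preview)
Your proposal is correct and follows essentially the same Alexandrov moving-planes argument as the paper: sweep by parallel planes, reflect the cap, stop at the first critical position, and apply the interior maximum principle (Theorem~\ref{thmintmaxprin}) or the boundary maximum principle (Theorem~\ref{thmintmaxprinb}) to obtain a plane of symmetry in every direction, then conclude $O(3)$-invariance about a common center. The only cosmetic differences are that the paper sweeps upward from below and phrases the stopping time as the supremum of parameters for which the reflected cap stays inside $W$ and remains graphical, rather than as a ``first contact'' time.
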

\begin{proof}
Let $M$ be a closed $H$-surface and let $W$ be the smooth
compact domain in $\rth$ with boundary $M$.
We now explain how to use the Alexandrov reflection principle to prove that for
the family of horizontal planes $\{ P(t)=\{x_3=t \} \} _{t\in \R}$, there
exists a $t_M\in \R$ such that $P(t_{M})$ that is a plane of reflectional symmetry
for $M$ and furthermore  $M-P(t_{M})$ consists of two components, each of which is
a graph over a bounded component of $P(t_{M})-M$.  Assuming this symmetry result we can deduce
that for any unit length vector $a$,  $M$ has a plane of reflective symmetry
with normal vector $a$ and, after a translation and by the compactness of $M$,
$M$ is invariant under the action of the orthogonal group $O(3)$, which implies
that $M$ is a round sphere.

For each $t\in\R$, let $R_t\colon\rth \to \rth$ be reflection in the plane $P(t)$
and consider the closed lower half-space $P(t)^-=\{(x_1,x_2,x_3)\in \rth \mid x_3\leq t\}$
determined by $P(t)$.
Consider the smallest $t_1$ such that $P(t_1)$ intersects $M$.  Then, there exists a small
$\ve>0$ such that the following hold:
\begin{enumerate}[1.]
\item $P(t_1+\ve)^- \cap M$ is a graph over its possibly disconnected projection to
$ P(t_1+\ve)$.
\item $R_{t_1+\ve}(P(t_1+\ve)^-\cap M)\subset W$.
\end{enumerate}
Define $\ve_1=\max \{ \ve '\in [\ve ,\infty )\ | \ \mbox{
items 1 and 2   hold for }\ve '\} $, which exists by compactness of~$M$.
\par
\vspace{.2cm}
{\sc Claim:} {\it The plane $P(t_1+\ve_1)$ is a plane of Alexandrov symmetry for $M$.}
\par
\vspace{.2cm}
\noindent
Observe that the above claim proves the desired
symmetry result for $M$ stated in the first paragraph of this proof,
in other words, $t_M=t_1+\ve_1$. Theorem~\ref{thmintmaxprinb} implies that the claim holds provided
that the plane $P(t_1+\ve)$
is orthogonal to $M$ at some point $p$. So assume now that the plane $P(t_1+\ve)$
is nowhere orthogonal to $M$. We claim that Theorem~\ref{thmintmaxprin}
implies
\begin{equation}
\label{alex1}
R_{t_1+\ve_1}(P(t_1+\ve_1)^- \cap M)-P(t_1+\ve_1)\subset \Int(W).
\end{equation}
Otherwise, since at a point of intersection of
$R_{t_1+\ve_1}(P(t_1+\ve_1)^- \cap M)-P(t_1+\ve_1)$ with
$M=\partial W$, these surfaces have the same normals, then
the interior maximum principle shows that $R_{t_1+\ve_1}(P(t_1+\ve_1)^- \cap M)\subset M$,
which would  imply that
$P(t_1+\ve_1)$ is a plane of symmetry and hence perpendicular
to $M$ at every point of $M\cap P(t_1+\ve _1)$, {which is
contrary to our hypothesis}.
{Now (\ref{alex1}) and the compactness of $M$ ensure that}
for $\delta>0$ sufficiently  small,
$$R_{t_1+\ve_1 +\delta}(P(t_1+\ve_1+\delta)^- \cap M)-P(t_1+\ve_1+\delta)\subset \Int(W),$$
from which we conclude that there exists a $\delta'\in (0,\de )$ such that ${\ve _1}+\delta'$
satisfies
items 1 and 2, which contradicts the definition of $\ve_1$.
This contradiction completes the proof.
\end{proof}

Another beautiful application
of Theorem~\ref{thmintmaxprin} is the following result by Hoffman
and Meeks.

\begin{theorem}[Strong Half-space Theorem~\cite{hm10}]
\label{thmhalf}
 Let $f\colon M\to \R^3$ be a properly immersed, possibly branched, non-planar 0-surface
without boundary. Then, $M$ cannot be contained in a half-space.
More generally, if $M_1,M_2\subset \R^3$ are the images of two properly immersed
0-surfaces in $\rth$, one of which is non-planar, then these surfaces intersect.
\end{theorem}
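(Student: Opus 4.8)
The engine of the proof is the interior maximum principle (Theorem~\ref{thmintmaxprin}) together with the fact that the catenoid is a complete minimal surface that is \emph{not} contained in any half-space; catenoids will serve as the comparison barriers. I would first establish the half-space statement for a single surface --- a connected, properly immersed, non-planar $0$-surface $M$ cannot lie in a closed half-space --- and then bootstrap to the intersection statement, noting that the latter contains the former as the special case in which $M_2$ is a plane.

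For the half-space statement, after a rigid motion assume $M\subset\{x_3\geq 0\}$ and suppose, for contradiction, that $M$ is non-planar. If $M$ meets the boundary plane $\{x_3=0\}$ at an interior point $p$, then near $p$ the surfaces $M$ and $\{x_3=0\}$ are two tangent $0$-surfaces with $M$ lying on the mean-convex (upper) side of the plane, so Theorem~\ref{thmintmaxprin} forces $M$ to coincide with $\{x_3=0\}$ near $p$; by real-analyticity and connectedness $M$ would be a plane, a contradiction. Hence $M\subset\{x_3>0\}$. Now I would use the standard vertical catenoid as a barrier: via homotheties and vertical translations one obtains minimal catenoids whose relevant end flattens toward horizontal planes as the neck shrinks, and one moves these monotonically toward $M$ from the region below until the first moment of contact with the properly immersed $M$. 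Properness guarantees that this first contact occurs at a finite interior point $q$, where $M$ is tangent to the catenoid and lies on its mean-convex side; Theorem~\ref{thmintmaxprin} then forces $M$ to agree with the catenoid near $q$, hence everywhere. But the catenoid is not contained in any half-space, contradicting $M\subset\{x_3>0\}$. Therefore $M$ must be a plane.

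To upgrade to the intersection statement, suppose $M_1$ and $M_2$ are disjoint, with $M_1$ non-planar. Since both images are closed (by properness) and disjoint, there is a closed complementary region $W$ of $\rth\setminus(M_1\cup M_2)$ whose boundary meets both surfaces, i.e.\ a region lying between them. I would run the same catenoid-barrier scheme inside $W$, using $M_2$ in place of the boundary plane: enlarge and translate catenoids trapped in $W$ until they first touch $M_1$ (or $M_2$) at an interior tangency on the mean-convex side, and apply Theorem~\ref{thmintmaxprin} to conclude that the touched surface coincides with a catenoid, which cannot be confined to the one-sided region $W$ --- a contradiction. Equivalently, the barrier argument confines $M_1$ to one side of a plane, and the half-space statement already proved then forces $M_1$ to be planar, again a contradiction. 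Hence $M_1$ and $M_2$ must intersect.

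The main obstacle is the barrier step. One must choose the catenoid family and the sliding so that the first point of contact with $M$ is a genuine interior tangency --- in particular not occurring along the waist circle bounding the portion of catenoid being used, and (since $M$ is only \emph{immersed}, possibly branched) at a regular rather than a branch point, the branch points being isolated --- and so that $M$ lies on the mean-convex side of the barrier, which is exactly the configuration demanded by Theorem~\ref{thmintmaxprin}. Verifying that properness makes this first contact occur at a finite point, with the contact tangential of the correct type, is where the real work lies; the passage from a local coincidence to the global conclusion that $M$ is an entire catenoid (or plane) is then routine by analyticity and connectedness.
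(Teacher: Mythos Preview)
Your approach to the first statement (the half-space theorem itself) is essentially the paper's: catenoid barriers plus the interior maximum principle. The paper makes the key detail explicit that you flag only as an ``obstacle'': one fixes a small ball $\B(r)$ around a point of $\{x_3=0\}$ that is disjoint from $M$ (by properness), places the waist circle of a half-catenoid inside $\B(r)$, lowers it slightly, and then takes \emph{homothetic shrinkings centered at the lowered center} so that the boundary circles of the entire one-parameter family remain inside $\B(r)$. This is what guarantees the first contact is interior and not on the boundary circle; your description via ``homotheties and vertical translations'' is the right intuition but does not yet pin down this mechanism.

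Your approach to the second (strong) statement, however, has a genuine gap and differs substantially from the paper. You propose to run the catenoid barrier directly in the region $W$ between $M_1$ and $M_2$, ``using $M_2$ in place of the boundary plane.'' But the catenoid argument needs a fixed ball in $W$ in which to hide the waist circles of the entire homothety family, and it needs the family to sweep out a planar region that $M_1$ must cross; when $M_2$ is an arbitrary proper minimal surface (think of a Riemann minimal example or a Scherk surface), there is no reason $W$ contains such a ball with a half-catenoid whose shrinkings stay in $W$, nor any plane that $M_1$ would be forced across. Your ``equivalently'' sentence assumes the conclusion: nothing in the barrier argument produces a separating plane unless one is already present. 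The paper bridges this gap by invoking a deep result of Meeks--Simon--Yau: if $M_1,M_2$ are disjoint, then between them there exists a proper \emph{stable} orientable $0$-surface $\Sigma$, and by Theorem~\ref{thmstablecompleteplane} any such $\Sigma$ is a plane. Now $M_1$ lies in a half-space bounded by $\Sigma$, and the first statement finishes the proof. This existence-of-a-stable-separator step is not something the elementary barrier argument can replace.
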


Here the adjective ``strong" refers to the second statement of Theorem~\ref{thmhalf}.
This second statement follows from the first statement by a previous result of Meeks, Simon
and Yau~\cite{msy1}, where they proved that given two proper, possibly branched
0-surfaces $M_1,M_2$ in $\rth$ that are disjoint, there exists
a proper, stable orientable 0-surface $\Sigma$ in the region of $W$ of $\rth$
between the images of $M_1,M_2$ (see Definition~\ref{defstable} for the notion of stability).
Since $\Sigma$ is stable, then it is a plane by
Theorem~\ref{thmstablecompleteplane} below.
The original proof by Hoffman and Meeks of the first statement of Theorem~\ref{thmhalf}
only uses the interior maximum principle and
a clever argument with catenoids as barriers. Since this  argument is simple and has become
a standard and useful technique for other applications,
we will also include it here for the sake of completeness.

For the following arguments, please refer to {Figure~\ref{halfSP}.}
The first step is to find a smallest open half-space that contains the surface $M$,
which can be assumed to be $\{ z<0\} $. As $M$ is
proper, no points of $\{ z=0\} $ are
accumulation points of $M$. Applying this property to $\vec{0}=(0,0,0)$, one finds a ball
$\B (r)$ centered at $\vec{0}$ of radius $r>0$ which
is disjoint from $M$. Given $a\in (0,r)$,
the vertical catenoid $C_a=\{ x^2+y^2=a^2\cosh ^2(z/a)\} $
has waist circle contained in $\B (r)$
and thus, the upper half-catenoid $C_a^+=C_a\cap \{ x_3\geq 0\} $ can be lowered
some small height $\ve >0$ so that $C:=C_a^+-\varepsilon (0,0,1)$ is contained in
$\B (r)\cup \{ x_3\geq 0\} $. Next one considers the
1-parameter family of half-catenoids
$\{ C(t)\ | \ t>0\} $ obtained after applying to $C$ a
homothety of ratio $t>0$ centered at the center
$O=(0,0,-\ve )$ of $C$.
%\begin{center}
\begin{figure}
\includegraphics[width=5.8in]{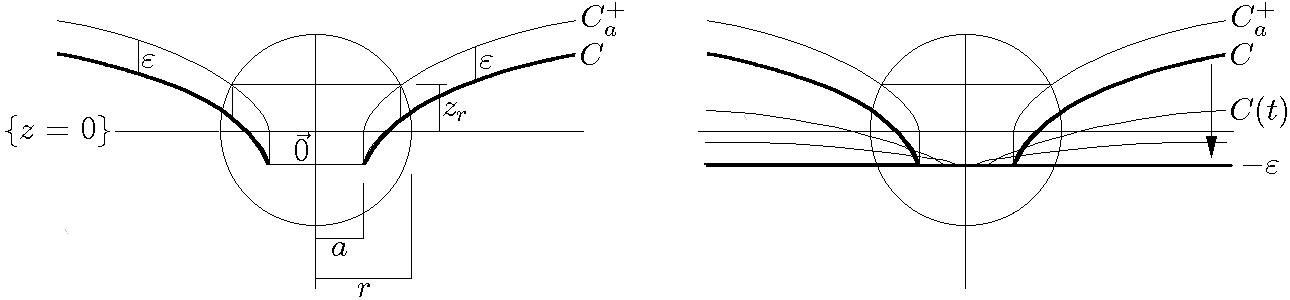}
\vspace{-.2cm}\caption{The $ C(t)$, $t\in (0,1]$, are
homothetic shrinkings of $C$.}
\label{halfSP}
\end{figure}
%\end{center}
As $C(t)$ converges as $t\to 0$ to the punctured plane $\{ x_3=-\ve \} -\{ O\} $,
then $M$ must have points above $C(T)$ for some $T>0$
sufficiently small. As $C(1)$ is disjoint from $M$,
then there exists the infimum $t_1$ of the
set $\{ t>0 \ | \ M\cap C(t)=\mbox{\O }\} $. As $M\subset
\{ x_3<0\} $ and the end of $C(t_1)$ is
catenoidal with positive logarithmic growth, then $M$ and
$C(t_1)$ intersect at a common interior point $p$ (note that $\partial C(t_1)\subset
\B (r)$) where $M$ lies at one side of $C(t_1)$ near $p$,
which contradicts the {interior}
maximum principle. \vspace{.2cm}

More generally, one has the following result of Meeks and Rosenberg
based on earlier partial results in~\cite{cmw1,hm10,lr1,mr1,sor1}.

\begin{theorem}[Maximum Principle at Infinity~\cite{mr7}]
\label{thmMaxprin}
Let $M_1,M_2\subset N$ be disjoint, connected,
properly immersed 0-surfaces
with (possibly empty) boundary in a complete flat
3-manifold $N$.
\begin{enumerate}[1.]
\item If $\partial M_1\neq \mbox{\rm \O }$ or
$\partial M_2\neq \mbox{\rm \O }$, then after possibly reindexing,
the distance between $M_1$ and $M_2$ (as subsets of $N$) is equal to
$\inf \{ {d_N}
(p, q) \mid p \in \partial M_1, \, q \in M_2 \} $,
{where $d_N$ denotes Riemannian distance in $N$.}
\item If $\partial M_1=\partial M_2=\mbox{\rm \O}$,
 then $M_1$ and $M_2$
are flat.
\end{enumerate}
\end{theorem}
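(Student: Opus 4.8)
The plan is to pass to the universal Riemannian covering $\pi\colon\R^3\to N$ (every complete flat $3$-manifold is a quotient $\R^3/\Gamma$ by a group $\Gamma$ of isometries), where the full preimages $\wt M_i=\pi^{-1}(M_i)$ are disjoint, properly immersed $0$-surfaces in $\R^3$; the Euclidean interior maximum principle (Theorem~\ref{thmintmaxprin}) and the Strong Half-space Theorem (Theorem~\ref{thmhalf}) apply upstairs, and the conclusions descend using $\Gamma$-invariance and the fact that each component of $\wt M_i$ covers the connected surface $M_i$. With this reduction, statement~2 is immediate: if $\partial M_1=\partial M_2=\emptyset$, then $\wt M_1$ and $\wt M_2$ are disjoint, properly immersed $0$-surfaces without boundary, so the ``strong'' (second) conclusion of Theorem~\ref{thmhalf}, applied componentwise, forbids either one from being non-planar; hence each $\wt M_i$ is a plane and $M_1,M_2$ are flat.

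For statement~1, write $C=\mathrm{dist}_N(M_1,M_2)$ and argue by contradiction, assuming that \emph{neither} boundary realizes it, i.e. $\mathrm{dist}_N(\partial M_1,M_2)>C$ and $\mathrm{dist}_N(\partial M_2,M_1)>C$. Then any almost-minimizing pairs $p_n\in M_1,\ q_n\in M_2$ with $d_N(p_n,q_n)\to C$ stay a definite distance from both boundaries. First I would dispose of the case in which $C$ is \emph{attained} at interior points $p\in\Int(M_1),\ q\in\Int(M_2)$. Lifting, the minimizing geodesic from $\wt p$ to $\wt q$ is a Euclidean segment of length $C$ in a unit direction $\nu$, orthogonal to both sheets, and $\wt M_1$ lies locally on the near side of the plane through $\wt p$; translating by the fixed vector $C\nu$ places $\wt M_1+C\nu$ tangent to $\wt M_2$ at $\wt q$ from its mean convex side, so Theorem~\ref{thmintmaxprin} and unique continuation force $\wt M_2=\wt M_1+C\nu$ on the component through $\wt q$. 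Projecting, every $b\in\partial M_1$ then has a point of $M_2$ within distance $C$, so $\mathrm{dist}_N(\partial M_1,M_2)\le C$, a contradiction.

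The substantial case is when $C$ is only approached at infinity. Here I would study the function $u=\mathrm{dist}_N(\,\cdot\,,M_1)$ on the complete surface $M_2$: it satisfies $u\ge C$, its infimum $C$ is approached at interior points leaving every compact set, and where its foot-point on $M_1$ is unique $u$ is smooth with $\Delta_{M_2}u\le 0$ near the infimum (this superharmonicity is the analytic reflection of the fact that the equidistant surfaces of the \emph{minimal} surface $M_1$ have nonpositive mean curvature on the relevant side). A maximum principle at infinity for $u$ on the complete minimal surface $M_2$ --- via the Omori--Yau maximum principle, or parabolicity --- then forces either that $C$ is attained at an interior point (reducing to the previous paragraph) or that $u\equiv C$, whence $M_2$ is a global equidistant surface of $M_1$ and again $\mathrm{dist}_N(\partial M_1,M_2)\le C$; either way we contradict our assumption. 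The curvature control needed to justify the limiting geometry and the uniqueness of foot-points near the infimum I would obtain, as in the proof of Theorem~\ref{thmhalf}, by inserting a stable minimal surface between $M_1$ and $M_2$ (Meeks--Simon--Yau~\cite{msy1}) and invoking curvature estimates for stable surfaces.

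The main obstacle is precisely this last step: establishing a genuine maximum principle \emph{at infinity} for the distance function on a complete (possibly non-parabolic) minimal surface, since the interior maximum principle alone cannot reach configurations whose closest approach escapes to infinity. Everything else --- the covering reduction, the attained-distance tangency argument, and statement~2 --- follows from the interior maximum principle and the Strong Half-space Theorem already available. Once the at-infinity principle is in hand, both conclusions of the theorem follow uniformly.
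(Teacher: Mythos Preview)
The paper does not prove this theorem; it is stated with a reference to Meeks--Rosenberg~\cite{mr7} and a list of earlier partial results. So there is no paper proof to match, and I evaluate your outline on its own.

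Your reduction to $\R^3$ by the universal cover is fine, and your argument for item~2 via the Strong Half-space Theorem is correct and non-circular (that theorem is proved in the survey from the catenoid-barrier Half-space Theorem plus Meeks--Simon--Yau and the stable-surface classification, none of which uses Theorem~\ref{thmMaxprin}). Your treatment of the case where the distance $C$ is actually \emph{attained} at interior points is also essentially right: the minimizing segment is normal to both lifted sheets, the translate $\wt M_1+C\nu$ is locally on one side of $\wt M_2$, and the interior maximum principle plus real-analyticity give coincidence on components, which forces a boundary point to be within $C$ of the other surface.

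The genuine gap is exactly where you place it, but the tools you propose do not close it. Your key analytic claim --- that $u=\mathrm{dist}_N(\cdot,M_1)$ is \emph{superharmonic} on $M_2$ near its infimum --- is false in general. In flat space the equidistant hypersurfaces at signed distance $t>0$ from a minimal $M_1$ with principal curvatures $\pm\kappa$ have mean curvature $t\kappa^2/(1-t^2\kappa^2)\ge 0$, so $\Delta_{\R^3}u\ge 0$; the restriction to a minimal $M_2$ differs from this only by a Hessian term of indeterminate sign, and one does \emph{not} get $\Delta_{M_2}u\le 0$. So the sign goes the wrong way for the maximum-principle argument you sketch. Moreover, invoking Omori--Yau requires a lower Ricci bound on $M_2$ that you do not have, and parabolicity is not available for a general properly immersed minimal surface with boundary in a flat $3$-manifold.

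Your instinct to insert a stable minimal surface between $M_1$ and $M_2$ is the right thread to pull, but it should be the \emph{main} mechanism rather than a source of auxiliary curvature bounds. The actual proof in~\cite{mr7} does not run a maximum principle on the raw distance function; it combines the existence of an intermediate (area-minimizing, hence stable) surface, the curvature estimates for stable minimal surfaces, and a catenoid-barrier argument of the same flavor as the Half-space Theorem to rule out the ``distance only approached at infinity'' scenario. If you want to repair your approach, replace the superharmonicity/Omori--Yau step by that barrier argument applied to the stable separating surface.
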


We now come to a  deep application of the general maximum
principle at infinity.  The next corollary appears in~\cite{mr7} and
a slightly weaker variant of it can be found in Soret~\cite{sor1} when $H=0$.
Actually the results in~\cite{mr7} describe a slightly weaker version of the last statement in
the corollary when $H>0$, but the stronger statement
given below is easily proven with the same methods.

\begin{corollary}[Regular Neighborhood Theorem]
\label{cor.max}
Suppose $M \subset N$ is a proper non-flat $0$-surface in a
complete flat 3-manifold $N$,
with bounded second fundamental form and let $1/R$ {be}
the supremum of the absolute values of the principal curvatures of $M$.  Let
$N_{R}(M)$ be the open subset of the normal bundle of $M$ given by the
normal vectors of length strictly less that $R$. Then,
the corresponding exponential map $\exp \colon N_{R}(M) \rightarrow N$
is a smooth embedding.  In particular:
\begin{enumerate}[1.]
\item $M$ is properly embedded.
\item $M$ has an open, embedded tubular neighborhood of radius~$R$.
\item There exists a constant $C>0$, depending only on $R$ such that
for all balls $B\subset N$ of radius 1,
the area of $M\cap B$ is at most $C$ times the volume of $B$.
\end{enumerate}
Furthermore, under the same hypotheses on $M$ except that it is an $(H>0)$-surface,
and letting $N_{R}^+(M)\subset N_{R}(M)$ be the subset
of normal vectors that have a non-negative inner
product with the mean curvature vector of $M$, then the
restriction $\exp \colon N_{R}^+(M) \rightarrow N$ is a smooth embedding.
In particular, property 2 holds on the mean convex side of $M$,
and thus properties 1 and 3 also hold for $M$.
\end{corollary}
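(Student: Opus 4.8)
The plan is to show that \(\exp\colon N_R(M)\to N\) is a proper local diffeomorphism that is moreover globally injective, and then to read off the three numbered consequences; the \((H>0)\) statement will follow from the same scheme carried out on the mean convex half \(N_R^+(M)\). First I would verify that \(\exp\) is a local diffeomorphism on all of \(N_R(M)\). Fix \(p\in M\), a unit normal \(\nu\), and write points of the normal bundle as \(t\nu\). Because \(N\) is flat, the normal Jacobi fields along the geodesic \(s\mapsto\exp_p(s\nu)\) are affine in \(s\), so in a principal frame the differential of \(\exp\) at \(t\nu\) is conjugate to \(\mathrm{diag}(1-t\kappa_1,\,1-t\kappa_2)\) on the tangential factor together with the identity in the normal factor, where \(\kappa_1,\kappa_2\) are the principal curvatures at \(p\). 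Since \(|\kappa_i|\le 1/R\) and \(t<R\), each factor \(1-t\kappa_i\) is positive; equivalently, the first focal point along every normal geodesic lies at distance at least \(R\), so \(\exp\) is a local diffeomorphism on \(N_R(M)\). I would also record the \emph{bounded geometry} consequence of the curvature bound: near each point \(M\) is a graph over a disk of a fixed radius in its tangent plane, with uniformly controlled gradient and Hessian, so in particular the tube of radius \(R\) over any single local graph is embedded.

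The heart of the argument, and the step I expect to be the main obstacle, is global injectivity, which is equivalent to the reach of \(M\) being at least \(R\), i.e.\ to every point within distance \(R\) of \(M\) having a unique nearest point. Since Step~1 excludes focal points within distance \(R\), a failure of injectivity must come from a point \(x\) with two nearest points \(p\neq q\in M\) at distance \(<R\); by the bounded geometry these lie on genuinely distinct local sheets, and \(d_N(p,q)<2R\). Thus it suffices to show that two distinct sheets of \(M\) cannot approach within distance \(2R\). I would argue by contradiction: given a sequence of sheet pairs realizing separation \(<2R\), translate to keep the configuration bounded and use the bounded geometry of Step~1 together with the properness of \(M\) to extract, after passing to a subsequence, \(C^2\) limits \(\Sigma_1,\Sigma_2\) that are complete, boundaryless, properly embedded minimal surfaces (or minimal laminations); at the configuration realizing or limiting to the infimal separation the connecting segment is orthogonal to \(M\) at both ends, so the limit sheets have parallel tangent planes at the point of closest approach. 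If \(\Sigma_1\) and \(\Sigma_2\) touch at a finite point, the Interior Maximum Principle (Theorem~\ref{thmintmaxprin}) forces \(\Sigma_1=\Sigma_2\), contradicting either the distinctness of the two sheets or the embeddedness of \(M\); if instead they remain disjoint, the Maximum Principle at Infinity (Theorem~\ref{thmMaxprin}, part~2) forces both to be flat, contradicting the hypothesis that \(M\) is non-flat. The delicate point, and the reason the Maximum Principle at Infinity rather than only the interior principle is indispensable, is that the closest approach of the two sheets need not be attained at finite points but may occur only in the limit at infinity, which is exactly the regime Theorem~\ref{thmMaxprin} governs.

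Once \(\exp\) is an injective local diffeomorphism, it is an embedding: it is proper because \(M\) is proper and the fibre length is bounded by \(R\) (so preimages of compacta are closed, bounded, hence compact in the complete manifold \(N\)), and a proper injective local diffeomorphism is a homeomorphism onto its open image, hence a smooth embedding onto an open tubular neighbourhood. Conclusion~1 then follows, since \(M\) is the image of the zero section and is therefore properly embedded; conclusion~2 is precisely the assertion that \(\exp(N_R(M))\) is an embedded tube of radius \(R\). For conclusion~3 I would exploit the volume-to-area comparison furnished by this embedded tube: over any unit ball \(B\subset N\), the radius-\(R\) normal tube above \(M\cap B\) is embedded and contained in the concentric ball of radius \(1+R\), so its volume is at least a dimensional constant times \(R\cdot\operatorname{Area}(M\cap B)\); rearranging gives \(\operatorname{Area}(M\cap B)\le C\cdot\operatorname{Vol}(B)\) with \(C=C(R)\).

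Finally, for the \((H>0)\) case I would run the identical scheme on the one-sided bundle \(N_R^+(M)\) of normal vectors making a non-negative angle with the mean curvature vector. The focal computation of Step~1 is unchanged on this side, so \(\exp\) is a local diffeomorphism on \(N_R^+(M)\). In the injectivity step, a first contact on the mean convex sides produces two \(H\)-sheets whose mean curvature vectors point toward one another; the Interior Maximum Principle for \(H\)-surfaces (Theorem~\ref{thmintmaxprin}) again forces coincidence at a finite touching, while the \((H>0)\) form of the maximum principle at infinity alluded to in the text handles a contact attained only at infinity. Hence \(\exp\colon N_R^+(M)\to N\) is an embedding, which is property~2 on the mean convex side; embeddedness of \(M\) (property~1) follows because overlapping sheets of \(M\) would already force the one-sided tubes to overlap, and the area bound (property~3) follows from the same volume comparison applied to the embedded half-tube.
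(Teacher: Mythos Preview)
The paper does not prove this corollary; it attributes the result to Meeks--Rosenberg~\cite{mr7} and presents it as a ``deep application'' of the Maximum Principle at Infinity (Theorem~\ref{thmMaxprin}). Your identification of that theorem as the key tool is therefore exactly right, and your focal-point computation for the local diffeomorphism step is correct.

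The injectivity argument, however, has a genuine gap. You propose to take a minimizing sequence of sheet pairs, \emph{translate} to keep them bounded, extract $C^2$ limit surfaces $\Sigma_1,\Sigma_2$, and apply the maximum principle to these limits. Two problems arise. First, a general complete flat 3-manifold $N$ need not admit a transitive group of isometries (some compact flat 3-manifolds have finite isometry group), so ``translate to keep the configuration bounded'' is not available; this can be repaired by lifting to the universal cover $\R^3$, but you did not do so. Second, and more seriously, the limits $\Sigma_1,\Sigma_2$ are limits of \emph{translates} of $M$, not pieces of $M$ itself. If the Maximum Principle at Infinity forces $\Sigma_1,\Sigma_2$ to be planes, this does \emph{not} imply $M$ is flat: a non-flat minimal surface with bounded curvature can perfectly well have flat limits under divergent translations (vertically translated catenoids converge on compacta to a pair of parallel planes). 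Likewise, if $\Sigma_1=\Sigma_2$ by the interior maximum principle, this does not contradict the embeddedness of $M$ or the distinctness of the original sheets, since distinct sheets are allowed to merge in a limit. So neither branch of your dichotomy actually produces a contradiction.

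The argument in~\cite{mr7} avoids limits of translates entirely. One pulls back the flat metric on $N$ to $N_R(M)$ via $\exp$, so that $\exp$ becomes a local isometry. The zero section $M_0\cong M$ is then a properly embedded minimal surface in this flat (incomplete) 3-manifold. If $\exp$ were not injective one produces a second component $M'$ of $\exp^{-1}(M)$ disjoint from $M_0$; the pair $(M_0,M')$ is now a pair of \emph{disjoint} minimal surfaces in a flat 3-manifold to which one applies Theorem~\ref{thmMaxprin} directly (the version with boundary, since $M'$ may meet $\partial N_R(M)$). The conclusion is that the distance from $M_0$ to $M'$ is realized on $\partial M'\subset \partial N_R(M)$, which lies at distance $R$ from $M_0$, contradicting $M'\subset N_R(M)$. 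The technical work---and the reason the paper calls this a \emph{deep} application---lies in handling the incompleteness of the pullback metric; your limit-of-translates substitute does not sidestep this difficulty. The same scheme, carried out on the mean convex side, gives the $(H>0)$ statement.
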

%
%\textcolor{red}{\marginpar{\footnotesize I would remove this paragraph: there is only
%ONE definition, and we have already
%mentioned Alexandrov's result}
%For later purposes we will need the following definitions.  The second
%definition  arises from the work of Alexandrov~\cite{aa1},
%who proved that a compact {\em strongly Alexandrov embedded} $H$-hypersurface in $\R^n$
%is a round sphere.}

\begin{definition}  \label{Strong-Alex}{\rm
Let $N$ be a smooth Riemannian $n$-manifold.
\ben
\item We call a compact, immersed
$H$-hypersurface $f\colon M\to N$
{\em Alexandrov embedded} if there exists an immersion
$F\colon W\to N$ of a compact, mean convex $n$-manifold $W$ with $\partial W = M$,
such that  $F|_M=f$.
\item
We call a proper, immersed
$H$-hypersurface $f\colon M\to N$
{\em strongly Alexandrov embedded} if there exists a proper immersion
$F\colon W\to N$ of a complete, mean convex $n$-manifold $W$ with $\partial W = M$,
such that $F$ is injective on the interior of $W$ and $F|_M=f$.
\een}
\end{definition}

We next include a result which is analogous to Corollary~\ref{cor.max}
and that holds in the $n$-dimensional setting for certain $(H>0)$-hypersurfaces.

\begin{theorem}[One-sided Regular Neighborhood, Meeks, Tinaglia\, \cite{mt3}]
\label{cor*}  \mbox{}\newline
Suppose $N$ is a complete $n$-manifold with absolute
sectional curvature bounded by a constant $S_0>0$. Let $M$ be a strongly
Alexandrov embedded hypersurface with constant mean
curvature $H_0 >0$ and norm of its second fundamental form $|A_M|\leq A_0$ for some $A_0>0$.
Then, the following statements hold.
\begin{enumerate}[1.]
\item There exists a positive number $\tau\in\left(0,\pi/S_0\right)$,
depending on $A_0$, $H_0$, $S_0,$ such
that $M$ has a regular neighborhood $\exp(N^+_{\tau }(M))$
of width $\tau$ on its mean convex side, where we are using
the notation of Corollary~\ref{cor.max}.
\item There exists $C>0$ depending on $A_0$, $H_0$, $S_0,$
such that  the $(n-1)$-dimensional volume of $M$ in balls of
radius $1$ in $N$ is less than $C$. \end{enumerate}
\end{theorem}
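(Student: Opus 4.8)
The plan is to prove the two statements by combining a uniform one-sided regular neighborhood of fixed width $\tau$ (Statement 1) with a volume estimate that follows from it (Statement 2). The key structural idea is identical in spirit to Corollary~\ref{cor.max}: once one knows that the exponential map is an embedding on the normal vectors of length less than $\tau$ pointing to the mean convex side, the tube $\exp(N^+_\tau(M))$ is an embedded region whose volume is comparable to $\tau$ times the $(n-1)$-volume of $M$, and disjointness of the fibers forces an area bound. The essential difference from Corollary~\ref{cor.max} is that we are in a curved ambient manifold $N$ rather than a flat one, and the surface is only strongly Alexandrov embedded rather than properly embedded, so self-intersections on the non-mean-convex side are allowed.

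For Statement 1, I would first fix the local geometry. Since $|A_M|\leq A_0$, each point $p\in M$ has a graphical neighborhood of uniform extrinsic size $\rho_0=\rho_0(A_0,S_0)$ over its tangent plane, by the standard graph lemma for hypersurfaces with bounded second fundamental form in a manifold of bounded sectional curvature (here the comparison geometry bound $|K_N|\leq S_0$ controls the domain of the normal exponential map through the condition $\tau<\pi/S_0$, which keeps us inside the focal radius). The proof then proceeds by contradiction: if no uniform $\tau$ exists, one extracts a sequence of points where the exponential map on the mean convex side fails to be injective at scale tending to zero, translates each configuration to the origin using the ambient isometries available from bounded geometry, and passes to a limit. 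Here one uses the compactness/Arzel\`a-Ascoli argument discussed in the introduction: uniform bounds on $|A_M|$ and on ambient curvature give subsequential $C^{1,\alpha}$ convergence to a limit $H_0$-hypersurface $M_\infty$ in a limit flat (or bounded-geometry) space, together with two sheets that are tangent on the mean convex side. At the tangency point the two sheets touch with the mean convex side of one lying on the mean convex side of the other, so the interior maximum principle (Theorem~\ref{thmintmaxprin}, in its Riemannian form) forces the sheets to coincide, contradicting the failure of injectivity. This yields a uniform lower bound $\tau=\tau(A_0,H_0,S_0)>0$, and the mean convexity of $W$ guarantees that the normals we use point into $W$, so the tube lies in the embedded part $F(\mathrm{int}\,W)$.

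For Statement 2, the volume estimate is a direct consequence. Let $B\subset N$ be a ball of radius $1$. The embedded one-sided tube of width $\tau$ over $M\cap B$ is contained in a slightly larger ball, say of radius $1+\tau$, and its volume is bounded below by $c(\tau,S_0)\cdot \mathrm{vol}_{n-1}(M\cap B)$ because on the mean convex side the Jacobian of $\exp$ along the normal is bounded below by a positive constant depending only on $\tau$, $A_0$ and $S_0$ (the principal curvatures of the parallel hypersurfaces stay controlled for $t<\tau<\pi/S_0$). Since the tube is embedded it injects into $N$, so its volume is at most the volume of the ball of radius $1+\tau$, which is bounded in terms of $S_0$ by Bishop--Gromov comparison. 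Combining the two inequalities gives $\mathrm{vol}_{n-1}(M\cap B)\leq C$ with $C=C(A_0,H_0,S_0)$, as claimed.

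I expect the main obstacle to be Statement 1, specifically the rigorous extraction of the limit configuration in the curved setting and the verification that the limiting two sheets are genuinely tangent \emph{from the mean convex side} so that Theorem~\ref{thmintmaxprin} applies with the correct orientation. One must track the mean convex normals through the rescaling limit and rule out that the near-contact occurs through two sheets of a single self-tangency on the non-mean-convex side, which is permitted for strongly Alexandrov embedded hypersurfaces and would not yield a contradiction. The strong Alexandrov embeddedness hypothesis, which provides the filling $W$ injective on its interior, is exactly what guarantees that the width estimate we need is on the mean convex side only, mirroring the final assertion of Corollary~\ref{cor.max}; making this bookkeeping precise in the limit is where the real work lies.
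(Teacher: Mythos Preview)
The paper does not actually prove this theorem: it is stated as a result of Meeks and Tinaglia with the citation~\cite{mt3} and no argument is given in the survey itself. So there is no ``paper's own proof'' to compare against here.

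That said, your approach is the expected one and matches the strategy the paper uses for the flat analogue, Corollary~\ref{cor.max}: obtain a uniform one-sided tubular neighborhood via the maximum principle (Theorem~\ref{thmintmaxprin}) and then deduce the volume bound from the fact that the embedded tube injects into a ball of controlled volume. Your identification of the delicate point is also correct: the strong Alexandrov embeddedness hypothesis is precisely what guarantees that any near-tangency produced by a failing sequence must occur on the mean convex side (since $F$ is injective on $\mathrm{Int}(W)$), so the two local sheets in the limit are ordered correctly for the comparison in Theorem~\ref{thmintmaxprin}. One small point worth tightening: in your contradiction step you do not need to rescale or pass to a flat limit, since $|A_M|\leq A_0$ already gives uniform graphical control at a fixed scale; you can run the maximum-principle contradiction directly in $N$ once the two sheets are graphs over a common tangent plane of definite size, which avoids the bookkeeping you flag about tracking orientations through a blow-up.
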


\subsection{Second variation of area, index of stability, Jacobi functions and
curvature estimates of stable $H$-surfaces.}
\label{subsecJacobi}
Let $\psi \colon M\to N$ be an isometric immersion of a surface in a Riemannian 3-manifold~$N$.
Assume that $\psi (M)$ is two-sided, i.e. there exists a globally defined unit normal vector
field $\eta$ on $M$. Given a compact smooth domain (possibly with
boundary) $\Omega \subset M$, we will consider variations of $\Omega
$ given by differentiable maps $\Psi \colon
(-\varepsilon,\varepsilon)\times \Omega \rightarrow N$,
$\varepsilon>0$, such that $\Psi (0,p)=\psi (p)$ and $\Psi (t,p)=\psi (p)$ for
$|t|<\varepsilon $ and $p\in M-\Omega $. The {\it variational
vector field} for such a variation $\Psi $ is $\left.
\frac{\partial \Psi }{\partial t}\right| _{t=0}$ and its normal
component is
$u=\langle \left. \frac{\partial \Psi }{\partial t}\right|_{t=0},\eta\rangle$.
Note that, for small $t$, the map
$\psi _t=\Psi _{|t\times \Omega }$ is an immersion. Hence we can associate
to $\Psi $ the {\it area function} Area$(t)=$Area$(\psi_t)$ and the {\it
volume function} Vol$(t)$  given by
\[
{\rm Vol}(t) =\int_{[0,t]\times \Omega } \Jac (\Psi ) \, dV,
\]
where $dV$ is the volume element in $N$. The function Vol$(t)$
measures the signed volume enclosed between $\psi _0=\psi$ and $\psi_t$.

The first variation formula for the area and volume are
\begin{equation}
\label{1varvolume}
\left. \frac{d}{dt}\right| _{t=0}\mbox{\rm Area}(t)=-2\int_M Hu\,
dA,\qquad \left. \frac{d}{dt}\right| _{t=0}\mbox{Vol}(t)= -\int _M
u\, dA,
\end{equation}
where $dA$ is the area element on $M$ for the induced metric by $\psi $. The equations
in (\ref{1varvolume}) imply that $M$ is a critical point
of the functional $\mbox{Area}-2c\, \mbox{Vol}$ (here $c\in \R $) if and only if it
has constant mean curvature $H=c$. In this case, we can consider the {\it Jacobi
operator} on $M$,
\begin{equation}
\label{Jacobiop}
L= \Delta +|A_M|^2+{\rm Ric}(\eta),
\end{equation}
where Ric$(\eta)$
is the Ricci curvature of $N$ along the unit normal vector field of
the immersion. For an $H$-surface $M$, the second variation formula
of the functional $\mbox{Area}-2H\, \mbox{Vol}$ is given by
(see e.g.~\cite{bce1,ni2})
\begin{equation}
\label{stable}
\hspace{-.45cm}
\left.
\frac{d^2}{dt^2}\right| _{t=0}\left[
 \mbox{\rm Area}(t)-2H\ \mbox{Vol}(t)\right] =
 -\int _M uLu\, dA =
 \int_ M \left[ |\nabla u|^2 -  (|A_M|^2+{\rm Ric}(\eta))u^2\right] dA.
\end{equation}
Formula (\ref{stable}) can be viewed as the
bilinear form $Q(u,u)$ associated to the linear elliptic $L^2$-selfadjoint operator
given by the Jacobi operator $L$ defined in (\ref{Jacobiop}).

\begin{remark}{\em
For a normal variation $\psi_t$ of a surface $\psi\colon M\to N$ with associated
normal variational vector field  $u\eta$, $L(u)(p)$ is
equal to $-2H'(t)|_{t=0}$ at $p$, where $H(t)(p)$ is the mean curvature of
the immersed surface $\psi_t (M)$ at the point $\psi_t(p)$.
}
\end{remark}

\begin{definition}
\label{defJacobif}
{\rm A $C^2$-function $u\colon M\to \R $ satisfying $Lu=0$
on $M$ is called a {\it Jacobi function.} We will let ${\mathcal
J}(M)$ denote  the linear space of Jacobi functions on~$M$.}
\end{definition}
Classical elliptic theory implies that given a subdomain
$\Omega \subset M$ with
compact closure, the Dirichlet problem for the Jacobi operator
in $\Omega $ has an
infinite discrete spectrum $\{ \l _k\} _{k\in \N \cup \{ 0\} }$
of eigenvalues with
$\l _k\nearrow +\infty $ as $k$ goes to infinity, and each
eigenspace is a finite
dimensional linear subspace of $C^{\infty }(\Omega )\cap
H_0^1(\Omega )$, where $H_0^1(\Omega )$ denotes the usual Sobolev
space of $L^2$-functions with $L^2$ weak partial derivatives and
trace zero.

\begin{definition} \label{def2.21}
{\rm
 Let $\Omega \subset M$ be a subdomain with compact closure.
  The {\it index of
stability} of $\Omega $ is
the number of negative eigenvalues of the Dirichlet problem
 associated to $L$ in
$\Omega $. The {\it nullity} of $\Omega $ is the dimension
of ${\mathcal J}(\Omega )\cap
H_0^1(\Omega )$. $\Omega $ is called {\it stable} if its index of stability
is zero, and {\it
strictly stable} if both its index and nullity are zero. }
\end{definition}

When $N=\R^3$, (\ref{Jacobiop}) reduces to $L=\Delta -2K$
{($K$ denotes Gaussian curvature)}.
In this case, since the
Gauss map of {an $H$}-graph
defined on a domain in a plane $\Pi $ has
image set contained in an open half-sphere, the inner product of the unit normal vector
with the unit normal to $\Pi $ provides a positive Jacobi function,
from where we conclude that any {$H$}-graph is stable.

Coming back to the general case of a
two-sided $H$-surface $\psi \colon M\to N$ in a Riemannian 3-manifold~$N$,
stability also makes sense in the non-compact setting for $M$, as we next explain.
\begin{definition}
\label{defstable}
{\rm
An $H$-surface $\psi \colon M\to N$ in a
Riemannian 3-manifold~$N$ is called {\it stable}
if any subdomain $\Omega \subset M$ with compact closure
is stable in the sense of Definition~\ref{def2.21}.
Stability is equivalent to the existence of a positive Jacobi function on $M$
(Proposition~1 in Fischer-Colbrie~\cite{fi1}). $M$ is said to have {\it
finite index} if outside of a compact subset it is stable. The {\it
index of stability} of $M$ is the supremum of the indices of stability of
subdomains with compact closure in~$M$.}
\end{definition}

For $H$-surfaces, it is natural to consider
a weaker notion of stability, associated to the isoperimetric
problem.
\begin{definition}
\label{defwstable}
{\rm
We say that an $H$-surface $\psi \colon M\to N$ in a
Riemannian 3-manifold~$N$ is {\it weakly
stable} if
\[
\int_ M \left[ |\nabla u|^2 -  (|A_M|^2+{\rm Ric}(\eta))u^2\right] dA\geq 0,
\]
for every $f\in C^{\infty }_0(M)$ with $\int _M f \, dA=0$.
Sometimes this notion is referred to as
{\it volume preserving stable} in the literature.}
\end{definition}

The Gauss equation allows us to write the Jacobi operator of an $H$-surface in several
interesting forms.
\begin{eqnarray}
L &=& \Delta - 2K +4H^2+\Ric (e_1)+\Ric (e_2) \label{jacobi1} \\
& =&\rule{0cm}{.5cm} \Delta  -K + 2H^2+\frac{1}{2}|A_M |^2+
\frac{1}{2}S \label{jacobi2}\\
&=&\rule{0cm}{.5cm}\Delta  -K+3H^2+\frac{1}{2}S+(H^2-\det (A_M)),
\label{jacobi3}
\end{eqnarray}
where $K$ is the Gaussian curvature of $M$, $e_1,e_2$ is
an orthonormal basis of the tangent plane
of $\psi \colon M\to N$ and $S$ denotes the scalar
curvature of $N$. Note that we take the scalar curvature
function $S$ at a point $p\in N$ to be six times
the average sectional curvature of $N$ at $p$.

By definition, stable surfaces have index zero. The following
theorem explains how restrictive is the property of stability for
complete $H$-surfaces in $\R^3$. In the case $H=0$, the first
statement in it was proved independently by
Fischer-Colbrie and Schoen~\cite{fs1}, do Carmo and Peng~\cite{cp1},
and Pogorelov~\cite{po1} for orientable surfaces. Later,
Ros~\cite{ros9} proved that a complete, non-orientable 0-surface in $\rth$ is never stable.
The second statement has important applications to
the study of regularity properties of $H$-laminations in $\rth$ punctured at the origin.
A short elementary proof of the next result is given in Lemma~6.4 of~\cite{mpr19}.
The case $H=0$ of the second statement in the following result was also obtained
by Colding and Minicozzi (Lemma A.26 in~\cite{cm25}).

\begin{theorem}
\label{thmstablecompleteplane}
If $M\subset\R^3$ is a complete, stable immersed $H$-surface, then $M$ is a plane.
More generally, if $M\subset \rth -\{\vec{0}\}$ is a stable $H$-surface
which is complete outside the origin (in the sense that every divergent path
in $M$ of finite length has as limit point the origin), then $M$ is a plane.
\end{theorem}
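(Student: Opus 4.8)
The plan is to run the standard test-function argument built on the stability inequality, with extra care to absorb the puncture in the general statement. First I would record that since the ambient space is flat, $\Ric(\eta)\equiv 0$, so by (\ref{Jacobiop}) and (\ref{jacobi1}) the Jacobi operator is $L=\Delta+|A_M|^2=\Delta-2K+4H^2$, and the Gauss equation gives the pointwise identity $|A_M|^2=4H^2-2K\geq 0$. Stability (equivalently, the existence of a positive Jacobi function, as in Definition~\ref{defstable}) then yields
\[
\int_M |A_M|^2\,\phi^2\,dA\;\leq\;\int_M|\nabla\phi|^2\,dA\qquad\text{for all }\phi\in C_0^\infty(M).
\]
Because $A_M\equiv 0$ already forces $M$ to be totally geodesic, hence a plane (and in particular $H=0$ and $K=0$), the entire theorem reduces to proving that one may insert into this inequality a sequence of cutoffs $\phi_j\nearrow 1$ whose Dirichlet energies tend to $0$: that would give $\int_M|A_M|^2=0$. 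I may pass to the universal cover throughout, since both stability and the positive Jacobi function lift.

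The mechanism for producing such cutoffs is the logarithmic cutoff: if $M$ has at most quadratic area growth, $\mathrm{Area}(B_M(p,r))\leq Cr^2$, then the functions equal to $1$ on $B_M(p,\sqrt r)$ and interpolating logarithmically to $0$ on $B_M(p,r)$ have energy $O(1/\log r)\to 0$, which closes the argument. The hard part is therefore the a priori quadratic area-growth estimate, which I expect to be the main obstacle. Here I would couple the stability inequality with Gauss--Bonnet: a linear cutoff gives $\int_{B_M(p,r)}|A_M|^2\leq r^{-2}\,\mathrm{Area}(B_M(p,2r))$, while Gauss--Bonnet together with the coarea formula $\mathrm{Area}'(r)=L(r)$ controls the growth of $L(r)=\mathrm{length}(\partial B_M(p,r))$ through $L'(r)\leq 2\pi-\int_{B_M(p,r)}K$ and $\int_{B_M(p,r)}K=2H^2\,\mathrm{Area}(B_M(p,r))-\tfrac12\int_{B_M(p,r)}|A_M|^2$. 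Feeding the $|A_M|^2$ bound into this integro-differential relation and bootstrapping yields the quadratic bound; this is the do Carmo--Peng/Pogorelov argument in the minimal case, and the mean-curvature term is harmless because $|A_M|^2\geq 2H^2$ keeps $4H^2-2K$ coercive. The delicate bookkeeping at the cut locus in the Gauss--Bonnet step is the part I would treat most carefully.

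For the punctured statement, where $M\subset\rth-\{\vec 0\}$ is only complete away from the origin, I would additionally localize the cutoffs away from $\vec 0$. The key point is that a single point has zero capacity in a surface, so one can multiply the previous cutoffs by a logarithmic cutoff vanishing on a small extrinsic ball around $\vec 0$ whose energy also tends to $0$; completeness outside the origin is exactly what lets the intrinsic-distance cutoffs at infinity behave as in the complete case. The same conclusion $\int_M|A_M|^2=0$ then follows, so $M$ lies in a plane $\Pi$. Finally, completeness outside the origin forces $M$ to fill out all of $\Pi$, up to the removable puncture $\Pi\cap\{\vec 0\}$, so $M$ is a plane.
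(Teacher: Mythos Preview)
The paper is a survey and does not actually prove this theorem; it merely cites external references (Lemma~6.4 in~\cite{mpr19} for the general statement, and Lemma~A.26 in~\cite{cm25} for the minimal punctured case), after attributing the complete minimal case to Fischer-Colbrie--Schoen, do Carmo--Peng and Pogorelov. So there is no in-paper proof to compare against, and your proposal must be assessed on its own.

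For the complete case your outline is the classical do~Carmo--Peng route and is essentially sound: stability inequality, quadratic intrinsic area growth via the Gauss--Bonnet/coarea bootstrap, then logarithmic cutoffs to force $|A_M|\equiv 0$. Your remark that the $4H^2$ term is harmless because $|A_M|^2\geq 2H^2$ is correct, and in fact the $H>0$ case is easier: once you know $\int_M|A_M|^2=0$ you get an immediate contradiction with $|A_M|^2\geq 2H^2>0$, so no such surface exists.

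The gap is in the punctured statement. Your assertion that ``a single point has zero capacity in a surface'' is the right intuition but is not justified as written: the origin is a point of the ambient $\rth$, not of $M$, and intrinsically $M$ could approach $\vec 0$ along something far more complicated than a puncture (a priori infinite area, infinite topology, etc.). For the extrinsic logarithmic cutoff $\psi_\ve$ in $|x|$ you need
\[
\int_{M\cap\{\ve<|x|<\delta\}}|x|^{-2}\,dA \;=\; o\big((\log(\delta/\ve))^2\big)\quad\text{as }\ve\to 0,
\]
and this requires an a~priori area bound of the form $\mathrm{Area}(M\cap\B(r))\leq Cr^2$ for small $r$, which you have not established. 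The standard way to get it is to use stability once more: the curvature estimate of Theorem~\ref{thmcurvestimstable} (Schoen/Ros), applied with the origin playing the role of boundary, gives $|A_M|(p)\,|p|\leq c$ near $\vec 0$; this scale-invariant bound yields local graphical structure over spheres and hence quadratic extrinsic area growth at the puncture, after which your capacity cutoff works. Alternatively one can blow up at $\vec 0$ and invoke the complete case together with a removable-singularity argument (this is essentially how the cited references proceed). Either way, this step needs to be made explicit; as written, the punctured case is incomplete.
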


A crucial fact in $H$-surface theory is that stable,
immersed $H$-surfaces with boundary in homogeneously regular 3-manifolds
(see Definition~\ref{defhomgreg} below) have curvature estimates
up to their boundary. These curvature estimates were first obtained by
Schoen for two-sided 0-surfaces and later improved by Ros
to the one-sided 0-case, and are a simple consequence of
Theorem~\ref{thmstablecompleteplane} after a rescaling argument.
\begin{definition}
\label{defhomgreg}
{\rm
A Riemannian 3-manifold $N$ is {\it homogeneously regular} if there exists
an $\ve > 0$ such that $\ve$-balls in $N$ are uniformly close to
$\ve$-balls in $\rth$ in the $C^2$-norm. In particular, if $N$ is
compact, then $N$ is homogeneously regular.
}
\end{definition}
\begin{theorem} [Schoen \cite{sc3}, Ros \cite{ros9}]
\label{thmcurvestimstable}
Let $N$ be a homogeneously regular 3-manifold. Then,
there exists a universal constant $c>0$
such that for
any stable immersed $H$-surface $M$ in $N$,
\[
|A_M(p)|\, {d_N}(p,\partial M)^2\leq c\quad \mbox{for all }p\in M,
\]
where {$d_N$}
denotes distance in $N$ and $\partial M$ is the boundary of $M$.
\end{theorem}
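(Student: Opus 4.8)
The plan is to argue by contradiction using a rescaling (blow-up) argument that reduces the estimate to the classification of complete stable $H$-surfaces in Theorem~\ref{thmstablecompleteplane}, exactly as the surrounding text suggests. The key structural observation is that the product $|A_M(p)|\,d_N(p,\partial M)$ is scale invariant: under a homothety of $\R^3$ by a factor $\lambda$, the norm of the second fundamental form scales by $\lambda^{-1}$ while distances scale by $\lambda$, so this is the natural quantity controlled by a blow-up (and it is equivalent, for the purpose of a uniform bound, to the displayed estimate). Because $N$ is homogeneously regular, on small scales its metric is $C^2$-close to the flat metric on $\R^3$ (Definition~\ref{defhomgreg}); hence after zooming in at a point the ambient geometry converges to $\R^3$ and the contribution of the mean curvature becomes negligible. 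The strategy is therefore to suppose the estimate fails, produce a sequence of stable $H_n$-surfaces whose scale-invariant curvature quantity blows up, rescale to normalize it, and extract a limit that is a complete stable minimal surface in $\R^3$ which is not a plane, contradicting Theorem~\ref{thmstablecompleteplane}.

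First I would fix $N$, assume no universal $c$ works, and obtain stable immersed $H_n$-surfaces $M_n\subset N$ together with points $p_n$ at which $|A_{M_n}(p_n)|\,d_N(p_n,\partial M_n)$ is unboundedly large. The delicate point, and the main obstacle, is the choice of base points and scales. Following Schoen, one selects $p_n$ inside a fixed intrinsic ball so as to (nearly) maximize the product of $|A_{M_n}|$ with the distance to the boundary of that ball; this forces that after rescaling by $\lambda_n=|A_{M_n}(p_n)|\to\infty$ the rescaled surfaces $\widetilde M_n=\lambda_n\,(M_n-p_n)$ satisfy $|A_{\widetilde M_n}|\le 1+o(1)$ on intrinsic balls whose radii tend to infinity, while $|A_{\widetilde M_n}|=1$ at the base point. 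This point-selection step is what guarantees that the eventual limit is complete and of uniformly bounded curvature rather than merely a local graph, and it must be arranged so that the regions of curvature control exhaust the whole limit surface.

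With the rescaling in place I would then pass to the limit. Since $N$ is homogeneously regular, the rescaled ambient metrics converge in $C^2$ on compact sets to the flat metric on $\R^3$; the rescaled mean curvatures equal $H_n/\lambda_n\to 0$, so any limit is minimal; and the uniform curvature bounds together with the normalization $|A|=1$ at the base point give, by elliptic estimates and the Arzel\`a--Ascoli theorem, $C^2$ subconvergence of the immersions to a complete, immersed minimal surface $M_\infty\subset\R^3$ with $|A_{M_\infty}|(\vec 0)=1$. Stability passes to the limit, either through the Fischer-Colbrie characterization that a stable surface carries a positive Jacobi function (see Definition~\ref{defstable}) together with convergence of the quadratic forms in \eqref{stable}, or directly from the preservation of nonnegativity of the second variation. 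Hence $M_\infty$ is a complete stable minimal surface in $\R^3$, so Theorem~\ref{thmstablecompleteplane} forces it to be a plane with $|A_{M_\infty}|\equiv 0$, contradicting $|A_{M_\infty}|(\vec 0)=1$; this contradiction proves the estimate. For the one-sided (non-orientable) refinement due to Ros, I would run the same argument after passing to the oriented double cover and invoke the non-orientable case of Theorem~\ref{thmstablecompleteplane}.
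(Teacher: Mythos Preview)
Your proposal is correct and matches the paper's approach: the paper does not give a detailed proof of this theorem but simply remarks that the estimate is ``a simple consequence of Theorem~\ref{thmstablecompleteplane} after a rescaling argument,'' which is precisely the blow-up/point-picking argument you describe. Your write-up fills in the standard details (Schoen's point-selection to guarantee completeness of the limit, convergence of the ambient metrics to flat $\rth$ via homogeneous regularity, $H_n/\lambda_n\to 0$, and passage of stability to the limit) and is the intended argument.
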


Rescaling arguments and results of L\'opez and Ros~\cite{lor2} for complete immersed
$0$-surfaces in $\R^3$ with index of stability 1 demonstrate that
given a homogeneously regular 3-manifold $N$, there exist similar curvature estimates
for {two-sided $H$-surfaces that are weakly stable}
in the sense of Definition~\ref{defwstable}.

We also note that Rosenberg, Souam and Toubiana~\cite{rst1}
have obtained the following version of Theorem~\ref{thmcurvestimstable}
valid for $H$-surfaces in the two-sided case when the ambient 3-manifold has
a bound on its sectional curvature.

\begin{theorem} [Rosenberg, Souam and Toubiana~\cite{rst1}]
\label{thmcurvestimstable1}
Let $N$ be a 3-manifold with a bound $k_0$ on its absolute sectional curvature.
There exists a universal constant $c>0$ (depending on $k_0$) such that for
any stable, two-sided immersed $H$-surface $M$ in $N$,
\[
|A_M(p)|\, {d_N}(p,\partial M)^2\leq c\quad \mbox{for all }p\in M.
\]
\end{theorem}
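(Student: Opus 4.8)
The plan is to adapt the blow-up argument that proves the Schoen--Ros estimate (Theorem~\ref{thmcurvestimstable}), replacing the homogeneous regularity of the ambient space by the uniform sectional curvature bound and using Theorem~\ref{thmstablecompleteplane} as the Bernstein-type input. I would argue by contradiction. Since a curve contained in $M$ has the same length measured intrinsically and in $N$, the ambient distance $d_N(p,\partial M)$ is dominated by the intrinsic distance $d_M(p,\partial M)$; hence a failure of the asserted bound (for the given $k_0$, and allowing the constant to be universal in $k_0$) produces complete Riemannian $3$-manifolds $N_n$ with $|\mathrm{sec}_{N_n}|\le k_0$, stable two-sided $H_n$-surfaces $M_n\subset N_n$, and points $p_n\in M_n$ with
\[
|A_{M_n}(p_n)|\; d_{M_n}(p_n,\partial M_n)^2\;\longrightarrow\;\infty .
\]
Thus it suffices to rule out such an intrinsic blow-up sequence.

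First I would run the standard point-selection argument, maximizing $|A_{M_n}|\cdot d_{M_n}(\cdot,\partial M_n)$ and recentering, so that I may assume there are points $q_n\in M_n$ with $\lambda_n:=|A_{M_n}(q_n)|\to\infty$ satisfying $\lambda_n\, d_{M_n}(q_n,\partial M_n)\to\infty$ and $\sup_{B_{M_n}(q_n,\,s_n/\lambda_n)}|A_{M_n}|\le 2\lambda_n$ for some $s_n\to\infty$. Rescaling the metric of $N_n$ by $\lambda_n^2$, the surfaces acquire second fundamental form bounded by $2$ on intrinsic balls of radius $s_n\to\infty$, with value exactly $1$ at $q_n$, while the rescaled ambient sectional curvature is bounded by $k_0/\lambda_n^2\to 0$ and the boundary recedes to infinity. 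The mean curvature rescales to $\widetilde H_n=H_n/\lambda_n$, and the elementary inequality $|A_M|^2=\lambda_1^2+\lambda_2^2\ge\tfrac12(\lambda_1+\lambda_2)^2=2H^2$ gives $|\widetilde H_n|\le 1/\sqrt2$, so after passing to a subsequence $\widetilde H_n\to H_\infty\in[0,1/\sqrt2]$.

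I would then extract a limit and invoke the Bernstein theorem. The sectional curvature upper bound guarantees, by Rauch comparison, that the exponential map at $f(q_n)$ has no conjugate points within distance $\pi/\sqrt{k_0}$, i.e.\ within rescaled distance $\lambda_n\pi/\sqrt{k_0}\to\infty$; working in the resulting geodesic normal coordinates (lifting the immersed $M_n$ if the injectivity radius is small), the pulled-back rescaled metrics converge to the flat metric on any fixed compact set. Since the rescaled surfaces have uniformly bounded second fundamental form and bounded mean curvature, they are locally graphs over their tangent planes solving the ambient $H$-surface equation, whose coefficients converge to the Euclidean ones; elliptic estimates then yield subconvergence to a complete surface $M_\infty\subset\rth$ of constant mean curvature $H_\infty$ with $|A_{M_\infty}|(q_\infty)=1$, so $M_\infty$ is not flat. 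Stability is scale invariant and, by Fischer-Colbrie~\cite{fi1}, equivalent to the existence of a positive Jacobi function; normalizing these at $q_n$ and passing to the limit produces a positive Jacobi function on $M_\infty$, so $M_\infty$ is a complete stable $H_\infty$-surface in $\rth$ in the sense of Definition~\ref{defstable}. By Theorem~\ref{thmstablecompleteplane} it must be a plane, whence $|A_{M_\infty}|\equiv 0$, contradicting $|A_{M_\infty}|(q_\infty)=1$.

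The hard part, and the place where a mere sectional curvature bound (rather than homogeneous regularity) must be used carefully, is the ambient convergence underlying this extraction. Without an a priori injectivity radius lower bound one cannot fix a single coordinate chart, and without a bound on the covariant derivatives of curvature one does not immediately obtain $C^2$ convergence of the rescaled ambient metrics. The resolution I would pursue is to exploit the absence of conjugate points up to distance $\pi/\sqrt{k_0}$ to pass to geodesic normal coordinates (or to a local cover), where the Taylor expansion of the metric is controlled by the curvature alone; this should give convergence of the rescaled metrics to the Euclidean one in a topology strong enough ($C^{1,\alpha}$ on the metric, hence $C^{2,\alpha}$ on the graphs) to pass the $H$-surface equation and the second fundamental form to the limit. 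Verifying this local control and the consequent smooth convergence of the surfaces, rather than the formal blow-up bookkeeping, is where I expect the technical weight of the argument to lie.
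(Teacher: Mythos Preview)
The paper is a survey and does not supply its own proof of this theorem; it merely records the statement with the citation to Rosenberg, Souam and Toubiana~\cite{rst1}, so there is no ``paper's proof'' to compare against. Your blow-up/point-selection argument is exactly the standard route to such estimates and is, in outline, the method used in~\cite{rst1}: assume failure, select points of almost-maximal $|A|\,d$, rescale so that the ambient sectional curvature goes to zero, extract a complete stable $H_\infty$-surface in $\rth$, and invoke Theorem~\ref{thmstablecompleteplane} to reach a contradiction.

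You have also correctly isolated where the content beyond Theorem~\ref{thmcurvestimstable} lies: with only a sectional curvature bound one has no injectivity radius control on $N$, so the ambient convergence must go through exponential/harmonic coordinates on the pullback to $T_{f(q_n)}N$ (where Rauch gives absence of conjugate points up to scale $\pi/\sqrt{k_0}$), yielding $C^{1,\alpha}$ convergence of the rescaled metrics to the flat metric. This is precisely the technical point emphasized in~\cite{rst1}, and your description of how to handle it is accurate. One small remark: your reduction from $d_N$ to $d_M$ is fine (since $d_N\le d_M$, unboundedness of $|A|\,d_N^2$ forces unboundedness of $|A|\,d_M^2$), and the two-sidedness hypothesis is what guarantees the limit surface in $\rth$ is two-sided so that Theorem~\ref{thmstablecompleteplane} applies.
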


If we weaken the stability
hypothesis in Theorem~\ref{thmstablecompleteplane} to finite index of
stability and we allow compact boundary, then completeness and orientability also lead to a
well-known family of immersed 0-surfaces.

\begin{theorem}[Fischer-Colbrie~\cite{fi1}]
 \label{thmfiniteindexftc}
Let $M\subset \R^3$ be a
complete, orientable immersed 0-surface in $\R^3$, with (possibly empty) compact
boundary. Then, $M$ has finite index of stability if and only if it has finite
total curvature.
In this case, the
index and nullity of $M$ coincide with the index and nullity of the
meromorphic extension of its Gauss map to the compactification
$\overline{M}$
obtained from $M$
after attaching its ends.
\end{theorem}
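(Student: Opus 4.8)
The plan is to establish both implications and the index--nullity count by reducing everything, through Osserman's theorem, to a spectral problem on the conformal compactification $\overline M$ governed by the extended Gauss map. Throughout I use that in $\rth$ the Jacobi operator of a $0$-surface simplifies to $L=\Delta+|A_M|^2=\Delta-2K$, since $\Ric\equiv 0$ and $|A_M|^2=-2K$ for a minimal surface.

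I would first handle the implication ``finite total curvature $\Rightarrow$ finite index'' together with the identification of the index and nullity, as these go hand in hand. By Osserman's theorem, a complete $0$-surface with $\int_M K\,dA>-\infty$ is conformally a compact Riemann surface $\overline M$ with finitely many punctures $p_1,\dots,p_k$ (one per end), and the Gauss map extends to a meromorphic map $\overline g\colon\overline M\to\C\cup\{\infty\}$. The key step is a conformal factorization of $L$: writing $\sigma=\overline g^{\,*}(g_{\esf^2})=-K\,ds^2$ for the pullback of the round metric on $\esf^2$ under the Gauss map, the behaviour of the Laplacian under conformal change yields $L=(-K)(\Delta_\sigma+2)$ on the set where $K\neq 0$. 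Since $dA_\sigma=(-K)\,dA$, the Dirichlet form $-\int_M uLu\,dA$ equals $-\int_{\overline M}u(\Delta_\sigma+2)u\,dA_\sigma$; hence the index and nullity of $M$ coincide with those of the operator $\Delta_\sigma+2$ on $(\overline M,\sigma)$, which is exactly what is meant by the index and nullity of the extended Gauss map. Because $\int_M|A_M|^2\,dA=-2\int_M K\,dA<\infty$, the metric $\sigma$ has finite total area (equal to $4\pi$ times the degree of $\overline g$) and extends as a smooth branched metric across the punctures, so $\Delta_\sigma+2$ is an elliptic operator on the closed surface $\overline M$ with discrete spectrum; in particular its index and nullity are finite.

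For the reverse implication ``finite index $\Rightarrow$ finite total curvature'', I would first observe that finite index forces $M$ to be stable outside a compact set $\Omega_0$: otherwise one could inductively construct infinitely many compactly supported test functions with pairwise disjoint supports on each of which the quadratic form is negative, and their span would exhibit arbitrarily large index. Consequently every end $E$ of $M$ is a complete, stable $0$-surface with compact boundary, and by the characterization of stability recalled above there is a positive Jacobi function $u>0$ on $E$ solving $\Delta u-2Ku=0$. The task is then to prove that such a stable end has finite total curvature. The plan is to use $u$ to pin down the conformal type: setting $h=\log u$ one obtains $\Delta h=2K-|\nabla h|^2\leq 0$, so $h$ is a positive superharmonic function on $E$, and together with the curvature estimates for stable surfaces (Theorem~\ref{thmcurvestimstable}) this is the input for showing, by potential theory, that $E$ is conformally a punctured disk on which the Gauss map has a limit at the puncture. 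Once $E$ is known to be conformally finite with $\overline g$ extending, Osserman's theorem (equivalently, integration of the Gauss equation on $E$) gives $\int_E|K|\,dA<\infty$, and summing over the finitely many ends yields finite total curvature for $M$.

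The main obstacle is precisely this last point: showing that stability of a complete minimal end with compact boundary already forces it to be conformally a punctured disk with a convergent Gauss map. The pointwise curvature control coming from stability only guarantees that $|A_M|$ decays along the end, and decay of this strength is not by itself enough to make $\int_E|A_M|^2\,dA$ converge (on a genuine finite--total--curvature end such as the catenoid the curvature decays one order faster than the stability estimate provides). One must therefore extract the finiteness of the conformal type from the existence of the positive Jacobi function via potential theory, which is the delicate analytic heart of Fischer-Colbrie's theorem. By contrast, the remaining ingredients---the conformal factorization of $L$, the fact that $\sigma$ extends smoothly across the punctures, and the matching of $L^2$ Jacobi fields on $M$ with smooth eigenfunctions on $\overline M$---are routine once this conformal finiteness is available.
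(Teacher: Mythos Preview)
The paper does not prove this theorem. It is a survey article, and Theorem~\ref{thmfiniteindexftc} is simply stated with attribution to Fischer-Colbrie~\cite{fi1}; the only addendum the paper offers is the short paragraph afterward recalling why the compactification $\overline M$ and the meromorphic extension of the Gauss map exist (Huber's parabolicity result for finite-total-curvature surfaces and Osserman's application of Picard's theorem). So there is no in-paper proof to compare your proposal against.

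That said, your outline is the standard one and is essentially Fischer-Colbrie's own route: conformal factorization $L=(-K)(\Delta_\sigma+2)$ for the easy direction and the index/nullity identification, and ``finite index $\Rightarrow$ stable ends $\Rightarrow$ finite total curvature'' for the hard direction. Two points are worth tightening. First, your claim that $h=\log u$ is a \emph{positive} superharmonic function is a slip: $\Delta h=2K-|\nabla h|^2\le 0$ gives superharmonicity, but $h$ has no reason to be positive (or proper, or bounded), so by itself it does not feed into the usual parabolicity criterion you allude to. Second, the actual mechanism Fischer-Colbrie uses at this ``delicate analytic heart'' is sharper than a bare potential-theory argument: one checks that the conformal metric $u\,ds^2$ has Gaussian curvature $\widetilde K=|\nabla u|^2/(2u^3)\ge 0$ and is complete, whence Cohn--Vossen/Huber give conformal finiteness of the end and finiteness of $\int\widetilde K\,d\widetilde A$, from which $\int_E(-K)\,dA<\infty$ follows. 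Your proposal correctly flags this step as the crux but stops short of naming the non-negative-curvature conformal metric, which is really the idea that makes the argument go through.
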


In order to make sense of  the last statement in the above theorem, recall
Huber's~\cite{hu1} parabolicity result that implies that if a complete Riemannian
surface with compact boundary has finite total curvature, then it is conformally
a compact Riemann surface, and, as shown by  Osserman~\cite{os3}, a simple application
of Picard's theorem implies the Gauss map extends holomorphically across the
the punctures to the conformal compactification.

\section{The flux of a Killing field.}
\label{sec:flux}

We next describe the notion of the
flux of a 1-cycle on an $H$-surface; see for instance~\cite{kks1,ku2,smyt1}
for further discussion of this invariant.  This generalizes the previous definition of
flux $F(\g)$ of a 1-cycle $\g$ on an immersed 0-surface given in equation~\eqref{eq:flux}
to the $H$-surface setting.

\begin{definition}[CMC Flux] \label{def:flux}
{\em
Let $\gamma$ be a piecewise-smooth 1-cycle in an immersed $H$-surface $M\subset \R^3$.
The  flux vector of $M$ along $\gamma$ is
\begin{equation}
\label{HFlux}
F(\g)=\int_{\gamma}(H\gamma+N)\times {\gamma'},
\end{equation}
where $N$
is the unit normal to $M$ and {$\g'$} is the velocity vector of $\g$
(compare with~\eqref{eq:flux}).
 }
\end{definition}

In the case of a  properly immersed 0-surface $M$ in $\rth$, one can associate for any $t\in \R$
its {\em scalar vertical flux $V_M(t)$ across the plane $\{ x_3=t \},$ } which is the possibly
improper integral
\[
V_M (t)= \int_{\partial (M\cap \{x_3\leq t \})} |\nabla x_3| \;\;\in  (0,\infty],
\]
where $\nabla x_3$ denotes the intrinsic gradient of the third coordinate function of $M$.

\begin{theorem}[Scalar vertical flux, Meeks \cite{me22}]
\label{Meeks-flux}
Let $M$ be a  properly immersed 0-surface in $\rth$. Then, $V_M (t)$
does  not depend on $t\in \R$. Hence,
without ambiguity we define
$V_M\in  (0,\infty]$ as the flux of $\nabla x_3$ across any
horizontal plane and we call $V_M$
the {\em scalar vertical flux of $M$}.
\end{theorem}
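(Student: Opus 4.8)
The plan is to exploit the harmonicity of the third coordinate function. Since $M$ is minimal, $x_3$ is harmonic on $M$, so the tangential gradient $\nabla x_3$ is a divergence-free vector field and, equivalently, the $1$-form $\ast\, dx_3$ is closed. Along a regular level set $\{x_3=t\}$ the intrinsic outward conormal is $\nu=\nabla x_3/|\nabla x_3|$, so $\langle \nabla x_3,\nu\rangle=|\nabla x_3|$ and hence $V_M(t)=\int_{\{x_3=t\}}\langle\nabla x_3,\nu\rangle\,ds=\int_{\{x_3=t\}}\ast\,dx_3$. In fact this is exactly the vertical component of the flux vector $F(\{x_3=t\})$ of \eqref{eq:flux}, since for the appropriate orientation $\langle N\times\gamma',e_3\rangle=|\nabla x_3|$ on a level curve. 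Thus the theorem asserts that the period of the closed form $\ast\,dx_3$ over the level cycles $\{x_3=t\}$ is independent of $t$.

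First I would restrict to regular values of $x_3$, which form a full-measure set by Sard's theorem; for these $t$ the level set $\{x_3=t\}$ is a properly embedded $1$-manifold in $M$, and the value of $V_M$ at the remaining heights, together with the limit passage, would be recovered at the end by monotone convergence. For two regular values $s<t$, consider the truncated slab $\Omega_R=M\cap\{s\le x_3\le t\}\cap \ov{B(\vec 0,R)}$, which is compact because $M$ is properly immersed. Applying Stokes' theorem to $\ast\,dx_3$ on $\Omega_R$ (equivalently, the divergence theorem to $\nabla x_3$, using $\Delta x_3=0$) gives
\[
V_M^{R}(t)-V_M^{R}(s)=-L(R),
\]
where $V_M^{R}(\tau)=\int_{\{x_3=\tau\}\cap B(\vec 0,R)}|\nabla x_3|\,ds$ and $L(R)=\int_{M\cap\partial B(\vec 0,R)\cap\{s\le x_3\le t\}}\langle\nabla x_3,\nu_R\rangle\,ds$ is the lateral flux across the sphere. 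As $R\to\infty$ the non-negative integrands give $V_M^{R}(\tau)\nearrow V_M(\tau)\in(0,\infty]$.

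The hard part will be controlling the lateral term $L(R)$. A naive estimate $|L(R)|\le\mathrm{length}(M\cap\partial B(\vec 0,R)\cap\{s\le x_3\le t\})$ is too lossy: for a vertical plane the level sets are lines, so $V_M\equiv\infty$ while the claim still holds, and what saves the balancing is that $\langle\nabla x_3,\nu_R\rangle$ vanishes identically there by orthogonality rather than any smallness of length. The correct strategy is therefore to extract the cancellation built into the closed form $\ast\,dx_3$ rather than to bound $|\nabla x_3|$ pointwise. Concretely, I would apply the coarea formula in the extrinsic radial variable $r=|\cdot|$ on $M$ to select a sequence of radii $R_n\to\infty$ along which $L(R_n)\to 0$, using that the closedness of $\ast\,dx_3$ makes the lateral contribution behave like a genuine boundary flux with cancellation, not an accumulation of length.

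Granting $\liminf_R|L(R)|=0$, the finite-flux case follows immediately from the displayed identity, giving $V_M(t)=V_M(s)$; the infinite case follows as well, since if $V_M^{R}(s)\to\infty$ while $L(R_n)$ stays bounded, then $V_M^{R_n}(t)=V_M^{R_n}(s)-L(R_n)\to\infty$, forcing $V_M(t)=\infty$. In either case $V_M(t)=V_M(s)$ as elements of $(0,\infty]$. I expect this lateral estimate—where properness of $M$ is essential to make each $\Omega_R$ compact and to control the geometry of $M\cap\partial B(\vec 0,R)$—to be the technical heart of the argument.
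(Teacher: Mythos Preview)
Your overall structure is correct and matches the natural approach: apply the divergence theorem to $\nabla x_3$ on the truncated slab $\Omega_R$ and show the lateral contribution $L(R)$ vanishes along some sequence $R_n\to\infty$. But you have correctly identified, and then not actually filled, the real gap: there is no argument here that $\liminf_R |L(R)|=0$. The gesture toward ``coarea in the radial variable'' and ``cancellation built into the closed form $\ast\,dx_3$'' does not do any work; writing $\int_{R_1}^{R_2}L(R)\,dR=\int_{\{R_1\le r\le R_2\}\cap\text{slab}}\langle\nabla x_3,\nabla r\rangle\,dA$ does not produce a useful bound without area control on $M$ in the slab, which you do not have in general. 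The vertical-plane example you cite is misleading: there the lateral term vanishes \emph{pointwise} by orthogonality, not by any averaging mechanism, so it gives no hint of how to handle the generic case.

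The paper's proof supplies exactly the missing ingredient, and it is of a different nature from what you propose. One shows that the slab piece $M[s,t]=M\cap\{s\le x_3\le t\}$ is a \emph{parabolic} Riemannian surface with boundary, using the universal superharmonic function $-x_3^2+\ln\sqrt{x_1^2+x_2^2}$ of Collin--Kusner--Meeks--Rosenberg~\cite{ckmr1}, which restricts to a proper positive superharmonic function on the slab piece outside a compact set. Parabolicity is precisely the statement that the ``boundary at infinity'' carries no harmonic measure, which in your language forces the lateral flux to vanish in the limit for \emph{any} bounded harmonic function; applied to $h=x_3$ (bounded on $M[s,t]$) this gives $V_M(s)=V_M(t)$ directly. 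So the technical heart you anticipated is not a clever choice of radii via coarea, but rather a potential-theoretic fact about the conformal type of the slab region.
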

We next give a sketch of proof of Theorem~\ref{Meeks-flux}, partly to motivate some other
important theoretical results and techniques in the subject.  We first recall
the notion of a parabolic Riemannian manifold $M$ with boundary, and refer the reader
to Section~7 of the book~\cite{mpe10} for further details.

\begin{definition}
\label{defparabolic}
{\rm
Let $(M^n,g)$ be an $n$-dimensional
Riemannian manifold
with non-empty boundary. $M$ is {\it parabolic} if every bounded
harmonic function on
$M$ is determined by its boundary values.
}
\end{definition}

In dimension $n=2$, the
property of a Riemannian surface with boundary to be parabolic is a conformal one,
and any proper smooth subdomain of a parabolic manifold is also parabolic.
One way to show that  an $n$-dimensional
Riemannian manifold $(M^n,g)$ is parabolic
is to prove that there exists a proper, positive
superharmonic function on it~\cite{gri1}.

Collin, Kusner, Meeks and Rosenberg~\cite{ckmr1}
constructed
ambient functions on certain proper non-compact regions in $\rth$
with the property that they restrict to any minimal surface to
be superharmonic; they called such functions {\em universal superharmonic functions}.
Using the universal superharmonic function
$f(x_1,x_2,x_3)=-x_3^2+\ln(\sqrt{x_1^2+x_2^2})$ defined
on $\{(x_1,x_2,x_3) \mid x_1^2 +x_2^2\geq 1\}$, they proved that the intersection of
a properly immersed 0-surface with boundary in $\rth$ and contained in a half-space
is parabolic. In particular, if $M$ is a  properly immersed 0-surface in $\rth$,
then for any real numbers $t_1<t_2$, the subdomain
\[
M[t_1,t_2]=M\cap \{(x_1,x_2,x_3) \mid t_1\leq x_3\leq t_2\}
\]
is a parabolic surface with boundary contained in the union of the planes $\{x_3=t_i\}$,
$i=1,2$; note that $x_3$ is a bounded harmonic function $h$ on  the parabolic Riemannian
manifold $X=M[t_1,t_2]$ with $\partial X\subset h^{-1}(\{t_1,t_2\})$. In this more
general setting, Meeks proved that the scalar flux of $\nabla h$ across $ h^{-1}(t_1)$ is the same
as the flux across $ h^{-1}(t_2)$, which then proves Theorem~\ref{Meeks-flux} (see~\cite{me22} and also see
the proof of Proposition~4.16 in~\cite{mpr14} for similar calculations). This
completes our sketch of the proof of Theorem~\ref{Meeks-flux}.
\vspace{.3cm}

There is a related notion of flux that generalizes the formula (\ref{HFlux})
and works in the $n$-dimensional
Riemannian manifold setting.  The proof of the next theorem is straightforward
and follows from two applications of the Divergence Theorem; see  the proof below or
the similar calculations in the proof of the conservation laws in Theorem~4.1 in~\cite{kkms1}.

\begin{theorem}[CMC Flux Formula] \label{fluxK}
Let $(X,g)$ be an $n$-dimensional orientable Riemannian manifold,
$M\subset X$ be an orientable hypersurface of constant mean curvature
and $K$ be a Killing field.  Suppose that $\Sigma ,\Sigma'\subset X$ are
$(n-1)$-chains with boundaries $\partial \S=\G \subset M$,
$\partial\Sigma'=\G'\subset M$, such that the $(n-2)$-cycles $\G,\G'$
are homologous in $M$ (i.e., there exists an
$(n-1)$-chain  $M(\G,\G')\subset M$ with boundary $\partial M(\G,\G')=\G -\G'$)
and $\Sigma -\Sigma'+M(\G ,\G')$ is a boundary in $X$  (i.e., there exists an
$n$-chain  $\Omega \subset X$ with boundary $\partial \Omega =\Sigma -\Sigma'+M(\G ,\G')$).
Consider the pairing:
\begin{equation}
\label{fluxformula}
\mbox{\rm Flux}(\G,\S,K) = \int_\G g(\eta_\G, K) +(n-1)H \int_\S g(N_{\S}, K)\in \R,
\end{equation}
where $H\in \R $ is the constant value of the mean curvature of $M$ with
respect to the outward pointing normal vector to $\Omega $,
$N_{\S}$ is the unit normal field to $\S$ that is outward pointing on $\Omega $, and
$\eta_\G$ is the unit normal field to $\G$ in $TM$ that is outward
pointing on $M(\G ,\G')$. Then, $\mbox{\rm Flux}(\G,\S,K)=\mbox{\rm Flux}(\G',\S',K)$.

In particular, if the $n$-th homology group $H_n(X)$ of $X$ vanishes, then
$\mbox{\rm Flux}(\G,\S,K)$ depends only on the homology class of
$[\G]\in H_{n-1}(X)$ and on  the Killing field $K$.
\end{theorem}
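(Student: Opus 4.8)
The plan is to deduce the identity $\mbox{\rm Flux}(\G,\S,K)=\mbox{\rm Flux}(\G',\S',K)$ from two applications of the Divergence Theorem — one intrinsic to $M$ and one in the ambient manifold $X$ — bridged by a single pointwise identity that simultaneously encodes that $K$ is Killing and that $M$ has \emph{constant} mean curvature.

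The first step, and the conceptual heart of the argument, is to establish that along $M$
\[
\Div_M(K^{T})=(n-1)H\,g(K,N),
\]
where $N$ is the unit normal to $M$ with respect to which $H$ is computed, $K^{T}=K-g(K,N)N$ is the part of $K$ tangent to $M$, and $\Div_M$ is the intrinsic divergence. I would prove this by choosing a local orthonormal frame $e_1,\dots,e_{n-1}$ of $TM$, completing it to a frame of $TX$ by $N$, and expanding the ambient divergence
\[
0=\Div_X K=\sum_{i=1}^{n-1}g(\nabla_{e_i}K,e_i)+g(\nabla_N K,N).
\]
The Killing equation makes $V\mapsto g(\nabla_V K,V)$ vanish identically, which both kills the last term and permits replacing $K$ by $K^{T}+g(K,N)N$ in the sum. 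Using $\nabla_{e_i}N=-A_M e_i$ and $\sum_i g(\nabla_{e_i}N,e_i)=-\mathrm{tr}(A_M)=-(n-1)H$ (our convention that $H$ is the average of the principal curvatures), the normal part contributes $-(n-1)H\,g(K,N)$ while the tangential part collects into $\Div_M(K^{T})$, yielding the displayed identity.

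With this in hand, the two integrations are routine. Integrating the identity over the $(n-1)$-chain $M(\G,\G')\subset M$, and using that the boundary conormal $\eta$ is tangent to $M$ so that $g(K^{T},\eta)=g(K,\eta)$, the intrinsic Divergence Theorem together with the constancy of $H$ gives
\[
(n-1)H\int_{M(\G,\G')}g(K,N)\,dA=\int_{\G}g(\eta_\G,K)-\int_{\G'}g(\eta_{\G'},K).
\]
Separately, since $K$ is Killing we have $\Div_X K=0$, so the ambient Divergence Theorem over the $n$-chain $\Omega$ with $\partial\Omega=\S-\S'+M(\G,\G')$ yields
\[
0=\int_{\S}g(K,N_{\S})-\int_{\S'}g(K,N_{\S'})+\int_{M(\G,\G')}g(K,N)\,dA,
\]
with all normals outward-pointing on $\Omega$. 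Multiplying this relation by $(n-1)H$, substituting the previous display for the middle integral, and rearranging terms produces exactly $\mbox{\rm Flux}(\G,\S,K)=\mbox{\rm Flux}(\G',\S',K)$. The final assertion is then immediate: when $H_n(X)=0$, any two fillings $\S,\S'$ of homologous cycles $\G,\G'$ close up into a cycle $\S-\S'+M(\G,\G')$ that bounds some $\Omega$, so the hypotheses of the first part are automatically satisfied and the flux depends only on the homology class of $\G$ and on $K$.

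The step I expect to demand the most care is not any single computation — the pointwise identity is short and the integrations are standard — but the bookkeeping of orientations: fixing mutually compatible coorientations of $\G,\G'$ (hence the directions of $\eta_\G,\eta_{\G'}$), the outward normals on $\S,\S',M(\G,\G')$, and the chain signs in $\partial\Omega=\S-\S'+M(\G,\G')$, so that the boundary terms from the two Divergence Theorems enter with exactly the signs needed to assemble the two flux pairings rather than their negatives. In particular one must verify that the conormal conventions built into the definition of $\mbox{\rm Flux}$ are precisely the outward conormals (up to the coherent sign) that the intrinsic Divergence Theorem produces on $\partial M(\G,\G')$.
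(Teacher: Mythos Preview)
Your proposal is correct and follows essentially the same approach as the paper: establish the pointwise identity $\Div_M(K^T)=(n-1)H\,g(K,N)$ from the Killing condition, then apply the Divergence Theorem once in $\Omega\subset X$ (using $\Div_X K=0$) and once in $M(\G,\G')\subset M$, and combine. The paper states the pointwise identity in one line via skew-symmetry of $(u,v)\mapsto g(\overline{\nabla}_u K,v)$ while you expand it in a frame, and the paper substitutes one displayed equation into the other rather than first multiplying by $(n-1)H$, but these are cosmetic differences.
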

\begin{proof}
As $K$ is a Killing vector field, then the bilinear
map $(u,v)\in TX\times TX\mapsto g(\overline{\nabla }_uK,v)$ is skew-symmetric,
where $\overline{\nabla }$ stands for the metric connection of $X$. This
implies that the divergence div$_X(K)$ of $K$ in $X$ vanishes identically
and that the divergence on $M$ of the tangent part $K^T$ of $K$ to $M$
is given by div$_M(K^T)=(n-1)Hg(K,N_M)$,
where $N_M$ is the unit normal vector field of $M$ for which $H$ is the
mean curvature (in particular,
$N_M$ is outward pointing on $\Omega $ along $M(\G,\G')$).

Applying the
divergence theorem to $K$ in $\Omega $, one obtains
\begin{equation}
\label{flux1}
0=\int _{\Omega }\mbox{div}_X(K)=\int_{\Sigma }g(K,N_{\Sigma })-
\int _{\Sigma'}g(K,N_{\Sigma'})+\int _{M(\G ,\G')}g(K,N_M).
\end{equation}
Analogously, the divergence theorem applied to $K^T$  in $M(\G ,\G')$  gives
\begin{equation}
\label{flux2}
(n-1)H\int _{M(\G,\G')}g(K,N_M)=\int _{\G }g(K,\eta _{\G })-\int _{\G'}g(K,\eta _{\G'}).
\end{equation}
Plugging (\ref{flux2}) into (\ref{flux1}) we deduce
that $\mbox{\rm Flux}(\G,\S,K)=\mbox{\rm Flux}(\G',\S',K)$, as desired.
\end{proof}

\begin{remark} \label{remark3.5}{\em
Let $\partial_{x_i}$, $i=1,2,3$  denote the usual constant
Killing fields in $\rth$ endowed with its usual flat metric, let $\g\subset M$ be
a 1-cycle on an immersed  $H$-surface.  Then,  the $i$-th component $F_i$  of the flux vector
$F$ defined in (\ref{HFlux}) is equal to
$\mbox{\rm Flux}(\g,M,\partial_{x_i})$,  and so,
it only depends on the homology class of $\g$ in $M$.  In the $\rth$-setting, there are other
Killing fields generated by one-parameter groups of rotations around
a line and the corresponding fluxes obtained from these additional Killing fields
give rise to other invariants for 1-cycles  on an immersed $H$-surface (torque or momentum).
}
\end{remark}

\section{Classification results for $H$-surfaces of finite genus in $\rth$.}
\label{subsecends}

In this section we will first review some of the main results
in the classical theory of complete $H$-surfaces in $\rth$ in the context of recent
results by Colding and Minicozzi~\cite{cm35}, Meeks, P\'erez and Ros~\cite{mpr9, mpr6}
and Meeks and Tinaglia~\cite{mt7}. After this review, we will present the classification
of the asymptotic behavior of annular ends of 0-surfaces in $\rth$ given by Meeks and P\'erez~\cite{mpe3}.

\subsection{Classification results.} \label{class:results} The next theorem demonstrates that
classification questions
for complete $H$-surfaces in $\rth$ are equivalent to the similar classification questions
for proper $H$-surfaces  under an appropriate
constraint on the global or the local topological properties
of the surface.  As stated, Theorem~\ref{thmCM} below depends
on results in several different papers. The first one of these
is the proof by Colding and Minicozzi~\cite{cm35} that
complete 0-surfaces of finite topology  in $\rth$ are proper.
Using some of the techniques in~\cite{cm35}, Meeks and Rosenberg~\cite{mr13} proved that
the closure of a complete 0-surface with positive injectivity
radius in a Riemannian 3-manifold
has the structure a 0-lamination (leaves are minimal surfaces)
and they used this lamination closure property  to prove that
a complete $0$-surface in $\R^3$ with positive injectivity radius is proper.
Recently, Meeks and Tinaglia~\cite{mt7} have been able
to generalize the results in both of these previous papers to the $H>0$
setting.  Summarizing the results into one statement,
we have the next fundamental theorem.

\begin{theorem}
\label{thmCM}
A complete  $H$-surface in $\rth$ of finite topology or positive injectivity radius
is proper.
\end{theorem}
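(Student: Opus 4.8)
The plan is to reduce both hypotheses to one mechanism. Recall that $M\hookrightarrow\rth$ fails to be proper exactly when there is a sequence $p_n\in M$ that is divergent in the intrinsic metric of $M$ yet stays in a fixed extrinsic ball, so that $p_n\to p\in\rth$ for some accumulation point $p$ not attained properly. The mechanism I would use to exclude this is the following chain. First, establish that $M$ has uniformly bounded second fundamental form. Bounded curvature then forces the closure $\overline M$ to carry the structure of a weak $H$-lamination $\mathcal L$ (the lamination-closure property for complete $H$-surfaces of locally bounded curvature, cf.~\cite{mr13}), obtained by extracting through each accumulation point the graphical limits of translated sheets of $M$ by an Arzel\`a--Ascoli argument. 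If $M$ were not proper, then $\overline M\setminus M\neq\emptyset$ and $\mathcal L$ would contain a genuine limit leaf $L$, which by the Stable Limit Leaf Theorem is stable. Being a complete stable $H$-surface, Theorem~\ref{thmstablecompleteplane} forces $L$ to be a plane. For $H>0$ this is already a contradiction, since a plane has mean curvature $0\neq H$; for $H=0$ I would rule out a minimal surface accumulating to the limit plane $L$ by the Maximum Principle at Infinity (Theorem~\ref{thmMaxprin}), which forces $M=L$ and hence properness. Thus properness follows once bounded curvature is in hand.

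The positive injectivity radius case is the cleaner of the two, since there the curvature bound is almost immediate. If $M$ has injectivity radius at least $i_0>0$, then for each $p\in M$ the intrinsic ball $B_M(p,i_0)$ is an embedded geodesic disk, hence an $H$-disk, with $p$ at intrinsic distance $i_0$ from its boundary. Applying the Curvature Estimates of Theorem~\ref{cest} with $\delta=i_0/2$ and $\cH=H$ (for $H>0$) gives $|A_M|(p)\le K(i_0/2,H)$; as $p$ is arbitrary, $M$ has globally bounded curvature and the mechanism above applies. For $H=0$ the corresponding bound is the Colding--Minicozzi estimate and the argument is the original Meeks--Rosenberg one.

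For the finite topology case the engine is instead the Chord-arc Property of Theorem~\ref{main2}. I would first settle the simply connected case: such an $M$ is exhausted by $H$-disks, and the comparison $\tfrac13 d_\Sigma(x,\vec 0)\le\tfrac12\|x\|+r_0$ shows that intrinsic distance is controlled linearly by extrinsic distance (in the flat alternative, where the curvature hypothesis of Theorem~\ref{main2} fails, $M$ is graphical and hence proper anyway). Consequently extrinsically bounded sets are intrinsically bounded, so no intrinsically divergent sequence stays extrinsically bounded and $M$ is proper. For general finite topology, outside a compact set $M$ is a finite union of annular ends $E_1,\dots,E_k$, and the task is to bound the curvature on each end so as to return to the mechanism of the first paragraph. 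Here the Radius Estimates of Theorem~\ref{rest} --- which in particular forbid complete $(H>0)$-planes and limit how deeply an $H$-disk can penetrate $M$ --- are what keep an annular end from drifting off with degenerating geometry, allowing one to realize large intrinsic balls in the ends as $H$-disks to which Theorems~\ref{cest} and~\ref{main2} apply.

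I expect the main obstacle to lie in the finite topology case, namely in converting estimates stated for $H$-disks (compact, with boundary) into a global curvature bound on a complete, boundaryless surface. The danger is that the injectivity radius might collapse along an annular end, so that large intrinsic balls wrap around the core of the annulus and fail to be disks, obstructing a direct application of Theorem~\ref{cest}. The heart of the argument is therefore to show that this cannot happen --- equivalently, to control the geometry of the ends through the radius estimate (Theorem~\ref{rest}) together with the chord-arc comparison (Theorem~\ref{main2}) --- after which the lamination together with the Stable Limit Leaf Theorem and Theorem~\ref{thmstablecompleteplane} closes the proof uniformly.
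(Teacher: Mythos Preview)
Your treatment of the positive injectivity radius case is essentially the paper's own argument (Corollaries~\ref{corinj1}, \ref{corinj2}, \ref{corolMLCTR3}): go from positive injectivity radius to bounded curvature via the disk curvature estimates, pass to a lamination closure, and use the Stable Limit Leaf Theorem together with Theorem~\ref{thmstablecompleteplane}. One small correction: when $H=0$ you cannot invoke the Maximum Principle at Infinity (Theorem~\ref{thmMaxprin}) against the limit plane $L$, because that theorem requires both surfaces to be \emph{proper}, which is precisely what you are trying to prove. The paper instead uses item~1 of Theorem~\ref{thmmlct} (bounded curvature of $M$ in an $\ve$-neighborhood of the limit plane) together with Lemma~1.3 of~\cite{mr8}, which rules out a non-flat minimal surface of bounded curvature accumulating on a plane; the Strong Half-space Theorem applied to the portion of $M$ on one side of $L$ also works. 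This is a repairable detail.

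The genuine gap is in the finite topology case. You correctly identify the obstruction---injectivity radius collapsing along an annular end so that intrinsic balls are not disks---but you do not overcome it; invoking the radius estimate and saying it ``keeps an annular end from drifting off with degenerating geometry'' is not an argument. Note also that Theorem~\ref{rest} requires $H>0$, so your sketch says nothing about non-simply-connected $0$-surfaces of finite topology. The paper takes a different and cleaner route: it \emph{reduces} finite topology to positive injectivity radius. For $H=0$ (Corollary~\ref{corolthmmlctR3}) this is a Gauss--Bonnet argument: if $I_M$ were not bounded away from zero there would be a divergent sequence of geodesic loops $\gamma_n$ with $\mathrm{Length}(\gamma_n)\to 0$, eventually all lying in a fixed annular end $E$ and homotopically non-trivial there; the annuli between $\gamma_1$ and $\gamma_n$ then have total curvature bounded below by $-4\pi$, forcing $E$ to have finite total curvature and hence to be asymptotically planar or catenoidal, which contradicts $\mathrm{Length}(\gamma_n)\to 0$. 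For $H>0$ the paper uses the Meeks--Tinaglia curvature estimates for $H$-\emph{annuli} (not just $H$-disks), which depend on the flux of the core circle; this is the missing ingredient in your outline and is exactly what yields Theorem~\ref{annulus}. Once positive injectivity radius is established, your first paragraph closes the proof.
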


\begin{remark} \label{remarkENDS}
{\em
The properness conclusion in Theorem~\ref{thmCM} also holds if the $H$-surface has
compact boundary and finite topology or if the surface has
compact boundary with injectivity radius function bounded away from zero
outside of some small neighborhood of its
boundary. In particular, annular ends of a complete $H$-surface in $\rth$ are proper.}
\end{remark}

In fact, by the curvature estimates in Theorem~\ref{cest} for $(H>0)$-disks,
it can be seen that a complete  $(H>0)$-surface
has bounded second fundamental form if and only if it has positive injectivity radius.
We will discuss these results of Meeks and Tinaglia~\cite{mt7} in Section~\ref{sec:MT}.
A fundamental open problem concerning classical $H$-surfaces is
the following one.

\begin{conjecture}[Meeks, P\'erez, Ros, Tinaglia] \label{CYconj1}
A complete  $H$-surface in $\rth$ of finite genus
is proper. More generally, for every such surface $M$, there exists $C_M>0$
such that for any ball $\B(p,R)$ in $\rth$ with radius $R\geq 1$,
$\mbox{\rm Area}(M\cap \B(p,R))\leq C_{M} \, R^3$.
\end{conjecture}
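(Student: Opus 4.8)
The plan is to deduce both assertions from a single geometric input: that a complete, finite-genus $H$-surface $M$ in $\rth$ has uniformly bounded second fundamental form. Granting this, properness of $M$ follows at once. For $H>0$, a bound on $|A_M|$ is equivalent to positive injectivity radius (as noted after Theorem~\ref{thmCM}), whence Theorem~\ref{thmCM} applies; for $H=0$ one would instead invoke the Meeks--Rosenberg lamination-closure property for complete $0$-surfaces of positive injectivity radius. The cubic area bound then comes from the Regular Neighborhood Theorem: by Corollary~\ref{cor.max} (or Theorem~\ref{cor*} on the mean convex side when $H>0$), a bound on $|A_M|$ forces the area of $M$ in every unit ball of $\rth$ to be at most a universal constant $C$; covering $\B(p,R)$ by $O(R^3)$ unit balls then gives $\mbox{\rm Area}(M\cap\B(p,R))\leq C_M R^3$ for all $R\geq 1$, exactly the asserted estimate.

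The heart of the matter is therefore to establish the curvature bound, and here I would argue by contradiction via a blow-up. Suppose $\sup_M|A_M|=\infty$ and choose $p_n\in M$ with $|A_M|(p_n)\to\infty$; after a standard Colding--Minicozzi point-selection producing almost-maximal curvature on a controlled scale, rescale $M$ about $p_n$ by the factor $|A_M|(p_n)$. The rescaled surfaces have mean curvature $H/|A_M|(p_n)\to 0$ and curvature normalized to $1$ at the base point, so through the Local Picture Theorem on the Scale of Curvature one extracts a nonflat, properly embedded \emph{minimal} surface $M_\infty$ through the origin with $|A_{M_\infty}|(\vec 0)=1$ and bounded curvature. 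The finite-genus hypothesis is crucial here: genus cannot be created in the limit and can be captured only finitely, so $M_\infty$ has finite genus, and far from the origin it is a planar domain.

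The final step is to rule out such a limit. If $M_\infty$ has finite total curvature it is asymptotically planar or catenoidal, and transplanting this flatness back to $M$ contradicts $|A_M|(p_n)\to\infty$; if $M_\infty$ has infinite total curvature, one appeals to the finite-genus structure theory together with the Dynamics and Quadratic Curvature Decay Theorems of Meeks, P\'erez and Ros to locate, inside $M$ at the scale $1/H$, an intrinsic ball that is an $H$-disk of radius exceeding $\mathcal R/H$, violating the Radius Estimate (Theorem~\ref{rest}); for $H>0$ this radius obstruction is the natural source of the contradiction, while the Curvature Estimate (Theorem~\ref{cest}) controls the approach to the limit. \textbf{The main obstacle} I anticipate is precisely the passage through finite genus with \emph{infinitely many ends}: unlike the finite-topology setting covered by Theorem~\ref{thmCM}, the ends of $M$ may accumulate, so one cannot a priori prevent curvature from concentrating along a divergent sequence of ends. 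Showing that finite genus alone keeps the local geometry controlled --- equivalently, that no blow-up limit with infinitely many accumulating ends can be a nonflat bounded-curvature minimal surface --- is where the current techniques stop short, and is the reason the statement remains a conjecture.
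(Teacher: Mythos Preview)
The statement is a \emph{conjecture}, not a theorem: the paper explicitly presents it as ``a fundamental open problem'' and offers no proof, only the partial result Theorem~\ref{thmCYMPR} (properness for $H=0$ assuming in addition a countable number of ends). So there is no ``paper's own proof'' to compare against, and your task reduces to assessing whether your outline could be turned into an actual argument.

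Your first paragraph is sound as a reduction: if one knew $\sup_M|A_M|<\infty$, then properness and the cubic area bound would indeed follow from Corollary~\ref{cor.max}/Theorem~\ref{cor*} exactly as you say. The problem is everything after that. The blow-up you describe via Theorem~\ref{thm3introd} certainly produces a nonflat, properly embedded $0$-surface $M_\infty$ of finite genus with $|A_{M_\infty}|(\vec 0)=1$ --- but this is \emph{not} a contradiction, and your ``final step'' does not rule it out. If $M_\infty$ has finite total curvature, say $M_\infty$ is a catenoid, there is nothing to ``transplant back'': the catenoid has $|A|=1$ at its waist and is a perfectly admissible blow-up limit of surfaces with $|A_M|(p_n)\to\infty$. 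If $M_\infty$ has infinite total curvature (a helicoid, or a Riemann example), your appeal to the Radius Estimate is incoherent: Theorem~\ref{rest} concerns $H$-disks with $H>0$, whereas $M_\infty$ is minimal, and nothing in the blow-up picture produces an $H$-disk in $M$ of radius exceeding $\mathcal R/H$. In short, the existence of a nonflat finite-genus minimal limit carries no information that contradicts unbounded curvature on $M$; the Local Picture Theorem is descriptive, not obstructive.

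You are right that the infinite-ends case is where current techniques break down, but the difficulty is more basic than your final paragraph suggests: it is not that a specific step fails for accumulating ends, it is that the entire strategy ``blow up and derive a contradiction'' never gets off the ground, because the blow-up limits one obtains are exactly the classified objects of Theorems~\ref{kks2} and~\ref{classthm2} and give no leverage. This is why the statement remains open.
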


We remark that Meeks, P\'erez and Ros~\cite{mpr9} have obtained the
following partial result on the above conjecture.

\begin{theorem}
\label{thmCYMPR}
A complete  $0$-surface in $\rth$ of finite genus is proper if and only
if it has a countable number of ends.
\end{theorem}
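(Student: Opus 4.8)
The plan is to prove the two implications separately, with the reverse implication (countably many ends $\Rightarrow$ proper) carrying essentially all of the geometric weight. I begin with the forward direction, which is purely topological once one imports one structural input: a properly embedded minimal surface of finite genus in $\rth$ has at most two limit ends (Meeks--P\'erez--Ros, building on Collin--Kusner--Meeks--Rosenberg and the Ordering Theorem of Frohman and Meeks). Granting this, recall that the space of ends $\mathcal{E}(M)$ of $M$ is a compact, second-countable (hence metrizable), totally disconnected space, and that an end is a \emph{limit end} precisely when it fails to be isolated in $\mathcal{E}(M)$. Thus the derived set of $\mathcal{E}(M)$ has at most two points. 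Since in any second-countable space the set of isolated points is countable, $\mathcal{E}(M)$ is the union of a countable set of isolated ends with a set of at most two limit ends, and is therefore countable.

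For the reverse implication I would argue contrapositively: assuming $M$ is a complete $0$-surface of finite genus that is \emph{not} proper, I would produce uncountably many ends. First, by Theorem~\ref{thmCM} a complete $H$-surface of finite topology is proper; since finite genus together with finitely many ends is finite topology, non-properness forces infinitely many ends, and hence at least one limit end. To analyze the accumulation, I would invoke the Limit Lamination Theorem for surfaces of finite genus of Colding and Minicozzi, together with the Local Removable Singularity Theorem, to show that the closure $\overline{M}$ carries the structure of a minimal lamination of $\rth$ with $M$ as a leaf: the finite-genus hypothesis supplies uniform local area bounds away from a fixed compact piece carrying all the genus, and the removable singularity result confines any curvature blow-up to a removable set. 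Non-properness means $M$ is not closed in $\rth$, so $\overline{M}\setminus M$ contains a leaf $L$, necessarily a limit leaf. By the Stable Limit Leaf Theorem $L$ is stable, and hence by Theorem~\ref{thmstablecompleteplane} it is a plane $P$.

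Thus $M$ accumulates onto the plane $P$, and the crucial point is that this accumulation occurs within a \emph{bounded} region of $\rth$, in contrast to the escaping-to-infinity behavior of proper examples such as the Riemann minimal examples (which are proper with only countably many ends). Near $P$ and outside the genus-carrying compact region, the Colding--Minicozzi description of embedded minimal disks forces $M$ to consist of multivalued graphs over $P$ --- a parking-garage-type picture with infinitely many sheets spiraling into $P$. I would then read off the end structure from this spiraling: because infinitely many sheets pile up over a compact piece of $P$ while belonging to a single connected surface of \emph{finite} genus, the sheets cannot all merge and must instead be linked through the columns of the parking-garage structure in a genuinely branching, tree-like fashion. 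The space of infinite descending paths in such a tree is a Cantor set, and these paths correspond to distinct ends of $M$; hence $M$ has uncountably many ends, the desired contradiction.

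The main obstacle lies entirely in the reverse direction. The first delicate step is the passage from ``not proper'' to a bona fide limit lamination: a priori the self-accumulation of $M$ could be geometrically wild, and ruling this out requires combining the finite-genus area bounds with the Local Removable Singularity Theorem precisely to exclude essential (non-removable) singularities and to guarantee that the limit is laminar, so that the Stable Limit Leaf Theorem applies. The second, and more quantitative, obstacle is converting the qualitative statement ``infinitely many sheets spiral into $P$'' into genuinely \emph{uncountably} many topological ends; this is exactly where finite genus must be used with care, since it is what prevents the sheets from merging into a single spiral (which would yield only countably many ends and a proper surface) and instead forces the branching tree structure whose ends form a Cantor set.
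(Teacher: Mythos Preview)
The paper does not actually prove this theorem; it merely attributes the result to Meeks, P\'erez and Ros~\cite{mpr9}. So there is no ``paper's own proof'' to compare against, and I will focus on whether your argument stands on its own.

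Your forward direction is fine: for a properly embedded minimal surface in $\rth$ the Ordering Theorem and the results of Collin--Kusner--Meeks--Rosenberg give at most two limit ends, and then the argument that $\mathcal{E}(M)$ is countable is correct. (Indeed the paper itself remarks, right after Conjecture~\ref{conjFTPC}, that the necessity of countably many ends for properness comes from~\cite{ckmr1}.)

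The reverse direction, however, has two genuine gaps. First, your claim that $\overline{M}$ has the structure of a minimal lamination is not justified. The Minimal Lamination Closure Theorem (Theorem~\ref{thmmlct}) requires positive injectivity radius, which you do not have; and your assertion that ``the finite-genus hypothesis supplies uniform local area bounds'' is simply false---finite genus by itself yields no such bounds. In fact, the statement that the closure of a complete embedded finite-genus minimal surface is a lamination (or has controlled limit set) is precisely the content of Conjecture~\ref{conjlocfing} in this paper, which is listed as \emph{open}. The actual proof in~\cite{mpr9} must use the countable-ends hypothesis in an essential way to get control on the limit set; you have not indicated how countability enters your argument at all before the very last step, which is suspicious since you are arguing the contrapositive.

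Second, even granting a limit plane $P$, your passage from ``infinitely many sheets accumulate on $P$'' to ``uncountably many ends via a branching tree'' is speculative. You give no mechanism by which finite genus forces the sheet structure to branch rather than, say, form a single infinite spiral or a $\Z$-indexed stack (either of which would produce only countably many ends). The Colding--Minicozzi parking-garage picture describes local multigraph structure near columns, not a global tree of ends, and it is not at all clear how to extract a Cantor set of ends from it.
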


A fundamental problem in classical surface
theory is to describe the behavior of a proper non-compact $H$-surface
$M\subset \R^3$ outside  large  compact sets in space.  This
problem is well-understood if $M$ is minimal with finite total curvature,
 because in this case, each of the ends of $M$ is asymptotic to
 an end of a plane or a catenoid.
In~\cite{col1}, Collin proved that a proper $0$-surface with at
least two ends and  finite topology must
have finite total curvature; hence by the properness of finite
topology $H$-surfaces in $\rth$, we have the next
fundamental result.

\begin{theorem}
\label{thmCM2}
A complete  $0$-surface in $\rth$ of finite topology and at least
two ends has finite total curvature.
In particular, each of its ends is asymptotic to the end of a plane or a catenoid.
\end{theorem}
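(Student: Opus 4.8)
The plan is to assemble the conclusion from three ingredients that are already available in the excerpt: the properness of complete finite-topology $0$-surfaces, Collin's finiteness-of-total-curvature theorem for proper multi-ended examples, and the classical conformal and asymptotic description of finite-total-curvature ends due to Huber and Osserman. In outline, properness reduces us to the proper setting, Collin's theorem supplies finite total curvature, and the Weierstrass machinery extracts the asymptotic geometry.

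First I would invoke Theorem~\ref{thmCM} in the case $H=0$: a complete $0$-surface $M\subset\rth$ of finite topology is proper. This immediately reduces the statement to the proper setting, which is where Collin's structural results apply. With $M$ now proper, of finite topology, and with at least two ends, I would then apply Collin's Theorem~\cite{col1} (already quoted at the start of Section~\ref{subsecexamples}), which asserts precisely that such a surface has finite total curvature. This is the substantive analytic input, and I expect it to be the main obstacle if one were forced to prove it from scratch rather than cite it: Collin's argument must rule out the possibility that an end behaves like an infinitely-sheeted multi-valued graph, and this requires a delicate combination of flux invariants (see Remark~\ref{remark3.5}) and curvature control. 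Here I would simply quote the result.

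Next I would upgrade finite total curvature to the asymptotic description of the individual ends. By Huber's theorem~\cite{hu1}, a complete surface of finite total curvature is conformally a compact Riemann surface $\overline{M}$ punctured at finitely many points, one for each end. By Osserman's argument~\cite{os3}, namely Picard's theorem applied to the stereographically projected Gauss map $g$, the map $g$ and the height differential $dh$ of the Weierstrass representation \eqref{eq:repW} extend meromorphically across each puncture. A local analysis of the Weierstrass data $(g,dh)$ near a puncture, together with the first and second fundamental form formulas \eqref{eq:I,II}, then shows that each end is embedded and is a graph over the exterior of a disk in a plane, with a well-defined logarithmic growth rate.

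Finally I would conclude that each such end is asymptotic to a plane when its logarithmic growth vanishes and to a half-catenoid when the growth is nonzero, since these are exactly the two model behaviors exhibited by the examples of Section~\ref{subsecexamples}. The only genuine subtlety in this last stage is the passage from the extended Weierstrass data to the precise asymptotic statement; this is the classical Jorge--Meeks / Schoen end-behavior theorem, and I would cite it or sketch it from the extended data rather than reproving it in detail. Since all of Collin's theorem, Huber's and Osserman's results, and the properness input are already in hand, the proof is essentially a synthesis, with the genuine depth concentrated in Collin's finiteness theorem.
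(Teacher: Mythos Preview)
Your proposal is correct and follows exactly the route the paper takes: the paragraph preceding Theorem~\ref{thmCM2} explicitly says that the result follows from Collin's theorem together with the properness of complete finite-topology $H$-surfaces (Theorem~\ref{thmCM}), and the asymptotic description of the ends is treated as classical (stated without proof two paragraphs earlier). Your write-up is simply more explicit than the paper about the Huber/Osserman/Jorge--Meeks steps behind the ``in particular'' clause, which the paper takes for granted.
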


The next Theorem~\ref{ttmr} by Bernstein and  Breiner in~\cite{bb2} states that if $M$
is a $0$-surface with finite
topology but infinite total curvature (thus $M$ has exactly one end by  Theorem~\ref{thmCM2}),
then $M$ is asymptotic to
a helicoid; this result is based on some of the techniques that
Meeks and Rosenberg used  in the proof of the uniqueness of the helicoid.
The proof of the next theorem
was  found independently by
Meeks and P\'erez~\cite{mpe3} (see Section~\ref{secCTI}) who considered the
asymptotic behaviors of annular ends  in the more general context
 where the 0-surface $M$ has compact boundary.

\begin{theorem} \label{ttmr}
A complete non-flat $0$-surface in $\rth$ of finite topology and one end is asymptotic
to a helicoid.  If the surface also has genus zero, then it is a helicoid.
\end{theorem}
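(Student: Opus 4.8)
The plan is to argue by a dichotomy on the total curvature of $M$ and to treat the genus-zero assertion separately, since the latter follows at once from the uniqueness of the helicoid. Throughout I first invoke Theorem~\ref{thmCM}: because $M$ has finite topology, it is automatically properly embedded, so I may analyze its geometry in the ambient space. Recall also that in this paper a $0$-surface is by definition embedded, so the single end of $M$ is an embedded annular end.

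First I would dispose of the finite total curvature case and show it cannot occur for a non-flat surface. If $C(M)>-\infty$, then by Osserman's structure theorem $M$ is conformally a compact Riemann surface with one puncture, the Gauss map extends meromorphically, $M$ is parabolic, and its unique embedded end is asymptotic to a plane or a half-catenoid. A loop $\gamma$ around the single end is null-homologous in $M$ (it bounds the complementary region), so $\int_\gamma *dx_3=\int \Delta x_3\,dA=0$; hence the logarithmic growth of the end vanishes and the end is planar. Then $x_3$ is a bounded harmonic function on the parabolic surface $M$ without boundary, so $x_3$ is constant and $M$ is a plane, contradicting non-flatness. Thus necessarily $C(M)=-\infty$. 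This is consistent with Theorem~\ref{thmCM2}, which already forces a finite-topology, finite-total-curvature, non-flat surface to have at least two ends.

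The substantive case is therefore $C(M)=-\infty$, and here I would follow the blow-down analysis of Colding--Minicozzi as adapted by Meeks--Rosenberg and Bernstein--Breiner. Outside a compact set $M$ is a single properly embedded annular end. I would consider the homothetic shrinkings $\tfrac1n M$ and apply the Lamination Theorem for $0$-disks (Theorem~\ref{thmlimitlaminCM}) together with the one-sided curvature estimates to extract a subsequential limit lamination $\mathcal L$ of $\R^3$. The infinite total curvature, the embeddedness, and the one-ended topology force $\mathcal L$ to be the foliation of $\R^3$ by parallel planes singular along a single axis, i.e.\ the one-column minimal parking garage structure, which is precisely the scale-invariant shadow of the double spiral staircase of a helicoid. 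The main obstacle is upgrading this blow-down picture to a genuine asymptotic statement: one must establish that $M$ is conformally parabolic of finite conformal type, that its Gauss map omits two values and behaves like $e^{z}$ near the end, and that the height differential behaves like $i\,dz$, so that the Weierstrass data of $M$ converge to those of a vertical helicoid. Controlling the multivalued-graph decomposition of the end and matching the conformal structures is where the real work lies.

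Finally, the genus-zero assertion is immediate and bypasses the asymptotic analysis entirely. A complete $0$-surface of finite topology with genus zero and one end is homeomorphic to a once-punctured sphere, hence simply connected. By Theorem~\ref{helicoid}, a complete simply connected $0$-surface in $\R^3$ is a plane or a helicoid (it cannot be a sphere, whose mean curvature is positive), and since $M$ is non-flat it must be a helicoid.
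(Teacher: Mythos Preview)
Your overall outline is sound, and the finite-total-curvature dismissal and the genus-zero case are handled correctly (the latter is indeed just the $H=0$ case of Theorem~\ref{helicoid}, i.e., the Meeks--Rosenberg uniqueness of the helicoid, so there is no circularity). However, your treatment of the main case---infinite total curvature---is only a sketch: you correctly identify the blow-down limit as a one-column parking garage, but the step you flag as ``where the real work lies'' (upgrading the blow-down picture to conformal parabolicity, the Weierstrass form $g=e^{iz+f}$, $dh=(1+\lambda/z)\,dz$, and genuine $C^\infty$ asymptotics) is the entire content of the theorem, and you do not carry it out. Also, Theorem~\ref{thmlimitlaminCM} is stated for $H$-\emph{disks}; the surfaces $\tfrac1n M$ are not disks, so you need the locally simply connected version for finite-genus surfaces rather than that theorem as stated.

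The paper's own argument takes a different route, and it is worth knowing. Rather than run the blow-down analysis directly, the paper invokes Theorem~\ref{th9.8}: any complete embedded $0$-annulus with infinite total curvature and compact boundary is, after a subend, homothety and rigid motion, asymptotic to a canonical end $E_{a,b}$, where $(a,0,-b)$ is the flux vector along its boundary. Since $M$ has a single end, the loop winding around that end bounds in $M$, so by the divergence theorem its flux vanishes; hence $(a,b)=(0,0)$ and the end is asymptotic to $E_{0,0}$, which is the end of a vertical helicoid (see the Remark following Theorem~\ref{th9.8}). This packages all of the hard Colding--Minicozzi/Meeks--Rosenberg analysis into the black box of Theorem~\ref{th9.8} and reduces the one-ended statement to a one-line flux computation. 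Your blow-down approach is essentially the Bernstein--Breiner route, which is equally valid but requires you to reproduce the conformal and Weierstrass control that the paper has already isolated in Theorem~\ref{th9.8}.
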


In~\cite{mt11}, Meeks and Tinaglia describe proper
1-surfaces $M_k$ in $\rth$ which are doubly-periodic (invariant by two independent translations)
and contained in an open slab  of
width $\frac{1}{2^{k+1}}$. After stacking these
slabs with their surfaces on top of each other, one obtains a
complete, injectively immersed disconnected surface $M_\infty$
of constant mean curvature 1 that is  properly embedded in the slab
$\{(x_1,x_2,x_3)\in \rth \mid -1<x_3<1\}$ but  $M_\infty$ is not properly embedded
in $\rth$.

In the classical setting, Meeks~\cite{me17} proved that a proper $(H>0)$-annular end
is contained in a solid half-cylinder in $\rth$, and then used this result to prove that
there do not exist any proper $(H>0)$-surfaces in $\rth$ with finite topology and just
one end. Based in part on Meeks' results, Korevaar, Kusner and Solomon~\cite{kks1} then
proved that a proper $(H>0)$-annular end in $\R^3$ is asymptotic to a Delaunay surface, and that
a proper $(H>0)$-surface of finite topology and two ends is a Delaunay surface.
By Theorem~\ref{thmCM}, we have the following result.

\begin{theorem} \label{kks}
Let $M$ be a complete $(H>0)$-surface in $\rth$.
\ben
\item Each annular end of $M$   is asymptotic to the end of a Delaunay surface.
\item If $M$ has finite topology, then it has at least two ends.
\item If $M$ has finite topology and two ends, then it is a Delaunay surface.
\een
\end{theorem}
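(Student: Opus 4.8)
The plan is to reduce all three statements to the corresponding facts for \emph{proper} $(H>0)$-surfaces, where the classification is classically known, by invoking the properness conclusions recorded in Theorem~\ref{thmCM} and Remark~\ref{remarkENDS}. In outline: Remark~\ref{remarkENDS} handles the end-by-end analysis needed for statement~1, while Theorem~\ref{thmCM} supplies global properness for the finite-topology statements~2 and~3; the classical theorems of Meeks~\cite{me17} and of Korevaar, Kusner and Solomon~\cite{kks1} then finish each case.

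For statement~1, I would begin by recalling that an annular end $E$ of $M$ is, by definition, a subsurface with compact boundary that is homeomorphic to $\esf^1\times[0,\infty)$. The crucial point is that $M$ itself is \emph{not} assumed to have finite topology here, so one cannot apply Theorem~\ref{thmCM} to all of $M$; instead I would invoke Remark~\ref{remarkENDS}, which asserts precisely that annular ends of a complete $H$-surface in $\rth$ are proper. Thus $E$ is a proper $(H>0)$-annular end, and the result of Korevaar, Kusner and Solomon~\cite{kks1} applies directly to conclude that $E$ is asymptotic to the end of a Delaunay surface.

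For statement~2, I would first dispose of the closed case: if $M$ is compact then by Theorem~\ref{thmAlex} it is a round sphere, so the assertion about ends concerns the non-compact case, where the number of ends is at least one. Assuming $M$ non-compact and of finite topology, Theorem~\ref{thmCM} shows $M$ is proper, and finite topology forces $M$ to have finitely many ends, each of them annular. Meeks'~\cite{me17} theorem rules out a proper $(H>0)$-surface of finite topology with exactly one end, so $M$ must have at least two ends. For statement~3, Theorem~\ref{thmCM} again yields that a complete finite-topology $M$ is proper, and then the classification in~\cite{kks1} of proper $(H>0)$-surfaces of finite topology and two ends as Delaunay surfaces completes the argument.

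The only genuine difficulty in this argument is supplying properness, which is exactly the deep Meeks--Tinaglia input isolated in Theorem~\ref{thmCM} and Remark~\ref{remarkENDS}; once properness is granted, each of the three statements follows by citing the classical work described above. Hence the main obstacle has already been surmounted by the previously stated theorems, and what remains is a careful reduction that matches each statement's hypotheses to the correct properness conclusion — in particular, being sure to use the \emph{end-wise} properness of Remark~\ref{remarkENDS} in statement~1, where the global finite-topology hypothesis of Theorem~\ref{thmCM} is unavailable.
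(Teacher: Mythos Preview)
Your proposal is correct and follows essentially the same approach as the paper, which simply states ``By Theorem~\ref{thmCM}, we have the following result'' after recalling the classical proper-case theorems of Meeks~\cite{me17} and Korevaar--Kusner--Solomon~\cite{kks1}. Your treatment is in fact slightly more careful than the paper's one-line justification: you correctly observe that statement~1 does not assume finite topology and therefore requires the end-wise properness of Remark~\ref{remarkENDS} rather than Theorem~\ref{thmCM} alone.
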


The first deep classification result for complete 0-surfaces
with finite topology in $\rth$ is the following one due to
Schoen~\cite{sc1}, who proved the following theorem as an
application of the Alexandrov reflection
technique.

\begin{theorem}
\label{schth2}
The catenoid is the unique complete, immersed 0-surface in $\R^3$ with finite total curvature
and two embedded ends.
\end{theorem}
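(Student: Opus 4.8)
The plan is to combine the rigid asymptotic structure forced by finite total curvature with an Alexandrov moving-planes argument, adapted to the non-compact setting exactly in the spirit of the proof of Theorem~\ref{thmAlex} but now taking care of the behavior at the two ends. First I would record the consequences of the hypotheses. By Huber's theorem $M$ is conformally a compact Riemann surface punctured at two points $p_1,p_2$, and by Osserman the Gauss map extends meromorphically across the punctures (this is the content recalled in Theorem~\ref{thmfiniteindexftc}). Since the two ends are embedded, the end analysis behind Theorem~\ref{thmCM2} shows that each end is asymptotic to the end of a plane or a catenoid, so near $p_i$ the surface is an annular graph over the exterior of a disk in the plane orthogonal to the limit normal, with a $C^2$ expansion of the form $x_3=\alpha_i\log r+\beta_i+O(1/r)$ in suitable coordinates. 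A flux/residue computation, compatible with Theorem~\ref{Meeks-flux}, gives $\alpha_1+\alpha_2=0$, and embeddedness forces the two limit normals to be antipodal; after a rotation I normalize both limit normals to be vertical with opposite logarithmic growths $\pm a$, $a\geq 0$. The degenerate case $a=0$ of two parallel flat ends I dispose of by comparing $M$ with its two asymptotic horizontal planes via the Maximum Principle at Infinity (Theorem~\ref{thmMaxprin}), or directly via the Strong Halfspace Theorem (Theorem~\ref{thmhalf}), which forces $M$ to be planar and hence contradicts its having two ends; thus $a>0$ and both ends are genuinely catenoidal.

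The heart of the argument is the Alexandrov reflection, run with the family of horizontal planes $\{x_3=t\}$, reflecting the top part $M\cap\{x_3>t\}$ downward. For $t$ large this top part is a single catenoidal-end graph, and its reflection lies strictly between $\{x_3=t\}$ and the rest of $M$; the opposite logarithmic growths are exactly what guarantees that at infinity the reflected top end sits a fixed vertical distance $2t-\beta_1-\beta_2$ above the bottom end, so that there is no contact at infinity while $t$ is large. Decreasing $t$, the first contact occurs either at an interior point, where the interior maximum principle (Theorem~\ref{thmintmaxprin}) forces the reflected cap to coincide with $M$, or else precisely when $t$ reaches $(\beta_1+\beta_2)/2$, where the reflected top end and the bottom end become asymptotic and Theorem~\ref{thmMaxprin} again forces coincidence. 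Either way $M$ is symmetric under reflection in a horizontal plane $\Pi_0$ interchanging the two ends.

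Finally I would upgrade this single mirror symmetry to rotational symmetry. Because each end is asymptotic to a rotationally symmetric model, whose leading term depends only on $r$, the same reflection scheme can be carried out for vertical planes in every horizontal direction: for such a plane the two pieces being compared at infinity agree to leading order, so the contact analysis is again governed by Theorems~\ref{thmintmaxprin}, \ref{thmintmaxprinb} and~\ref{thmMaxprin}. This produces a plane of symmetry with any prescribed horizontal normal, and these reflections generate the full group of rotations about the vertical axis, so $M$ is a surface of revolution. Since the only complete minimal surfaces of revolution are the plane and the catenoid (Bonnet, as recalled above) and the plane is excluded by $M$ having two ends and nonzero total curvature, we conclude that $M$ is a catenoid.

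The hard part will be the adaptation of Alexandrov reflection to the non-compact ends: unlike the closed case of Theorem~\ref{thmAlex}, the surface $M$ is contained in no halfspace, so there is no initial plane with all of $M$ on one side and first contact can a priori occur at infinity rather than at an interior point. The resolution, which is the technical core of Schoen's method, is to use the precise $C^2$ catenoidal asymptotics of the embedded ends both to start the moving-plane procedure and to replace the ordinary maximum principle by the Maximum Principle at Infinity (Theorem~\ref{thmMaxprin}) at the end contacts; verifying that no other contact configuration can arise, and that the resulting symmetries are genuinely present for every horizontal direction, is where the bulk of the careful work lies.
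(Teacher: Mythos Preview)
The paper does not supply its own proof of this theorem: it is stated as a classical result of Schoen~\cite{sc1} and the only information given about the method is the sentence ``who proved the following theorem as an application of the Alexandrov reflection technique.'' Your proposal is precisely a fleshed-out version of that indicated strategy --- finite total curvature end asymptotics, normalization of the two catenoidal/planar ends, moving planes with the maximum principle at infinity handling the noncompact contacts, and then rotational symmetry forcing the catenoid via Bonnet --- so at the level the paper operates, your approach and the paper's coincide.

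One point worth tightening: the statement is for \emph{immersed} $M$ with two embedded ends, but your moving-planes step (``the reflected cap\ldots\ compared with $M$'') tacitly treats $M$ as globally embedded when invoking first interior contact. In Schoen's original argument this is exactly the delicate part: the reflection is run on the embedded end representatives to produce the plane of symmetry there, and one then uses unique continuation to propagate the symmetry through the possibly immersed compact core, rather than a direct global one-sidedness comparison. Your sketch would go through verbatim if $M$ were assumed embedded; to match the stated hypothesis you should indicate how the immersed interior is handled once the ends have been shown to be symmetric.
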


After Schoen's result, L\'opez and Ros~\cite{lor1} used a
deformation argument based on the Weierstrass
representation to prove the following classification theorem.
\begin{theorem}
\label{thmlr}
The only complete 0-surfaces in $\rth$ with finite total curvature
and genus zero are the plane and the catenoid.
\end{theorem}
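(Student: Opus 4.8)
The plan is to run the \textbf{L\'opez--Ros deformation} $g\mapsto \lambda g$ inside the Weierstrass representation of Theorem~\ref{thm3.1}, and to use embeddedness to force the Gauss map to have degree at most one, which pins down the plane and the catenoid. \emph{Reduction to the sphere.} First I would invoke finite total curvature: by Huber's theorem and Osserman's analysis (as recalled after Theorem~\ref{thmfiniteindexftc}), a complete minimal surface $M\subset\rth$ with $C(M)>-\infty$ is conformally a compact Riemann surface $\ov M$ punctured at finitely many points $q_1,\dots,q_r$ (the ends), and the Weierstrass data $(g,dh)$ extend meromorphically across the punctures. Genus zero forces $\ov M=\C\cup\{\infty\}$, so $g$ is a rational map of degree $d=\deg(g)$ and, by \eqref{eq:curvtot}, $C(M)=-4\pi d$. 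Since $M$ is embedded with finite total curvature, each end is an embedded annular end, hence a graph outside a compact set asymptotic to a plane or a half-catenoid; in particular each end has multiplicity one, and the Jorge--Meeks/Gauss--Bonnet relation gives $d=r-1$. The goal thus becomes to show $d\le 1$, i.e.\ $r\le 2$.

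\emph{Horizontal ends and vanishing horizontal flux.} Two embedded ends whose asymptotic normals were not parallel would have intersecting asymptotic planes or half-catenoids, contradicting embeddedness; hence all limiting normals are parallel, and after a rotation I may assume every end is horizontal, so that $g(q_j)\in\{0,\infty\}$ for each $j$. For a horizontal planar or catenoidal end the flux \eqref{eq:flux} along a loop $\g_j$ encircling $q_j$ is vertical, i.e.\ its first two components vanish. Combined with the closing (period) conditions $\mbox{Re}\int_{\g_j}\phi=0$ that hold because $M$ is a genuine surface, the vanishing of the horizontal flux is exactly $\int_{\g_j}g\,dh=\int_{\g_j}\frac{dh}{g}=0$. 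As $\g_1,\dots,\g_r$ generate $H_1(M)$, these integrals vanish on every cycle, which by the discussion following \eqref{eq:periodproblem} is precisely the condition that $(\lambda g,dh)$ solve the period problem for every $\lambda>0$. I thus obtain a one-parameter family $M_\lambda$ of complete minimal surfaces with $M_1=M$, all sharing the same conformal type, the same height differential $dh$ (hence the same height function $x_3$), the same degree $d$, and the same horizontal ends.

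\emph{The main step: embeddedness along the deformation.} The heart of the argument, and the step I expect to be hardest, is to show that $M_\lambda$ stays embedded for all $\lambda>0$. I would argue that $\{\lambda>0:M_\lambda\text{ is embedded}\}$ is open and closed in $(0,\infty)$: openness follows because the ends of $M_\lambda$ vary continuously and remain uniformly graphical over horizontal planes, so a small change of $\lambda$ cannot create a self-intersection; for closedness, if embedded $M_{\lambda_n}$ converged to a non-embedded $M_{\lambda_0}$, a first point of contact would be an interior tangency of two local sheets with coinciding normals, and the interior maximum principle (Theorem~\ref{thmintmaxprin}) would force the sheets to coincide, propagating by unique continuation to a contradiction with the structure of the distinct ends. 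The delicate points are the uniform control of the non-compact ends throughout the deformation and the exclusion of tangencies escaping to infinity; these require the asymptotic planar/catenoidal description of the ends together with a careful limit argument, and this is where the real work lies.

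\emph{Conclusion.} Granting that every $M_\lambda$ is embedded, I would analyze the limits $\lambda\to\infty$ and $\lambda\to 0$: there $g_\lambda=\lambda g\to\infty$ (resp.\ $0$) away from the zeros (resp.\ poles) of $g$, so the Gauss map concentrates at the vertical directions and the surface flattens onto horizontal, multi-sheeted configurations. An embedded family can survive this degeneration only if the generic fibre of $g$ consists of a single point, i.e.\ $d\le 1$; equivalently, the plane and the catenoid are exactly the genus-zero data fixed, up to rigid motion and scaling, by the L\'opez--Ros deformation, whereas $d\ge 2$ produces sheets that are forced to cross. Finally $d=0$ gives a constant Gauss map and hence a plane, while $d=1$ gives $r=2$, so $M$ is a complete immersed minimal surface of finite total curvature with two embedded ends and is therefore the catenoid by Schoen's Theorem~\ref{schth2}. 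This establishes the classification.
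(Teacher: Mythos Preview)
The paper does not give its own proof of this theorem; it simply attributes the result to L\'opez and Ros~\cite{lor1} with the remark that they ``used a deformation argument based on the Weierstrass representation.'' Your proposal is precisely that deformation argument, so at the level of approach there is nothing to compare: you are reconstructing the intended proof.

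Your reductions are correct and well organized: the punctured sphere via Huber--Osserman, the Jorge--Meeks relation $d=r-1$ for embedded ends, the parallelism of the limiting normals (hence horizontal ends after rotation), and the equivalence between vanishing horizontal flux and the simultaneous vanishing of $\int_\gamma g\,dh$ and $\int_\gamma dh/g$ on every cycle, which is exactly the condition for $(\lambda g,dh)$ to close up for all $\lambda>0$. You are also right that the persistence of embeddedness is the heart of the matter, and your open--closed sketch is the correct skeleton. One useful sharpening: since $dh$ is unchanged, not only $x_3$ but also the residues of $dh$ are independent of $\lambda$, so every $M_\lambda$ has the \emph{same} asymptotic half-catenoids and planar ends at the \emph{same} heights; this is what confines potential first tangencies to a compact region and makes the closedness argument tractable.

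The genuine soft spot is your ``Conclusion'' paragraph. Saying that as $\lambda\to 0,\infty$ the surface ``flattens onto horizontal, multi-sheeted configurations'' and that ``$d\ge 2$ produces sheets that are forced to cross'' is an intuition, not an argument: nearly-horizontal many-sheeted embedded surfaces certainly exist, so flattening alone is not a contradiction, and one must exhibit a finite value of $\lambda$ at which embeddedness actually fails. In L\'opez--Ros this step is carried out concretely. One first notes, from $d=r-1$ and the fact that each embedded horizontal end contributes a \emph{simple} zero or pole to $g$, that for $r\ge 3$ there is at least one \emph{interior} point $p$ with $g(p)\in\{0,\infty\}$ (the count gives $2d-r=r-2\ge 1$ such points). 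The horizontal tangent plane of $M_\lambda$ at $\psi_\lambda(p)$ sits at the fixed height $x_3(p)$ for every $\lambda$, while the ordered asymptotic planes of the ends are also fixed; a comparison of $M_\lambda$ with these fixed horizontal barriers (equivalently, an analysis of the rescaled limits of $M_\lambda$ near $p$ and near the ends) produces a tangential contact that the maximum principle promotes to a self-intersection for some finite $\lambda$. Making that comparison explicit is what is missing from your sketch; once it is supplied, the deduction $r\le 2$ follows, and your endgame via Schoen's theorem is fine.
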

We next summarize some of the above classification results; in particular,
Theorem~\ref{helicoid} follows from the next theorem.

\begin{theorem}
\label{kks2}
Let $M$ be a complete $H$-surface of finite topology in $\rth$. Then:
\ben[1.]
\item $M$ is proper and has bounded second fundamental form.
\item Each annular end of $M$   is asymptotic to the end of a Delaunay
surface, a plane, a catenoid or a helicoid.
\item If $M$ simply connected, then it is a plane, a sphere, a catenoid or a helicoid.
\item If $M$ has two ends, then it is a catenoid or a Delaunay surface.
\item If $M$ has genus zero and it is a $0$-surface, then it is a plane, a catenoid or a helicoid.
\item If $H>0$, then $M$ has at least two ends.
\een
\end{theorem}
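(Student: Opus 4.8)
The plan is to assemble this omnibus statement from the classification results established earlier in this section, organizing the argument around the sign of $H$ and the number of ends. First I would dispose of the properness half of statement~1: since $M$ is a complete $H$-surface of finite topology, Theorem~\ref{thmCM} gives at once that $M$ is proper. Next I would separate the compact and non-compact cases. If $M$ is compact then $H>0$ (there are no closed minimal surfaces in $\rth$, by the maximum principle) and Alexandrov's Theorem~\ref{thmAlex} forces $M$ to be a round sphere; this is consistent with statement~3 and is the degenerate case of statements~2, 4 and~6, all of which concern ends and hence the non-compact situation. So I would assume henceforth that $M$ is non-compact and use that a proper non-compact surface of finite topology decomposes as a compact core together with finitely many annular ends $E_1,\dots,E_k$, each proper by Remark~\ref{remarkENDS}.

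The heart of the matter is statement~2, the asymptotic description of the ends, from which the bounded-curvature half of statement~1 then follows. For $H>0$, Theorem~\ref{kks}(1) gives directly that every annular end is asymptotic to the end of a Delaunay surface. For $H=0$ I would split on the number of ends: if $M$ has at least two ends, Collin's Theorem~\ref{thmCM2} shows $M$ has finite total curvature and each end is asymptotic to a plane or a catenoid; if $M$ has exactly one end, then either $M$ is flat, hence a plane, or $M$ is non-flat and Theorem~\ref{ttmr} shows its single end is asymptotic to a helicoid. In every case each $E_i$ is asymptotic to a plane, catenoid, helicoid or Delaunay surface, which is statement~2. Since each of these model surfaces has uniformly bounded second fundamental form and the asymptotic convergence holds in the $C^2$ (indeed $C^k$) sense, $|A_M|$ is bounded on each end; as it is bounded on the compact core by compactness, $|A_M|$ is bounded on all of $M$, completing statement~1. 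One could alternatively route the curvature bound through the equivalence, noted after Remark~\ref{remarkENDS}, between bounded $|A_M|$ and positive injectivity radius for $(H>0)$-surfaces, but the route through the asymptotic models avoids any circularity with the curvature estimates of Theorem~\ref{cest}.

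With statements~1 and~2 in hand, the remaining items are direct appeals to the classification theorems. Statement~3 is exactly Theorem~\ref{helicoid} (the listed catenoid never occurs, since a catenoid is not simply connected). For statement~4 with two ends: when $H>0$, Theorem~\ref{kks}(3) gives a Delaunay surface; when $H=0$, Theorem~\ref{thmCM2} yields finite total curvature, and since $M$ is embedded its two ends are embedded, so Schoen's Theorem~\ref{schth2} forces $M$ to be a catenoid. For statement~5, with $H=0$ and genus zero: if $M$ has one end it is a plane when flat and, by the genus-zero clause of Theorem~\ref{ttmr}, a helicoid when non-flat; if $M$ has at least two ends it has finite total curvature by Theorem~\ref{thmCM2}, so the genus-zero classification Theorem~\ref{thmlr} makes it a plane or a catenoid, necessarily a catenoid since the plane has only one end. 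Finally statement~6 is the non-existence of one-ended proper non-compact $(H>0)$-surfaces of finite topology, that is, Theorem~\ref{kks}(2).

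The main obstacle here is not any single deep estimate but the bookkeeping: one must be careful that the compact sphere is excluded from the end-counting statements, that the ``catenoid'' listed in statement~3 is vacuous, and above all that the bounded-curvature claim in statement~1 is derived from the asymptotic geometry of the ends (statement~2) rather than from a premature use of the curvature estimates of Theorem~\ref{cest}, with which statement~1 would otherwise be logically entangled. The only genuinely analytic input beyond citation is checking that the convergence to the model ends is in a strong enough ($C^2$) topology to transfer the curvature bounds of the models to $M$; this is implicit in the asymptotics supplied by Theorems~\ref{kks}, \ref{thmCM2} and~\ref{ttmr}.
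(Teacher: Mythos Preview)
Your assembly is correct and matches the paper's intent: Theorem~\ref{kks2} is presented there as a summary of the preceding classification results, with no separate proof, so the task is exactly the bookkeeping you carry out, and your derivations of items~1, 2, 4, 5, 6 from Theorems~\ref{thmCM}, \ref{thmCM2}, \ref{ttmr}, \ref{kks}, \ref{schth2}, \ref{thmlr} and \ref{thmAlex} are the intended ones.

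There is one genuine circularity, though. For item~3 you write ``Statement~3 is exactly Theorem~\ref{helicoid}'', but the paper's sentence immediately preceding Theorem~\ref{kks2} says ``Theorem~\ref{helicoid} follows from the next theorem''. In this exposition the logical flow is \{Theorems~\ref{thmCM}, \ref{ttmr}, \ref{kks}\} $\Rightarrow$ Theorem~\ref{kks2} $\Rightarrow$ Theorem~\ref{helicoid}, so invoking Theorem~\ref{helicoid} to establish item~3 closes a loop. The fix is the argument you already have in hand: if $M$ is simply connected and compact, Theorem~\ref{thmAlex} gives a sphere; if $M$ is simply connected and non-compact it has genus zero and one end, so for $H>0$ item~2 of Theorem~\ref{kks} rules this out (forcing compactness), while for $H=0$ the genus-zero clause of Theorem~\ref{ttmr} gives a plane or a helicoid. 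This derives item~3 directly from the earlier theorems and makes the implication Theorem~\ref{kks2} $\Rightarrow$ Theorem~\ref{helicoid} non-circular. Your observation that the catenoid listed in item~3 is vacuous is correct and worth keeping.
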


We next explain some of the elements in the proof of Theorem~\ref{classthm}, which
classifies the proper 0-surfaces in $\R^3$ with genus zero; the case of finite topology is covered
by Theorem~\ref{kks2} above and does not need the hypothesis of properness but only the weaker one of
completeness. Unfortunately it is not known at the present moment
if every complete $H$-surface of finite genus in $\rth$ is proper;
see Conjecture~\ref{CYconj1} and Theorem~\ref{thmCYMPR} for a related discussion.
So for this reason, in the next theorem we will assume that the surface is proper.
The results summarized in the next theorem
can be found in~\cite{mpr4}, where it is shown that a proper
finite genus $0$-surface  with infinite topology in $\R^3$
must have two limit ends, and in~\cite{mpr6}; {
The space of ends ${\cal E}(M)$ of a non-compact connected manifold $M$
has the following natural Hausdorff topology. For each
proper domain $\Omega \subset M$ with compact boundary, we define the basis open set
$B(\Omega ) \subset {\cal E}(M)$ to be those equivalence classes in ${\cal E}(M)$
which have representatives contained in $\Omega $. With this topology,
${\cal E}(M)$ is a totally disconnected
compact space. Any isolated point $e \in {\cal E}(M)$ is called a {\it simple end} of
$M$. If $e\in {\cal E}(M)$ is not a simple end (equivalently, if it is a limit point of
${\cal E}(M)$), then $e$ is called a {\it limit end} of $M$.
In the case that $M$ is a proper $0$-surface in $\R^3$ with more than one end,
then Frohman and Meeks~\cite{fme2} showed that  ${\cal E}(M)$ can be equipped with
a linear ordering by the relative heights of the ends over the
$(x_1,x_2)$-plane (after a rotation in $\R^3$).  One
defines the {\it top end} $e_T$ of $M$ as the unique maximal element in
${\cal E}(M)$ for this linear ordering. Analogously, the {\it bottom end}
$e_B$ of $M$ is the unique minimal element in ${\cal E}(M)$. If $e\in {\cal E}(M)$ is neither the
top nor the bottom end of $M$, then it is called a {\it middle end} of $M$.
}

\begin{theorem}
\label{classthm2}
Let $M\subset \rth$
proper 0-surface with infinite topology.
\begin{enumerate}[1.]
\item If $M$ has genus zero, then $M$ is one of the Riemann minimal
examples.

\item $M$ has finite genus greater than zero, then $M$ has two limit
ends and each of its middle ends is planar. Furthermore,
after a homothety and rigid motion, the following properties hold.
\ben[2.a.]
\item $M$ is conformally diffeomorphic to a compact Riemann surface
of genus $g$ minus a countable closed subset
${\cal E}_M=\{ e_n\} _{n\in \Z }\cup \{ e_{\infty},e_{-\infty}\} \subset \overline{M}$,
where $\lim_{n\to -\infty}
e_n =e_{-\infty} $ and $ \lim_{n\to\infty}e_n= e_{\infty} $.

\item There exists a Riemann minimal example $R_t$ so that the
top end of $M$ converges exponentially
to the top end of a translated image of ${\cal R}_t$ in the
following sense: there exists a vector $v^+\in \R ^3$ and
representatives ${\cal R}_t^+$ and $M^+$ of the top ends
of ${\cal R}_t$ and $M$ respectively,
such that $M^+$ can be expressed as the graph over ${\cal R}_t^++v^+$ given by a smooth function
$f$ defined on the half-cylinder $\overline{{\cal R}_t^+}$
obtained after attaching to ${\cal R}_t^+$ its planar ends,
such that $f$ decays exponentially as the height $x_3\to \infty $ on ${\cal R}_t^+$.
In the same way, the bottom end of $M$ is exponentially asymptotic
to ${\cal R}_t^-+v^-$ for a certain translation
vector $v^-\in \R^3$.
\een
\end{enumerate}
\end{theorem}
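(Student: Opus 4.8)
The plan is to handle the two cases together for as long as possible, since the analytic heart lies in understanding a genus-zero (planar-domain) end structure; the positive-genus case of Part~2 is then obtained by confining the genus to a compact piece and applying the planar-domain theory to the complement.

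\emph{Step 1: reduction to two limit ends with a controlled conformal model.} First I would invoke the Ordering Theorem of Frohman and Meeks~\cite{fme2} to linearly order the space of ends $\mathcal{E}(M)$ by relative height over the $(x_1,x_2)$-plane, as recalled just before the statement. Since $M$ is proper of finite genus but infinite topology, $\mathcal{E}(M)$ is an infinite totally disconnected compact space, and a flux-monotonicity argument combined with the ordering rules out a single limit end, forcing exactly two limit ends $e_{\pm\infty}$ with the simple ends $\{e_n\}$ clustering at them (this two-limit-end dichotomy is the main output of~\cite{mpr4}). A central prerequisite is that $M$ has \emph{bounded} Gaussian curvature: this follows from the Colding--Minicozzi curvature estimates for embedded minimal disks together with the Local Removable Singularity Theorem, which forbid curvature blow-up at a limit end. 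Bounded curvature and properness then yield, via recurrence (parabolicity) of the region between consecutive middle ends and a Huber-type conformal compactification~\cite{hu1}, the conformal model of~2.a: a compact genus-$g$ surface minus the discrete set $\mathcal{E}_M=\{e_n\}\cup\{e_{\infty},e_{-\infty}\}$.

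\emph{Step 2: the middle ends are planar and $M$ is quasiperiodic.} Using the scalar vertical flux of Theorem~\ref{Meeks-flux} and the CMC flux formula of Theorem~\ref{fluxK} applied to the three translational Killing fields, together with the ordering of the ends, I would show that each middle end has vanishing logarithmic growth and vanishing horizontal flux, hence is asymptotic to a plane. Embeddedness and the two-limit-end structure are essential here: the resulting stack of horizontal planar middle ends gives exactly the quasiperiodic picture needed below. Boundedness of curvature then lets one extract from vertical translates of $M$ a limit minimal lamination whose leaves are again properly embedded planar-domain surfaces of the same type, making precise the quasiperiodicity that drives the next step.

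\emph{Step 3 (the main obstacle): vanishing of the Shiffman function.} On $M$ I would introduce the Shiffman Jacobi function $S_M$, constructed from the Weierstrass data, which vanishes identically precisely when every horizontal section of $M$ is a circle or a line. Bounded curvature and the asymptotic planarity of the ends show that $S_M$ is a \emph{bounded} Jacobi function. The crux is to prove $S_M\equiv 0$, and this is where essentially all of the difficulty of~\cite{mpr6} is concentrated. The strategy is to exploit the integrable-systems structure: $S_M$ generates a deformation of $M$ through minimal surfaces (the Shiffman flow), and in the quasiperiodic two-limit-ended setting this flow is identified with a flow in the KdV (algebro-geometric, finite-gap) hierarchy. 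The two-limit-end compactness constrains the space of bounded Jacobi fields so severely that the only bounded solution compatible with the period and flux constraints is the trivial one; equivalently, a nonzero bounded $S_M$ would generate a one-parameter family of non-congruent planar-domain surfaces violating the rigidity of the finite-gap spectral data. Ruling out nontrivial bounded Shiffman Jacobi fields is the genuinely hard analytic step. Once $S_M\equiv 0$, Riemann's classical theorem on minimal surfaces foliated by circles and lines in parallel planes identifies $M$ as such a surface; since $M$ has infinite topology, the plane, catenoid and helicoid are excluded, so $M$ is one of the Riemann minimal examples $\mathcal{R}_t$, which proves Part~1.

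\emph{Step 4: the positive-genus case and the exponential asymptotics.} For Part~2, I would remove from $M$ a compact subsurface containing all of the genus; each of the two resulting top and bottom ends is then a genus-zero planar-domain end with a single limit end, to which Steps~1--2 apply, giving the two-limit-end conclusion, the planarity of all middle ends, and the conformal model~2.a. For~2.b, I would select the Riemann example $\mathcal{R}_t$ by matching the flux and period data of $M$ at infinity. Because the ends are asymptotically translation-periodic with horizontal planar ends, the normal graph function measuring the deviation of $M^+$ from a translate of $\mathcal{R}_t^+$ satisfies the linearized minimal surface (Jacobi) equation on the half-cylinder, whose bounded solutions decay exponentially by separation of variables; a contraction-mapping or implicit-function argument then upgrades this linear decay to the asserted exponential convergence of $M^+$ to $\mathcal{R}_t^++v^+$, and symmetrically for the bottom end, completing the proof.
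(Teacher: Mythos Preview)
The paper is a survey and does not itself prove this theorem; it only points to~\cite{mpr4} for the two-limit-end structure, bounded curvature, recurrence and conformal type, and to~\cite{mpr6} for the genus-zero classification via the Shiffman Jacobi function and KdV/integrable-systems methods, and your outline follows exactly this referenced route.

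Two points where your sketch drifts from the actual arguments are worth flagging. First, the bounded-curvature step is not ``Colding--Minicozzi disk estimates plus the Local Removable Singularity Theorem''; as the paper notes in Section~\ref{subsecregularitysingular}, it is the blow-up analysis on the \emph{scale of topology} of~\cite{mpr3,mpr4} that yields bounded curvature (and recurrence) for proper finite-genus $0$-surfaces with infinitely many ends. Second, in Step~3 the mechanism in~\cite{mpr6} is not that a nonzero bounded $S_M$ is ruled out by a rigidity-of-spectral-data contradiction; rather, one shows that the Shiffman function lies in a finite-dimensional space generated by the KdV hierarchy, so the Shiffman flow \emph{integrates holomorphically} to a compact family of quasiperiodic genus-zero surfaces, and then compactness of translates forces $M$ to be periodic, hence a Riemann example. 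Your Step~4 is fine in spirit: once Part~1 is known, limits of vertical translates of the top and bottom ends of a positive-genus $M$ are Riemann examples, and the exponential decay in~2.b is indeed the linear Jacobi analysis on the half-cylinder that you describe.
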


\subsection{Embedded 0-annular ends with infinite total curvature.}
\label{secCTI}

%\subsection{Annular minimal ends of infinite total curvature.}
\label{subsecannular}

In this section we will describe the asymptotic
behavior, conformal structure and analytic
representation of an annular end of any
complete 0-surface $M$ in $\rth$ with compact boundary
and finite topology (hence proper by Remark~\ref{remarkENDS}).
For detailed proofs of the results in this section,
see Meeks and P\'erez~\cite{mpe3}.

Take two numbers $a\in [0,\infty )$, $b\in \R $.
Next we outline how to construct examples $E_{a,b}\subset \R^3$ of complete
0-annuli with compact boundary, conformally parameterized
in $D(\infty ,R)=\{z\in \C \mid |z|\geq R\}$, for some $R>0$,
so that their flux vector along their boundary is $(a,0,-b)$ and their total
Gaussian curvature is infinite. These annuli $E_{\a,b}$
will serve as models for the asymptotic geometry of every complete
embedded minimal end with infinite total curvature and compact boundary.
To define $E_{a,b}$, we will use the Weierstrass representation
$(g,dh)$ where $g$ is the Gauss map and $dh$ the height differential. First
note that after an isometry in $\R^3$ and a possible
change of orientation, we can assume that $b\geq 0$.
We consider three separate cases.
\begin{enumerate}[(C1)]
\item If $a=b=0$, then define  $g(z)=e^{iz}$, $dh=dz$, which produces
the end of a vertical helicoid.
\item If $a\neq 0$ and $b\geq 0$ (i.e., the flux vector is not vertical), we choose
\begin{equation}
\label{eq:gdh1}
g(z)=t\, e^{iz}\frac{z-A}{z}, \qquad dh=\left( 1+
 \frac{B}{z}\right) dz, \qquad z\in D(\infty ,R),
\end{equation}
where $B=\frac{b}{2\pi }$, and the parameters $t>0$ and $A\in \C-\{ 0\} $ are to be
determined (here $R>|A|$). Note that with this choice of $B$, the imaginary part of
$\int _{\{ |z|=R\} }dh$ is $-b$ because we use the orientation of $\{ |z|=R\} $
as the boundary of $D(\infty ,R)$.
\item In the case of vertical flux, i.e., $a=0$ and $b>0 $, we take
\begin{equation}
\label{eq:gdh2} g(z)=e^{iz}\frac{z-A}{z-\overline{A}},\qquad dh=
\left( 1+\frac{B}{z}\right) dz, \quad z\in D(\infty, R),
\end{equation}
where $B=\frac{b}{2\pi }$ and $A\in \C-\{0\}$ is to be determined (again $R> |A|$).
\end{enumerate}

In each of the three cases above, $g$ can rewritten as $g(z)=t\, e^{iz+f(z)}$
where $f(z)$ is a well-defined holomorphic function in $D(\infty ,R)$ with $f(\infty )=0$.
In particular, the differential $\frac{dg}{g}$ extends meromorphically
through the puncture at $\infty $. The same extendability holds for $dh$. These
properties will be collected in the next notion, that was first introduced by
Rosenberg~\cite{rose1} and later studied by Hauswirth, P\'erez and Romon~\cite{hkp1}.
\begin{definition}
\label{deffinitetype}
A complete immersed $0$-surface $M\subset \R^3$
with Weierstrass data $(g,dh)$ is of {\it finite type} if
$M$ is conformally diffeomorphic to a finitely punctured, compact
Riemann surface $\overline{M}$ and after a possible rotation, both
$dg/g$, $dh$ extend meromorphically to $\overline{M}$.
\end{definition}

Coming back to our discussion about the annular minimal ends $E_{a,b}$,
to determine the parameters $t>0$, $A\in \C -\{ 0\} $ that appear in cases (C2), (C3) above,
one studies the period problem for $(g,dh)$. The only period to be
killed is the first equation in (\ref{eq:periodproblem}) along $\{ |z|=R\} $, which
can be explicitly computed in terms of $t,A$. An intermediate value argument gives that
given $B=\frac{b}{2\pi }$, there exist parameters $t>0$, $A\in \C -\{ 0\} $
so that the Weierstrass data given by (\ref{eq:gdh1}), (\ref{eq:gdh2})
solve this corresponding period problem. At the same time, one can calculate
the flux vector $F$ of the resulting 0-immersion along its boundary and prove that
its horizontal component covers all possible values.
This defines for each $a,b\in [0,\infty )$ a complete immersed 0-annulus $E_{a,b}$ with compact boundary,
infinite total curvature and flux vector $(a,0,-b)$. Embeddedness of $E_{a,b}$ will be discussed below.
With the notation above, we will call the end $E_{a,b}$ a {\it canonical end}
(in spite of the name ``canonical end'', we note that the choice of $E_{a,b}$ depends on the
explicit parameters $t,A$ in equations (\ref{eq:gdh1}) and (\ref{eq:gdh2})).

\begin{remark}
{\rm
In case (C3) above, it is easy to
check that the conformal map $z \stackrel{\Phi }{\mapsto
}\overline{z}$ in the parameter domain $D(\infty,R)$ of $E_{0,b}$
satisfies $g\circ \Phi =1/\overline{g}$, $\Phi ^*dh=\overline{dh}$.
Hence, after translating the surface so that the image of the point
$R\in D(\infty ,R)$ lies on the $x_3$-axis, we deduce that $\Phi $
produces an isometry of $E_{0,b}$ which extends to a 180-rotation
of $\rth$ around the $x_3$-axis; in particular,
$E_{0,b}\cap (x_3$-axis)  contains  two infinite rays.
}
\end{remark}

To understand the geometry of the canonical end
$E_{a,b}$ and in particular prove that it is
embedded if $R$ is taken large enough,
it is worth analyzing its multi-valued graph structure,
which is the purpose of Theorem~\ref{thm1.3} below. Before
stating this result, we need some notation.

\begin{definition}
\label{Ngraph}
{\rm
In polar coordinates $(\rho, \t )$ on $\R^2-\{ 0\} $
with $\rho >0$ and
$\t \in \R $, a {\it $k$-valued graph on an annulus of inner radius
$r$ and outer
radius $R$,} is a single-valued graph of a real-valued function $u(\rho ,\t )$
defined over
\begin{equation}
\label{eq:sectormultigraph}
S_{r,R}^{-k,k}=\{
(\rho ,\t )\ | \ r\leq \rho \leq R,\ |\t |\leq k\pi \} ,
\end{equation}
$k$ being a positive
integer (see Figure~\ref{multigrph}).
\begin{figure}
\begin{center}
\includegraphics[height=3.1cm]{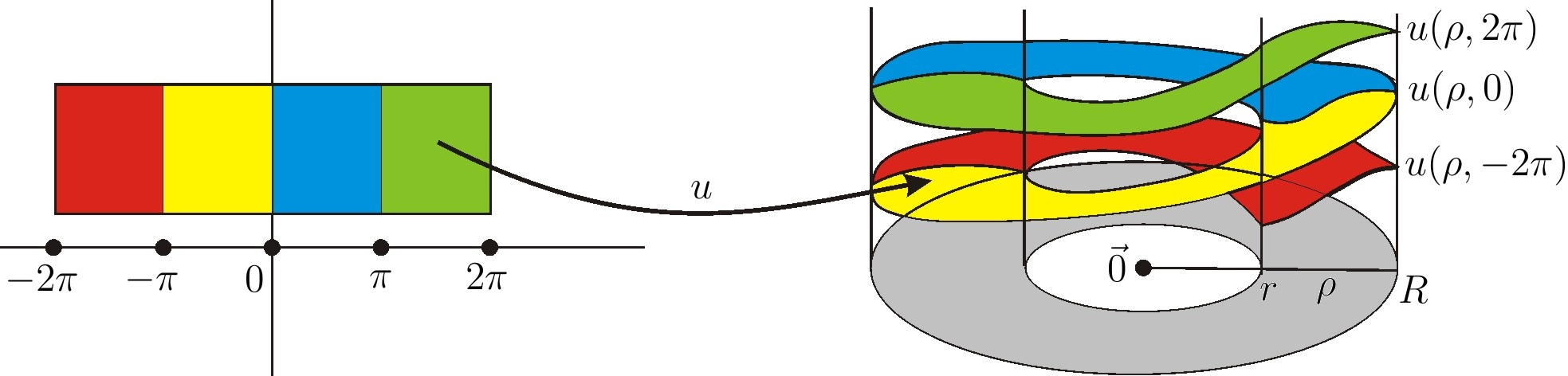}
\caption{A $2$-valued graph with positive separation.}
\label{multigrph}
\end{center}
\end{figure}
The {\it separation} between consecutive sheets is
\begin{equation}
\label{eq:sep}
w(\rho ,\t )=u(\rho ,\t+2\pi )-u(\rho ,\t )\in \R.
\end{equation}
 The surface $\Sigma =\{ (\rho \cos \theta ,\rho \sin \theta ,u(\rho ,\theta ))
\ | \ (\rho ,\theta )\in S_{r,R}^{-k,k}\} $
is clearly embedded if and only if
$w>0$ (or $w<0$). The multi-valued graph $u$ is said
to be an $H$-multi-valued graph if it is an $H$-surface.}
\end{definition}

Note that the separation function $w(\rho ,\theta )$ used in Theorem~\ref{thm1.3} below
refers to the vertical separation between the two disjoint multi-valued graphs
$\Sigma _1,\Sigma _2$ appearing in the next result (versus
the separation used in Definition~\ref{Ngraph}, which measured the
vertical distance between two consecutive sheets of the {\it same} multi-valued graph). We
also use the notation $\widetilde{D}(\infty ,R)=\{ (\rho ,\theta )
\mid \rho \geq R, \theta \in \R \} $ and $C(R)=\{ (x_1,x_2,x_3)\in
\R^3\mid x_1^2+x_2^2\leq R^2\} $.

\begin{theorem}[Asymptotic behavior of $E_{a,b}$]
\label{thm1.3}
Given $a\geq 0$ and $b\in \R$, the canonical end $E=E_{a,b}$ satisfies
the following properties.
\begin{enumerate}[1.]
\item There exists $R=R_E>0$ large such that
    $E_{a,b}-C(R)$ consists of two disjoint multi-valued graphs $\Sigma _1,
\Sigma _2$ over $D(\infty ,R)$ of smooth functions $u_1,u_2\colon
\widetilde{D}(\infty ,R)\to \R $ such that their gradients satisfy
$\nabla u_i(\rho,\theta )\to 0$ as $\rho \to \infty $ and the separation
function $w(\rho,\theta )=u_1(\rho,\theta )-u_2(\rho,\theta )$ between both
multi-valued graphs converges to $\pi $ as $\rho+|\theta |\to \infty $.
Furthermore for $\t $ fixed\,\footnote{This condition expresses the intersection
of $E-C(R_E)$ with a vertical half-plane
bounded by the $x_3$-axis, of polar angle $\theta $, see Figure~\ref{Eab}.} and $i=1,2$,
\begin{equation}
\label{eq:graphr}
 \lim_{\rho \to \infty} \frac{u_i(\rho ,\theta)}{\log
(\log \rho )} =\frac{b}{2\pi }.
\end{equation}
\item The translated surfaces $E_{a,b}+(0,0,-2\pi n-\frac{b}{2\pi
}\log n)$ (resp. $E_{a,b}+(0,0,2\pi n-\frac{b}{2\pi }\log n)$)
converge as $n\to \infty $ to a vertical helicoid $H_T$ (resp.
$H_B$) such that
\begin{equation}
\label{eq:HBHT}
H_B=H_T+(0,a/2,0).
\end{equation}
The last equality together with item 1 imply that for different values of $(a,b)$, the
related canonical ends $E_{a,b}$ are not asymptotic after a rigid motion
and homothety. See Figure~\ref{Eab} for a description of how the flux vector $(a,0,-b)$ of $E_{a,b}$
influences its geometry.
\begin{figure}[h]
\begin{center}
\includegraphics[height=8cm]{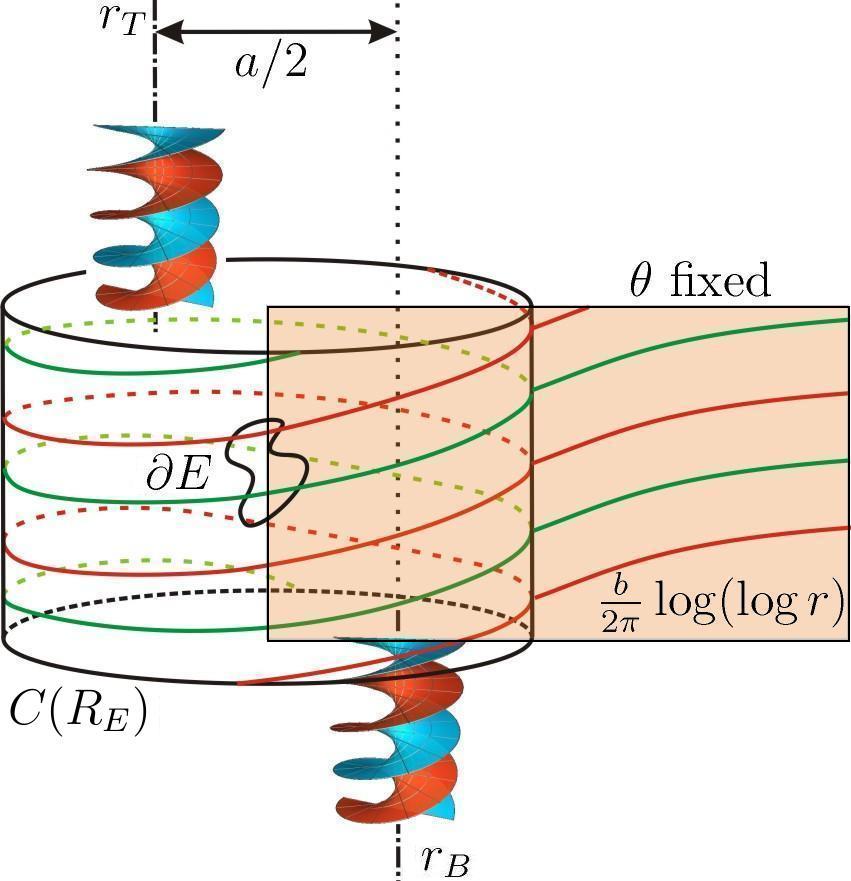}
\caption{A schematic image of the embedded canonical
end $E=E_{a,b}$ with flux vector $(a,0,-b)$ (see
Theorem~\ref{thm1.3}).  The half-lines $r_T,r_B$ refer
to the axes of the vertical helicoids $H_T,H_B$.
}
\label{Eab}
\end{center}
\end{figure}

\end{enumerate}
\end{theorem}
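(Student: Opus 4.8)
The plan is to exploit the completely explicit Weierstrass data and reduce everything to asymptotic integration of the representation formula \eqref{eq:repW}. In all three cases (C1)--(C3) the Gauss map has the form $g(z)=t\,e^{iz+f(z)}$ with $f$ holomorphic on $D(\infty,R)$ and $f(\infty)=0$, while $dh=(1+\tfrac{B}{z})\,dz$ with $B=\tfrac{b}{2\pi}$; since $dg/g$ and $dh$ extend meromorphically across $\infty$, the end is of finite type in the sense of Definition~\ref{deffinitetype}. Writing the conformal coordinate as $z=p+iq$, I would first record the closed form $x_3=\operatorname{Re}(z+B\log z)=p+B\log|z|$, and for the horizontal coordinate $w=x_1+ix_2$ use the identity $dw=-\tfrac12 g\,dh+\tfrac12\overline{dh/g}$ (from $\phi_1+i\phi_2=-g$, $\phi_1-i\phi_2=1/g$) to integrate and obtain, to leading order, $w\sim i\,e^{ip}\big(\tfrac{t}{2}e^{-q}-\tfrac{1}{2t}e^{q}\big)$ as $|z|\to\infty$. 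The decisive consequence is that the extrinsic radius satisfies $\rho=|w|\sim\tfrac12\big|\,t\,e^{-q}-t^{-1}e^{q}\big|$, so $\rho\to\infty$ forces $|q|\to\infty$ with $|q|\sim\log\rho$, while the polar angle of $w$ is $\theta\sim p+\tfrac{\pi}{2}$. The end therefore splits into the two pieces $\{q>0\}$ and $\{q<0\}$, which I would take as the multi-valued graphs $\Sigma_1,\Sigma_2$.

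For part 1 of the theorem, I would verify that each piece is a multi-valued graph over $D(\infty,R)$ in the sense of Definition~\ref{Ngraph}. Graphicality and the gradient decay $\nabla u_i\to 0$ come from the fact that, as for a genuine helicoid, the height is governed by the multivalued angle while the radius grows independently: on $\{q>0\}$, for a fixed multivalued angle $\theta$ the quantity $p\approx\theta-\tfrac{\pi}{2}$ stays bounded while $|z|=\sqrt{p^2+q^2}\sim|q|\sim\log\rho$, so that $x_3=p+B\log|z|\sim\mathrm{const}+B\log(\log\rho)$, which is precisely \eqref{eq:graphr}; the slope of such a slowly growing graph is $O(1/\rho)$. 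For the separation statement I would identify the limiting helicoidal model: since $dh\sim dz$ gives $x_3\sim\theta$, over a common horizontal point the two families of sheets differ by $\pi$ in the multivalued angle, and because the slowly varying term $B\log|z|$ agrees for the two sheets at a common $\rho$ and cancels in the difference, the vertical separation $w(\rho,\theta)=u_1-u_2$ tends to $\pi$.

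For part 2, I would analyze the translates $E_{a,b}+(0,0,-2\pi n-B\log n)$ and $E_{a,b}+(0,0,2\pi n-B\log n)$, which focus attention on the top portion ($x_3\to+\infty$, i.e. $\theta\to+\infty$) and the bottom portion ($\theta\to-\infty$) of the end. The renormalizing shift is dictated by the height formula $x_3\sim p+B\log|z|$: along these sequences the added constant cancels the slowly varying $B\log|z|$ term to leading order, leaving a pure helicoidal profile with $x_3\sim\theta$, and passing to the limit yields the vertical helicoids $H_T$ and $H_B$. To pin down the relative position \eqref{eq:HBHT}, I would track the $A$-dependent subleading terms in $w$: the factor $\tfrac{z-A}{z}$ (resp.\ $\tfrac{z-A}{z-\overline A}$) contributes, through the residues of $g\,dh$ and $dh/g$ around the puncture, a constant horizontal displacement of the helicoid axis that is opposite on the two ends, and a flux computation as in Remark~\ref{remark3.5} ties this displacement to the prescribed horizontal flux $a$; the $\mbox{Rot}_{90^\circ}$ appearing in \eqref{eq:flux} explains why the flux $(a,0,-b)$ produces an offset in the orthogonal direction, giving $H_B=H_T+(0,a/2,0)$.

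The asymptotic integration itself is routine; the hard part will be upgrading the pointwise asymptotics into the uniform, genuinely graphical statements of part 1 (well-definedness of $u_1,u_2$ on all of $\widetilde D(\infty,R)$ with controlled gradients) and, above all, extracting in part 2 the exact limiting helicoids together with the precise offset $(0,a/2,0)$. This last identification forces one to control the perturbations $f(z)$ and $B/z$ to the next order and to normalize the axes correctly, and it is exactly here that the constants $t$ and $A$ fixed by the period problem in \eqref{eq:gdh1}--\eqref{eq:gdh2} must be used quantitatively rather than merely qualitatively.
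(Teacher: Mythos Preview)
Your proposal is correct and follows essentially the same approach the paper outlines: the paper says explicitly that the proof of Theorem~\ref{thm1.3} ``is based on a careful analysis of the horizontal and vertical projections of $E=E_{a,b}$ in terms of the Weierstrass representation,'' and that is precisely what you do by computing $x_3=\operatorname{Re}\int dh$ and $x_1+ix_2=-\tfrac12\int g\,dh+\tfrac12\,\overline{\int dh/g}$ and extracting the leading asymptotics. One small point of emphasis worth noting: the paper stresses that the explicit forms in (\ref{eq:gdh1})--(\ref{eq:gdh2}) are \emph{not} used, only the common structure $g(z)=e^{iz+f(z)}$, $dh=(1+\lambda/(z-\mu))\,dz$ with $f(\infty)=0$ (the constant $t$ being absorbed into $\mu$ by a change of variable); so when you write that ``the constants $t$ and $A$ fixed by the period problem \ldots\ must be used quantitatively,'' this is slightly misleading---the offset $(0,a/2,0)$ in (\ref{eq:HBHT}) is extracted from the flux (which is an intrinsic invariant of the end) rather than from the particular solution of the period problem, exactly as your flux argument already indicates.
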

The proof of Theorem~\ref{thm1.3} is based on a careful analysis of the
horizontal and vertical projections of $E=E_{a,b}$ in terms of the
Weierstrass representation. In fact, the explicit expressions of $g,dh$
in equations (\ref{eq:gdh1}), (\ref{eq:gdh2}) is not used, but only that
those choices of $g,dh$ have the common structure
\begin{equation}
\label{eq:WBB}
g(z)=e^{iz+f(z)},\quad dh=\left( 1+\frac{\l }{z-\mu }\right) dz,
\end{equation}
where $\l \in \R $, $\mu \in \C $ and  $f\colon D(\infty ,R)\cup \{ \infty \} \to \C $
is a holomorphic function such that $f(\infty)=0$
(the multiplicative constant $t$ appearing in equation (\ref{eq:gdh1})
can be absorbed by $\mu $ in (\ref{eq:WBB}) after an appropriate change of variables in the
parameter domain).

With the canonical examples at hand, we are now ready to state the main result of this section.
\begin{theorem}
\label{th9.8}
Let $E\subset \rth$ be a complete 0-annulus with
infinite total curvature and compact boundary.
Then, $E$ is conformally diffeomorphic to a punctured disk, its Gaussian
curvature function is bounded, and after replacing $E$ by a subend and
applying a suitable homothety and
rigid motion, we have:
\ben[1.]
\item The Weierstrass data of $E$ is of the form
(\ref{eq:WBB}) defined on $D(\infty ,R)$ for some $R>0$, where $f$
is a holomorphic function in $D(\infty ,R)$ with $f(\infty )=0$, $\l \in \R $ and $\mu
\in \C$.  In particular, $dh$ extends meromorphically across
infinity with a double pole.
\item $E$ is asymptotic to the canonical end $E_{a,b}$
determined by the equality $F=(a,0,-b)$, where $F$ is the flux vector
of $E$ along its boundary. In particular, $E$ is asymptotic to the end of a
helicoid if and only if it has zero flux.
\een
\end{theorem}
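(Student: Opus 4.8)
The plan is to reduce the statement to the behaviour of the Weierstrass data $(g,dh)$ of $E$ in a neighbourhood of its end, the decisive point being to establish that $E$ is of \emph{finite type} in the sense of Definition~\ref{deffinitetype}. Once the meromorphic extension of $dg/g$ and $dh$ across the puncture is known, the normal form (\ref{eq:WBB}) and the asymptotic description both follow: the first by reading off pole orders and the reality of periods, the second by a flux computation together with Theorem~\ref{thm1.3}. At the outset I would record that $E$ is properly embedded by Remark~\ref{remarkENDS}, so that blow-downs and the extrinsic geometry of the end are available; note also that by Theorem~\ref{thmCM2} a one-ended surface of infinite total curvature is exactly what is not covered by the finite-total-curvature theory, so there is genuinely something to classify.

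The first substantive step is to control the conformal type and the curvature. Since $E$ is a properly embedded, complete minimal annular end of infinite total curvature, the local and global theory of Colding--Minicozzi for embedded minimal disks and annuli~\cite{cm35} forces a neighbourhood of the end to contain arbitrarily large embedded multi-valued graphs, and the uniqueness of the helicoid of Meeks--Rosenberg~\cite{mr8} identifies every blow-down limit of $E$ with a single vertical helicoid. After a homothety and a rigid motion I would normalise this limiting helicoid to be vertical of unit pitch. Two consequences follow: the area of $E\cap \B(p,R)$ grows quadratically in $R$, and the second fundamental form is uniformly bounded, because on the scale set by the limit helicoid the surface is, away from the axis, a graph of bounded geometry. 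The bounded curvature together with the quadratic area growth forces $E$ to be conformally a punctured disk, which I parametrise by $D(\infty,R)=\{|z|\ge R\}$ with the end at $\infty$; this yields the first two assertions of the theorem.

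The main obstacle is the finite-type property. On a subend the unit normal of $E$ avoids the two vertical directions dictated by the helicoidal model, so the Gauss map $g$ omits $0$ and $\infty$ there, and, since its winding number around the puncture agrees with that of the model (which is zero), $\log g$ is single valued; I write $\log g = iz+f(z)$. The heart of the matter is to show that $f$ extends holomorphically across $\infty$ with $f(\infty)=0$ and that $dh$ extends meromorphically. Here I would transport the bounded curvature estimate through the Weierstrass formulae (\ref{eq:I,II}) and (\ref{eq:K}), which bound $|dg/g|$ and $|dh|$ against the flat metric, and upgrade the \emph{qualitative} asymptotic convergence of $E$ to the helicoid into the \emph{quantitative} statement that $g\,e^{-iz}\to 1$ and $dh/dz$ tends to a nonzero constant, with enough uniformity that a normal-families / removable-singularity argument yields the desired extensions. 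I expect precisely this upgrade --- obtaining a decay rate for $df$ sufficient for holomorphic extension, rather than mere convergence --- to be the delicate point, and it is exactly where embeddedness of the multi-valued graphs and the curvature bound are used. Granting it, the linear height growth of the helicoidal end ($x_3\sim\mathrm{Re}\,z$) forces $dh$ to have a double pole at $\infty$, while the closing condition $\mathrm{Re}\int dh=0$ in (\ref{eq:periodproblem}) forces the residual simple-pole term to be real; thus $dh=(1+\tfrac{\l}{z-\mu})dz$ with $\l\in\R$, $\mu\in\C$, establishing item~1 and the form (\ref{eq:WBB}).

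Finally, for item~2, I would compute the flux vector $F$ of $E$ along a generating $1$-cycle of the end by means of (\ref{HFlux}); by homology invariance it depends only on the end, and after a rotation about the vertical axis aligning the horizontal part of $F$ with the $x_1$-axis we may write $F=(a,0,-b)$. The third component is $\mathrm{Im}\int dh=-b$, so the residue is pinned by $\l=\tfrac{b}{2\pi}$, while the period problem for data of type (\ref{eq:WBB}) fixes the remaining parameters exactly as in the construction of the canonical end $E_{a,b}$. Consequently $E$ and $E_{a,b}$ have the same Weierstrass data up to a holomorphic perturbation that decays at $\infty$, whence $E$ is asymptotic to $E_{a,b}$ by the multi-valued graph description of Theorem~\ref{thm1.3}. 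In particular $F=0$ gives $a=b=0$, i.e.\ case (C1), so $E$ is asymptotic to a helicoid exactly when its flux vanishes, as claimed.
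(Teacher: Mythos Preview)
The paper is a survey and does not actually supply a proof of Theorem~\ref{th9.8}; it refers the reader to Meeks--P\'erez~\cite{mpe3} for the details. What the paper does provide is the scaffolding: the finite-type notion (Definition~\ref{deffinitetype}), the canonical ends $E_{a,b}$ with their common Weierstrass structure~(\ref{eq:WBB}), and Theorem~\ref{thm1.3}, whose proof the paper explicitly says uses only the form~(\ref{eq:WBB}) and not the specific choices in (\ref{eq:gdh1})--(\ref{eq:gdh2}). Your outline is organised around exactly this scaffolding --- prove finite type, read off~(\ref{eq:WBB}), then invoke Theorem~\ref{thm1.3} with the flux determining $(a,b)$ --- and so is faithful to the intended strategy. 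You also correctly flag the holomorphic extension of $f$ across $\infty$ (equivalently, the meromorphic extension of $dg/g$) as the genuine analytic crux; this is indeed where the Colding--Minicozzi structure and embeddedness do the work in~\cite{mpe3}, and it is not a step one can wave through.

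One point worth tightening: your route to the conformal type via ``bounded curvature plus quadratic area growth $\Rightarrow$ punctured disk'' is plausible but not the argument emphasised in the paper's circle of ideas. The paper highlights (just after Theorem~\ref{Meeks-flux}) the parabolicity of slabs $M[t_1,t_2]$ via the universal superharmonic functions of Collin--Kusner--Meeks--Rosenberg~\cite{ckmr1}; in~\cite{mpe3} the punctured-disk conformal type and the control on $x_3$ come from that parabolicity machinery rather than from an area-growth estimate deduced from blow-down convergence. Your version can be made to work, but you would need to justify that the extrinsic quadratic area growth you obtain from helicoidal blow-downs transfers to the intrinsic statement needed for parabolicity; invoking the chord-arc property (Theorem~\ref{main2}) would close that gap. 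Apart from this, your derivation of $\lambda\in\R$ from the period condition, of the double pole of $dh$ from linear height growth, and of the asymptotics from Theorem~\ref{thm1.3} are all in line with the paper's framework.
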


\begin{remark}
{\rm
The first statement of Theorem~\ref{ttmr} is now a direct
consequence of Theorem~\ref{th9.8}, since the divergence theorem
insures that the flux vector of a one-ended 0-surface along
a loop that winds around its end vanishes.
}
\end{remark}

\section{Constant mean curvature surfaces in $\esf^3$ and $\esf^2\times\R$.} \label{SectionReferee}

As mentioned in the introduction, this survey's primarily focus is
on past and recent work in the theory of minimal and constant
mean curvature surface that has been done by the authors. However,
in this section we will discuss some key selected results in the
field which are not directly related to our work. These results
are related to constant mean curvature surfaces in $\esf^3$ and $\esf^2\times\R$.

\subsection{Brendle's proof of the Lawson Conjecture.}
We begin by talking about Brendle's proof of the Lawson Conjecture.
See~\cite{bren1} for the actual paper and \cite{bren2} for Brendle's complete survey of this problem
and related questions.
Let $\mathbb S^3$ denote the unit sphere % of constant sectional curvature
%one. Namely, we identify $\mathbb S^3$ with the unit sphere
in $\mathbb R^4$.
The Clifford Torus is the torus defined by the following set of points
\begin{equation}\label{clifford}
\{(x_1,x_2,x_3,x_4)\in \mathbb R^4 : x_1^2+x_2^2=x_3^2+x_4^2=\frac 12\}.
\end{equation}
It is a minimally embedded torus with zero intrinsic Gaussian
curvature and its principal curvatures are 1 and $-1$.

In 1970, Lawson proposed the following conjecture, see~\cite{la2}.

\begin{conjecture}[Lawson Conjecture]
Let $\Sigma$ be an embedded minimal torus in $\mathbb S^3$,
then $\Sigma$ is congruent to the Clifford Torus.
\end{conjecture}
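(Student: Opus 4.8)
The plan is to reduce the conjecture to proving that a single scalar invariant of $\Sigma$ is constant, and then to extract that constancy from a maximum principle. View $\esf^3 \subset \R^4$ and let $\nu$ be the unit normal field of $\Sigma$ in $\esf^3$. Since $\Sigma$ is minimal its principal curvatures are $\lambda$ and $-\lambda$ for some $\lambda \geq 0$, so $|A_\Sigma|^2 = 2\lambda^2$ and $\lambda$ is a well-defined continuous function on $\Sigma$, smooth away from the umbilic set $\{\lambda = 0\}$. The key reduction is the following: if one can prove that $\lambda$ is constant, then $\Sigma$ has constant principal curvatures, hence is isoparametric in $\esf^3$; the classification of such surfaces, together with the torus hypothesis and minimality, forces $\Sigma$ to be congruent to the Clifford Torus. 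Thus the problem becomes: \emph{an embedded minimal torus in $\esf^3$ has constant positive principal curvature.}

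To prove constancy I would not attack $\lambda$ directly through Simons' identity $\frac12\Delta|A_\Sigma|^2 = |\nabla A_\Sigma|^2 + |A_\Sigma|^2(2 - |A_\Sigma|^2)$, which on a torus yields no sign information by itself, but instead set up a two-point maximum principle on $\Sigma \times \Sigma$. Concretely, define
\[
Z(x,y) = \lambda(x)\,(1 - \langle x,y\rangle) - \langle \nu(x), y\rangle, \qquad (x,y)\in \Sigma\times\Sigma,
\]
where $\langle \cdot,\cdot\rangle$ is the Euclidean inner product on $\R^4$. Note that $Z$ vanishes identically on the diagonal, since $\langle x,x\rangle = 1$ and $\nu(x)\perp x$. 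The goal is to prove the \emph{global} inequality $Z \geq 0$ on $\Sigma\times\Sigma$; geometrically this is a quantitative, infinitesimally sharp statement that the osculating sphere determined by $\lambda(x)$ does not separate $x$ from the rest of the surface, and it is precisely here that embeddedness is indispensable.

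The heart of the argument is to establish $Z\geq 0$ by contradiction. If $\inf Z < 0$, then since $Z = 0$ on the diagonal and $\Sigma$ is compact, the infimum is attained at an interior point $(x_0,y_0)$ with $x_0 \neq y_0$. At such a minimum the first-order conditions pin down $y_0$ and the tangential derivatives of $\lambda$ at $x_0$, while the second-order condition states that a suitable trace of the Hessian of $Z$ is nonnegative. Computing this trace using the Gauss and Codazzi equations, the minimality relation (the principal curvatures sum to zero), and Simons-type identities, one shows that the natural second-order elliptic operator built from $Z$ is strictly negative at $(x_0,y_0)$, which is incompatible with $(x_0,y_0)$ being an interior minimum. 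The genuinely hard part is this curvature computation: choosing the weight $(1-\langle x,y\rangle)$ so that the destabilizing terms $|\nabla\lambda|^2$ cancel and the sign works out, and controlling $Z$ as $y \to x$ so that the minimum cannot escape back to the diagonal or to umbilic points. Embeddedness enters decisively to guarantee that $1 - \langle x,y\rangle$ stays positive on the relevant region and that no spurious near-diagonal minima occur.

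Once $Z \geq 0$ is proved, the conclusion follows from the strong maximum principle. The diagonal is now a minimum locus of the nonnegative function $Z$, so the linearization of $Z$ transverse to the diagonal is controlled; applying the strong maximum principle to the resulting elliptic equation forces the first-order obstruction to vanish identically, which unwinds to $\nabla\lambda \equiv 0$. Hence $\lambda$ is constant, the reduction above applies, and $\Sigma$ is congruent to the Clifford Torus. I expect the delicate second-order computation at the interior minimum, together with the correct handling of the umbilic and near-diagonal degeneration, to be where essentially all of the difficulty resides.
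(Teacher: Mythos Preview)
Your two-point function $Z(x,y)=\lambda(x)(1-\langle x,y\rangle)-\langle\nu(x),y\rangle$ is exactly Brendle's, and the overall philosophy is right, but the argument structure you propose does not match what actually makes the proof close. You plan to show $Z\geq 0$ by reaching a contradiction at an interior minimum where $Z<0$; Brendle does \emph{not} do this, and it is not clear the second-order computation yields a strict sign at a strictly negative minimum. Instead he introduces the sharp constant $\omega=\sup_{x\neq y}\sqrt{2}\,|\langle\nu(x),y\rangle|/(|A|(x)(1-\langle x,y\rangle))$ and splits into cases. If $\omega\leq 1$ then your $Z\geq 0$ holds, and a direct calculation (not a diagonal strong-maximum-principle argument) shows the second fundamental form is parallel. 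If $\omega>1$ one replaces $\lambda(x)$ by $\omega\lambda(x)$ so that the rescaled function $Z_\omega$ is nonnegative and attains an \emph{off-diagonal zero}; the second-order computation at such a zero---which genuinely uses $Z_\omega=0$, not merely $Z_\omega\leq 0$---together with the Simons identity shows $Z_\omega$ is a subsolution of a degenerate elliptic equation, and Bony's strict maximum principle then propagates the zero set to an open set on which $\nabla|A|=0$, whence $\nabla|A|\equiv 0$ by analyticity.

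Two further points. First, you set aside the Simons identity as an alternative you will not use, but in Brendle's argument it is precisely what makes the subsolution inequality work inside the two-point framework; it is not bypassed but absorbed. Second, you allow for an umbilic set and speak of controlling $Z$ near it, but Lawson's theorem that a minimally immersed torus in $\esf^3$ has no umbilic points is a genuine prerequisite: without it $\inf_\Sigma |A|$ could vanish, $\omega$ need not be finite, and the whole setup breaks down. So the missing ingredients are the sharp-constant dichotomy, the Simons-identity-driven subsolution estimate, Bony's strict maximum principle for the propagation step, and the no-umbilic input.
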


While, as we pointed out in Section 2, there are no closed minimal
surfaces in $\mathbb R^3$, in $\mathbb S^3$ many examples of closed
minimal surfaces  have been constructed. The simplest example is the equator, namely
\[
\{(x_1,x_2,x_3,x_4)\in \mathbb S^3\subset \mathbb R^4 : x_4=0\}.
\]
The second simplest embedded example is perhaps the Clifford Torus.
Indeed, constructing closed embedded examples which are different
from the equator and the Clifford Torus {turned out}
to be a non-trivial task.
In~\cite{la3} Lawson proved that given any positive genus $g$, there
exists at least one compact embedded minimal surface
{of genus $g$}
in $\mathbb S^3$.
In fact, he also showed that when the genus is not a prime number, then
there are at least two such {(non-congruent)}
surfaces. Note that the conjecture is false if
the surface is not embedded, see~\cite{la5}. After these embedded examples,
more examples were constructed (see for instance~\cite{kps1,kapya}).

The first example of a classification result for minimal surfaces
in $\mathbb S^3$ was given by Almgren. In~\cite{alm3}, Almgren proved the following theorem
{whose proof is based on the fact that the Hopf differential of a CMC surface
in $\esf^3$ is holomorphic.}

\begin{theorem}\label{almgren}
Up to rigid motions of $\mathbb S^3$, the equator is the only minimal
surface in $\mathbb S^3$ of genus zero.
\end{theorem}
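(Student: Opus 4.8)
The plan is to exploit the conformal structure forced by the genus-zero hypothesis together with the holomorphicity of the Hopf differential, which is precisely the mechanism underlying the Hopf Theorem (Theorem~\ref{hopf}) specialized to the space form $\Q^3(1)=\esf^3$. First I would endow $\Sigma$ with the conformal structure induced by its first fundamental form; since $\Sigma$ is a closed surface of genus zero, it is conformally the Riemann sphere $\C\cup\{\infty\}$. Working in a local conformal coordinate $z=u+iv$ with $ds^2=\lambda^2|dz|^2$, I would form the Hopf differential $\Phi=\phi\,dz^2$, where $\phi$ is the $(2,0)$-part of the second fundamental form, $\phi=II(\partial_z,\partial_z)$, and compute via the Codazzi equations in the space form that $\partial_{\bar z}\phi=\lambda^2\,\partial_z H$ (the constant-curvature term of the ambient space contributes nothing off-diagonal).

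Since $\Sigma$ is minimal, $H\equiv 0$, so $\phi$ is holomorphic and $\Phi$ is a globally defined holomorphic quadratic differential on the underlying Riemann surface. The crucial step is then the observation that on the Riemann sphere there are no nonzero holomorphic quadratic differentials: writing $\Phi=\phi\,dz^2$ with $\phi$ entire in $z$, the requirement that $\Phi$ extend holomorphically across $\infty$ forces $\phi(z)=O(|z|^{-4})$ at infinity, because under the change $w=1/z$ one has $dz^2=w^{-4}dw^2$. Thus $\phi$ is a bounded entire function vanishing at infinity, hence identically zero by Liouville's theorem. Equivalently this follows from Riemann--Roch, since the relevant power of the canonical bundle of $\mathbb{CP}^1$ has negative degree and therefore no nonzero sections.

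Consequently $\Phi\equiv 0$, which means the trace-free part of the second fundamental form vanishes identically and $\Sigma$ is totally umbilic. Combining total umbilicity with $H=0$ forces both principal curvatures to vanish, so $\Sigma$ is totally geodesic. Finally I would invoke the classification of totally geodesic surfaces in $\esf^3$: they are exactly the intersections of $\esf^3$ with hyperplanes through the origin of $\R^4$, i.e., the great spheres, all of which are congruent under rigid motions to the equator.

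The main obstacle is the Codazzi computation establishing the holomorphicity of $\Phi$; once that identity is in hand, the genus-zero hypothesis kills $\Phi$ by a purely complex-analytic argument and the remainder is the standard umbilic classification. I would note that this entire argument is the content of the Hopf Theorem (Theorem~\ref{hopf}) with $c=1$ and $H=0$, since a closed genus-zero surface is a topological sphere and hence an immersed $0$-sphere; so one could alternatively deduce Almgren's result directly from that theorem, the present sketch merely making the Hopf-differential mechanism explicit as indicated.
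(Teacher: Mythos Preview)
Your proposal is correct and follows exactly the approach the paper indicates: the paper does not give a detailed proof but states explicitly that Almgren's argument ``is based on the fact that the Hopf differential of a CMC surface in $\esf^3$ is holomorphic'' and that ``the proof uses similar arguments'' to the Hopf Theorem. Your sketch spells out precisely this mechanism---holomorphicity of the Hopf differential via Codazzi, its vanishing on $\mathbb{CP}^1$, hence total umbilicity and total geodesy---so there is nothing to add.
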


This is the analogous of the Hopf Theorem in $\mathbb R^3$ and in fact
the proof uses similar arguments. Lawson Conjecture is the equivalent of
Theorem~\ref{almgren} but instead for the case of tori. In~\cite{ros6} Ros was able to
prove the Lawson Conjecture assuming some additional symmetries of $\Sigma$.
We refer the interested reader to~\cite{bren2} and the references therein for a better
discussion on previous partial results.

We now give a sketch of Brendle's proof of the Lawson Conjecture.  A key
ingredient in Brendle's proof is the following theorem also due to Lawson~\cite{la3}.

\begin{theorem}
\label{thm5.3}
A surface of genus one minimally immersed in $\mathbb S^3$ has no umbilical points.
\end{theorem}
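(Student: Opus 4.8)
The plan is to convert the purely geometric assertion into a statement about holomorphic quadratic differentials, via the Hopf differential, and then close the argument with an elementary degree count on a genus-one surface. First I would equip $\Sigma$ with the conformal structure induced by its Riemannian metric, so that $\Sigma$ becomes a compact Riemann surface; since it has genus one it is a torus, conformally $\C/\L$ for some lattice $\L$. The immersion is of an orientable surface into the orientable $\esf^3$, so I may fix a global unit normal, and with it the second fundamental form $II$. Taking the $(2,0)$-part of $II$ with respect to the complex structure gives the \emph{Hopf differential} $\Phi$; in a local conformal coordinate $z$ this reads $\Phi=\phi\, dz^2$, where $\phi=II(\partial_z,\partial_z)$.

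The key observation is that for a \emph{minimal} surface the umbilical points are exactly the zeros of $\Phi$. Indeed, minimality forces $\kappa_1+\kappa_2=0$ for the principal curvatures, so a point is umbilical (meaning $\kappa_1=\kappa_2$) precisely when $\kappa_1=\kappa_2=0$, i.e. when $II$ vanishes there, which is equivalent to $\phi=0$. I would then invoke the one genuinely geometric input, already emphasized in the excerpt for CMC surfaces in $\esf^3$: in an ambient space of constant curvature the Codazzi equations imply that $\Phi$ is a \emph{holomorphic} quadratic differential, i.e. $\phi$ is locally a holomorphic function of $z$.

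On the torus $\C/\L$ this holomorphicity is very restrictive. The one-form $dz$ is a globally defined, nowhere-vanishing holomorphic one-form, so any holomorphic quadratic differential can be written globally as $\Phi=h\, dz^2$ with $h$ a holomorphic function on the compact surface $\Sigma$; by the maximum principle $h$ is a constant $c$. Equivalently, a holomorphic quadratic differential is a section of $K^{\otimes 2}$, whose degree is $2(2g-2)=4g-4$, which equals $0$ when $g=1$; hence a nonzero such differential has no zeros at all.

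It remains only to rule out the trivial case $\Phi\equiv 0$, and this is exactly where the genus hypothesis enters a second time. If $c=0$ then $\Sigma$ would be totally umbilical at every point, but the totally umbilical surfaces of $\esf^3$ are the round (in particular totally geodesic) spheres, which have genus zero, contradicting the assumption that $\Sigma$ has genus one. Therefore $c\neq 0$ and $\Phi=c\, dz^2$ is nowhere vanishing, so $\Sigma$ has no umbilical points. I do not expect a serious obstacle here: the substantive fact is the holomorphicity of the Hopf differential via Codazzi (which I am taking as known), and everything else is standard genus/degree bookkeeping; the only point requiring care is the borderline case $\Phi\equiv 0$, handled by the classification of totally umbilical surfaces.
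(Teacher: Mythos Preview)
Your proof is correct and is essentially the classical argument due to Lawson. Note, however, that the paper does not actually give its own proof of this theorem: it merely states the result as a key ingredient in Brendle's argument and attributes it to Lawson~\cite{la3}. Your approach via the Hopf differential is the standard one (and the paper even remarks, in connection with Almgren's Theorem~\ref{almgren}, that the Hopf differential of a CMC surface in $\esf^3$ is holomorphic, which is exactly the geometric input you invoke).
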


Another key result is {a Simons-type} identity~\cite{sim1}; namely,
if $\Sigma$ is a minimal surface in $\esf^3$, then
\begin{equation}
\label{Simonsid}
{\Delta_\Sigma(|A|)-\frac{|\nabla |A||^2}{|A|}+(|A|^2-2)|A|=0,}
\end{equation}
where $|A| $ is the norm of the second fundamental form of $\S$.
%\[
%\Delta_\Sigma(|A|^2)-2|\nabla A|^2+2(|A|^2-2)|A|=0,
%\]

Finally, the proof {by Brendle of the Lawson Conjecture}
relies on applying a maximum principle type argument to
a function depending on two points. Results of this type were first developed
by Huisken in~\cite{hui1} and then extended by Andrews in~\cite{and1}.
In~\cite{hui1}, among other things, Huisken used these techniques to give
a new proof of Grayson's theorem~\cite{gray1}, that says that under the curve
shortening flow, any embedded curve shrinks to a point in finite time and
asymptotically becomes a circle. In~\cite{and1} Andrews  applied these
ideas in the  mean curvature flow setting.

Let $F\colon \Sigma\to \mathbb S^3$ be a minimal immersion of a genus-one
surface $\Sigma$ into $\mathbb S^3$ and let $\nu$ denote a unit normal vector
field. {Since by Theorem~\ref{thm5.3} $\Sigma$ has no umbilical points, then
$\inf_{\Sigma}|A|>0$ and}
the following quantity is finite:
\[
\omega=\sup_{x,y\in \Sigma,\, x\neq y}\sqrt 2\frac{|\langle \nu (x),F(y)\rangle|}{|A|(x)(1-\langle F(x),F(y) \rangle)}.
\]

There are then two cases to consider, $\omega\leq 1$ and $\omega> 1$.
If $\omega\leq 1$ then
one has that
\[
\frac{|A|(x)}{\sqrt 2}(1-\langle F(x),F(y) \rangle)-\langle \nu (x),F(y)\rangle\geq 0.
\]
Using this, a calculation gives that the second fundamental form of $F$ is
parallel and therefore the principal curvatures are constant. Since by the Gauss equation,
\[
K=1+k_1k_2
\]
where $K$ denotes the intrinsic Gaussian curvature and $k_1,k_2$ the principal
curvatures of $\Sigma$, it follows that $K$ is constant and therefore $\Sigma$
is flat. By a result of Lawson, this implies that $\Sigma$ is congruent to the Clifford Torus.

If $\omega>1$, then Brendle considers the function
\[
Z(x,y)=\frac{\omega}{\sqrt 2}{|A|(x)}(1-\langle F(x), F(y)\rangle)+\langle \nu (x),F(y)\rangle.
\]
Note that this function is non-negative and by possibly replacing $\nu$ by $-\nu$,
there exist $\overline x,\overline y\in \Sigma$, $\overline x\neq\overline y$,
such that $Z(\overline x,\overline y)=0$. {Since the function $Z$ attains
its global minimum at $(\overline x,\overline y)$, then $Z_x(\overline x,\overline y)=
Z_y(\overline x,\overline y)=0$ and the Hessian of Z at $(\overline x,\overline y)$ is
 non-negative.}
 Let
\[
\Delta:=\{x\in \Sigma:\, \text{there exists a point } y\in\Sigma\setminus \{x\} \text{ with } Z(x, y)=0\}.
\]

The set $\Delta$ is non-empty. Reasoning as before, if $x\in \Delta $ and $y\in \Sigma \setminus \{x\} $ satisfies
$Z(x, y)=0$, then $Z_x(x,y)=Z_y(x,y)=0$ and $(\nabla ^2Z)(x,y)\geq 0$.
Using equation (\ref{Simonsid}), Brendle proves that $Z$ is a subsolution of a degenerate elliptic equation
in $\Delta $.
An application of Bony's strict maximum principle~\cite{bony} then gives that $\Delta$ is open.
Finally, Brendle shows that for any $x\in \Delta$, $\nabla |A|(x)=0$.
Since $\Omega$ is open, an analytic continuation type argument gives that
for any $x\in \Sigma$, $\nabla |A|(x)=0$. Just like in the case $\omega\leq 1$,
this implies that $\Sigma$ is flat and thus, by a result of Lawson, congruent to the Clifford Torus.

\subsection{Marques and Neves' proof of the Willmore Conjecture}
In this section, we give an idea of Marques and Neves' proof of the Willmore Conjecture.
See~\cite{mane1} for the actual paper and \cite{mane2} for Marques and Neves'
complete survey of this problem
and related questions.    Let $\Sigma\subset \mathbb S^3$ be a
 closed surface. The Willmore energy of $\Sigma$ is
\[
\mathcal W(\Sigma):=\int_\Sigma(1+H^2).
\]
The Willmore energy of the Clifford Torus is $2\pi^2$. The Willmore conjecture~\cite{will1}
states that the Clifford Torus minimizes the Willmore energy among tori,
and thus, in this sense, is the best torus. Namely,
\begin{conjecture}
Let $\Sigma\subset \mathbb S^3$ be a closed surface of genus one. Then
\[
\mathcal W(\Sigma)\geq 2\pi^2.
\]
\end{conjecture}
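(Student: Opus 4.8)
The plan is to follow the min-max strategy of Marques and Neves. The starting point is that the Willmore energy $\mathcal W(\Sigma)=\int_\Sigma(1+H^2)$ is invariant under the conformal group of $\mathbb{S}^3$, and that for a \emph{minimal} surface $H\equiv 0$, so $\mathcal W$ reduces to area; the Clifford Torus is minimal with area exactly $2\pi^2$. The goal is therefore to realize $2\pi^2$ as a min-max (mountain-pass) value in the space $\mathcal Z_2(\mathbb{S}^3)$ of mod-two $2$-cycles, and to bound that value above by $\mathcal W(\Sigma)$. Note that this simultaneously yields the area estimate $\mathrm{Area}(\Sigma)\ge 2\pi^2$ for positive-genus embedded minimal surfaces as the minimal case, so that estimate is an output rather than an input and no circularity arises.

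Given a genus-one embedded $\Sigma$, I would construct a $5$-parameter family of generalized surfaces $\{\Sigma_{(v,t)}\}$ indexed by $(v,t)\in \overline{B^4}\times[-\pi,\pi]$: the point $v$ in the closed unit ball selects a conformal diffeomorphism $F_v$ of $\mathbb{S}^3$, and $t$ selects the parallel (signed-distance) surface at geodesic distance $t$ from $F_v(\Sigma)$. The decisive analytic estimate is the area bound
\[
\sup_{(v,t)}\mathrm{Area}\big(\Sigma_{(v,t)}\big)\le \mathcal W(\Sigma).
\]
This follows because $\mathcal W$ is conformally invariant, so it suffices to bound the areas of the parallel surfaces of a \emph{fixed} surface, and a pointwise computation shows the area density of the distance-$t$ parallel surface equals $(\cos t-k_1\sin t)(\cos t-k_2\sin t)$ for principal curvatures $k_1,k_2$, whose maximum over $t$ is at most $1+\big(\tfrac{k_1+k_2}{2}\big)^2=1+H^2$ by the arithmetic-geometric mean inequality. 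Integrating over $\Sigma$ gives the displayed bound.

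As $v\to\partial B^4$ the conformal images $F_v(\Sigma)$ concentrate, and together with the parameter $t$ the restriction of the family to $\partial(\overline{B^4}\times[-\pi,\pi])\cong S^4$ becomes the \emph{canonical sweepout of $\mathbb{S}^3$ by round geodesic spheres}. The heart of the argument is that, precisely because $\Sigma$ has positive genus, this family is a topologically nontrivial \emph{$5$-sweepout}: it detects the fifth power $\bar\lambda^5$ of the generator $\bar\lambda\in H^1(\mathcal Z_2(\mathbb{S}^3);\mathbb{Z}_2)$ under the Almgren isomorphism $\mathcal Z_2(\mathbb{S}^3)\simeq \mathbb{RP}^\infty$. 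The Almgren-Pitts min-max machinery, in the form developed by Marques and Neves, then guarantees that the associated width
\[
W=\inf_{\Phi'\sim\Phi}\ \sup_x\ \mathrm{Area}\big(\Phi'(x)\big)
\]
is realized by a smooth, closed, embedded minimal surface in $\mathbb{S}^3$, and equals $2\pi^2$, the area of the Clifford Torus (which has Morse index five). Combining $W\le \sup_{(v,t)}\mathrm{Area}(\Sigma_{(v,t)})\le \mathcal W(\Sigma)$ with $W=2\pi^2$ yields $\mathcal W(\Sigma)\ge 2\pi^2$; a rigidity analysis of the min-max at equality would identify $\Sigma$, up to conformal transformation, with the Clifford Torus.

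The genuinely hard step is the lower bound $W\ge 2\pi^2$. Its two difficulties are: (i) proving, via Lusternik-Schnirelmann theory on $\mathcal Z_2(\mathbb{S}^3)$, that positive genus upgrades the canonical family from a $4$-sweepout---whose width is only the great-sphere value $4\pi<2\pi^2$---to a genuine $5$-sweepout, so the min-max value jumps past $4\pi$; and (ii) the regularity theory needed to show the min-max limit is a single smooth embedded minimal surface, without area concentration or higher multiplicity, so its area can be matched against the discrete set $\{4\pi,2\pi^2,\dots\}$ of minimal areas in $\mathbb{S}^3$ and pinned to $2\pi^2$. Controlling the boundary degeneration of the conformal images $F_v(\Sigma)$ as $v\to\partial B^4$, and verifying that the induced boundary map is exactly the nontrivial round-sphere sweepout, is the technical crux underpinning (i), and is where the positive-genus hypothesis is essential.
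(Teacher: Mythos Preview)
Your proposal is correct in its overall architecture and follows the Marques--Neves strategy the paper sketches: build the five-parameter canonical family from conformal images and parallel surfaces, bound the area of every slice by $\mathcal W(\Sigma)$ via the Ros-type inequality, and run a min-max over the family to hit a positive-genus minimal surface of area $2\pi^2$.

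There are, however, two noteworthy differences from the paper's account. First, your topological input is phrased in the later Marques--Neves language of $\mathbb Z_2$-cycles, the Almgren isomorphism $\mathcal Z_2(\mathbb S^3)\simeq\mathbb{RP}^\infty$, and $p$-sweepouts detecting $\bar\lambda^5$. The original proof, as the paper presents it, works with \emph{integral} currents and a homotopy class relative to $\partial I^5$; the nontriviality is encoded not by a cup-length but by a specific continuous map $Q\colon\partial I^4\cong\mathbb S^3\to\mathbb S^3$ whose degree equals the genus of $\Sigma$. Your Lusternik--Schnirelmann picture is morally equivalent and is how one would think about it post-2017, but it is not the mechanism used in the paper's sketch.

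Second, you compress the identification $W=2\pi^2$ into ``matching against the discrete set $\{4\pi,2\pi^2,\dots\}$'', and you allude to index five without saying how it is obtained. The paper is explicit that Almgren--Pitts theory by itself yields \emph{no} index bound on the min-max surface $\widehat\Sigma$; one first uses $W>4\pi$ together with an $8\pi$ threshold (forcing connectedness), then a separate argument of Marques--Neves shows $\widehat\Sigma$ has index exactly five, and only then does Urbano's theorem pin $\widehat\Sigma$ down as the Clifford torus. Naming Urbano's theorem and isolating the index-five step would bring your sketch into alignment with the actual structure of the proof.
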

An important feature of the Willmore energy is that it is
conformally invariant. Namely, given $v\in \B^4$
(unit ball in $\R^4$),
consider the
centered dilation of $\esf^3$ that fixes $v/|v|$ and $-v/|v|$, namely
\begin{equation}\label{confmap}
F_v\colon \mathbb S^3\to\mathbb S^3, \quad F_v(x)=\frac{1-|v|^2}{|x-v|^2}(x-v)-v.
\end{equation}
Then for any $v\in \B^4$,  $\mathcal W(F_v(\Sigma))=\mathcal W(\Sigma)$.

%In fact, the Willmore conjecture originated as a conjecture about closed surfaces in $\mathbb R^3$.
%{\bf maybe explain how the conjecture comes from R3}
%
%List partial results related to the conjecture (Simon, Ros, Urbano,
%Langevin and Rosenberg, Li and Yau and others, see survey).

In their seminal paper~\cite{mane1}, Marques and Neves proved the Willmore
conjecture. Before their very elegant proof, several results and techniques
have been used to understand the Willmore Conjecture, see for
instance~\cite{baku,br1,br4,chenby,chela,rr1,ros4,shita,si5,top,urb1,will2}.
We refer the interested reader to Section 2 of~\cite{mane2} and the references
therein for a more comprehensive and detailed discussion.

We now give a sketch of their proof. A key tool in their proof
are techniques which come from the min-max principle which is used to find
unstable critical points of a given functional.
See Section~\ref{subsecJacobi} for a discussion on stability and index of a
minimal surface.
 In~\cite{alm4}, Almgren developed a min-max theory for the area functional.
Let $\mathcal Z_2(\mathbb S^3)$ denote the space of
integral 2-currents with zero boundary and let $I^k=[0,1]^k$ denote the $k$-dimensional
cube. Given a continuous function $\Phi\colon I^k\to \mathcal Z_2(\mathbb S^3)$,
let $[\Phi]$ denote the set of all continuous functions from $I^k$ to $\mathcal Z_2(\mathbb S^3)$
that are homotopic to $\Phi$ through homotopies that fix the functions on $\partial I^k$. Let
\[
{\bf L}([\Phi]):=\inf_{\Psi\in[\Phi]} \sup_{x\in I^k} \text{\rm Area}(\Psi(x)).
\]
In~\cite{pi1} Pitts proved the following theorem.

\begin{theorem}
If ${\bf L}([\Phi])>\sup_{x\in I^k} \text{\rm Area}(\Phi(x))$ then there exists a
disjoint collection of smooth, closed, embedded minimal
surfaces $\Sigma_1,\dots,\Sigma_N$ in $\mathbb S^3$ such that
\[
{\bf L}([\Phi])=\sum_{i=1}^N m_i \;
\text{\rm Area}(\Sigma_i),
\]
for some positive integers multiplicities $m_1,\dots, m_N$.
\end{theorem}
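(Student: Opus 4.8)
The plan is to invoke the Almgren--Pitts min-max machinery, whose three main stages are a \emph{pull-tight} deformation, a combinatorial extraction of an \emph{almost minimizing} min-max sequence, and an interior \emph{regularity} theorem. First I would fix a minimizing sequence of sweepouts $\{\Phi_i\}\subset[\Phi]$, meaning $\sup_{x\in I^k}\mathrm{Area}(\Phi_i(x))\to\mathbf{L}([\Phi])$, and consider the associated \emph{min-max sequences} $\{\Phi_i(x_i)\}$ whose masses converge to $\mathbf{L}([\Phi])$. The pull-tight procedure replaces $\{\Phi_i\}$ by a homotopic sequence so that \emph{every} varifold limit $V$ of a min-max sequence is stationary, i.e. has vanishing first variation. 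This is achieved by building a continuous, mass-decreasing deformation of $\mathcal Z_2(\esf^3)$ supported near the non-stationary currents and fixing the (almost) stationary ones; since it leaves the boundary data on $\partial I^k$ untouched, the deformed family still lies in $[\Phi]$.

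The second and decisive step is Pitts' combinatorial argument. Here the strict inequality hypothesis $\mathbf{L}([\Phi])>\sup_x\mathrm{Area}(\Phi(x))$ is used in an essential way: it guarantees that the critical value is genuinely of saddle type and is \emph{not} realized on $\partial I^k$, so that the discretization and interpolation scheme passing between the continuous and the combinatorial pictures does not degenerate. Running this scheme produces a stationary integral varifold $V$, realized as a varifold limit of a min-max sequence, which is moreover \emph{almost minimizing in small annuli}: on every sufficiently small annular region the approximating currents cannot be deformed to much smaller mass without first passing through configurations of definitely larger mass. By construction $\mathrm{mass}(V)=\mathbf{L}([\Phi])$.

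The hard part is the regularity theory, which upgrades the measure-theoretic object $V$ to a smooth surface; I expect this to be the principal technical obstacle. The strategy is a tangent-cone analysis: at each point the almost minimizing property forces the tangent cones of $V$ to be area-minimizing, and in the codimension-one setting the blow-up lives in $\rth$, where any area-minimizing hypercone is a plane (there are no singular minimizing hypercones below ambient dimension $7$). Combining this with the compactness and uniform curvature estimates for stable minimal hypersurfaces of Schoen--Simon, one concludes that $V$ has empty singular set, so its support is a smooth, closed, embedded minimal surface. Decomposing $V$ into its connected components with their integer varifold multiplicities yields disjoint smooth embedded minimal surfaces $\Sigma_1,\dots,\Sigma_N$ and positive integers $m_1,\dots,m_N$ with
\[
\mathbf{L}([\Phi])=\mathrm{mass}(V)=\sum_{i=1}^N m_i\,\mathrm{Area}(\Sigma_i),
\]
which is the asserted conclusion.
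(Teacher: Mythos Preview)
The paper does not give its own proof of this theorem: it is stated as a result of Pitts and simply cited (``In~\cite{pi1} Pitts proved the following theorem''), so there is nothing in the paper to compare your argument against. Your outline correctly identifies the three pillars of the Almgren--Pitts proof (pull-tight to obtain stationarity, the combinatorial almost-minimizing argument where the strict inequality hypothesis enters, and the Schoen--Simon regularity theory via tangent-cone analysis), and this is indeed the route taken in the original literature.
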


Let $F_v$ be the conformal map defined in equation~\eqref{confmap}. Given an
embedded surface $S=\partial \Omega$, where $\Omega$ is a region of $\mathbb S^3$,
and using the ambient distance, let
\[
S_t:=\partial \{x\in \mathbb S^3: d(x,\Omega)\leq t \}\quad \text{if } t\in [0,\pi]
\]
and
\[
S_t:=\partial \{x\in \mathbb S^3: d(x,\mathbb S^3\setminus \Omega)\leq -t \}\quad \text{if } t\in [-\pi ,0].
\]
Given an embedded compact surface $\Sigma\subset\mathbb S^3$, Marques and Neves
define a five parameters deformation of $\Sigma$, $\{\Sigma_{(v,t)}\}_{(v,t)\in \B^4\times [-\pi,\pi]}$,
called the canonical family, where
\[
\Sigma_{(v,t)}:=(F_v(\Sigma))_t\in\mathcal Z_2(\mathbb S^3)
\]

Thanks to a result of Ros~\cite{ros4} that was inspirational to their approach and the fact
that the Willmore energy is conformally invariant, it follows that
\[
\text{\rm Area}(\Sigma_{(v,t)})\leq \mathcal W (F_v(\Sigma))
=\mathcal W(\Sigma), \text{ for all } (v,t)\in \B^4\times [-\pi,\pi].
\]

The idea is to prove that the min-max principle applied to the homotopy
class of the canonical family of a closed surface of genus one produces
the Clifford Torus. If that is the case, then
\[
2\pi^2=\text{\rm Area(Clifford Torus)}\leq
\sup_{(v,t)\in \B^4\times [-\pi,\pi]}\text{\rm Area}(\Sigma_{(v,t)})\leq \mathcal W(\Sigma)
\]
However, the canonical family so defined is discontinuous on
$\partial \B^4=\mathbb S^3$ but Marques and Neves  were able to reparameterize the canonical
family of $\Sigma$ by a continuous map $\Phi_\Sigma\colon I^5\to \mathcal Z_2(\mathbb S^3)$
such that the image $\Phi _{\Sigma }(I^5)$ is equal to the closure of the canonical family in
$\mathcal Z_2(\mathbb S^3)$ and $\Phi_{\Sigma }$ satisfies the following properties:
\begin{itemize}
\item[(A)] $\sup_{x\in I^5} \text{\rm Area}(\Phi_\Sigma(x))\leq \mathcal W(\Sigma)$.
\item[(B)] $\Phi_\Sigma(x,0)=\Phi_\Sigma(x,1)=0$ for all $x\in I^4$.
\item[(C)] For any $x\in \partial I^4$
there exists $Q(x)\in\mathbb S^3$ such
that $\Phi_\Sigma(x,t)$ is a sphere of radius $\pi t$ centered at $Q(x)$ for any $t\in I$.
\item[(D)] The degree of $Q\colon \mathbb S^3\to\mathbb S^3$ is equal to the
genus of $\Sigma$ and hence it is non-zero if the genus is not zero.
\end{itemize}

The definition of $Q$ can be found in the papers~\cite{mane1,mane2}. Without going into the
details, %\textbf{because I didn't quite understand them but, by all means, feel free to add a few words},
when the genus of $\Sigma$ is not zero, Property (D) guarantees that ${\bf L}([\Phi_\Sigma])> 4\pi$,
where ${\bf L}$ and $[\Phi_\Sigma]$ are defined in the previous discussion about the min-max principle.
Given that,  Marques and Neves  apply the min-max argument to prove that there exists a closed minimal
surface $\widehat \Sigma$ with $\text{\rm Area}(\widehat \Sigma)= {\bf L}([\Phi_\Sigma])$.
If $\text{\rm Area}(\widehat \Sigma)\geq 8\pi$ then
 \[
 \mathcal W(\Sigma)\geq \text{\rm Area}(\widehat \Sigma)\geq 8\pi >2\pi ^2
 \]
 and the conjecture holds. Thus  $\text{\rm Area}(\widehat \Sigma)< 8\pi$ which gives that
 $\Sigma$ is connected because the area of a closed minimal surface in $\mathbb S^3$ is at least $4\pi$.

By the nature of the deformation, it is natural to expect that the index of
$\widehat \Sigma$ is 5. If this is the case then using a theorem by Urbano~\cite{urb1} gives that $\widehat \Sigma$ must be a Clifford torus.
It is important to notice that Almgren-Pitts Theory does not give an estimate on the
Morse index of $\widehat \Sigma$. Indeed, Marques and Neves were able to prove that
the index of $\widehat \Sigma$ is 5 and therefore $\widehat \Sigma$ is a Clifford Torus.
Therefore, for any closed surface $\Sigma$ in $\mathbb S^3$ of genus
at least one, we have that
\[
\mathcal W(\Sigma)\geq 2\pi^2.
\]
The fact that when equality holds then $\Sigma$ is a Clifford Torus requires extra work
and we refer the interested reader to Marques and Neves' papers for the argument.

\subsection{Minimal surfaces in $\esf^2\times\R$ foliated by circles: the classification theorem
of Hauswirth, Kilian and Schmidt.}

In~\cite{mr10} Meeks and Rosenberg studied the geometry of complete minimal annuli in
Riemannian manifolds that can be expressed as
the Riemannian product $M\times \R$ of a closed Riemannian surface $M$
with $\R$.  In the case that $M$ is a sphere with a metric of positive Gaussian curvature,
they proved that any complete embedded minimal annulus $\S$ in  $M\times \R$  is properly embedded
with bounded second fundamental form  and
$\S$ intersects each level set sphere $M\times\{ t\}, t\in \R,$ in
a simple closed curve.  In this case they also showed that the moduli space of
such minimal annuli with a fixed vertical flux
is compact, which is a useful property since it implies that
any sequence of vertical translations of $\S$ has a convergent
subsequence.  In particular, if $M=\esf^2$ is the sphere of constant Gaussian curvature
$1$, then $\S\subset \esf^2\times \R$ is properly embedded and intersects each level set
sphere in a round circle. They also described in that paper
a 1-parameter family $\mathcal{A}$ {of periodic minimal annuli in
$ \esf^2\times \R$ and each surface in this family}
intersects each level set sphere of $\esf^2\times \R$  in a circle; this family first described
by Hauswirth in~\cite{hau1} is similar
to the Riemann family of properly embedded minimal planar domains in $\rth$.

In fact
Hauswirth~\cite{hau1} was able to construct a Jacobi function similar to the Shiffman function,
an indispensable tool used by Meeks, P\'erez and Ros in their proof
of Theorem~\ref{classthm}, the proof of which depended upon
methods in the theory of integrable systems.
Also using methods from the theory of integrable systems,  Hauswirth,
Kilian and Schmidt~\cite{hauks1} recently proved the following classification result
for complete embedded minimal annuli in  $\esf^2\times \R$. For some related classification results
for strongly Alexandrov embedded minimal annuli in the 3-sphere
see~Kilian and Schmidt~\cite{KiSch1}.

\begin{theorem}[Hauswirth, Kilian, Schmidt~\cite{hauks1}]
Every complete embedded minimal annulus in   $\esf^2\times \R$ lies in the family
$\mathcal{A}$.  In particular, each such minimal annulus $\S$ intersects
every level set sphere in  $\esf^2\times \R$
in a circle, $\S$ is  invariant under a reflection in
a vertical totally geodesic annulus and $\S$ is periodic under a vertical translation
that is the composition of two  rotational symmetries around  circles that
are great circles in level set spheres.
\end{theorem}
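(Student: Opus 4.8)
The plan is to adapt the strategy that Meeks, P\'erez and Ros used to prove Theorem~\ref{classthm}, replacing the ambient $\rth$ by $\esf^2\times\R$ and the role of the planar circular sections by the level-set curves $\g_t=\S\cap(\esf^2\times\{t\})$. The two pillars of the argument are the compactness and regularity results of Meeks and Rosenberg~\cite{mr10} on the geometric side, and a \emph{Shiffman-type Jacobi function} together with the integrable-systems machinery (spectral curves and polynomial Killing fields) on the analytic side.

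First I would invoke Meeks and Rosenberg~\cite{mr10} to pass to a compact setting. Their results guarantee that the complete embedded minimal annulus $\S\subset\esf^2\times\R$ is properly embedded, has bounded second fundamental form, and meets each level sphere in a single simple closed curve $\g_t$; moreover the moduli space $\cM_\omega$ of such annuli with fixed vertical flux $\omega$ is compact. This gives a well-defined parametrization of $\S$ by its level curves, and the compactness of $\cM_\omega$ is what will ultimately force rigidity. Onto $\S$ I would then introduce Hauswirth's Shiffman function $\cS_\S$, the Jacobi field whose vanishing on $\g_t$ is equivalent to $\g_t$ being a round circle; thus proving $\cS_\S\equiv 0$ is exactly the geometric heart of the theorem.

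To show $\cS_\S\equiv 0$ I would exploit the harmonicity of the (suitably defined) Gauss data of a minimal surface in $\esf^2\times\R$ to encode $\S$ in a holomorphic loop-group / spectral-curve framework, and identify $\cS_\S$ as the infinitesimal generator of a deformation of $\S$ through minimal annuli, a flow in the Shiffman hierarchy. The crucial claim is that this flow preserves completeness, embeddedness and the vertical flux, so that it descends to a flow on the compact space $\cM_\omega$; completeness of a flow on a compact manifold together with a recurrence argument then produces an annulus on which $\cS_\S$ vanishes, and unique continuation for the Jacobi equation propagates the vanishing to all of $\cM_\omega$, hence to $\S$. \textbf{The main obstacle} is precisely this step: proving the Shiffman flow is well defined and complete and does not leave the moduli space. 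This is where the integrable-systems structure is indispensable, for one must bound the spectral genus of the spectral curve attached to $\S$ (showing it is genus zero or one) so that the polynomial Killing field generating the flow stays bounded. Controlling the spectral genus from the embeddedness and the bounded geometry is the crux, just as the analogous control of the Shiffman flow was the crux in the proof of Theorem~\ref{classthm}.

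Finally, once $\cS_\S\equiv 0$, every $\g_t$ is a circle and the minimal surface equation reduces to an ODE for the one-parameter family of level circles. Integrating this ODE and matching the prescribed vertical flux identifies $\S$ with a member of the family $\mathcal{A}$ of Hauswirth~\cite{hau1}. The asserted symmetries, namely invariance under reflection in a vertical totally geodesic annulus and periodicity realized as the composition of two rotations about great circles in level spheres, then follow directly from the explicit ODE description of the members of $\mathcal{A}$.
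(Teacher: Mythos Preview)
The paper does not contain its own proof of this theorem: it is a survey, and the result is simply cited from Hauswirth, Kilian and Schmidt~\cite{hauks1} without proof. The only indication the paper gives about the method is the sentence immediately preceding the statement, where it notes that Hauswirth constructed a Shiffman-type Jacobi function and that the classification was obtained ``using methods from the theory of integrable systems''; this is consistent with the broad strategy you outline.

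That said, since there is no proof in the paper to compare against, I can only comment on your proposal as a standalone sketch. The overall architecture --- Meeks--Rosenberg compactness, a Shiffman function whose vanishing characterizes circular level sets, and integrable-systems control of the associated flow --- is indeed the template suggested by Theorem~\ref{classthm}. However, you have correctly flagged the genuine difficulty and then essentially assumed it away: the step where you ``bound the spectral genus of the spectral curve'' and conclude that the Shiffman flow stays in the moduli space is not an argument but a restatement of what must be proved. In the actual work~\cite{hauks1} the finite-type (finite spectral genus) property of embedded minimal annuli is a substantial result in its own right, and the passage from finite spectral genus to membership in the explicit family~$\mathcal{A}$ requires a detailed analysis of the spectral curve rather than a soft compactness-plus-recurrence argument. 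Your recurrence sketch, as written, would at best produce \emph{some} annulus in $\cM_\omega$ with vanishing Shiffman function, not the one you started with, and the ``unique continuation'' you invoke to propagate vanishing across the whole moduli space is not a standard statement and would need justification.
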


\section{Limits of $H$-surfaces without local area or curvature bounds.}
\label{sec4}
Two central problems in the classical theory of $H$-surfaces are to
understand the possible geometries or shapes of those $H$-surfaces
in $\R^3$ that have finite genus, as well as the structure of limits of sequences of
$H$-surfaces with fixed finite genus. The classical theory deals with these limits when
the sequence has uniform local area and curvature bounds, since in this case
one can reduce the problem of taking limits of $H$-surfaces
to taking limits of $H$-graphs (for
this reduction, one uses the local curvature bound in order to
express the surfaces as local graphs of uniform size, and the local
area bound to constrain locally the number of such graphs to a fixed
finite number). In this graphical framework, the existence and properties of limits is given by
the classical Arzel\`{a}-Ascoli
theorem, see e.g., \cite{pro2}. Hence, we will concentrate here on
the case where we do not have such estimates.

The starting point consists of analyzing local structure of a sequence of compact $0$-surfaces $\Sigma _n$
with fixed finite genus in a fixed extrinsic ball in $\R^3$, which is
an issue first tackled by Colding and Minicozzi in a
series of papers where they study the structure of a sequence of
compact 0-surfaces $\Sigma _n$ with fixed genus but without
area or curvature bounds in balls $\B (R_n)=\B (\vec{0},R_n)\subset \R^3$, whose radii $R_n$ either remain
bounded or tend to infinity as $n\to \infty $ and with boundaries $\partial \Sigma _n\subset
\partial \B _n$, $n\in \N$. These two possibilities on $R_n$ lead to very different
situations for the limit object of (a subsequence of) the $\Sigma _n$, as
we will explain below.  Generalizations these results to the $(H>0)$-setting
by Meeks and Tinaglia will also be discussed.

\subsection{Colding-Minicozzi theory for $0$-surfaces and generalizations by
Meeks and Tinaglia to $H$-surfaces.}
\label{subsecCM}
As we indicated above, a main goal of the Colding-Minicozzi theory,
as adapted by Meeks and Tinaglia, is to
understand the limit objects for a sequence of compact $H_n$-surfaces
$\Sigma _n$ with fixed genus
but not {\it a priori} area or curvature bounds,
each one with compact boundary contained in the boundary sphere of $\B (R_n)$.
Typically, one finds {\it weak $H$-laminations} (see Definition~\ref{deflamination}
for the concept of weak $H$-lamination) possibly with singularities
as limits of subsequences of the $\Sigma _n$. Nevertheless,
we will see in Theorems~\ref{thmlimitlaminCM}
 and~\ref{t:t5.1CM}, that the behavior of the limit lamination changes dramatically
depending on whether $R_n$ diverges to $\infty $ or stays bounded,
among others, in the following two aspects:
\begin{enumerate}[(I)]
\item The limit lamination might develop removable (case $R_n\to \infty $)
or essential singularities (case $R_n$ bounded).
\item The leaves of the lamination are proper (case $R_n\to \infty $) or might fail to have
this property (case $R_n$ bounded).
\end{enumerate}
These two phenomena connect with major open problems in the current state of the theory of $H$-surfaces:
\begin{enumerate}[(I)']
\item Finding removable singularity results for weak $H$-laminations (or equivalently,
finding extension theorems for weak $H$-laminations defined outside of a small set). In this line,
we can mention the work by Meeks, P\'erez and Ros~\cite{mpr21,mpr10},
see also Sections~\ref{LRST} and \ref{sec:CMC} below.
\item Finding conditions under which a complete $H$-surface must be proper
({\it embedded Calabi-Yau problem}), which we will treat further
in Sections~\ref{ttttmr} and \ref{sec:MT}.
\end{enumerate}

Coming back to the Colding-Minicozzi theory, the most important goal is to understand the
shape of a 0-disk $\Sigma $ in $\R^3$ depending on its Gaussian curvature. Roughly speaking,
only two models are possible: if the curvature of $\Sigma $ is everywhere small, then $\Sigma $
is a graph (with the plane as a model); and if the Gaussian
curvature of $\Sigma $ is large at some point, then
$\Sigma $ consists of two multi-valued graphs pieced
together (this is called a {\it double staircase, }
whose model is the helicoid; see~\cite{tin1,tin2} for related generalizations). Multi-valued graphs $\Sigma _g\subset \Sigma $ are subsets such that
every point in $\Sigma _g$ has a
neighborhood which is a graph over its projection
over the plane $\{ z=0\} $ (up to a rotation),
but the global projection from $\Sigma $ to $\{ z=0\} $ fails
to be one-to-one; for a precise definition of a $k$-valued graph, see Definition~\ref{Ngraph}.
For instance, a vertical helicoid can be thought as two $\infty $-valued graphs
joined along the vertical axis.

The study of $H$-multi-valued graphs, i.e., multi-valued graphs with constant mean curvature $H$,
relies heavily on PDE techniques, some of which aspects
we will comment next.
Since the third component of the Gauss map on a $H$-multi-valued
graph $\Sigma $ as in Definition~\ref{Ngraph} is a positive Jacobi function,
then $\Sigma $ is stable, and thus it has
curvature estimates away from its boundary by Theorem~\ref{thmcurvestimstable}.
Also in this case the separation $w$ given by equation (\ref{eq:sep})
is a difference between two solutions of the same mean curvature $H$ equation, thus $w$ satisfies a
second order elliptic, partial differential equation in divergence form:
\begin{equation}
\label{eq:PDEseparat}
\Div (\mathcal{A}\nabla w)=0,
\end{equation}
where $\mathcal{A}$ is a smooth map valued in the space of real symmetric, positive definite
$2\times 2$ matrices. Furthermore,
the eigenvalues $\l _i$ of $\mathcal{A}$ satisfy
\begin{equation}
\label{eq:PDEseparat1}
0<\mu \leq \l_i\leq 1/\mu ,
\end{equation}
where the constant $\mu $ only depends on an upper bound for the gradient $|\nabla u|$
(in particular, (\ref{eq:PDEseparat}) resembles the Laplace equation if $|\nabla u|$
is extremely small). A consequence of (\ref{eq:PDEseparat})-(\ref{eq:PDEseparat1}) is
that $w$ satisfies a Harnack-type inequality:
\[
\sup _{\Omega '}w\leq C\inf _{\Omega '}w
\]
whenever $\Omega '$ has compact closure in the domain of $w$ for some
constant $C>0$ depending solely on
an upper bound for $|\nabla u|$ and on the distance from
$\Omega '$ to the boundary of the domain of $w$.

With these preliminaries at hand, the statement of the so-called
{\it Limit Lamination Theorem for 0-Disks} (Theorem~0.1
of~\cite{cm23}) is easy to understand (see also~\cite{bt1} for related
results on the topology and geometry of leaves of a lamination obtained as
limit of a sequence of 0-disks); to make this statement optimal in this
0-disk setting, we will make use of the result by Meeks~\cite{me25}
(see Theorem~\ref{thmregular} below) that the singular set
$\mathcal{S}$ in Theorem~\ref{thmlimitlaminCM}
is a vertical line instead of a  Lipschitz curve parameterized by its height, as given in~\cite{cm23}.
It is worth mentioning that the proof of Theorem~\ref{thmregular} uses the
uniqueness of the helicoid among properly embedded non-flat $0$-surfaces in $\R^3$,
whose proof in turn depends on the original statement of the Limit Lamination Theorem for 0-Disks with a
Lipschitz curve as singular set $\mathcal{S}$.
Recently, Meeks and Tinaglia~\cite{mt13} generalized the
Limit Lamination Theorem for Disks
to the case that the surfaces are $H$-disks.  Hence, we state
this theorem in the general $H$-setting.

\begin{theorem}[Limit Lamination Theorem for $H$-Disks]
\label{thmlimitlaminCM}
Let $\Sigma _n\subset \B(R_n)$ be a sequence of  $H_n$-disks
with $\vec{0}\in\Sigma _n$,
$\partial \Sigma _n\subset \partial \B(R_n)$, $R_n\to \infty $ and
$\sup |A_{\Sigma _n}(\vec{0})|\to \infty $.
Let $\cS$ denote the $x_3$-axis.
Then, there exists a subsequence
$\Sigma _n$
(denoted in the same way) and numbers  $\wh{R}_n\to \infty$
 such that up to a
rotation of $\R^3$ fixing $\vec{0}$:
\begin{enumerate}[\rm 1.]
    \item Each $\Sigma _n\cap \B(\wh{R}_n)$ consists of exactly two multi-valued
    graphs away from $\mathcal{S}$, which  spiral together.
    \item For each $\a \in (0,1)$, the surfaces $\Sigma _n-\mathcal{S}$ converge
    in the $C^{\a }$-topology to the foliation
${\mathcal F}=\{ x_3=t\} _{t\in \R }$ of $\R^3$  by horizontal planes.
    \item If ${\cal S}(t)=(0,0,t)$ for every $t\in \R $, then given $t\in \R $ and $r>0$,
\[
\sup _{\Sigma _n\cap \B(\mathcal{S}(t),r)}|A_{\Sigma _n}|\to \infty \quad \mbox{as $n\to \infty $.}
\]
\end{enumerate}
\end{theorem}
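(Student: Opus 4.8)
The plan is to follow the Colding--Minicozzi blow-up analysis, using the $H$-disk estimates of Theorems~\ref{cest} and~\ref{rest} to reduce the problem to the minimal case in the limit. The first thing I would observe is that the hypotheses already force $H_n\to 0$. Indeed, since $\B(R_n)=\B(\vec 0,R_n)$ and $\partial\Sigma_n\subset\partial\B(R_n)$, any path in $\Sigma_n$ from $\vec 0$ to $\partial\Sigma_n$ has extrinsic length at least $R_n$, so $d_{\Sigma_n}(\vec 0,\partial\Sigma_n)\geq R_n\to\infty$. If some subsequence satisfied $H_n\geq\cH$ for a fixed $\cH>0$, then for $n$ large we would have $d_{\Sigma_n}(\vec 0,\partial\Sigma_n)\geq 1$, and Theorem~\ref{cest} with $\delta=1$ would bound $|A_{\Sigma_n}|(\vec 0)\leq K(1,\cH)$, contradicting $\sup|A_{\Sigma_n}(\vec 0)|\to\infty$. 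Hence $H_n\to 0$, which is exactly consistent with the limit foliation $\{x_3=t\}$ being by (minimal) horizontal planes.

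With curvature blowing up at $\vec 0$ while $H_n\to 0$, the heart of the argument is to produce and propagate a multi-valued graph. Following the Colding--Minicozzi machinery as adapted by Meeks--Tinaglia, I would show that near the origin, where $|A_{\Sigma_n}|$ is large, $\Sigma_n$ contains an embedded $N_n$-valued graph $\Sigma_{n,g}$ with $N_n\to\infty$, and that after the rotation of $\R^3$ straightening its axis to $\cS$, this multi-valued graph extends outward to radius $\wh R_n\to\infty$. The key structural input in the $H$-setting is that such a multi-valued $H$-graph is \emph{stable}: the third coordinate of its Gauss map is a positive Jacobi function, so Theorem~\ref{thmcurvestimstable} gives interior curvature estimates away from $\cS$, which force the graphs to flatten, $\nabla u_i\to 0$, away from the axis. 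The separation between consecutive sheets, being a difference of two solutions of the same $H_n$-equation, solves the divergence-form elliptic PDE~\eqref{eq:PDEseparat} with uniformly elliptic coefficients~\eqref{eq:PDEseparat1}; since $H_n\to 0$ these coefficients converge to the identity and the attendant Harnack inequality is uniform, which is what prevents the sheets from closing up or collapsing.

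To obtain conclusion~1 in full I would invoke the double-staircase phenomenon: between consecutive sheets of the first multi-valued graph there necessarily lies a second multi-valued graph spiraling together with it, so the local model is a nearly flat, slowly spinning helicoid. Conclusion~2 follows next: once the two multi-valued graphs cover $\B(\wh R_n)\setminus\cS$ and flatten, the uniform curvature estimates upgrade to $C^\alpha$ convergence of $\Sigma_n-\cS$, for every $\alpha\in(0,1)$, to the horizontal foliation $\mathcal F=\{x_3=t\}_{t\in\R}$. For conclusion~3 I would apply Meeks' regularity result (Theorem~\ref{thmregular}) to identify the singular set of convergence $\cS$ with a single vertical line; the blow-up $\sup_{\Sigma_n\cap\B(\cS(t),r)}|A_{\Sigma_n}|\to\infty$ at each height is then precisely the statement that along $\cS$ the helicoidal model is spinning infinitely fast in the limit, i.e., the singularity is essential and not removable.

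The main obstacle is the technical core underlying the second paragraph: the existence and unbounded extension of the multi-valued graph (the content of the Colding--Minicozzi disk papers), and in particular arranging that every estimate holds \emph{uniformly} as $H_n\to 0$. One must verify that the $H$-disk curvature and radius estimates (Theorems~\ref{cest} and~\ref{rest}), the stability estimate (Theorem~\ref{thmcurvestimstable}), and the Harnack inequality for~\eqref{eq:PDEseparat} all degenerate gracefully to their minimal-surface counterparts, with no constants blowing up along the sequence. Reconciling the two regimes---the large-curvature region near $\cS$ where the helicoidal structure lives, and the flat region away from $\cS$---into a single coherent limit is where the delicate one-sided curvature estimate arguments of Colding--Minicozzi, and their $H$-analogues due to Meeks and Tinaglia, do the real work.
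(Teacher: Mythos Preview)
The paper does not actually prove this theorem: it explicitly states that ``the proof of Theorem~\ref{thmlimitlaminCM} in the $H=0$ setting is involved and runs along several highly demanding papers~\cite{cm26,cm21,cm22,cm24,cm23}\ldots\ we do not provide any further details here,'' and attributes the $H$-disk generalization to Meeks--Tinaglia~\cite{mt13}. So there is no in-paper proof to compare against; what one can do is compare your outline to the ingredients the paper singles out.

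On that score your sketch is well aligned. Your opening observation that $H_n\to 0$ (via Theorem~\ref{cest} or, even more directly, the radius estimate Theorem~\ref{rest}) is correct and is exactly the reduction that makes the Colding--Minicozzi machinery available in the limit. The paper highlights the one-sided curvature estimate (Theorem~\ref{thmcurvestimCM} and its $H$-analogue Theorem~\ref{th}) as ``one of the crucial ingredients,'' and you correctly locate it as the tool that reconciles the helicoidal region near $\cS$ with the flat region away from $\cS$. Your use of stability of multi-valued $H$-graphs, the divergence-form PDE~\eqref{eq:PDEseparat} for the separation, and the Harnack inequality matches the discussion immediately preceding the theorem. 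Finally, your appeal to Theorem~\ref{thmregular} to straighten the singular set to the $x_3$-axis is exactly what the paper does to upgrade the original Colding--Minicozzi statement (Lipschitz singular curve) to the version stated here; note the paper's caveat that Theorem~\ref{thmregular} itself relies on the uniqueness of the helicoid, which in turn rests on the \emph{Lipschitz-curve} version of the lamination theorem, so the logical order is: lamination theorem with Lipschitz $\cS$ $\Rightarrow$ helicoid uniqueness $\Rightarrow$ Theorem~\ref{thmregular} $\Rightarrow$ the sharpened statement you are asked to prove.

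In short: your proposal is not a proof but a correct high-level roadmap, and it is the same roadmap the paper points to.
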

Items 1 and 2 in the statement of Theorem~\ref{thmlimitlaminCM} mean that for every compact subset
$C\subset \R^3-\mathcal{S}$ and for every
$n\in \N $ sufficiently large depending on $C$, the surface $\Sigma _n\cap C$
consists of multi-valued graphs
over a portion of $\{ x_3=0\} $, and the sequence $\{ \Sigma _n\cap C\} _n$
converges to $\mathcal{F}\cap C$
 as graphs in the $C^{\a }$-topology. Item~3 deals with the behavior of
the sequence along the singular set of convergence.

The basic example to visualize Theorem~\ref{thmlimitlaminCM} is a sequence of rescaled
helicoids $\Sigma _n=\l_nH = \{ \l_nx \ | \ x\in H \}$,
where $H$ is a fixed vertical helicoid with axis the $x_3$-axis
and $\l_n>0$, $\l_n \searrow 0$. The Gaussian curvature of $\{ \Sigma _n\} _n$ blows up
along the $x_3$-axis and the $\Sigma _n$ converge away from the axis to the foliation ${\mathcal F}$
of $\R^3$ by horizontal planes. The $x_3$-axis ${\cal S}$ is the singular set of
 $C^1$-convergence of the $\Sigma _n$ to ${\mathcal F}$;
i.e., the $\Sigma _n$ do not converge $C^1$ to the leaves of ${\mathcal F}$ along the $x_3$-axis.
Finally, each leaf $L$ of ${\mathcal F}$ extends smoothly across $L\cap {\mathcal S}$;
($S$ consists of removable singularities of ${\mathcal F}$).

The proof of Theorem~\ref{thmlimitlaminCM} in the $H=0$ setting is
involved and runs along several highly demanding
papers~\cite{cm26,cm21,cm22,cm24,cm23}. Since a through sketch of this proof
is provided in Chapter~4 of~\cite{mpe10},
we do not provide any further details here. However, before
going on to the next section of this survey,
it is worthwhile to highlight one of the crucial ingredients of this proof when $H=0$,
namely the scale invariant  {\it 1-sided curvature estimate for 0-disks} by Colding, Minicozzi~\cite{cm23}.
We remark that Meeks and Tinaglia~\cite{mt9} have proved a companion {\it 1-sided curvature estimate
for $H$-disks} that is not scale invariant, and that will be stated
in Theorem~\ref{th}.

\begin{theorem}[One-Sided Curvature Estimate for 0-Disks]
\label{thmcurvestimCM}
There exists an $\ve >0$ such that the following
holds. Given $r>0$ and a 0-disk $\Sigma \subset \B(2r)\cap \{ x_3>0\} $
with $\partial \Sigma \subset \partial \B(2r)$, then for any component $\Sigma '$ of
$\Sigma \cap \B(r)$  which intersects $\B(\ve r)$,
\begin{equation}
\label{eq:1-sided}
r\sup _{\Sigma '}|A_{\Sigma }|\leq 1.
\end{equation}
(See Figure~\ref{1-sidedCurvEstim}).
\end{theorem}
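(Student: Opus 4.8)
The plan is to argue by contradiction, keeping the confining plane fixed throughout (this is essential, as I explain below). By scale invariance I would normalize $r=1$, so that $\Sigma\subset\B(2)\cap\{x_3>0\}$ with $\partial\Sigma\subset\partial\B(2)$; it suffices to produce a universal $C$ and an $\ve>0$ with $\sup_{\Sigma'}|A_\Sigma|\le C$ on every component $\Sigma'$ of $\Sigma\cap\B(1)$ that meets $\B(\ve)$, the constant $1$ in the statement being a matter of normalization. If no such pair $(\ve,C)$ existed, then for $\ve_n\downarrow 0$ and $C_n\uparrow\infty$ there would be $0$-disks $\Sigma_n\subset\B(2)\cap\{x_3>0\}$ with $\partial\Sigma_n\subset\partial\B(2)$ and components $\Sigma_n'$ of $\Sigma_n\cap\B(1)$ meeting $\B(\ve_n)$ on which $\sup_{\Sigma_n'}|A_{\Sigma_n}|>C_n$. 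Thus curvature would concentrate on sheets that approach the origin $\vec{0}\in\{x_3=0\}$, and I must show this is impossible.

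The structural heart is the Colding--Minicozzi description of an embedded minimal disk near a point of concentrated curvature. Near a point $p_n\in\Sigma_n'$ with $|A_{\Sigma_n}|(p_n)$ large, $\Sigma_n$ contains an $N$-valued graph in the sense of Definition~\ref{Ngraph}, with $N\to\infty$, small gradient, and inner radius comparable to $|A_{\Sigma_n}|(p_n)^{-1}$. The decisive analytic input is that this graph \emph{extends} outward to radius bounded below independently of $n$ while staying inside $\{x_3>0\}$: the separation $w$ between consecutive sheets is a difference of solutions of the minimal surface equation, hence solves the uniformly elliptic divergence-form equation (\ref{eq:PDEseparat})--(\ref{eq:PDEseparat1}); the ensuing Harnack inequality, together with the stability of the multivalued graphs (their Gauss maps yield positive Jacobi functions) and the curvature estimates of Theorem~\ref{thmcurvestimstable}, propagates the graphical structure across a large annulus. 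At the fixed scale $r=1$ the disk then looks, near the origin, like a double spiral staircase: a pair of interlocking multivalued graphs winding along a nearly vertical axis.

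Now the one-sided hypothesis is used. A helicoidal staircase spans arbitrarily large heights, so it cannot be accommodated in the slab $\{0<x_3<2\}$; the excess sheets must therefore collapse, and consecutive sheets $\Sigma_1,\Sigma_2$ become disjoint graphs lying in an arbitrarily thin slab about a horizontal plane. Using $\Sigma_1,\Sigma_2$ (and, if needed, the plane $\{x_3=0\}$) as barriers, I would solve a Plateau/area-minimization problem in the trapped region to produce an embedded \emph{stable} minimal surface $S$. By the curvature estimates for stable $H$-surfaces (Theorem~\ref{thmcurvestimstable}) and the classification of complete stable minimal surfaces (Theorem~\ref{thmstablecompleteplane}), $S$ is flat with uniformly bounded curvature; since the sheets of $\Sigma_n$ are squeezed against $S$, this forces a uniform bound on $|A_{\Sigma_n}|$ near the origin, contradicting $\sup_{\Sigma_n'}|A_{\Sigma_n}|\to\infty$.

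\textbf{The main obstacle} is twofold. First, one cannot argue by a naive parabolic rescaling around $p_n$: blowing up by $|A_{\Sigma_n}|(p_n)$ would send the confining plane $\{x_3=0\}$ off to infinity and destroy precisely the one-sidedness on which everything depends. The estimate must be proved at the given scale, which is what makes the \emph{extension} of the multivalued graph over an annulus of definite size --- rather than any soft compactness argument --- indispensable; this extension, resting on the delicate PDE analysis of the separation (\ref{eq:PDEseparat})--(\ref{eq:PDEseparat1}) and on iterated curvature estimates, is the technically heaviest ingredient. Second, one must verify that the barriers genuinely trap a stable minimal surface and then transfer its curvature bound back to $\Sigma_n$. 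Once these two points are secured, the scaling normalization, the selection of $p_n$, and the final contradiction are routine.
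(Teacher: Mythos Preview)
The paper does not contain a proof of Theorem~\ref{thmcurvestimCM}; this is a survey article, and the one-sided curvature estimate is simply quoted as the key Colding--Minicozzi result from~\cite{cm23}, with no proof or sketch provided. There is therefore no ``paper's own proof'' to compare against.

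That said, your outline follows the genuine Colding--Minicozzi strategy: contradiction at unit scale, formation of a multi-valued graph at a point of large curvature, extension of that graph via the separation PDE~(\ref{eq:PDEseparat})--(\ref{eq:PDEseparat1}) and Harnack-type estimates, and a stability/barrier argument exploiting the half-space constraint. You are also right to flag that a naive blow-up destroys the one-sidedness, so the argument must work at the fixed scale; this is indeed the crux. Two points deserve care. First, the sentence ``a helicoidal staircase spans arbitrarily large heights, so it cannot be accommodated in the slab $\{0<x_3<2\}$'' is not the mechanism: the multi-valued graph has small gradient, so many sheets can fit in a thin slab; the tension is between the sublinear growth of the separation $w$ and the presence of the barrier plane, which together force the graph to be effectively single-valued. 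Second, your closing step---produce a stable surface $S$ between sheets and then ``transfer its curvature bound back to $\Sigma_n$''---is not how the contradiction is obtained. The stable surface (or the plane $\{x_3=0\}$ itself) is used, via catenoid-type or half-space estimates, to rule out the multi-valued graph structure near the axis; it does not directly bound $|A_{\Sigma_n}|$ at $p_n$. The large curvature lives at the core, not on the sheets, so squeezing the sheets against a flat $S$ says nothing about $|A_{\Sigma_n}|(p_n)$ without an additional argument. You should also check that the multi-valued graph existence you invoke is logically prior to the one-sided estimate in the Colding--Minicozzi sequence~\cite{cm21,cm22,cm24,cm23}, to avoid circularity; it is, but this should be made explicit.
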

\begin{figure}
\begin{center}
  \includegraphics[width=8cm]{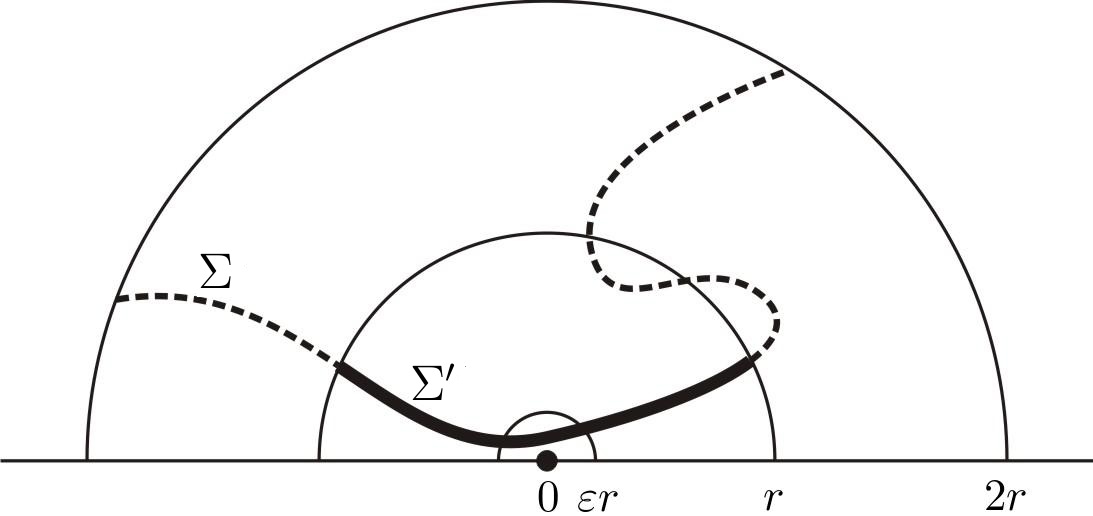}\\
  \caption{The one-sided curvature estimate.}\label{1-sidedCurvEstim}
\end{center}
\end{figure}

\section{Weak $H$-laminations, the Stable Limit Leaf Theorem and the Limit
Lamination Theorem for Finite Genus.} \label{section6}
We have mentioned the importance of understanding limits of
sequences of  $H$-surfaces with fixed (or
bounded) genus but no {\it a priori} curvature or area bounds
in a 3-manifold $N$, a situation of which
Theorem~\ref{thmlimitlaminCM}  is a particular case. This result shows that one must consider
limit objects other than $H$-surfaces, such as minimal foliations or more
generally, $H$-laminations of $N$.

In this section we start by recalling the classical notion of lamination
and discuss some results on the regularity of these objects when the
leaves have constant mean curvature. Then we will enlarge the class to admit
{\it weak laminations} by allowing certain tangential intersections between
the leaves. These weak laminations and foliations will be studied
in subsequent sections.

\begin{definition}
\label{deflamination}
{\rm
 A {\it codimension-one
lamination} of a Riemannian 3-manifold $N$ is the union of a
collection of pairwise disjoint, connected, injectively immersed
surfaces, with a certain local product structure. More precisely, it
is a pair $({\mathcal L},{\mathcal A})$ satisfying:
\begin{enumerate}[1.]
\item ${\mathcal L}$ is a closed subset of $N$;
\item ${\mathcal A}=\{ \varphi _{\be }\colon \D \times (0,1)\to
U_{\be }\} _{\be }$ is an atlas of topological coordinate charts of $N$ (here
$\D $ is the open unit disk in $\R^2$, $(0,1)$ is the open unit
interval in $\R$ and $U_{\be }$ is an open subset of $N$); note that
although $N$ is assumed to be smooth, we
only require that the regularity of the atlas (i.e., that of
its change of coordinates) is of class $C^0$; in other words, ${\cal A}$
is an atlas with respect to the topological structure of $N$.
\item For each $\be $, there exists a closed subset $C_{\be }$ of
$(0,1)$ such that $\varphi _{\be }^{-1}(U_{\be }\cap {\mathcal L})=\D \times
C_{\be}$.
\end{enumerate}

We will simply denote laminations by ${\mathcal L}$, omitting the
charts $\varphi _{\be }$ in ${\mathcal A}$ unless explicitly necessary.
A lamination ${\mathcal L}$ is said to be a {\it foliation of $N$} if ${\mathcal L}=N$.
Every lamination ${\mathcal L}$  decomposes into a
collection of disjoint, connected topological surfaces (locally given
by $\varphi_{\be }(\D \times \{ t\} )$, $t\in C_{\be }$, with the notation
above), called the {\it leaves} of ${\mathcal L}$.
Observe that if $\Delta
\subset {\cal L}$ is any collection of leaves of ${\cal L}$, then
the closure of the union of these leaves has the structure of a
lamination within ${\cal L}$, which we will call a {\it
sublamination.}

A codimension-one lamination ${\cal L}$ of $N$ is called a {\it CMC lamination}
if each of its leaves is smooth and has constant mean curvature, possibly varying
from leaf to leaf.
%, regardless of the regularity of the change of
%coordinate charts.
Given $H\in \R $, an {\it $H$-lamination} of $N$
is a CMC lamination all whose leaves have the same mean curvature
$H$. If $H=0$, the $H$-lamination will  also be called a {\it minimal
lamination}.
 }
\end{definition}

Since the leaves of a lamination ${\cal L}$ are disjoint, it makes sense to consider the
second fundamental form $A_{\cal L}$ as being defined on the union of the leaves.
A natural question to ask is whether or not the
norm $|A_{\cal L}|$  of the second fundamental form of a
(minimal, $H$- or CMC) lamination ${\cal L}$ in a Riemannian
3-manifold is locally bounded. Concerning this question, we
make the following observations.
\begin{enumerate}[(O.1)]
\item If ${\cal L}$ is a minimal lamination, then Theorem~\ref{thmcurvestimCM}
implies that $|A_{\cal L}|$
is locally bounded (to prove this, one only needs to deal
with limit leaves, where the one-sided curvature estimates apply).
\item As a consequence of recent work of Meeks and
Tinaglia~\cite{mt7,mt1,mt9} on curvature estimates for $(H>0$)-disks (Theorem~\ref{th} below),
a CMC lamination ${\cal L}$ of a Riemannian 3-manifold $N$ has
$|A_{\cal L}|$ locally bounded.
\end{enumerate}

Given a
sequence of CMC laminations ${\cal L}_n$ of a Riemannian
3-manifold  $N$ with uniformly bounded second fundamental form
on compact subsets of $N$, a simple application of the uniform graph lemma
for surfaces with constant mean curvature (see Colding and Minicozzi~\cite{cmCourant}
or P\'erez and Ros~\cite{pro2} from where this well-known result can be deduced)
and of the Arzel\`a-Ascoli Theorem, gives that there exists
a limit object of (a subsequence of) the ${\cal L}_n$,
which in general fails to be a CMC lamination since two ``leaves'' of this limit
object could intersect tangentially with mean curvature vectors pointing in opposite directions;
nevertheless, if ${\cal L}_n$ is a 0-lamination for every $n$, then
the maximum principle ensures that the limit object is indeed a 0-lamination,
see Proposition~B1 in~\cite{cm23}. Still, in the general case of
CMC laminations, such a limit object always
satisfies the conditions in the next definition.

\begin{definition}
\label{definition}
{\rm A (codimension-one) {\it weak CMC lamination} ${\cal L}$ of a
Riemannian 3-manifold $N$ is a collection
$\{ L_\alpha\}_{\alpha\in I}$ of (not necessarily
injectively) immersed constant mean curvature surfaces,
called the {\it leaves} of ${\cal L}$,
satisfying the following three properties.
\begin{enumerate}[1.]
\item $\bigcup_{\alpha\in I}L_{\alpha }$ is a closed
subset of $N$.
\item If $p\in N$ is a point where either two leaves of
${\cal L}$ intersect or a leaf of ${\cal L}$ intersects itself, then
each of these local surfaces at $p$ lies at one side of the other
(this cannot happen if both of the intersecting leaves have the same
signed mean curvature as graphs over their common tangent space at
$p$, by the maximum principle).
\item The function $|A_{\cal L}|\colon {\cal L}\to [0,\infty )$ given by
\begin{equation}
\label{eq:sigma}
|A_{\cal L}|(p)=\sup \{ |A_L|(p)\ | \ L \mbox{ is a leaf of ${\cal L}$ with $p\in L$} \} .
\end{equation}
is uniformly bounded on compact sets of~$N$.
\end{enumerate}
Furthermore:
\begin{itemize}
\item If $N=\bigcup _{\alpha } L_{\a }$, then we call ${\cal
L}$ a {\it weak CMC foliation} of $N$.
\item If the leaves of ${\cal L}$ have the same constant mean curvature
$H$, then we call ${\cal L}$ a {\it weak $H$-lamination} of $N$ (or
$H$-foliation, if additionally $N=\bigcup _{\alpha } L_{\a }$).
\end{itemize}
}
\end{definition}

The following proposition follows immediately from the definition of
a weak $H$-lamination and from Theorem~\ref{thmintmaxprinb}.

\begin{proposition}
\label{prop10.2}
Let ${\cal L}$ be a weak $H$-lamination
of a 3-manifold $N$. Then ${\cal L}$ has a local $H$-lamination
structure on the mean convex side of each leaf.  More precisely,
given a leaf $L_{\a }$ of ${\cal L}$ and a small disk $\Delta
\subset L_{\alpha }$, there exists an $\ve >0$ such that if $(q,t)$
denotes the normal coordinates for $\exp _q(t\eta _q)$ (here $\exp $
is the exponential map of $N$ and $\eta $ is the unit normal vector
field to $L_{\a }$ pointing to the mean convex side of $L_{\a }$),
then the exponential map $\exp $ is an injective submersion in
$U(\Delta ,\ve ):= \{ (q,t) \ | \ q\in \mbox{\rm Int}(\Delta ), \, t\in
(-\ve ,\ve )\} $, and the inverse image $\exp^{-1}({\cal L})\cap \{
q\in \mbox{\rm Int}(\Delta ), \,t\in [0,\ve )\} $ is an $H$-lamination of
$U(\Delta ,\ve $) in the pulled back metric, see
Figure~\ref{figHlamin}.
\begin{figure}
\begin{center}
\includegraphics[width=5.1cm,height=4cm]{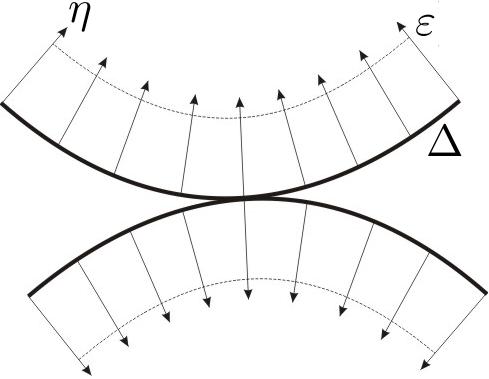}
\caption{The leaves of a weak $H$-lamination with $H\neq 0$ can
intersect each other or themselves, but only tangentially
with opposite mean curvature vectors. Nevertheless,
on the mean convex side of these locally intersecting leaves,
there is a lamination structure.
}
\label{figHlamin}
\end{center}
\end{figure}
\end{proposition}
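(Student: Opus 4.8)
The plan is to prove Proposition~\ref{prop10.2} by working entirely in a tubular neighborhood of a fixed leaf and exploiting the only structural obstruction a weak $H$-lamination can have, namely tangential self-intersections or leaf-leaf intersections with oppositely pointing mean curvature vectors. The key observation is that condition~2 in Definition~\ref{definition} forces every such contact to occur with one sheet lying on the mean convex side of the other. Consequently, if I open up the neighborhood on the mean convex side of the distinguished leaf $L_\a$ and pull everything back through the exponential map, the problematic opposite-pointing contacts are pushed to the \emph{concave} side and disappear from view, leaving a genuine (intersection-free) $H$-lamination.

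First I would fix a leaf $L_\a$ and a small disk $\Delta\subset L_\a$, and choose $\ve>0$ small enough that the Fermi (normal) coordinates $(q,t)\mapsto \exp_q(t\eta_q)$, with $\eta$ the unit normal pointing to the mean convex side, define an injective submersion on $U(\Delta,\ve)=\{(q,t)\mid q\in\mathrm{Int}(\Delta),\ t\in(-\ve,\ve)\}$. That such an $\ve$ exists is a standard consequence of the smoothness of $L_\a$ together with the uniform local bound on $|A_{\cal L}|$ from condition~3 of Definition~\ref{definition}: the curvature bound gives a uniform lower bound on the size of the embedded normal neighborhood of each leaf, so the exponential map has no conjugate points and remains a diffeomorphism onto its image for $|t|<\ve$. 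Next I would consider the pulled-back collection $\exp^{-1}({\cal L})\cap\{q\in\mathrm{Int}(\Delta),\ t\in[0,\ve)\}$ and verify it is a smooth $H$-lamination in the pulled-back metric. Each leaf of ${\cal L}$ meeting this region pulls back to a smooth $H$-surface (the mean curvature value $H$ is unchanged since the metric is pulled back isometrically, and we orient by the image of $\eta$), so the only thing to check is disjointness of the local sheets.

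The heart of the argument, and the step I expect to be the main obstacle, is establishing this disjointness. Suppose two of these pulled-back sheets met at a point $(q_0,t_0)$ with $t_0\in[0,\ve)$. By condition~2 of Definition~\ref{definition}, the two underlying local surfaces in $N$ lie on opposite sides of one another at the contact point; since they are tangent there, the interior maximum principle (Theorem~\ref{thmintmaxprin}) together with the boundary version (Theorem~\ref{thmintmaxprinb}) forces their mean curvature vectors to point in opposite directions, for otherwise the two sheets would coincide. But both sheets in $U(\Delta,\ve)$ are, by construction, oriented so that their normals have non-negative $t$-component (they were lifted on the mean convex side of $L_\a$ using $\eta$), so their mean curvature vectors cannot point in opposite transverse directions unless the contact is forced to the slice $t_0=0$, i.e.\ onto $L_\a$ itself. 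The delicate part is to handle precisely this boundary slice and to rule out the degenerate contacts there, which requires invoking the \emph{boundary} maximum principle Theorem~\ref{thmintmaxprinb} rather than the interior one. Once one checks that all genuinely opposite-pointing contacts are confined to the concave side $t<0$ and hence excluded from the half-neighborhood $t\in[0,\ve)$, the sheets in $U(\Delta,\ve)$ are pairwise disjoint and the local product structure of Definition~\ref{deflamination} follows from the Arzel\`a--Ascoli / uniform graph lemma packaging already used to produce weak CMC laminations. This yields the asserted $H$-lamination structure on the mean convex side, completing the proof.
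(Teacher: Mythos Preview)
Your overall strategy matches the paper's one-line justification (the result ``follows immediately from the definition of a weak $H$-lamination and from [the maximum principle]''), but there is a real gap in your disjointness argument. You assert that ``both sheets in $U(\Delta,\ve)$ are, by construction, oriented so that their normals have non-negative $t$-component,'' yet nothing in the construction forces this: a leaf $L_\gamma$ lying entirely in $\{0\leq t<\ve\}$ could a priori have its mean curvature vector pointing in the $-\eta$ direction, and being located on the mean convex side of $L_\alpha$ says nothing about $L_\gamma$'s own intrinsic orientation. What actually rules this out is the interior maximum principle applied against $L_\alpha$ itself: if such an $L_\gamma$ touched $L_\alpha$ from above, then $L_\gamma$ would lie on the mean convex side of $L_\alpha$ while having mean curvature $-H$ with respect to $\eta$, which contradicts Theorem~\ref{thmintmaxprin} since $L_\alpha$ has mean curvature $+H$. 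Hence every wrongly oriented leaf stays at positive height over $\overline{\Delta}$, and a short compactness argument (using the uniform curvature bound in condition~3 and the closedness in condition~1 of Definition~\ref{definition}) shows that for $\ve$ small enough there are \emph{no} such leaves in $\{0\leq t<\ve\}$. Once every local sheet in that half-slab shares the orientation of $L_\alpha$, the interior maximum principle gives the disjointness you want. This compactness step is the missing idea in your argument.

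Your appeal to the \emph{boundary} maximum principle for the slice $t_0=0$ is also misplaced: Theorem~\ref{thmintmaxprinb} concerns surfaces tangent along their own boundaries, whereas a contact at $t_0=0$ occurs at interior points of both $L_\alpha$ and the other leaf, so Theorem~\ref{thmintmaxprin} applies directly there as well.
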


\begin{definition}
\label{def-limset}
{\rm
Let $M$ be a complete, embedded surface in a Riemannian 3-manifold $N$. A
point $p\in N$ is a {\it limit point} of $M$ if there exists a
sequence $\{p_n\}_n\subset M$ which diverges to infinity in $M$ with
respect to the intrinsic Riemannian topology on $M$ but converges in
$N$ to $p$ as $n\to \infty$. Let lim$(M)$ denote the set of all limit
points of $M$ in $N$; we call this set the {\it limit set of $M$}.
In particular, lim$(M)$ is a closed subset of $N$ and $\overline{M} -M
\subset \lim (M)$, where $\overline{M}$ denotes the closure of~$M$.

The above notion of limit point can be extended to the case of
a lamination ${\cal L}$ of $N$ as follows:
A point $p\in \mathcal{L}$ is a {\it limit point} if there exists a coordinate chart
$\varphi _{\beta }\colon \D \times (0,1)\to U_{\be }$ as in Definition~\ref{deflamination}
such that $p\in U_{\be }$ and $\varphi _{\be }^{-1}(p)=(x,t)$ with
$t$ belonging to the accumulation set of $C_{\be }$.
The notion of limit point can be also extended to the case of a weak $H$-lamination of $N$,
by using that such an weak $H$-lamination has a local lamination structure at the mean convex side
of any of its points, given by Proposition~\ref{prop10.2}.

It is not difficult to show that if $p$ is a limit point of
a lamination ${\cal L}$ (resp. of a weak $H$-lamination), then the leaf $L$ of ${\cal L}$
passing through $p$ consists entirely of limit points of ${\cal L}$;
in this case, $L$ is called a {\it limit leaf} of ${\cal L}$.}
\end{definition}

The following result, called the {\it Stable Limit Leaf Theorem,}
concerns the behavior of limit leaves for a weak $H$-lamination.
\begin{theorem}[Meeks, P\'erez, Ros~\cite{mpr19,mpr18}]
\label{thmstable}
Any limit leaf $L$ of a codimension-one weak $H$-lamination of a Riemannian
manifold is stable for the Jacobi
operator defined in equation~(\ref{Jacobiop}).
More strongly, every two-sided cover of such a limit leaf $L$ is stable. Therefore,
the collection of stable leaves of a weak $H$-lamination ${\mathcal L}$
has the structure of a sublamination containing
all the limit leaves of ${\mathcal L}$.
\end{theorem}

We next return to discuss more aspects related to the Limit Lamination Theorem for $H$-Disks
(Theorem~\ref{thmlimitlaminCM}). The limit object in that result
is an example of a limiting {\it parking garage structure}
on $\rth$ with one column, see the next to last paragraph before Theorem~\ref{thmregular}
for a description of the notion of minimal parking garage structure.
We will find again a limiting parking garage structure
in Theorem~\ref{t:t5.1CM} below, but with two columns instead of one.
In a parking garage structure one can travel quickly up and down
the horizontal levels of the limiting surfaces
only along the (helicoidal\footnote{See Remark~\ref{rem10.3.2}.}) columns,
in much the same way that some parking garages are
configured for traffic flow; hence, the name parking garage structure.
We will study these structures in Section~\ref{subsecregularitysingular}.

Theorem~\ref{thmlimitlaminCM} deals with limits of sequences of $H$-disks, but
it  is also useful when studying more general situations, as
 for instance, locally simply connected sequences of $H$-surfaces, a notion
which we now define.

\begin{definition}
\label{defULSC}
{\rm
Suppose that $\{ M _n\} _n$ is a sequence of
$H_n$-surfaces (possibly with boundary) in an open set $U$ of $\R^3$.
%\textcolor{red}{\marginpar{\footnotesize I would remove this sentence, as it is not used}
%Given $p\in U$ and $n\in \N$,
%we let $r_n(p)\in (0,\infty )$ be the supremum of the set
%\[
%\{ r>0 \ | \ \B (p,r)\subset U \mbox{ intersects $M_n$ in simply
%connected components}\} .
%\]
%}
If for any $p\in U$ there exists a number $r(p)>0$ such that $\B (p,r(p))\subset U$ and
for $n$ sufficiently large,
$M_n$ intersects $\B (p,r(p))$ in compact disks whose boundaries lie
on $\partial \B (p,r(p))$, then we say that $\{ M_n\} _n$ is  {\it
locally simply connected in $U$.} If  $\{ M_n\} _n$ is a locally
simply connected sequence in $U=\R^3$ and the positive number $r(p)$
can be chosen independently of $p\in \R^3$, then we say that $\{
M_n\} _n$ is {\it uniformly locally simply connected}.}
\end{definition}

There is a subtle difference between our definition of uniformly
locally simply connected and that of Colding and
Minicozzi~\cite{cm25}, which may lead to some confusion. Colding and
Minicozzi define a sequence
 $\{ M_n\} _n$ to be
uniformly locally simply connected in an open set $U\subset \R^3$ if
for any compact set $K\subset U$, the number $r(p)$ in
Definition~\ref{defULSC} can be chosen independently of $p\in K$. It
is not difficult to check that
 this concept coincides
with our definition of a locally simply connected sequence in $U$.

The Limit Lamination Theorem for $H$-Disks
(Theorem~\ref{thmlimitlaminCM}) admits a generalization to a locally
simply connected sequence of non-simply connected $H_n$-planar domains passing through
the origin and having unbounded curvature at the origin,
which we now
explain since it will be useful for our goal of classifying minimal planar domains.
Instead of the scaled-down limit of the helicoid, the basic example in this case
is an appropriate scaled-down limit of Riemann
minimal examples ${\cal R}_t$, $t>0$. To understand this limit,
normalize each Riemann minimal example ${\cal R}_t$ so that
${\cal R}_t$ is symmetric by reflection in the $(x_1,x_3)$-plane $\Pi $ and
the {\it flux} $F(t)$ of ${\cal R}_t$, which is the flux vector
along any compact horizontal section ${\cal R}_t \cap \{x_3 = {\mathrm{constant}} \}$,
has third component equal to one.
The fact that ${\cal R}_t$ is invariant by reflection in
$\Pi $ forces $F(t)$ to be contained in $\Pi $ for each~$t>0$.
Furthermore, $t>0 \mapsto F=F(t)$
is a bijection {whose inverse map $t=t(F)$}
parameterizes the whole family of
Riemann minimal examples, with $F$ running from
horizontal to vertical (with monotonically increasing slope function).
When $F$ tends to vertical, then
it can be proved that ${\cal R}_{t(F)}$ converges to a vertical
catenoid with waist circle of radius
$\frac{1}{2\pi }$. When $F$ tends to horizontal, then one can shrink ${\cal R}_{t(F)}$
so that $F$ tends to $(4,0,0)$, and in that case the
${\cal R}_{t(F)}$ converge to the foliation of $\R^3$ by horizontal planes,
outside of the two vertical lines $\{ (0,\pm 1,x_3)\ | \ x_3\in \R \} $, along which the
{shrunk}
surface
${\cal R}_{t(F)}$ with $F$ very horizontal approximates two oppositely handed, highly sheeted,
scaled-down vertical helicoids, see Figures~\ref{figRiemannparkgar1} and \ref{ULSC1}.
\begin{figure}
\begin{center}
  \includegraphics[width=13cm]{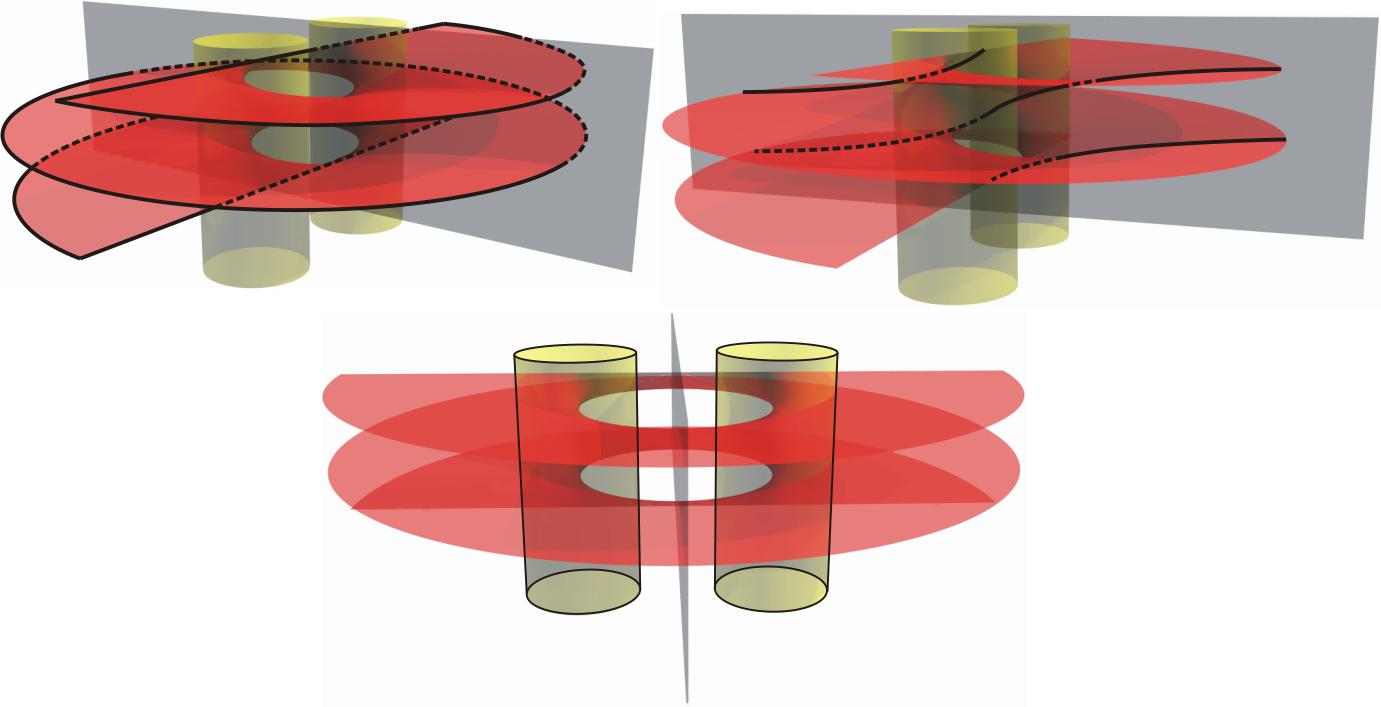}\\
  \caption{Three views of the same Riemann minimal example, with large horizontal flux
  and two oppositely handed vertical helicoids
  forming inside solid almost vertical cylinders,
  one at each side of the vertical plane of symmetry.}
\label{figRiemannparkgar1}
\end{center}
\end{figure}

With this basic family of examples in mind, we state the following
result by Colding and Minicozzi. We refer the reader to the paper~\cite{mt14}
by Meeks and Tinaglia for theorems that describe the limiting object of a
locally simply connected sequence of $H_n$-surfaces of fixed finite
genus in $\rth$ with boundaries diverging to infinity;
in particular, the reader might compare the statement of the next theorem
with the similar statement of Theorem~1.4 in~\cite{mt14}.

\begin{theorem}[Limit Lamination Theorem for $0$-surfaces of Finite Genus~\cite{cm25}]
\label{t:t5.1CM}
 Let $\S _n\subset \B(R_n)$ be a locally simply connected sequence of
$0$-surfaces of finite genus $g$, with $\partial \S_n\subset \partial \B(R_n)$,
$R_n\to \infty $, such that $\S _n\cap \B(2)$ contains a component which is not a disk
for any $n$. If $\sup| A_{\S_n\cap \B(1)}|\to \infty $ as $n\to \infty $,
then there exists a subsequence of the $\S_n$ (denoted in the same way) and
two vertical lines $S_1,S_2$, such that after a rotation in $\R^3$, then following properties hold.
\begin{enumerate}
\item Away from $S_1\cup S_2$, each $\S _n$ consists of exactly
two multi-valued graphs spiraling together.
\item For each $\a \in (0,1)$, the surfaces $\S _n-(S_1\cup S_2)$ converge in the $C^{\a }$-topology
to the foliation ${\mathcal F}$ of $\R^3$ by horizontal planes.
\item Along $S_1$ and $S_2$ the norm of the second fundamental form of the $\S _n$ blows up as $n\to \infty $.
\item The pair of multi-valued graphs appearing in item 1 inside $\S _n$ for $n$ large,
form double spiral staircases with opposite handedness at $S_1$ and $S_2$.
Thus, circling only $S_1$ or only $S_2$ results in going either up or down, while a
path circling both $S_1$ and $S_2$ closes up, see Figure~\ref{ULSC1}.
\begin{figure}
\begin{center}
  \includegraphics[width=12.5cm]{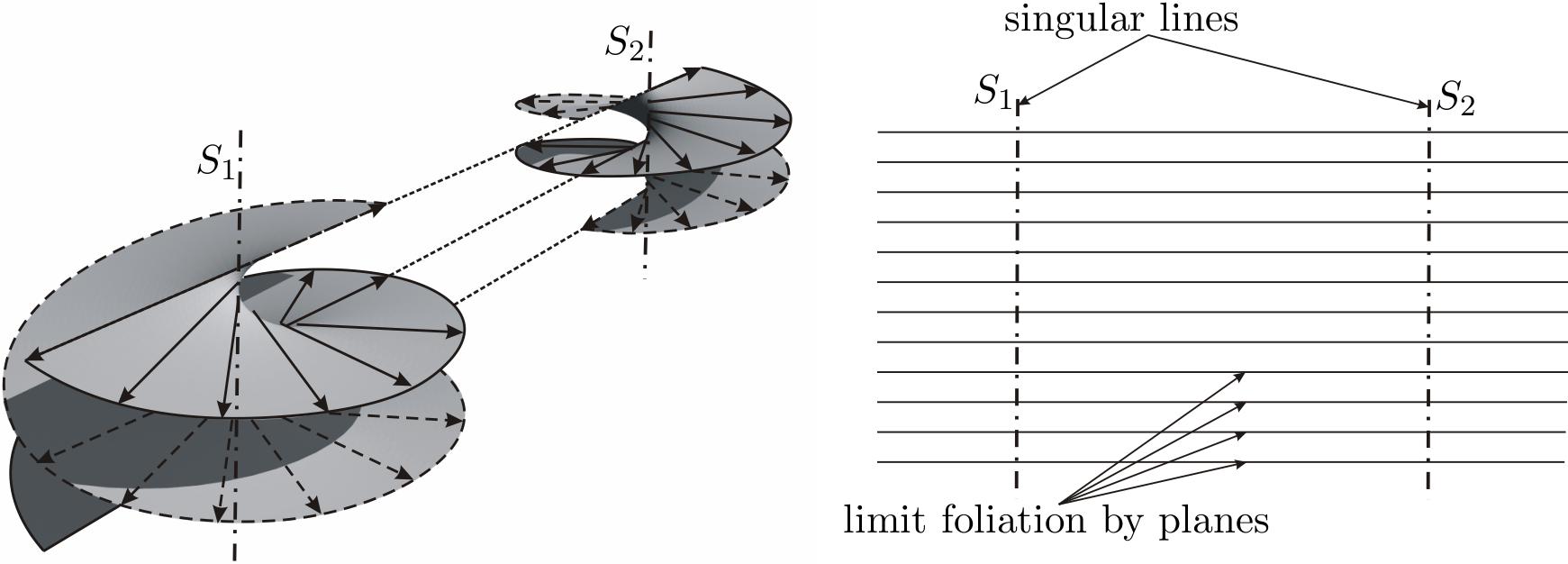}\\
  \caption{Left: Two oppositely handed double spiral staircases.
  Right: The limit foliation by parallel
 planes and the singular set of convergence $S_1\cup S_2$.}
\label{ULSC1}
\end{center}
\end{figure}
\end{enumerate}
\end{theorem}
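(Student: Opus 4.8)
The plan is to run the blow-up and compactness argument of Colding--Minicozzi, taking as principal analytic inputs the disk case Theorem~\ref{thmlimitlaminCM} (with $H=0$), the one-sided curvature estimate Theorem~\ref{thmcurvestimCM}, and the stability results Theorems~\ref{thmstable} and~\ref{thmstablecompleteplane}, and then to pin down the global structure of the singular set using the finite genus together with the non-disk hypothesis. First I would extract the limit. Let $S\subset\R^3$ be the set of points near which $\sup_n|A_{\S_n}|$ fails to be locally bounded; by the curvature hypothesis $S\cap\B(1)\neq\emptyset$. On any compact set disjoint from $S$ the second fundamental forms are uniformly bounded for large $n$, so the locally simply connected hypothesis lets me express the pieces of $\S_n$ as graphs of uniformly bounded geometry and pass, by Arzel\`a--Ascoli, to a limit; since the number of sheets need not stay bounded, the correct limit object is a minimal lamination $\cL$ of $\R^3-S$ (possibly with infinitely many leaves) rather than a single surface, and no area bound is available or required. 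The one-sided curvature estimate, applied to the disk pieces furnished by local simple connectivity, controls how $S$ meets $\cL$: it forbids curvature from concentrating immediately to one side of a nearly horizontal graphical sheet, so the leaves near $S$ are approached from both sides and accumulate there.

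Next I would identify $\cL$ with the foliation of $\R^3$ by parallel planes. The curvature blow-up along $S$ forces leaves of $\cL$ to accumulate near $S$, producing a limit leaf; by the Stable Limit Leaf Theorem~\ref{thmstable} every limit leaf (and its two-sided cover) is stable, and a complete stable minimal surface in $\R^3$ is a plane by Theorem~\ref{thmstablecompleteplane}. An openness--closedness argument then spreads flatness to all of $\cL$: the planar leaves form an open set by the local product structure and a closed set because a limit of planar leaves is again a stable limit leaf, hence planar. Disjointness and completeness force these planes to be mutually parallel, and after a rotation I take them horizontal. This is exactly items 1 and 2: on compact subsets of $\R^3-S$ each $\S_n$ is, for $n$ large, a union of multi-valued graphs over horizontal planes that converges in the $C^\a$-topology to the foliation $\cF$.

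Third I would describe $S$ itself. Since $\cF$ is horizontal and the one-sided estimate prevents curvature concentration wherever $\S_n$ is a single-valued graph, $S$ must be transverse to $\cF$, i.e. a union of Lipschitz arcs parameterized by height; Meeks' regularity result Theorem~\ref{thmregular} upgrades each arc to a vertical line, and the finite genus $g$ bounds how many there can be. That at least two vertical lines occur is where the non-disk hypothesis enters: if only one line met $\B(2)$, the local picture would be a single helicoidal column as in Theorem~\ref{thmlimitlaminCM}, a neighborhood of whose axis is an embedded disk, contradicting the presence of a non-disk component of $\S_n\cap\B(2)$. Item 3 is then immediate, since $S$ is by construction the curvature blow-up set.

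The final and hardest point is that there are \emph{exactly} two lines $S_1,S_2$ and that the two columns spiral with \emph{opposite} handedness (item 4). Near each line Theorem~\ref{thmlimitlaminCM} presents $\S_n$ locally as a double spiral staircase, so circling a single column changes height monotonically. A loop realizing the nontrivial topology of the non-disk component of $\S_n\cap\B(2)$ must encircle more than one column and return to its starting height, which forces the signed vertical displacements around the enclosed columns to cancel; given the staircase structure this can happen only when the columns occur in oppositely handed pairs, and the genus bound together with the non-disk condition selects the minimal such configuration, namely one oppositely handed pair. The genuine obstacle is to make this winding-and-height bookkeeping rigorous while respecting global embeddedness: one must prove that the sheets of the distinct multi-valued graphs emanating from $S_1$ and from $S_2$ interleave in the region between the columns without ever intersecting, and that no further columns can appear consistently with the fixed genus $g$. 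This interleaving is exactly the behavior that the scaled-down Riemann minimal examples $\cR_t$ model, and controlling it quantitatively is where the full weight of the Colding--Minicozzi estimates is needed.
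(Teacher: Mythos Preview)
The paper does not prove this theorem. This is a survey article, and Theorem~\ref{t:t5.1CM} is simply stated with attribution to Colding and Minicozzi~\cite{cm25}; the surrounding text motivates it with the scaled-down Riemann minimal examples and then moves on. There is no proof or proof sketch in the paper to compare your proposal against.

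That said, your outline tracks the known architecture of the Colding--Minicozzi argument reasonably well through the first three steps: extract the singular set, use Theorems~\ref{thmstable} and~\ref{thmstablecompleteplane} to force the limit lamination to be a foliation by parallel planes, and invoke Theorem~\ref{thmregular} to straighten the singular curves into vertical lines. The genuine weakness is in your final step. Your height-cancellation heuristic correctly predicts that the \emph{signed sum} of handedness around a null-homotopic loop must vanish, but this alone does not force exactly two columns: a configuration of four columns with signs $(+,-,+,-)$ passes your test just as well. Ruling out more than two columns is where the fixed genus bound does real work, via the Colding--Minicozzi analysis of how sheets from distinct columns must glue (the ``no-mixing'' phenomenon) and the observation that each additional pair of columns forces genus to accumulate in the approximating surfaces. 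You flag this as ``the genuine obstacle'' at the end, which is accurate, but you should recognize that it is the core of the theorem rather than a technicality: items~1--3 are essentially the disk case plus regularity, while item~4 and the count of columns are what distinguish the finite-genus theorem from the general local picture in Theorem~\ref{tthm3introd}.
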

Theorem~\ref{t:t5.1CM} gives rise to a second example of a limiting parking
garage structure on~$\rth$ (we obtained the
first example in Theorem~\ref{thmlimitlaminCM} above), now with two columns which are
$(+,-)$-handed\footnote{Here, $+$,
(resp. $-$) means that the corresponding forming helicoid or multi-valued
graph is right-handed.}, just like in the case of the Riemann
minimal examples ${\cal R}_t$ discussed before the statement
of Theorem~\ref{t:t5.1CM}. We refer the reader to Section~\ref{subsecregularitysingular}
for more details about parking garage structures on~$\rth$, and to
Theorem~\ref{tthm3introd} for a generalization to the case where there is no bound on the genus
of the surfaces {$\Sigma _n$}.

\section{Properness results for {$0$}-surfaces.}
\label{ttttmr}

In previous sections we have seen that $H$-laminations constitute a key tool in the
understanding of the  global behavior of  $H$-surfaces in $\R^3$.
In this section, we will present some results about the
existence and structure of 0-laminations,
which have deep  consequences in various aspects of 0-surface theory, including the
general 3-manifold setting.
For example, we will give a natural condition under which the closure of a
complete 0-surface in a Riemannian 3-manifold has the structure of a 0-lamination.
We will deduce from this analysis, among other things, that certain complete
0-surfaces are proper, a result which is used to prove
 the uniqueness of the helicoid in the complete setting (Theorem~\ref{helicoid})
by reducing it to the corresponding characterization assuming properness (Theorem~\ref{ttmr}).

In the Introduction we mentioned the result, proved in~\cite{mr8} by
Meeks and Rosenberg, that the closure of a complete 0-surface $M$ with locally bounded
second fundamental form
in a Riemannian 3-manifold $N$, has the structure of a 0-lamination
(they stated this result in~\cite{mr8} in the particular case $N=\R^3$,
but their proof extends to the general case for $N$ as mentioned in~\cite{mr13}).
The same authors have demonstrated that this result still holds true if we
substitute the locally bounded curvature assumption by a weaker hypothesis,
namely that for every $p\in N$, there exists a neighborhood $D_p$ of $p$ in $N$ where
the injectivity radius function $I_M$ of $M$ restricted to $M\cap D_p$ is bounded away from zero.

\begin{definition}
{\rm
Let $\Sigma $ be a complete Riemannian manifold. The {\it injectivity radius function}
$I_{\Sigma }\colon \Sigma \to (0,\infty ]$
is defined at a point $p\in \Sigma $, to be the supremum of the
radii $r>0$ of disks $D(\vec{0},r)\subset T_p\Sigma $,
such that the exponential map $\exp _p\colon T_p\S \to \S $,
restricts to $D(\vec{0},r)$ as a
diffeomorphism onto its image. The {\it injectivity radius of
$\Sigma $} is the infimum of $I_{\Sigma }$.
}
\end{definition}

The next theorem was proved by Meeks and Rosenberg~\cite{mr13}.

\begin{theorem}[Minimal Lamination Closure Theorem]
\label{thmmlct}
Let $M$ be a complete 0-surface of positive injectivity radius in a Riemannian
3-manifold $N$ (not necessarily complete). Then, the closure $\overline{M}$ of $M$ in $N$
has the structure of a %$C^{1, \alpha}$-
0-lamination $\lc$, some of whose leaves are the connected components of $M$.
Furthermore:
\begin{enumerate}[1.]
\item If $N$ is homogeneously regular, then there exist $C, \ve>0$ depending on $N$ and
on the injectivity radius of $M$, such that the norm of the second fundamental form of $M$
in the $\ve$-neighborhood of any limit leaf of $\overline{M}$ is less than $C$ (recall that
limit leaves were introduced in Definition~\ref{def-limset}).
\item If $M$ is connected, then exactly one of the following three statements holds for the set
{\rm lim}$(M)\subset \lc$
of limit points of $M$:
\begin{enumerate}[2.a.]
\item $M$ is properly embedded in $N$, and {\rm lim}$(M)=\mbox{\rm \O }$.
\item {\rm lim}$(M)$ is non-empty and
disjoint from $M$, and $M$ is properly embedded in the open set $N-\mbox{\rm lim}(M)$.
\item {\rm lim}$(M)=\lc$ and ${\mathcal L}$ contains an uncountable number of leaves.
\end{enumerate}
\end{enumerate}
\end{theorem}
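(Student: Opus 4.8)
The plan is to reduce the whole statement to one local fact: for every $q\in\ov M$ there exist $r>0$ and $C>0$ with $|A_M|\le C$ on $M\cap\B(q,r)$. Granting such a uniform local curvature bound, the lamination structure follows by a standard compactness argument. First I would record that, since $M$ is minimal, its Gaussian curvature is nonpositive, so by Cartan--Hadamard the exponential map has no conjugate points and every intrinsic ball of radius less than $\delta:=\inf_M I_M>0$ is an embedded disk; thus the sheets of $M$ meeting a small ball about $q$ are embedded minimal disks of definite intrinsic size. Together with the curvature bound these disks are uniformly graphical over their tangent planes, so the uniform graph lemma and the Arzel\`a--Ascoli theorem show that their accumulation set in $\B(q,r)$ is closed and, in suitable coordinates, has the local product form $\D\times C_\beta$ of Definition~\ref{deflamination}; smooth convergence keeps the limit graphs minimal, and the interior maximum principle (Theorem~\ref{thmintmaxprin}) forces any two tangent leaves to coincide, so the leaves are pairwise disjoint. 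This exhibits $\ov M$ as a $0$-lamination $\lc$ whose leaves include the connected components of $M$.

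The hard part will be the local curvature bound, and this is exactly where the one-sided curvature estimate (Theorem~\ref{thmcurvestimCM}) is indispensable. I would argue by contradiction: if the bound fails at $q$, pick $p_n\in M$ with $p_n\to q$ and $|A_M|(p_n)\to\infty$, and (after a point-selection making the curvature of the intrinsic disk $D_n:=B_M(p_n,\delta/2)$ comparable to its supremum near $p_n$) pass to a subsequence of these embedded minimal disks accumulating at $q$. Working in geodesic normal coordinates at $q$, where the metric of $N$ is $C^2$-close to the flat metric of $\rth$, the goal is to produce a fixed, sufficiently flat minimal disk (or plane) $P$ through $q$ such that infinitely many $D_n$ lie to one side of $P$ yet come arbitrarily close to it while still carrying large curvature. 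Theorem~\ref{thmcurvestimCM} then bounds $\sup_{D_n}|A_M|$ on the relevant scale, contradicting $|A_M|(p_n)\to\infty$. Manufacturing this one-sided flat barrier $P$ directly from the accumulating disks---without presupposing the lamination structure one is trying to build---is the delicate technical step, and it is precisely here that positive injectivity radius is used: it prevents the sheets of $M$ from folding on a scale smaller than $\delta$, so the accumulation of the $D_n$ is genuinely one-sided and laminar rather than spiralling.

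For item~1 I would invoke the Stable Limit Leaf Theorem (Theorem~\ref{thmstable}): every limit leaf $L$ of $\lc$ is stable, and the stable leaves constitute a sublamination $\lc_s$ containing all the limit leaves. In a homogeneously regular $N$ the sheets of $M$ lying in a thin neighborhood of $L$ are graphs over compact stable pieces of leaves of $\lc_s$ whose boundaries lie on the edge of a fixed-size chart; applying the curvature estimate for stable surfaces (Theorem~\ref{thmcurvestimstable}) at points at definite distance $d_N(\cdot,\partial)$ from those boundaries yields a uniform bound $|A_M|\le C$ on a slightly thinner $\ve$-neighborhood of $L$, with $C,\ve$ depending only on $N$ and on $\delta$.

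Finally, item~2 is the elementary endgame. With $M$ connected, $\mathrm{lim}(M)$ is a closed union of full limit leaves of $\lc$ (Definition~\ref{def-limset}), so there are exactly three mutually exclusive possibilities. If $\mathrm{lim}(M)=\emptyset$ then no divergent sequence in $M$ converges in $N$, i.e.\ $M$ is properly embedded, giving (2a). If $\mathrm{lim}(M)\neq\emptyset$ but $\mathrm{lim}(M)\cap M=\emptyset$, then in the open set $U=N-\mathrm{lim}(M)$ the surface $M$ is closed and hence properly embedded, giving (2b). If instead $\mathrm{lim}(M)\cap M\neq\emptyset$, then because the limit points of a single leaf form that entire leaf, the connected surface $M$ consists wholly of limit points, so $M\subset\mathrm{lim}(M)$ and therefore $\mathrm{lim}(M)=\ov M=\lc$; in this case every leaf of $\lc$ is a limit leaf, so each local transversal $C_\beta$ is a nonempty perfect subset of $\R$ and is uncountable, whence $\lc$ has uncountably many leaves, giving (2c).
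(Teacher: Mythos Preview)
The paper is a survey and does not contain a proof of this theorem; it only states the result and attributes it to Meeks and Rosenberg~\cite{mr13}, so there is no in-paper proof to compare against.

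That said, your outline follows the strategy of the original source quite faithfully: the reduction to a local curvature bound via the Colding--Minicozzi one-sided curvature estimate is exactly the engine of the Meeks--Rosenberg argument, and your treatment of items~1 and~2 (Stable Limit Leaf Theorem plus Schoen-type estimates for item~1, the closed-sublamination trichotomy for item~2) is correct and standard. The one place where your sketch is genuinely incomplete is the step you yourself flag as delicate: producing the one-sided flat barrier $P$ from the accumulating disks. In the actual proof this is not done by first building a limit plane and then invoking Theorem~\ref{thmcurvestimCM}; rather, one exploits the full Colding--Minicozzi local structure theory for sequences of embedded minimal disks. If the curvature blows up at $q$, that theory forces the disks $D_n$ to contain, near $q$, highly-sheeted double multi-valued graphs spiralling around a nearly vertical axis---a local helicoid picture. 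But along such an axis the injectivity radius of the disk is forced to be small (two nearby sheets are joined by a short path through the core), contradicting the uniform lower bound $\delta$ on $I_M$. So positive injectivity radius kills the helicoidal alternative directly, and what remains is the graphical/laminar alternative with bounded curvature. Your version, which tries to manufacture a flat $P$ \emph{a priori} and then apply the one-sided estimate to it, risks circularity (the flatness of $P$ is essentially what you are trying to prove) and would need more care to make rigorous.
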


In the particular case $N=\rth$, more can be said. Suppose $M\subset \R^3$ is a connected,
complete 0-surface
with positive injectivity radius. By Theorem~\ref{thmmlct},  the closure of $M$ has the structure
of a 0-lamination of $\rth$. If item 2.a in Theorem~\ref{thmmlct} does not hold for $M$,
then the sublamination of limit points lim$(M)\subset \overline{M}$ contains some leaf $L$. By
Theorem~\ref{thmstable} $L$ is stable,
hence $L$ is a plane by
Theorem~\ref{thmstablecompleteplane}.
Now Theorem~\ref{thmmlct} insures that $M$ has bounded curvature
in some $\varepsilon$-neighborhood of the plane~$L$,
which contradicts Lemma~1.3 in~\cite{mr8}. This
contradiction proves the following
result.

\begin{corollary}[Meeks, Rosenberg~\cite{mr13}]
\label{corolMLCTR3}
Every connected, complete 0-surface in $\rth$ with positive
injectivity radius is properly embedded.
\end{corollary}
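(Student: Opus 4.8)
The plan is to deduce the corollary directly from the three structural results already established: the Minimal Lamination Closure Theorem (Theorem~\ref{thmmlct}), the Stable Limit Leaf Theorem (Theorem~\ref{thmstable}), and the classification of complete stable $0$-surfaces (Theorem~\ref{thmstablecompleteplane}). First I would invoke Theorem~\ref{thmmlct} with $N=\rth$: since $M$ is a connected, complete $0$-surface of positive injectivity radius, its closure $\overline{M}$ carries the structure of a $0$-lamination $\lc$, and the trichotomy in part~2 of that theorem applies to the set $\mathrm{lim}(M)$ of limit points of $M$. The goal is to rule out cases 2.b and 2.c and thereby force case 2.a, which is precisely the assertion that $M$ is properly embedded in $\rth$ with empty limit set.

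The argument proceeds by contradiction. Suppose $M$ is not properly embedded, so that $\mathrm{lim}(M)\neq\emptyset$; then $\mathrm{lim}(M)$, being a nonempty sublamination of $\lc$, contains at least one leaf $L$, which is by definition a limit leaf of $\lc$. The key step is to identify $L$. By the Stable Limit Leaf Theorem (Theorem~\ref{thmstable}), every limit leaf of a (weak) $H$-lamination is stable for the Jacobi operator; in particular $L$ is a complete, stable $0$-surface in $\rth$. Theorem~\ref{thmstablecompleteplane} then forces $L$ to be a plane.

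Finally I would extract a contradiction from the presence of this limiting plane. Because $\rth$ is homogeneously regular, part~1 of Theorem~\ref{thmmlct} supplies constants $C,\ve>0$ so that the norm of the second fundamental form of $M$ is bounded by $C$ throughout the $\ve$-neighborhood of the limit leaf $L$. Thus $M$ accumulates to the plane $L$ while having uniformly bounded curvature in a slab around $L$; this is exactly the configuration excluded by Lemma~1.3 in~\cite{mr8}, which prohibits a complete, embedded $0$-surface from limiting onto a plane with bounded curvature in a neighborhood of it. This contradiction shows $\mathrm{lim}(M)=\emptyset$, so alternative 2.a holds and $M$ is properly embedded.

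The main obstacle in this chain is not computational but one of verifying that the deep inputs interlock correctly: one must be certain that the limit leaf $L$ genuinely satisfies the completeness hypothesis required by Theorem~\ref{thmstablecompleteplane} (so that the stability conclusion of Theorem~\ref{thmstable} can be upgraded to planarity), and that the curvature bound in part~1 of Theorem~\ref{thmmlct} is being applied at a \emph{limit} leaf rather than at a leaf belonging to $M$ itself. Once these two compatibility checks are in place, the clash with Lemma~1.3 of~\cite{mr8} is immediate and no delicate estimates remain, since essentially all of the analytic difficulty has been absorbed into the three previously established theorems.
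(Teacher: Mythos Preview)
Your proposal is correct and follows essentially the same approach as the paper: invoke Theorem~\ref{thmmlct} to get the lamination structure and the trichotomy, use Theorem~\ref{thmstable} and Theorem~\ref{thmstablecompleteplane} to identify any limit leaf as a plane, and then derive a contradiction with Lemma~1.3 of~\cite{mr8} via the curvature bound in part~1 of Theorem~\ref{thmmlct}. The paper's argument is more terse but structurally identical.
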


Suppose $M$ is a complete 0-surface of finite
topology in $\R^3$. If the injectivity radius of $M$ is zero, then
there exists a divergent sequence of embedded geodesic loops $\g
_n\subset M$ (i.e., closed geodesics with at most one corner) with
lengths going to zero. Since $M$ has finite topology, we may assume
the $\g _n$ are all contained in a fixed annular end $E$ of $M$. By
the Gauss-Bonnet formula, each $\g _n$ is homotopically non-trivial,
and so, the cycles $\g _n\cup \g _1$, $n\geq 2$, bound compact
annular subdomains in $E$, whose union is a subend of $E$. However,
the Gauss-Bonnet formula implies that the total Gaussian curvature of this
union is finite (greater than $-4\pi $). Hence, $E$ is asymptotic to
an end of a plane or of a half-catenoid, which is absurd. This
argument proves that the following result holds.

\begin{corollary}[Colding-Minicozzi~\cite{cm35}]
\label{corolthmmlctR3}
A complete  0-surface of finite topology in $\rth$ is
properly embedded.
\end{corollary}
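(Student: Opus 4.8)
The plan is to reduce the statement to Corollary~\ref{corolMLCTR3}, which already guarantees that a connected, complete $0$-surface of \emph{positive} injectivity radius in $\rth$ is properly embedded. Thus it suffices to show that a complete $0$-surface $M$ of finite topology in $\rth$ cannot have zero injectivity radius. First I would suppose, for contradiction, that the injectivity radius of $M$ vanishes and extract points $p_n\in M$ with $I_M(p_n)\to 0$. Because $M$ is minimal its Gaussian curvature satisfies $K\leq 0$, so there are no conjugate points along geodesics; consequently a small injectivity radius at $p_n$ must arise from a short geodesic loop based at $p_n$ rather than from a focal phenomenon. This produces a sequence of embedded geodesic loops $\g_n\subset M$ (closed geodesics with at most one corner) whose lengths tend to $0$. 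Since the injectivity radius is bounded below on every compact subset of $M$, the loops $\g_n$ must diverge in $M$.

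Next I would invoke the finite-topology hypothesis: $M$ is homeomorphic to a compact surface with finitely many punctures, so it has finitely many ends, each of annular type. After passing to a subsequence, all the $\g_n$ lie in a single annular end $E$. A Gauss--Bonnet computation on any disk that such a loop might bound would force $\int_D K = 2\pi-\theta\geq\pi>0$ (the single exterior angle satisfying $\theta\leq\pi$), contradicting $K\leq 0$; hence each $\g_n$ is homotopically non-trivial, i.e. homotopic to the core circle of $E$. Therefore, for $n\geq 2$ the cycle $\g_n\cup\g_1$ bounds a compact annular subdomain $A_n\subset E$, and the $A_n$ exhaust a subend $E'$ of $E$.

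The heart of the argument is then a second Gauss--Bonnet estimate. Since each $A_n$ is an annulus, $\chi(A_n)=0$, and as its boundary consists of geodesic arcs with at most two corners, Gauss--Bonnet gives $\int_{A_n}K=-\sum_i\theta_i\geq-2\pi$. Because the integrand is non-positive and the domains increase, letting $n\to\infty$ shows that the total Gaussian curvature of the subend $E'$ is finite (bounded below by $-4\pi$). A complete annular minimal end of finite total curvature is conformally a punctured disk and, by the classical structure theory, is asymptotic to the end of a plane or of a half-catenoid. But on such an asymptotically flat (or catenoidal) end the length of the shortest homotopically non-trivial loop is bounded away from zero near the puncture, which is incompatible with the existence of the essential loops $\g_n$ of length tending to $0$. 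This contradiction shows that the injectivity radius of $M$ is positive, and Corollary~\ref{corolMLCTR3} then finishes the proof.

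I expect the main obstacle to be the first step---rigorously passing from the vanishing of the injectivity radius to a divergent sequence of \emph{short embedded geodesic loops}. One must use the sign condition $K\leq 0$ to exclude conjugate points, so that small injectivity radius is genuinely caused by a short loop, and one must verify that these loops can be taken embedded and divergent. Once the short essential loops are in hand, the two Gauss--Bonnet computations and the finite-total-curvature asymptotics are comparatively routine, the only subtlety being the careful accounting of the corner angles bounding the annular regions.
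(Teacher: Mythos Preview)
Your proposal is correct and follows essentially the same route as the paper's own argument: reduce to Corollary~\ref{corolMLCTR3} by showing the injectivity radius is positive, extract short divergent geodesic loops when it is not, push them into a single annular end by finite topology, use Gauss--Bonnet first to see the loops are essential and then to bound the total curvature of the subend, and finally contradict the planar/catenoidal asymptotics. Your write-up is in fact more explicit than the paper's at each step (you spell out the $K\leq 0$ conjugate-point argument and the corner-angle bookkeeping), and the obstacle you flag is exactly the one the paper glosses over.
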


\subsection{Regularity of the singular sets of convergence to 0-laminations.}
\label{subsecregularitysingular}
An important technique which is used when dealing with a sequence of
 0-surfaces $M_n$ is to rescale each surface in the sequence
to obtain a well-defined limit after rescaling, from where one
deduces information about the original sequence. An important case
in these rescaling processes is that of blowing up a sequence of
$H$-surfaces {\it on the scale of curvature} (for details, see
Theorem~1.1 in~\cite{mpr20},
and also see the proofs
of Theorem~15 in~\cite{mpe1} or of Corollary~2.2
in~\cite{me29}).
When the surfaces in the sequence are complete and embedded in $\R^3$,
this blowing-up process produces a limit which is a proper
 non-flat $0$-surface with bounded  second fundamental form,
whose genus and rank of homology groups are bounded above by the
ones of the $M_n$. For example, if each $M_n$ is a planar domain, then the
same property holds for the limit surface.

Recall that we defined in {Section}~\ref{subsecCM} the concept of a
locally simply connected sequence of proper 0-surfaces in $\R^3$.
This concept can be easily generalized to a
sequence of proper 0-surfaces
in a Riemannian 3-manifold\footnote{To do this, simply exchange the Euclidean balls
$\B (p,r(p))$ in Definition~\ref{defULSC} by extrinsic balls
$B_N(p,r(p))$ relative to the Riemannian distance function on $N$.}
$N$. For useful applications of the notion of locally simply
connected sequence, it is
essential to consider sequences of proper 0-surfaces
which {\it a priori} may not satisfy the locally simply connected condition, and then
modify them to produce a new sequence which satisfies that
condition. We accomplish this by considering a blow-up argument
on a scale which, in general, is different from
blowing up on the scale of curvature. We call this procedure
{\it blowing up on the scale of topology}. This scale was
defined and used in~\cite{mpr3,mpr4} to prove that any
proper 0-surface in $\rth$ of finite genus
and infinitely many ends has bounded curvature and is recurrent.
We now explain the elements of this new blow-up procedure, which is also
the basis for the proof of   Theorem~\ref{tthm3introd} below in the
general 3-manifold setting.

Suppose $\{ M_n\} _n$ is a sequence of non-simply connected,
proper 0-surfaces in $\R^3$ which is not uniformly
locally simply connected. Note that the Gaussian curvature of the
collection $M_n$ is not uniformly bounded, and so, one could blow up
these surfaces on the scale of curvature to obtain a proper,
 non-flat 0-surface which may or may not be simply
connected. Also note that, after choosing a subsequence, there exist
points $p_n\in \R^3$ such that by defining
\[
r_n(p_n)=\sup \{ r>0 \ | \ \B(p_n,r) \mbox{ intersects $M_n$ in disks} \} ,
\]
then $r_n(p_n)\to 0$ as $n\to \infty $.
Let $\widetilde{p}_n$ be a point in $\B(p_n,1)=\{ x\in \R^3 \ | \
\|x-p_n\| <1\} $ where the function
$x\mapsto d(x,\partial \B(p_n,1)) /  r_n(x)$ attains
its maximum (here $d$ denotes extrinsic distance). Then, the
translated and rescaled surfaces
$\widetilde{M}_n=\frac{1}{r_n(\widetilde{p}_n)}(M_n-\widetilde{p}_n)$
intersect for all~$n$ the closed unit ball $\overline{\B}(\vec{0},1)$
in at least one component which is
not simply connected, and for $n$ large they intersect any
ball of radius less than~1/2 in simply connected components, in particular
the sequence $\{ \widetilde{M}_n\} _n$ is uniformly locally simply
connected (see
the proof of Lemma~8 in~\cite{mpr3} for details).

For the sake of clarity, we now illustrate this blow-up
procedure on certain sequences of Riemann minimal examples,
defined in {Section}~\ref{subsecexamples}. Each of these
surfaces is foliated by circles and straight lines in
horizontal planes, with the $(x_1,x_3)$-plane as a plane of
reflective symmetry. After a translation and a homothety, we can
also assume that these surfaces are normalized so that any ball of
radius less than 1 intersects these surfaces in compact disks, and
the closed unit ball $\overline{\B } (\vec{0},1)$ intersects every
Riemann example in at least one component which is not a disk. Under
this normalization, any sequence of Riemann minimal examples is
uniformly locally simply connected. The flux of each Riemann minimal
example along a compact horizontal section has horizontal and
vertical components which are not zero; hence it makes sense to
consider the ratio $V$ of the norm of its horizontal component over
the vertical one.

As explained before
Theorem~\ref{t:t5.1CM},
$V$ parameterizes the family $\{
{\cal R}(V)\} _V $ of Riemann minimal examples,
with $V\in (0,\infty )$. When $V\to 0$,  the surfaces ${\cal R}(V)$  converge smoothly to the
vertical catenoid centered at the origin with waist circle of radius~1. But for our current
purposes, we are more interested in the limit object of ${\cal R}(V)$ as $V\to \infty $.
In this case, the Riemann minimal examples ${\cal R}(V)$
converge smoothly to a foliation of $\R^3$ by horizontal planes away
from the two vertical lines
passing through $(0,-1,0),(0,1,0)$.
Given a horizontal slab $S\subset \R^3$ of finite width,
the description of ${\cal R}(V)\cap S$ for $V$ large is as
follows, see
Figure~\ref{figRiemannparkgar1}.
\begin{enumerate}[(a)]
\item If $C_1,C_2$ are disjoint vertical cylinders in $S$ with axes $S\cap
[\{ (0,-1)\} \times \R ]$, $S\cap [\{ (0,1)\} \times \R ]$,
then ${\cal R}(V)\cap C_i$ is arbitrarily close to the
intersection of $S$ with a highly sheeted vertical helicoid with axis the axis of $C_i$, $i=1,2$.
Furthermore, the fact that the $(x_1,x_3)$-plane is a plane of reflective symmetry of
${\cal R}(V)$ implies that these limit helicoids are oppositely handed.

\item In $S-(C_1\cup C_2)$, the surface ${\cal R}(V)$ consists of two almost flat,
almost horizontal multi-valued graphs, with number of sheets increasing to $\infty $ as
$V\to \infty $.

\item If $C$ is a vertical cylinder in $S$ containing $C_1\cup C_2$, then ${\cal R}(V)$
intersects $S-C$ in a finite number $n(V)$ of univalent graphs, each one representing a
planar end of ${\cal R}(V)$. Furthermore, $n(V)\to \infty $ as
$V\to \infty $.
\end{enumerate}
This description shows an example of a particular case of what we call a
{\it parking garage structure} on $\R^3$ for the limit of a sequence of 0-surfaces
(we mentioned this notion before Definition~\ref{defULSC}).
Roughly speaking, a parking garage surface with $n$ columns is a smooth
embedded surface in $\rth$ (not necessarily minimal), which in any fixed finite width
horizontal slab $S\subset \R^3$,  can be decomposed into 2 disjoint, almost flat horizontal
multi-valued graphs over the exterior of $n$ disjoint  disks $D_1,\ldots ,D_n$ in the
$(x_1,x_2)$-plane, together $n$ topological strips each one
contained in one of the solid cylinders $S\cap (D_i\times \R )$ (these are the
{\it columns} of the parking garage structure),
such that each strip lies in a small regular neighborhood of
the intersection of a vertical helicoid $H_i$ with $S\cap (D_i\times \R )$. One can
associate to each column a $+$ or $-$ sign, depending on the
handedness of the corresponding helicoid $H_i$. Note that a vertical
helicoid is the basic example of a parking garage surface with 1 column,
and the Riemann minimal examples ${\cal R}(V)$ with $V\to \infty $ have the structure
of a parking garage structure with two oppositely handed columns. Other limiting
parking garage structures with varying
numbers of columns (including the case where $n=\infty$) and
associated signs can be found in Traizet and Weber~\cite{tw1} and in
Meeks, P\'erez and Ros~\cite{mpr14}.

There are interesting cases where the locally simply connected
condition guarantees the convergence of a sequence of 0-surfaces in
$\R^3$ to a limiting parking garage structure.
Typically, one proves that the sequence
converges (up to a subsequence and a rotation) to a foliation of $\R^3$
by horizontal planes, with singular set of convergence consisting of a
locally finite set of Lipschitz curves parameterized by heights. In fact,
these Lipschitz curves are vertical lines by Theorem~\ref{thmregular} below,
and locally around the lines the surfaces in the sequence can be arbitrarily
approximated by highly sheeted vertical helicoids, as follows from
the uniqueness of the helicoid (Theorem~\ref{ttmr}) after a blowing-up process
on the scale of curvature.
By work of Meeks and Tinaglia~\cite{mt1}, the next theorem can be generalized
to a locally simply connected sequence of $H_n$-surfaces in a Riemannian 3-manifold.

\begin{theorem}[$C^{1,1}$-regularity of $S(\mathcal{L})$, \, Meeks~\cite{me25}]
\label{thmregular}
Suppose $\{ M_n\}_n$ is a locally simply connected sequence of proper 0-surfaces in
a Riemannian 3-manifold $N$, that converges $C^{\a }$, $\a \in (0,1)$, to a
0-lamination ${\mathcal L}$ of $N$, outside a locally finite
collection of Lipschitz curves $S({\mathcal L})\subset N$
transverse to ${\mathcal L}$, along which the
Gaussian curvatures of the $M_n$ blow up and the convergence
fails to be $C^{\a }$.
Then, $\lc$ is a % $C^{1,1}$-
$0$-foliation in a neighborhood of $S(\lc)$, and
$S(\mathcal{L})$ consists of $C^{1,1}$-curves orthogonal to the leaves
of $\mathcal{L}$.
\end{theorem}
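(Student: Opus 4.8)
The plan is to localize near a point $p_0\in S(\mathcal{L})$ and to identify the local model of the degeneration as a helicoid, from which both conclusions follow. First I would blow up the sequence $M_n$ on the scale of curvature at points $p_n\to p_0$ realizing the curvature concentration. Since the $M_n$ are locally simply connected and their curvatures blow up exactly along $S(\mathcal{L})$, the one-sided curvature estimate (Theorem~\ref{thmcurvestimCM}) together with the Limit Lamination Theorem (Theorem~\ref{thmlimitlaminCM}) forces the rescaled surfaces, in a fixed extrinsic ball, to consist of two multi-valued graphs spiraling together along a single axis. Applying the asymptotic characterization of the helicoid (Theorem~\ref{ttmr}) then identifies the blow-up limit as a vertical helicoid, whose axis is the blow-up of $S(\mathcal{L})$ and whose associated minimal foliation is by planes orthogonal to that axis.

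From this local model two structural facts follow. The separation between consecutive sheets of the helicoidal approximation tends to zero as one approaches the axis, so a fixed neighborhood of $p_0$ meets an unbounded number of sheets of $M_n$; passing to the limit, the leaves of $\mathcal{L}$ accumulate at every point of a neighborhood of $S(\mathcal{L})$, leaving no complementary gaps. Hence $\mathcal{L}$ is a genuine foliation near $S(\mathcal{L})$, and moreover each of its leaves there is a limit leaf. The tangent line to $S(\mathcal{L})$ at $p_0$ agrees with the helicoid axis of the blow-up, which is orthogonal to the limiting planes; this gives the orthogonality of $S(\mathcal{L})$ to the leaves of $\mathcal{L}$.

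By the Stable Limit Leaf Theorem (Theorem~\ref{thmstable}), every leaf of $\mathcal{L}$ near $S(\mathcal{L})$, being a limit leaf, is stable, and therefore enjoys uniform interior curvature estimates by Theorem~\ref{thmcurvestimstable} (and would be a plane if complete, by Theorem~\ref{thmstablecompleteplane}). Writing the leaves as graphs with uniformly bounded second fundamental form over a fixed background leaf in Fermi coordinates, one obtains that the unit normal field $\nu$ of the foliation $\mathcal{L}$ is Lipschitz on a neighborhood of $p_0$. Since $S(\mathcal{L})$ is everywhere orthogonal to the leaves, it is an integral curve of $\nu$, and integral curves of a Lipschitz field are $C^{1,1}$; this yields the desired regularity.

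The main obstacle will be the quantitative estimate needed to prove that $\nu$ is genuinely Lipschitz, equivalently that the axis direction of the local helicoidal model rotates at a bounded rate as the basepoint moves along $S(\mathcal{L})$. Here all the analytic input concentrates: one must compare the helicoidal approximations centered at two nearby points $q_1,q_2\in S(\mathcal{L})$ and show, using the uniform curvature estimates for the stable leaves and the scale invariance in Theorem~\ref{thmcurvestimCM}, that the angle between the two axis directions is at most $C\,d_N(q_1,q_2)$. Establishing this linear bound, rather than a merely sublinear modulus of continuity, and ruling out oscillation of the tangent direction of $S(\mathcal{L})$ at small scales, is the technical heart of the argument.
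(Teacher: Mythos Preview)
Your overall architecture---show that the unit normal $\nu$ of $\mathcal{L}$ is Lipschitz near $S(\mathcal{L})$, show that $S(\mathcal{L})$ is an integral curve of $\nu$, and conclude $C^{1,1}$---matches the paper's. However, you have misidentified where the work lies. In the paper, the Lipschitz regularity of $\nu$ is \emph{free}: once one knows $\mathcal{L}$ is a minimal foliation near $S(\mathcal{L})$, Solomon's regularity theorem for codimension-one minimal foliations gives immediately that $\nu$ is $C^{0,1}$, so its integral curves are $C^{1,1}$. Your route via the Stable Limit Leaf Theorem and Schoen-type curvature estimates is not wrong, but it is an unnecessary detour, and the ``main obstacle'' you describe in your last paragraph is not actually an obstacle.

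The genuine technical heart, which your proposal does not address, is proving that the a priori merely Lipschitz curve $S(\mathcal{L})$ \emph{is} an integral curve of $\nu$. Your sentence ``The tangent line to $S(\mathcal{L})$ at $p_0$ agrees with the helicoid axis of the blow-up'' presupposes that $S(\mathcal{L})$ has a tangent line at $p_0$; a Lipschitz curve need not, and the blow-up argument alone only tells you about \emph{one} sequence of scales. The paper handles this by an intrinsic construction: one chooses pairs of points of almost-maximal curvature on $M_n$ and joins them by \emph{minimizing geodesics} $\gamma_n\subset M_n$; using the uniqueness of the helicoid to control the local picture, one proves that these $\gamma_n$ converge $C^1$ to the integral curve of $\nu$ through the origin, and that this limit coincides with $S(\mathcal{L})$. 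This geodesic argument is what upgrades $S(\mathcal{L})$ from Lipschitz to $C^1$ with tangent field $\nu$, and it is absent from your outline.
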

Next we give an idea of the proof of Theorem~\ref{thmregular}. First note that the
nature of this theorem is local, hence it suffices to consider the case of a sequence of
proper 0-disks $M_n$ in the unit ball $\B (1)=\B (\vec{0},1)$ of $\R^3$
(the general case follows from adapting the arguments to a Riemannian 3-manifold).
After passing to a subsequence, one can also assume that the surfaces $M_n$ converge to a
$C^{0,1}$-minimal foliation\footnote{Any codimension-one minimal foliation is of
class $C^{0,1}$ and its unit normal vector field is $C^{0,1}$ as well, see Solomon~\cite{sol1}.}
${\mathcal L}$ of $\B (1)$ and the convergence
is $C^{\a }$, $\a \in (0,1)$, outside of a transverse Lipschitz curve $S({\mathcal L})$ that
passes through the origin. Since unit normal vector
field $N_{\mathcal L}$ to ${\mathcal L}$ is Lipschitz
(Solomon~\cite{sol1}), then the integral curves of
$N_{\mathcal L}$ are of class $C^{1,1}$. Then, the
proof consists of demonstrating that $S({\mathcal L})$
is the integral curve of $N_{\mathcal L}$ passing through the origin. To do this,
one first proves that $S({\mathcal L})$ is of class $C^1$, hence it admits a
continuous tangent field $T$, and then one shows that $T$ is orthogonal
to the leaves of ${\mathcal L}$. These properties
rely on a local analysis of the singular set $S({\mathcal L})$ as a limit of
minimizing geodesics $\gamma _n$ in $M_n$
that join pairs of appropriately chosen {\it points of almost maximal curvature,}
(in a sense similar
to the points $p_n$ in Theorem~\ref{thm3introd} below),
together with the fact that the minimizing
geodesics $\g _n$ converge $C^1$ as $n\to \infty $ to the integral
curve of $N_{\mathcal L}$ passing through the origin.
Crucial in this proof is the uniqueness of the helicoid
(Theorem~\ref{ttmr}), since it gives the local picture of the 0-disks $M_n$ near the origin as
being closely approximated by portions of a highly sheeted helicoid near its axis.

\begin{remark}
\label{rem10.3.2}
{\rm
The local structure of the surfaces $M_n$ for $n$ large near a point in $S({\mathcal L})$
as an approximation by a highly sheeted helicoid is the reason for the parenthetical
comment of the columns being helicoidal for the surfaces limiting to a parking garage
structure, see the paragraph just after Theorem~\ref{thmstable}.
}
\end{remark}
Meeks and Weber~\cite{mwe1} have shown that the
$C^{1,1}$-regularity of $S(\lc)$ given by
Theorem~\ref{thmregular} is the best possible result. They do
this by proving the following theorem, which is based on the
{\it bent helicoids} described in Section~\ref{subsecexamples}, also
see Figure~\ref{figmobius-benthel} Right.

\begin{theorem}  [Meeks, Weber \cite{mwe1}]
\label{MeeksWeber}
Let $\Gamma$ be a properly embedded $C^{1,1}$-curve
in an open set $U$ of $\rth$. Then, $\G $ is the singular set of convergence for some
Colding-Minicozzi type limit foliation of some neighborhood of $\G$  in $U$.
\end{theorem}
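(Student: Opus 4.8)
The plan is to realize $\G$ as the ``bent axis'' of a sequence of minimal bent helicoids constructed via the Björling formula~\cite{ni2}, directly generalizing the bent helicoids $\widetilde{A}_n$ around the circle $\esf^1$ described in Section~\ref{subsecexamples}, whose limit is the foliation of $\rth$ by vertical half-planes with $\esf^1$ as singular set of convergence. Since the conclusion only concerns a neighborhood of $\G$ and $\G$ is properly embedded, I would first pass to a tubular neighborhood and parametrize $\G$ by arc length, $c\colon I\to U$; because $\G$ is $C^{1,1}$, its unit tangent $T=c'$ is Lipschitz, and one can choose a Lipschitz orthonormal frame $\{T(s),e_1(s),e_2(s)\}$ with $e_1,e_2$ spanning the normal plane to $\G$ along $c$.

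After replacing the Lipschitz data $(c,e_1,e_2)$ by real-analytic approximations (justified in the next paragraph), the approximating surfaces are produced by feeding into the Björling formula the curve $c$ together with the rapidly spinning unit normal field
\[
N_n(s)=\cos(ns)\,e_1(s)+\sin(ns)\,e_2(s),
\]
which rotates $n$ times per unit length inside the normal plane to $\G$. The resulting minimal surface $M_n$ contains $\G$ and has $N_n$ as its unit normal along $\G$; near each point it is modeled on a helicoid whose straight axis has been bent to follow $\G$ (for $\G=\esf^1$ one recovers the $\widetilde{A}_n$). I would then extract the limit: as $n\to\infty$ the $M_n$ spin faster and faster and converge, in $C^{\a}$ for every $\a\in(0,1)$ and uniformly on compact subsets away from $\G$, to a minimal foliation $\lc$ whose leaves are orthogonal to $T$, i.e.\ orthogonal to $\G$ --- exactly the local picture forced by Theorem~\ref{thmregular}. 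Simultaneously $|A_{M_n}|$ blows up along $\G$, and since each $M_n$ winds $n$ times around $\G$ the sequence is locally simply connected, so that the hypotheses defining a Colding--Minicozzi type limit foliation are met with $S(\lc)=\G$.

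The principal difficulty is the mismatch between the real-analytic data required by the Björling formula and the mere $C^{1,1}$ regularity of $\G$. I would resolve this by approximating $(c,e_1,e_2)$ by real-analytic data in the $C^{1,1}$ norm and coupling the quality of the approximation to the spinning frequency $n$, invoking the continuous dependence of Björling solutions on their data to guarantee that the $M_n$ are defined on a fixed neighborhood of $\G$ and that the curvature blow-up of the limit occurs precisely along $\G$. The second delicate point is \emph{embeddedness} of the highly-spinning $M_n$ near $\G$: I would establish it locally by writing $M_n$, in normal (Fermi) coordinates around each point of $\G$, as a controlled perturbation of a standard helicoid in the normal plane on the scale $1/n$, whose embeddedness is explicit, and then patch the local embeddings along $\G$ using that the curvature of $\G$ is bounded because $\G$ is $C^{1,1}$. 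This $C^{1,1}$ bound is exactly what makes the patching possible, which is why the regularity hypothesis is sharp, consistently with Theorem~\ref{thmregular}, whereby any singular set of convergence is necessarily a $C^{1,1}$ curve.

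Finally I would verify that the singular set of convergence is exactly $\G$ and no larger: away from $\G$ the surfaces $M_n$ are, after the local helicoidal normalization, uniformly graphical over the leaves of $\lc$, so the convergence there is smooth (and hence the leaves of $\lc$, as limits of minimal graphs, are minimal), and $\G$ is the entire locus where $C^{\a}$-convergence fails. This exhibits $\lc$ as a Colding--Minicozzi type limit foliation of a neighborhood of $\G$ with $S(\lc)=\G$, completing the proof.
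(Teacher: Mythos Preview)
The paper does not actually provide a proof of this theorem; it merely states the result with a citation to Meeks and Weber~\cite{mwe1} and indicates that the proof ``is based on the {\it bent helicoids} described in Section~\ref{subsecexamples}'', which in turn are produced via ``the classical {\it Bj\"orling formula}\ldots with an orthogonal unit field along $\esf^1$ that spins an arbitrary number $n$ of times around the circle.'' Your proposal follows precisely this indicated strategy --- Bj\"orling data along $\G$ with a rapidly spinning normal field, real-analytic approximation of the $C^{1,1}$ curve, and extraction of the limit foliation with singular set $\G$ --- so it is consistent with the approach the paper attributes to Meeks and Weber.
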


\section{Local pictures, local removable singularities and dynamics.}
\label{LRST}

An important application of the Local Removable Singularity Theorem~\ref{tt2} below
is a characterization of all complete 0-surfaces in $\R^3$ of
quadratic decay of Gaussian curvature (Theorem~\ref{thm1introd} below).

Given a 3-manifold $N$ and a point $p\in N$, we will denote by
$d_N$ the distance function in $N$ to $p$ and $B_N(p,r)$ the metric
ball of center $p$ and radius $r>0$. For a lamination  $\lc$ of $N$,
we will denote by $|A_{{\mathcal L}}|$ the norm of the second fundamental form
function on the leaves of ${\mathcal L}$.
Meeks, P\'erez and Ros~\cite{mpr21} obtained the following remarkable local
removable singularity result in any Riemannian 3-manifold $N$
for certain possibly singular weak $H$-laminations.

\begin{theorem} [Local Removable Singularity Theorem]
\label{tt2}
A weak $H$-lamination $\lc$ of a punctured ball $B _N(p,r)-\{ p\} $
in a Riemannian 3-manifold $N$ extends to a weak $H$-lamination
of $B_N(p,r)$ if and only if there exists a positive constant $c$
such that $|A_{{\mathcal L}}|d_N <c$ in some subball.
\end{theorem}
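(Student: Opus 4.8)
The forward implication is immediate from the structure of a weak $H$-lamination: if $\lc$ extends to a weak $H$-lamination $\overline{\lc}$ of $B_N(p,r)$, then property~3 of Definition~\ref{definition} bounds $|A_{\overline{\lc}}|$ on a compact neighborhood of $p$, so $|A_\lc|\le M$ there and $|A_\lc|\,d_N\le M\,d_N<c$ on a small enough subball. The whole content therefore lies in the converse, and since any extension already forces $|A_\lc|$ to be \emph{bounded} near $p$, the converse is equivalent to two assertions: (i) the scale-invariant bound $|A_\lc|\,d_N<c$ upgrades to an honest local bound $\sup|A_\lc|<\infty$ near $p$; and (ii) once curvature is bounded, the closure $\overline{\lc}$ is a weak $H$-lamination of the full ball. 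The plan is to prove (i) by a blow-up contradiction and (ii) by a lamination-closure argument in the spirit of Theorem~\ref{thmmlct}.

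Part (ii) is the routine step. Assuming $\sup|A_\lc|\le M$ on $B_N(p,r')-\{p\}$, the leaves near $p$ have uniformly bounded second fundamental form, so the uniform graph lemma together with Arzel\`a--Ascoli gives the closed set $\overline{\lc}$ a local product structure; the interior maximum principle (Theorem~\ref{thmintmaxprin}) then guarantees that any two sheets meeting at $p$ either coincide or touch tangentially from one side with opposite mean curvature vectors, which is exactly the weak $H$-lamination condition of Definition~\ref{definition}.

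The heart of the matter is (i). Suppose, for contradiction, that $|A_\lc|$ is unbounded near $p$. Using the scale invariance of $\phi:=|A_\lc|\,d_N$, I would point-pick a sequence $x_n\to p$ with $|A_\lc|(x_n)\to\infty$ and $\phi(x_n)\to c_0:=\sup\phi\in(0,c]$, and rescale the metric by $\lambda_n^2$ with $\lambda_n=|A_\lc|(x_n)$. Under blow-up at $p$ the rescaled metrics converge in $C^2$ on compact subsets of $\R^3-\{\vec 0\}$ to the flat metric, the rescaled mean curvatures $H/\lambda_n\to 0$, and the bound $\phi<c$ persists. Since $\phi<c$ gives uniformly bounded curvature on compacta away from the origin, the structure theory for limits of CMC laminations with locally bounded $|A|$ (the very property encoded in Definition~\ref{definition}) yields a subsequential limit that is a \emph{minimal} lamination $\lc_\infty$ of $\R^3-\{\vec 0\}$ with $|A_{\lc_\infty}|\,\|x\|\le c_0$, possessing a non-flat leaf through a point $y_\infty$ of norm $c_0$ at which $\phi$ attains its maximum $c_0$.

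It then remains to contradict the existence of such an extremal minimal lamination. First, any limit leaf of $\lc_\infty$ (Definition~\ref{def-limset}) is stable by the Stable Limit Leaf Theorem~\ref{thmstable}, hence a plane by Theorem~\ref{thmstablecompleteplane}. Next, each leaf $L$ has quadratic curvature decay $|A_L|\le c_0/\|x\|$, so the monotonicity formula bounds $\int_{\{\rho\le\|x\|\le 2\rho\}}|A_L|^2$ independently of $\rho$; summing over dyadic annuli gives $L$ finite total curvature near both $\vec 0$ and $\infty$, so its tangent cones there are planes and the isolated singularity at $\vec 0$ is removable. Finally, the fact that $\phi$ attains a positive interior maximum at $y_\infty$ should let one blow $\lc_\infty$ both up and down at $\vec 0$ to the \emph{same} homothety-invariant limit, and a scaling-invariant minimal lamination of $\R^3-\{\vec 0\}$ is a union of minimal cones, i.e.\ planes through the origin, which are flat---contradicting $|A_{\lc_\infty}|(y_\infty)=1$. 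I expect this last self-similarity step to be the main obstacle: rigorously producing a scaling-invariant extremal blow-up limit and ruling out any non-flat minimal lamination that saturates the bound (the strong half-space theorem~\ref{thmhalf} and the planarity of stable limit leaves are the natural tools here). A secondary difficulty, handled via Proposition~\ref{prop10.2}, is carrying out both the closure argument of (ii) and the blow-up of (i) for \emph{weak} laminations whose leaves may intersect tangentially, by passing to the mean convex side where an honest local lamination structure persists.
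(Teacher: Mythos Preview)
The paper does not contain a proof of Theorem~\ref{tt2}. This is a survey, and the Local Removable Singularity Theorem is stated without proof and attributed to Meeks, P\'erez and Ros~\cite{mpr21}; there is therefore no ``paper's own proof'' to compare your proposal against. What the survey \emph{does} provide is the statement of the Quadratic Curvature Decay Theorem~\ref{thm1introd}, which is the main engine behind the actual proof in~\cite{mpr21}, together with the Stable Limit Leaf Theorem~\ref{thmstable} and Theorem~\ref{thmstablecompleteplane}, and you do identify these as the relevant tools.

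Your overall architecture---trivial forward direction, then (i) upgrade $|A_\lc|\,d_N<c$ to $\sup|A_\lc|<\infty$ by blow-up, then (ii) closure argument---is the right one and matches the strategy in~\cite{mpr21}. The blow-up to a minimal lamination of $\R^3-\{\vec 0\}$ with quadratic curvature decay is also correct. Where your sketch has genuine gaps is in the final contradiction step. First, your finite-total-curvature argument for a leaf $L$ of $\lc_\infty$ invokes monotonicity to bound $\int_L|A_L|^2$ on dyadic annuli, but monotonicity gives area growth only for \emph{proper} minimal surfaces, and a leaf of a lamination of $\R^3-\{\vec 0\}$ need not be proper in $\R^3$ nor complete there; the actual argument in~\cite{mpr21} has to treat limit leaves and non-limit leaves separately and use the one-sided lamination structure of Proposition~\ref{prop10.2} more carefully. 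Second, and more seriously, your ``self-similarity'' step---blowing up and down at $\vec 0$ to the same scaling-invariant lamination---is not justified: the maximum of $\phi$ being attained at $y_\infty$ does not by itself force homothety invariance of any further limit, and your conclusion that a scaling-invariant minimal lamination of $\R^3-\{\vec 0\}$ consists of planes through the origin is false as stated (the foliation by horizontal planes is scaling-invariant). The correct endgame, as encoded in Theorem~\ref{thm1introd}, is that each non-flat leaf with quadratic decay extends across $\vec 0$ with finite total curvature, and then one derives a contradiction from the structure of such surfaces (catenoid-type ends are incompatible with the lamination containing the origin in its closure in the required way). You have identified this as your main obstacle, which is accurate.
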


\begin{remark}
{\rm
There is a natural setting where the curvature estimate
$|A_{{\mathcal L}}|d_N <c$ described in the above theorem holds;
namely, if the weak $H$-lamination  $\lc$ is a
sublamination of a CMC foliation of  a punctured ball $B _N(p,r)-\{ p\} $.
This will be discussed further in Section~\ref{sec:CMC}.
}
\end{remark}

Since  stable immersed $H$-surfaces have local curvature estimates which
satisfy the hypothesis of Theorem~\ref{tt2} and every
limit leaf of a $H$-lamination is stable (Theorem~\ref{thmstable}),
we obtain the
next extension result for the sublamination of limit leaves of any
$H$-lamination in a punctured Riemannian 3-manifold.

\begin{corollary}
\label{corrs}
Suppose that $N$ is a Riemannian 3-manifold, which is  not
necessarily complete. If $W\subset N$ is a closed countable subset\,\footnote{An
argument based on the classical
Baire's theorem allows us to pass from the isolated
singularity setting of Theorem~\ref{tt2} to a closed countable
set of singularities, see~\cite{mpr10}.}
and ${\mathcal L}$ is a weak $H$-lamination of $N-W$, then:

\begin{enumerate}[\rm 1.]
\item  The sublamination of ${\mathcal L}$ consisting of the closure
of any collection of its stable leaves extends across $W$ to a
weak $H$-lamination ${\mathcal L}_1$ of $N$. Furthermore, each leaf of
${\mathcal L}_1$ is stable.
\item   The sublamination of ${\mathcal L}$ consisting of its limit
leaves extends across $W$ to a weak $H$-lamination of $N$.
\item If ${\mathcal L}$ is a minimal foliation of $N-W$, then ${\mathcal L}$
extends across $W$ to a minimal foliation of~$N$ (this result will be generalized
in Section~\ref{sec:CMC} to the case of a CMC foliation of $N-W$, provided
that there is a bound on the mean curvature of the leaves of the foliation).
\end{enumerate}
\end{corollary}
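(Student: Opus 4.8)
The plan is to deduce all three statements from the Local Removable Singularity Theorem (Theorem~\ref{tt2}), whose hypothesis is a bound of the form $|A_{\mathcal L}|\,d_N<c$ near each puncture, where $d_N(\cdot)$ denotes distance to the singular point. The key structural input is that the relevant sublaminations consist entirely of \emph{stable} leaves, and stable $H$-surfaces obey scale-invariant curvature estimates. For statement~1, the Stable Limit Leaf Theorem (Theorem~\ref{thmstable}) tells me that the stable leaves of $\mathcal L$ already form a sublamination, so the closure $\mathcal L'$ of any collection of them is a weak $H$-lamination of $N-W$ all of whose leaves are stable (the closure only adjoins limit leaves, which are stable). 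I would then fix $p\in W$ and a geodesic ball $B_N(p,r)$ small enough to have bounded geometry, so that the curvature estimates of Theorems~\ref{thmcurvestimstable} and~\ref{thmcurvestimstable1} apply there (passing to the two-sided cover on any one-sided leaf). For $q$ on a leaf of $\mathcal L'$ with $\rho:=d_N(q,p)$ small, the ball $B_N(q,\rho/2)$ avoids $p$; applying the stable-surface estimate at scale $\rho$ (equivalently, to the $\tfrac1\rho$-rescaled leaf, for which the puncture recedes to distance $\approx1$) yields $|A_{\mathcal L'}|(q)\,\rho\le c$. This is precisely the estimate $|A_{\mathcal L'}|\,d_N<c$ demanded by Theorem~\ref{tt2}.

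To promote this from an isolated puncture to the countable closed set $W$, I would run the Baire-category induction indicated in the footnote and in~\cite{mpr10}. Let $W_1\subset W$ be the (countable, relatively closed) set of points across which $\mathcal L'$ fails to extend. If $W_1\neq\varnothing$ then, working locally, it contains an isolated point $p$, since a nonempty countable closed set cannot be perfect in a complete metric space. Choosing $B_N(p,r)$ meeting $W_1$ only at $p$, one first extends $\mathcal L'$ across the (extendable) points of $W\cap B_N(p,r)\setminus\{p\}$, and then the uniform estimate above together with Theorem~\ref{tt2} extends it across $p$ as well, contradicting $p\in W_1$. Hence $W_1=\varnothing$ and $\mathcal L'$ extends to a weak $H$-lamination $\mathcal L_1$ of $N$. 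Finally, every leaf of $\mathcal L_1$ is stable: a leaf that merely acquires a removable puncture retains its positive Jacobi function, while any genuinely new leaf through a point of $W$ is a limit leaf of $\mathcal L_1$ and hence stable by Theorem~\ref{thmstable}.

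Statement~2 is then immediate, because by Theorem~\ref{thmstable} the limit leaves of $\mathcal L$ form a sublamination consisting entirely of stable leaves, so statement~1 applies to it verbatim. For statement~3, I would first note that every leaf of a minimal foliation is stable: the normal component $u$ of the infinitesimal flow carrying one leaf to the next satisfies $Lu=-2H'(0)=0$ (all nearby leaves being minimal), so $u$ is a Jacobi function, and fixing the transverse orientation makes $u>0$; by Fischer-Colbrie's criterion recalled in Definition~\ref{defstable}, each leaf is therefore stable. Statement~1 (with $H=0$) thus extends $\mathcal L$ to a minimal lamination $\mathcal L_1$ of $N$. Since $\mathcal L_1$ is a closed subset of $N$ containing the point set $N-W$, which is dense because the countable set $W$ is nowhere dense, one gets $\mathcal L_1=N$; that is, $\mathcal L_1$ is a foliation.

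The step I expect to be the main obstacle is establishing the curvature estimate near the punctures uniformly over all leaves and at every scale $\rho\to0$: the stable-surface estimates are phrased in terms of intrinsic distance to the boundary of the surface, and converting them into the bound $|A_{\mathcal L'}|\,d_N<c$ relative to the \emph{ambient} singular point requires the rescaling argument to be controlled independently of the leaf, using that leaves of a lamination are complete in the open set $N-W$. Once this uniform estimate is in hand, the Baire reduction is the main organizational point and Theorem~\ref{tt2} finishes the argument.
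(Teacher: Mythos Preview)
Your proposal is correct and follows essentially the same approach as the paper, which does not give a formal proof but only the one-sentence justification preceding the corollary: stable $H$-surfaces satisfy the scale-invariant curvature estimate $|A_{\mathcal L}|\,d_N\le c$ required by Theorem~\ref{tt2}, and limit leaves are stable by Theorem~\ref{thmstable}, with the passage from an isolated puncture to a closed countable set handled by the Baire argument in the footnote. Your expansion of each of the three items, including the observation that leaves of a minimal foliation carry a positive Jacobi function and the density argument showing the extended lamination is all of $N$, is exactly what the paper intends.
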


Recall that Corollary~\ref{cor.max} ensured that every complete
$H$-surface in~$\R^3$ with bounded Gaussian curvature is properly embedded.
The next theorem by  Meeks, P\'erez and
Ros~\cite{mpr20} shows that any complete $H$-surface in $\R^3$
which is not properly embedded, has natural limits under dilations,
which are properly embedded 0-surfaces.
By {\it dilation,} we mean the composition of a homothety
and a translation.

\begin{theorem}[Local Picture on the Scale of Curvature]\label{thm3introd}

\noindent Suppose $M$ is a complete $H$-surface with unbounded second fundamental
form in a homogeneously
regular 3-manifold $N$. Then, there exists a sequence of points
$p_n\in M$ and positive numbers $\ve _n\to 0$, such that the
following statements hold.
\begin{enumerate}[\rm 1.]
\item  For all $n$, the component $M_n$ of $B _N(p_n,\ve _n)\cap M$ that
contains $p_n$ is compact, with boundary $\partial M_n\subset
\partial B _N(p_n,\ve _n)$.
\item Let $\lambda_{n} = \sqrt{|A_{M_{n}} |(p_{n})}$ (where as
usual, $|A_{\widehat{M}}|$ stands for the norm of the second
fundamental form of a surface $\widehat{M}$).
Then,
$\frac{|A_{M_n}|}{\l _n}\leq 1+\frac{1}{n}$ on $M_n$, with
$\lim_{n\to \infty }\ve_n\l_n=\infty $.
\item The rescaling of the metric balls $B _N(p_n,\ve _n)$ by factor $\l _n$
   converge uniformly to $\R^3$ with its usual metric
(so that we identify $p_n$ with $\vec{0}$ for all $n$), and there exists
a
properly embedded $0$-surface
%, possibly disconnected, strongly Alexandrov embedded $H$-surface
$M_{\infty}$ in $\R^3$ with
$\vec{0}\in M_{\infty }$,
$|A_{M_{\infty}}|\leq 1$ on $M_{\infty}$ and $|A_{M_{\infty}}|(\vec{0})=1$,
such that for any $k \in \N$, the surfaces $\l _nM_n$ converge $C^k$
on compact subsets of $\rth$  to $M_{\infty}$  with
multiplicity one
as $n \rightarrow \infty$.
\end{enumerate}
\end{theorem}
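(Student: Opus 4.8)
The plan is to prove Theorem~\ref{thm3introd} by a blow-up argument: select base points of almost maximal curvature, rescale $M$ so that its second fundamental form is normalized to have norm $1$ at the base point, and extract a subsequential limit $M_\infty$. The key structural point is that although $M$ has constant mean curvature $H$, rescaling by a large factor $\lambda_n$ multiplies the mean curvature by $1/\lambda_n\to 0$; hence every blow-up limit is automatically a $0$-surface, which is why the local picture is minimal.

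First I would perform the point selection \emph{intrinsically}, so as to use completeness rather than any a priori properness of $M$. Since $\sup_M|A_M|=\infty$, pick $q_k\in M$ with $|A_M|(q_k)\to\infty$. As $M$ is complete, $\overline{B}_M(q_k,1)$ is compact (Hopf--Rinow), so the continuous nonnegative function $x\mapsto|A_M|(x)\,(1-d_M(x,q_k))$, which vanishes on $\partial B_M(q_k,1)$, attains its maximum at an interior point $p_k$. With $\sigma_k=1-d_M(p_k,q_k)>0$ and $\lambda_k=|A_M|(p_k)$, maximality gives $\lambda_k\sigma_k\ge|A_M|(q_k)\to\infty$, and the triangle inequality yields $|A_M|\le\lambda_k/(1-t)$ on $B_M(p_k,t\sigma_k)$ for $0<t<1$. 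Taking $t=t_n=1/(n+1)$ and a subsequence $k(n)$ with $\lambda_{k(n)}\sigma_{k(n)}\ge n(n+1)$, after relabelling I obtain points $p_n$, scales $\lambda_n=|A_M|(p_n)\to\infty$ and radii $\rho_n=t_n\sigma_{k(n)}$ with $|A_M|\le(1+\tfrac1n)\lambda_n$ on $B_M(p_n,\rho_n)$ and $\lambda_n\rho_n\to\infty$. This produces the curvature normalization of item~2.

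Next I would rescale: view the embeddings of $B_M(p_n,\rho_n)$, with metric scaled by $\lambda_n$, inside $N$ with its metric scaled by $\lambda_n$, based at $p_n\mapsto\vec 0$. By homogeneous regularity the rescaled ambient balls converge smoothly to flat $\R^3$; the rescaled surfaces have $|A|\le1+\tfrac1n$ on intrinsic balls of radius $\lambda_n\rho_n\to\infty$ and satisfy the CMC equation with mean curvature $H/\lambda_n\to0$. The uniform curvature bound represents each surface locally as a graph of bounded gradient over a disk of uniform size, and elliptic (Schauder) estimates for this equation give uniform $C^k$ bounds; Arzel\`a--Ascoli with a diagonal argument then yields a subsequence converging in $C^k$ on compact sets to a complete immersed $0$-surface $M_\infty\subset\R^3$ with $\vec 0\in M_\infty$, $|A_{M_\infty}|\le1$ and $|A_{M_\infty}|(\vec 0)=1$, the domains exhausting $\R^3$ since $\lambda_n\rho_n\to\infty$.

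The hard part will be upgrading $M_\infty$ to a \emph{properly embedded} surface and transferring the picture to the \emph{extrinsic} balls demanded by items~1 and~3. For embeddedness, the $M_n$ are embedded, so the $C^1$ limit has no transverse self-intersection, a tangential self-touching is excluded by the interior maximum principle (Theorem~\ref{thmintmaxprin}), and $|A_{M_\infty}|(\vec 0)=1$ forces multiplicity-one convergence near $\vec 0$, which then propagates. For properness, $M_\infty$ is a complete embedded minimal surface of bounded curvature: short conjugate points are ruled out by the curvature bound and short geodesic loops bounding disks by Gauss--Bonnet together with $K\le0$, so $M_\infty$ has positive injectivity radius and Corollary~\ref{corolMLCTR3} makes it properly embedded; should collapsing along homotopically nontrivial loops remain a concern, I would argue as in the proof of Corollary~\ref{corolMLCTR3}, producing a limit leaf that is stable by the Stable Limit Leaf Theorem~\ref{thmstable}, hence a plane by Theorem~\ref{thmstablecompleteplane}, contradicting the normalization $|A_{M_\infty}|(\vec 0)=1$. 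Finally, Corollary~\ref{cor.max} gives $M_\infty$ a regular tubular neighborhood of definite size; choosing $R_n\to\infty$ with $R_n=o(\lambda_n)$ to be regular values of the distance-to-$\vec 0$ function on $M_\infty$ and setting $\varepsilon_n=R_n/\lambda_n\to0$, the $C^k$-proximity of $\lambda_n M_n$ to $M_\infty$ on $\B(\vec 0,R_n)$ shows that the component of $M\cap B_N(p_n,\varepsilon_n)$ through $p_n$ is compact with $\partial M_n\subset\partial B_N(p_n,\varepsilon_n)$ (item~1), that $\varepsilon_n\lambda_n=R_n\to\infty$ (item~2), and that the rescaled balls and surfaces converge with multiplicity one as in item~3. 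I expect this last reconciliation, and in particular the properness of $M_\infty$, to be the principal obstacle, since it is precisely where the deep cited machinery is indispensable.
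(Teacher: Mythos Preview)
The paper is a survey and does not give a proof of this theorem; it only states it and refers to Theorem~1.1 in~\cite{mpr20} (and the proofs of Theorem~15 in~\cite{mpe1} and Corollary~2.2 in~\cite{me29}) for details. Your blow-up argument is exactly the standard one alluded to in Section~\ref{subsecregularitysingular} and is essentially correct: the intrinsic point-picking to produce almost-maximal curvature points, the rescaling that sends the mean curvature $H/\lambda_n\to 0$, the Arzel\`a--Ascoli/Schauder compactness, and the final passage from intrinsic to extrinsic balls via the regular neighborhood of the properly embedded limit are all the right moves. You also silently corrected the evident typo $\lambda_n=\sqrt{|A_{M_n}|(p_n)}$ to $\lambda_n=|A_{M_n}|(p_n)$, which is what items~2 and~3 require.

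One place your write-up can be tightened is the properness of $M_\infty$. The detour through positive injectivity radius and Corollary~\ref{corolMLCTR3} is unnecessary (and your Gauss--Bonnet step only rules out short \emph{contractible} loops, so you are right that the non-simply-connected case needs more). The cleaner route, which the paper itself uses just before Corollary~\ref{corolMLCTR3}, is: $M_\infty$ is a complete embedded minimal surface in $\R^3$ with bounded second fundamental form, so by the Meeks--Rosenberg result cited there its closure is a minimal lamination; any limit leaf is stable by Theorem~\ref{thmstable}, hence a plane by Theorem~\ref{thmstablecompleteplane}; but a non-flat complete minimal surface of bounded curvature cannot accumulate on a plane (Lemma~1.3 of~\cite{mr8}, exactly as quoted in the paper), contradicting $|A_{M_\infty}|(\vec 0)=1$. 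This is essentially your ``fallback'' argument, and it should be the primary one. The same stable-limit-leaf reasoning also gives multiplicity one in a single stroke: multiplicity $\ge 2$ would make $M_\infty$ a limit leaf of the sequence-lamination, hence stable, hence flat, again contradicting $|A_{M_\infty}|(\vec 0)=1$.
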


The above theorem gives a {\it local picture} or description of
the local geometry of an $H$-surface $M$ in an extrinsic
neighborhood of a point $p_n\in M$ of concentrated curvature.
The points $p_n\in M$ appearing in Theorem~\ref{thm3introd}
are called {\it blow-up points on the scale of curvature.}

Now assume that for any positive $\ve $, the intrinsic $\ve $-balls
of such an $H$-surface $M$ are not always disks. Then, the
curvature of $M$ certainly blows up at some points in these
non-simply connected intrinsic $\ve$-balls as $\ve \to 0$. Thus, one
could blow up $M$ on the scale of curvature as in
Theorem~\ref{thm3introd} but this process would create a simply
connected
{
non-flat}
limit, hence a helicoid. Now imagine that we want to avoid
this helicoidal blow-up limit, and note that the injectivity radius
of $M$ is zero, i.e., there exists an intrinsically divergent
sequence of points $p_n \in M$ where the injectivity radius function
of $M$ tends to zero;
%\textcolor{red}{\marginpar{\footnotesize \textcolor{red}{not correct}}
%\textst{such points are called points of almost-minimal
%injectivity radius.}}
If we
choose these points $p_n$ carefully and blow up $M$ around the $p_n$
in a similar way as we did in the above theorem, but exchanging the
former ratio of rescaling (which was  the norm
of the second fundamental form  at $p_n$) by the inverse of the injectivity
radius at these points, then we will obtain a
sequence of $H_n$-surfaces, $H_n\to 0$,  which are
non-simply connected in a fixed ball of space
(after identifying again $p_n$ with $\vec{0}\in \R^n$), and it is
natural to ask about possible limits of such a blow-up sequence.
This is the purpose of the next result.

For a
complete Riemannian manifold $M$, we will let $I_M\colon M\to (0,\infty ]$ be
the injectivity radius function of $M$, and given a subdomain $\Omega \subset
M$, $I_{\Omega }=(I_M)|_{\Omega }$ will stand for the restriction of $I_M$ to $\Omega $.
Recall that
the infimum of $I_M$ is called the {\it injectivity radius} of $M$.

The next theorem  by Meeks, P\' erez and Ros appears in~\cite{mpr14}.

\begin{theorem}[Local Picture on the Scale of Topology]
\label{tthm3introd}
There exists a smooth decreasing function $\de\colon (0,\infty) \to (0,\infty)$
with $\lim_{r\to \infty} r\de(r)=\infty$
such that the following statements hold.
Suppose $M$ is a complete
0-surface with injectivity radius zero in a homogeneously
regular
 3-manifold~$N$. Then, there exists a sequence
of points $p_n\in M$ (called ``points of almost-minimal injectivity radius'')
and positive numbers $\ve _n= n\, I_{M}(p_n)\to 0$ such that:
%the following statements hold.
\begin{description}
\item[{\it 1.}]  For all $n$, the closure $M_n$ of the component
of $M\cap B_N(p_n,\ve _n)$  that contains $p_n$ is  a compact surface with boundary in
$\partial B_N(p_n,\ve _n)$. Furthermore, $M_n$ is contained in the intrinsic open ball
$B_M(p_n,\frac{r_n}{2}I_M(p_n))$, where
$r_n>0$ satisfies $r_n\de (r_n)=n$.
\item[{\it 2.}] Let $\l _n=1/I_{M}(p_n)$.
Then, %$ \ve_n\l_n=\infty$ %$\lim_{n\to \infty } \, \ve_n\l_n=\infty$
$\l _nI_{M_n}\geq 1-\frac{1}{n}$ on $M_n$.
\item[{\it 3.}]The rescaling of the metric balls $B_N(p_n,\ve_n)$ by factor $\l_n$
converge uniformly as $n\to \infty $ to $\rth$ with
its usual metric (as in Theorem~\ref{thm3introd}, we identify $p_n$ with
$\vec{0}$ for all $n$).
\end{description}
Furthermore, exactly one of the following three possibilities occurs.
\begin{description}
\item[{\it 4.}] The surfaces $\l_nM_n$ have
uniformly bounded Gaussian curvature
on compact subsets\footnote{As $M_n\subset B_N(p_n,\ve _n)$, the convergence
$\{ \l _nB_N(p_n,\ve _n)\} _n\to \R^3$ explained in item~{\it 3} allows us to view
the rescaled surface $\l _nM_n$ as a subset of $\R^3$. The uniformly bounded
property for the Gaussian curvature of the induced metric on $M_n\subset N$ rescaled by
$\l _n$ on compact subsets of $\R^3$ now makes sense.} of
$\rth$ and there exists a connected, properly
embedded  $0$-surface
$M_{\infty}\subset \R^3$
with $\vec{0}\in M_{\infty }$, $I_{M_{\infty}}\geq 1$
and $I_{M_{\infty}}(\vec{0})=1$,
 such that for any
$k \in \N$, the surfaces $\l _nM_n$ converge $C^k$
on compact subsets of $\rth$  to $M_{\infty}$ with
multiplicity one as $n \to \infty$.
\item[{\it 5.}] After a rotation in $\rth$, the surfaces $\l_nM_n$ converge to
a minimal parking garage structure on $\rth$,
consisting of a foliation $\cL$ of $\R^3$ by
horizontal planes, with columns forming a locally finite
set $S(\cL)$ of vertical straight lines (the set $S(\cL)$ is the singular set of
convergence
of $\l _nM_n$ to $\cL$), and:
\begin{enumerate}[(5.1)]
\item $S(\cL)$ contains a line $l_1$  which
passes through the closed ball of radius 1 centered
at the origin, and another line $l_2$ at
distance one from $l_1$, and all of the lines in $S(\cL)$
have distance at least one from each other.
\item There exist oriented, homotopically non-trivial
simple closed curves  $\g_n\subset\l_nM_n$
with lengths converging to~$2$, which converge
to the line segment $\g$ that joins
$(l_1\cup l_2)\cap \{x_3=0\}$ and such that the
integrals of the unit conormal vector
of $\l_nM_n$ along $\g_n$ in the induced exponential
$\rth$-coordinates of $\l _nB_N(p_n,\ve _n)$
converge to a horizontal vector of length $2$ orthogonal to $\g$.
\item If there exists a bound on the genus
of the surfaces $\l_nM_n$, then:
\ben
\item $S(\cL)$ consists  of just two lines
$l_1, \,l_2$ and the associated  limiting
double multigraphs in $\l _nM_n$ are oppositely handed.
\item Given $R>0$, for $n\in \N$ sufficiently large depending on $R$, the surface
$(\l _nM_n)\cap B_{\l _nN}(p,R/\l _n)$ has genus zero.
\een

\end{enumerate}
\item[{\it 6.}]
\label{i6} There exists a non-empty, closed set $\cS\subset
\R^3$ and a 0-lamination $\cL$ of \mbox{$\R^3-\cS$}
such that the surfaces $(\l _nM_n)-\cS$ converge to $\cL$
outside of some singular set of convergence $S(\cL) \subset\cL$,
and $\cL$ has at least one non-flat leaf. Furthermore,
if we let $\Delta (\cL)=\cS \cup S(\cL)$, then, after a rotation of $\rth$:
\begin{enumerate}[(6.1)]
\item Let ${\cal P}$ be the sublamination of flat leaves in ${\cal L}$. Then, $\cP\neq
\mbox{\rm \O}$ and the closure of every such flat leaf is a horizontal plane.
Furthermore, if $L\in \cP$, then the plane
$\overline{L}$ intersects $\Delta (\cL)$ in a set containing at least two points,
each of which are at least distance 1 from each other in $\overline{L}$, and either
$\overline{L}\cap \Delta (\cL)\subset \cS$
or $\overline{L}\cap \Delta (\cL)\subset S(\cL)$.
\item $\Delta (\cL)$ is a closed set of $\rth$ which is contained in the union of planes
$\bigcup_{L \in \cP} \overline{L}$. Furthermore,
every plane in $\R^3$ intersects $\cL$.
\item There exists $R_0>0$ such that the
sequence of surfaces $\left\{ M_n\cap B_M(p_n,\frac{R_0}{\l _n})
\right\} _n$ does not have bounded genus.
\item There exist oriented closed geodesics  $\g_n\subset\l_nM_n$
with uniformly bounded lengths which converge to a line segment $\g$
in the closure of some flat leaf in $\cP$, which
joins two points of $\Delta(\cL)$, and such that the integrals of
$\l_nM_n$ along $\g_n$
in the induced exponential $\rth$-coordinates of $\l _nB_N(p_n,\ve _n)$
converge to
a horizontal vector orthogonal to $\g$ with length $2\, \mbox{\rm Length}(\g)$.
\end{enumerate}
\end{description}
\end{theorem}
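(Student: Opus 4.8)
The plan is to blow up $M$ \emph{on the scale of topology} rather than on the scale of curvature as in Theorem~\ref{thm3introd}. Since the injectivity radius of $M$ is zero, there is an intrinsically divergent sequence of points along which $I_M\to 0$, and the first and most delicate task is to extract from these a sequence of \emph{points of almost-minimal injectivity radius}. I would carry this out by an iterated maximum (Schoen-type point-selection) argument: at the scale governed by the auxiliary function $\de$, one selects $p_n$ so that $I_M(p_n)$ is comparable to the infimum of $I_M$ over the intrinsic ball $B_M(p_n,\frac{r_n}{2}I_M(p_n))$, where $r_n$ is fixed by $r_n\de(r_n)=n$. The condition $\lim_{r\to\infty}r\de(r)=\infty$ is exactly what forces this comparison region, in the rescaled picture, to grow and exhaust $\rth$, which yields items~1 and~2. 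Setting $\l_n=1/I_M(p_n)$ and rescaling the ambient balls $B_N(p_n,\ve_n)$ by $\l_n$, the homogeneous regularity of $N$ guarantees that these rescaled balls converge in the $C^2$-norm to flat $\rth$ (item~3), reducing everything to a locally simply connected sequence of minimal surfaces in $\rth$.

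After rescaling, each $\l_nM_n$ carries an essential geodesic loop of length converging to~$2$ (the almost-shortest loop realizing the injectivity radius at $p_n$), so the limit can never be a plane or a single helicoid, while the bound $\l_nI_{M_n}\geq 1-\frac1n$ makes $\{\l_nM_n\}_n$ locally simply connected on the scale~$1$. The trichotomy then arises from the dichotomy ``curvature bounded versus curvature unbounded.'' If $|A_{\l_nM_n}|$ is uniformly bounded on compact sets, a subsequence converges by Arzel\`a--Ascoli to a complete embedded minimal surface $M_\infty$; the loops pass to the limit to give $I_{M_\infty}\geq 1$ with $I_{M_\infty}(\vec 0)=1$, and Corollary~\ref{corolMLCTR3} (positive injectivity radius implies properness) shows $M_\infty$ is properly embedded, which is case~4.

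If the curvature is unbounded, I would invoke the Colding--Minicozzi structure theory as adapted in Theorems~\ref{thmlimitlaminCM} and~\ref{t:t5.1CM}: a subsequence converges, away from a singular set $S(\cL)$, to a minimal lamination $\cL$ of $\rth$ minus a closed set $\cS$, with $|A|$ blowing up along $S(\cL)$ and the convergence failing to be $C^\a$ there. When $\cS=\emptyset$ one is in the parking-garage regime; the Stable Limit Leaf Theorem~\ref{thmstable} together with Theorem~\ref{thmstablecompleteplane} forces every limit leaf to be a horizontal plane, Theorem~\ref{thmregular} makes $S(\cL)$ a locally finite set of vertical lines, and the uniqueness of the helicoid (Theorem~\ref{ttmr}) identifies the surface near each line with a highly sheeted helicoid. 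The loops $\g_n$ then encircle a pair of columns, yielding the length-$2$ curves and the conormal-integral statement, while a genus bound restricts $S(\cL)$ to exactly two oppositely handed lines via Theorem~\ref{t:t5.1CM}; this is case~5. When $\cS\neq\emptyset$, the Local Removable Singularity Theorem~\ref{tt2} and the Stable Limit Leaf Theorem show the flat sublamination $\cP$ is nonempty with horizontal-plane closures, and $\Delta(\cL)=\cS\cup S(\cL)\subset\bigcup_{L\in\cP}\overline L$; a bounded genus would again collapse this into case~5, so the genus must be unbounded, giving case~6.

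The hardest part will be the point-selection argument producing items~1 and~2 with the precise scale function $\de$: one must simultaneously control the intrinsic size of $M_n$, guarantee almost-minimality of the injectivity radius throughout the rescaled exhaustion, and ensure $\partial M_n$ lands on $\partial B_N(p_n,\ve_n)$. A second subtlety is cleanly separating cases~5 and~6 and extracting the fine geometric conclusions---the configuration and mutual distances of the lines in $S(\cL)$, the $C^1$-convergence of the $\g_n$ to segments joining points of $\Delta(\cL)$, and the conormal-flux limits---all of which hinge on combining helicoid uniqueness with the regularity of the singular set of convergence from Theorem~\ref{thmregular}.
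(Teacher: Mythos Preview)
Your proposal is essentially correct and follows the same strategy the paper indicates. The paper is a survey and does not give a self-contained proof of this theorem---it refers to \cite{mpr14} for the details and only remarks that the Colding--Minicozzi papers \cite{cm23,cm35,cm25} and the Minimal Lamination Closure Theorem~\ref{thmmlct} ``play important roles in deriving the above compactness result.'' Your outline (point-selection for almost-minimal injectivity radius, rescaling by $\l_n=1/I_M(p_n)$, the bounded/unbounded curvature dichotomy, and then the Colding--Minicozzi structure theory together with Theorems~\ref{thmstable}, \ref{tt2}, \ref{thmregular} and the uniqueness of the helicoid) is precisely this program, and you have correctly identified both the order of the argument and the places where the real work lies.

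One minor remark: the brief blow-up discussion in Section~\ref{subsecregularitysingular} of the paper phrases the point-selection in terms of the \emph{extrinsic} disk-radius function $r_n(p)=\sup\{r>0 : \B(p,r)\cap M_n \mbox{ consists of disks}\}$ and maximizes $d(x,\partial\B(p_n,1))/r_n(x)$, whereas the theorem (and your sketch) work with the \emph{intrinsic} injectivity radius $I_M$. These are closely related once the chord-arc and one-sided curvature estimates are in hand, but in \cite{mpr14} some care is needed to pass between them and to obtain the containment $M_n\subset B_M(p_n,\frac{r_n}{2}I_M(p_n))$; you are right to flag this as the most delicate step.
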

The results in the series of papers
\cite{cm23,cm35,cm25} by Colding and Minicozzi
and the minimal lamination closure theorem by Meeks and
Rosenberg~\cite{mr13}  play important roles in
deriving the above compactness result.
The first two authors conjecture that
item~{6} in Theorem~\ref{tthm3introd} does not actually occur.
%\textcolor{red}{\marginpar{\footnotesize explain better??}
%We also remark that the curvature estimates of Meeks and Tinaglia described in
%the next section
%should allow a complete generalization of the above theorem to the case
%where  $M$ is a complete
%$H$-surface with injectivity radius zero in a homogeneously
%regular 3-manifold~$N$.}

The local picture theorems on the scales of curvature and topology deal with
limits of a sequence of blow-up rescalings
for a complete 0-surface. Next we will study an
interesting function which is invariant by
rescalings, namely the Gaussian curvature of a surface in $\R^3$
times the squared distance to a given point.
A complete Riemannian surface $M$ is said to have {\em intrinsic
quadratic curvature decay
constant $C>0$ with respect to a point
$p\in M$}, if the absolute curvature function $|K_M|$ of $M$
satisfies
\[
|K_M(q)|\leq \frac{C}{d_M(p,q)^2}\qquad \mbox{for all }q\in M-\{ p\},
\]
where $d_M$ denotes the Riemannian distance
function. Note that
if such a Riemannian surface $M$ is a complete surface
in $\R^3$ with $p=\vec{0}\in M$,
then it also has extrinsic quadratic decay constant $C$ with respect
to the radial distance $R$ to $\vec{0}$,
i.e., $|K_M|R^2 \leq {C}$ on $M$. For
this reason, when we say that a 0-surface in $\rth$ has {\em
quadratic decay of curvature}, we
will always refer to curvature decay with respect to the
extrinsic distance $R$ to $\vec{0}$, independently of whether or not
$M$ passes through or limits to~$\vec{0}$.

\begin{theorem}[Quadratic Curvature Decay Theorem, \, Meeks, P\'erez,
Ros~\cite{mpr10}]
\label{thm1introd}
Let $M\subset \R^3-\{
\vec{0}\} $ be a 0-surface with compact boundary
(possibly empty), which is complete outside the origin $\vec{0}$,
i.e., all divergent paths of finite length on~$M$ limit to
$\vec{0}$. Then, $M$
has quadratic decay of curvature if and only if
its closure in $\R^3$ has finite total curvature.
In particular, every  complete 0-surface $M\subset \R^3$ with compact boundary and
quadratic decay of curvature is properly embedded in~$\R^3$.
Furthermore,
if $C$ is the maximum of the logarithmic growths of the ends of $M$,
then
\[
\lim _{R\to \infty }\sup _{M-{\mbox{\bbsmall B}}(R)}|K_M|R^4=C^2,
\]
where $\B (R)$ denotes the extrinsic ball of radius $R$ centered at
$\vec{0}$.
\end{theorem}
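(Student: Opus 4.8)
The plan is to first reduce the problem to a statement about the scale-invariant quantity $|A_M|\,R$, and then to treat the two implications separately, using the Local Removable Singularity Theorem~\ref{tt2} as the engine for the hard direction. Since $M$ is minimal its principal curvatures are $\pm\lambda$, so $|A_M|^2=2\lambda^2=-2K_M$; hence $M$ has quadratic decay of curvature (i.e. $|K_M|R^2\le C$ for $R=\|q\|$ large) if and only if the function $|A_M|\,R$ is bounded on $M$ outside a compact set. I shall work throughout with this scale-invariant formulation, which is exactly the hypothesis appearing in Theorem~\ref{tt2}.

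For the implication ``finite total curvature $\Rightarrow$ quadratic decay'' together with the sharp limit, I would invoke the classical description of finite total curvature surfaces recalled after Theorem~\ref{thmfiniteindexftc}: by Huber's theorem $M$ is conformally a compact Riemann surface minus finitely many punctures, and by Osserman its Weierstrass data extend meromorphically across the punctures. Consequently each annular end of $M$ is, after a rotation, a graph $x_3=u(x_1,x_2)$ over the exterior of a disk with $u=\alpha\log r+\beta+O(r^{-1})$, where $\alpha$ is the logarithmic growth of the end and $r=\sqrt{x_1^2+x_2^2}$. A direct computation of the Gaussian curvature of such a graph gives $K_M=-\alpha^2 r^{-4}\big(1+o(1)\big)$ while $R=\|q\|=r\big(1+o(1)\big)$, so that $|K_M|R^4\to\alpha^2$ along that end and $|K_M|R^4\to 0$ on planar ends ($\alpha=0$). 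Taking the supremum over the finitely many ends and setting $C=\max_{\mathrm{ends}}|\alpha|$ yields $\lim_{R\to\infty}\sup_{M-\B(R)}|K_M|R^4=C^2$, which in particular forces $|K_M|R^2\to 0$; properness of $M$ in $\rth$ is then immediate, since each end is asymptotic to a plane or a catenoid.

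For the hard implication ``quadratic decay $\Rightarrow$ finite total curvature'', assume $|A_M|\,R\le c$. Away from $\vec 0$ the surface has locally bounded second fundamental form, so by the Minimal Lamination Closure Theorem~\ref{thmmlct} the closure of $M$ in $\rth-\{\vec 0\}$ is a minimal lamination $\mathcal L$; since $|A_{\mathcal L}|\,d_{\rth}\le c$ on a punctured ball about $\vec 0$, Theorem~\ref{tt2} extends $\mathcal L$ across $\vec 0$, controlling $M$ near the origin. I would then blow down: for $R_n\to\infty$ put $M_n=\frac1{R_n}M$, so that the bound $|A_{M_n}|\,R\le c$ is preserved; applying the closure theorem and Theorem~\ref{tt2} to each $M_n$ and passing to a subsequential limit produces a minimal lamination $\mathcal L_\infty$ of $\rth$ with $|A_{\mathcal L_\infty}|\,R\le c$. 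Every limit leaf of $\mathcal L_\infty$ is stable by the Stable Limit Leaf Theorem~\ref{thmstable}, hence a plane by Theorem~\ref{thmstablecompleteplane}; using the (approximate) homothety invariance of the blow-down limit, a standard tangent-cone analysis shows that in fact every leaf is flat, and since the leaves are disjoint planes they are parallel. Thus the large-scale model of $M$ is a foliation by parallel planes. Together with the bound $|A_M|\,R\le c$ (which excludes helicoidal behavior, the helicoid failing this bound along its axis) this forces each end of $M$ to be a finite-sheeted multigraph over the exterior of a disk with gradient tending to $0$; the curvature bound then forces the number of sheets to be one, so each end is a genuine minimal graph of sublinear growth, hence asymptotic to a plane or a catenoid and of finite total curvature. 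As the planar blow-down has finite multiplicity and the area bound of Corollary~\ref{cor.max} applies on the (nearly flat) ends, $M$ has finite density at infinity and thus finitely many ends, so $\overline M$ has finite total curvature and the sharp limit formula follows from the first part.

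The main obstacle is precisely the last upgrade, from the weak decay $|K_M|R^2\le C$ (decay of order $R^{-2}$) to the genuine $R^{-4}$ decay carried by finite-total-curvature ends: a naive Gauss--Bonnet estimate combining quadratic curvature decay with quadratic area growth bounds the curvature integral over each dyadic annulus only by a constant, and hence fails to produce a finite total integral. The scale-invariant bound $|A_M|\,R\le c$ must therefore be exploited structurally, through the blow-down lamination and its removable-singularity extension, in order to prove that each end is asymptotically planar or catenoidal. Quantifying the uniform graphical decay of the ends and ruling out higher multiplicity and infinitely many ends is the technical heart of the argument, and is exactly where Theorem~\ref{tt2} and the stable-leaf/plane dichotomy of Theorems~\ref{thmstable} and~\ref{thmstablecompleteplane} are indispensable.
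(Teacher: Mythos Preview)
The paper does not contain a proof of Theorem~\ref{thm1introd}; it is a survey and merely records the result, attributing it to~\cite{mpr10}. So there is nothing to compare your attempt against inside this paper, and I can only assess whether your outline is internally sound and whether it matches the general strategy one expects from the cited work.

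Your overall architecture is correct and does match the approach of Meeks, P\'erez and Ros: the easy direction via the classical asymptotic expansion of finite-total-curvature ends, and the hard direction via the Local Removable Singularity Theorem~\ref{tt2} applied both at the origin and after blow-down, combined with the Stable Limit Leaf Theorem~\ref{thmstable} and Theorem~\ref{thmstablecompleteplane}. Your computation of the sharp constant $C^2$ from the logarithmic growths is also right.

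There are, however, two places where your sketch is not yet a proof. First, the sentence ``using the (approximate) homothety invariance of the blow-down limit, a standard tangent-cone analysis shows that in fact every leaf is flat'' does real work that you have not supplied. Stability only gives flatness of \emph{limit} leaves; a non-limit leaf of $\mathcal L_\infty$ is a proper minimal surface in $\rth$ satisfying $|A|\,R\le c$, and you must argue separately that such a surface is a plane (for instance, by showing it has quadratic area growth and applying the monotonicity formula together with the cone structure, or by iterating the blow-down). Second, and more seriously, the passage from ``the blow-down is a lamination by parallel planes'' to ``$M$ has finitely many ends, each a single-sheeted graph'' is where the genuine analysis lives, and your paragraph acknowledges but does not resolve it. You need to control the multiplicity of the convergence $M_n\to\mathcal L_\infty$ (local area bounds coming from a one-sided regular neighborhood of the almost-flat pieces), rule out infinitely many sheets accumulating on a plane, and then show each annular end is an actual graph rather than a multigraph. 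None of these steps is automatic from the curvature bound alone; in the original argument they require a careful combination of the regular-neighborhood area estimate (Corollary~\ref{cor.max}) on the almost-flat region, the half-space theorem to separate ends, and a flux/winding-number argument to pin the sheeting to one.

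In short: right skeleton, but the two highlighted steps need to be written out before this is a proof.
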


\begin{remark}{\em
The above Quadratic Curvature Decay Theorem
is a crucial tool in understanding the asymptotic behavior of all
properly embedded minimal surfaces in $\rth$ via rescaling arguments.
This application is called the {\it Dynamics Theorem
for Properly Embedded Minimal Surfaces} by Meeks, P\'erez and Ros~\cite{mpr20},
which we will not explain in this article; instead, we will discuss in the next section
a closely related dynamics type theorem for certain $(H>0)$- surfaces in $\rth$.
}
\end{remark}

\section{The Dynamics Theorem for $H$-surfaces in $\rth$.}
\label{sec:dynamics}
We now apply some of the theoretical results in the previous sections
to analyze some aspects of the asymptotic behavior of a given
proper $H$-surface $M$  in $\rth$.
To understand this asymptotic behavior, we consider two
separate cases.

In the case that $M$ has bounded second fundamental form,
the answer to this problem consists of classifying the space $\Te(M)$ of limits of
sequences of the form $\{ M-p_n\} _n$, where $p_n\in M$,
$|p_n|\to \infty $ {(equivalently $\{ p_n\} _n$ is a divergent sequence in $M$,
note that $M$ is proper as follows from Corollary~\ref{cor.max} and Theorem~\ref{cor*}).}
Observe that $\{ M-p_n\} _n$ has area estimates in balls of any fixed radius
by Corollary~\ref{cor.max} {and Theorem~\ref{cor*};}
hence, every divergent sequence of translations of $M$ has a
subsequence that converges on compact subsets of $\rth$ to a possibly immersed $H$-surface.
In fact, the surfaces in $\Te(M)$ are possibly disconnected, strongly Alexandrov embedded $H$-surfaces.

In the case that $M$ is proper but does not have bounded second fundamental form,
a more natural way to understand its asymptotic behavior  is  to consider all proper non-flat surfaces in $\R^3$
that can obtained as a limit of a sequence of the form $\{ \l _n(M-p_n)\} _n$, where
$p_n\in M$ is a sequence of points that diverges in $\R^3$ and
$\l _n=|A_M|(p_n)$ is unbounded (as usual, $|A_M|$ is the norm of the second fundamental
form of $M$). This set of limits by divergent dilations of $M$ was studied by
Meeks, P\'erez and Ros assuming that $M$ is a $0$-surface
(this is their {\it Dynamics Theorem}~\cite{mpr20}, which is an application of their Quadratic
Curvature Decay Theorem~\ref{thm1introd}); also see Chapter~11
in~\cite{mpe10} for further discussion in this minimal case when $|A_M|$ is not bounded.

The material covered here is based on~\cite{mt4} by Meeks and Tinaglia,
which was motivated by the earlier work in~\cite{mpr20} and
we refer the reader to~\cite{mt4} for further details.  We will focus our attention here
on some of the
less technical results in~\cite{mt4} and the basic techniques developed there.

\begin{definition}
\label{def9.1}
 {\rm Suppose that $M\subset \rth$ is a complete,
 non-compact, connected $H$-surface with compact boundary
(possibly empty) and with bounded second fundamental form.

\begin{enumerate}[1.] \item For any divergent sequence of
points $p_n\in M$, a subsequence of the translated surfaces $M-p_n$
converges to a properly immersed $H$-surface which bounds a smooth open subdomain on its mean convex
side. Let $\Te(M)$ denote the collection of all such limit
surfaces.

\item We say that $M$ is {\it chord-arc} if there exists a constant $C>0$ such that
for all $p,q\in M$ with $\| p-q\|\geq 1$, we have $d_M(p,q) \leq C \| p-q\|$.
Note that if $M$ is chord-arc with constant $C$ and $p,q\in M$ with $\| p-q\| <1$,
then $d_{M}(p,q)\leq 5C$ by the following argument.
Let $z\in M\cap \partial B(p,2)$; applying the chord-arc property of $M$ to
$p,z$ and to $z,q$ we obtain $d_M(p,z)\leq 2C$ and $d_M(z,q)\leq C|q-z|\leq 3C$.
Hence, the triangle inequality gives  $d_{M}(p,q)\leq 5C$.
 \end{enumerate}
}
\end{definition}

In order to study $\Te(M)$ it is convenient for technical reasons to study
a closely related space $\TM$ that can be thought of a subset of $\Te(M)$.

\begin{definition} {\rm Suppose $M\subset \rth$ is a
 non-compact, strongly Alexandrov embedded $H$-surface with bounded
second fundamental form.
\begin{enumerate}[1.]
\item We define the set ${\cal T}(M)$ of all connected, strongly
Alexandrov embedded $H$-surfaces $\S \subset \rth$, which are
obtained in the following way.

There exists a divergent sequence of points $p_n\in M$
such that the translated surfaces $M-p_n$
converge $C^2$ on compact sets of $\rth$ to a strongly Alexandrov
embedded $H$-surface $\Sigma'$, and $\Sigma$ is a connected
component of $\Sigma'$ passing through the origin. Actually we
consider the immersed surfaces in ${\cal T}(M)$ to be {\it pointed}
in the sense that if such a surface is not embedded at the origin,
then we consider the surface to represent two different elements in
${\cal T}(M)$ depending on a choice of one of the two preimages of
the origin.
\item $\Delta \subset  {\cal T}(M)$ is called
{\em ${\cal T}$-invariant}, if $\S\in\Delta$
implies ${\cal T}(\S)\subset \Delta$.
\item A non-empty subset $\Delta\subset {\cal T}(M)$ is
called a {\em minimal} ${\cal T}$-invariant
set, if it is ${\cal T}$-invariant and contains no smaller non-empty ${\cal
T}$-invariant sets.
\item If $\S \in {\cal T}(M)$ and $\S$ lies in a minimal ${\cal T}$-invariant
 set of ${\cal T}(M)$, then $\S$ is called a  {\em minimal
element} of ${\cal T}(M)$.
\end{enumerate}}
\end{definition}

${\cal T}(M)$ has a natural compact
metric space topology, that we now describe.
Suppose   that $\S \in {\cal T}(M)$ is embedded at the origin. In this
case, there exists an $\ve>0$ depending only on the bound of the
second fundamental form of $M$, so that there exists a disk
$D(\S)\subset \S\cap \overline{\B}(\ve)$ with $\partial
D(\S)\subset\partial\overline{\B}(\ve)$, $\vec{0} \in D(\S)$
and such that $D(\S)$ is a graph with gradient at most 1 over its
projection to the tangent plane $T_{\vec{0}}D(\S)\subset \rth$.
Given another such $\S'\in {\cal T}(M)$, define
\begin{equation}
\label{dist}
 d_{{\cal T}(M)}(\S,\S')=d_{\cal H}(D(\S),D(\S')),
\end{equation}
where $d_{\cal H}$ is the Hausdorff distance. If $\vec{0}$ is not a
point where $\S$ is embedded, then since we consider $\S$ to
represent one of two different pointed surfaces in ${\cal T}(M)$, we
choose $D(\S)$ to be the disk in $\S\cap\B(\ve)$ corresponding to the
chosen base point. With this modification, the above metric is
well-defined on ${\cal T}(M)$.  Using the curvature and local area estimates
of elements in $\TM$, it is straightforward to prove that $\TM$ is sequentially
compact and therefore it has a compact metric space structure.

 Given a surface
$\S\in {\cal T}(M)$,  it can be shown that ${\cal T}(\S)$ is  a
subset of ${\cal T}(M)$.
In particular, we can consider ${\cal T}$
to represent a function:
$${\cal T}\colon {\cal T}(M) \to {\cal P}({\cal T}(M)),$$
where ${\cal P}({\cal T}(M))$ denotes the power set of ${\cal
T}(M)$. Using the natural compact metric space structure on $\TM$,
we can  obtain classical dynamics type results on
${\cal T}(M)$ with respect to the mapping ${\cal T}$. These
dynamics results include the existence of non-empty minimal ${\cal
T}$-invariant sets in $\TM$.

\begin{theorem}[CMC Dynamics Theorem]
\label{T}
Let $M\subset \rth$ be a connected, non-compact, strongly Alexandrov
embedded $(H>0)$-surface with bounded second fundamental form.
Then the following statements hold:
\begin{enumerate}[1.]
\item \label{n4} ${\cal T}(M)$ is non-empty and ${\cal T}$-invariant.
\item \label{n5} ${\cal T}(M)$ has a natural compact topological space structure
given by the metric $d_{{\cal T}(M)}$ defined in~(\ref{dist}).
\item \label{n8} Every non-empty ${\cal T}$-invariant
set of ${\cal T}(M)$ contains a non-empty minimal ${\cal
T}$-invariant set. In particular, since ${\cal T}(M)$ is itself a
 non-empty ${\cal T}$-invariant set, then ${\cal T}(M)$ contains
 non-empty minimal invariant elements.
\end{enumerate}
\end{theorem}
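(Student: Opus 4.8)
The plan is to treat the three assertions in turn, with the compactness in part~2 serving as the engine for the dynamics in part~3. For part~1, I would first note that since $M$ is non-compact there is a sequence $p_n\in M$ diverging to infinity, and that every translate $M-p_n$ is a strongly Alexandrov embedded $H$-surface with the same bound $|A_{M-p_n}|\le C_0$ on its second fundamental form that $M$ enjoys. By Corollary~\ref{cor.max} and Theorem~\ref{cor*}, these translates also have uniform local area estimates in unit balls on their mean convex sides. The standard compactness theorem for surfaces of uniformly bounded geometry (an Arzel\`a--Ascoli argument after writing the surfaces as uniformly sized local graphs, with the local area bound constraining the number of sheets) then yields a subsequence converging $C^2$ on compact subsets of $\rth$ to a strongly Alexandrov embedded $H$-surface $\Sigma'$; one must carry along the defining immersions $F_n\colon W_n\to\rth$ of the mean convex fillings so that the limit inherits the strong Alexandrov structure. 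The component of $\Sigma'$ through $\vec 0$ is an element of $\mathcal{T}(M)$, proving non-emptiness.

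The $\mathcal{T}$-invariance is the first genuinely delicate point, and it is really a diagonalization. If $\Sigma=\lim(M-p_n)\in\mathcal{T}(M)$ and $\Gamma=\lim(\Sigma-q_m)\in\mathcal{T}(\Sigma)$ with $q_m$ divergent in $\Sigma$, I would choose for each $m$ points $q_m^{(n)}\in M-p_n$ converging to $q_m$ as $n\to\infty$ (available since the convergence is $C^2$ on compacts), and then extract a diagonal sequence $n_m\to\infty$ for which the base points $p_{n_m}+q_m^{(n_m)}$ genuinely diverge in $M$, so that the associated translates of $M$ converge to $\Gamma$. The care here lies in guaranteeing that the diagonal base points diverge in $M$ (not merely in $\rth$) and that the limit retains the strong Alexandrov filling; the uniform curvature and area bounds are used repeatedly for this. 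The same argument shows that $\mathcal{T}(\Sigma)$ is non-empty and $\mathcal{T}$-invariant for every $\Sigma\in\mathcal{T}(M)$.

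For part~2, since every element of $\mathcal{T}(M)$ has $|A|\le C_0$, there is a uniform $\ve>0$ so that each $\Sigma$ contains a graphical disk $D(\Sigma)$ as described in the text, whence $d_{\mathcal{T}(M)}(\Sigma,\Sigma')=d_{\mathcal{H}}(D(\Sigma),D(\Sigma'))$ is well defined. Sequential compactness follows by the same compactness-plus-diagonal scheme as in part~1 applied to a sequence $\Sigma_k=\lim_n(M-p_n^{(k)})$: a diagonal choice of base points produces a convergent subsequence whose limit again lies in $\mathcal{T}(M)$, so $(\mathcal{T}(M),d_{\mathcal{T}(M)})$ is a compact metric space.

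Finally, part~3 is a Zorn's lemma argument powered by this compactness. Given a non-empty $\mathcal{T}$-invariant set $\Delta$, pick $\Sigma_0\in\Delta$; then $\mathcal{T}(\Sigma_0)\subset\Delta$ is non-empty, compact (hence closed), and $\mathcal{T}$-invariant. Order the non-empty closed $\mathcal{T}$-invariant subsets of $\mathcal{T}(\Sigma_0)$ by inclusion. Any chain has the intersection of its members as a lower bound, which is non-empty by the finite intersection property in the compact space $\mathcal{T}(M)$, is closed, and is $\mathcal{T}$-invariant because $\mathcal{T}(\Sigma)\subset\Delta_\alpha$ for all $\alpha$ whenever $\Sigma$ lies in every $\Delta_\alpha$. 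Zorn's lemma yields a minimal closed $\mathcal{T}$-invariant set $\Delta_{\min}$. To upgrade minimality to the class of all, not necessarily closed, $\mathcal{T}$-invariant sets, I would observe that if $\Delta'\subset\Delta_{\min}$ is non-empty and $\mathcal{T}$-invariant and $\Sigma\in\Delta'$, then $\mathcal{T}(\Sigma)$ is a non-empty closed $\mathcal{T}$-invariant subset of $\Delta_{\min}$, so $\mathcal{T}(\Sigma)=\Delta_{\min}$ by minimality, and since $\mathcal{T}(\Sigma)\subset\Delta'$ this forces $\Delta'=\Delta_{\min}$. Thus $\Delta_{\min}\subset\Delta$ is the desired minimal $\mathcal{T}$-invariant set, and taking $\Delta=\mathcal{T}(M)$ gives the final assertion. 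The main obstacle throughout is the diagonalization underlying both the $\mathcal{T}$-invariance and the compactness, together with the bookkeeping needed to preserve the strong Alexandrov filling in every limit.
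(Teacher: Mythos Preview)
Your proposal is correct and follows essentially the same approach that the paper sketches: the paper indicates that non-emptiness and compactness come from the uniform curvature and local area estimates (Corollary~\ref{cor.max}, Theorem~\ref{cor*}) via Arzel\`a--Ascoli, that ${\cal T}$-invariance amounts to the inclusion ${\cal T}(\Sigma)\subset{\cal T}(M)$ (your diagonalization), and that the existence of minimal ${\cal T}$-invariant sets is a ``classical dynamics type result'' on the compact metric space ${\cal T}(M)$ --- exactly your Zorn's lemma argument. Your treatment of the upgrade from minimality among closed ${\cal T}$-invariant sets to minimality among all such sets is a nice detail that the paper leaves implicit.
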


\begin{definition}
\label{defM(p,R)}
{\em
For any point $p$ in a surface $M\subset \R^3$,
we denote by $M(p,R)$ the
closure of the
connected component of
$M\cap \B(p,R)$ which contains $p$.

If $M$ is a surface satisfying the hypotheses of Theorem~\ref{T} and
$M$ is not embedded at $p$ having two immersed components $M(p,R)$, $M'(p,R)$
that correspond to two pointed immersions, then in what follows we
will consider both of these components separately.
}
\end{definition}

As an application of the above Dynamics Theorem, we have the following result
on the geometry of minimal elements of $\TM$.

\begin{theorem}[Minimal Element Theorem] \label{sp2} Let $M\subset
\rth$ be a complete, non-compact,  $(H>0)$-surface
with possibly empty compact boundary and bounded second fundamental
form. Then, the following statements hold.

\begin{enumerate}[1.]
\item \label{oneend}
If $\S\in \TM$ is a minimal element, then
either every surface in $\Te(\S)$
is the translation of a fixed
Delaunay surface, or every surface in $\Te(\S)$ has one end. In
particular, every surface
in $\Te(\S)$ is connected and, after ignoring base points, $\TS=\Te(\S)$.
\item \label{sca} Minimal elements of $\TM$ are chord-arc, in the sense of
Definition~\ref{def9.1}.
\item \label{inf2} Suppose
$\Sigma$ is a minimal element of ${\cal T}(M)$. Then,
the following statements are equivalent.
\begin{enumerate}[a.]
\item  \label{D} $\Sigma$ is a Delaunay surface.
\item  \label{Afinite1}
$\lim _{R\to \infty }\underline{A}(R)=0$, where $\underline{A}(R)=\inf _{R_1\geq R}
\left( \inf_{p\in \S}({\rm Area}[\S(p,R_1)]\cdot R_1^{-2})\right) $.
\item  \label{Gfinite1}
$\lim _{R\to \infty }\underline{G}(R)=0$, where $\underline{G}(R)=\inf _{R_1\geq R}
\left( \inf_{p\in \S}({\rm Genus}[\S(p,R_1)]\cdot R_1^{-2})\right) $.
\end{enumerate}
\end{enumerate}
\end{theorem}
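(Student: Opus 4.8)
The whole argument is organized around the compact dynamical system $(\TM,\mathcal{T})$ produced by Theorem~\ref{T}, the asymptotically Delaunay behavior of annular ends from Theorem~\ref{kks}, and the curvature and radius estimates of Theorems~\ref{cest} and~\ref{rest}. The first thing I would record is the standard recurrence property of minimal $\mathcal{T}$-invariant sets: if $\Sigma$ is a minimal element and $\Delta$ is the minimal $\mathcal{T}$-invariant set containing it, then $\Delta=\TS$, and every $\Sigma''\in\TS$ is a subsequential limit of translations $\Sigma'-p_n$ of any prescribed $\Sigma'\in\TS$ along some divergent sequence $p_n$. This is the exact analog of the fact that every orbit in a minimal set of topological dynamics is dense, and it follows from Zorn's lemma applied to closed $\mathcal{T}$-invariant subsets together with the compactness in item~\ref{n5} of Theorem~\ref{T}. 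With recurrence available, the dichotomy of Part~1 proceeds as follows. Suppose some $\Sigma'\in\TS$ has an annular end $E$. By Theorem~\ref{kks}(1), $E$ is asymptotic to a Delaunay surface, so translating $\Sigma'$ by points $p_n\in E$ diverging out $E$ produces, after a subsequence, a Delaunay surface in $\Te(\Sigma')$; passing to the component through the origin and using $\mathcal{T}$-invariance yields a Delaunay surface $\cD\in\TS$. Since $\mathcal{T}(\cD)$ consists exactly of the translates of the fixed periodic surface $\cD$, it is a nonempty $\mathcal{T}$-invariant subset of the minimal set $\TS$, so $\TS=\mathcal{T}(\cD)$ by minimality, which is the first alternative. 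The remaining and genuinely hard case is to show that if \emph{no} element of $\TS$ has an annular end, then every element of $\TS$ has exactly one end; I address this jointly with Part~2 below, after which the one-ended conclusion prevents translation limits from disconnecting and yields the identity $\TS=\Te(\S)$ once base points are forgotten.

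\textbf{Part 2 and the one-ended alternative.} To prove that minimal elements are chord-arc I would argue by contradiction. If chord-arc failed there would be $p_n,q_n\in\Sigma$ with $\|p_n-q_n\|\le 1$ but $d_\Sigma(p_n,q_n)\to\infty$, the reduction to $\|p_n-q_n\|\le 1$ being the elementary computation already carried out in Definition~\ref{def9.1}. Translating by $p_n$ and invoking the uniform curvature bound on $\Sigma$ together with the local area bounds of Corollary~\ref{cor.max} and Theorem~\ref{cor*}, a subsequence of $\Sigma-p_n$ converges in $C^2$ on compact sets to some $\Sigma_\infty\in\TS$, with $q_n-p_n\to q_\infty\in\Sigma_\infty\cap\overline{\B}(\vec 0,1)$. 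If $q_\infty$ lay on the same connected component of the limit as $\vec 0$, then $d_{\Sigma_\infty}(\vec 0,q_\infty)$ would be finite, a minimizing path between them would stay in a fixed compact ball, and $C^2$-convergence there would force $d_\Sigma(p_n,q_n)$ to remain bounded, a contradiction. Thus the only obstruction to chord-arc is a \emph{disconnection} of the translation limit, which is precisely the failure of the one-ended property. Consequently Part~2 and the one-ended alternative of Part~1 are established together: using the curvature estimate of Theorem~\ref{cest} and the radius estimate of Theorem~\ref{rest}, the absence of annular ends forces any incipient second end of a translation limit to be a limit end of infinite genus, and the homogeneity supplied by recurrence is incompatible with two such ends (or two components) coexisting in one connected limit; hence limits stay connected and one-ended, giving both chord-arc and the second alternative of Part~1.

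\textbf{Part 3.} For the equivalences I would first dispatch $(a)\Rightarrow(b),(c)$ by direct computation: a Delaunay surface is periodic and contained in a solid cylinder of bounded radius, so $\text{Area}[\S(p,R_1)]\le cR_1$ and hence $\underline{A}(R)\to 0$, while its genus vanishes, giving $\underline{G}(R)\equiv 0$. For $(b)\Rightarrow(a)$, the hypothesis $\underline{A}(R)\to 0$ produces points $p_n$ and radii $R_n\to\infty$ with $\text{Area}[\S(p_n,R_n)]R_n^{-2}\to 0$; a translation limit $\Sigma_\infty\in\TS$ then has at most linear area growth, which by the monotonicity formula together with the bounded curvature traps $\Sigma_\infty$ in a solid cylinder and hence supplies an annular end, so Part~1 forces $\Sigma$ to be Delaunay. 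For $(c)\Rightarrow(a)$ I would argue contrapositively: if $\Sigma$ is a non-Delaunay minimal element, then by Part~1 it is one-ended, and since a one-ended $(H>0)$-surface cannot have finite topology by Theorem~\ref{kks}(2), it must have infinite genus; recurrence then distributes this genus at a uniform scale, so that the local picture on the scale of topology (Theorem~\ref{tthm3introd}) yields a quadratic lower bound $\text{Genus}[\S(p,R_1)]\ge cR_1^2$ and hence $\underline{G}(R)\not\to 0$.

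\textbf{Main obstacle.} The single hardest point is the one-ended alternative of Part~1, equivalently the non-disconnection of translation limits of a minimal element; as the argument above shows, the chord-arc property of Part~2 and the lower bounds used in Part~3 all reduce to it rather cleanly. Establishing it is where I expect the real work to lie, requiring the full strength of the curvature estimates of Theorem~\ref{cest} and the chord-arc bookkeeping of Part~2 used in tandem to prevent a second end or a second component from escaping to infinity in a divergent translation limit.
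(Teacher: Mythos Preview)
The paper does not actually prove Theorem~\ref{sp2}; this is a survey, and the Minimal Element Theorem is stated with a reference to~\cite{mt4} for the proof. So there is no in-paper argument to compare against, and I can only evaluate your proposal on its own merits.

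Your architecture for the Delaunay alternative in Part~1 (annular end $\Rightarrow$ Delaunay limit via Theorem~\ref{kks}, then minimality of $\TS$ forces $\TS=\mathcal{T}(\cD)$) and for the easy implications $(a)\Rightarrow(b),(c)$ in Part~3 is correct. But there are two genuine gaps.

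First, and as you yourself acknowledge, the one-ended alternative is the core of the theorem, and your treatment of it (``homogeneity supplied by recurrence is incompatible with two such ends'') is a slogan, not an argument. Recurrence by itself does not rule out a minimal element with two limit ends of infinite genus, nor does it prevent translation limits from disconnecting; you need an additional ingredient, and the curvature and radius estimates of Theorems~\ref{cest} and~\ref{rest} do not by themselves supply it. This is exactly where the flux invariant and separation-type arguments are used in~\cite{mt4}, and your chord-arc argument in Part~2 is circular until this is resolved, since you reduce chord-arc to non-disconnection and then non-disconnection to chord-arc-style reasoning.

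Second, your proof of $(c)\Rightarrow(a)$ invokes Theorem~\ref{tthm3introd}, but that result is stated for \emph{$0$-surfaces with injectivity radius zero}. Your minimal element $\Sigma$ is an $(H>0)$-surface with bounded second fundamental form, hence has positive injectivity radius by Corollary~\ref{cor10.6}, so the Local Picture Theorem on the Scale of Topology simply does not apply. You need a different mechanism to extract a quadratic genus lower bound from recurrence and infinite genus. Likewise, in $(b)\Rightarrow(a)$ the step from ``$\mathrm{Area}[\Sigma(p_n,R_n)]R_n^{-2}\to 0$ at some points and radii'' to ``$\Sigma_\infty$ is trapped in a solid cylinder'' is not justified: the monotonicity formula you cite is for minimal surfaces, and the one-sided regular neighborhood of Theorem~\ref{cor*} gives only local area \emph{upper} bounds, not the global confinement you want.
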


\section{The curvature and radius estimates
of Meeks-Tinaglia.} \label{sec:MT}
A longstanding  problem in classical surface theory
is to classify the complete, simply connected $H$-surfaces  in $\rth$.
In the case the surface is simply connected and
compact, this classification
follows by work of either Hopf~\cite{hf1} in 1951 or of
Alexandrov~\cite{aa1} in 1956, who gave different proofs that
a round sphere is the only possibility.
In~\cite{mt7}, Meeks and Tinaglia have recently proved that a complete, simply
connected $(H>0)$-surface is compact.
\begin{theorem} \label{round} Complete
simply connected $(H>0)$-surfaces   in
$\rth$  are compact, and thus are round spheres.
\end{theorem}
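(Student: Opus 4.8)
The plan is to reduce the statement to the single claim that such a surface $M$ must be compact: once compactness is known, the classification of closed simply connected surfaces forces $M$ to be a $2$-sphere, and then the Hopf Theorem~\ref{hopf} (or Alexandrov's Theorem~\ref{thmAlex}) identifies $M$ as a round sphere of radius $1/H$. So suppose, seeking a contradiction, that $M$ is a complete, simply connected, embedded $(H>0)$-surface without boundary that is \emph{not} compact. Being a complete, boundaryless, simply connected and non-compact surface, $M$ is homeomorphic to $\R^2$; in particular it can be exhausted by compact topological disks.

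First I would fix a point $p_0\in M$. Since $M$ is complete, each closed intrinsic ball $\overline{B}_M(p_0,n)$ is compact by Hopf--Rinow, and since $M$ is non-compact it is a proper subset of $M$. Using the homeomorphism $M\cong\R^2$, for every $n$ I would choose a smooth compact subdomain $D_n\subset M$, homeomorphic to a closed disk, with $\overline{B}_M(p_0,n)\subset \Int(D_n)$. Each $D_n$ is connected, embedded, and inherits the constant mean curvature $H>0$ of $M$, so it is an $H$-disk in the sense of the paper. The key step is a lower bound on its radius: because $\partial D_n$ is disjoint from $\overline{B}_M(p_0,n)$, every point $q\in\partial D_n$ satisfies $d_M(p_0,q)>n$, and any path inside $D_n$ joining $p_0$ to $\partial D_n$ is also a path in $M$, hence has length exceeding $n$. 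Therefore
\[
\mathrm{radius}(D_n)\ \geq\ d_{D_n}(p_0,\partial D_n)\ \geq\ d_M\bigl(p_0,\partial D_n\bigr)\ \geq\ n .
\]
Applying the Radius Estimate Theorem~\ref{rest}, there is a constant $\mathcal R\geq\pi$ with $\mathrm{radius}(D_n)<\mathcal R/H$ for every $n$. Taking $n>\mathcal R/H$ yields a contradiction, so $M$ is compact, which completes the argument.

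The substance of the proof is carried entirely by Theorem~\ref{rest}, whose proof rests on the curvature estimates of Theorem~\ref{cest} (and is intertwined with them); once that estimate is granted, the remaining steps are elementary. The one point requiring care --- and where I would be explicit --- is the construction of the exhausting family $\{D_n\}$ as genuine smooth, embedded $H$-disks whose intrinsic radii diverge, which is exactly where the topological identification $M\cong\R^2$ and the completeness of $M$ enter. Note that, in contrast to the route through Theorem~\ref{helicoid}, neither the chord-arc property Theorem~\ref{main2} nor properness of $M$ is needed here, since Theorem~\ref{rest} applies to any $H$-disk regardless of how $M$ sits in $\R^3$.
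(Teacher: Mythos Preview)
Your proposal is correct and follows essentially the same route as the paper: the paper notes (just before Theorem~\ref{rest}) that the Radius Estimates rule out complete $(H>0)$-planes because such planes contain $H$-disks of arbitrarily large radius, and then identifies the radius and curvature estimates as the two ingredients behind Theorem~\ref{round}. You have simply spelled out the elementary details of producing the exhausting family $\{D_n\}$ and bounding their intrinsic radii from below, and correctly observed that once Theorem~\ref{rest} is taken as a black box, the curvature estimates are only needed insofar as they underlie its proof.
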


The two main ingredients in the proof of Theorem~\ref{round} in~\cite{mt7} are
the radius estimates Theorem~\ref{rest} and the curvature estimates
Theorem~\ref{cest}. For the reader's convenience, we restate them here.
%
%The following intrinsic radius and curvature estimates for
%$(H>0)$-disks in $\rth$ are two other main
%results presented in~\cite{mt7}.

\begin{theorem}[Radius Estimates, Meeks-Tinaglia~\cite{mt7}]
\label{rest2} There exists an
${\mathcal R}\geq \pi$ such that any $H$-disk in $\rth$ with
$H>0$ has radius less than ${{\mathcal R}}/{H}$.
\end{theorem}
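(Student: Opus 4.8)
The plan is to reduce the statement to a single scale-invariant bound for $1$-disks. First I would note that if $M\subset\rth$ is an $H$-disk with $H>0$, the homothety $x\mapsto Hx$ carries $M$ to a $1$-disk $\widehat M=H\cdot M$, since the principal curvatures, and hence the mean curvature, scale by the reciprocal of the dilation factor while intrinsic distances scale by $H$. Thus the radius of $\widehat M$ is $H$ times the radius of $M$, and it suffices to produce a universal constant $\mathcal{R}_0$ such that every $1$-disk $\Sigma$ has radius less than $\mathcal{R}_0$; taking $\mathcal{R}=\max\{\mathcal{R}_0,\pi\}$ then gives radius less than $\mathcal{R}/H$ for every $H$-disk, as required.

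If I were allowed to invoke the curvature estimates of Theorem~\ref{cest}, the bound would be immediate: the computation already displayed around~\eqref{eq:a} shows that any $1$-disk has radius less than $K(1,1)$, since after scaling by $1/K(1,1)$ the trace inequality forces $|A|>K(1,1)$ everywhere, so no interior point can lie at intrinsic distance $\geq 1$ from the boundary without violating Theorem~\ref{cest}; hence $\mathcal{R}_0=K(1,1)$ works. The catch is that in the genuine development the radius and curvature estimates are proved \emph{together}, so Theorem~\ref{cest} is not available a priori and a self-contained argument must obtain the $1$-disk radius bound without this circularity.

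For a direct argument I would proceed by contradiction. Assuming $1$-disks of arbitrarily large radius exist, I would select a sequence $\Sigma_n$ of $1$-disks with interior base points $p_n$ at intrinsic distance to $\partial\Sigma_n$ tending to infinity, translate so that $p_n=\vec 0$, and study the pointed surfaces $\Sigma_n-p_n$. The chord-arc estimate of Theorem~\ref{main2} would control intrinsic versus extrinsic distance near $\vec 0$, keeping the deep interior in a bounded extrinsic region, while the one-sided curvature estimate for $H$-disks (Theorem~\ref{th}) and the Limit Lamination Theorem for $H$-Disks (Theorem~\ref{thmlimitlaminCM}) would describe the local structure and the admissible singular sets. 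When the second fundamental forms remain locally bounded, the lamination machinery yields a complete $1$-surface through $\vec 0$; its limit leaves are stable by Theorem~\ref{thmstable}, hence planes by Theorem~\ref{thmstablecompleteplane}, which contradicts $H=1>0$. Any remaining smooth limit of positive injectivity radius I would feed into the CMC Dynamics Theorem~\ref{T} and the Minimal Element Theorem~\ref{sp2}, whose minimal elements are Delaunay or one-ended and therefore incompatible with arising as blow-up limits of simply connected disks. If instead the curvatures blow up, I would first rescale on the scale of curvature as in Theorem~\ref{thm3introd} to obtain a properly embedded non-flat limit and repeat the analysis.

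The hard part will be exactly the circular dependence flagged above: because Theorem~\ref{cest} cannot be assumed, the radius bound has to be extracted in tandem with the curvature bound through a simultaneous induction on scales, in which scale-dependent partial versions of each estimate feed the other. The most delicate point of the limit analysis is ruling out the pathological case in which the blow-up converges to a weak $1$-lamination with a genuine singular set rather than to a smooth surface, and controlling the topology of the limit well enough to apply the Delaunay/one-ended dichotomy; this is precisely where the new tools of Meeks and Tinaglia---the Minimal Element Theorem and the one-sided curvature estimate for $H$-disks---are indispensable.
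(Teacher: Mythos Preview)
Your overall strategy—scale to $1$-disks, argue by contradiction, pass to a limit, and invoke the Minimal Element Theorem—is aligned with the paper's outline, but you miss the mechanism that actually produces the contradiction, and your substitute for it is not an argument.

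The paper's Step~2 is a \emph{flux} argument. Every $1$-cycle on a disk bounds, so the CMC flux of each $E(n)$ vanishes along every closed curve, and this passes to the limit surface $M$; moreover $M$, being a limit of planar domains, has genus zero. Then the genus-growth criterion in item~\ref{inf2} of Theorem~\ref{sp2} forces a minimal element of $\mathcal{T}(M)$ to be a Delaunay surface, which has \emph{nonzero} flux along its waist circles—contradiction. Your sentence ``Delaunay or one-ended and therefore incompatible with arising as blow-up limits of simply connected disks'' is not a proof: a one-ended $1$-surface is not a priori incompatible with being such a limit, and without the flux invariant you have no obstruction to the Delaunay case either. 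The stable-limit-leaf argument you offer also does not finish things, since the leaf through $\vec 0$ need not be a limit leaf.

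There is also a logical-order problem. You invoke the chord-arc property (Theorem~\ref{main2}) and the one-sided curvature estimate for $H$-disks (Theorem~\ref{th}), but in the paper these are Steps~3 and~4, proved \emph{after} the radius bound is first obtained. The paper avoids the circularity you flag not by a ``simultaneous induction on scales'' but by an extrinsic-first ordering: Step~1 establishes an \emph{extrinsic} curvature estimate (via Proposition~\ref{number} and the multi-valued graph analysis in Lemma~\ref{lem:excest2}), Step~2 deduces an \emph{extrinsic} radius estimate by the flux contradiction above, and only then do Steps~3--5 upgrade both to intrinsic statements via Theorem~\ref{th} and the weak chord-arc Theorem~\ref{thm1.1}.
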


\begin{theorem}[Curvature Estimates, Meeks-Tinaglia~\cite{mt7}]
\label{cest2} Given $\delta$, $\cH>0$,
there exists a  $K(\delta,\cH)\geq\sqrt{2}\cH$ such that
any   $H$-disk $M$  in $\rth$ with
 $H\geq \cH$ satisfies
\[
\sup_{\large \{p\in {M} \, \mid \,
d_{M}(p,\partial M)\geq \delta\}} |A_{M}|\leq  K(\delta,\cH),
\]
where %$|A_{M}|$ is the norm of the second fundamental form and
$d_{M}$ is the intrinsic distance function of $M$.
\end{theorem}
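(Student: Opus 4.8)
The plan is to argue by contradiction, after first normalizing the lower curvature bound. A homothety of ratio $c>0$ sends an $H$-disk $M$ to the $(H/c)$-disk $c\,M$, multiplies $|A_M|$ by $c^{-1}$ and multiplies intrinsic distances by $c$; taking $c=\cH$ turns any $H$-disk with $H\ge\cH$ into one with mean curvature $\ge 1$. Hence it suffices to produce, for each $\delta>0$, a bound $K(\delta)$ on $|A|$ over $\{d_M(\cdot,\partial M)\ge\delta\}$ valid for all $H$-disks with $H\ge 1$, since then $K(\delta,\cH):=\cH\,K(\cH\delta)$ works in general; the asserted lower bound $K(\delta,\cH)\ge\sqrt2\,\cH$ is automatic because $|A|^2=\lambda_1^2+\lambda_2^2\ge\tfrac12(\lambda_1+\lambda_2)^2=2H^2$ on any $H$-surface. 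So suppose the estimate fails for some fixed $\delta>0$: there are $H_n$-disks $M_n\subset\rth$ with $H_n\ge1$ and points $p_n\in M_n$ with $d_{M_n}(p_n,\partial M_n)\ge\delta$ but $a_n:=|A_{M_n}|(p_n)\to\infty$. The first thing I would observe is that the $H_n$ are then automatically bounded: by the Radius Estimate (Theorem~\ref{rest}) we have $\delta\le\mathrm{radius}(M_n)<\mathcal R/H_n$, so $1\le H_n<\mathcal R/\delta$, and after passing to a subsequence $H_n\to H_\infty\in[1,\mathcal R/\delta]$.

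Next I would blow up at the points of largest curvature. A Colding--Minicozzi point-selection argument lets me move $p_n$ slightly so that $\sup_{B_{M_n}(p_n,s_n)}|A_{M_n}|\le 2a_n$ with $s_na_n\to\infty$, and then I rescale by $a_n$, setting $\widetilde M_n:=a_n(M_n-p_n)$. The rescaled surfaces are $\widetilde H_n$-disks with $\widetilde H_n=H_n/a_n\to0$ (here the boundedness of $H_n$ is exactly what forces the rescaled mean curvature to vanish in the limit, excluding any $(H>0)$-limit), they have uniformly bounded second fundamental form with $|A_{\widetilde M_n}|(\vec0)=1$, and their boundaries recede to infinity since $d_{\widetilde M_n}(\vec0,\partial\widetilde M_n)\ge a_n\delta\to\infty$. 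The chord-arc property for $H$-disks (Theorem~\ref{main2}) shows the $\widetilde M_n$ are uniformly proper around $\vec0$, so a subsequence converges smoothly on compact sets to a complete, properly embedded, simply connected, nonflat minimal surface $M_\infty$ with $|A_{M_\infty}|(\vec0)=1$; by the uniqueness of the helicoid in the minimal setting (the $H=0$ instance of Theorem~\ref{ttmr}), $M_\infty$ is a vertical helicoid.

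The final and hardest step is to derive a contradiction from this helicoidal blow-up limit, and this is where I expect the main obstacle to lie. The subtlety is that a helicoid is, by itself, perfectly compatible with the Radius Estimate: the piece of $\widetilde M_n$ approximating it has intrinsic size comparable to $a_n$, which matches the radius bound $\mathcal R\,a_n/H_n$ allowed for the $\widetilde H_n$-disk $\widetilde M_n$, so no contradiction can be read off from Theorem~\ref{rest} used as a black box, and likewise a naive count of the spiraling sheets only produces a lower bound for $\mathrm{radius}(M_n)$ of order $\mathcal R/H_n$, which is consistent rather than contradictory. What must be used instead is information detecting the positive mean curvature, namely the non scale-invariant one-sided curvature estimate for $H$-disks of Meeks and Tinaglia (the companion of the minimal estimate in Theorem~\ref{thmcurvestimCM}): because it is governed by the fixed length scale $1/H$ rather than being scale invariant, an $(H>0)$-disk cannot sustain the unboundedly many helicoidal sheets over a definite region that the limit demands, and it is this obstruction that must contradict $a_n\to\infty$. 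Deploying that estimate at the correct scale against the limit — reconciling the blow-up scale $a_n^{-1}$ with the mean-curvature scale $\sim 1$ — is precisely where the argument intertwines with the proof of the Radius Estimate and relies on the Stable Limit Leaf Theorem (Theorem~\ref{thmstable}), since limit leaves forced by accumulating sheets are stable and hence planes by Theorem~\ref{thmstablecompleteplane}, incompatible with $H_\infty>0$, together with the CMC Dynamics and Minimal Element Theorems (Theorems~\ref{T} and~\ref{sp2}). Carrying out this synthesis, rather than exhibiting any single clean contradiction, is the substance of the proof.
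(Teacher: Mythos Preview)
Your overall shape is recognisable, but there is a genuine circularity in the way you invoke the other Meeks--Tinaglia results as black boxes. In the paper's development (outlined in Section~\ref{sec:MT}) the logical order is: first prove an \emph{extrinsic} curvature estimate for $H$-disks (Step~1), then deduce from it the extrinsic radius estimate (Step~2), the one-sided curvature estimate Theorem~\ref{th} (Step~3), and the weak chord-arc Theorem~\ref{thm1.1} (Step~4); only then does the \emph{intrinsic} curvature estimate Theorem~\ref{cest2} follow, and finally the intrinsic Radius Estimate Theorem~\ref{rest} (Step~5). Your argument uses Theorem~\ref{rest} at the outset to bound $H_n$, and then proposes to close the contradiction with Theorem~\ref{th} and Theorem~\ref{main2}; all three sit logically \emph{downstream} of the result you are trying to prove, so the argument as written is circular.

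More importantly, you have not identified the actual mechanism that produces the contradiction. In the paper's Step~1 one does blow up and see a helicoid forming, but the contradiction is \emph{not} obtained by appealing to the one-sided estimate or to stability of limit leaves. Instead, one extends (via Colding--Minicozzi multigraph-extension techniques, using auxiliary stable minimal disks in the complement) a pair of interlocking $2$-valued graphs $G^{up}_j,\,G^{down}_j$ inside $\Delta(j)$ out to a fixed extrinsic scale $\sim\delta$; these two multigraphs have mean curvature vectors pointing in \emph{opposite} vertical directions, yet as $j\to\infty$ they are forced to collapse onto a single $1$-valued graph over an annulus, which is impossible. That ``opposite-mean-curvature sheets cannot collapse'' argument is the missing idea; once the extrinsic estimate is in hand, the intrinsic one follows from the weak chord-arc Theorem~\ref{thm1.1} rather than from any direct helicoid/one-sided reasoning.
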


Since every point on an $H$-surface $M$ of positive injectivity radius ${r_0}$
is the center of a geodesic $H$-disk in $M$ of radius $r_0$,
the curvature estimate in Theorem~\ref{cest2}
has the following immediate consequence.

\begin{corollary} \label{corinj1} If $M$ is a
complete $(H>0)$-surface with positive injectivity radius ${r_0}$, then
 $$\sup_M |A_M|\leq K(r_0,H).$$
\end{corollary}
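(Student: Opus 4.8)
The plan is to reduce the corollary directly to the curvature estimate of Theorem~\ref{cest2} by producing, through each point of $M$, an $H$-disk whose center lies at a controlled intrinsic distance from its boundary. Fix an arbitrary $q\in M$. Since the injectivity radius of $M$ is at least $r_0$, I would first argue that the closed intrinsic geodesic ball $D_q:=\overline{B_M(q,r_0)}$ is a compact embedded topological disk: completeness and the Hopf--Rinow theorem make $D_q$ compact, while $I_M(q)\ge r_0$ says the exponential map $\exp_q$ restricts to a diffeomorphism from the closed Euclidean disk of radius $r_0$ in $T_qM$ onto $D_q$, so $D_q$ is homeomorphic to a closed unit disk. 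Being a compact subdomain of $M$ carrying the induced metric and orientation, $D_q$ has constant mean curvature $H>0$, hence it is an $H$-disk in the sense of the standing convention.

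Next I would record the distance identity $d_{D_q}(q,\partial D_q)=r_0$. Every boundary point $z\in\partial D_q$ satisfies $d_M(q,z)=r_0$, and, since we are inside the injectivity radius, the radial geodesic from $q$ to $z$ is length-minimizing and stays in $D_q$; therefore the distance measured within $D_q$ coincides with the ambient intrinsic distance and equals $r_0$. In particular $q$ belongs to the set $\{p\in D_q \mid d_{D_q}(p,\partial D_q)\ge r_0\}$ over which Theorem~\ref{cest2} controls the second fundamental form.

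With these two facts, the conclusion is immediate. Applying Theorem~\ref{cest2} to the $H$-disk $D_q$ with $\delta=r_0$ and $\cH=H$ (legitimate since $H\ge H=\cH$) gives $|A_{D_q}|(q)\le K(r_0,H)$. Because the second fundamental form of the subdomain $D_q$ agrees with that of $M$ at the interior point $q$, this reads $|A_M|(q)\le K(r_0,H)$. As $q\in M$ was arbitrary, taking the supremum yields $\sup_M|A_M|\le K(r_0,H)$, as claimed.

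The only place where genuine care is needed is the borderline case $I_M(q)=r_0$, and this is the step I expect to be the main (admittedly minor) obstacle: I must ensure that the \emph{closed} geodesic ball of radius exactly $r_0$ is still an embedded disk with boundary at intrinsic distance $r_0$, not merely the open balls of radius $r<r_0$. This matters because if one only used radii $r<r_0$, the argument would produce $|A_M|(q)\le K(r,H)$ for each $r<r_0$, which need not sharpen to $K(r_0,H)$, since $\delta\mapsto K(\delta,\cH)$ is a priori only non-increasing. I would resolve this by using completeness together with the uniform lower bound $r_0$ on the injectivity radius function to conclude that the closed $r_0$-balls are themselves embedded geodesic $H$-disks, so that the value $\delta=r_0$ may be inserted directly into Theorem~\ref{cest2}.
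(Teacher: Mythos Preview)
Your proposal is correct and follows exactly the paper's approach: the paper's entire argument is the sentence ``every point on an $H$-surface $M$ of positive injectivity radius $r_0$ is the center of a geodesic $H$-disk in $M$ of radius $r_0$,'' followed by an appeal to Theorem~\ref{cest2}. You have simply unpacked this in more detail.

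One small comment on your final paragraph: your proposed resolution of the borderline case $I_M(q)=r_0$ is not quite airtight. A uniform lower bound $I_M\geq r_0$ does not prevent the existence of a cut point at distance exactly $r_0$ from $q$, in which case the \emph{closed} ball $\overline{B_M(q,r_0)}$ can fail to be an embedded disk (two boundary points may be identified). The clean fix is the one you rejected: apply Theorem~\ref{cest2} to the closed ball of radius $r$ for each $r<r_0$, obtaining $|A_M|(q)\leq K(r,H)$. Since $K(\delta,\cH)$ may always be replaced by $\inf_{\delta'<\delta}K(\delta',\cH)$ without affecting the validity of Theorem~\ref{cest2} (any point at distance $\geq\delta$ from $\partial M$ is also at distance $\geq\delta'$ for every $\delta'<\delta$), one can assume $K$ is left-continuous in $\delta$, and then the limit $r\to r_0^-$ gives exactly $K(r_0,H)$. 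The paper does not dwell on this point.
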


As complete
$(H>0)$-surfaces of bounded curvature are properly embedded in $\rth$
by  Theorem~\ref{cor*},
%Corollary~\ref{cor.max},
then Corollary~\ref{corinj1} implies the next result.

\begin{corollary} \label{corinj2} A
complete $(H>0)$-surface with positive injectivity radius is properly embedded in $\rth$.
\end{corollary}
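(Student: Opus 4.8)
The plan is to reduce Corollary~\ref{corinj2} to two facts already in hand: positive injectivity radius forces a uniform bound on the second fundamental form (Corollary~\ref{corinj1}), and a complete embedded $(H>0)$-surface with bounded second fundamental form is properly embedded. The essential point is that, for $(H>0)$-surfaces, \emph{positive injectivity radius} and \emph{bounded curvature} are interchangeable hypotheses, and the properness conclusion is already available under the curvature hypothesis.

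First I would carry out the curvature step, which is exactly Corollary~\ref{corinj1}. Fix $p\in M$. Since $M$ is complete with empty boundary and has injectivity radius at least $r_0>0$, the intrinsic geodesic disk $D$ of radius $r_0$ centered at $p$ is an embedded $H$-disk in which $p$ sits at intrinsic distance $r_0$ from $\partial D$. Applying the Curvature Estimates (Theorem~\ref{cest2}) to $D$ with $\delta=r_0$ and $\cH=H$ gives $|A_M|(p)=|A_D|(p)\le K(r_0,H)$; since $p$ was arbitrary, $\sup_M|A_M|\le K(r_0,H)<\infty$.

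Next I would deduce properness from this curvature bound. With $\sup_M|A_M|<\infty$, the surface $M$ is a complete embedded $(H>0)$-surface of bounded curvature in $\rth$, and Theorem~\ref{cor*} supplies an embedded one-sided regular neighborhood of uniform width $\tau>0$ on its mean convex side together with uniform local area bounds. Heuristically this uniform tubular neighborhood prevents nonproper accumulation, since distinct sheets of $M$ cannot approach each other arbitrarily closely from the mean convex side.

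The hard part will be turning this into genuine properness without circularity, because Theorem~\ref{cor*} is stated for \emph{strongly Alexandrov embedded} hypersurfaces, and strong Alexandrov embeddedness already presupposes properness. I would avoid the circle exactly as in the minimal case (Corollary~\ref{corolMLCTR3}). Pass to the closure $\overline{M}$, which under the bounded-curvature hypothesis carries the structure of a weak $H$-lamination (the $(H>0)$ analog of Theorem~\ref{thmmlct}). Suppose toward a contradiction that $M$ is not proper, so that its limit set is nonempty and contains a limit leaf $L$ of this weak $H$-lamination. By the Stable Limit Leaf Theorem~\ref{thmstable}, $L$ is stable, whence by Theorem~\ref{thmstablecompleteplane} it is a plane; but a plane has mean curvature $0\neq H$, contradicting that every leaf has mean curvature $H>0$. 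Hence the limit set is empty and $M$ is properly embedded. This step is in fact cleaner than its $H=0$ analog, since here the mere existence of a stable complete limit leaf already contradicts $H>0$, rather than requiring the additional curvature comparison used in the proof of Corollary~\ref{corolMLCTR3}.
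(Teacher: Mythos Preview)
Your proof is correct and follows the same two-step strategy as the paper: first invoke Corollary~\ref{corinj1} to convert positive injectivity radius into a uniform bound on $|A_M|$, then conclude properness from bounded curvature. The paper dispatches the second step in one line by citing Theorem~\ref{cor*}, whereas you notice that Theorem~\ref{cor*}, as stated here, hypothesizes strong Alexandrov embeddedness (hence properness) and so cannot be quoted without comment. Your workaround via the weak $H$-lamination closure of $\overline{M}$, the Stable Limit Leaf Theorem~\ref{thmstable}, and Theorem~\ref{thmstablecompleteplane} is exactly the argument the paper uses in the minimal case (Corollary~\ref{corolMLCTR3}), and it is indeed cleaner here: a stable complete limit leaf would be a plane, immediately contradicting $H>0$. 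The lamination-closure step for $H>0$ with locally bounded curvature is supplied by the discussion in the Introduction and in Section~\ref{section6} (the $H>0$ analog of the lamination closure result, yielding a weak $H$-lamination), so your appeal to it is legitimate within the paper's framework. In short: same decomposition, but your second step is more self-contained than the paper's one-line citation.
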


Since there exists an $\ve>0$ such that for any $C>0$, every complete immersed
surface $\S$ in $\rth$ with $\sup_{\S}|A_\S|<C$ has injectivity radius greater
than $\ve/ C$, then Corollary~\ref{corinj1} also demonstrates
that a necessary and sufficient condition
for an $(H>0)$-surface in $\rth$ to have bounded curvature
is that it has positive injectivity radius.

\begin{corollary} \label{cor10.6}
A complete $(H>0)$-surface has positive injectivity radius
if and only if it has bounded curvature.
\end{corollary}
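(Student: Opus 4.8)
The plan is to prove the two implications separately, since one direction is essentially immediate from the machinery already assembled while the other is a standard piece of Riemannian geometry. Throughout, let $M$ be a complete $(H>0)$-surface in $\rth$.

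First I would treat the implication that positive injectivity radius forces bounded curvature. This is nothing more than Corollary~\ref{corinj1}: if $M$ has injectivity radius $r_0>0$, then every point of $M$ is the center of an intrinsic geodesic $H$-disk of radius $r_0$, and the curvature estimate of Theorem~\ref{cest2} applied with $\delta=r_0$ bounds $|A_M|$ at that center by $K(r_0,H)$; taking the supremum over $M$ gives $\sup_M|A_M|\le K(r_0,H)<\infty$. So this direction carries all the analytic depth of the Meeks--Tinaglia estimates (Theorems~\ref{rest2} and~\ref{cest2}) but requires no further argument here.

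The reverse implication, that bounded curvature yields a positive lower bound on the injectivity radius, is the part I would actually argue. Write $C_0=\sup_M|A_M|$; note $C_0\ge\sqrt2\,H>0$ since $|A_M|^2=\lambda_1^2+\lambda_2^2\ge\frac12(\lambda_1+\lambda_2)^2=2H^2$, so there is no degenerate case. The injectivity radius at a point $p$ is the minimum of the conjugate radius and half the length of the shortest geodesic loop based at $p$. For the conjugate radius I would use the Gauss equation: since $|K_M|=|\lambda_1\lambda_2|\le\frac12(\lambda_1^2+\lambda_2^2)=\frac12|A_M|^2\le\frac12 C_0^2$, the standard comparison bound gives conjugate radius at least $\pi\sqrt2/C_0$. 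To rule out short geodesic loops I would pass to the ambient picture: a geodesic $\gamma$ of $M$ has, as a space curve in $\rth$, curvature equal to its normal curvature on $M$, hence at most $|A_M|\le C_0$. A tangent-indicatrix argument then shows that an arc of curvature at most $C_0$ and length less than $\pi/C_0$ has its tangent vectors confined to an open hemisphere, so it is strictly monotone in some fixed direction and cannot return to its starting point; therefore every geodesic loop of $M$ has length at least $\pi/C_0$. Combining, $I_M\ge\min(\pi\sqrt2/C_0,\pi/(2C_0))=\pi/(2C_0)>0$, which is exactly the asserted lower bound (with universal constant $\ve=\pi/2$, in the notation of the paragraph preceding the corollary).

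The hard part here is conceptual bookkeeping rather than new analysis: essentially all the weight sits in Corollary~\ref{corinj1}. The one genuine estimate I must supply is the geodesic-loop length bound, and the subtlety there is that a geodesic loop need not be a smooth closed curve at $p$, so I cannot invoke closed-curve total-curvature theorems directly; the tangent-indicatrix monotonicity argument is what handles the possible corner at the base point cleanly. I would also remark that this reverse direction is purely intrinsic-to-extrinsic and so needs neither embeddedness nor completeness beyond geodesic completeness. Once both implications are in place, the equivalence of the final statement follows at once.
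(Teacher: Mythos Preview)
Your proposal is correct and follows the same decomposition as the paper: the direction ``positive injectivity radius $\Rightarrow$ bounded curvature'' is exactly Corollary~\ref{corinj1}, and the reverse direction is the general Riemannian fact that a bound on $|A_M|$ forces a scale-invariant lower bound on the injectivity radius. The paper simply asserts this reverse direction as known (``there exists an $\ve>0$ such that \dots\ injectivity radius greater than $\ve/C$''), whereas you supply a clean self-contained proof via the tangent-indicatrix argument, even producing the explicit constant $\ve=\pi/2$; this is a nice bonus but not a genuinely different route.
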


We now give an outline of Meeks and Tinaglia's
approach to proving Theorems~\ref{rest2}
and \ref{cest2}. \vspace{.2cm}

\noindent {  \bf Step 1:}\, {\em Prove
analogous curvature estimates for $(H>0)$-disks
in terms of extrinsic rather than intrinsic
distances of points to the boundaries of their disks.}

The proofs of this extrinsic
version of Theorem~\ref{cest2} is by contradiction
and relies on an accurate
geometric description of a $1$-disk near interior points where
the norm of the second fundamental form becomes
arbitrarily large. This geometric description
is inspired by the pioneering work of Colding and Minicozzi
in the minimal case~\cite{cm21,cm22,cm23}.

The extrinsic curvature estimates just alluded to are the following ones.

\begin{lemma}[First Extrinsic Curvature Estimate] \label{lem:excest}
Given $\delta>0$ and $H\in (0,\frac 1{2\delta})$,
there exists  $K_0(\delta, H)>0$ such that for any $H$-disk
{$M\subset \R^3$,}
%{$${\large \sup}_{\large \{p\in {\mathcal D} \; \mid \;
%d_{\rth}(p,\partial {\mathcal D})\geq  \delta \}}
%|A_{\mathcal D}|\leq K_0(\delta,H).$$}
\[
\sup_{\large \{p\in {M} \, \mid \,
d_{\rth}(p,\partial M)\geq \delta\}} |A_{M}|\leq  K_0(\delta,H),
\]
\end{lemma}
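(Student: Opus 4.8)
The plan is to argue by contradiction with a blow-up on the scale of curvature, following the strategy Colding and Minicozzi developed for minimal disks and adapting it to the $H>0$ setting. Suppose the estimate fails for some fixed $\delta>0$ and $H\in(0,\tfrac{1}{2\delta})$. Then there is a sequence of $H$-disks $M_n\subset\rth$ and points $p_n\in M_n$ with $d_{\rth}(p_n,\partial M_n)\geq\delta$ and $|A_{M_n}|(p_n)\to\infty$. After translating so that $p_n=\vec 0$ and replacing $M_n$ by the component of $M_n\cap\B(\vec 0,\delta)$ containing $\vec 0$, I obtain $H$-disks in $\B(\vec 0,\delta)$ with $\partial M_n\subset\partial\B(\vec 0,\delta)$ whose second fundamental form blows up at the center. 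First I would run the standard point-selection argument (exactly as in the proof of Theorem~\ref{thm3introd}) to replace $\vec 0$ by nearby points $q_n$ of almost-maximal curvature, so that, setting $\lambda_n=|A_{M_n}|(q_n)\to\infty$, the surfaces have $|A_{M_n}|$ bounded by roughly $\lambda_n$ on the extrinsic balls of radius $c\,\delta$ about $q_n$, with equality (up to $o(1)$) attained at $q_n$.

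The essential feature of the $H$-setting is the presence of two competing scales. Rescaling by $\lambda_n$, the surfaces $\widetilde M_n=\lambda_n(M_n-q_n)$ are disks of constant mean curvature $H/\lambda_n\to 0$, with $|A_{\widetilde M_n}|(\vec 0)=1$, uniformly bounded second fundamental form, and boundaries receding to infinity. Using the local area bound furnished by embeddedness and bounded curvature (Corollary~\ref{cor.max}), a subsequence converges in $C^2$ on compact sets to a complete, properly embedded, \emph{nonflat minimal} surface $M_\infty\subset\rth$ with $\vec 0\in M_\infty$, $|A_{M_\infty}|(\vec 0)=1$ and bounded curvature. Being a blow-up limit of the disks $M_n$, $M_\infty$ has genus zero and, by the point selection, is simply connected; hence by the uniqueness of the helicoid (Theorems~\ref{helicoid} and~\ref{ttmr}) it is a helicoid. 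Equivalently, at the scale $1/\lambda_n$ the disks $M_n$ near $q_n$ are arbitrarily close to a highly-sheeted helicoid, so $M_n$ contains a pair of interlocking multivalued graphs of small gradient spiraling along an almost-vertical axis through $q_n$.

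The remaining, and hardest, step is to turn this helicoidal blow-up picture into a contradiction that exploits both the extrinsic-distance hypothesis and the scale restriction $2\delta H<1$. Here the key tool is the one-sided curvature estimate for $H$-disks, the non-scale-invariant companion of the Colding--Minicozzi estimate of Theorem~\ref{thmcurvestimCM}, stated below as Theorem~\ref{th}. The idea is to use one of the nearly-flat sheets of the forming multigraph as a barrier graph: a point of $M_n$ lying at the scale of curvature from the axis sits on one side of the extension of this barrier and within extrinsic distance $\delta$ of it, and the one-sided estimate then bounds $|A_{M_n}|$ there independently of $n$, contradicting $\lambda_n\to\infty$. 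I expect this to be the genuine obstacle, for two reasons. In the minimal case the one-sided estimate is scale invariant, so no restriction on $\delta$ is needed; in the $H$-setting the estimate degrades with $H$ times the radius, and a \emph{uniform} constant is available only when the working ball sits below the mean-curvature length scale $\tfrac{1}{2H}$, which is exactly what $\delta<\tfrac{1}{2H}$, i.e. $2\delta H<1$, guarantees. Establishing the $H$-version of the one-sided estimate and tracking this loss of scale invariance is the crux of the whole argument. Finally, I note that this is only the \emph{extrinsic} estimate; the passage to the intrinsic statement of Theorem~\ref{cest2} would be handled separately, by combining it with the chord-arc property of $H$-disks (Theorem~\ref{main2}).
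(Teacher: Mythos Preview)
There are two genuine gaps. First, when you restrict $M_n$ to $\B(\vec 0,\delta)$, the component through $\vec 0$ is \emph{not} an $H$-disk: since the convex hull property fails for $(H>0)$-surfaces, this component is only a planar domain, and a priori its number of boundary components is unbounded. The paper handles this by first proving a curvature estimate for $H$-planar domains with zero flux and at most $k$ boundary components (Lemma~\ref{lem:excest2}), and separately bounding the number of boundary components by a universal $N_0$ (Proposition~\ref{number}); Lemma~\ref{lem:excest} follows by combining the two. Your blow-up limit is then a proper minimal \emph{planar domain} with zero flux, not a priori simply connected; the conclusion that it is a helicoid comes from the classification Theorem~\ref{classthm}, not from simple connectivity.

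Second, and more seriously, your plan to derive the contradiction from the one-sided curvature estimate for $H$-disks (Theorem~\ref{th}) is circular: in the logical structure of~\cite{mt7,mt9} that estimate is proved \emph{using} the extrinsic curvature estimates of Step~1, not the other way around. The paper's actual contradiction is different and does not use Theorem~\ref{th}. One shows that the $n$-valued graphs forming near the helicoidal blow-up contain $2$-valued subgraphs which, via Colding--Minicozzi extension results applied to auxiliary stable minimal disks in the complement of $\Delta(j)$ in $\B(\delta)$, extend to $2$-valued graphs $G^{up}_j,G^{down}_j$ on a fixed scale proportional to $\delta$, with a sheet of $G^{up}_j$ sandwiched between the two sheets of $G^{down}_j$. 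As $j\to\infty$ these collapse to a single graph over an annulus, contradicting the fact that $G^{up}_j$ and $G^{down}_j$ have mean curvature vectors pointing in opposite directions. Your speculation about the role of $2\delta H<1$ via loss of scale invariance in Theorem~\ref{th} is therefore off target; that restriction enters instead through the geometry of the intersection with the ball of radius $\delta$.
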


The arguments in the proof of the above lemma deal
only with the component $\Delta$ of the
intersection $\B(p,\delta)\cap M$ that contains $p$.
Since the convex hull property does not hold
for $(H>0)$-disks, in principle the topology of
the planar domain $\Delta$ might be arbitrarily
complicated, in the sense that the number $k$ of
boundary components might not have a universal
upper bound. However, this potential problem
is solved by proving, for any $k\in \N$, an extrinsic  curvature
estimate $K_0(\delta, H,k)$ valid when $\Delta $ has at most
$k$ boundary components, and then by demonstrating
the following result on the existence
of an upper bound $N_0$ on the number
of boundary components of $\Delta$:

\begin{proposition}[Proposition 3.1 in~\cite{mt7}] \label{number}
There exists $N_0\in \N$ such that
for any $R\leq{\frac{1}{2}}$ and $H\in [0,1]$, if $M$ is a compact $H$-disk
with $\partial M\subset \rth-\B(R)$
and $M$ is transverse to $\partial \B(R)$, then each component of
$M\cap \B(R)$ has at most $N_0$ boundary components.
\end{proposition}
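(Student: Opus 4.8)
The plan is to bound the number $m$ of boundary components of a component $\Delta$ of $M\cap\B(R)$ by first reducing the count to a curvature-concentration statement via Morse theory, and then ruling out unbounded concentration by a blow-up argument. The whole argument takes place in $\overline{\B(R)}$, where $R$ and $H$ enter only through the product $HR\le\frac12$. Indeed, for the position map $x$ and unit normal $N$ of $M$ one computes $\Delta_M|x|^2=4+4H\langle x,N\rangle$, so on $M\cap\overline{\B(R)}$, using $|\langle x,N\rangle|\le|x|\le R$ and $HR\le\frac12$, we get $\Delta_M|x|^2\ge 4-4HR\ge 2>0$. Thus $f:=|x|^2$ is strictly subharmonic on $\Delta$, while $\partial\Delta\subset S(R):=\partial\B(R)$ is a level set $\{f=R^2\}$ which, by the assumed transversality of $M$ to $S(R)$, contains no critical point of $f$.

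For a generic radius (or after an arbitrarily small perturbation of the center) $f$ is Morse on $\Delta$ with its maximum attained exactly on $\partial\Delta$, and Morse theory for a manifold-with-boundary whose boundary is the top regular level set gives $\chi(\Delta)=c_0-c_1+c_2$, where $c_\lambda$ is the number of interior critical points of index $\lambda$. At an interior critical point $p$ the position vector is normal, $\langle x,N\rangle=s$ with $|s|=|x(p)|=:t\le R$, and the Hessian eigenvalues are $2(1+s\kappa_i)$, with $\kappa_1,\kappa_2$ the principal curvatures of $M$ at $p$. Since $\kappa_1+\kappa_2=2H$ and $|H|\le 1$, two negative eigenvalues would force $|\kappa_1|,|\kappa_2|>1/t\ge 1/R\ge2$ with a common sign, contradicting $|\kappa_1+\kappa_2|\le 2$; hence $c_2=0$. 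As $\Delta$ is planar, $\chi(\Delta)=2-m$, so $m=2-c_0+c_1\le 1+c_1$ (a minimum of $f$ being forced in the interior gives $c_0\ge1$). Moreover an index-$1$ point has exactly one eigenvalue $2(1+s\kappa_i)<0$, forcing $|A_M|(p)\ge|\kappa_i|>1/t\ge 1/R$. This reduces the proposition to a uniform bound on $c_1$, the number of interior tangencies of $M$ with the concentric spheres $S(t)$, $t\le R$, at which $|A_M|>1/R$.

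The hard part is to bound $c_1$, and I would do this by contradiction and blow-up; this is where the real difficulty lies. After rescaling so that $R=1$ (whence $H\le\frac12$), suppose there were $H_n$-disks $M_n$ with components $\Delta_n$ of $M_n\cap\B(1)$ carrying at least $n$ interior index-$1$ points $q^n_j\in\overline{\B(1)}$, each with $|A_{M_n}|(q^n_j)>1$. By compactness these accumulate at some $q_\infty\in\overline{\B(1)}$, and I would blow up the $M_n$ near $q_\infty$ on the scale of curvature as in Theorem~\ref{thm3introd}. The key point that removes any circularity with the estimates being proved is that rescaling by the (unbounded) maximal curvature sends the mean curvatures $H_n$ to $0$, so every limit object is \emph{minimal}. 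The accumulation of infinitely many points of curvature $>1$ near $q_\infty$ forces the rescaled embedded surfaces to accumulate on themselves, producing in the limit a minimal lamination $\mathcal L$ of $\rth$ with a \emph{limit} leaf $L$; by the Stable Limit Leaf Theorem~\ref{thmstable}, $L$ is stable, hence a plane by Theorem~\ref{thmstablecompleteplane}.

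Finally, the one-sided curvature estimate for $0$-disks (Theorem~\ref{thmcurvestimCM}), applied near this planar limit leaf, would bound the second fundamental form of the nearby sheets of the rescaled $M_n$, contradicting the hypothesis that arbitrarily many of the high-curvature index-$1$ points $q^n_j$ concentrate there. I expect the genuinely delicate point to be the organization of this blow-up: one must choose centers and scales so that the clustering of the saddle-type tangencies forces a bona fide limit leaf (rather than a single smooth sheet), and so that the interior maximum principle (Theorem~\ref{thmintmaxprin}) comparing $M_n$ with the spheres $S(t)$ keeps the relevant sheets on a controlled side. This is exactly what converts ``infinitely many high-curvature tangencies in a bounded region'' into a contradiction, and yields the universal bound $N_0$.
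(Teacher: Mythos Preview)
The survey itself does not prove this proposition; it quotes it as Proposition~3.1 in~\cite{mt7} and only indicates that in the overall scheme it is established alongside the Second Extrinsic Curvature Estimate (Lemma~\ref{lem:excest2}), the $k$-dependent estimate being proved first. So there is no proof here to compare yours against, and I assess your argument directly.

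Your Morse-theoretic reduction is correct: the subharmonicity $\Delta_M|x|^2\ge 4-4HR\ge 2$, the Hessian eigenvalues $2(1+s\kappa_i)$ at interior critical points, the exclusion of index-$2$ points via $|\kappa_1+\kappa_2|=2|H|\le 2$, and the resulting bound $m\le 1+c_1$ are all valid. Reducing the proposition to a bound on the number $c_1$ of saddle-type tangencies with concentric spheres, each satisfying $|A_M|>1/R$, is a clean and legitimate first step.

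The gap is in your bound on $c_1$, and it is substantial. First, you ``blow up on the scale of curvature'' and refer to ``the (unbounded) maximal curvature,'' but you have not shown $\sup_{\Delta_n}|A_{M_n}|\to\infty$: you only know $|A_{M_n}|>1$ at $n$ points, and nothing rules out a uniform curvature bound, in which case there is nothing to rescale and you give no alternative argument. Second, even if curvature does blow up, the mechanism you invoke (Theorem~\ref{thm3introd}) produces a \emph{single} proper minimal surface through the origin with $|A|(\vec 0)=1$, not a lamination with a limit leaf; your assertion that the rescaled disks ``accumulate on themselves, producing \ldots a limit leaf'' is exactly the unproved step, and the tangency points on the original scale carry no evident information after dilation by the curvature. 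Your final paragraph concedes that ``the organization of this blow-up'' is the delicate point --- but that organization \emph{is} the content of the proposition. The Morse reduction is an attractive repackaging, but it does not bypass the hard geometric analysis that~\cite{mt7} carries out.
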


Since $\Delta$ is a subset of a disk, then every 1-cycle on it has zero flux. Hence,
Lemma~\ref{lem:excest} follows from Proposition ~\ref{number} and the
next extrinsic curvature estimate.

\begin{lemma}[Second Extrinsic Curvature Estimate] \label{lem:excest2}
Given $\delta>0$ and $H\in (0,\frac 1{2\delta})$,
there exists $K_0(\delta, H,k)>0$ such that any $H$-planar
domain $\Delta$ with zero flux and at most
$k$ boundary components satisfies:
%{$${\large \sup}_{\large \{p\in {\Delta} \; \mid \;
%d_{\rth}(p,\partial {\Delta})\geq  \delta \}}
%|A_{\Delta}|\leq K_1(\delta,H,k).$$}
\[
\sup_{\large \{p\in {\Delta} \, \mid \,
d_{\rth}(p,\partial \Delta)\geq \delta\}} |A_{\Delta}|\leq  K_1(\delta,H,k),
\]
\end{lemma}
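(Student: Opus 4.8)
The plan is to prove the estimate by contradiction, using a blow-up argument on the scale of curvature in the spirit of Theorem~\ref{thm3introd}. If the lemma were false for some fixed $\delta$, $H$ and $k$, there would be a sequence of $H$-planar domains $\Delta_n$, each with zero flux and at most $k$ boundary components, together with points $p_n\in\Delta_n$ satisfying $d_{\R^3}(p_n,\partial\Delta_n)\geq\delta$ and $\lambda_n:=|A_{\Delta_n}|(p_n)\to\infty$. First I would apply a point-picking argument to replace each $p_n$ by a nearby point of almost-maximal curvature, so that after translating $p_n$ to $\vec 0$ and rescaling by $\lambda_n$, the surfaces $\widehat\Delta_n=\lambda_n(\Delta_n-p_n)$ satisfy $|A_{\widehat\Delta_n}|(\vec 0)=1$, $|A_{\widehat\Delta_n}|\leq 1+\tfrac1n$ on extrinsic balls of radius tending to infinity, and $\partial\widehat\Delta_n$ diverging to infinity. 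The key feature of this rescaling is that the mean curvature of $\widehat\Delta_n$ equals $H_n=H/\lambda_n\to 0$.

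Next I would extract a limit and classify it. The uniform bound on $|A_{\widehat\Delta_n}|$ and the Arzel\`a--Ascoli theorem yield a subsequence converging in $C^2$ on compact sets to a complete, connected surface $M_\infty$ through $\vec 0$ with $|A_{M_\infty}|\leq 1$ and $|A_{M_\infty}|(\vec 0)=1$; since $H_n\to 0$, the limit $M_\infty$ is minimal, and since its curvature does not vanish at $\vec 0$, it is non-flat. Bounded curvature forces $M_\infty$ to be properly embedded by the Regular Neighborhood Theorem (Corollary~\ref{cor.max}), and because neither the genus nor the rank of the first homology increases in a blow-up on the scale of curvature, $M_\infty$ has genus zero and first Betti number at most $k-1$, hence finite topology. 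By the classification of complete embedded minimal surfaces of finite topology and genus zero (Theorem~\ref{kks2}, together with Theorems~\ref{thmlr} and \ref{ttmr}), the non-flat surface $M_\infty$ is therefore a catenoid or a helicoid. The zero-flux hypothesis now enters decisively: flux is a homological invariant (Theorem~\ref{fluxK} and Remark~\ref{remark3.5}), so the vanishing of the flux of every $1$-cycle on $\Delta_n$ persists under translation and rescaling and passes to $M_\infty$; as the catenoid has non-zero flux across its waist circle, $M_\infty$ must be a helicoid.

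The main obstacle, and the genuinely hard part, is to rule out the helicoid. This cannot be done by minimal-surface considerations alone, since a sequence of rescaled helicoids converges to a helicoid and the minimal one-sided curvature estimate (Theorem~\ref{thmcurvestimCM}) is scale invariant; here the fixed positive lower bound on $H$ is indispensable. The plan is to apply the Limit Lamination Theorem for $H$-Disks (Theorem~\ref{thmlimitlaminCM}) to $\widehat\Delta_n$, which exhibits, near the forming helicoid, two interlocking multivalued graphs over a foliation by nearly flat planes with curvature concentrating along the helicoidal axis. On each side of a plane leaf and away from the axis I would then invoke the one-sided curvature estimate for $H$-disks (Theorem~\ref{th}), whose non-scale-invariance means that its conclusion is sensitive to the true value of the mean curvature rather than to the vanishing quantity $H_n$. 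Quantifying how the fixed constant $H$ obstructs the unbounded spinning of the multivalued graphs --- for instance through the Harnack inequality for the separation function $w$ governed by the divergence-form equation (\ref{eq:PDEseparat}) --- is what produces the contradiction. The bound $k$ on the number of boundary components enters to guarantee that the blow-up limit has finite topology, so that the genus-zero classification applies and excludes in particular the infinite-topology Riemann minimal examples; it also yields, via the contradiction scheme with $\delta$, $H$ and $k$ all fixed, the claimed dependence of the final constant $K_1(\delta,H,k)$ on these three quantities.
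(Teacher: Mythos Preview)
Your first two paragraphs track the paper's argument closely: contradiction, point-picking to a point of almost-maximal curvature, blow-up by $\lambda_n=|A_{\Delta_n}|(p_n)$, mean curvature $H/\lambda_n\to 0$, convergence to a non-flat proper minimal planar domain with zero flux, and elimination of every possibility except the helicoid via the genus-zero classification and the flux constraint. The paper invokes Theorem~\ref{classthm} directly rather than the finite-topology route through Theorem~\ref{kks2}, but since the bound $k$ on boundary components forces finite topology in the limit, this is only a cosmetic difference.

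The third paragraph, however, has two real problems. First, it is circular: in the paper's logical architecture, Lemma~\ref{lem:excest2} is part of Step~1, and the one-sided curvature estimate for $H$-disks (Theorem~\ref{th}) is Step~3, whose proof the paper explicitly says \emph{requires} the extrinsic curvature estimates of Step~1. You therefore cannot invoke Theorem~\ref{th} here. For similar reasons, appealing to the $H$-disk version of Theorem~\ref{thmlimitlaminCM} is suspect, and in any case your $\widehat\Delta_n$ are planar domains with up to $k$ boundary components, not disks.

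Second, and more substantively, your mechanism for excluding the helicoid is only a gesture. The actual contradiction in the paper is geometric and does not go through scale (non-)invariance of any curvature estimate. Near the forming helicoid one identifies, inside $\Delta(j)$, a pair of highly-sheeted multivalued graphs $\wh G^{up}_j$ and $\wh G^{down}_j$ whose mean curvature vectors point in \emph{opposite} vertical directions; this dichotomy is exactly where $H>0$ enters. The hard step --- carried out using the Colding--Minicozzi extension theory applied to stable minimal disks constructed in the complement of $\Delta(j)$ in $\B(\delta)$ --- is to show that for large $n$ these contain $2$-valued subgraphs which extend as $2$-valued graphs on a fixed scale proportional to $\delta$, with a sheet of $G^{up}_j$ trapped between the two sheets of $G^{down}_j$. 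As $j\to\infty$ the interleaved sheets collapse to a single graph over an annulus, which is impossible because a single graph cannot carry two opposite constant mean curvature orientations. This opposite-orientation collapse is the missing idea; the Harnack inequality for the separation function is an ingredient in controlling the multigraphs, but it is not by itself the contradiction.
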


For details on the following  outline
of the proof of Lemma~\ref{lem:excest2},
see~\cite{mt7}. Arguing by contradiction, assume that the lemma fails.
One can easily reduce the proof of Lemma~\ref{lem:excest2} to the following situation.
There exists a sequence  $\{ \Delta(j)\} _j$ of  $1$-planar domains with
zero flux satisfying the following properties for each $j\in \N$:
\ben[1.]
\item $\Delta(j)\subset \B(\delta)$ and $\partial \Delta(j)\subset \partial\B(\delta)$.
\item $\vec{0}$ is a point of almost-maximal curvature on $\Delta(j)$
with $|A_{\Delta (j)}|
{(\vec{0})} >j$,
in the sense that there exists a sequence of positive numbers $\de_j\to 0$
with $ \de_j\cdot |A_{\Delta (j)}|(\vec{0}) \to \infty$ and
$\lim_{j\to \infty}\max\{|A_{\Delta (j)}|(q)\mid q\in \B(\de_j)\}\cdot|A_{\Delta (j)}|(\vec{0})= 1$.
\een

The proof of the  Local Picture Theorem on the Scale of Curvature (Theorem~\ref{thm3introd}) implies,
after replacing by a subsequence, that the homothetically scaled surfaces
$$\S(j)=|A_{\Delta (j)}|{(\vec{0})}\cdot \Delta(j)$$
converge to a {non-flat,}
 proper 0-planar domain $\S_\infty\subset \R^3$ with zero flux.
By the classification Theorem~\ref{classthm} of such 0-planar domains,
$\S_\infty$ must be helicoid that we will assume
has a vertical axis passing through the origin. What this means geometrically is
that on the scale of curvature, vertical helicoids are forming around
the point $\vec{0}\in \Delta(j)$ as $j\to \infty$.

Hence, after replacing by a subsequence,
for any $n\in \N$, there exists an integer $J(n)$ such that for $j>J(n)$, the following
statements hold.
On the scale of curvature and near the origin, there exists in $\Delta(j)$
a pair $\wh{G}^{up}_j,\wh{G}^{down}_j$ of $n$-valued graphs
which correspond to ``large" $n$-valued graphs in the scaled almost-helicoids and that
have  ``small" gradients for their
$n$-valued graphing functions; here the superscripts ``up" and ``down" refer to the
direction of their mean curvature vectors.
The most difficult  part in demonstrating  Step 1 is to prove that, for $n$ sufficiently large,
the $n$-valued graphs  $\wh{G}^{up}_j,\wh{G}^{down}_j$  contain 2-valued
subgraphs that extend to 2-valued graphs ${G}^{up}_j,{G}^{down}_j$
in $\Delta(j)$ on a scale proportional
to $\delta$. Furthermore, it is shown that the
2-valued graph ${G}^{up}_j$ can be chosen to contain a sheet that lies between
the two sheets of ${G}^{down}_j$.   Crucial in the proof of this extension results
is the work
of Colding and Minicozzi~\cite{cm21} on the extension of multi-valued graphs inside of
certain 0-disks in $\rth$.  In our situation, one applies their results to
stable 0-disks $E(n)$ that are shown to  exist in the complement of $\Delta(j)$
in the small ball $\B(\delta)$; see~\cite{mt7} for details.
Finally one obtains a contradiction by showing that one can
choose the multigraphs ${G}^{up}_j,{G}^{down}_j$, so that as $j\to \infty$, they collapse to a
$1$-graph over an annulus in the$(x_1,x_2)$-plane, which is impossible since
${G}^{up}_j,{G}^{down}_j$ have oppositely signed mean curvatures. \vspace{.3cm}

\noindent {  \bf Step 2:}\, {\em Relate the existence of
an extrinsic curvature estimate in Step 1 to
the existence of  extrinsic radius estimates for $H$-disks.}

Arguing again by contradiction,
we may assume that $E(n)$ is a sequence of 1-disks with $\vec{0}\in E(n)$ and
$\partial E(n) \subset \rth-\B (n+1)$. By Step 1, the 1-planar domains $E(n)\cap \B(n)$
have bounded  norm of their second fundamental
forms. By rather standard arguments like those used in the proof
of the Dynamics Theorem~\ref{T}, a subsequence of the $E(n)\cap \B(n)$ converges to
a strongly Alexandrov embedded 1-surface $M$ in $\rth$ with zero flux.  But
item~\ref{inf2} in the Minimal Element
Theorem~\ref{sp2} implies that there exists a Delaunay surface which
is a limit of a sequence of translations of subdomains
in $M$. This is contradiction, since a Delaunay surface has non-zero flux.
\vspace{.3cm}

\noindent {  \bf Step 3:}\, {\em Prove  the following one-sided curvature
estimate for $H$-disks. }

\begin{theorem}[One-sided curvature estimate for $H$-disks, Meeks, Tinaglia \cite{mt9}] \label{th}
There exist $\ve\in(0,\frac{1}{2})$ and
$C \geq 2 \sqrt{2}$ such that for any $R>0$, the following holds.
Let $M\subset \R^3$ be an $H$-disk such that $M\cap \B(R)\cap\{x_3=0\}
=\O$ and $\partial M\cap \B(R)\cap\{x_3>0\}=\O.$
Then,
\begin{equation} \label{eq1}
\sup _{x\in M\cap \B(\ve R)\cap\{x_3>0\}} |A_M|(x)\leq \frac{C}{R}.
\end{equation} In particular,
if $M\cap \B(\ve R)\cap\{x_3>0\}\neq\O$, then $H\leq \frac{C}{R}$.
\end{theorem}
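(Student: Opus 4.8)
The plan is to first peel off the ``in particular'' assertion, which is a formal consequence of \eqref{eq1}. At any point $x$ of an $H$-surface the principal curvatures satisfy $\lambda_1+\lambda_2=2H$, so $|A_M|(x)^2=\lambda_1^2+\lambda_2^2\ge \frac{1}{2}(\lambda_1+\lambda_2)^2=2H^2$, i.e. $|A_M|(x)\ge\sqrt{2}\,H$. Hence if the upper region $M\cap\B(\ve R)\cap\{x_3>0\}$ is nonempty, choosing $x$ there and using \eqref{eq1} gives $\sqrt{2}\,H\le|A_M|(x)\le C/R$, so $H\le C/R$. Thus all the content lies in the curvature bound \eqref{eq1}, which I would prove by contradiction.

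Suppose \eqref{eq1} fails. Then for every $n$ there is an $H_n$-disk $M_n$ and a radius $R_n>0$ satisfying the one-sided hypotheses on $\B(R_n)$, together with a point $x_n\in M_n\cap\B(R_n/n)\cap\{x_3>0\}$ at which $|A_{M_n}|(x_n)>n/R_n$. Rescaling by $1/R_n$ I may assume $R_n=1$, at the cost of replacing $H_n$ by $\wt H_n:=H_nR_n$; the rescaled disks $\wt M_n$ satisfy $\wt M_n\cap\B(1)\cap\{x_3=0\}=\emptyset$ and $\partial\wt M_n\cap\B(1)\cap\{x_3>0\}=\emptyset$, and carry points $\wt x_n\to\vec{0}\in\{x_3=0\}$ (since $\wt x_n\in\B(1/n)\cap\{x_3>0\}$) with $|A_{\wt M_n}|(\wt x_n)>n$. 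My first claim is that $\wt H_n$ stays bounded. Indeed, inside $\B(1)$ the surface $\wt M_n$ never meets $\{x_3=0\}$, so any path on $\wt M_n$ from $\wt x_n$ to $\partial\wt M_n$ must exit $\B(1)$ (the boundary in $\B(1)$ lies only in $\{x_3<0\}$), and therefore has length at least $1-|\wt x_n|\ge 1-\frac{1}{n}$. By the radius estimate (Theorem~\ref{rest}) this length is $<\mathcal{R}/\wt H_n$, forcing $\wt H_n<\mathcal{R}/(1-\frac{1}{n})$; passing to a subsequence, $\wt H_n\to H_\infty\in[0,\mathcal{R}]$. (The cases $\wt H_n=0$ fall under the minimal one-sided estimate, Theorem~\ref{thmcurvestimCM}.)

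Next I would blow up on the scale of curvature. Choosing points $y_n\in\wt M_n$ of almost-maximal curvature near $\vec{0}$ and setting $\l_n=|A_{\wt M_n}|(y_n)\to\infty$, the Local Picture Theorem on the Scale of Curvature (Theorem~\ref{thm3introd}) gives that $\l_n(\wt M_n-y_n)$ converges to a complete, properly embedded, non-flat surface $M_\infty\subset\rth$ with $|A_{M_\infty}|\le1=|A_{M_\infty}|(\vec{0})$; since $\wt H_n/\l_n\to0$ the limit $M_\infty$ is minimal, and being a blow-up of disks it is simply connected, so by Theorem~\ref{helicoid} it is a helicoid, with vertical axis after a rotation. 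The plane $\{x_3=0\}$ rescales to the horizontal plane at height $-\l_n\,x_3(y_n)$, and the rescaled surface lies above it; hence if $\l_n\,x_3(y_n)$ is bounded along a subsequence, $M_\infty$ is trapped in a half-space $\{x_3\ge -h\}$, which is impossible for a helicoid whose third coordinate is unbounded below. This settles the ``matched-scale'' regime where the curvature concentration is at height comparable to $1/\l_n$ above the plane.

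The hard part will be excluding the remaining regime $\l_n\,x_3(y_n)\to\infty$, in which the constraining plane escapes to $-\infty$ on the curvature scale and the local helicoid blow-up detects no obstruction. Here I cannot argue purely locally; instead I would invoke the full structure of the unrescaled limit: after passing to a subsequence, the $\wt M_n$ converge to a weak $H_\infty$-lamination of $\B(1)$ away from a singular set $S\ni\vec{0}$, and near $S$ the disks are modelled on a double spiral staircase (two multivalued graphs spiralling together) as in the Limit Lamination Theorem for $H$-disks (Theorem~\ref{thmlimitlaminCM}). The one-sided hypothesis must then be played against this spiralling: following the two interpenetrating multigraphs along the axis toward $\{x_3=0\}$, they must either cross the plane, contradicting $\wt M_n\subset\{x_3>0\}$ in $\B(1)$, or have their separation collapse, which is ruled out by the Harnack inequality for the separation function $w$ solving the divergence-form equation \eqref{eq:PDEseparat}--\eqref{eq:PDEseparat1}, combined with the interior curvature bound on the nearly-minimal sheets supplied by the minimal one-sided estimate (Theorem~\ref{thmcurvestimCM}) applied to stable pieces in the complement. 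Making this dichotomy quantitative and uniform in $n$ is the crux, exactly as in the minimal argument of Colding and Minicozzi; once it is in place both regimes close and \eqref{eq1} follows.
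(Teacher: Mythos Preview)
Your derivation of the ``in particular'' clause from \eqref{eq1} is correct. The substantive problems are in your argument for \eqref{eq1} itself.

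\medskip
\textbf{Circularity.} To bound $\wt H_n$ you invoke the radius estimate Theorem~\ref{rest}. But in the paper's logical architecture (Section~\ref{sec:MT}), Theorem~\ref{th} is Step~3, the weak chord-arc estimate (Theorem~\ref{thm1.1}) is Step~4 and explicitly requires Step~3, and the intrinsic radius estimate Theorem~\ref{rest} is Step~5, which in turn rests on Step~4. So Theorem~\ref{rest} depends on Theorem~\ref{th}, and your appeal to it is circular. You cannot simply swap in the \emph{extrinsic} radius estimate of Step~2 either: your argument only bounds the \emph{intrinsic} distance from $\wt x_n$ to $\partial\wt M_n$ from below (paths must leave $\B(1)$), whereas $\partial\wt M_n$ may well have points in $\B(1)\cap\{x_3<0\}$ arbitrarily close to $\wt x_n$ in $\R^3$, so the extrinsic distance to the boundary need not be large. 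Thus your bound on $\wt H_n$ is not established by any result available at this stage of the development.

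\medskip
\textbf{The ``hard part'' is not a proof.} In the regime $\l_n\,x_3(y_n)\to\infty$ you invoke Theorem~\ref{thmlimitlaminCM}, but that theorem is stated for disks in balls of radii $R_n\to\infty$, not for a fixed ball $\B(1)$; it does not apply as written. What remains is a narrative (``multigraphs must cross the plane or collapse, contradicting Harnack'') that names the expected contradiction without supplying the estimates that make it work uniformly in $n$ and in the presence of nonzero $H$. The paper's account of the actual proof lists a rather different toolkit: the extrinsic curvature estimates and multigraph \emph{extension} results from Step~1, technical results on $0$-laminations of $\R^3$ with finitely many singularities, the Stable Limit Leaf Theorem~\ref{thmstable}, the Local Removable Singularity Theorem~\ref{tt2}, and the Local Picture Theorem on the Scale of \emph{Topology} (Theorem~\ref{tthm3introd}), in addition to the minimal one-sided estimate Theorem~\ref{thmcurvestimCM}. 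Your sketch does not touch most of these, and in particular you never explain how to control the limit object in $\B(1)$ once the curvature blow-up is far (on its own scale) from the constraining plane.

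\medskip
In short: the overall contradiction strategy and the easy half-space case are reasonable, but the bound on $\wt H_n$ is circular as written, and the decisive regime is only gestured at rather than proved.
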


A key technical result that is needed in the proof of the above one-sided curvature
estimate is the existence of the extrinsic curvature estimates in Step 1.
Perhaps even more important in the
proof of Theorem~\ref{th} are the extension results for multi-valued graphs
in the surfaces $\Delta(j)$ described in the sketch of that proof  of Step~1,
and some rather technical results on 0-laminations of $\rth$ with a finite
number of singularities. Some of the tools used in the proof of Step~3
include the one-sided curvature estimates for
0-disks in Theorem~\ref{thmcurvestimCM}, the Stable Limit Leaf Theorem~\ref{thmstable},
the Local Removable Singularity Theorem~\ref{tt2} and
the Local Picture Theorem~\ref{tthm3introd} on the Scale of Topology.

\vspace{.2cm}
\noindent {  \bf Step 4:}\, {\em Relate the existence of
an extrinsic curvature estimate in Step 1 to
the existence of  an intrinsic curvature
estimate via the following weak chord-arc result for $H$-disks.}

Recall from Definition~\ref{defM(p,R)} that given a point $p$ in a surface
$\Sigma\subset \rth$, $\S (p,R)$ denotes the closure of the component of
$\Sigma \cap \B(p,R)$ passing through $p$.

\begin{theorem}[Weak Chord Arc Estimate, Theorem 1.2
in~\cite{mt8}]
\label{thm1.1}
 There exists a $\delta_1 \in (0,
\frac{1}{2})$  such that the following holds.
Given an   $H$-disk in $\S \subset \rth$ and an
intrinsic closed ball $\ov{B}_\S(x,R)$ which is contained in $\S-
\partial \S$, we have
\ben[1.]
\item $\S (x,\delta_1 R)$ is a disk with
$\partial \Sigma(\vec{0},\delta_1 R)\subset \partial \B(\de_1R)$.
\item $ \S (x, \delta_1 R) \subset B_\S (x, \frac{R}{2}).$
\een
\end{theorem}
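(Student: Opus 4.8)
The plan is to argue by contradiction and compactness, following and adapting the proof of the analogous chord-arc estimate for $0$-disks by Colding and Minicozzi in~\cite{cm35}. First I would normalize by scaling: replacing $\S$ by $\frac{1}{R}\S$ I may assume $R=1$, at the cost of replacing the mean curvature $H$ by $HR$, and the hypothesis $\ov{B}_\S(x,1)\subset \S-\partial\S$ together with the Radius Estimate (Theorem~\ref{rest}) forces $HR<\mathcal R$, so after normalization the mean curvature ranges in the bounded interval $[0,\mathcal R)$. Translating so that $x=\vec 0$, the goal becomes the existence of a universal $\delta_1\in(0,\tfrac12)$ for which $\S(\vec 0,\delta_1)$ is a disk with $\partial\S(\vec 0,\delta_1)\subset\partial\B(\delta_1)$ and $\S(\vec 0,\delta_1)\subset B_\S(\vec 0,\tfrac12)$. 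If no such $\delta_1$ existed, there would be a sequence of $H_j$-disks $\S_j$ with $H_j\in[0,\mathcal R)$ and $\ov{B}_{\S_j}(\vec 0,1)\subset \S_j-\partial\S_j$ for which one of the two assertions fails when $\delta_1=\ve_j\to 0$.

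The heart of the matter is controlling the connected component $\S_j(\vec 0,\rho)$ of $\S_j\cap\B(\rho)$ through the origin for a small fixed extrinsic radius $\rho$. Note that the inclusion $B_{\S_j}(\vec 0,\rho)\subset\B(\rho)$ always holds, since extrinsic distance never exceeds intrinsic distance; the entire difficulty is the reverse comparison, which fails for $\S_j$ as a whole because of helicoidal spiraling and which the theorem rescues only for the single component through $\vec 0$. To bound the intrinsic size of this component I would use the First Extrinsic Curvature Estimate (Lemma~\ref{lem:excest}) and, decisively, the One-sided Curvature Estimate for $H$-disks (Theorem~\ref{th}): the latter guarantees that a sheet of $\S_j$ meeting $\B(\rho)$ which stays to one side of a plane has second fundamental form controlled at scale $\rho$, which is precisely what forbids a sheet from winding back to be extrinsically close while remaining intrinsically far. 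Integrating such a curvature bound along a path in $\S_j(\vec 0,\rho)$ joining $\vec 0$ to an arbitrary point would then bound its intrinsic distance by a fixed multiple of $\rho$, giving the second assertion for $\delta_1=\rho$; the same graphical control shows that $\S_j(\vec 0,\rho)$ is a graph-like disk meeting $\partial\B(\rho)$ transversally, which is the first assertion.

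To turn this into a contradiction I would extract a blow-up limit and split into two cases. If $\sup_{\S_j(\vec 0,\rho)}|A_{\S_j}|$ stays bounded for some fixed $\rho$, then after passing to a subsequence the $\S_j$ converge near $\vec 0$ to a smooth $H_\infty$-surface through the origin, for which both assertions hold with a definite $\delta_1>0$; since $\ve_j\to 0<\delta_1$ this contradicts their failure for large $j$. If instead the curvature blows up, I would rescale by $\lambda_j=\sup|A_{\S_j}|\to\infty$ and invoke the Limit Lamination Theorem for $H$-Disks (Theorem~\ref{thmlimitlaminCM}) together with the classification of minimal planar domains (Theorem~\ref{classthm}) to identify the pointed limit as a vertical helicoid, i.e.\ a collapsing double spiral staircase forming inside an arbitrarily small extrinsic ball. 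The One-sided Curvature Estimate is then used a second time to show that near $\vec 0$ the two multi-valued graphs of this forming helicoid would force sheets of opposite mean-curvature orientation to collapse together, which is impossible for an embedded $H$-disk---this is the same collision mechanism invoked at the end of Step~1 in the sketch of Lemma~\ref{lem:excest2}.

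I expect the curvature blow-up case to be the main obstacle. The subtlety is that the second assertion concerns the \emph{connected component} through $\vec 0$, so one cannot simply appeal to a global curvature bound; one must track how a single sheet can re-enter a small extrinsic ball and quantify, through the one-sided estimate and the separation estimates for multi-valued graphs, that it cannot do so while staying in the component of $\vec 0$. Once the intrinsic diameter of $\S(\vec 0,\delta_1)$ is bounded by $\tfrac12$ and the component is shown to be a disk with boundary on $\partial\B(\delta_1)$, both assertions of the theorem follow, with $\delta_1$ the constant produced by the compactness contradiction.
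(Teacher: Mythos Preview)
Your overall strategy—contradiction, normalization, and reliance on the one-sided curvature estimate in the spirit of Colding–Minicozzi—matches the paper's own one-line description: once Theorem~\ref{th} (Step~3) is in hand, the proof of Theorem~\ref{thm1.1} proceeds along the lines of Proposition~1.1 in~\cite{cm35}.

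There is, however, a genuine circularity in your normalization step. You invoke the intrinsic Radius Estimate (Theorem~\ref{rest}) to conclude $HR<\mathcal R$ after rescaling, but in the logical architecture laid out in Steps~1--5, Theorem~\ref{rest} is Step~5 and is \emph{derived from} the Weak Chord Arc Estimate (Step~4), which in turn yields the intrinsic curvature estimate Theorem~\ref{cest}. So Theorem~\ref{rest} is not available here. What is available at this stage is only the \emph{extrinsic} radius estimate from Step~2, and since the hypothesis of Theorem~\ref{thm1.1} is intrinsic, that does not apply directly either. The correct route—and the one the paper points to—is to work uniformly in $H$ from the start: the one-sided curvature estimate (Theorem~\ref{th}) is stated for arbitrary $H$-disks with no a~priori bound on $H$, and its conclusion $|A_M|\leq C/R$ (hence also $H\leq C/R$) on the relevant extrinsic scale is precisely what drives the Colding–Minicozzi iteration. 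You should drop the appeal to Theorem~\ref{rest} and let the scale-uniform one-sided estimate carry the argument.

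A secondary remark: your blow-up dichotomy invoking the Limit Lamination Theorem (Theorem~\ref{thmlimitlaminCM}) and the helicoid collision mechanism is more machinery than the Colding–Minicozzi argument actually uses. Their Proposition~1.1 proceeds by an iterative application of the one-sided estimate across dyadic scales, without passing to a blow-up limit or classifying the limiting surface; adapting that iteration to $H$-disks via Theorem~\ref{th} is both closer to the paper's indicated approach and sidesteps any question of which limit theorems are logically available at Step~4.
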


Theorem~\ref{cest} is a straightforward consequence
of the Theorem~\ref{thm1.1}. Once one has obtained the 1-sided
curvature estimate in Step 3, the strategy of the proof of
Theorem~\ref{thm1.1} is similar to the strategy of the
proof of Proposition~1.1 in~\cite{cm35} by Colding and Minicozzi.\vspace{.1cm}

\vspace{.2cm}

\noindent {  \bf Step 5:}\, {\em Relate the existence of
an intrinsic curvature estimate in Step 4 to
the existence of  radius estimates for $H$-disks.}
This final step of the proof of Theorem~\ref{cest2} is similar to that
of Step~2, which completes our sketch of the proofs of Theorems~\ref{rest2} and \ref{cest2}.
\vspace{.4cm}

In~\cite{mt7}, Meeks and Tinaglia also  obtain curvature estimates
for $(H>0)$-annuli. However, while these estimates
are analogous to
the curvature estimates in Theorem~\ref{cest2} for $H$-disks,
they necessarily must depend on the length of the
flux vector of the generator
of the first homology group of the given annulus.
An immediate consequence of these curvature estimates for $H$-annuli is
the next Theorem~\ref{annulus} on the
properness of complete $H$-surfaces of finite topology.
The next theorem is what allows us to obtain
the properness and curvature estimates for classical finite topology
$(H>0)$-surfaces described in Section~\ref{subsecends}.

\begin{theorem} \label{annulus} A complete $H$-surface  with smooth
compact boundary (possibly empty) and finite topology has
bounded curvature and  is properly embedded in $\rth$.
\end{theorem}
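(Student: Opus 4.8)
The plan is to prove the two conclusions separately: first that $M$ has bounded second fundamental form, and then to deduce properness from this curvature bound. The curvature bound is the substantive part, and it is here that the curvature estimates for $H$-annuli enter. Throughout I would treat the cases $H>0$ and $H=0$ separately, since the former uses the new annulus estimates while the latter reduces to the classical structure theory already recorded above.

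Consider first the case $H>0$. Since $M$ has finite topology and compact boundary, I would write $M=M_0\cup E_1\cup\dots\cup E_k$, where $M_0$ is a compact core with smooth boundary and each $E_i$ is an annular end with compact boundary curve $\partial E_i\subset\partial M_0$. On $M_0$ the function $|A_M|$ is bounded by compactness, so it suffices to bound $|A_M|$ on each of the finitely many ends $E_i$. Each $E_i$ is an $H$-annulus whose first homology is generated by a single loop $\gamma_i$; by Theorem~\ref{fluxK} and Remark~\ref{remark3.5} the flux vector $F(\gamma_i)$ depends only on the homology class of $\gamma_i$, hence is a fixed finite vector attached to the end. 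I would then apply the curvature estimates for $H$-annuli of~\cite{mt7}---the analogue of Theorem~\ref{cest2} whose constant additionally depends on the length $|F(\gamma_i)|$ of this flux---to conclude that, for a fixed $\delta>0$, one has $|A_M|\le K(\delta,H,|F(\gamma_i)|)$ at every point of $E_i$ at intrinsic distance at least $\delta$ from $\partial E_i$. The complementary region $\{p\in E_i: d_{E_i}(p,\partial E_i)\le\delta\}$, together with $M_0$, is closed and bounded in the complete metric space $M$, hence compact by the Hopf--Rinow theorem, so $|A_M|$ is bounded there as well. Taking the maximum over the finitely many ends and the compact region yields a global bound $\sup_M|A_M|<\infty$.

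Once bounded curvature is in hand, properness follows quickly. For $H>0$, a complete $(H>0)$-surface of bounded curvature in $\rth$ is properly embedded by the One-sided Regular Neighborhood Theorem~\ref{cor*} (equivalently, bounded curvature yields positive injectivity radius, whence Corollary~\ref{corinj2} applies); embeddedness is already built into the definition of an $H$-surface. For the remaining case $H=0$, properness is the Colding--Minicozzi result (Corollary~\ref{corolthmmlctR3}, extended to compact boundary by Remark~\ref{remarkENDS}), and the curvature bound follows from the resulting end structure: a proper minimal surface of finite topology either has finite total curvature, with ends asymptotic to a plane or a catenoid (Theorem~\ref{thmCM2}), or has a single end asymptotic to a helicoid (Theorem~\ref{ttmr}); in either situation $|A_M|$ is bounded on the ends, and on the compact core it is bounded by compactness.

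The main obstacle is the curvature estimate on the ends in the $H>0$ case, which rests entirely on the annulus curvature estimates of~\cite{mt7}; these are considerably deeper than the disk estimate of Theorem~\ref{cest2}, precisely because the convex-hull property fails and one cannot discard the flux, so the estimating constant genuinely depends on $|F(\gamma_i)|$. The point that renders this dependence harmless is that the flux of an annular end is a single homological invariant, fixed once and for all by Theorem~\ref{fluxK}, rather than a quantity that could degenerate along the end. A secondary point requiring care is the passage from bounded curvature to properness through Theorem~\ref{cor*}: one must check that a complete embedded $(H>0)$-surface of bounded curvature is strongly Alexandrov embedded (taking $W$ to be its mean convex side), so that the hypotheses of that theorem are met without circularly presupposing the properness one is trying to establish.
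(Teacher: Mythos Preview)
Your proposal is correct and follows exactly the route the paper indicates: the paper does not give a detailed proof of Theorem~\ref{annulus} but simply says it is ``an immediate consequence of these curvature estimates for $H$-annuli,'' and your argument fills in precisely those details---decomposing into a compact core and annular ends, applying the flux-dependent annulus estimate on each end, and then invoking Theorem~\ref{cor*}/Corollary~\ref{corinj2} for properness (with the $H=0$ case handled classically via Remark~\ref{remarkENDS} and Theorems~\ref{thmCM2},~\ref{ttmr}). Your flagged concern about circularity in applying Theorem~\ref{cor*} is legitimate but is a wrinkle in how the survey states that theorem rather than in your argument; the paper itself uses exactly this implication in the sentence preceding Corollary~\ref{corinj2}.
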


The  theory developed
in this manuscript also provides key tools for understanding the geometry
of $(H>0)$-disks in a Riemannian 3-manifold, especially in the case that
the manifold is complete and locally homogeneous.  These
generalizations and applications of the work presented here
will appear in ~\cite{mt1}, and we mention
 two of them here and refer the reader
to~\cite{mt11} for details.

First of all, one has the next generalization of Theorem~\ref{cest2}.

\begin{theorem}[Curvature Estimates, Meeks-Tinaglia~\cite{mt1}]
\label{cest3}
 Let $X$ be a homogeneous 3-manifold. Given $\delta$, $\cH>0$,
there exists $K(\delta,\cH,X)\geq\sqrt{2}\cH$ such that
any   $H$-disk $M$  in $X$ with
 $H\geq \cH$ satisfies
\[
\sup_{\large \{p\in {M} \, \mid \,
d_{M}(p,\partial M)\geq \delta\}} |A_{M}|\leq  K(\delta,\cH,X).
\]
%where $|A_{M}|$ is the norm of the second fundamental form
%and $d_{M}$ is the intrinsic distance function of $M$.
\end{theorem}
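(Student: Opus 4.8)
The plan is to argue by contradiction and reduce, by a blow-up argument, to the Euclidean curvature estimate (Theorem~\ref{cest2}) and radius estimate (Theorem~\ref{rest2}), exploiting the fact that a homogeneous $3$-manifold is in particular homogeneously regular (Definition~\ref{defhomgreg}), so that arbitrarily large rescalings of $X$ converge to flat $\R^3$. Suppose the estimate fails for some $\delta,\cH>0$. Then there is a sequence of $H_n$-disks $M_n\subset X$ with $H_n\geq\cH$ and points $p_n\in M_n$ with $d_{M_n}(p_n,\partial M_n)\geq\delta$ and $|A_{M_n}|(p_n)\to\infty$. Inside $B_{M_n}(p_n,\delta/2)$ I would run the usual Colding--Minicozzi point selection to produce points $q_n$ of almost-maximal curvature with $\lambda_n:=|A_{M_n}|(q_n)\to\infty$ and $d_{M_n}(q_n,\partial M_n)\geq\delta/2$; since $X$ is homogeneous, I may compose with isometries so that every $q_n$ is moved to a fixed base point $x_0\in X$.

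I would then rescale the ambient metric by $\lambda_n^2$. Because $X$ is homogeneously regular, its curvature tensor scales to zero and its injectivity radius to infinity, so the pointed manifolds $(X,x_0,\lambda_n^2 g)$ converge in $C^k$ on compact sets to $(\R^3,\vec 0,g_{\mathrm{flat}})$. The rescaled surfaces $\widetilde M_n$ are then $\widetilde H_n$-disks with $\widetilde H_n=H_n/\lambda_n\leq 1/\sqrt2$ (using $|A|\geq\sqrt2\,H$), with $|A_{\widetilde M_n}|\leq 1+o(1)$, $|A_{\widetilde M_n}|(x_0)=1$, and with intrinsic distance from $x_0$ to $\partial\widetilde M_n$ at least $\lambda_n\,\delta/2\to\infty$. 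This is exactly the situation of the Local Picture Theorem on the Scale of Curvature (Theorem~\ref{thm3introd}), which is already valid in any homogeneously regular $3$-manifold; hence a subsequence converges to a complete, properly embedded \emph{minimal} planar domain $M_\infty\subset\R^3$ that is non-flat since $|A_{M_\infty}|(\vec 0)=1$. The decisive gain is that, although the $M_n$ live in the curved manifold $X$, every blow-up limit is a genuine minimal surface in flat $\R^3$, so the Euclidean classification results apply to it verbatim.

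With this reduction in hand the strategy is to run the five-step Meeks--Tinaglia scheme (Steps~1--5 of Section~\ref{sec:MT}) in $X$, replacing the fixed flat background by the sequence of rescaled metrics converging to $g_{\mathrm{flat}}$ and invoking the $C^k$-convergence at each point where Euclidean structure is used. The extrinsic curvature estimate (Step~1) should transfer because its proof only uses Theorem~\ref{thm3introd}, the classification of properly embedded minimal planar domains (Theorem~\ref{classthm}) applied to the $\R^3$-limit forcing it to be a helicoid, and the Colding--Minicozzi multivalued-graph extension run against stable minimal barrier disks, whose curvature is controlled by the estimate for stable $H$-surfaces (Theorem~\ref{thmcurvestimstable}, also valid in the homogeneously regular setting). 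The passage from curvature to radius estimates (Steps~2 and~5), where the lower bound $H\geq\cH$ is indispensable, would use the CMC Dynamics and Minimal Element Theorems (Theorems~\ref{T} and~\ref{sp2}) to extract a Delaunay surface as a limit of translations; the contradiction comes from the CMC Flux Formula (Theorem~\ref{fluxK}), since a Delaunay surface has nonzero flux while the $1$-cycles on a disk are null-homologous and hence carry zero flux. Steps~3 and~4 (the one-sided estimate of Theorem~\ref{th} and the weak chord-arc estimate) transfer in the same way, using Theorem~\ref{thmcurvestimCM}, the Stable Limit Leaf Theorem~\ref{thmstable}, and the Local Removable Singularity Theorem~\ref{tt2}, all of which already hold in the Riemannian setting.

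The hard part will be verifying that every estimate in the scheme is \emph{uniform} as the ambient metric $\lambda_n^2 g$ varies, i.e.\ that the constants do not degenerate in the limit. Concretely, one must control the error terms coming from the Ricci and scalar curvature of $X$ that appear in the Jacobi operator~(\ref{Jacobiop}) when establishing stability of the barrier disks, and one must preserve the Harnack and separation estimates~(\ref{eq:PDEseparat})--(\ref{eq:PDEseparat1}) for the forming multivalued graphs under a $C^k$-small perturbation of the metric. A second genuinely new point is the flux obstruction of Steps~2 and~5: in $\R^3$ the zero-flux-on-a-disk versus nonzero-flux-of-Delaunay dichotomy is immediate, but in $X$ one must run this argument on the blow-up limits in $\R^3$ rather than on the $M_n$ themselves, and check that the Dynamics and Minimal Element theory of~\cite{mt4} survives the rescaling; this forces one to establish the radius estimate in $X$ simultaneously with the curvature estimate, exactly as in the Euclidean case. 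Once uniformity is secured the contradiction is reached as before, producing the bound $K(\delta,\cH,X)$; the inequality $K\geq\sqrt2\,\cH$ is automatic, since $|A_M|\geq\sqrt2\,H\geq\sqrt2\,\cH$ pointwise on any $H$-surface with $H\geq\cH$.
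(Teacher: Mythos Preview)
Your proposal misses the one genuinely new obstacle that the paper singles out. In the Euclidean proof, Step~1 (the First Extrinsic Curvature Estimate, Lemma~\ref{lem:excest}) is obtained from Lemma~\ref{lem:excest2} \emph{together with} Proposition~\ref{number}, the uniform bound $N_0$ on the number of boundary components of $M\cap\B(R)$ for an $H$-disk $M$. The paper explicitly states, just after Theorem~\ref{cest3}, that the main difficulty in passing to a homogeneous $3$-manifold $X$ is precisely that one does not have an analogue of Proposition~\ref{number} in $X$; your plan to ``run the five-step Meeks--Tinaglia scheme in $X$'' never addresses this, and in fact your summary of Step~1 omits Proposition~\ref{number} entirely. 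Without such a bound the extrinsic estimate of Step~1 does not follow, and then Steps~2--5 have nothing to build on. The paper's actual route is different: it bypasses Proposition~\ref{number} by importing the lamination/injectivity-radius techniques of Meeks--Rosenberg from the proof of the Minimal Lamination Closure Theorem~\ref{thmmlct}, and proves instead a modified curvature estimate valid for compact $(H>0)$-surfaces whose injectivity radius function is bounded away from zero outside a small boundary collar (and equals the distance to the boundary inside the collar). An $H$-disk satisfies this hypothesis, and that is what yields $K(\delta,\cH,X)$.

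A second, smaller issue: the opening blow-up paragraph does not actually ``reduce'' anything. Rescaling at a point of almost-maximal curvature produces a non-flat properly embedded minimal surface in $\R^3$ (by Theorem~\ref{thm3introd}), but since the $M_n$ are disks this limit can perfectly well be a helicoid, and no contradiction is obtained. You seem to acknowledge this by then proposing to rerun Steps~1--5, but that means the blow-up paragraph is inert; all the work is in transferring the five steps, and that transfer is blocked at Step~1 as above. If you want to salvage the approach, you need a substitute for Proposition~\ref{number} in $X$, and the paper's hint is to look at the proof of Theorem~\ref{thmmlct} rather than at perturbative versions of the Euclidean estimates.
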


The main difficulty in generalizing the curvature estimate in Theorem~\ref{cest2}
to  the setting of homogeneous 3-manifolds
is that one does not have a corresponding result like Proposition~\ref{number}.
But this problem can be solved by applying and adapting the manifold techniques applied
by Meeks and Rosenberg~\cite{mr13}, in their proof of the
Minimal Lamination Closure Theorem~\ref{thmmlct}.  One obtains a modified version of
Theorem~\ref{cest2}  in the setting of homogeneously regular 3-manifolds for
any compact $(H>0)$-surface whose injectivity radius function is bounded away from zero outside
of a small neighborhood of its boundary and in the small neighborhood this function is
equal to the distance to the boundary.
This approach gives the following generalization of Corollary~\ref{cor10.6}.

\begin{theorem}
 A complete $(H>0)$-surface in a homogeneously regular 3-manifold
 has positive injectivity radius
 if and only if it has bounded second fundamental form.
\end{theorem}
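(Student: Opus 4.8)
The plan is to prove the two implications separately, following the route already used in the Euclidean case (Corollaries~\ref{corinj1} and~\ref{cor10.6}), of which this theorem is the homogeneously regular analogue. Write $M$ for the surface, $|A_M|$ for the norm of its second fundamental form, $I_M$ for its injectivity radius function, and $r_0=\inf_M I_M$ for its injectivity radius.

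For the elementary direction, suppose $\sup_M|A_M|=A_0<\infty$. Since $X$ is homogeneously regular, its sectional curvature is uniformly bounded, say by $S_0$; by the Gauss equation $K_M=K_X(T_pM)+\det A_M$ together with $|\det A_M|\le\frac12|A_M|^2$, the intrinsic Gaussian curvature satisfies $|K_M|\le S_0+\frac12 A_0^2=:\Lambda$. An upper bound $K_M\le\Lambda$ bounds the conjugate radius of $M$ from below by Rauch comparison, so the injectivity radius can fail to be positive only through short geodesic loops. To rule these out, note that a bound $|A_M|\le A_0$ together with the $C^2$-closeness of small ambient balls to Euclidean balls forces $M$ to meet every sufficiently small extrinsic ball centred at one of its points in graphs of uniformly controlled size; hence the component through the centre is an embedded disk. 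This is precisely the mechanism behind the Euclidean fact quoted before Corollary~\ref{cor10.6} (namely that $\sup_\S|A_\S|<C$ yields injectivity radius greater than $\ve/C$), and it carries over with constants depending in addition on $X$. Thus $r_0\ge c(A_0,X)>0$.

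The substantive direction is the converse: $r_0>0$ forces $\sup_M|A_M|<\infty$. Here I would mimic the proof of Corollary~\ref{corinj1}. Since $I_M\ge r_0$ everywhere, for each $p\in M$ the exponential map $\exp_p$ restricts to a diffeomorphism of the disk of radius $r_0$ in $T_pM$ onto its image, so the closed intrinsic ball $D_p:=\overline{B}_M(p,r_0)$ is a compact embedded geodesic $H$-disk with $p$ at intrinsic distance $r_0$ from $\partial D_p$. Because the injectivity radius of $M$ is at least $r_0$, the injectivity radius function of $D_p$ is bounded away from zero at points at distance at least $r_0/2$ from $\partial D_p$ and is comparable to the distance to $\partial D_p$ near the boundary; thus $D_p$ satisfies exactly the hypotheses of the homogeneously regular version of the curvature estimate of Theorem~\ref{cest3} described immediately above. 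Applying that estimate with $\delta=r_0$ and $\cH=H$ gives $|A_M|(p)=|A_{D_p}|(p)\le K(r_0,H,X)$, and since $p$ is arbitrary, $\sup_M|A_M|\le K(r_0,H,X)<\infty$.

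The step I expect to be the main obstacle is exactly the curvature estimate for $(H>0)$-disks in the homogeneously regular setting invoked in the previous paragraph. The Euclidean proof of Theorem~\ref{cest2} used Proposition~\ref{number}, the universal bound $N_0$ on the number of boundary components of a component of $M\cap\B(R)$, which has no analogue in a general homogeneously regular $X$. Following the approach to Theorem~\ref{cest3}, I would replace the role of Proposition~\ref{number} by the manifold techniques of Meeks and Rosenberg from their proof of the Minimal Lamination Closure Theorem~\ref{thmmlct}: these describe the closure of a complete $H$-surface of positive injectivity radius as an $H$-lamination and, via the Stable Limit Leaf Theorem~\ref{thmstable} and the Local Removable Singularity Theorem~\ref{tt2}, yield the local boundedness of $|A|$ needed to run the blow-up and contradiction argument. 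This is why the estimate is phrased for surfaces whose injectivity radius function is bounded away from zero off a neighbourhood of the boundary and equals the distance to the boundary there: that hypothesis is what makes the lamination machinery applicable in place of the missing component bound, and it is also the precise form in which the geodesic disks $D_p$ above present themselves.
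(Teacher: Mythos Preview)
Your proposal is correct and follows essentially the same route the paper indicates. The paper does not give a detailed proof of this theorem; it only says that the main obstacle is the absence of Proposition~\ref{number} in the homogeneously regular setting, that this is overcome by adapting the Meeks--Rosenberg lamination techniques from the proof of Theorem~\ref{thmmlct} to obtain a modified curvature estimate for compact $(H>0)$-surfaces whose injectivity radius function is bounded away from zero off a neighbourhood of the boundary and equals the boundary distance there, and that the theorem then follows as in Corollary~\ref{cor10.6}. You have identified exactly this structure, including the correct form of the intermediate curvature estimate and the reason the geodesic disks $D_p$ satisfy its hypotheses. One small tightening: near $\partial D_p$ the injectivity radius of $D_p$ is not merely comparable to the boundary distance but equal to it (since $I_M\ge r_0$ forces $B_M(q,d)\subset D_p$ to be an embedded disk for $d=d_{D_p}(q,\partial D_p)<r_0$), which is precisely the hypothesis the modified estimate requires.
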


As an application of these  results, one can prove the following
theorem.

\begin{theorem}[Meeks, Tinaglia \cite{mt11}]
Let $H\geq 1$.
Then, any complete $H$-surface of finite topology in a complete
hyperbolic 3-manifold is proper.
\end{theorem}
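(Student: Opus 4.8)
The plan is to show that the finite-topology $H$-surface $M\subset N$ has positive injectivity radius, to conclude from this that $M$ has bounded second fundamental form, and finally to deduce properness from a uniform one-sided tubular neighborhood. First I would record that a complete hyperbolic $3$-manifold $N$ has constant sectional curvature $-1$ and hence bounded geometry, so it is homogeneously regular in the sense of Definition~\ref{defhomgreg}; this is what licenses the use of the curvature estimates for $H$-disks (Theorem~\ref{cest3}) and of the stated equivalence, for complete $(H>0)$-surfaces in a homogeneously regular $3$-manifold, between having positive injectivity radius and having bounded second fundamental form. The hypothesis $H\geq 1$ plays here exactly the role that $H>0$ plays in $\rth$: since horospheres in $\HH^3$ have mean curvature $1$, the value $H=1$ is the critical threshold above which the $H$-disk curvature estimates yield genuine interior bounds, and one should not expect the result for $H<1$, where minimal-surface-like ends reappear.

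Granting Theorem~\ref{cest3} and the cited equivalence, the heart of the matter is to rule out injectivity radius zero. Since $M$ is connected and has finite topology, outside a compact core $M$ is a finite union of annular ends, each of genus zero, and the injectivity radius function $I_M$ is bounded below on the core. Thus if $I_M$ is not bounded below, then after passing to a subsequence there is a divergent sequence of points $p_n$ lying in a single annular end $E$ with $I_M(p_n)\to 0$, and hence a divergent sequence of short, homotopically nontrivial geodesic loops through the $p_n$.

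Next I would blow up on the scale of topology at the points $p_n$, rescaling the metric by $\l_n=1/I_M(p_n)\to\infty$. Because $N$ has constant curvature $-1$, the rescaled ambient metrics $\l_n^2 g$ converge in $C^k$ on compact sets to the flat metric on $\rth$, while the mean curvature rescales to $H/\l_n\to 0$; thus the rescaled surfaces are $H_n$-surfaces with $H_n\to 0$, non-simply-connected in a fixed ball and of genus zero. Applying the Local Picture Theorem on the Scale of Topology (Theorem~\ref{tthm3introd}), adapted to $N$ via the Meeks--Rosenberg lamination-closure techniques, a subsequence converges to one of its three possibilities; the genus-zero hypothesis excludes case~6, which forces unbounded genus, and places case~5 in its genus-bounded subcase. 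In every surviving case the limit is a properly embedded, or parking-garage, nonflat genus-zero minimal object in $\rth$ carrying, near the rescaled loops, a homotopically nontrivial simple closed curve of bounded length. By the classification of genus-zero proper $0$-surfaces (Theorem~\ref{classthm}) together with the uniqueness of the helicoid (Theorem~\ref{ttmr}), the only consistent pictures are a catenoid-type neck or a pair of oppositely handed helicoidal columns. Either picture, reproduced at every sufficiently small scale along $E$, would force $E$ to accumulate infinitely many necks of shrinking size; but the $H$-disk curvature estimate (Theorem~\ref{cest3}), applied to the disk-like components of $E\cap B_N(q,\delta)$ at points of fixed intrinsic distance $\delta$ from these loops, bounds the curvature there in terms of $\delta$ and $H\geq 1$ alone, which is incompatible with $I_M(p_n)\to 0$. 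This contradiction shows $I_M$ is bounded below.

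With positive injectivity radius in hand, the cited equivalence gives $\sup_M|A_M|<\infty$, and then the regular-neighborhood results (Theorem~\ref{cor*}, the analogue in $N$ of Corollary~\ref{cor.max}) provide a tubular neighborhood of uniform radius on the mean convex side of the embedded surface $M$; such a uniform one-sided tubular neighborhood prevents $M$ from accumulating on itself, so $M$ is properly embedded, as desired. I expect the genuine obstacle to be the step of establishing positive injectivity radius, and within it the control of the scale-of-topology blow-up in the merely locally homogeneous ambient $N$: precisely the absence of an a priori bound on the number of boundary components of the relevant components of $M\cap B_N(q,\delta)$ --- the analogue of Proposition~\ref{number} that holds automatically in $\rth$ but that must here be recovered by adapting the Minimal Lamination Closure Theorem~\ref{thmmlct} machinery of Meeks and Rosenberg.
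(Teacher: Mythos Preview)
The paper does not actually give a proof of this theorem; it merely states it as a consequence of the curvature estimates developed in~\cite{mt7,mt1,mt11}. The surrounding discussion, together with the paragraph preceding Theorem~\ref{annulus}, indicates that the intended route is to use the curvature estimates for $(H>0)$-\emph{annuli} (which depend on the flux of the generator of the annulus) to bound $|A_M|$ directly on each annular end of the finite-topology surface, and then deduce properness from the resulting bound and the one-sided regular neighborhood Theorem~\ref{cor*}; see also the sentence in Section~\ref{sec:conj} referring to~\cite{mt11} for the decomposition of the argument into ``finite topology $\Rightarrow$ bounded second fundamental form'' and ``injectivity radius bounded away from zero on compact subsets of $Y$ plus $H\geq 1$ $\Rightarrow$ proper''.

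Your proposal has two genuine gaps. First, a complete hyperbolic $3$-manifold need not be homogeneously regular: cusps have injectivity radius tending to zero, so Definition~\ref{defhomgreg} fails. This can be repaired by lifting to the universal cover $\HH^3$ (which is homogeneous, so Theorem~\ref{cest3} applies there), but you never invoke this lift, and the properness step downstairs then needs separate care in the thin part --- this is exactly where the hypothesis $H\geq 1$ does real work (comparison with horospheres and tubes), and your writeup does not address it.

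Second, and more seriously, the contradiction you claim at the end of the injectivity-radius argument does not go through. After the scale-of-topology blow-up you correctly arrive at a catenoid-type neck or a two-column parking garage as the local model. You then assert that the $H$-disk estimate ``at points of fixed intrinsic distance $\delta$ from these loops'' contradicts $I_M(p_n)\to 0$. It does not: bounded curvature \emph{away} from the necks is perfectly compatible with curvature blowing up \emph{at} the necks --- this is exactly what happens along a degenerating family of catenoidal necks. What actually rules out these pictures in the paper's framework is not the disk estimate alone but the curvature estimates for $H$-\emph{annuli}: on a fixed annular end the flux along the homotopically nontrivial loops is a fixed invariant, and the annulus estimate (depending on that flux) bounds $|A|$ on the whole end, directly excluding the shrinking-neck scenario. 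Your argument never brings in the annulus estimate or any flux control, and without it the blow-up analysis does not close.
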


This result is sharp when the complete hyperbolic 3-manifold is $\HH^3$:

\begin{theorem}[Coskunuzer, Meeks, Tinaglia\, \cite{cmt1}]
\label{mainBaris}
For every $H\in [0,1)$, there
exists a complete, stable simply-connected $H$-surface
in $\mathbb{H}^3$  that is not proper.
\end{theorem}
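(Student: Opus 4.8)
The plan is to construct, for each $H\in[0,1)$, a complete simply-connected $H$-surface in $\mathbb{H}^3$ that is uniformly close to umbilic — with $|A_M|^2\le 2$ everywhere — and that accumulates at interior points of $\mathbb{H}^3$, hence fails to be proper. The observation driving the whole argument is a stability criterion special to $\mathbb{H}^3$: since $\mathbb{H}^3$ has constant sectional curvature $-1$, one has $\mathrm{Ric}(\eta)=-2$, so the stability quadratic form in~(\ref{stable}) becomes $Q(u,u)=\int_M\big(|\nabla u|^2+(2-|A_M|^2)u^2\big)\,dA$. Consequently every $H$-surface satisfying $|A_M|^2\le 2$ is automatically stable. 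Because $|A_M|^2=\kappa_1^2+\kappa_2^2\ge 2H^2$ with equality exactly at umbilic points, and $2H^2<2$ precisely when $H<1$, there is genuine room in the subcritical range to build nearly umbilic — and therefore stable — examples; this gap is what disappears at the critical value $H=1$ and is absent in $\mathbb{R}^3$, where Theorem~\ref{thmstablecompleteplane} forces a complete stable $H$-surface to be a plane.

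First I would fix a totally geodesic plane $P_0\subset\mathbb{H}^3$ and take as a target limit leaf the equidistant surface $L$ at signed distance $t_0=\tanh^{-1}H$ from $P_0$, whose two principal curvatures both equal $\tanh t_0=H$; thus $L$ is an embedded, umbilic, complete $H$-surface with $|A_L|^2=2H^2<2$. In a one-sided tubular neighborhood of $L$ I would use Fermi coordinates $(p,s)$, $p\in L$, $s$ the distance to $L$, in which the equidistant level sets $\{s=\mathrm{const}\}$ have mean curvature $\tanh(t_0+s)$, equal to $H$ only on $L$. The surface itself I would obtain as an $H$-multigraph spiraling onto $L$: fixing an annular region $A\subset L$ and passing to its universal cover $\widetilde A$ (an infinite strip, hence a disk), I would solve the constant-mean-curvature-$H$ equation for a sequence of multivalued graphs $s=u_n$ over larger and larger pieces of $\widetilde A$, with boundary data arranged so that $u_n$ decreases monotonically as one winds around $A$ and approaches the leaf $\{s=0\}=L$ after more and more sheets. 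Each $u_n$ yields a compact simply-connected $H$-disk $\Sigma_n$ which stays nearly tangent to the equidistant foliation, hence nearly umbilic, with $|A_{\Sigma_n}|^2\le 2$ maintained throughout. Using this uniform second-fundamental-form bound together with the curvature estimates for stable $H$-surfaces in ambient spaces of bounded sectional curvature (Theorem~\ref{thmcurvestimstable1}) and standard elliptic theory, I would extract a subsequence converging smoothly on compact subsets of the neighborhood to a limit surface $M$.

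The resulting $M$ should then have all four required properties. It is an infinitely-sheeted multigraph, so topologically a disk and in particular simply connected; it is complete because winding infinitely toward $L$ gives it infinite intrinsic diameter; it is stable because the bound $|A_M|^2\le 2$ passes to the limit, making $Q(u,u)\ge 0$ above (equivalently, one produces a positive Jacobi function in the sense of Definition~\ref{defstable}); and it has constant mean curvature $H$ since each $\Sigma_n$ does and mean curvature is preserved under smooth convergence. Non-properness is then immediate from the spiraling: points on high sheets of $M$ converge in $\mathbb{H}^3$ to points of $L$ while diverging intrinsically in $M$, so $L\subset\lim(M)$ in the sense of Definition~\ref{def-limset}, and a proper surface has empty limit set of this kind.

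The hard part is the construction in the second paragraph, namely producing genuine accumulation onto $L$ — which is what simultaneously yields completeness and non-properness — while keeping the mean curvature exactly $H$, preventing any neck or handle from forming where successive sheets crowd together (so that simple connectivity survives), and, most delicately, maintaining the pinching $|A_M|^2\le 2$ that underpins stability. The danger is concentrated precisely at the places where consecutive sheets pile up against $L$, since that is where the second fundamental form of an $H$-surface is most apt to exceed the critical threshold $\sqrt2$. The crux is therefore a quantitative a priori control of the solutions $u_n$ of the $H$-surface equation — uniform gradient and Hessian bounds keeping the graphs nearly umbilic even as their number of sheets diverges — and it is exactly the subcritical surplus $2-2H^2>0$ that makes such control possible, explaining why the statement is sharp at $H=1$.
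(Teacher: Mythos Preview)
This survey does not prove Theorem~\ref{mainBaris}; it merely states the result and cites~\cite{cmt1}. So there is no in-paper proof to compare against, and I comment on your proposal on its own merits and against what the cited construction actually does.

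Your stability observation is correct and identifies why the range $H\in[0,1)$ is natural: in $\mathbb{H}^3$ one has $\mathrm{Ric}(\eta)=-2$, so $|A_M|^2\le 2$ forces $Q\ge 0$ in~(\ref{stable}), and since $|A_M|^2\ge 2H^2$ there is room precisely when $H<1$. But the proposal has a genuine gap: it does not actually construct the surface. You assert that one can solve the $H$-equation for multivalued graphs $u_n$ spiraling toward the equidistant surface $L$ while maintaining $|A_{\Sigma_n}|^2\le 2$, yet you supply no mechanism for either existence or pinching. The nearby equidistant leaves have mean curvature $\tanh(t_0+s)\ne H$ for $s\ne 0$, so they are not $H$-barriers; nothing you have written prevents $|A|$ from exceeding $\sqrt{2}$ where the sheets crowd together. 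Your appeal to Theorem~\ref{thmcurvestimstable1} is circular, since that result presupposes stability, which is exactly what the bound $|A|^2\le 2$ was meant to deliver. You also do not specify what boundary-value problem produces the $u_n$, why it is solvable, or why the limit is a single connected disk rather than a lamination by translates of $L$. You flag all of this as ``the hard part,'' and it is --- but that hard part is the entire content of the theorem.

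The construction in~\cite{cmt1}, extending Coskunuzer's minimal case~\cite{cosk1}, sidesteps the pinching issue altogether: the surfaces are produced as limits of compact disks that minimize a suitable functional for prescribed boundary data, so stability is inherited from the variational characterization and never has to be verified through an explicit second-fundamental-form bound. Non-properness is engineered through the choice of boundary data rather than through a spiraling-multigraph Ansatz.
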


See also~\cite{cosk1} and~\cite{rodt1} for examples of non-properly
embedded complete simply-connected $0$-surfaces in $\mathbb H^3$ and
$\mathbb H^2\times\mathbb R$ respectively.

\section{Calabi-Yau problems.}
\label{seccy}
The Calabi-Yau problems or conjectures refer to a series of
questions concerning the non-existence of a complete, 0-immersion
$f \colon M \rightarrow \rth$ whose image $f(M)$ is
constrained to lie in a particular region of $\rth$ (see~\cite{ca4}, page 212
in~\cite{che4}, problem 91 in~\cite{yau1} and
page 360 in~\cite{yau2}).  Calabi's original conjecture states that
a complete non-flat minimal surface cannot be contained either in
the unit ball $\B(1)$ or in a slab. The first important negative
result on the Calabi-Yau problem was given by Jorge and
Xavier~\cite{jx1}, who proved the existence of a complete
0-surface contained in an open slab of $\rth$. In 1996,
Nadirashvili~\cite{na1} constructed a complete minimal disk in
$\B(1)$; {such a}
minimal disk cannot be embedded by the
Colding-Minicozzi Theorem~\ref{thmCM}. A clever refinement of the
ideas used by Nadirashvili, allowed Morales~\cite{moral1} to
construct a conformal 0-immersion of the open unit disk that
is proper in $\R^3$. These same techniques were then applied by
Mart\'{i}n and Morales~\cite{marmor2} to prove that if ${\mathcal D}
\subset \rth$ is either a smooth open bounded domain or a possibly
 non-smooth open convex domain, then there exists a complete,
properly immersed 0-disk in ${\mathcal D}$;
{again, }
this disk cannot be
embedded by Theorem~\ref{thmCM}. In fact, embeddedness creates a
dichotomy in results concerning the Calabi-Yau questions, as we have
already seen in Theorems~\ref{thmCM} and \ref{thmmlct}.

In contrast to the existence results described in the previous
paragraph, Mart\'\i n and Meeks  have shown that there
exist many bounded non-smooth domains in $\rth$ which do not admit
any complete, properly immersed surfaces with bounded absolute mean curvature function
and at least one
annular end.
This generalized  their previous
joint work with  Nadirashvili~\cite{mmn}
in the minimal setting.

\begin{theorem}[Mart\'\i n and Meeks~\cite{mm1})]
\label{MMN}
Given any bounded domain ${\mathcal D}' \subset \rth$, there
exists a proper family ${\mathcal F}$ of horizontal simple closed curves
in ${\mathcal D}'$ such that the bounded domain ${\mathcal D} = {\mathcal D}' -
\bigcup {\mathcal F}$ does not admit any complete, connected properly immersed
surfaces with compact (possibly empty) boundary, an annular end and  bounded
absolute mean curvature function.
\end{theorem}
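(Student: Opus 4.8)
The plan is to argue by contradiction, exploiting the fact that a properly immersed annular end in a bounded domain must accumulate on the boundary in a controlled way, and to engineer $\mathcal{F}$ as a ``horizontal labyrinth'' whose mesh degenerates to zero as one approaches $\partial \mathcal{D}'$. First I would record two structural facts about a hypothetical offending surface $M=f(\Sigma)$. Writing $H_0$ for the finite bound on $|H_M|$ and letting $E\cong \mathbb{S}^1\times[0,\infty)$ denote the annular end, properness of $M$ in $\mathcal{D}$ forces the divergent loops $\gamma_s=f(\mathbb{S}^1\times\{s\})$ to leave every compact subset of $\mathcal{D}$, so the limit set $\lim(E)\subset\partial\mathcal{D}=\partial\mathcal{D}'\cup\overline{\bigcup\mathcal{F}}$ is a nonempty, compact, connected set. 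Second, since $\Delta_M x_3=2H_M\langle N,e_3\rangle$ with $|H_M|\leq H_0$, the height function $x_3$ is ``quasi-harmonic'' on $E$; equivalently, $E$ obeys the maximum principle against round spheres of radius $1/H_0$ via Theorem~\ref{thmintmaxprin} (such a sphere is an $H$-surface of mean curvature $H_0\geq|H_M|$). This sphere comparison is the substitute, in the bounded mean curvature setting, for the convex hull property that drives the minimal case of Mart\'\i n, Meeks and Nadirashvili~\cite{mmn}, and it is what lets the argument survive the loss of the CMC flux invariants of Section~\ref{sec:flux}, which are unavailable when $H$ is merely bounded rather than constant.

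Next I would construct $\mathcal{F}$. Fixing heights $t_k$ that accumulate on the extreme values of $x_3$ on $\mathcal{D}'$ (and, more generally, dense among the heights where $\partial\mathcal{D}'$ has substance), I would place in each plane $\{x_3=t_k\}$ a finite configuration of disjoint horizontal simple closed curves $C_{k,1},\dots,C_{k,m_k}$ cutting the slice $\mathcal{D}'\cap\{x_3=t_k\}$ into ``cells'' of diameter at most $\varepsilon_k$, arranged so that $\bigcup\mathcal{F}$ is a proper (locally finite, closed) one-dimensional subset of $\mathcal{D}'$, with the essential feature that $\varepsilon_k\to 0$ as $t_k$ approaches $\partial\mathcal{D}'$. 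Letting the mesh $\varepsilon_k$ degenerate to zero near the boundary is forced by the logic of the statement: no single bound $H_0$ may be chosen in advance, since the fixed domain $\mathcal{D}$ must exclude surfaces of every finite mean curvature, so the obstacle must be ``infinitely fine'' precisely where the ends are compelled to go. Once $\varepsilon_k<c/H_0$ for a universal constant $c$, a sphere of radius $1/H_0$ seated against a cell at level $t_k$ becomes a barrier that $E$ cannot cross without violating Theorem~\ref{thmintmaxprin}.

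The contradiction then assembles as follows. Because $\lim(E)$ is connected and $M$ avoids $\bigcup\mathcal{F}$, the end $E$ must, arbitrarily far out, accumulate on $\partial\mathcal{D}'$ (or on $\overline{\bigcup\mathcal{F}}$) by threading through the ever-finer cells. I would pick a height $t_k$ close enough to $\partial\mathcal{D}'$ that $\varepsilon_k<c/H_0$, choose a point $p\in E$ whose cell at level $t_k$ is of small diameter, and slide a comparison sphere of radius $1/H_0$ against the portion of $E$ inside that cell; the sphere maximum principle confines that portion of $E$ to the cell, so a whole divergent subcollar of $E$ is trapped in a single cell without being allowed to meet any removed $C_{k,j}$. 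Since $E$ is an annulus, a trapped divergent collar yields the familiar topological contradiction (the collar would have to close up or run into $\partial M$, contradicting that it is a genuine proper annular end). Carrying out the trapping for all large $k$ shows $E$ cannot escape to infinity inside $\mathcal{D}$, contradicting properness; the empty-boundary and compact-boundary cases are handled identically.

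The main obstacle, which is exactly the substance of the generalization beyond~\cite{mmn}, is making this trapping work \emph{uniformly over all finite mean curvature bounds simultaneously} with one fixed family $\mathcal{F}$. This is the role of the degenerating mesh: the delicate point is to quantify, for a cell of diameter $\varepsilon$, just how large $H_0$ may be before a sphere of radius $1/H_0$ ceases to be a barrier, and then to choose the heights $t_k$ and the circle configurations so that every divergent annular collar is eventually forced into a cell small enough for its own $H_0$. A secondary technical difficulty is guaranteeing that $\mathcal{F}$ stays a \emph{proper} family of disjoint \emph{simple closed} horizontal curves while still separating each near-boundary slice into cells of prescribed small diameter; this demands a careful combinatorial choice of the $C_{k,j}$ adapted to the geometry of the slices $\mathcal{D}'\cap\{x_3=t_k\}$, together with the verification that the connected limit set of \emph{any} annular end is unable to avoid them.
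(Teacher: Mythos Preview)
The paper does not prove this theorem. It is stated in Section~\ref{seccy} purely as a citation of the result of Mart\'\i n and Meeks~\cite{mm1} (generalizing their earlier joint work with Nadirashvili~\cite{mmn}), and the survey moves immediately on to Theorem~\ref{calabi} without offering any argument. There is therefore no ``paper's own proof'' to compare your proposal against.

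That said, your outline is broadly in the spirit of the construction in~\cite{mmn,mm1}: a proper ``horizontal labyrinth'' whose mesh degenerates as one approaches $\partial\mathcal{D}'$, combined with a barrier/maximum-principle argument (convex hull in the minimal case, sphere comparison here) to prevent an annular end from threading through. You have correctly identified the two essential points that drive the generalization from $H=0$ to bounded $|H|$: the loss of the convex hull property forces one to substitute comparison with spheres of radius $1/H_0$, and since no single $H_0$ is fixed in advance, the obstacle family must become arbitrarily fine near $\partial\mathcal{D}'$.

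Where your sketch is thinnest is the actual trapping step. Placing circles in a single horizontal plane $\{x_3=t_k\}$ partitions that \emph{slice} into small cells, but the annular end is a surface moving in three dimensions; you need to explain why the sphere barrier confines a whole divergent subcollar of $E$ to a three-dimensional region bounded by pieces of $\bigcup\mathcal{F}$ and $\partial\mathcal{D}'$, not merely its trace at one height. The sentence ``the collar would have to close up or run into $\partial M$'' is the place where the real work lies, and as written it is an assertion rather than an argument. In the actual papers this is handled by a more careful analysis of how a proper annular end must intersect the horizontal planes and of the connectedness of its limit set relative to the labyrinth; if you want a self-contained proof you will need to supply that piece.
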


Ferrer, Mart\'\i n and  Meeks have given the following
general result on the classical Calabi-Yau
problem.

\begin{theorem} [Ferrer, Mart\'\i n and  Meeks \cite{fmm1}]
\label{calabi}
Let $M$ be an open, connected orientable surface
and let ${\mathcal D}$ be a domain in $\rth$ which is either convex or
bounded and smooth. Then, there exists a complete, proper minimal
immersion $f\colon M\to {\mathcal D}$.
\end{theorem}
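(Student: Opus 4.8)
Let $M$ be an open, connected orientable surface and let $\mathcal{D}\subset\R^3$ be a domain which is either convex or bounded and smooth. Then there exists a complete, proper minimal immersion $f\colon M\to\mathcal{D}$.

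Let me think about how to prove this.

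The theorem is a Calabi-Yau existence result. The key features: (1) the source surface $M$ is an *arbitrary* open orientable surface — any genus, any (possibly infinite) number of ends; (2) the target $\mathcal{D}$ is either convex or bounded-and-smooth; (3) the immersion must be simultaneously *complete* (in the intrinsic metric) and *proper* (preimages of compact sets are compact). This is strictly harder than the earlier results quoted in the excerpt: Nadirashvili built a complete minimal disk in the ball but it was not proper; Morales got a proper complete disk in $\R^3$; Martín–Morales got a complete proper disk in a bounded or convex $\mathcal{D}$. So the plan must combine the "disk" constructions with a way to realize arbitrary topology, and reconcile completeness with properness.

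The natural strategy is a two-scale approach. First, I would reduce the topological problem to the disk case by an *exhaustion and bridging* argument: write $M = \bigcup_n M_n$ as an increasing union of compact subsurfaces with boundary, where $M_{n+1}$ is obtained from $M_n$ by attaching finitely many handles and/or tubes (this is the standard handle-decomposition of an open surface). Correspondingly I would build the immersion inductively, at each stage deforming a minimal immersion of $M_n$ into $\mathcal{D}$ and then adding a minimal "bridge" (a thin minimal annulus/handle) to pass to $M_{n+1}$ while keeping the image inside $\mathcal{D}$. The analytic engine for each stage is the Runge-type / López–Ros deformation technique on the Weierstrass data $(g,dh)$: one perturbs the Weierstrass representation (\ref{eq:repW}) on a labyrinth of curves to force the intrinsic metric $ds^2=\left(\tfrac{1}{2}(|g|+|g|^{-1})|dh|\right)^2$ from (\ref{eq:I,II}) to grow — increasing the intrinsic diameter toward infinity — while controlling the extrinsic displacement so the image stays in $\mathcal{D}$. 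This is exactly the mechanism by which Nadirashvili and his successors make boundaries "run off to infinity intrinsically" yet stay bounded extrinsically.

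The heart of the argument — and the main obstacle — is to interleave two competing requirements across the infinite induction. \emph{Completeness} demands that each successive deformation pumps a definite amount of intrinsic length into the new boundary collar, so that the limiting metric is complete. \emph{Properness} demands, conversely, that the image eventually exhausts $\mathcal{D}$: each compact piece $f(M_n)$ should be pushed close to, but controllably inside, the boundary $\partial\mathcal{D}$, with later pieces nested beyond earlier ones. I would therefore fix a smooth exhaustion of $\mathcal{D}$ by compact convex-like pieces (using convexity, or smoothness of $\partial\mathcal{D}$, to get an inner-approximating family with good barrier properties) and arrange that $f(M_n\setminus M_{n-1})$ lands in a prescribed shell of this exhaustion. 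The delicate point is that the labyrinth deformation that increases intrinsic distance also tends to move points extrinsically, so one must prove a quantitative estimate: at each step the $C^0$-displacement of the image is smaller than the thickness of the current shell while the intrinsic-distance gain exceeds a fixed positive constant. Controlling these two quantities simultaneously, uniformly along the handle-attachments, is where the real work lies, and it is what forces the careful geometric construction of the labyrinths adapted to the topology of $M_{n+1}$. For the convex (possibly unbounded) case one additionally exploits the half-space/barrier structure of $\partial\mathcal{D}$ via the maximum principle (Theorem~\ref{thmintmaxprin}) to confine the bridges.

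Finally I would pass to the limit. The construction produces a sequence of minimal immersions $f_n$ of $M_n$ converging uniformly on compact subsets of $M$ to a minimal immersion $f\colon M\to\overline{\mathcal{D}}$; the convergence is arranged to be in a fine enough topology (using the curvature estimates for stable barriers, and the explicit control on $(g,dh)$) that $f$ remains an immersion. By the completeness estimates accumulated at each stage, the pulled-back metric is complete; by the nesting of the extrinsic shells, $f$ is proper into $\mathcal{D}$ (preimages of compacta are exhausted by finitely many $M_n\setminus M_{n-1}$), and the image avoids $\partial\mathcal{D}$. I expect the two genuinely hard ingredients to be: (a) the uniform quantitative coupling between intrinsic-length gain and extrinsic displacement through a handle-attachment, and (b) ensuring the limit map does not degenerate (stays an immersion and stays complete) despite the infinitely many increasingly violent deformations — this is where one must be frugal, letting the step sizes shrink geometrically so the total extrinsic perturbation converges while the intrinsic lengths diverge.
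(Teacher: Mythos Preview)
The paper you are working from is a survey and does not contain a proof of this theorem; it merely states the result and cites the original article of Ferrer, Mart\'{\i}n and Meeks~\cite{fmm1}. So there is no ``paper's own proof'' to compare your proposal against.

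That said, your outline is broadly faithful to the strategy actually used in~\cite{fmm1}: an inductive construction that (i) exhausts the open surface $M$ by compact bordered pieces $M_n$, adding topology step by step via a minimal bridge principle, and (ii) at each stage applies a Nadirashvili--type L\'opez--Ros deformation of the Weierstrass data on labyrinths to pump intrinsic length into the boundary collar while keeping the extrinsic displacement small enough that the image sits in a prescribed shell of an exhaustion of $\mathcal{D}$. The simultaneous control of intrinsic gain versus extrinsic drift, and the nesting of shells to force properness, are exactly the two tensions the original proof must balance.

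What you have written, however, is a strategic sketch rather than a proof. The genuinely hard technical content lies in the steps you flag but do not carry out: a bridge principle for \emph{complete} minimal surfaces with boundary that preserves the quantitative estimates already accumulated; the construction of labyrinths adapted to nontrivial topology (not just disks); and the verification that the limit remains an immersion with the induced metric complete. Each of these requires substantial work in~\cite{fmm1}, and your proposal does not supply any of the estimates. As an outline of the approach it is accurate; as a proof it is incomplete.
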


The following conjecture and the earlier stated
Conjecture~\ref{CYconj1} are the two most important problems
related to the properness of complete $H$-surfaces
in $\rth$. In relation to the following conjecture for
complete 0-surfaces, one can ask whether there
{exists}
a complete, bounded non-compact $(H>0)$-surface in $\rth$.
The next problem is largely motivated and suggested by the work of  Mart\'\i n, Meeks,
Nadirashvili, P\'erez and  Ros.

\begin{conjecture}[Embedded Calabi-Yau Conjectures]
\label{CY1} \par .
\vspace{-.2cm}
\begin{enumerate}[1.]
\item A necessary and sufficient condition for a
connected, open topological surface $M$ to admit a
complete bounded minimal embedding in $\R^3$ is that every end of
$M$ has infinite genus.
\item A necessary and sufficient condition for a
connected, open topological surface $M$ to admit a proper minimal
embedding in every smooth bounded domain ${\mathcal D}\subset \R^3$ as a complete
surface is that $M$ is orientable and every end of $M$
has infinite genus.
\item A necessary and sufficient condition for a
connected, non-orientable open topological surface $M$ to admit a
proper minimal embedding in some bounded domain
${\mathcal D}\subset \R^3$ as a complete surface is that every end of
$M$ has infinite genus.
\end{enumerate}
\end{conjecture}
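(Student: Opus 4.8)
Because the three assertions of Conjecture~\ref{CY1} are open problems, what follows is a \emph{research strategy} rather than a completed proof. In each case the statement has the form ``$M$ admits such an embedding if and only if a topological condition holds,'' so the plan is to split every part into a \emph{necessity} direction, in which the topological condition is forced by the geometry, and a \emph{sufficiency} (existence) direction, in which one constructs the embedding. The orientability hypothesis distinguishes parts~2 and~3, and it will be handled through the contrast between embedding into \emph{every} bounded domain versus into \emph{some} bounded domain.

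For the necessity direction I would first record the elementary fact that a complete, non-compact surface whose image lies in a bounded region of $\R^3$ cannot be proper in $\R^3$: properness would make the preimage of the compact closure of the image equal to all of $M$, forcing $M$ to be compact. Combined with Theorem~\ref{thmCM} this already shows that such an $M$ has neither finite topology nor positive injectivity radius. To upgrade this to a statement about \emph{ends}, suppose some end $e$ of $M$ has finite genus. If $e$ is an isolated (simple) end, then after passing to a subend and excising the finitely many handles one obtains an annular representative, and by Remark~\ref{remarkENDS} annular ends of complete $H$-surfaces in $\R^3$ are proper and hence unbounded, contradicting confinement. The delicate case is that of a \emph{limit} end of finite genus; here I would pass to a finite-genus neighborhood of $e$ with compact boundary and invoke the finite-genus structure theory, Theorem~\ref{thmCYMPR} and Theorem~\ref{classthm2}, to argue that such a neighborhood with a countable set of ends must be proper in $\R^3$, and therefore unbounded. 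Ruling out the remaining possibility of an uncountable set of ends accumulating at $e$ is where this direction is genuinely subtle and would require the sharpest available lamination and properness estimates.

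The orientability conditions in parts~2 and~3 I would extract from the quantifier asymmetry. For a non-orientable $M$, one-sidedness of the embedding obstructs completeness-with-properness inside topologically simple domains such as a ball, so no non-orientable surface can be properly embedded as a complete minimal surface in \emph{every} smooth bounded domain; this forces orientability in part~2. Part~3 asks only for \emph{some} bounded domain, and the freedom to choose a domain adapted to the one-sided topology of $M$ is precisely what allows non-orientable surfaces to be admitted there.

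The genuine obstacle is the sufficiency (existence) direction, and it is the reason these statements remain conjectural. Every known construction on the classical Calabi-Yau problem --- Jorge and Xavier~\cite{jx1}, Nadirashvili~\cite{na1}, Morales~\cite{moral1}, Mart\'\i n and Morales~\cite{marmor2}, Ferrer, Mart\'\i n and Meeks~\cite{fmm1} --- yields \emph{immersed} surfaces of finite topology, and by the properness Theorem~\ref{thmCM} none of these can be embedded. To produce \emph{embedded} complete surfaces trapped in a bounded region one must add genus densely, and the conjecture asserts that infinite genus on each end is exactly the topological room needed to reroute the surface around itself and destroy self-intersections. The construction I would attempt is an inductive limit alternating two steps: (i)~a Runge-type approximation that lengthens a labyrinth of boundary curves so as to increase the intrinsic diameter toward completeness while keeping the piece inside the prescribed domain~$\mathcal{D}$, in the spirit of the Nadirashvili technique; and (ii)~a handle-addition step that consumes the newly created genus to restore embeddedness of the enlarged piece by a general-position argument. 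The crux --- and the part I expect to be hardest --- is controlling completeness, embeddedness, and confinement \emph{simultaneously} over infinitely many steps, and arranging the limit surface to have exactly the prescribed space of ends with every end of infinite genus; it is this triple control, rather than any single one of its ingredients, that has so far resisted the known methods.
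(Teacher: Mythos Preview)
The statement you were given is Conjecture~\ref{CY1}, an explicitly open problem in the paper; there is no proof in the paper to compare against. The paper simply records the conjecture, attributes its motivation to work of Mart\'\i n, Meeks, Nadirashvili, P\'erez and Ros, and moves on. You correctly recognized this and offered a research strategy rather than a proof, which is the appropriate response.

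A few remarks on the strategy itself. Your necessity argument for isolated finite-genus ends via Remark~\ref{remarkENDS} is sound, and you rightly flag the limit-end case as the genuine difficulty; note that even Conjecture~\ref{CYconj1} (finite-genus properness) is open in the paper, with Theorem~\ref{thmCYMPR} only settling the countable-ends case, so the necessity direction here is itself conjectural and not merely ``delicate.'' Your treatment of the orientability split between parts~2 and~3 is too vague: the claim that one-sidedness ``obstructs completeness-with-properness inside topologically simple domains such as a ball'' is not justified and would need a concrete mechanism. On the sufficiency side, your outline of a Nadirashvili-type labyrinth alternated with handle-addition to restore embeddedness is in the spirit of what experts expect would be needed, and you correctly identify the simultaneous control of completeness, embeddedness, and confinement through the limit as the crux; this is consistent with the paper's own framing that these existence questions remain open.
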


\section{The Hopf Uniqueness Problem.} \label{sec:Hopf}
There are two highly
influential results on the classification and geometric
description of $H$-spheres in
homogeneous 3-manifolds, see \cite{AbRo1,AbRo2,hf1}:

\begin{theorem}[Hopf Theorem]
An immersed $H$-sphere  in a complete, simply
connected 3-dimensional manifold $\Q^3(c)$ of constant sectional
curvature $c$ is a round sphere.
 \end{theorem}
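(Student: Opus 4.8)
The plan is to reprise Hopf's original argument, whose single decisive input is the holomorphicity of the \emph{Hopf differential} on a constant mean curvature sphere, a feature available precisely because the ambient space form has constant sectional curvature. First I would fix the conformal structure: let $\psi\colon \esf^2 \to \Q^3(c)$ be the immersion, endow $\esf^2$ with the conformal class of the pulled-back metric, and recall that a genus-zero Riemann surface is biholomorphic to the Riemann sphere $\C \cup \{\infty\}$. In a local conformal coordinate $z=x+iy$ the induced metric reads $ds^2 = \lambda^2\,|dz|^2$, and writing $e,f,g$ for the coefficients of the second fundamental form with respect to a unit normal $N$, the mean curvature is $H = (e+g)/(2\lambda^2)$ and the Hopf differential is the quadratic differential $\Phi = \phi\, dz^2$ with $\phi = \tfrac12(e-g) - i f = \langle \psi_{zz}, N\rangle$.

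The core computation is to show that $\Phi$ is holomorphic, i.e. $\partial_{\bar z}\phi = 0$. Here I would write out the Codazzi--Mainardi equations for the immersion. The essential point is that in $\Q^3(c)$ the ambient curvature tensor has the form $R(X,Y)Z = c\bigl(\langle Y,Z\rangle X - \langle X,Z\rangle Y\bigr)$, so its normal component along any pair of tangent directions vanishes; consequently the Codazzi equations take exactly the same form as in $\rth$. A short calculation then yields $\partial_{\bar z}\phi$ proportional to $\partial_z H$ with a fixed nonzero factor, and since $H$ is constant this forces $\partial_{\bar z}\phi = 0$ on every chart, so $\Phi$ is a globally well-defined holomorphic quadratic differential on $\esf^2$.

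Next I would invoke the genus-zero obstruction: on the Riemann sphere $\C\cup\{\infty\}$ there are no nonzero holomorphic quadratic differentials, since the canonical bundle has degree $-2$ and hence $K^{\otimes 2}$ has degree $-4<0$, admitting no nonzero holomorphic section. Therefore $\Phi \equiv 0$, i.e. $e=g$ and $f=0$ at every point, so the immersion is \emph{totally umbilic} with both principal curvatures equal to $H$ everywhere. The proof closes with the classical local classification of totally umbilic surfaces in a space form: the condition $A = H\cdot\mathrm{Id}$ on the shape operator is an overdetermined first-order system whose connected solutions are open pieces of round geodesic spheres (or their degenerations, such as horospheres and totally geodesic surfaces, in the extremal cases). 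A \emph{closed} immersed such surface must therefore be a complete round geodesic sphere, which is the assertion.

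I expect the holomorphicity step to be where all the hypotheses genuinely converge, and thus the true heart of the matter: it uses $H\equiv\mathrm{const}$ and the constant-curvature hypothesis on $\Q^3(c)$ simultaneously, the latter being exactly what collapses the Codazzi equations to their flat form. This is precisely the structural fact that fails for general homogeneous targets, where (as noted above for the Abresch--Rosenberg theory) one must replace $\Phi$ by a cleverly corrected quadratic differential to recover holomorphicity. Isolating why the naive Hopf differential already works here, with no correction needed, is the conceptual crux of the argument.
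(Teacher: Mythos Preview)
Your proposal is correct and reproduces exactly the classical Hopf argument via the holomorphic Hopf quadratic differential, which is the proof the paper alludes to (the paper merely cites Hopf~\cite{hf1} and remarks that his original proof adapts to space forms, without writing out the details). Your identification of the key point---that constant sectional curvature makes the normal part of the ambient curvature tensor vanish, so the Codazzi equations reduce to $\partial_{\bar z}\phi = \lambda^2\,\partial_z H$---is precisely the adaptation the paper has in mind.
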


\begin{theorem}[Abresch-Rosenberg Theorem] \label{abro}
An immersed $H$-sphere in a simply connected
homogeneous 3-manifold with a four-dimensional isometry group is
a rotationally symmetric immersed sphere.
\end{theorem}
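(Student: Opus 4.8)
The plan is to generalize Hopf's genus-zero argument: in a space form the $(2,0)$-part of the second fundamental form, the classical Hopf differential, is holomorphic for a surface of constant mean curvature and hence vanishes on a topological sphere, forcing total umbilicity and thus roundness. Over a homogeneous $3$-manifold $X$ with four-dimensional isometry group this differential is no longer holomorphic, so the first task is to identify the correct replacement. I would begin by recalling that such an $X$ is, up to rescaling, one of the Bianchi--Cartan--Vranceanu spaces $\mathbb{E}(\kappa,\tau)$: each admits a Riemannian submersion $\pi\colon X\to M^2(\kappa)$ onto the simply connected surface of constant curvature $\kappa$, with unit geodesic fibers generated by a Killing field $\xi$ and bundle curvature $\tau$ (the product cases $\esf^2(\kappa)\times\R$ and $\mathbb{H}^2(\kappa)\times\R$ have $\tau=0$, while $\mathrm{Nil}_3$, the Berger spheres and $\widetilde{\mathrm{SL}}(2,\R)$ have $\tau\neq 0$). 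Since an immersed sphere is conformally $\esf^2=\mathbb{C}\cup\{\infty\}$, I would fix a global conformal parameter $z$, write $p\,dz^2$ for the $(2,0)$-part of the second fundamental form, and record the fibration data through the decomposition $\xi=T+\nu N$, where $T$ is tangent to the surface, $\nu=\langle \xi,N\rangle$ is the angle function, and $N$ is the unit normal.

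The central step is to introduce the Abresch--Rosenberg quadratic differential, of the form $Q=\big(2(H+i\tau)\,p-(\kappa-4\tau^2)\,\langle\xi,\partial_z\rangle^2\big)\,dz^2$, and to prove that it is holomorphic, i.e.\ that $\partial_{\bar z}$ of its coefficient vanishes whenever $H$ is constant. This computation is where I expect the real difficulty to lie: unlike the space-form case, the structure equations for a surface in $\mathbb{E}(\kappa,\tau)$ involve the non-constant ambient sectional curvatures and the bundle curvature $\tau$, so the Codazzi equation acquires correction terms coupling $p$, $\nu$ and $T$. The verification requires combining these modified Gauss--Codazzi relations with the compatibility equations for $\xi$ (which express $\nabla_X\xi$ in terms of $\tau$ and the complex structure of the surface) and checking that the extra terms produced by differentiating $2(H+i\tau)p$ cancel exactly against those arising from differentiating $(\kappa-4\tau^2)\langle\xi,\partial_z\rangle^2$. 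The precise coefficient $\kappa-4\tau^2$ and the complex factor $H+i\tau$ are in fact \emph{forced} by this cancellation, which is what makes the single ansatz uniform across all the geometries at once.

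Granting holomorphicity, the topological conclusion runs exactly as in Hopf's theorem. Because the surface is a topological sphere it is conformally $\mathbb{CP}^1$, on which the space of holomorphic quadratic differentials is trivial (the bundle $K^{\otimes 2}$ has negative degree $-4$); hence $Q\equiv 0$ identically. The final step is to extract geometric rigidity from this vanishing. I would show that $Q\equiv 0$ forces the integral curves of the kernel line field of the traceless modified second fundamental form to foliate the sphere by curves along which the surface invariants depend only on the distance to the two poles, so that the sphere is invariant under the one-parameter group of rotations of $X$ fixing $\xi$; equivalently, $Q\equiv 0$ makes the induced metric together with the modified Hopf invariants agree with those of a rotationally symmetric example of the same $H$, and uniqueness for the associated Cauchy problem then identifies the two surfaces. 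To close the argument I would classify the rotationally symmetric $H$-spheres directly, reducing the CMC equation to an ODE for the profile curve and confirming that a complete such sphere exists for every admissible value of $H$, thereby establishing that every immersed $H$-sphere in $X$ is a rotationally symmetric sphere.
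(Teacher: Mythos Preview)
Your outline is correct and follows exactly the method the paper attributes to Abresch and Rosenberg: the paper does not supply its own proof of Theorem~\ref{abro} (it is a survey and simply cites \cite{AbRo1,AbRo2}), but it explicitly notes that ``the theorems of Hopf and Abresch--Rosenberg rely on the existence of a holomorphic quadratic differential for immersed $H$-surfaces in homogeneous 3-manifolds with isometry group of dimension at least four,'' which is precisely the mechanism you describe. Your identification of the differential $Q$, the holomorphicity computation via the modified Codazzi equations in $\mathbb{E}(\kappa,\tau)$, the vanishing on $\esf^2$ by the degree argument, and the reduction to the rotationally symmetric ODE all match the original Abresch--Rosenberg strategy, so there is nothing to correct.
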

When the ambient space is an arbitrary homogeneous 3-manifold
$X$, the type of description of immersed $H$-spheres
given by the above theorems is no longer possible, due to the lack of
continuous families of ambient rotations in $X$. Because of this, one
natural way to describe  immersed $H$-spheres in this general setting is
to parameterize explicitly the moduli space of these spheres up to ambient
isometries, and to determine their most important geometric properties.

In this section we describe  a theoretical framework for  studying
 immersed $H$-surfaces in  any simply connected
homogeneous 3-manifold $X$ that is not diffeomorphic to
$\esf^2\times \R $; in $\esf^2\times \R $, there is a unique
immersed $H$-sphere for each value of the mean curvature $H\in [0,\infty )$,
and each such immersed $H$-sphere is embedded as consequence of
Theorem~\ref{abro}.

The common framework for every simply connected
homogeneous 3-manifold $X$ not diffeomorphic to $\esf^2\times \R $ is
that such a $X$ is isometric to a metric Lie group, i.e., to a 3-dimensional
Lie group equipped with a left invariant metric.
For background material
on the classification
and geometry of 3-dimensional metric Lie groups,
the reader can consult the introductory textbook-style
article~\cite{mpe11} by the first two authors, and
for further details on the proofs outlined in this section, we refer
the reader to the papers~\cite{mmpr1,mmpr4,mmpr2}
by Meeks, Mira, P\' erez and Ros.

We wish to explain here how this general
theory leads to the classification and geometric
study of  immersed $H$-spheres when $X$ is compact.
Specifically, Theorem~\ref{main5} below gives a classification of
 immersed $H$-spheres in any homogeneous 3-manifold
diffeomorphic to $\esf^3$, and determines the essential
properties of such spheres with respect to their existence,
uniqueness, moduli space, symmetries, embeddedness and stability.
Since we will refer to smooth families of oriented $H$-spheres parameterized by the
values $H$ of their constant mean curvature, in this section we will allow
$H$ to be any real number.

\begin{theorem}[Meeks, Mira, P\'erez, Ros\, \cite{mmpr4}]
\label{main5} Let $X$ be a compact, simply connected homogeneous
3-manifold. Then:
\ben[1.]
\item \label{exist}
For every $H\in \R$,
there exists an immersed oriented sphere $S_H$ in $X$ of constant
mean curvature $H$.
\item \label{uniq}
 Up to ambient isometry, $S_H$
is the unique immersed sphere in $X$ with constant mean curvature
$H$.
\item \label{familySt}
There exists
a well-defined point in $X$ called
the  \emph{center of symmetry} of $S_H$
such that the isometries of $X$ that fix this point also leave
$S_H$  invariant.
\item \label{alex-embed}
$S_H$ is Alexandrov embedded,
in the sense that the immersion
$f\colon S_H\looparrowright X$ of $S_H$ in $X$ can be extended to an
isometric immersion $F\colon B\to X$ of a Riemannian 3-ball
 such that $\partial B=S_H$ is mean convex.
\item \label{index1}
$S_H$ has index one and nullity three for the Jacobi operator.
\end{enumerate}
Moreover, let $\mathcal{M}_X$ be the set of oriented  immersed
$H$-spheres  in $X$ whose center of symmetry
is a given point $e\in X$. Then, ${\mathcal M}_X$ is an analytic
family $\{ S(t) \mid t \in \R\} $ parameterized by the mean
curvature value $t$ of $S(t)$.
\end{theorem}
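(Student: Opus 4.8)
The plan is to run a continuity (connectedness) argument in the mean curvature parameter $H$, exploiting the Lie group structure of $X$ throughout. First I would invoke the fact recalled before~(\ref{eq:su2}) that $X$ is isometric to $\su$ equipped with a left invariant metric; in particular $X$ carries a transitive $3$-dimensional family of isometries given by left translations, producing three linearly independent Killing fields $K_1,K_2,K_3$. These are the common source of both the nullity-three statement and the center of symmetry. The architecture is: (i) show that the moduli space $\cM=\{(H,S): S \text{ an oriented immersed } H\text{-sphere in } X\}$ is locally a smooth $1$-dimensional analytic manifold on which $H$ restricts as a submersion (this rests on item~\ref{index1}); (ii) obtain a priori curvature and area estimates making the projection $(H,S)\mapsto H$ proper over every compact interval, so the component through a known example is \emph{closed}; (iii) supply a seed example and conclude by openness plus closedness plus connectedness of the interval of admissible $H$-values. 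I note that for the special left invariant metrics whose isometry group is $4$-dimensional (Berger spheres), Theorem~\ref{abro} already yields a rotationally symmetric sphere, so the genuinely new work is for metrics with only a $3$-dimensional isometry group.

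The crux, and the step I expect to be the main obstacle, is item~\ref{index1}. The lower bounds are easy: each $K_i$ gives a Jacobi function $u_i=\langle K_i,\eta\rangle$, and these are independent because a sphere cannot be invariant under any $1$-parameter subgroup of left translations, so $\mathrm{null}(S_H)\ge 3$; and the free functional $\mathrm{Area}-2H\,\mathrm{Vol}$ has at least one descent direction since $Q(1,1)=-\int_{S_H}(|A_{S_H}|^2+\Ric(\eta))<0$, giving $\mathrm{index}(S_H)\ge 1$. For the matching bound $\mathrm{index}(S_H)\le 1$ I would argue that $S_H$ is volume-preserving (weakly) stable — immediate when $S_H$ is an isoperimetric boundary, and I would propagate weak stability along the family — so that $Q(u,u)\ge 0$ on the codimension-one space $\{\int_{S_H}u=0\}$, forcing at most one negative eigenvalue. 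The truly hard content is the nullity bound $\mathrm{null}(S_H)\le 3$: one must rule out Jacobi functions beyond the three translational ones. In the classical or $4$-dimensional-isometry settings a holomorphic Hopf/Abresch--Rosenberg quadratic differential (Theorem~\ref{abro}) forces this, but for a general left invariant metric on $\su$ no such differential exists, so a global argument controlling the kernel of $L=\Delta+|A_{S_H}|^2+\Ric(\eta)$ directly, and showing the family does not bifurcate, is required.

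Granting item~\ref{index1}, items~\ref{exist} and~\ref{uniq} follow. The nullity three is exactly the dimension of the orbit of $S_H$ under left translations, so after quotienting by this $3$-dimensional symmetry the linearized CMC operator is invertible, and the analytic implicit function theorem yields through each $S_{H_0}$ a unique analytic arc $t\mapsto S(t)$ of $H(t)$-spheres with $H'(t)\ne 0$ — giving local existence, local uniqueness, and openness of the attained $H$-values. For closedness I would use the curvature estimates of Theorem~\ref{cest3} together with area bounds from the One-sided Regular Neighborhood Theorem~\ref{cor*} and Corollary~\ref{cor.max} to show that any sequence $S_{H_n}$ with $H_n\to H_\infty$ in a compact interval subconverges to an immersed $H_\infty$-sphere of the same type; the key point is that index one prevents the spheres from degenerating or splitting off area, so no topology or area is lost. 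A seed is supplied by the small-volume isoperimetric solutions, which are nearly round $H$-spheres with large $H$ (and for the bi-invariant metric by Theorem~\ref{hopf}). Since the admissible interval is all of $\R$ and connected, openness plus closedness produces one $S_H$ for each $H$, and local uniqueness upgrades to global uniqueness up to ambient isometry.

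For item~\ref{alex-embed} I would propagate Alexandrov embeddedness along the arc: the seed isoperimetric spheres bound embedded balls, and Alexandrov embeddedness is open and closed along the family, using the regular neighborhood on the mean convex side from Theorem~\ref{cor*} to preclude interior collapse of the bounding immersion. For item~\ref{familySt}, the equality $\mathrm{null}(S_H)=3$ — exactly the translation directions — lets me define a canonical \emph{center of symmetry} $e\in X$: the stabilizer of $S_H$ in $\mathrm{Isom}(X)$ must fix a unique point (located, say, by requiring the left-invariant balancing functionals $\int_{S_H}u_i$ to vanish, or by a symmetrization argument), and any isometry fixing $e$ carries $S_H$ to an $H$-sphere with the same center, which by item~\ref{uniq} is $S_H$ itself. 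Finally the \emph{Moreover} clause is the synthesis: fixing the center at $e$ removes the $3$-parameter translation freedom, so $\cM_X$ is precisely the image of the analytic arc, a single family $\{S(t)\mid t\in\R\}$ with $t$ the mean curvature of $S(t)$, analyticity being inherited from the analytic implicit function theorem applied to the real-analytic CMC equation. The part I expect to resist throughout is the nullity bound, since properness of the family, the absence of bifurcation, and the very definition of the center all hinge on the null space being exactly three-dimensional.
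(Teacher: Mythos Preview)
Your overall continuity architecture (seed via small isoperimetric spheres, openness via the implicit function theorem, closedness via a priori estimates) matches the paper's outline. But several of the load-bearing steps are either incorrect or miss the actual mechanism.

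First, your route to index $\le 1$ is wrong. You propose to propagate weak stability along the family from the isoperimetric seed, and then deduce index one. But the paper explicitly records (just after the statement of Theorem~\ref{main5}, citing Torralbo--Urbano) that $H$-spheres in certain Berger spheres are \emph{not} weakly stable. So weak stability is not a property that persists along the family, and your argument for the upper index bound collapses. In the paper's approach, ``index one'' is not proved for an arbitrary $H$-sphere a priori; rather, one works inside the set $\cH$ of values $H$ for which an index-one sphere exists and shows $\cH=\R$ by the open-closed argument. Index one for \emph{all} $H$-spheres is then a consequence of uniqueness, not an input to it.

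Second, and more fundamentally, you have not identified the replacement for the Hopf/Abresch--Rosenberg differential. The key object is the \emph{left invariant Gauss map} (Definition~\ref{defG}), and the central lemma is Theorem~\ref{thm:index1}: for any index-one $H$-sphere, this Gauss map is a diffeomorphism onto $\esf^2$, and this directly forces uniqueness up to left translation among all $H$-spheres with that value of $H$ (and gives the local analytic arc). Your ``local uniqueness upgrades to global uniqueness'' does not close: IFT uniqueness only compares spheres in the same deformation class, whereas you must rule out an $H$-sphere outside the family. The Gauss-map diffeomorphism is precisely what does this, and it is also what supplies the nullity bound you flag as unresolved.

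Third, your closedness step appeals to Theorem~\ref{cest3} and Theorem~\ref{cor*}, both of which require $H\ge \cH>0$; they give nothing as $H\to 0$, which is exactly where the family must pass (there is a $0$-sphere, cf.\ Theorem~\ref{embed:su2}). The paper obtains curvature estimates by a rescaling/blow-up argument instead, and singles out the \emph{area} estimate as the most delicate step: if areas blow up, one produces a complete stable constant mean curvature surface in $X$ that is a lift, via a fibration $\Pi\colon X\to\esf^2$, of an immersed curve in $\esf^2$, and then shows such a surface cannot be stable. Your one-sided regular neighborhood argument does not substitute for this.

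Finally, the center of symmetry is not defined via balancing integrals $\int_{S_H}u_i$; the paper deduces it from Alexandrov embeddedness together with uniqueness up to left translations.
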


Every compact,  simply connected homogeneous 3-manifold is
isometric to the Lie group $\su $ given by (\ref{eq:su2}),
endowed with a left invariant
metric. There exists a 3-dimensional family of such homogeneous
manifolds, which includes the 3-spheres $\esf^3(c)$ of constant
sectional curvature $c>0$ and the two-dimensional family of
rotationally symmetric \emph{Berger spheres}, each of which has a
four-dimensional isometry group. Apart from these two more symmetric
families, any other  left invariant metric on $\su$ has a
3-dimensional isometry group, with the isotropy group of every
point being isomorphic to $\Z_2\times\Z_2$. Item~\ref{familySt} in
Theorem~\ref{main5} provides the natural generalization of the
theorems by Hopf and Abresch-Rosenberg to this more general context,
since it implies that any  immersed $H$-sphere $S_H$ in
such a space inherits all the ambient isometries fixing some point;
in particular, $S_H$ is round in $\esf^3(c)$
and rotationally symmetric in the Berger spheres.

Items~\ref{exist} and \ref{uniq} together with the last
statement of Theorem~\ref{main5} provide an explicit description of
the moduli space of  immersed $H$-spheres in any compact,
simply connected homogeneous 3-manifold $X$.
Items~\ref{alex-embed} and \ref{index1} in Theorem~\ref{main5}
describe general embeddedness and stability type properties of
immersed $H$-spheres in $X$ which are essentially sharp,
as we explain next. In $\esf^3(c)$,  immersed $H$-spheres
are round, embedded and weakly
stable (see Definition~\ref{defwstable} for the notion of weak
stability).
 However, for a general homogeneous $X$ diffeomorphic to
$\esf^3$,  immersed $H$-spheres need not be embedded
(Torralbo~\cite{tor1} for certain ambient Berger spheres) or weakly
stable (Torralbo and Urbano~\cite{tou1} for certain ambient Berger
spheres, see also Souam~\cite{so3}), and they are not geodesic
spheres if $X$ is not isometric to some  $\esf^3(c)$.  Nonetheless,
item~\ref{alex-embed} in Theorem~\ref{main5} shows that any
 immersed $H$-sphere in a general $X$ is {Alexandrov
embedded,}
a weaker notion of embeddedness,
while item~\ref{index1} describes the index and the
dimension of the kernel of the Jacobi operator of an  immersed $H$-sphere.

Just as in the classical case of $\rth$, the left invariant
Gauss map of an oriented  surface $\S$ in a metric Lie group
$X$ (not necessarily compact) takes values
in the unit sphere of the Lie algebra of $X$ and
contains
essential information on the geometry of the
surface, especially when $\S$ is an immersed $H$-surface.

\begin{definition}
\label{defG} {\rm Given an oriented immersed surface $f\colon \Sigma
\looparrowright X$ with unit normal vector field $N\colon \Sigma \to
TX$ (here $TX$ refers to the tangent bundle of $X$), we define the
{\it left invariant Gauss map} of the immersed surface to be the map
$G\colon \Sigma \to \esf^2\subset T_eX$ that assigns to each $p\in
\Sigma $ the unit tangent vector to $X$ at the identity element $e$
given by $(dl_{f(p)})_e(G(p))=N_p$. }
\end{definition}

An additional  property of the immersed $H$-spheres in $X$ that is not
listed in the statement of Theorem~\ref{main5} is that, after
identifying $X$ with the Lie group $\su$ endowed with a left
invariant metric, the left invariant Gauss map of every immersed $H$-sphere
 in $X$ is a diffeomorphism to $\esf^2$; this diffeomorphism
property is crucial in the proof of Theorem~\ref{main5} and follows from the
next more general result.

\begin{theorem}[Theorem~4.1 in \cite{mmpr4}]
\label{thm:index1} Any index-one $H$-sphere $S_H$ in a
3-dimensional, simply-connected metric Lie group $X$ satisfies:
\ben[(1)]
\item The left invariant Gauss map of $S_H$ is an orientation-preserving diffeomorphism
to $\esf^2$.

\item   $S_H$ is unique up to left translations among
$H$-spheres in $X$.

\item $S_H$ lies inside a real-analytic family $\{S_{H'} \mid H'\in
(H-\varepsilon , H+\varepsilon )\}$ of index-one spheres in $X$ for
some $\ve>0$, where $S_{H'}$ has constant mean curvature of value
$H'$.
\een\end{theorem}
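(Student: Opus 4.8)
The plan is to establish the three assertions in the order (1), (2), (3), since the diffeomorphism property of the left-invariant Gauss map is the engine that drives both the uniqueness statement and the construction of the family. Throughout I identify $X$ with a $3$-dimensional metric Lie group with Lie algebra $T_eX$, and I use repeatedly that the Killing fields $K_1,K_2,K_3$ generating the left translations of $X$ span a $3$-dimensional space, so that for any immersed $H$-surface their normal components $u_i=\langle K_i,N\rangle$ are Jacobi functions, i.e.\ lie in the kernel of the operator $L$ of~(\ref{Jacobiop}). I also use that, by Theorem~\ref{main5}, an index-one $H$-sphere has nullity exactly three.

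For part (1), the first step is to record a local formula for the differential of the left-invariant Gauss map $G$ of Definition~\ref{defG}. Writing the unit normal as $N=\sum_i g_iE_i$ in a left-invariant orthonormal frame $\{E_i\}$, so that $(g_1,g_2,g_3)=G$, and differentiating along $\S=S_H$ together with the identity $\nabla_vN=-A(v)$, one expresses $dG$ as a field of endomorphisms $\mathcal S=A+\mathcal B$ of $T\S$, where $A$ is the symmetric shape operator and $\mathcal B$ is the skew-symmetric contribution coming from the structure constants of the metric Lie group. A direct $2\times2$ computation then gives that the Jacobian of $G$ equals $\det\mathcal S=\det A+\beta^2$, where $\beta$ measures the skew part $\mathcal B$; this is also the local area-distortion factor of $G$. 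The decisive step is to show that $\det\mathcal S>0$ everywhere. Granting this, $G$ is an orientation-preserving local diffeomorphism of the closed surface $S_H\cong\esf^2$ onto $\esf^2$, hence a covering map; since the target $\esf^2$ is simply connected, $G$ is a global diffeomorphism, which proves (1). The strict positivity is exactly where the index-one hypothesis enters: a zero of $\det\mathcal S$ at a point $p_0$ produces a nonzero tangent direction in $\ker dG_{p_0}$, and I would rule this out using the spectral structure of $L$ forced by index one, namely that the first eigenvalue $\lambda_1<0$ is simple with positive eigenfunction $\phi_1$, that $\ker L$ is precisely the three-dimensional space spanned by the Killing--Jacobi functions $u_i$, and that each $u_i$ is $L^2$-orthogonal to $\phi_1$ (so each $u_i$ changes sign). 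I expect this positivity statement to be the main obstacle in the whole proof.

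With (1) in hand, part (2) follows from a reconstruction argument. Given two $H$-spheres $S_H$ and $S_H'$, their left-invariant Gauss maps $G$ and $G'$ are diffeomorphisms onto $\esf^2$ by (1), so after reparameterizing $S_H'$ by $(G')^{-1}\circ G$ both spheres carry the same left-invariant Gauss map. The plan is then to show that an $H$-immersion into $X$ is determined by its left-invariant Gauss map up to a left translation: the normal direction at each point is the left translate of the common Gauss map, the conformal factor of the induced metric is pinned down by the constant value $H$ together with the Gauss map through the Gauss--Codazzi equations, and the resulting first-order reconstruction system has a unique solution modulo the integration constant, which is precisely the left-translation freedom. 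Equivalently, once the two immersions are aligned at a point with the same normal and the same Gauss map, one places $S_H'$ on the mean convex side of $S_H$ and invokes the interior maximum principle (Theorem~\ref{thmintmaxprin}) to force coincidence. Either way $S_H'=l_a(S_H)$ for some left translation $l_a$, $a\in X$.

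Finally, for part (3) I would apply the analytic implicit function theorem to the mean curvature operator, which is real-analytic on a neighborhood of $S_H$ in the space of immersions. Its linearization at $S_H$ is the Jacobi operator $L$, whose kernel is the three-dimensional space generated by the left translations. Since this kernel is entirely geometric --- it is the tangent space to the orbit of $S_H$ under the three-dimensional group of left translations --- I would work on a slice transverse to this orbit, fixing the center of symmetry provided by Theorem~\ref{main5}; on this slice the restricted linearized operator is an isomorphism onto the appropriate space, and the implicit function theorem produces a real-analytic curve $H'\mapsto S_{H'}$ of $H'$-spheres for $H'\in(H-\ve,H+\ve)$. That each $S_{H'}$ is again of index one follows from the real-analytic, hence continuous, dependence of the spectrum of $L$ on $H'$: the single negative eigenvalue stays negative, the three zero eigenvalues persist because the left-translation symmetry survives, and the next eigenvalue stays positive, so index and nullity are locally constant. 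The hard part remains the Jacobian positivity of part (1); the arguments for (2) and (3), while technical, are comparatively routine once the Gauss map is known to be a diffeomorphism.
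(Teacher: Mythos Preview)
There is a structural circularity and a genuine gap in part~(2) that you should address.

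\textbf{Circularity.} You invoke Theorem~\ref{main5} twice --- once to get nullity three for an index-one sphere, and again to supply a center of symmetry for the slice in part~(3). But the survey makes clear that Theorem~\ref{thm:index1} is an ingredient in the proof of Theorem~\ref{main5}, not the other way around. The nullity statement you need comes from Cheng's theorem (see the discussion just before Conjecture~\ref{conj3.17*}): since index one means $0$ is the second eigenvalue of $L$ and the three right-invariant Killing fields already supply a three-dimensional subspace of $\ker L$, Cheng's bound on the multiplicity of the second eigenvalue on a sphere forces $\dim\ker L=3$. For part~(3), the slice should be taken transverse to the left-translation orbit directly, without appealing to any center of symmetry.

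\textbf{The gap in part~(2).} You apply part~(1) to both $S_H$ and the competing sphere $S_H'$, and then compose Gauss maps. But the hypothesis of Theorem~\ref{thm:index1} --- and hence your proof of (1) --- requires index one, and $S_H'$ is an \emph{arbitrary} $H$-sphere. You have no right to assume its Gauss map is a diffeomorphism. The actual argument in~\cite{mmpr4} (following the strategy of G\'alvez--Mira and Daniel--Mira~\cite{dm2}) uses only that $G\colon S_H\to\esf^2$ is a diffeomorphism: for each $p\in S_H'$ there is a unique $q\in S_H$ with $G(q)=G'(p)$, and the difference of the second fundamental form data of $S_H'$ at $p$ and of $S_H$ at $q$ defines a complex quadratic form on $S_H'$ whose zeros, if isolated, have negative index. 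Since $S_H'$ is a sphere, Poincar\'e--Hopf forces this form to vanish identically; then the Cauchy--Kowalevski type reconstruction (Remark~\ref{remark-integration}) shows $S_H'$ is a left translate of $S_H$. Your maximum-principle alternative does not work either: after matching one point and one normal you have no mechanism to place $S_H'$ entirely on one side of $S_H$.

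\textbf{On part~(1).} Your sketch of the Jacobian positivity stops short of the key mechanism. The idea is that a point in $\ker dG_{p_0}$ produces a nontrivial linear combination $u=\sum c_iu_i\in\ker L$ vanishing to second order at $p_0$; since $0$ is the second eigenvalue, the nodal set of $u$ must be a simple closed curve separating $S_H$ into exactly two nodal domains, and an order-two zero forces a crossing and hence at least three nodal domains, a contradiction. The $L^2$-orthogonality to $\phi_1$ that you mention is not enough by itself.
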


%\begin{theorem}[Theorem~4.1 in \cite{mmpr4}]
%\label{thm:index1}
%Suppose that $S_H$ is an index-one  immersed $H$-sphere in
%a 3-dimensional metric Lie group
%$Y$. Then, the left invariant Gauss map of  $S_H$ is an
%orientation-preserving diffeomorphism onto $\esf^2$ and any other immersed
%$H$-sphere in $X$ is the left translation of $S_H$.
%\end{theorem}

As an application of Theorem~\ref{main5}, Meeks, Mira, P\'erez and Ros
provide  a more detailed description of the special
geometry of  immersed  0-spheres in a general compact $X$.

\begin{theorem}[Theorem~7.1 in \cite{mmpr4}]\label{embed:su2}
For $X$ as in Theorem~\ref{main5}, the unique
(up to left translations)
immersed 0-sphere  $S_0$ in $X$ is embedded. Furthermore, since
the stabilizer of any point in a left invariant
metric on $\su$ contains $\Z_2\times \Z_2$ and
$S_0$ is also invariant under the antipodal map $A \mapsto -A$,
then by item~\ref{familySt} {of Theorem~\ref{main5}}
the related group of
ambient isometries $G=\Z_2\times \Z_2\times \Z_2$ leaves
 $S_0$ invariant; in fact when the isometry group of $X$ is
 3-dimensional, then $G$ is the subgroup of
 ambient isometries of $X$ that leaves $S_0$ invariant.
\end{theorem}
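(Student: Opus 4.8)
The plan is to treat the two assertions separately: first the embeddedness of the minimal sphere $S_0$, obtained by a continuity argument along the analytic family of Theorem~\ref{main5} together with the interior maximum principle; and then the symmetry statement, obtained by combining the point isotropy with a central (antipodal) involution. Throughout I identify $X$ with $\su$ carrying a left invariant metric, I normalize by a left translation so that the center of symmetry of every sphere in the family is the identity $e=I_2$, and I write $\mathcal A\cong\Z_2\times\Z_2$ for the isotropy subgroup of metric automorphisms fixing $e$, which preserves each such sphere by item~\ref{familySt} of Theorem~\ref{main5}. I record at the outset the elementary facts that $\iota\colon A\mapsto -A$ is left translation by the central element $-I_2$ --- hence an orientation-preserving, fixed-point-free isometric involution coinciding with the antipodal map of $\esf^3\cong\su$ --- and that every $\phi\in\mathcal A$ fixes $-I_2$ (automorphisms preserve the order-two center) and therefore commutes with $\iota$ and fixes both $\pm I_2$.

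For embeddedness I would argue by continuity in the mean curvature parameter. Theorem~\ref{main5} gives an analytic family $\{S(t)\}_{t\in\R}$ with $S(t)$ the (centered) $t$-sphere, and for $|t|$ large $S(t)$ is a small, nearly round embedded sphere, its size controlled by the curvature estimate of Theorem~\ref{cest3}; so embeddedness holds for large $t$. Suppose it first fails at some $t_0$ as $t$ decreases. At a first interior self-contact the two local sheets are mutually tangent and locally one-sided. The key input is Theorem~\ref{thm:index1}: the left invariant Gauss map $G\colon S(t_0)\to\esf^2$ is a diffeomorphism, so injectivity of $G$ rules out the two sheets sharing the same left invariant unit normal; hence they meet with opposite unit normals, which is exactly the tangential configuration (together with the mean-convex side structure of item~\ref{alex-embed}) to which the interior maximum principle, Theorem~\ref{thmintmaxprin}, applies. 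That principle forces the sheets to coincide, contradicting that $t_0$ is the first failure. Hence every $S(t)$, and in particular $S_0=S(0)$, is embedded.

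Next I would establish the antipodal invariance $\iota(S_0)=S_0$, which is the genuinely extra symmetry. Let $S_0^-$ denote $S_0$ with reversed orientation; since $H=0$, $S_0^-$ is again an oriented minimal sphere of index one whose Gauss map $-G$ is a diffeomorphism, so by the uniqueness clause of Theorem~\ref{thm:index1} one has $S_0^-=L_h(S_0)$ as oriented spheres for some $h\in\su$. Because $\mathcal A$ fixes both $\pm I_2$ and has order four, the isotropy groups at $e$ and at $-e$ coincide with $\mathcal A$, so both $e$ and $-e$ are centers of symmetry of $S_0$, and the center set is exactly $\{e,-e\}$ when $\mathrm{Isom}(X)$ is three-dimensional. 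Matching the center sets of $S_0^-$ and $L_h(S_0)$ forces $h\in\{e,-e\}$; the value $h=e$ is impossible since $S_0^-$ and $S_0$ carry opposite orientations on the same image while $L_e=\mathrm{id}$ preserves orientation. Therefore $h=-e$, and comparing underlying sets gives $\mathrm{Image}(S_0)=\mathrm{Image}(S_0^-)=\iota(\mathrm{Image}(S_0))$, i.e. $\iota(S_0)=S_0$.

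Finally I would assemble the group. The commuting involutions generate $G=\mathcal A\times\langle\iota\rangle\cong\Z_2\times\Z_2\times\Z_2$, and $\iota\notin\mathcal A$ since $\iota$ is fixed-point free while elements of $\mathcal A$ fix $e$; by item~\ref{familySt} and the invariance just proved, $G$ leaves $S_0$ invariant. For the sharp (``in fact'') clause, when $\mathrm{Isom}(X)$ is three-dimensional any isometry $\psi$ preserving $S_0$ permutes the intrinsically defined center set $\{e,-e\}$; replacing $\psi$ by $\iota\circ\psi$ if $\psi(e)=-e$, we may assume $\psi(e)=e$, whence $\psi\in\mathcal A$, and thus $\mathrm{Stab}(S_0)=G$. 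The main obstacle is the self-contact analysis in the embeddedness step --- proving that embeddedness is simultaneously open and closed along the family and that a first self-intersection is a clean tangency of the type required by the maximum principle, uniformly in the normalization; a secondary subtlety is verifying that the center set is exactly the antipodal pair $\{e,-e\}$, which underpins both the antipodal invariance and the sharpness clause.
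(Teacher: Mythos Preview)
Your embeddedness argument has a genuine gap: as written, it would prove that \emph{every} sphere $S(t)$ in the family is embedded, not just $S_0$. This is false --- the paper itself notes that Torralbo~\cite{tor1} constructed non-embedded $H$-spheres in certain Berger spheres, which are left invariant metrics on $\su$. So the continuity argument cannot succeed uniformly in $t$, and you have not isolated what is special about $t=0$. The failure lies in the maximum-principle step. At a first self-tangency with opposite left-invariant normals and $t_0>0$, the two sheets may meet with their mean curvature vectors pointing \emph{away} from each other (think of two round spheres in external tangency); in that configuration Theorem~\ref{thmintmaxprin} yields no contradiction, and Alexandrov embeddedness alone does not exclude it. Only when $t_0=0$ does tangency of two minimal sheets force local coincidence irrespective of side --- but to use that you would first have to know that embeddedness survives all the way down to $t=0$, which is precisely the point at issue.

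The argument the paper actually offers is completely different and one line long: by Smith~\cite{smith1} every Riemannian metric on $\esf^3$ carries an embedded minimal sphere, and by item~\ref{uniq} of Theorem~\ref{main5} that sphere must be a left translate of $S_0$; hence $S_0$ is embedded. This bypasses the deformation entirely.

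Your symmetry argument is closer to what one expects, but the step ``the center set is exactly $\{e,-e\}$'' is doing double duty: you invoke it to force $h\in\{e,-e\}$, yet the only reason $-e$ should be a center on equal footing with $e$ is the $\iota$-invariance you are trying to prove. What you have actually shown is that $L_h$ preserves the set $S_0$ while reversing its orientation, so $h$ lies in the (a priori unknown) subgroup $\{g\in\su: L_g(S_0)=S_0\}$ and $h\neq e$; you still need an independent reason why this subgroup is $\{\pm e\}$. One way to close this is to use the uniqueness of the center of symmetry furnished by Theorem~\ref{main5} (a single well-defined point, not a set) together with the fact that $\iota$ commutes with every $\phi\in\mathcal A$, rather than arguing via a ``center set''.
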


Since it is well-known~\cite{smith1} that for every Riemannian
metric on $\esf^3$, there exists an
embedded minimal
sphere, then one could apply the uniqueness statement in
item~\ref{uniq} of Theorem~\ref{main5}
to give an alternative proof that $S_0$ is embedded.

The theorems of Hopf and Abresch-Rosenberg rely on the existence of
a holomorphic quadratic differential for  immersed $H$-surfaces
in homogeneous 3-manifolds with isometry group of
dimension at least four. This approach using holomorphic quadratic
differentials does not seem to work when the isometry group of the
homogeneous 3-manifold has dimension three. Instead, the
approach to proving Theorem~\ref{main5} is inspired by two recent
works on  immersed $H$-spheres in the Thurston geometry
$\sol$, i.e., in the solvable Lie group Sol$_3$ equipped with its
standard left invariant metric. One of these works is the local
parameterization by Daniel and Mira~\cite{dm2} of the space
$\mathcal{M}_{\mbox{\footnotesize Sol}_3}^1$ of index-one,  immersed
$H$-spheres in $\sol$ equipped with its standard metric,
via the left invariant Gauss map and the uniqueness of such spheres.
The other one is  Meeks'~\cite{me34}  area
estimates for the subfamily of spheres in
$\mathcal{M}_{\mbox{\footnotesize Sol}_3}^1$ whose mean curvatures
are bounded from below by any fixed positive constant; these two
results lead to a complete description of the  immersed $H$-spheres
in $\sol$ endowed with its standard metric. However,
the proof of Theorem~\ref{main5} for a general compact, simply
connected homogeneous 3-manifold $X$ requires the development of
new techniques and theory, {which}
are needed to prove that the left invariant Gauss map of an index-one
 immersed $H$-sphere  in $X$ is a diffeomorphism, that
 immersed $H$-spheres in $X$ are {Alexandrov embedded}
and have a center of symmetry, and that there exist a priori area
estimates for the family of index-one  immersed $H$-spheres in $X$.

Here is a brief outline of the proof of Theorem~\ref{main5}. One first
identifies the compact, simply connected homogenous 3-manifold $X$
isometrically with $\su$ endowed with a left invariant metric. Next,
one shows that any index-one  immersed $H$-sphere $S_H$
in $X$ has the property that any other immersed sphere of the same
constant mean curvature $H$ in $X$ is a left translation of $S_H$.
The next step in the proof is to show that the set $\cH$ of values
$H\in \R$ for which there exists an index-one  immersed $H$-sphere
in $X$ is non-empty, open and closed in $\R$
(hence, $\cH=\R$). That $\cH$ is non-empty follows from the
existence of isoperimetric spheres in $X$ of small volume. Openness of $\cH$
follows from an application of the implicit function theorem,
an argument that
also proves that the space of index-one  immersed $H$-spheres
in $X$ modulo left translations is an analytic
one-dimensional manifold. By elliptic theory, closedness of $\cH$
can be reduced to obtaining a priori area and curvature estimates
for index-one,  immersed $H$-spheres with any fixed upper
bound on their mean curvatures. The existence of these curvature
estimates is obtained by a rescaling argument. The most delicate
part of the proof of Theorem~\ref{main5} is obtaining a priori  area
estimates; for this, one first shows that the non-existence of an
upper bound on the areas of all  immersed $H$-spheres in
$X$ implies the existence of a complete, stable, constant mean
curvature surface in $X$ that can be seen to be the lift via a
certain fibration $\Pi\colon X\to \esf^2$  of an immersed curve in
$\esf^2$, and then one proves that such a surface cannot be stable to
obtain a contradiction. This contradiction completes the proof of the fact
that index-one  immersed $H$-spheres exist for all values
of $H$, and so, they are the unique  immersed $H$-spheres
in $X$. The {Alexandrov embeddedness}
 of  immersed $H$-spheres
follows from a deformation argument, using the
smoothness of the family of  immersed $H$-spheres in $X$
and the maximum principle
for $H$-surfaces in Theorem~\ref{thmintmaxprin}.
Finally, the existence of a center of symmetry for any  immersed
$H$-sphere in $X$ is deduced from the
{Alexandrov embeddedness}
and the uniqueness up to left translations of the sphere.

We next describe  some key results and definitions that are essential
in pushing forward and generalizing the arguments
for classifying immersed $H$-spheres described above in the compact case to
the setting where the metric Lie
group is diffeomorphic to $\rth$.

\begin{definition}
\label{Crit-Cheg} {\em
Let $Y$ be a complete homogeneous 3-manifold.
\ben
\item The {\em critical mean curvature} $H(Y)$ of $Y$ is defined as
\[
H(Y)=\inf \{\max|H_{M}| :
M \mbox{ is an immersed closed surface in }Y \},
\]
where  $\max|H_{M}| $ denotes  the maximum of the
absolute mean curvature function $H_{M}$.
\item The {\em Cheeger constant} Ch$(Y)$ of $Y$ is defined as
\[
\mbox{Ch}(Y)=\inf_{\stackrel{K \subset Y}{\mathrm{ \; compact}}}
 \frac{{\rm Area}(\partial K)}{{\rm Volume}(K)}.
\]
\een
}
\end{definition}

The strategy of Meeks, Mira, P\'erez and Ros in~\cite{mmpr1}
to generalize Theorem~\ref{main5} to the case
where $X$ is diffeomorphic to $\rth$ is to obtain
a  result similar to Theorem~\ref{main5}  except that in this case, index-one $H$-spheres
in $X$ exist  precisely for the values $H\in (H(X),\infty)$; note that the definition
of $H(X)$ only permits immersed $H$-spheres to occur in $X$ if $H\geq H(X)$ and the case
of $H=H(X)$ is also easily ruled out.
The expected proof of this generalization of  Theorem~\ref{main5}
follows the same general reasoning as
the outline given above. At the present moment, the main difficulty
in completing the proof of this final result on the Hopf Uniqueness Problem
is to obtain the following area estimates for immersed $H$-spheres in $X$:
\begin{quote}
$(\star )$\ {\em For any $\ve>0$, there exists $A(X,\ve)>0$ such that every index-one
$H$-sphere in $X$ with $H\in (H(X)+\ve,\infty)$
has  area less than $A(X,\ve)$.}
\end{quote}
Meeks, Mira, P\'erez and Ros in~\cite{mmpr1}
are presently writing up the proof of
these area estimates in an essentially
case-by-case study of the possible metric Lie groups  $X$ that can occur
when $X$ is diffeomorphic to $\rth$.

We end this section with some comments about the geometry of
solutions of the isoperimetric problem in
metric Lie groups diffeomorphic to $\rth$ and the relationship between the two
constants $H(X), \mbox{\rm Ch}(X)$ in Definition~\ref{Crit-Cheg}.
In this subject there are still many important open problems concerning $H$-surfaces
in simply connected
homogeneous 3-manifolds, and we refer the interested reader to the last section
of~\cite{mpe11} for a long list of them; however,
we mention some of our favorite ones below related to the isoperimetric problem in
metric Lie groups diffeomorphic to $\rth$.

\begin{conjecture}[Isoperimetric Domains Conjecture] \label{iso:conj}
Let  ${ X}$ denote a metric Lie group diffeomorphic to $\rth$. Then:
 \begin{enumerate}[1.]
\item  Isoperimetric domains in ${ X}$ are topological balls. More generally, closed
Alexandrov embedded $H$-surfaces in
$X$ are spheres.
\item Immersed $H$-spheres in $X$ are embedded, and the balls that
they bound are isoperimetric domains.
\item For each fixed volume ${ V}_0$, solutions to
the isoperimetric problem in  ${X}$
for volume ${ V}_0$ are unique up to left translations in ${ X}$.
\end{enumerate}
\end{conjecture}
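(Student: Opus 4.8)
The plan is to deduce all three items from the conjectured extension of Theorem~\ref{main5} to the case $X\cong\rth$: namely that for every $H\in(H(X),\infty)$ there is a unique (up to left translation) index-one immersed $H$-sphere $S_H$ in $X$, that $S_H$ is Alexandrov embedded with a well-defined center of symmetry, and that the family $\{S_H\}_{H>H(X)}$ is analytic with left invariant Gauss map a diffeomorphism onto $\esf^2$ (Theorem~\ref{thm:index1}). As the excerpt makes explicit, the one genuinely open ingredient of that extension is the area estimate $(\star)$ for index-one spheres with $H\in(H(X)+\ve,\infty)$. I would therefore first assume $(\star)$ and the resulting sphere classification, and organize the argument so that items~1--3 follow from this classification together with the maximum principles and the CMC flux formula already available.

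First I would establish item~2 by a continuity argument along the connected analytic family $\{S_H\}_{H>H(X)}$. For $H$ large the curvature estimates for $H$-disks in homogeneous $3$-manifolds (Theorem~\ref{cest3}) force $S_H$ to be a small sphere of extrinsic diameter $O(1/H)$, uniformly close to a round Euclidean sphere and hence embedded. Embeddedness is an open condition, and it is also closed: at a first parameter value $H_0$ where it failed, the Alexandrov-embedded sphere $S_{H_0}=\partial B$ would exhibit an interior tangency of two sheets of its immersion, both sheets sharing the same mean curvature $H_0$ and unit normal at the contact point, whence the interior maximum principle (Theorem~\ref{thmintmaxprin}) would force them to coincide and rule out such a first instant. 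Therefore every $S_H$ is embedded. That the ball bounded by $S_H$ is isoperimetric I would obtain by showing that $H\mapsto\mathrm{Vol}(S_H)$ is a decreasing bijection of $(H(X),\infty)$ onto $(0,\infty)$, by anchoring minimization at the small-volume isoperimetric spheres produced in the proof of Theorem~\ref{main5}, and by propagating the minimizing property along the family using the uniqueness in Theorem~\ref{thm:index1}.

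Item~3 would then reduce to items~1 and~2 plus standard regularity. A solution $\Omega$ of the isoperimetric problem for volume $V_0$ has smooth boundary which is a weakly stable, i.e.\ volume-preserving stable (Definition~\ref{defwstable}), embedded closed $H$-surface; by item~1 it is a sphere, hence equals some $S_H$, and by uniqueness up to left translation it is determined by $V_0$ up to a left translation of $X$.

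The hard part is item~1, the assertion that every closed Alexandrov-embedded $H$-surface in $X$ is a sphere, which is a full Alexandrov-type theorem in a homogeneous $3$-manifold that in general possesses no reflectional symmetry, so the Alexandrov reflection method used for Theorem~\ref{thmAlex} is unavailable. My plan would be to rule out positive-genus examples by a Heintze--Karcher/Reilly type inequality adapted to immersions bounding mean-convex domains, using the homogeneity of $X$ to control the relevant Ricci terms, together with the CMC flux formula (Theorem~\ref{fluxK}) applied to the Killing fields generated by left translations in order to obstruct nontrivial homology of the surface; combined with the sphere classification and a deformation anchored at the small embedded spheres $S_H$ for $H$ large, this would force genus zero. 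I expect the two principal obstacles to be exactly (i) the area estimate $(\star)$, on which the entire sphere classification rests, and (ii) establishing the Heintze--Karcher/flux obstruction in the absence of a fixed sign for the curvature of $X$, which is what makes the general topological statement in item~1 substantially harder than the uniqueness of $H$-spheres itself.
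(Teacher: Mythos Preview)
The statement you are trying to prove is a \emph{conjecture}: the paper does not contain a proof of it, and no proof is known. It is listed among the open problems motivating the Hopf Uniqueness programme, and the paper later restates it (Conjecture~\ref{iso:conj2}) and separates out item~1 as the independent Alexandrov Uniqueness Conjecture~\ref{conjAlex}. So there is no ``paper's own proof'' to compare against; your proposal is not a proof but a strategy that layers several open problems on top of one another.

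Concretely, your argument assumes as inputs (i) the area estimate $(\star)$, which you correctly identify as open, and (ii) a Heintze--Karcher/flux obstruction ruling out positive genus for closed Alexandrov embedded $H$-surfaces in a general metric Lie group $X\cong\rth$. Neither is available. The paper explicitly says that $(\star)$ is the missing piece for the sphere classification in the non-compact case, and it presents the positive-genus exclusion as Conjecture~\ref{conjAlex}, known only when $X$ admits two orthogonal foliations by planes of reflectional symmetry (e.g.\ $\sol$), precisely because the Alexandrov reflection technique fails otherwise. Your Heintze--Karcher approach would require a sign on the Ricci curvature of $X$ that a generic left invariant metric on a solvable group simply does not have, and the CMC flux of Killing fields (Theorem~\ref{fluxK}) does not by itself obstruct nontrivial $H_1$ of the surface when $H_2(X)=0$; it only gives invariants, not vanishing. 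There is also a gap in your closedness-of-embeddedness step: at a first self-tangency of an Alexandrov embedded sphere the two local sheets can meet with \emph{opposite} unit normals (mean curvature vectors pointing away from each other), in which case Theorem~\ref{thmintmaxprin} does not apply and one cannot conclude local coincidence; this is exactly why embeddedness of $H$-spheres in $X$ is stated as part of the conjecture rather than as a consequence of Alexandrov embeddedness. In short, your outline is a reasonable wish list, but each of its load-bearing steps is itself an open problem recorded as such in the paper.
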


There seems to be no direct method for computing the
critical mean curvature $H(X)$ of a metric Lie group $X$
diffeomorphic to $\rth$,
whereas when $X$ is not isomorphic to the universal
cover $\sl$ of the special linear group
SL$(2,\R )$, it is straightforward to compute
the more familiar Cheeger constant of $X$ directly
from its metric Lie algebra, see~\cite{mpe11} for this computation.
Since  the validity of Conjecture~\ref{iso:conj} would imply that
$H$-spheres in $X$ are the boundaries of
isoperimetric domains, then it is perhaps not too surprising
that one has the following result.

\begin{theorem}[Meeks, Mira, P\'erez, Ros \cite{mmpr2}]
If \,$Y$ is a simply connected homogeneous 3-manifold,
then $2\, H(Y)=\mbox{\rm Ch}(Y)$.
Furthermore, if $\Delta_n$ is a sequence of isoperimetric
domains in $X$ with diverging volumes, then,
as $n\to \infty$, the mean curvatures of their boundary surfaces
converge to $H(X)$ and
the radii $R_n$ of  $\Delta_n$
converge to infinity.
\end{theorem}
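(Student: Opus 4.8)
The plan is to read both assertions off the isoperimetric profile of $Y$. First I would dispose of the compact case: if $Y$ is compact and simply connected (hence diffeomorphic to $\esf^3$), then taking $K=Y$ in Definition~\ref{Crit-Cheg} gives $\mbox{\rm Ch}(Y)=0$, while a min–max argument produces a closed minimal surface so that $H(Y)=0$; thus $2H(Y)=\mbox{\rm Ch}(Y)=0$ and the "furthermore" is vacuous. So I assume $Y=X$ is noncompact, i.e.\ a metric Lie group diffeomorphic to $\rth$. I would then invoke existence and regularity of solutions of the isoperimetric problem: for every $V>0$ there is an isoperimetric region $\Delta(V)$ whose boundary $\Sigma(V)$ is a smooth closed embedded surface of constant mean curvature $H(V)>0$ (inward normal), and I write $I(V)=\mbox{\rm Area}(\Sigma(V))$ for the profile. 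Two standard facts drive everything: (i) $I$ is absolutely continuous with $I(0)=0$ and $I'(V)=2H(V)$ a.e., which is exactly the first–variation identities $\mbox{\rm Area}'=-2\int_M Hu\,dA$ and $\mbox{\rm Vol}'=-\int_M u\,dA$ read for the profile; and (ii) since $\Sigma(V)$ is a closed surface with $\max|H_{\Sigma(V)}|=H(V)$, the definition of $H(X)$ as an infimum over \emph{all} closed immersed surfaces yields $H(V)\ge H(X)$ for every $V$.

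With these in hand the inequality $\mbox{\rm Ch}(X)\ge 2H(X)$ is immediate. Because isoperimetric regions realize the least boundary area for their volume, $\mbox{\rm Ch}(X)=\inf_{V>0} I(V)/V$, and
\[
I(V)=\int_0^V I'(t)\,dt=\int_0^V 2H(t)\,dt\ge 2H(X)\,V,
\]
so $I(V)/V\ge 2H(X)$ for all $V$, whence $\mbox{\rm Ch}(X)\ge 2H(X)$.

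The real content is the reverse inequality, which reduces to the single statement $\limsup_{V\to\infty}H(V)\le H(X)$: once $H(V)\to H(X)$, the ratio $I(V)/V$ is the average of $2H(t)$ on $[0,V]$, so $I(V)/V\to 2H(X)$, and combined with $I(V)/V\ge 2H(X)$ this forces $\mbox{\rm Ch}(X)=\inf_V I(V)/V=2H(X)$. To prove $\limsup H(V)\le H(X)$ I would run a maximum–principle barrier argument. Fix $\varepsilon>0$ and, using the definition of $H(X)$, choose a closed embedded mean–convex surface $M_0$ with $\max|H_{M_0}|\le H(X)+\varepsilon/2$; suppose toward a contradiction that $H(V)\ge H(X)+\varepsilon$ for arbitrarily large $V$. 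Since $\mbox{\rm Vol}(\Delta(V))=V\to\infty$, I would use homogeneity to place a translate of $M_0$ inside $\Delta(V)$ and then slide it by ambient isometries until it makes a first (interior, tangential) contact with $\Sigma(V)$ from the inside; at the contact point $p$ the surface $M_0$ then lies on the mean–convex side of $\Sigma(V)$ with matching normals. As $H(V)=H_{\Sigma(V)}(p)>H(X)+\varepsilon/2\ge H_{M_0}(p)$, the Interior Maximum Principle (Theorem~\ref{thmintmaxprin}) forces $\Sigma(V)$ and $M_0$ to coincide near $p$, so their mean curvatures agree there, contradicting $H_{M_0}(p)<H(V)$. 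The hard part, and the main obstacle, is making this sliding rigorous: one must control the shape of the large isoperimetric regions — ruling out "thin" $\Delta(V)$ into which $M_0$ cannot be pushed to a genuine interior tangency, and securing embedded mean–convex barriers whose mean curvature approaches $H(X)$. This is precisely where the a priori analysis of large isoperimetric domains (their Alexandrov embeddedness, curvature estimates, and asymptotic roundness) is indispensable.

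Finally I would package the "furthermore". The convergence of the boundary mean curvatures to $H(X)$ is exactly $H(V)\to H(X)$, with the upper bound from the barrier argument and the lower bound $H(V)\ge H(X)$ from fact (ii). For the radii, the volumes $V\to\infty$ together with the bounded geometry of the homogeneous metric (metric balls of a fixed radius have uniformly bounded volume) force the extrinsic size of $\Delta_n$ to diverge, since a region of volume $V$ cannot be contained in a ball of bounded radius; combined with the roundness coming from $H(V)\to H(X)$ and the curvature estimates for the CMC boundaries, the radii $R_n\to\infty$. This completes the plan, with the large–volume isoperimetric analysis isolated as the crux.
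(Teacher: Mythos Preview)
The survey does not contain a proof of this theorem; it merely records the result and attributes it to \cite{mmpr2}, so there is no ``paper's own proof'' to compare your proposal against. I can only comment on the internal quality of your outline.

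Your computation of the easy direction $\mbox{\rm Ch}(X)\ge 2H(X)$ via $I'(V)=2H(V)\ge 2H(X)$ and integration is fine (modulo the usual regularity caveats about the isoperimetric profile). For the hard direction you already flag the real difficulty, but the barrier argument as written has a concrete defect you did not name: the test surface $M_0$ coming from the definition of $H(X)$ is only guaranteed to be an \emph{immersed} closed surface with $\max|H_{M_0}|\le H(X)+\ve/2$; nothing in Definition~\ref{Crit-Cheg} lets you assume $M_0$ is embedded or mean convex, so ``place $M_0$ inside $\Delta(V)$ and slide until first interior contact'' is not a priori meaningful. This, together with the unknown thickness of $\Delta(V)$, is why the argument genuinely needs the structural input from \cite{mmpr2} that you allude to but do not supply.

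There is also a gap in the ``furthermore'' part. By the footnote in the paper, the radius $R_n$ of $\Delta_n$ is its \emph{inradius}: $R_n=\max_{p\in\Delta_n}d_X(p,\partial\Delta_n)$. Your argument (``a region of volume $V$ cannot be contained in a ball of bounded radius'') only shows the \emph{diameter} of $\Delta_n$ diverges; a long thin tube can have arbitrarily large volume and diameter while keeping bounded inradius. The passage from diverging volume to diverging inradius is not automatic and requires exactly the control on the shape of large isoperimetric domains that you defer to the end; invoking ``roundness'' at that point is circular, since roundness of isoperimetric domains is part of what one is trying to establish.
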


\begin{remark} \label{remark-integration}
{\em
Another theoretical tool developed by Meeks, Mira, P\'erez and  Ros
in~\cite{mmpr4} is a conformal PDE that the stereographic projection $g$
of the left invariant Gauss map of
an immersed $H$-surface in a simply connected 3-dimensional Lie group $X$ must
satisfy; this PDE depends on the value of $H$ and
invariants of its metric Lie algebra of $X$. Conversely, it follows
from the representation  Theorem~3.7 in~\cite{mmpr4} that
any function $g\colon M\to \C\cup \{\infty\}$
on a simply connected Riemann surface satisfying this PDE,
can be integrated
to obtain a conformal $H$-immersion of $M$ into $X$ with $g$
as its stereographically projected left invariant Gauss map.
}
\end{remark}

\section{CMC foliations.} \label{sec:CMC}
{This section}
is devoted to results
on the existence and geometry of
CMC foliations of Riemannian $n$-manifolds.

\subsection{The classification of singular CMC foliations of $\rth$.}

The  following classification  theorem is stated for weak CMC foliations of $\rth$,
which are similar to weak $H$-laminations
in that the leaves can intersect non-transversely, and where two such leaves
intersect at a point, then locally they lie on one side of the other one near this
point; for the definition of the more general notion of
a weak CMC lamination, see Definition~\ref{definition}.
Critical to its proof are the existence of curvature estimates
given in Theorem~\ref{thm5.7} for weak CMC foliations
of any Riemannian 3-manifold with bounded absolute sectional curvature.
The following result
generalizes the classical theorem of Meeks~\cite{me17} that the only CMC foliations of $\rth$
are foliations by parallel planes.

\begin{theorem}[Meeks, P\'erez, Ros \cite{mpr21}]
\label{thmspheres}
Suppose that ${ \cal F}$ is
a weak  CMC foliation of $\rth$ with a  closed
countable set $ \cal S$ of singularities (these are the points
where the weak CMC structure of ${\cal F}$ cannot be extended).  Then,
each leaf of ${\cal F}$ is contained in either a plane or a round sphere,
and ${\cal S}$ contains at most 2 points. Furthermore
if $\cal S$ is empty, then
$\cal F$ is a foliation by planes.
\end{theorem}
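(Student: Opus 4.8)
The plan is to reduce the global statement to the local behaviour of $\mathcal{F}$ near its singular set $\mathcal{S}$, using the curvature estimates for weak CMC foliations together with the removable singularity machinery. Away from $\mathcal{S}$, the collection $\mathcal{F}$ is a genuine transversely oriented weak CMC foliation of the flat manifold $\rth-\mathcal{S}$. Because $\rth$ has vanishing sectional curvature, Theorem~\ref{thm5.7} applies and supplies a universal local curvature estimate: there is a constant $c>0$ with $|A_{\mathcal{F}}|(x)\,d(x,\mathcal{S})\le c$ for all $x$ on leaves near $\mathcal{S}$, while $|A_{\mathcal{F}}|$ is bounded on compact subsets of $\rth-\mathcal{S}$. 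This is exactly the curvature hypothesis of the Local Removable Singularity Theorem~\ref{tt2}. Each leaf $L$ is then either compact or complete outside $\mathcal{S}$, and a compact leaf is a closed embedded CMC surface, hence a round sphere by Alexandrov's Theorem~\ref{thmAlex}.

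Next I would show that every noncompact leaf lies in a plane. The saturated set $U\subset\rth-\mathcal{S}$ swept out by the spherical leaves is open, since near a round sphere the local product structure of the foliation together with Theorem~\ref{thmAlex} forces the nearby leaves to be spheres as well. The leaves in $\partial U$ are accumulated by spheres of $U$, so they are limit leaves of $\mathcal{F}$; by the Stable Limit Leaf Theorem~\ref{thmstable} they are stable, and being complete outside the countable closed set $\mathcal{S}$, Theorem~\ref{thmstablecompleteplane}, extended across $\mathcal{S}$ by the Baire-category argument underlying Corollary~\ref{corrs}, shows that each is a plane. For the remaining noncompact leaves I would exploit the transverse orientation through the continuous mean-curvature function $h$ on $\rth-\mathcal{S}$ that records the mean curvature of the leaf through each point: a leaf on which $h$ is locally extremal in the transverse direction carries a nonnegative Jacobi field, namely the normal component of the variation field of $\mathcal{F}$, which forces stability and hence planarity again by Theorem~\ref{thmstablecompleteplane}; the curvature estimate then propagates planarity to all noncompact leaves.

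I would then bound $\mathcal{S}$. If the leaf mean curvatures remained bounded near a point $p\in\mathcal{S}$, the estimate above would let the CMC-foliation form of the Local Removable Singularity Theorem, the generalization of Corollary~\ref{corrs}(3) announced for this section, extend $\mathcal{F}$ across $p$; removing in this way all points of the countable closed set $\mathcal{S}$ one at a time through Baire's theorem would yield a nonsingular CMC foliation of $\rth$, which by Meeks' theorem~\cite{me17} is a foliation by parallel planes and has no singularities. Thus the leaf mean curvatures must blow up at each $p\in\mathcal{S}$, and a rescaling argument together with Theorem~\ref{thmAlex} identifies the nearby leaves as small round spheres collapsing to $p$. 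To count the singular points I would pass to the conformal compactification $\esf^3=\rth\cup\{\infty\}$, under which every planar or spherical leaf becomes a round $2$-sphere: the spheres are pairwise disjoint, hence nested, and a foliation by round $2$-spheres of a domain of $\esf^3$ degenerates only along a polar set of at most two points. Discarding the possible degeneration at $\infty$, which corresponds to the planar leaves and does not lie in $\rth$, this gives $\#\mathcal{S}\le 2$. Finally, if $\mathcal{S}=\emptyset$ then $\mathcal{F}$ is a nonsingular CMC foliation of $\rth$, so it is a foliation by parallel planes by~\cite{me17}, which is the last assertion.

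The hardest part will be the two steps that force the global geometry from purely local data: proving that the noncompact leaves which are not limit leaves are nonetheless planar (the transverse-variation and extremal-$h$ argument has to be made rigorous for the weak foliation with varying mean curvature), and the degeneracy count bounding $\#\mathcal{S}$ by two, where one must rule out several independent nested sphere families each contributing its own pair of collapse points. I expect the removable-singularity plus Baire reduction, together with the Alexandrov and stable-limit-leaf inputs, to be comparatively routine given the cited results.
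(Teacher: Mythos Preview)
Your proposal identifies the correct toolkit (Theorem~\ref{thm5.7}, the Local Removable Singularity Theorem~\ref{tt2}, Alexandrov, the Stable Limit Leaf Theorem) and the overall architecture is sound. However, there is one substantive methodological difference from the paper's sketch, and it bears on precisely the step you flag as hardest.

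For the case $\mathcal{S}=\emptyset$ (and implicitly for forcing planarity of noncompact leaves), the paper does \emph{not} go through stability arguments or an ``extremal-$h$'' analysis. Instead it uses a direct homothety argument: if $\mathcal{F}$ were a non-flat weak CMC foliation of $\rth$, the rescalings $\frac{1}{n}\mathcal{F}$ are again weak CMC foliations of $\rth$, but at a fixed point of non-flatness the second fundamental form blows up like $n$, contradicting the uniform curvature bound of Theorem~\ref{thm5.7} on, say, the unit ball. This scaling argument is both shorter and avoids the delicate issue you raise of showing that \emph{every} noncompact leaf (not just limit leaves) is stable. Your transverse-variation/extremal-$h$ idea can probably be made to work, but it is doing by hand what the scale-invariance of the curvature estimate gives for free.

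Your conformal-compactification idea for bounding $\#\mathcal{S}$ is attractive and not explicitly in the paper's sketch (the paper refers to~\cite{mpr21} for the full argument and only records that the local model near a singularity comes from a blow-up analysis). It should work once you know all leaves are round spheres in $\esf^3$, but be careful: you must rule out several disjoint maximal nested families in $\rth$ each contributing a collapse point, and the planar leaves (through $\infty$) constrain this because any two parallel planes bound a slab that a nested family of spheres cannot fill from both sides. This combinatorial step is where the bound of $2$ actually comes from, and your sketch does not yet make it explicit.
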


The simplest examples of weak CMC foliations of $\rth$ with a closed
countable set of singularities are families of
parallel planes or concentric spheres around a given point. A
slightly more complicated example appears when considering a
family of pairwise disjoint planes and spheres as in
Figure~\ref{figspheres}, where the set ${\cal S}$ consists of two points.
\begin{figure}
\begin{center}
\includegraphics[width=10cm]{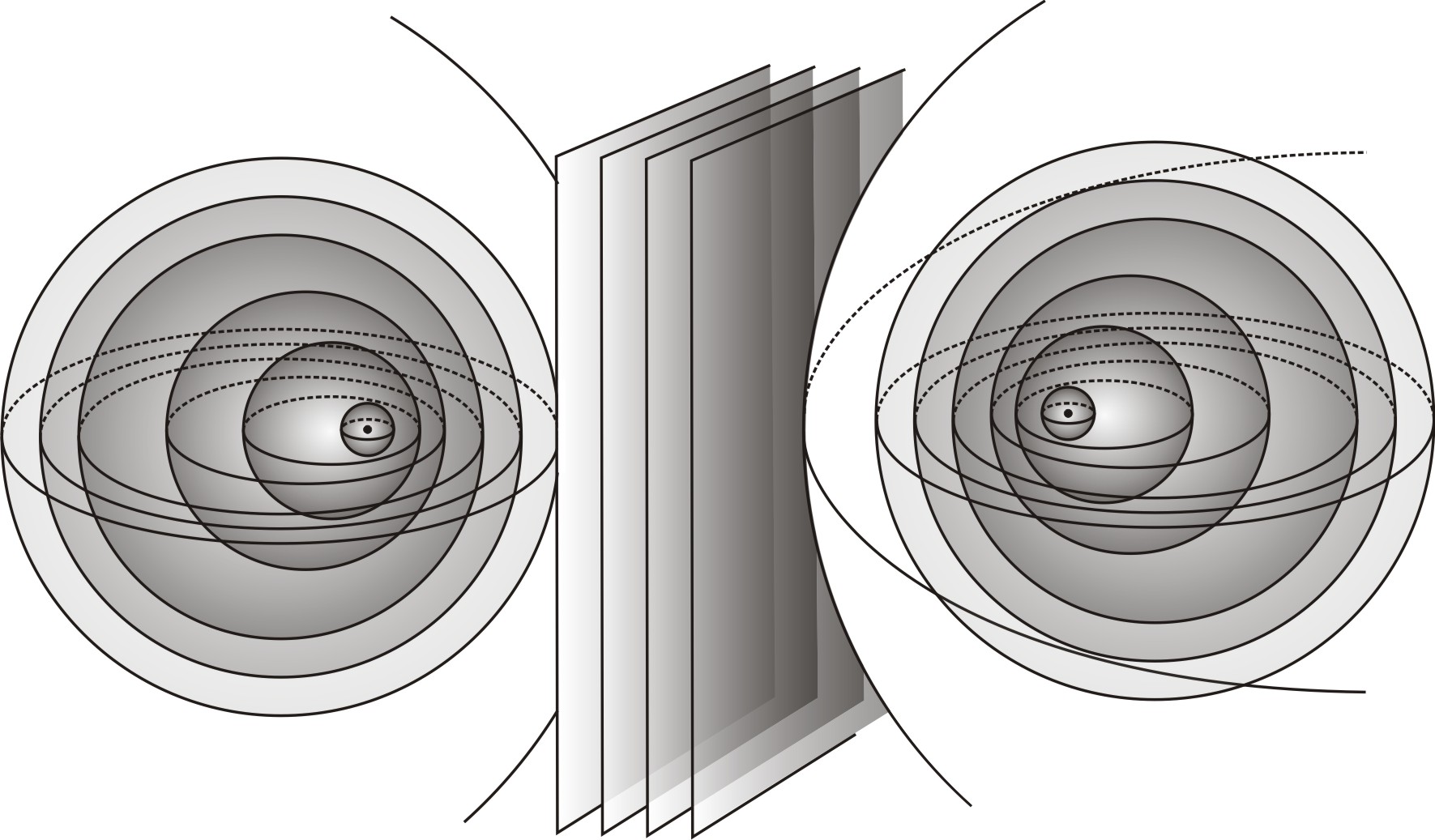}
\caption{A foliation of $\R^3$  by spheres and planes with two
singularities. }
\label{figspheres}
\end{center}
\end{figure}

In the case of the unit 3-sphere $\esf^3\subset \R^4$ with its
constant 1 sectional curvature, we obtain a similar result:
\par
\vspace{.2cm}
{\it The leaves of every weak CMC foliation   of $\esf^3$ with a closed
countable set $ \cal S$ of singularities are contained in round spheres,
and ${\cal S}$ consists of 1 or 2 points.
}
\par
\vspace{.2cm}

We note that in the
statement of the above theorem, we made no assumption on the
regularity of the foliation ${\cal F}$. However, the proofs
require that ${\cal F}$ has bounded second fundamental
form on compact sets of $N=\R^3$ or $\esf^3$ minus the singular set
${\cal S}$. This bounded curvature assumption always holds
for a topological CMC foliation by
recent work of Meeks and Tinaglia~\cite{mt7,mt1} on curvature estimates for embedded, non-zero constant
mean curvature disks and a related 1-sided curvature estimate for embedded surfaces
of any constant mean curvature (see Theorem~\ref{th} in the $\rth$-setting and observation
{(O.2)}
in Section~\ref{section6});  in
the case that all of the leaves of the lamination of a 3-manifold are
minimal, this 1-sided curvature estimate was given earlier by
Colding and Minicozzi~\cite{cm23} which also holds in the 3-manifold setting.

Consider a foliation ${\cal F}$ of a Riemannian 3-manifold $N$
with leaves having constant absolute mean curvature, with this constant possibly depending on
the given leaf.
After possibly passing to a four-sheeted cover, we can assume $X$ is oriented and that all
leaves of ${\cal F}$ are oriented consistently, in the sense that
there exists a continuous, nowhere zero vector field in $X$ which is
transversal to the leaves of ${\cal F}$. In this situation, the mean
curvature function of the leaves of ${\cal F}$ is well-defined and so $\cF$ is a CMC foliation.
Therefore, when analyzing the structure of such a CMC foliation ${\cal F}$,
% of a 3-dimensional Riemannian manifold,
it is natural to consider for each $H\in \R $,
the subset ${\cal F}(H)$ of ${\cal F}$ of those leaves that have
mean curvature $H$. Such a subset ${\cal F}(H)$ %of ${\cal F}$
is closed since the mean curvature function is continuous on
${\cal F}$; ${\cal F}(H)$ is an example of an $H$-lamination. A
cornerstone in proving Theorem~\ref{thmspheres} is to analyze the
structure of an $H$-lamination ${\cal L}$ (or more generally, a weak
$H$-lamination, see Definition~\ref{definition}) of a punctured ball
in a Riemannian 3-manifold, in a small neighborhood of the
puncture. This local problem can be viewed as a desingularization
problem, see Theorem~\ref{tt2}.

Besides Theorem~\ref{tt2}, a second key ingredient is needed in
the proof of Theorem~\ref{thmspheres}: a
universal scale-invariant curvature estimate valid
for any weak CMC foliation of a compact
Riemannian 3-manifold with boundary, solely in terms
of an upper bound for its sectional curvature. The next result is inspired by
previous curvature estimates described in Section~\ref{subsecJacobi}
for stable constant mean curvature surfaces.

\begin{theorem}[Curvature Estimates for CMC foliations, \,\cite{mpr21}]
\label{thm5.7}
There exists a constant $C>0$ such that the following statement holds.
Given $\Lambda \geq 0$, a compact Riemannian 3-manifold $X$ with boundary
whose absolute sectional curvature is at most $\Lambda $, a weak $CMC$ foliation
${\cal F}$ of $X$ and a point $p\in \rm Int(X)$, we have
\[
|A_{\cal F}|(p)\leq \frac{C}{\min\{ d_X(p,\partial X),
\frac{\pi }{\sqrt{\Lambda }}\}},
\]<
where $|A_{\cal F}|\colon X\to [0,\infty )$ is the function that assigns to each $p\in X$
the supremum of the norms of the second fundamental forms of leaves of ${\cal F}$ passing through
$p$,
and $d_X$ is the Riemannian distance in $X$.
\end{theorem}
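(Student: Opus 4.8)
The plan is to prove Theorem~\ref{thm5.7} by contradiction, via a rescaling (blow-up) argument that reduces the estimate to the classification of complete stable leaves through the Stable Limit Leaf Theorem~\ref{thmstable} and Theorem~\ref{thmstablecompleteplane}. First I would record that the desired inequality is scale invariant: under $g\mapsto \mu^2 g$ one has $|A_{\cF}|\mapsto \mu^{-1}|A_{\cF}|$, $d_X\mapsto \mu\,d_X$, and the sectional curvature bound $\L\mapsto \mu^{-2}\L$ so that $\pi/\sqrt{\L}\mapsto \mu\,\pi/\sqrt{\L}$; hence the density $\Theta(p):=|A_{\cF}|(p)\cdot \min\{d_X(p,\partial X),\,\pi/\sqrt{\L}\}$ is unchanged. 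If no universal $C$ exists, there are compact $X_n$ with absolute sectional curvature at most $\L_n$, weak CMC foliations $\cF_n$, and points $p_n\in \Int(X_n)$ with $\Theta_n(p_n)\to\infty$. Using the standard point-selection/maximum argument applied to $\Theta_n$ (as in the proof of Theorem~\ref{thm3introd}), I would replace $p_n$ by points $q_n$ of almost-maximal normalized curvature and set $\l_n=|A_{\cF_n}|(q_n)\to\infty$; the selection guarantees that after scaling the metric of $X_n$ by $\l_n^2$, the rescaled foliations $\widehat{\cF}_n$ satisfy $|A_{\widehat{\cF}_n}|\leq 1+o(1)$ on intrinsic balls $B(q_n,R_n)$ with $R_n\to\infty$, while $|A_{\widehat{\cF}_n}|(q_n)=1$.

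Next I would pass to the limit. Since $\Theta_n(q_n)\leq \l_n\,\pi/\sqrt{\L_n}$ and $\Theta_n(q_n)\to\infty$, we get $\l_n/\sqrt{\L_n}\to\infty$ (the case $\L_n=0$ being trivially flat), so the rescaled curvature bound $\L_n/\l_n^2\to 0$ and the pointed rescaled manifolds converge in $C^2$ to flat $\rth$ with $q_n\to\vec 0$. The rescaled leaves have $|A|\leq 1+o(1)$ and mean curvatures $H/\l_n$ bounded by $|A|/(\sqrt 2\,\l_n)$, so by the uniform graph lemma for CMC surfaces together with Arzel\`a--Ascoli (as in the discussion preceding Definition~\ref{definition}) a subsequence of the $\widehat{\cF}_n$ converges to a weak CMC lamination $\cL_\infty$ of $\rth$ with $|A_{\cL_\infty}|\leq 1$, containing a complete non-flat leaf $L$ through $\vec 0$ with $|A_L|(\vec 0)=1$. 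Point-selection with $R_n\to\infty$ rules out interior singularities of $\cL_\infty$ in any fixed ball, since a point of concentrated curvature at finite rescaled distance would have carried strictly larger normalized density, contradicting near-maximality; here the Local Removable Singularity Theorem~\ref{tt2} would be used to keep the bookkeeping clean and to ensure $L$ is genuinely complete.

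The contradiction would come from producing a stable complete leaf. The route I would try to make rigorous is to show that near $L$ the limit $\cL_\infty$ is a weak $H$-lamination for a single value $H=H_\infty$ (all leaves meeting a fixed compact set sharing the limiting mean curvature), and then to use that in a foliation every leaf is a limit leaf (Definition~\ref{def-limset}, since the transverse set $C_\beta$ is a full interval): the Stable Limit Leaf Theorem~\ref{thmstable} then makes $L$ stable. An equivalent, more hands-on route exploits the transverse structure directly: by the Remark following \eqref{Jacobiop}, the positive normal component $u>0$ of a transverse vector field restricted to a leaf solves $Lu=-2H'$, where $H'$ is the transverse derivative of the mean curvature, so along a leaf where $H'$ has a sign $u$ is a positive (super)solution of the Jacobi operator, which by the Fischer--Colbrie criterion quoted in Definition~\ref{defstable} forces stability. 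Once $L$ is a complete stable $H$-surface in $\rth$, Theorem~\ref{thmstablecompleteplane} forces $L$ to be a plane, so $|A_L|\equiv 0$, contradicting $|A_L|(\vec 0)=1$; this contradiction establishes the estimate.

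The hard part will be exactly this last reduction: controlling the mean curvature function in the blow-up so that the limit behaves, near $L$, like a single-mean-curvature object to which Theorem~\ref{thmstable} applies. The difficulty is that a weak CMC foliation has mean curvature varying from leaf to leaf, and a priori this variation might persist at the curvature scale; indeed, nested concentric-sphere shells are weak CMC foliations of flat regions with bounded curvature, non-flat leaves, and strictly varying mean curvature. What should save the argument is the \emph{global} bound $|A_{\cL_\infty}|\leq 1$ furnished by point-selection, which excludes such shells (their inner leaves have unbounded curvature) and, more importantly, should force the transverse variation of $H$ to collapse on the rescaled scale. Making this collapse quantitative --- equivalently, establishing the local single-$H$/stability property of the limit leaf --- is the technical heart of the proof. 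A secondary, more routine point is the separate treatment of the boundary scale $\pi/\sqrt{\L}$, the threshold past which the ambient geometry ceases to be uniformly Euclidean, together with the degenerate case $\L=0$ where $\min\{d_X(p,\partial X),\,\pi/\sqrt{\L}\}$ reduces to $d_X(p,\partial X)$.
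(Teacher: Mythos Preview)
The paper does not prove Theorem~\ref{thm5.7}; this is a survey, and the result is simply stated with attribution to~\cite{mpr21}. The only hint the paper gives is the sentence preceding the statement, that the estimate ``is inspired by previous curvature estimates described in Section~\ref{subsecJacobi} for stable constant mean curvature surfaces,'' which is consistent with your plan of reducing, after blow-up, to Theorem~\ref{thmstablecompleteplane}. So there is no ``paper's own proof'' to compare against.

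That said, your outline is the standard one and your diagnosis of the crux is accurate. The blow-up and point-selection produce a weak CMC foliation of $\rth$ with $|A|\le 1$ and a non-flat leaf $L$ through the origin; the whole problem is to show $L$ is stable. Two cautions on the routes you propose. First, the sentence ``in a foliation every leaf is a limit leaf, so Theorem~\ref{thmstable} applies'' is not yet a proof: Theorem~\ref{thmstable} is stated for weak $H$-laminations with a \emph{fixed} $H$, and in a CMC foliation the approximating leaves typically have $H_n\neq H(L)$, so you are outside its hypotheses. Second, in the $Lu=-2H'$ route, a positive supersolution (hence stability via the Fischer--Colbrie criterion in Definition~\ref{defstable}) requires $Lu\le 0$, i.e.\ $H'\ge 0$; the case $H'<0$ gives $Lu>0$, which does not directly yield stability. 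A clean way to organize the endgame is the dichotomy you almost write down: let $H_0=H(L)$ and consider the sublamination $\cF_\infty(H_0)$ of leaves with mean curvature $H_0$. If $L$ is a limit leaf of $\cF_\infty(H_0)$ you are in the hypotheses of Theorem~\ref{thmstable} and done. If $L$ is isolated in $\cF_\infty(H_0)$, continuity of the mean curvature across the foliation forces $H-H_0$ to have a sign on each side of $L$; on the side where $H\ge H_0$ the normal speed $u>0$ of the foliation satisfies $Lu\le 0$, which again gives stability. Making this last step rigorous in the \emph{weak} CMC setting (regularity of $H$ in the transverse direction, and the two-sided product structure near $L$) is exactly the technical content you would need to import from~\cite{mpr21}.
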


If $\cF$ were a non-flat weak CMC foliation of $\rth$, then the norms
of the second fundamental forms of foliations obtained by scaling $\cF$ by $\frac1n$, $n\in \N$,
are not uniformly bounded which contradicts the conclusions
of Theorem~\ref{thm5.7}. This contradiction
proves that
the only CMC foliations of $\rth$
are foliations by parallel planes.

The above curvature estimate is also an essential tool for
analyzing the structure of a weak CMC
foliation of a small geodesic Riemannian 3-ball
punctured at its center.  Among other things, in~\cite{mpr21}
Meeks, P\'erez and Ros proved that if the mean curvatures
of the leaves of such a weak CMC foliation  are bounded in a neighborhood
of the puncture, then  the weak CMC foliation extends
across the puncture to a weak CMC foliation of the ball.
Theorem~\ref{thmspheres} and a blow-up argument lead to a
model for the structure of a weak CMC foliation of a punctured ball
in any Riemannian 3-manifold. From here, one can deduce
that a compact, orientable Riemannian 3-manifold not diffeomorphic to
the 3-sphere $\esf^3$
does not admit any weak (transversely oriented) CMC foliation with a non-empty
countable closed set of singularities; see~\cite{mpe14}
for this and other related results.

\subsection{CMC foliations of closed $n$-manifolds.} \label{sec:closedH}
By the next  theorem by Meeks and P\'erez, the vanishing of the
Euler characteristic of a  closed
$n$-manifold $X$ is equivalent to  the existence of a CMC foliation of $X$ with respect
to some Riemannian metric.  In the case $X$ is orientable, this theorem
was proved by Oshikiri~\cite{osh3}; we emphasize  that the proof of  Theorem~\ref{mainCMC} below
by Meeks and P\'erez in~\cite{mpe13} does not use Oshikiri's
results.
Furthermore, when $n\geq 3$ the CMC foliations $\cF$ that we construct
on $X$ with vanishing Euler characteristic
satisfy that there are a finite number of components of the complement
of the sublamination of minimal leaves
in $\cF$ such that each of these
foliated components is diffeomorphic to the product of an open $(n-1)$-disk $D$
and a circle $\esf^1$, with isometry group containing $SO(n-1)\times \esf^1$;
furthermore, the universal cover $D\times \R$ of each such component together with its
lifted foliation and metric are equivalent to a rather explicit CMC
foliation $\cF_n$ on  $D\times \R$ with a product metric $g_n$, such that this structure is
invariant under the action of $SO(n-1)\times \R$ and depends only on the dimension $n$;
see Figure~\ref{Reeb}.
\begin{figure}
\begin{center}
\includegraphics[width=2in]{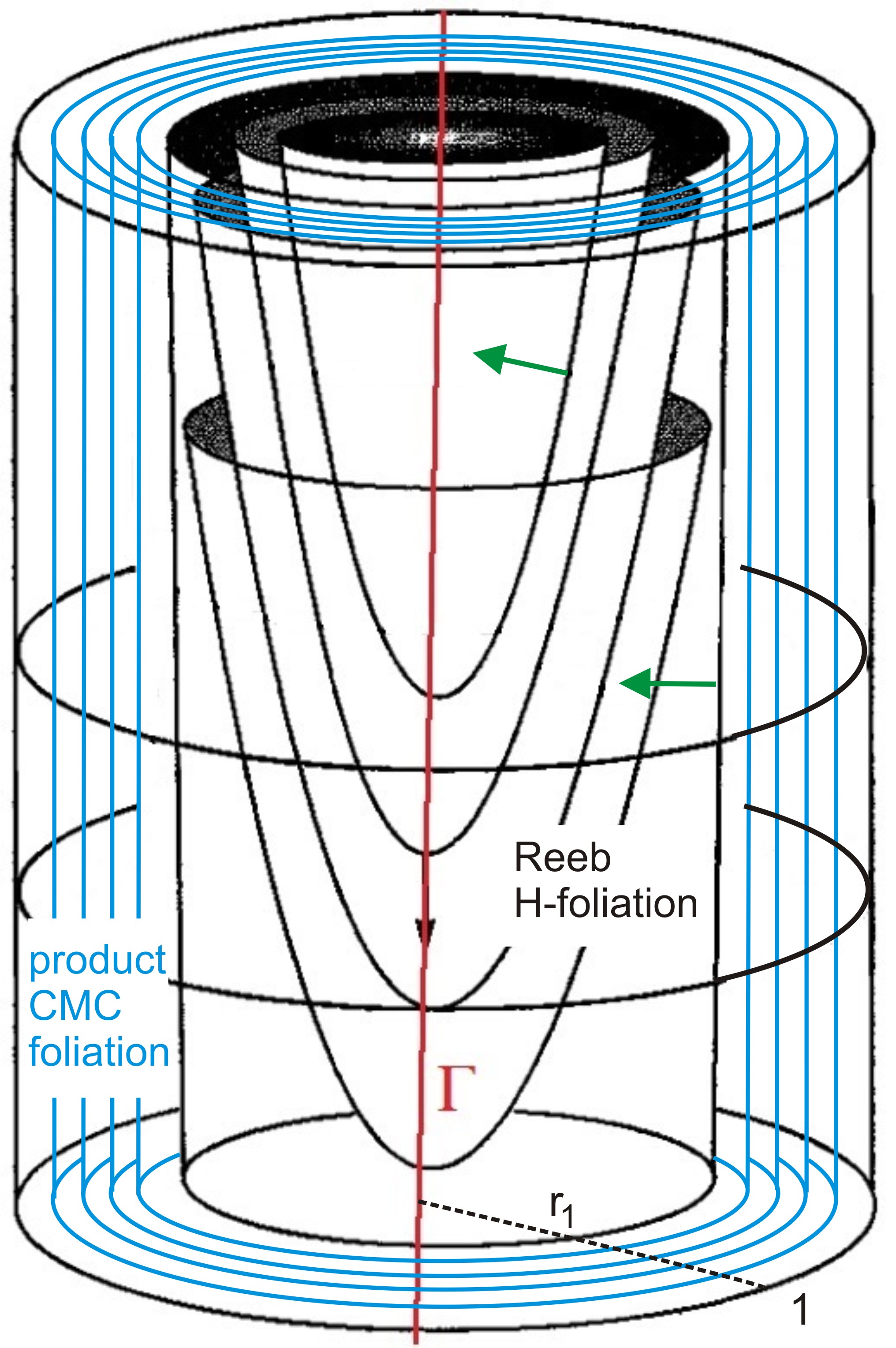}
\caption{This figure depicts the structure of a generalized  Reeb foliation
on $D\times \R$ with
leaves of constant mean curvature $H$.}
\label{Reeb}
\end{center}
\end{figure}

Recall that by definition, a CMC foliation is necessarily smooth.

\begin{theorem}[Existence Theorem for CMC Foliations] \label{mainCMC}
A closed   $n$-manifold admits a CMC foliation
for some Riemannian metric %$\cF$
if and only if its Euler characteristic is zero. When $n\geq 2$,
the CMC foliation can be taken to be non-minimal.
\end{theorem}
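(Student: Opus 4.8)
\emph{Necessity.} Suppose $X$ is a closed $n$-manifold carrying a CMC foliation $\cF$. By the definition adopted in this survey, $\cF$ is transversely oriented and of codimension one, so the distribution tangent to its leaves is a smooth rank-$(n-1)$ subbundle of $TX$ whose normal line bundle is oriented, hence trivial. A global choice of positively oriented unit normal (with respect to any background metric) is then a nowhere-zero vector field on $X$, and by the Poincar\'e--Hopf theorem a closed manifold admitting such a field has $\chi(X)=0$. Here the constant mean curvature hypothesis plays no role: only the existence of a transversely oriented codimension-one foliation is used.

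\emph{Sufficiency.} This is the substantive direction, and the Riemannian metric is ours to construct, which is what makes it tractable. First I would reduce the topological input to a handle-type decomposition adapted to foliations: since $\chi(X)=0$, the manifold admits a round handle decomposition by the theorem of Asimov (with the low-dimensional cases handled separately), that is, a decomposition into pieces of the form $D^i\times\esf^1\times D^{n-1-i}$ glued along their boundaries. Round handles are the natural building blocks, because each carries a standard product foliation and the circle factor supplies exactly the symmetry I will use to control the mean curvature. The plan is then to foliate the bulk of the decomposition by closed minimal hypersurfaces and to fill each remaining region --- arranged to be an open solid torus $D\times\esf^1$ --- with a generalized Reeb piece of constant mean curvature.

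The heart of the construction is a fixed model CMC foliation $(\cF_n,g_n)$ on $D\times\R$, the universal cover of $D\times\esf^1$, invariant under $\mathrm{SO}(n-1)\times\R$. Taking $D$ to be an open $(n-1)$-disk of radius $R$, I would seek a rotationally symmetric graph $t=u(\rho)$, $\rho=|x|$, with $u(\rho)\to\infty$ as $\rho\to R$, solving the constant mean curvature graph equation; writing $\phi(\rho)=u'/\sqrt{1+(u')^2}$ this reduces to a first order ODE that can be integrated explicitly, the condition $u'\to\infty$ being arranged so that the graph meets the cylinder $\partial D\times\R$ vertically. The vertical $\R$-translates $\{t=u(\rho)+c\}_{c\in\R}$ are mutually congruent, hence share the same constant mean curvature $H\neq0$, and they foliate the solid cylinder; together with the bounding round cylinder they descend to the quotient $D\times\esf^1$, where the interior leaves spiral out to and accumulate on the boundary torus. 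This produces exactly the generalized Reeb component of Figure~\ref{Reeb}, with the asserted $\mathrm{SO}(n-1)\times\esf^1$ symmetry.

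\emph{Gluing, regularity, and non-minimality.} The hard part will be to glue the model Reeb pieces to the minimally foliated complement into a single globally $C^\infty$ foliation, together with one smooth ambient metric, while preserving constancy of the mean curvature on every leaf. Concretely, I would choose the metric near each interface so that the frontier leaf separating a Reeb component from the rest is minimal and hence lies in the minimal sublamination $\cM$; then $X\setminus\cM$ has finitely many components, each equivalent to the model $D\times\esf^1$, and the leafwise mean curvature function (a nonzero constant inside each Reeb piece, zero on $\cM$) is well defined even though it jumps across $\cM$. Verifying that the transition between the rotationally symmetric model metric $g_n$ and the surrounding metric can be made smooth while keeping all intervening leaves of constant mean curvature is where the genuine work lies, and I expect this interpolation to be the main obstacle. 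Finally, since each Reeb component contributes leaves of nonzero constant mean curvature, and for $n\geq2$ with $\chi(X)=0$ the construction can always be arranged to contain at least one such component, the resulting CMC foliation is non-minimal, which yields the last assertion.
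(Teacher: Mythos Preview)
Your necessity argument is correct and matches the paper's. The sufficiency direction, however, diverges from the paper's route and contains a genuine gap.

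The paper does not use Asimov's round handle decomposition. Instead it starts from Thurston's theorem that $\chi(X)=0$ guarantees a smooth transversely oriented codimension-one foliation $\cF'$, and then performs two rounds of turbularization: first along a closed transversal to force a non-compact leaf, and then along a finite collection of transverse \emph{arcs} $\g_1,\dots,\g_k$ chosen (via Haefliger's compactness theorem for the set of compact leaves) so that every compact leaf of $\cF'$ meets some $\g_i$. Introducing pairs of enlarged Reeb components along these arcs yields a new foliation $\cF$ with the property that on the complement $\wh{X}$ of the Reeb pieces, the restricted foliation is \emph{homologically taut}. This is precisely the hypothesis needed to invoke Sullivan's theorem (geometrically taut $\Leftrightarrow$ homologically taut, extended to the non-orientable case), which is what actually produces the metric on $\wh{X}$ making every leaf minimal. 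Only then does one glue in the rotationally symmetric CMC Reeb models, and Moser's volume-form theorem is what makes that gluing smooth.

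Your proposal skips the Sullivan step entirely. The sentence ``foliate the bulk of the decomposition by closed minimal hypersurfaces'' is the crux, and it is unjustified: a round handle $D^i\times\esf^1\times D^{n-1-i}$ of intermediate index does not come with a canonical foliation by closed hypersurfaces, the gluing maps between round handles need not respect any product or $\esf^1$-symmetry, and even if you had a foliation on the bulk there is no mechanism in your outline for producing a metric making its leaves minimal. That mechanism is exactly Sullivan's tautness theorem, and the turbularization-along-arcs trick is what arranges its hypothesis. You correctly sense that smooth interpolation near the Reeb boundaries is delicate (and the paper does use Moser there), but you have misidentified it as the main obstacle; the real difficulty lies one step earlier, in obtaining the minimal metric on the complement.
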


Since closed (topological) 3-manifolds admit smooth structures and
the Euler characteristic of any closed manifold
of odd dimension is zero, the %statement and proof of the
previous theorem has the following corollary.

\begin{corollary} \label{cor1.2}
Every closed topological 3-manifold %$X$
admits a smooth structure together with
a  Riemannian metric and a non-minimal CMC
foliation.
\end{corollary}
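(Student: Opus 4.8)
The plan is to deduce Corollary~\ref{cor1.2} directly from the Existence Theorem for CMC Foliations (Theorem~\ref{mainCMC}) once two classical topological facts are in place. First I would note that Theorem~\ref{mainCMC} is stated for \emph{smooth} $n$-manifolds, whereas the corollary begins with a merely topological closed $3$-manifold $X$; so the initial step is to equip $X$ with a smooth structure. By the classical smoothing theorem of Moise (every closed topological $3$-manifold admits a smooth, indeed piecewise-linear, structure, with uniqueness up to diffeomorphism following from the work of Munkres and Whitehead), $X$ carries at least one smooth structure, and I fix such a structure for the remainder of the argument.

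The second ingredient is the vanishing of the Euler characteristic of $X$. More generally, for any closed $n$-manifold with $n$ odd one has $\chi=0$, and I would prove this as follows. The Euler characteristic $\chi(X)=\sum_{i=0}^{n}(-1)^i\dim_{\Z/2}H_i(X;\Z/2)$ is independent of the field of coefficients, and Poincar\'e duality over $\Z/2$ holds for every closed manifold, orientable or not, giving $\dim_{\Z/2}H_i(X;\Z/2)=\dim_{\Z/2}H_{n-i}(X;\Z/2)$. Pairing the index $i$ with $n-i$ and using that $(-1)^{n-i}=-(-1)^i$ when $n$ is odd shows that the contributions cancel in pairs, so $\chi(X)=0$. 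In particular this applies to our closed $3$-manifold $X$.

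With the smooth structure and the equality $\chi(X)=0$ established, I would apply the sufficiency direction of Theorem~\ref{mainCMC} with $n=3$ to produce a Riemannian metric on $X$ together with a CMC foliation $\cF$, and since $n=3\geq 2$, the final clause of Theorem~\ref{mainCMC} permits $\cF$ to be chosen non-minimal. This simultaneously supplies the required smooth structure, Riemannian metric, and non-minimal CMC foliation, completing the proof.

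There is essentially no internal obstacle in the corollary itself: the entire analytic and geometric difficulty is absorbed into Theorem~\ref{mainCMC}, whose construction of the generalized Reeb-type foliations $\cF_n$ on $D\times\R$ is the substantive point. The only genuinely nontrivial external inputs are Moise's smoothing theorem—which is what legitimizes the passage from a topological to a smooth $3$-manifold and is special to dimension three—and the careful treatment of the non-orientable case in the Euler characteristic computation, for which the $\Z/2$ Poincar\'e duality argument above is cleaner than passing to the orientable double cover and using $\chi(\widetilde{X})=2\chi(X)$.
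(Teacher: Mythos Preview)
Your proposal is correct and follows essentially the same approach as the paper: the paper simply notes that closed topological 3-manifolds admit smooth structures and that the Euler characteristic of any closed odd-dimensional manifold is zero, then applies Theorem~\ref{mainCMC}. Your version merely spells out these two classical facts in more detail (Moise's theorem and the $\Z/2$ Poincar\'e duality argument for $\chi=0$), which is exactly what the paper's one-sentence justification is appealing to.
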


The proof of Theorem~\ref{mainCMC} is motivated by two seminal
works. The first one, due to Thurston
(Theorem 1(a) in~\cite{th3}), shows
that a necessary and sufficient condition for a smooth closed
$n$-manifold $X$ to admit a
smooth, codimension-one foliation $\cF$ is for its Euler characteristic to vanish;
for our applications, $\cF$ can be chosen to be  transversely oriented.
The second one is the result by
Sullivan (Corollary~3 in~\cite{sul1}) that given such a pair
$(X,\cF)$ where $\cF$ is orientable (this means that the subbundle
of the tangent bundle to $X$ which is tangent
to the foliation is orientable),
then $X$ admits a smooth Riemannian metric
$g_X$ for which $\cF$ is a minimal foliation (this is called $\cF$
is {\it geometrically taut}) if and only if
for every compact leaf $L$ of $\cF$ there exists a closed transversal that intersects $L$
(called $\cF$ is {\it homologically taut}); in the proof of
Theorem~\ref{mainCMC}, it is needed the generalization
of the implication `homologically taut $\Rightarrow $ geometrically taut' without
Sullivan's hypothesis that the foliation $\cF$ be orientable.

In the case when $n=2$, Theorem~\ref{mainCMC} follows by giving
explicit examples.
Consider the curve
$\a=\{(t, 3 +\cos t )\mid t\in \R\}$ in the $(x_1,x_2)$-plane and let $C$ in $\rth$
be the surface obtained by revolving $\a$ around the $x_1$-axis. Let $\cF$ be the
foliation of $C$ by circles contained in planes orthogonal to the $x_1$-axis,
whose leaves have constant geodesic curvature, see Figure~\ref{revolution}.
\begin{figure}
\begin{center}
\includegraphics[height=6.5cm]{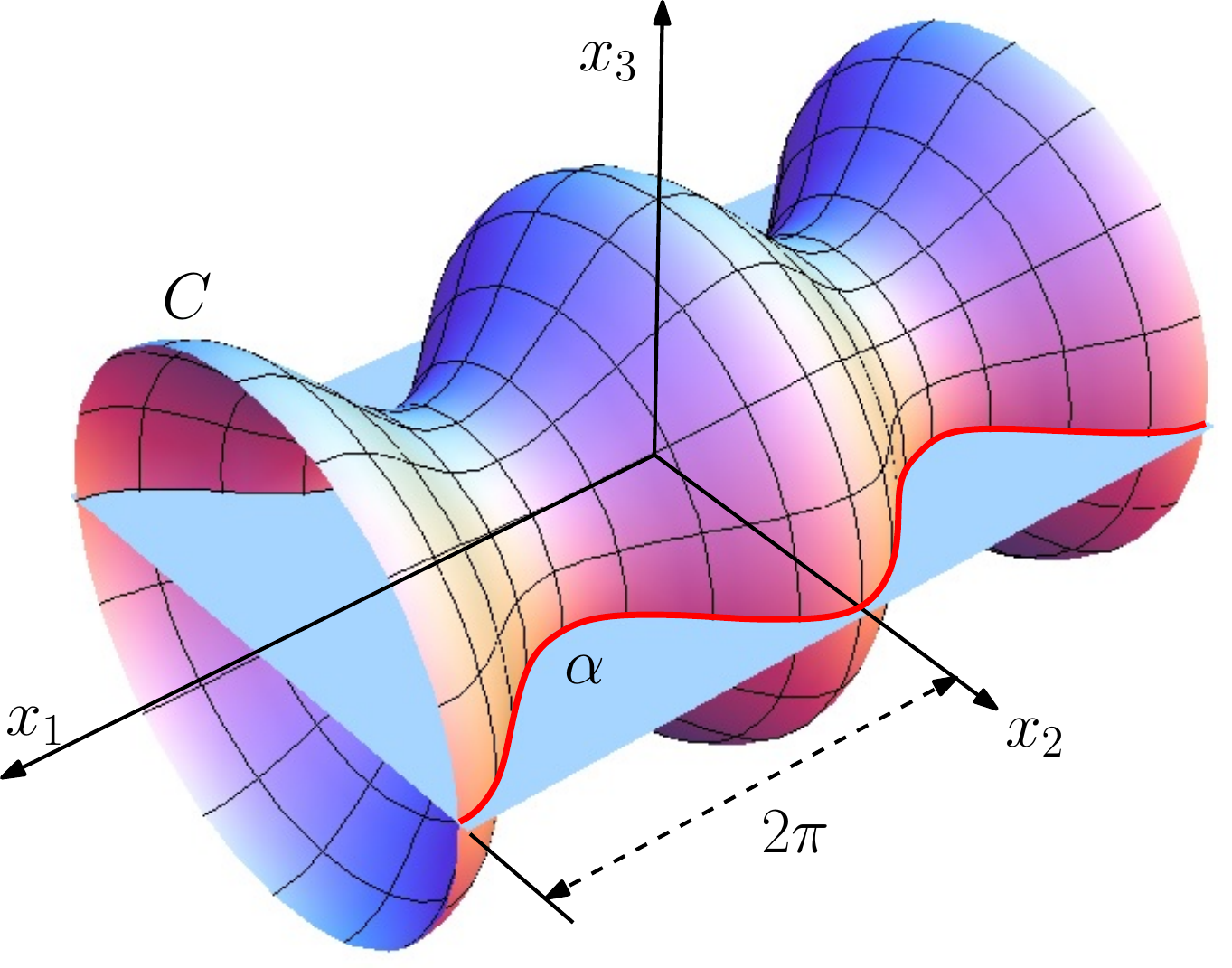}
\caption{The circles in $C$ foliate the surface by curves of constant
geodesic curvature. The symmetry $R$ is
the composition of the reflection in the $(x_1,x_2)$-plane
(depicted in the figure) with the translation by the vector $(2\pi ,0,0)$.}
\label{revolution}
\end{center}
\end{figure}
$\cF$ is transversely oriented by the normal vectors to the circles in $C$
that have positive inner product in $\rth$ with $\partial_{x_1}$.
Since the map  $R(x_1,x_2,x_3)= (2\pi +x_1,x_2,-x_3)$
preserves the transverse orientation of the CMC foliation,
then $\cF$ descends to a CMC foliation
of the Klein bottle $C/R$ or to the torus $C/(R^2)$.
By classification of closed surfaces, a closed surface with Euler characteristic zero
must be a torus or a Klein bottle.
Thus, Theorem~\ref{mainCMC} trivially holds when $n=2$.

So  assume $n\geq 3$ and we will give a sketch of the proof of
Theorem~\ref{mainCMC} in this case.
One first studies the existence of
codimension-one,  $(SO(n-1)\times \R)$-invariant CMC
foliations $\cR_{n-1} $ of the Riemannian product of the
real number line $\R$ with the closed unit $(n-1)$-disk $\ov{\D}(1)\subset \R^{n-1}$
with respect to a certain $SO(n-1)$-invariant metric,
see Figure~\ref{Reeb}. The leaves of this
foliation $\cR_{n-1} $ are of one of two types:
those leaves that intersect $\D(r_1)\times \R $
(here $\D(r_1)=\{ x\in \R ^{n-1} \ | \ |x| <r_1\} $ and
$0<r_1<1)$ are rotationally symmetric
hypersurfaces which are graphical
over $\D (r_1)\times \{ 0\} $ and asymptotic to the vertical
$(n-1)$-cylinder $\esf ^{n-2}(r_1)\times \R $;
the remaining leaves of $\cR_{n-1} $ are the
vertical cylinders $\esf ^{n-2}(r)\times \R $, $r\in [r_1,1]$.
All leaves of $\cR_{n-1} $
in $\D^{n-1}(r_1)\times \R $
are vertical translates of a single such leaf (in particular,
they all have the same constant mean
curvature, equal to the constant value of the mean curvature
of $\esf ^{n-2}(r_1)\times \R $),
while the (constant) mean curvature values of the
cylinders $\esf ^{n-2}(r)\times \R $, $r\in [r_1,1]$,
vary from leaf to leaf. This foliation $\cR_{n-1} $
gives rise under the quotient action
of $\Z\subset \R$ to what we call an {\it enlarged
foliated Reeb component} $\cR_{n-1}/\Z$,
that is diffeomorphic to $\overline{\D}(1)\times \esf^1$.

The sufficient implication in Theorem~\ref{mainCMC} follows directly
from the Poincar\'e-Hopf index theorem.
As for the necessary
implication, the results in~\cite{th3} imply that
a smooth, closed   $n$-manifold $X$ with Euler
characteristic zero admits a smooth,
transversely oriented foliation $\cF'$ of codimension one.
After a simple modification of $\cF'$ along some smooth
simple closed curve $\G$ transverse
to the foliation by the classical technique of turbularization, $\cF'$
can be assumed to have at least one non-compact leaf.  Recall that in this
process one modifies the previous foliation in a small tubular neighborhood of $\G$ and
one ends up with a new foliation where we  have introduced what we called
in the previous paragraph a generalized Reeb component
centered along $\G$. Then one proves the existence of a finite collection
$\Delta=\{\g_1,\ldots,\g_k\}$ of pairwise
disjoint, compact embedded arcs in $X$ that are transverse to the leaves of $\cF'$
and such that every  compact leaf of the foliation intersects at least one of
these arcs; this existence result  follows from work of
Haefliger~\cite{hae2} on the compactness of the set of compact leaves
of any codimension-one foliation of~$X$. The next step consists of modifying
$\cF'$ using again turbularization by introducing pairs of
enlarged Reeb components, one pair for each $\g_i\in \Delta$.
These modifications give rise to a new transversely oriented foliation $\cF$.
By a careful application of a generalization
of Sullivan's theorem to the case of non-orientable codimension-one
foliations, one can check that in the complement
of the sewn in generalized Reeb components in $\cF$,
the resulting manifold $\wh{X}$ with boundary admits a metric so that all of the leaves
of the restricted foliation are minimal and in a neighborhood
of $\partial \wh{X}$ where the foliation is a product
foliation, the metric is also a product metric.
Then one proceeds by extending this minimal metric on $\wh{X}$ to $X$ and so that
the regions modified by  turbularization now have the rotationally invariant metrics
mentioned in the previous paragraph. Crucial in
obtaining this smooth metric on $X$ so that all
of the leaves of $\cF$ have constant mean curvature, is the application  of
the classical Theorem~2 in Moser~\cite{moser1}. This smooth ``gluing" result of Moser
is closely related to his following well-known classical result:
If $g_1,g_2$ are two metrics with respective volume forms $dV_1,dV_2$
on a closed orientable Riemannian $n$-manifold~$Y$ with the same total volume,
then there exists an orientation preserving diffeomorphism $f\colon Y\to Y$ such that
$f^*(dV_1)=dV_2$; furthermore, $f$ can be chosen isotopic to the identity.

We finish this article mentioning another result from~\cite{mpe13},
which is the Structure Theorem~\ref{mainCMC2}
given below on the geometry and topology of non-minimal CMC foliations of
a closed   $n$-manifold. Before stating this theorem, we fix some notation
for a CMC foliation $\cF$ of a (connected) closed  Riemannian
$n$-manifold $X$:
\bit
\item $N_\cF$ denotes the unit normal vector field
to $\cF$ whose direction coincides
with the given transverse orientation.
\item  $H_{\cF}\colon X\to \R$ stands for the
{\em mean curvature function} of $\cF$ with respect to $N_{\cF}$.
\item $H_\cF (X)=[\min H_{\cF},\max H_{\cF}]$ is the image of $H_\cF$.
\item $\cC_\cF$ denotes the union of the compact
leaves in $\cF$, which is a compact
subset of $X$ by the aforementioned result of Haefliger~\cite{hae2}.
\eit

\begin{theorem}[Structure Theorem for CMC Foliations] \label{mainCMC2}
Let $(X,g)$ be
a closed, connected Riemannian $n$-manifold
which admits a non-minimal CMC foliation $\cF$. Then:
\ben[1.]
\item  \label{it1}  $\int_X H_{\cF} \,dV=0$ and so,
$H_{\cF}$ changes sign (here $dV$ denotes the volume element with respect to $g$).
\item  \label{it2} For $H$ a regular value of $H_\cF$,  $H_{\cF}^{-1}(H)$ consists
of a finite number of compact leaves of $\cF$ contained in $\Int(\cC_\cF)$.
\item  \label{it3} $X-\cC_\cF$ consists of a countable number of open
components and the leaves in each such
component $\Delta $ have the same mean curvature
as the finite positive number of compact leaves in $\partial \Delta $;
furthermore, every leaf in the closure of $X-\cC_\cF$ is stable.
In particular, except for a countable subset of $H_\cF(X)$, every leaf of $\cF$
with mean curvature $H$ is compact, and for every $H\in H_\cF(X)$,
there exists at least one compact leaf of $\cF$ with mean curvature $H$.
\item \label{it4}
\ben[a.] \item Suppose that $L$ is a leaf of $\cF$ that contains
a regular point of $H_\cF$. Then $L$ is compact, it consists
entirely of regular points of $H_\cF$ and lies
in $\Int(\cC_\cF)$. Furthermore, $L$ has % the nullity of 0 and the
index zero if and only if the function $g( \nabla H_\cF, N_\cF)=N_{\cF}(H_{\cF})$ is
negative along $L$, and if the index of  $L$
is zero, then it also has nullity zero.
\item  Suppose that $L$ is a leaf of $\cF$ that
is disjoint from the regular points
of $H_\cF$. Then the index of $L$ is zero, and if $L$ is a limit
leaf\,\footnote{See Definition~\ref{def-limset} for
the definition of a limit leaf of a {codimension-one}
lamination.}
of the CMC lamination of $X$ consisting of the compact leaves of $\cF$,
then $L$ is compact with nullity one.
\een
\item \label{it5} Any leaf of $\cF$ with mean curvature equal
to $\min H_\cF$ or $\max H_\cF$
is stable and such a leaf can  be chosen to be compact with nullity one.
\een
\end{theorem}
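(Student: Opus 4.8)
The plan is to organize the entire proof around a single object: the \emph{lapse function} $u$ of the foliation $\cF$ together with the identity it satisfies for the Jacobi operator $L$ of equation~(\ref{Jacobiop}). Recall from the Remark following~(\ref{Jacobiop}) that a normal variation with normal speed $u$ through surfaces of mean curvature $H(t)$ obeys $L(u)=-2H'(0)$. Applying this to the variation of $\cF$ through its own leaves (normal speed $u>0$, the lapse) and using that $H_\cF$ is constant on leaves, so that $\nabla H_\cF=N_\cF(H_\cF)\,N_\cF$ along each leaf, one gets the master identity
\begin{equation}
L(u)=-2\,N_\cF(H_\cF)\,u \qquad \text{on each leaf,}
\end{equation}
up to the orientation conventions fixing the overall sign. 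Every assertion is read off from this identity together with the Fischer--Colbrie positive-solution criterion for stability~\cite{fi1} and the Stable Limit Leaf Theorem~\ref{thmstable}. I would dispatch items~1 and~2 first as warm-ups: item~1 is immediate, since $\mathrm{div}(N_\cF)$ is a constant multiple of $H_\cF$, so $\int_X H_\cF\,dV=0$ by the divergence theorem on the closed manifold $X$, and non-minimality then forces $H_\cF$ to change sign; item~2 is the regular value theorem, since a regular value $H$ makes $H_\cF^{-1}(H)$ a compact smooth hypersurface that is a union of leaves, hence finitely many compact leaves, with the non-vanishing of $\nabla H_\cF$ on a neighborhood placing them in $\Int(\cC_\cF)$.

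For item~3 the key observation is that on a connected component $\Delta$ of $X-\cC_\cF$ the function $H_\cF$ is constant: otherwise, by Sard's theorem $H_\cF|_\Delta$ would attain a regular value, producing by item~2 a compact leaf inside $\Delta$, contrary to $\Delta\subset X-\cC_\cF$. Since $X$ is second countable there are at most countably many such components, and by continuity the compact leaves of $\partial\Delta\subset\cC_\cF$ carry the same mean curvature value as $\Delta$. The stability claim is where the deeper tools enter: within $\Delta$ all leaves share one value $H$, so $\cF|_\Delta$ is an honest $H$-foliation and the boundary leaves in $\partial\Delta$ are limit leaves of this $H$-lamination; the Stable Limit Leaf Theorem~\ref{thmstable} then gives their stability, while the non-compact leaves of $\Delta$ acquire a positive Jacobi function from the lapse $u$, since here $N_\cF(H_\cF)\equiv0$ on $\Delta$ and the master identity reads $Lu=0$. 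The finiteness and positivity of the number of boundary leaves of $\partial\Delta$ I would extract from the curvature estimate of Theorem~\ref{thm5.7}, which bounds $|A_\cF|$ and forces the compact leaves accumulating on $\partial\Delta$ to have uniformly bounded geometry, together with the local model near such leaves.

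Items~4 and~5 are then a bookkeeping of the master identity. If a leaf $L$ contains a regular point, then $L\subset\cC_\cF$ (a leaf meeting $X-\cC_\cF$ lies in a constant-$H$ component and is entirely critical), so $L$ is compact; moreover, writing a nearby leaf as a normal graph over $L$ with displacement $s(x)>0$ gives $H'-H=N_\cF(H_\cF)(x)\,s(x)+O(s^2)$ with $H'-H$ a constant, which forces $N_\cF(H_\cF)$ to have a single nonzero sign on all of $L$; hence $L$ consists entirely of regular points and lies in $\Int(\cC_\cF)$. On such an $L$ one may use $H_\cF$ itself as the transverse coordinate, yielding the globally defined lapse $u=1/N_\cF(H_\cF)$ with $L(u)=-2$; the Fischer--Colbrie criterion and a Rayleigh-quotient comparison then identify the sign of $N_\cF(H_\cF)$ with index zero, and because $Lu$ is a nonzero constant (the supersolution inequality is strict) the stable case has nullity zero, which is precisely item~4(a). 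When $L$ is disjoint from the regular points, $N_\cF(H_\cF)\equiv0$ on $L$ and the positive lapse gives $Lu=0$, so $u>0$ is a genuine Jacobi function and $L$ is stable; if in addition $L$ is a compact limit leaf of the lamination $\cC_\cF$, then $0$ is the simple bottom eigenvalue of $-L$, whence nullity one, giving item~4(b). Item~5 follows by applying 4(b) to a leaf of maximal or minimal mean curvature: such a leaf consists entirely of critical points of $H_\cF$, and item~3 guarantees a compact such leaf exists (on some $\partial\Delta$ if all the extremizers were non-compact), yielding a stable compact leaf with nullity one.

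The main obstacle, I expect, is item~3 rather than the essentially formal items~4--5. Two points require real work. First, upgrading the locally defined lapse functions to an honest \emph{global} positive Jacobi function on a possibly non-compact leaf: the infinitesimal holonomy of $\cF$ obstructs single-valuedness and must be absorbed, either by passing to the two-sided cover as in the Stable Limit Leaf Theorem~\ref{thmstable} or by exploiting the product structure of the constant-$H$ regions. Second, proving the finiteness of the compact leaves in each $\partial\Delta$ is not formal and leans on the curvature estimates of Theorem~\ref{thm5.7} and the local structure theory behind the Local Removable Singularity Theorem~\ref{tt2}. A secondary but genuine nuisance is the sign bookkeeping in the master identity: deciding whether index zero corresponds to $N_\cF(H_\cF)<0$ or to $N_\cF(H_\cF)>0$ requires fixing the orientation conventions for $N_\cF$, the transverse orientation, and the sign of $H_\cF$ consistently, and it is this careful sign analysis that produces the criterion stated in item~4(a).
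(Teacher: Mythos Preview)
The survey does not actually prove Theorem~\ref{mainCMC2}; it is merely stated as a result imported from~\cite{mpe13}, with no proof or sketch given. So there is no ``paper's own proof'' to compare your outline against.

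That said, your strategy is the natural one and matches how such structure theorems are proved: the divergence-theorem computation for item~\ref{it1}, Sard plus the regular value theorem for item~\ref{it2}, constancy of $H_\cF$ on the complement of $\cC_\cF$ for item~\ref{it3}, and the lapse/Jacobi identity governing items~\ref{it4}--\ref{it5}. One concrete correction: your ``master identity'' carries the sign from the paper's Remark after~(\ref{stable}), but that Remark appears to have a typo. A direct check on concentric round spheres (inward normal, $H=1/R$, $u\equiv 1$, $L(1)=|A|^2=2/R^2$, $H'(0)=1/R^2$) gives $L(u)=+2H'(0)$, hence $L(u)=+2\,N_\cF(H_\cF)\,u$. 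With your sign, $N_\cF(H_\cF)>0$ on spheres would force index zero, contradicting the well-known fact that round spheres have index one; with the corrected sign, $N_\cF(H_\cF)<0$ gives a positive strict supersolution $Lu<0$, hence $\lambda_1(-L)>0$, i.e.\ index and nullity zero, exactly as item~\ref{it4}(a) asserts. You already flagged the sign bookkeeping as a nuisance---it is, and it resolves in favor of the theorem's stated criterion once you bypass the Remark and compute directly.

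A second place deserving more than a gesture is the global existence of a positive Jacobi function on the non-compact leaves in $X-\cC_\cF$: the transverse holonomy of $\cF$ obstructs patching local lapse functions into a single-valued $u>0$, and the clean way around this is to pass to the holonomy cover of the leaf (equivalently, use the local product structure and the constancy of $H_\cF$ on the component to trivialize the normal bundle up to a cover), exactly as in the proof of the Stable Limit Leaf Theorem~\ref{thmstable}. Your comment on this point is correct but would need to be made precise in a full write-up.
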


\section{Outstanding problems and conjectures.}  \label{sec:conj}
In this last section, we present many of the  fundamental conjectures
in minimal and constant mean curvature surface theory.
In the statement of most of these conjectures we have
listed the principal researchers to whom the given conjecture might
be attributed and/or those individuals who have made important
progress in its solution.

\subsection{Conjectures in the classical case of $\rth$.}

The classical Euclidean isoperimetric inequality states that the inequality
$4\pi A\leq L^2$ holds for the area $A$ of a compact subdomain of $\R^2$
with boundary length $L$, with equality if and only if $\Omega $ is a round
disk. The same inequality is known to hold for compact minimal surfaces with
boundary in $\R^3$ with at most two boundary components
(Reid~\cite{red1}, Osserman and Schiffer~\cite{osSchi1},
Li, Schoen and Yau~\cite{LSY1}, see also Osserman's survey paper~\cite{os5}).
\begin{conjecture} [Isoperimetric Inequality Conjecture]
\label{isop}
Every connected, compact minimal surface $\Omega $ with boundary in $\R^3$ satisfies
\begin{equation} \label{isoper}
4\pi A\leq L^2,
\end{equation}
where $A$ is the area of $\Omega $ and $L$ is the length of its boundary. Furthermore,
equality holds if and only if $\Omega $ is a planar round disk.

More generally,  if $\Omega $ is a connected, compact surface with
boundary in a Hadamard 3-manifold $N$ with sectional curvature at most $-a^2$,
and the absolute mean curvature function of $\Omega $ is at most $|a|\geq0$, then
equation~\eqref{isoper} is satisfied and equality holds if and
only if $\Omega $ is a planar round disk in a flat totally umbilic simply-connected
hypersurface of constant mean curvature $|a|$.
\end{conjecture}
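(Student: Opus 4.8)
\noindent\emph{Proof proposal.} The plan is to reduce the inequality to an \emph{intrinsic} isoperimetric inequality for metrics of non-positive Gauss curvature, and to confront the genuinely topological difficulty separately. The first observation is that the two hypotheses in the general statement conspire, through the Gauss equation, to force $K_\Omega\le 0$. Writing $\kappa_1,\kappa_2$ for the principal curvatures of $\Omega$ and $\overline{K}$ for the ambient sectional curvature of the tangent plane, we have $K_\Omega=\overline{K}+\kappa_1\kappa_2$; since $\kappa_1\kappa_2=H^2-\tfrac14(\kappa_1-\kappa_2)^2\le H^2\le a^2$ (using $|H|\le|a|$) and $\overline{K}\le -a^2$, this yields
\[
K_\Omega\le -a^2+a^2=0 .
\]
The classical case of $\Omega\subset\rth$ minimal is simply $a=0$, where $K_\Omega=\kappa_1\kappa_2=-\kappa_1^2\le 0$. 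Thus in all cases $\Omega$ carries a metric of non-positive Gauss curvature, and the ambient data will re-enter only in the rigidity discussion.

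Next I would settle the disk-type case, recovering the results of Carleman and of Li, Schoen and Yau cited above. Fix a conformal parametrization $\psi\colon\D\to\Omega$ of the unit disk, so that the induced metric is $e^{2\lambda}\,|dz|^2$, with $A=\int_\D e^{2\lambda}$ and $L=\int_{\partial\D}e^{\lambda}$. The bound $K_\Omega\le 0$ is equivalent to $\Delta\lambda\ge 0$, i.e.\ $\lambda$ is subharmonic, and the Carleman--Beckenbach--Rado inequality for subharmonic conformal factors gives
\[
4\pi\int_\D e^{2\lambda}\le\Bigl(\int_{\partial\D}e^{\lambda}\Bigr)^{2},
\]
that is, $4\pi A\le L^2$. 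Tracing the equality case forces $\lambda$ to be harmonic, hence $K_\Omega\equiv 0$ and $\Omega$ flat, and forces $\partial\Omega$ to be a round circle; feeding this back into the Gauss equation forces $\kappa_1=\kappa_2$, $|H|=|a|$ and $\overline{K}\equiv -a^2$, which is precisely the asserted rigidity, with $\Omega$ a flat round disk in a totally umbilic constant-mean-curvature $|a|$ hypersurface.

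The hard part will be discarding the disk-type hypothesis, i.e.\ allowing arbitrary genus and arbitrarily many boundary components; this is exactly where the conjecture remains open. Two warnings delimit the difficulty. First, the Hersch/M\"obius balancing underlying the harmonic-coordinate proof disappears once the conformal automorphism group of the underlying Riemann surface collapses, and the Carleman inequality has no direct analogue on a higher-genus domain. Second, for \emph{abstract} metrics with $K\le 0$ the sharp inequality can fail on surfaces of non-trivial topology, so one must genuinely exploit minimality (respectively the ambient constant-mean-curvature condition) and not merely the sign of $K_\Omega$. One cannot circumvent this by passing to an area-minimizer with the same boundary, since that produces a different surface; the inequality must be established for the given, possibly unstable, critical point, which is what makes the conformal route indispensable. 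My proposed route is an induction on the first Betti number of $\Omega$: given a non-simply-connected $\Omega$, select a shortest essential geodesic arc meeting $\partial\Omega$, or a shortest essential closed geodesic, cut $\Omega$ along it to lower the topology, and control the resulting changes in $A$ and $L$ by combining the monotonicity formula with the extrinsic structure in $\rth$ --- and, in a Hadamard target, with the convexity of the ambient distance functions --- so as to keep the defect pointing in the favorable direction. The central obstacle, common to every variant of the argument, is transferring the sharp constant $4\pi$, which is rigidly tied to the disk through the lowest conformal mode, to surfaces whose boundary correspondence is no longer governed by a three-parameter conformal group.
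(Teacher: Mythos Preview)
This statement is a \emph{conjecture} in the paper, not a theorem: the paper provides no proof, only the remark that the inequality is known for connected compact minimal surfaces in $\rth$ with at most two boundary components (Reid, Osserman--Schiffer, Li--Schoen--Yau). So there is no ``paper's own proof'' to compare against; your proposal is a research programme for an open problem, and should be read as such.

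Your reduction via the Gauss equation to $K_\Omega\le 0$ is correct and standard, and your disk-type argument via Carleman's subharmonic inequality recovers exactly the known case. You are also right to flag that the intrinsic $K\le 0$ inequality fails on non-simply-connected surfaces (a long flat cylinder is the obvious counterexample), so the embedding must be used essentially beyond the disk case. Where your proposal becomes genuinely incomplete is the induction on the first Betti number: cutting along a shortest essential geodesic and ``controlling the resulting changes in $A$ and $L$ by combining the monotonicity formula with the extrinsic structure'' is not an argument but a hope. The cut introduces new boundary of length $2\ell$ (twice the geodesic length) while leaving $A$ unchanged, and you give no mechanism by which monotonicity or ambient convexity would bound $\ell$ in terms of the original $L$ with a constant compatible with the sharp $4\pi$; indeed, as you yourself note in your final sentence, the sharp constant is tied to the M\"obius group of the disk, and that structure is precisely what your cutting procedure destroys. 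This is the well-known obstacle, and your proposal does not surmount it.
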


Gulliver and Lawson~\cite{gl1} proved that if $\Sigma$ is an orientable, stable
minimal surface with compact boundary that is properly
embedded in the punctured unit ball $\B -\{ \vec{0}\} $ of $\rth$, then its closure is
a compact, embedded  minimal surface.
If $\Sigma$ is not stable, then the
corresponding result is not known.
Meeks, P\'erez and
Ros~\cite{mpr10,mpr4} proved that a properly
embedded minimal surface $M$ in $\B -\{ \vec{0}\} $ with $\partial
M\subset \esf^2$ extends across the origin if and only if the function
$K|R|^2$ is bounded on $M$, where $K$ is the Gaussian curvature
function of $M$ and $R^2=x_1^2+x_2^2+x_3^2$ (Theorem~\ref{thmcurvestimstable}
implies that if $M$ is stable, then $K|R|^2$ is bounded).
In fact, this removable singularity result holds true if we replace $\R^3$
by an arbitrary Riemannian 3-manifold (Theorem~\ref{tt2}).
The following conjecture can be proven to hold for any
minimal surface of finite topology (in fact, with finite genus, see Corollary~2.4 in~\cite{mpr11}).

\begin{conjecture} {\bf (Isolated Singularities Conjecture, \ Gulliver-Lawson)}
\label{conjisolsing}
\mbox{}
\newline
The closure of a properly embedded minimal surface with compact boundary
in the punctured ball $\B-\{\vec{0}\}$ is a compact, embedded minimal surface.
\end{conjecture}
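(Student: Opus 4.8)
The plan is to reduce the conjecture to a single scale-invariant curvature estimate at the puncture and then to attack that estimate by a blow-up analysis. A properly embedded minimal surface $M \subset \B - \{\vec 0\}$ with $\partial M \subset \esf^2$ is a weak minimal lamination of $\B - \{\vec 0\}$ with one leaf, and it is complete outside the origin in the sense of Theorem~\ref{thmstablecompleteplane}: since $M$ is proper in $\B - \{\vec 0\}$ and $\partial M$ is compact, every divergent finite-length path in $M$ must limit to $\vec 0$. By the Local Removable Singularity Theorem~\ref{tt2}, $\overline M$ extends to a weak minimal lamination of $\B$, and hence to a compact embedded minimal surface, precisely when there is a constant $c>0$ with $|A_M|(q)\,|q| \le c$ on some subball $\B(\rho) - \{\vec 0\}$. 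Because $|K| = \tfrac12 |A_M|^2$ on a minimal surface, this is exactly boundedness of the conformally invariant quantity $K|R|^2$ in the Meeks--P\'erez--Ros characterization quoted above. Thus everything reduces to proving the estimate $|A_M|\,R \le c$ near $\vec 0$.

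First I would suppose the estimate fails, producing $p_n \in M$ with $p_n \to \vec 0$ and $\lambda_n := |A_M|(p_n)\,|p_n| \to \infty$. Two scales are relevant: the extrinsic distance $|p_n|$ to the puncture and the curvature scale $1/|A_M|(p_n)$, which is infinitesimal relative to $|p_n|$ and to the distance from $p_n$ to $\partial M$. On the curvature scale the puncture and the boundary both recede to infinity, so the blow-up procedure underlying the Local Picture Theorem on the Scale of Curvature (Theorem~\ref{thm3introd}) applies at points of almost-maximal curvature and yields a connected, properly embedded, non-flat minimal surface $\Sigma_\infty \subset \rth$ with $|A_{\Sigma_\infty}| \le 1$ and $|A_{\Sigma_\infty}|(\vec 0) = 1$. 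In parallel, the homothetic rescalings $M_n = \tfrac{1}{|p_n|}\,M$, which are proper minimal surfaces in punctured balls $\B(R_n) - \{\vec 0\}$ with $R_n \to \infty$, have curvature blowing up at the points $q_n = p_n/|p_n| \in \esf^2$; after passing to a subsequence and rotating, they converge to a minimal lamination $\mathcal L$ of $\rth - \{\vec 0\}$ away from a singular set of convergence that reaches the unit sphere.

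Next I would classify $\mathcal L$. Any limit leaf of $\mathcal L$ is stable by the Stable Limit Leaf Theorem~\ref{thmstable}, hence a plane by Theorem~\ref{thmstablecompleteplane}; by the scale-invariance built into the construction, every flat leaf is a plane through $\vec 0$. If all leaves of $\mathcal L$ are flat, then $\mathcal L$ is a foliation by parallel planes and $M_n$ converges to it away from straight lines in the singular set along which $M_n$ is modeled on highly sheeted helicoids (Theorems~\ref{thmlimitlaminCM} and~\ref{tthm3introd}); identifying $\Sigma_\infty$ as the curvature-scale limit along such a line, it must be a helicoid, and then a flux computation around a loop winding once about the puncture --- which vanishes for the one-ended surface $M$ --- is incompatible with the non-zero helicoidal asymptotics, or alternatively the Strong Half-space Theorem~\ref{thmhalf} applied between consecutive planar leaves forces $\Sigma_\infty$ to be planar, contradicting $|A_{\Sigma_\infty}|(\vec 0) = 1 > 0$.

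The genuine obstacle is to rule out the remaining possibility that $\mathcal L$ contains a \emph{non-flat} leaf, or a parking-garage column spiraling into $\vec 0$ --- exactly the scenario of item~6 in Theorem~\ref{tthm3introd}, which the authors conjecture never occurs. I expect this step to require a finite-density bound $\mathrm{Area}(M \cap \B(r)) \le C r^2$ as $r \to 0$, of the kind predicted for finite-genus surfaces by Conjecture~\ref{CYconj1}, so that the monotonicity formula forces every tangent object at $\vec 0$ to be a plane with integer multiplicity, after which embeddedness excludes branching and Allard-type regularity returns us to the flat situation handled above. Proving such an area bound near an isolated singularity without a stability hypothesis is the crux of the difficulty, and is the reason the conjecture remains open.
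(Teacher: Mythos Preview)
The statement you are attempting to prove is a \emph{conjecture} in the paper, not a theorem: the paper offers no proof of Conjecture~\ref{conjisolsing} and explicitly records it as open. What the paper does record is exactly the reduction you carry out in your first paragraph --- that by the Local Removable Singularity Theorem~\ref{tt2} the problem is equivalent to the scale-invariant estimate $|A_M|\,R\le c$ (equivalently, boundedness of $K|R|^2$) near $\vec{0}$ --- together with the remark that the conjecture is known in the stable case (Gulliver--Lawson) and in the finite-genus case (Corollary~2.4 in~\cite{mpr11}). So your reduction matches the paper's discussion, and your final paragraph correctly locates the genuine obstruction; there is simply no ``paper's proof'' to compare the rest against.

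That said, the intermediate steps you sketch to handle the ``flat lamination'' case contain real gaps and would not stand as written even in that sub-case. The flux argument is not justified: a loop in $M$ winding about the puncture need not bound in $M$, so its flux need not vanish, and in any case the flux of the original $M$ bears no direct relation to the asymptotic behaviour of the curvature-scale limit $\Sigma_\infty$, which lives at an entirely different scale from the homothetic limit $\mathcal L$. Likewise, invoking the Strong Half-space Theorem~\ref{thmhalf} ``between consecutive planar leaves'' to force $\Sigma_\infty$ to be planar conflates two different blow-up limits: the planes are leaves of $\mathcal L$ (the $1/|p_n|$-rescaling), while $\Sigma_\infty$ arises from the much faster $|A_M|(p_n)$-rescaling, so there is no ambient configuration in which Theorem~\ref{thmhalf} applies to them simultaneously. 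Finally, the convergence of $M_n$ to a lamination $\mathcal L$ of $\rth-\{\vec 0\}$ is not automatic without local area or curvature bounds, which is precisely what you are trying to establish; the Colding--Minicozzi/Meeks--Tinaglia lamination theorems you cite require a locally simply connected hypothesis that is not available here in general. Your closing diagnosis --- that an \emph{a priori} area-density bound near the puncture is the missing ingredient --- is the honest summary, and is consistent with the paper's treatment of the problem as open.
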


%Meeks, P\'erez and Ros have conjectured that this removable singularity
%result holds true if we replace the origin by any closed set in $\R^3$ with zero
%1-dimensional Hausdorff measure  and the surface is assumed to be properly
%embedded in the complement of this set, see Conjecture~\ref{FSC} below.

The most ambitious conjecture about removable singularities for
minimal surfaces is the following one, which deals with laminations
instead of with surfaces.

\begin{conjecture}\label{FSC} {\bf (Fundamental Singularity Conjecture, \ Meeks-P\'erez-Ros)}
If $A \subset \rth$ is a closed set with zero $1$-dimensional Hausdorff
measure and $\lc$ is a minimal lamination of $\rth -A$, then $\lc$ extends
to a minimal lamination of $\rth$.
\end{conjecture}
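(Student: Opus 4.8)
\smallskip
\noindent\emph{Proof proposal.} The plan is to make a single \emph{curvature estimate} near $A$ the heart of the matter. Because the conclusion is local and a lamination is a closed set, it suffices to extend $\lc|_{B(p,r)-A}$ across $A\cap B(p,r)$ for each $p\in A$ and small $r>0$. The Local Removable Singularity Theorem~\ref{tt2} shows, in the model case of an isolated singularity, that extension holds \emph{if and only if} $|A_{\lc}|\,d_N$ is locally bounded; this motivates taking the estimate~\eqref{eq:FSCest} below as the central goal. The closed \emph{countable} case is already settled in Corollary~\ref{corrs} by iterating Theorem~\ref{tt2} through a Baire category argument, but that mechanism fails here: a closed set with vanishing $1$-dimensional Hausdorff measure $\mathcal{H}^1(A)=0$ may be uncountable (for instance a Cantor set of Hausdorff dimension strictly between $0$ and $1$), so the extension across $A$ must be carried out \emph{simultaneously}, which I would do by the elliptic removability argument of the final paragraph. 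The guiding principle is that such an $A$ is too thin to support curvature blow-up of a minimal lamination: by Theorem~\ref{thmregular} a singular set of convergence is a union of $C^{1,1}$ curves, which has \emph{positive} $\mathcal{H}^1$ measure.

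First I would establish the estimate
\begin{equation}
\label{eq:FSCest}
\sup_{x}\,|A_{\lc}|(x)\,\operatorname{dist}(x,A)\;<\;\infty
\end{equation}
locally near $A$. The sublamination of limit leaves is the easy part: by the Stable Limit Leaf Theorem~\ref{thmstable} every limit leaf is stable, so the stable curvature estimates of Theorem~\ref{thmcurvestimstable} yield~\eqref{eq:FSCest} for limit leaves at once, and Theorem~\ref{thmstablecompleteplane} forces their closures to be planar near $A$. The hard part is the non-limit leaves. Here I would argue by contradiction: assuming~\eqref{eq:FSCest} fails, select points $x_n$ with $|A_{\lc}|(x_n)\,\operatorname{dist}(x_n,A)\to\infty$, pass to points of almost-maximal rescaled curvature in the spirit of the Local Picture Theorem~\ref{thm3introd}, and blow up. Invoking the scale-invariant one-sided curvature estimate for $0$-disks (Theorem~\ref{thmcurvestimCM}) together with the $C^{1,1}$-regularity of the singular set of convergence (Theorem~\ref{thmregular}), the blow-up produces a limiting minimal lamination of $\rth$ whose curvature concentrates along a genuine $C^{1,1}$ curve. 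One must then show that this curve of concentration descends to a subset of $A$ of positive $\mathcal{H}^1$ measure, contradicting $\mathcal{H}^1(A)=0$.

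The main obstacle is precisely this last step: controlling the closed set $A$ under the blow-up rescalings. Under dilation by the (unbounded) curvature scale, $A$ is stretched, and one must track the Hausdorff-measure relationship between the singular set of the limiting lamination and the preimages of $A$. An Almgren--Federer style dimension-reduction and stratification argument seems unavoidable, coupled with the fact that a Lipschitz (bounded-gradient) projection cannot raise $\mathcal{H}^1$ measure. Showing that curvature blow-up must accumulate on a $1$-rectifiable portion of $A$ is the crux, and is where a genuinely new argument beyond the countable case of Corollary~\ref{corrs} is required.

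Granting~\eqref{eq:FSCest}, the extension is then routine. Bounded rescaled curvature makes the leaves locally graphical with uniformly bounded gradient, so their graphing functions solve the minimal surface equation, a quasilinear uniformly elliptic PDE in the plane. Since $A$ is closed with $\mathcal{H}^1(A)=0$, its image under the (Lipschitz) graphical projection is again $\mathcal{H}^1$-null, and closed sets of vanishing $\mathcal{H}^1=\mathcal{H}^{n-1}$ measure are removable for bounded solutions of such equations. Each leaf therefore extends smoothly across $A$, and standard lamination compactness (Arzel\`a--Ascoli together with the uniform graph lemma for minimal surfaces) reassembles the extended leaves into a minimal lamination of $\rth$, which would complete the proof.
\smallskip
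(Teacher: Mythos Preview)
The statement you are attempting to prove is Conjecture~\ref{FSC}, explicitly presented in the paper as an \emph{open problem}; the paper contains no proof of it. There is therefore nothing to compare your proposal against---what you have written is a strategy toward an unsolved question, not an alternative to an existing argument.

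As a strategy the outline is the natural one: reduce to a local scale-invariant curvature bound $|A_{\lc}|\cdot\operatorname{dist}(\,\cdot\,,A)\leq C$, then invoke elliptic removability of $\mathcal{H}^{n-1}$-null sets for the graphing functions. You are right that this is how the isolated-point case (Theorem~\ref{tt2}) and the countable case (Corollary~\ref{corrs}) work, and you are right that the Baire iteration breaks down for uncountable $\mathcal{H}^1$-null sets. But the gap you flag is genuine and is exactly why the conjecture remains open. A few further issues: Theorem~\ref{thmstablecompleteplane} is proved only for surfaces complete outside a \emph{single point}, so even the ``easy'' limit-leaf step requires an extension you have not supplied; Theorem~\ref{thmregular} needs a \emph{locally simply connected} sequence of proper $0$-surfaces, a hypothesis not available for leaves of an arbitrary lamination near a blow-up point; and most seriously, the descent step---arguing that a $C^{1,1}$ curve of curvature concentration in the blow-up forces positive $\mathcal{H}^1$-measure in the original $A$---does not follow from any dimension-reduction argument currently in the literature, since under unbounded dilations a Cantor-type set of dimension close to~$1$ can limit onto genuine curves. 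Your proposal is an honest sketch of where the difficulty lies, not a proof.
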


In Section~\ref{LRST}, we saw how the Local Removable Singularity Theorem~\ref{tt2} is
a cornerstone for the proof of the Quadratic Curvature Decay Theorem~\ref{thm1introd} and the
Dynamics Theorem in~\cite{mpr20}, which illustrates the usefulness of removable singularities results.

In the discussion of the conjectures that follow, it is helpful
to fix some notation for certain classes
of complete embedded minimal surfaces  in $\rth$.
\begin{itemize}
\item Let ${\mathcal C}$ be the space of connected, ${\mathcal C}$omplete,
 embedded minimal surfaces.
\item Let ${\mathcal P}\subset {\mathcal C}$ be the subspace
of ${\mathcal P}$roperly embedded surfaces.
\item Let ${\mathcal M} \subset {\mathcal P}$ be the subspace of
surfaces with ${\mathcal M}$ore than one end.
\end{itemize}

In what follows we will freely {use}
the properness result of
complete 0-surfaces of finite topology given in
Corollary~\ref{corolthmmlctR3} and Collin's Theorem ~\cite{col1}
that properly embedded minimal surfaces of finite topology
with more than 1 end have finite total curvature.

\begin{conjecture}[Finite Topology Conjecture I, \ Hoffman-Meeks]
\label{conj15.3}
 An orientable surface $M$ of finite topology
  with genus $g$ and $r$ ends, $r \neq 0,2$, occurs as a topological
  type of a surface in ${\mathcal C}$
 if and only
  if $r \leq g+2$.
\end{conjecture}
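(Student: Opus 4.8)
The plan is to treat the two implications separately, after first reducing every admissible case to one of two well-understood asymptotic regimes. Since $r\neq 0$ and $r\neq 2$, the topological types in question have either $r=1$ or $r\geq 3$. By Corollary~\ref{corolthmmlctR3} every surface in $\mathcal{C}$ of finite topology is automatically properly embedded, so we may work inside $\mathcal{P}$. For $r=1$ the surface is asymptotic to a helicoid by Theorem~\ref{ttmr}, while for $r\geq 2$ Theorem~\ref{thmCM2} shows it has finite total curvature, conformally $\overline{M}\setminus\{p_1,\dots,p_r\}$ for a compact genus-$g$ surface $\overline{M}$, with each end asymptotic to a plane or a catenoid. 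In particular, for the one-ended case the inequality $r\leq g+2$ reads $1\leq g+2$ and holds automatically, so for $r=1$ all the content lies on the existence side.

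For \emph{necessity} (a surface in $\mathcal{C}$ of type $(g,r)$ with $r\geq 3$ forces $r\leq g+2$), I would normalize the limiting normals to be horizontal and invoke the Jorge--Meeks formula, which for embedded ends gives total curvature $-4\pi(g+r-1)$ and Gauss-map degree $g+r-1$. By the work of Frohman and Meeks the ends carry a linear ordering by height (the top, bottom, and middle ends described just before Theorem~\ref{classthm2}); the two extreme ends may be catenoidal, but each of the $r-2$ middle ends is forced to be planar, with logarithmic growths that are monotone along the ordering and cannot be nested without producing self-intersections. The remaining, and hardest, step is to convert this flux and growth-rate data into the \emph{sharp} bound $r\leq g+2$: one must relate the number of planar middle ends to the genus through the Gauss-map degree together with a flux-balancing argument. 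This is precisely the long-standing Hoffman--Meeks end-count estimate, and the results currently available (Meeks--P\'erez--Ros) yield only a bound $r\leq C(g)$ that is linear but not sharp; I expect this to be the main obstacle.

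For \emph{sufficiency} ($r\leq g+2$ implies realizability), one constructs complete embedded minimal surfaces of every admissible type. For $3\leq r\leq g+2$ the strategy is to generalize the Costa--Hoffman--Meeks family: impose a cyclic or dihedral symmetry group acting by rotations about the vertical axis together with reflections, which collapses the Weierstrass period problem to a finite-dimensional system on the quotient, and then solve that system by an intermediate-value or topological-degree argument as the conformal moduli vary, checking embeddedness a posteriori from the symmetry and the plane/catenoid asymptotics; alternatively, a Kapouleas-- or White-type desingularization of a stack of $r$ nearly parallel planes joined along a genus-$g$ core by small catenoidal necks, with the implicit function theorem killing the residual periods, should cover the same range. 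The one-ended case $r=1$ requires instead exhibiting a genus-$g$ helicoid for every $g$; these are known for $g=0,1$, and their existence for all $g$ is itself an open ingredient this direction depends on. Thus, beyond assembling the reductions above, the two genuinely open points — the sharp upper bound on necessity and the full realization of every $(g,r)$ in the admissible range on sufficiency — are where the real work, and the present gap, reside.
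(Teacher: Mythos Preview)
This statement is a \emph{conjecture}, not a theorem; the paper does not prove it and explicitly lists it among the outstanding open problems. Your proposal is therefore not competing with a proof in the paper but with the paper's own discussion of partial progress, and at that level your outline is broadly in agreement: you correctly reduce to the proper setting via Corollary~\ref{corolthmmlctR3}, separate the helicoidal ($r=1$) and finite-total-curvature ($r\geq 3$) regimes via Theorems~\ref{ttmr} and~\ref{thmCM2}, and you correctly identify the two genuine obstructions --- the sharp end bound on the necessity side and the full realization problem on the sufficiency side. The paper says exactly this: the best known result (Meeks--P\'erez--Ros~\cite{mpr8}) gives only some genus-dependent bound $e(g)$, and the conjecture would follow from $e(g)=g+2$.

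Two points where your summary needs correction. First, your claim that genus-$g$ helicoids are ``known for $g=0,1$'' and open otherwise is out of date: the paper cites Hoffman, Traizet and White~\cite{htw1,htw2} as having proved the existence of an embedded genus-$g$ helicoid for every $g\geq 0$, so the $r=1$ sufficiency direction is settled (uniqueness, which is Conjecture~\ref{conj15}, remains open). Second, your assertion that ``each of the $r-2$ middle ends is forced to be planar'' is false for finite-total-curvature surfaces: already the Hoffman--Meeks deformations of the Costa surface have all three ends catenoidal. What the ordering theorem gives is monotonicity of the logarithmic growths along the height ordering, not vanishing of the middle ones, so the flux/growth heuristic you sketch does not by itself approach the sharp bound. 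That sharp bound is, as you say, the heart of the matter and remains open.
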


The main theorem in~\cite{mpr8} insures that for each positive genus $g$, there exists
an upper bound $e(g)$ on the number of ends of an $M \in {\mathcal M}$
with finite topology and genus $g$. Hence, the non-existence implication in
Conjecture~\ref{conj15.3} will be proved if one
can show that $e(g)$ can be taken as $g+2$. Concerning the case $r=2$, the
classification result of  Schoen~\cite{sc1}  implies that the only examples in ${\mathcal M}$
with finite topology and two ends are catenoids.

On the other hand,
Theorem~\ref{helicoid} characterizes the helicoid among complete,
embedded, non-flat minimal surfaces in $\R^3$ with genus zero and
one end. Concerning one-ended surfaces in ${\mathcal C}$ with finite
positive genus, first note that all these surfaces are proper by
Theorem~\ref{thmCM}. Furthermore, every example $M \in {\mathcal P}$ of
finite positive genus and one end has a special analytic
representation on a once punctured compact Riemann surface, as
follows from the works of Bernstein and Breiner~\cite{bb2} and Meeks
and P\'erez~\cite{mpe3}, see Theorems~\ref{ttmr}
{and~\ref{th9.8}.}
In fact, these
authors showed that any such minimal surface has {\it finite
type}\footnote{See Definition~\ref{deffinitetype} for the concept of
minimal surface of finite type.} and is asymptotic to a helicoid.

 All these facts motivate the next
conjecture, which appeared in print for the first time in the
paper~\cite{mr8} by Meeks and Rosenberg, although several versions
of it as questions were around a long time before appearing
in~\cite{mr8}.
The finite type condition and work of Colding and Minicozzi
were applied  by Hoffman, Traizet and White~\cite{htw1,htw2} to prove of the existence
implication of the next conjecture. A step in the proof of the uniqueness statement
of the next conjecture in the
case of genus one is the result of Bernstein and
Breiner~\cite{bb3} that states that every genus $1$ helicoid has an
axis of rotational symmetry; here uniqueness means up to the composition of
an ambient isometry and a homothety.

\begin{conjecture}  {\bf (Finite Topology Conjecture II, \ Meeks-Rosenberg)}
\label{conj15}
For every non-negative integer $g$,
there exists a unique
  non-planar $M \in {\mathcal C}$ with genus $g$ and one end.
\end{conjecture}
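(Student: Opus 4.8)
The plan is to treat the existence and uniqueness halves of the statement separately; only the uniqueness half is genuinely difficult. For existence, I would invoke the construction of Hoffman, Traizet and White~\cite{htw1,htw2}, who, combining the finite-type representation with the Colding--Minicozzi theory, produce for every genus $g\ge 0$ a complete, embedded, non-planar minimal surface in $\rth$ of genus $g$ with a single end asymptotic to a helicoid. Thus existence is settled, and the content of the conjecture is the uniqueness of such a surface up to a rigid motion and a homothety.

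For uniqueness I would first reduce an arbitrary example to a rigid analytic model. Let $M\in\mathcal C$ be non-planar of genus $g$ with one end. By Theorem~\ref{thmCM}, $M$ is proper, and since $M$ is non-flat with finite topology and a single end, Theorem~\ref{ttmr} shows that $M$ is asymptotic to a helicoid and in particular has infinite total curvature. The finite-type results of Bernstein--Breiner and Meeks--P\'erez, together with Theorem~\ref{th9.8} applied to the annular end, then give that $M$ is conformally a once-punctured compact Riemann surface $\overline M$ of genus $g$ whose Weierstrass data $(g,dh)$ has the controlled helicoidal form~\eqref{eq:WBB}, with $dg/g$ and $dh$ extending meromorphically across the puncture and $dh$ having a double pole there. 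Consequently $M$ lives in a finite-dimensional moduli space $\mathcal N_g$ of such finite-type surfaces, and the problem becomes to show that $\mathcal N_g$ is a single orbit of the rigid-motion and scaling action.

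The strategy for collapsing $\mathcal N_g$ would mimic the philosophy that succeeded in the planar-domain classification of Theorem~\ref{classthm}: construct on $M$ a holomorphic Jacobi field of \emph{Shiffman type}, show through the integrable-systems machinery underlying that proof that this Shiffman field integrates to a genuine one-parameter deformation of $M$ through complete embedded minimal surfaces preserving the helicoidal asymptotics, and then use asymptotic rigidity---the deformed surfaces must remain asymptotic to a fixed helicoid up to ambient isometry---to force the deformation to be trivial, i.e.\ generated only by isometries and homotheties. Combined with the Hoffman--Traizet--White existence and the properness and connectedness of $\mathcal N_g$, this would yield that $\mathcal N_g$ modulo isometries and homotheties is a single point for each $g$.

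The hard part will be controlling the geometry for $g\ge 2$. In the genus-one case I would instead exploit the rotational symmetry theorem of Bernstein--Breiner~\cite{bb3}, which asserts that every genus-one helicoid has an axis of rotational symmetry; this reduces the classification to an essentially one-dimensional (ODE) problem on the quotient and makes uniqueness tractable. For higher genus no such symmetry result is available, and the integration of the Shiffman field together with the accompanying period and embeddedness control are exactly the steps that remain open. I expect that this obstruction cannot be removed by any routine computation: a genuinely new rigidity input, most plausibly an extension of the Bernstein--Breiner rotational symmetry to all genera, will be needed to complete the uniqueness argument.
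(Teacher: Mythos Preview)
This statement is a \emph{conjecture}, not a theorem: the paper lists it in Section~\ref{sec:conj} among outstanding open problems and explicitly does not prove it. There is no ``paper's own proof'' to compare against. The paper only records that Hoffman--Traizet--White settled the existence half and that Bernstein--Breiner's rotational-symmetry result is a step toward uniqueness in genus one; uniqueness for $g\ge 1$ remains open.

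Your write-up is therefore not a proof but a research outline, and you are candid about this in the final paragraph. The reductions you describe---properness via Theorem~\ref{thmCM}, helicoidal asymptotics via Theorem~\ref{ttmr}, finite-type Weierstrass data via Theorem~\ref{th9.8}---are all correct and are exactly the known structural facts the paper assembles. But the heart of the matter, the Shiffman-type deformation argument you propose for collapsing the moduli space $\mathcal N_g$, is speculative: no such holomorphic Jacobi field adapted to the helicoidal end has been constructed for positive genus, the KdV/integrable-systems machinery from~\cite{mpr6} is specific to the planar-end, genus-zero setting, and there is no established properness or connectedness result for $\mathcal N_g$ to feed into a continuity argument. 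Your own closing sentences acknowledge precisely these gaps. So the proposal is an honest and well-informed sketch of where the problem stands, but it should not be labeled a proof.
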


The Finite Topology Conjectures I and II together propose the
precise topological conditions under which a non-compact orientable
surface of finite topology can be properly minimally embedded
in $\rth$.  What about the case where the
non-compact orientable surface $M$ has infinite topology? In this case, either
$M$ has infinite genus or $M$ has an infinite number of ends.
Results of Collin, Kusner, Meeks and Rosenberg imply
such an $M$ must have at most two limit ends.
Meeks, P\'erez and Ros proved in~\cite{mpr4} that such an $M$ cannot have one limit end and
finite genus.  The claim is that these restrictions are the
only ones.

\begin{conjecture} [Infinite Topology Conjecture, \ Meeks] \label{conj:meekstop}
A non-compact, orientable surface of infinite topology
occurs as a topological type of a surface in ${\mathcal P}$
if and only if it has at most one or two
limit ends, and when it has one limit end, then its limit end has infinite
genus. \end{conjecture}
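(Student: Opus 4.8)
The plan is to prove the two implications separately, observing first that the \emph{necessity} (``only if'') direction is already settled by results cited above. Properness is part of the definition of the class $\mathcal{P}$; the bound of at most two limit ends for any $M\in\mathcal{P}$ of infinite topology is the theorem of Collin, Kusner, Meeks and Rosenberg recalled just before the conjecture; and the impossibility of a single limit end combined with finite genus is the theorem of Meeks, P\'erez and Ros in~\cite{mpr4}. Hence the entire content of the conjecture reduces to the \emph{sufficiency} (existence) direction: realizing every non-compact orientable surface obeying the stated constraints as the topological type of a member of $\mathcal{P}$. I would organize this by the number $\ell\in\{0,1,2\}$ of limit ends, noting that $\ell=0$ forces finitely many simple ends and hence infinite genus, that $\ell=1$ forces infinite genus, and that $\ell=2$ permits genus zero, finite positive, or infinite.

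Next I would dispose of the two-limit-end cases, which are the most accessible because Theorem~\ref{classthm2} supplies the precise target model: the middle ends should be planar and the two limit ends should be exponentially asymptotic to translated Riemann minimal examples. The genus-zero subcase is realized outright by the Riemann minimal examples themselves. For two limit ends with infinite genus I would use singly-periodic constructions invariant under a translation $T$ whose quotient in $\R^3/T$ has finite positive genus and finitely many ends, as in the Callahan--Hoffman--Meeks family; such surfaces are automatically proper and embedded once the compact quotient is, and they produce infinite genus with exactly a top and a bottom limit end. For two limit ends with finite positive genus $g$, I would insert the prescribed $g$ handles by a node-opening / desingularization gluing of Traizet--Weber type into a configuration asymptotic to a fixed Riemann example, using the Limit Lamination Theorems~\ref{thmlimitlaminCM} and~\ref{t:t5.1CM} together with the chord-arc estimate of Theorem~\ref{main2} to control the gluing limits and to certify embeddedness and properness.

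The remaining cases $\ell=1$ and $\ell=0$ are where I expect the real difficulty, because they cannot be produced by translation-periodic surfaces: invariance under a translation always yields exactly two limit ends, so a single limit end (or finitely many simple ends) carrying infinite genus demands a genuinely non-periodic construction in which an unbounded, locally finite family of handle-blocks accumulates in a \emph{one-sided} fashion at a single end. The natural approach is again an infinite gluing, with block sizes and separations tuned so that the accumulation produces one limit end of infinite genus rather than two. \textbf{The hard part} will be solving the resulting infinite-dimensional period problem and proving convergence of the infinite gluing to a surface that is complete, embedded and proper and that has \emph{exactly} the prescribed end-and-genus structure, with no spurious extra ends, no loss of embeddedness, and no failure of properness in the limit. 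This is precisely the regime in which non-proper or non-embedded limits can arise, and I expect that excluding them will require, as an essential ingredient, the infinite-genus analogues of the properness and area-growth control of Conjecture~\ref{CYconj1} and Theorem~\ref{thmCYMPR}, applied to the bounded-genus subregions of the candidate surfaces so that the lamination-theoretic machinery developed in the earlier sections can be brought to bear.
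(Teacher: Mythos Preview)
The statement you are attempting to prove is a \emph{conjecture}, not a theorem: the paper does not contain a proof, and the problem remains open. There is therefore no ``paper's own proof'' to compare your proposal against. What the paper does provide, immediately after stating the conjecture, is a record of partial progress on the existence direction: Traizet~\cite{tra8} constructed a surface in $\mathcal{P}$ with infinite genus and exactly one limit end (all simple ends annular), and Morabito and Traizet~\cite{morTra1} constructed a two-limit-end example with one limit end of genus zero and the other of infinite genus. You should incorporate these references, since the Traizet construction already handles a case you flag as among the hardest ($\ell=1$), and the Morabito--Traizet example shows that the two limit ends need not carry genus symmetrically.

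Your identification of the necessity direction as settled is correct and matches the paper's discussion preceding the conjecture. Your organization of the sufficiency direction by $\ell\in\{0,1,2\}$ is sensible, and your observation that periodic constructions cannot produce $\ell\in\{0,1\}$ is on target. However, your proposal is an outline of a strategy, not a proof: the ``hard part'' you isolate---solving an infinite-dimensional period problem and certifying embeddedness, properness, and the exact end structure of a non-periodic infinite gluing---is precisely the unresolved core of the conjecture. Two smaller points: your invocation of Theorem~\ref{main2} (the chord-arc estimate for $H$-disks) is not obviously relevant to controlling a minimal gluing construction; and the case $\ell=2$ with finite positive genus $g$, which you treat as accessible via Traizet--Weber type desingularization, is itself not established in the literature cited here and should not be presented as routine.
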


Traizet~\cite{tra8} constructed a properly embedded
minimal surface with infinite genus and one limit end, all whose
simple ends are annuli and whose Gaussian curvature function is
unbounded. In a closely related paper, Morabito and
Traizet~\cite{morTra1} constructed a properly embedded minimal
surface with two limit ends, one of which has genus zero and the
other with infinite genus, such that all of its middle ends are annuli.
These results represent progress on Conjecture~\ref{conj:meekstop}.

If $M \in {\mathcal C}$ has finite topology, then $M$ has finite total
curvature or is asymptotic to a helicoid by
Theorems~\ref{thmCM} and \ref{ttmr}. It follows that for any such surface $M$,
there exists a constant $C_M>0$ such that the injectivity radius function
$I_M \colon M \rightarrow (0, \infty]$ satisfies
\[
I_M (p) \geq C_M\| p\| ,\quad p\in M.
\]
Work of Meeks, P\'erez and Ros in~\cite{mpr14,mpr10} indicates that this linear growth property
of the injectivity radius function should characterize the examples in ${\mathcal C}$ with finite topology,
in a similar manner that the inequality $K_M(p)\| p\|^2\leq C_M$ characterizes finite total
curvature for a surface $M\in {\mathcal C}$ (Theorem~\ref{thm1introd}, here $K_M$ denotes the
Gaussian curvature function of $M$).

\begin{conjecture}\label{conjinjradius} {\bf (Injectivity Radius Growth Conjecture,  Meeks-P\'erez-Ros)}

\noindent A surface $M \in {\mathcal C}$ has finite topology if and only if
its injectivity radius function grows at least linearly with respect to the extrinsic
distance from the origin.
\end{conjecture}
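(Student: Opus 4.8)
The plan is to prove the two implications separately, exploiting one clean structural fact. A complete minimal surface has nonpositive Gaussian curvature, hence no conjugate points, so its injectivity radius at a point $p$ equals one half the length of the shortest homotopically nontrivial geodesic loop through $p$ (and is $+\infty$ when no such loop exists). Thus for $M\in\mathcal{C}$ the function $I_M$ is a purely topological–metric quantity: small injectivity radius means a short essential loop, never merely large curvature. For the forward implication, let $M\in\mathcal{C}$ have finite topology. By Theorems~\ref{thmCM} and~\ref{ttmr}, $M$ is proper and is either of finite total curvature or asymptotic to a helicoid. A helicoidal end is simply connected, so $I_M\equiv+\infty$ there; on a finite total curvature surface each end is asymptotic to a plane or a catenoid (Theorem~\ref{thmCM2}), and on such model ends the shortest essential loop through a point at extrinsic distance $R$ has length comparable to $R$. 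Taking $C_M$ to be the minimum of these finitely many linear constants together with the positive infimum of $I_M/\|\cdot\|$ on the compact core yields $I_M(p)\geq C_M\|p\|$ on all of $M$.

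For the converse I would argue by contradiction: assume $I_M(p)\geq C_M\|p\|$ yet $M$ has infinite topology. First, on $M\cap(\rth-\B(R_0))$ the hypothesis gives $I_M\geq C_MR_0>0$, so $M$ has injectivity radius bounded away from zero outside a compact set and is therefore properly embedded by Theorem~\ref{thmCM} and Remark~\ref{remarkENDS}; this neatly sidesteps the still-open properness question of Conjecture~\ref{CYconj1}. If $M$ has finite genus, I would invoke the classification in Theorem~\ref{classthm2}: $M$ is either a Riemann minimal example or a finite positive genus surface with two limit ends whose middle ends are planar and which is asymptotic at each limit end to a fixed Riemann example. In all these cases the planar middle ends accumulate at a limit end, and since $M$ is asymptotic there to a fixed translation-periodic Riemann example, the shortest essential loops $\gamma_n$ around these ends have uniformly bounded length while their basepoints $q_n$ satisfy $\|q_n\|\to\infty$. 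Hence $I_M(q_n)/\|q_n\|\to 0$, contradicting linear growth.

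The remaining and genuinely hard case is infinite genus, where no such classification is available; this is where I expect the main obstacle to lie, and it is why the statement remains only a conjecture. The plan here is to mimic the proof of the Quadratic Curvature Decay Theorem~\ref{thm1introd}, exchanging the curvature scale $|A_M|^{-1}$ for the topology scale $I_M$ and the blow-up on the scale of curvature (Theorem~\ref{thm3introd}) for the blow-up on the scale of topology (Theorem~\ref{tthm3introd}). Infinite genus produces a divergent sequence of essential loops $\gamma_n$, of length $L_n$ and based at points $q_n$ with $\|q_n\|\to\infty$, carrying the escaping genus; rescaling by $\lambda_n=1/\|q_n\|$, the surfaces $\widetilde{M}_n=\lambda_nM$ retain the bound $I_{\widetilde{M}_n}(p)\geq C_M\|p\|$ and carry loops $\lambda_n\gamma_n$ near the unit sphere. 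If $\lambda_nL_n\to 0$ the injectivity radius of $\widetilde{M}_n$ collapses at distance one, contradicting the uniform lower bound $C_M$; the decisive difficulty is the borderline regime $\liminf\lambda_nL_n>0$, in which genus concentrates exactly at the scale of the distance to the origin. Ruling this out requires extracting a nonflat limit lamination via Theorem~\ref{tthm3introd} and the Local Removable Singularity Theorem~\ref{tt2}, and then running a dynamics-type argument (in the spirit of the dynamics theorem attached to Theorem~\ref{thm1introd}) to show that any such limit must have locally finite total curvature, hence locally finite genus, contradicting the escaping infinite genus. Proving the requisite compactness, recurrence, and minimal-element classification for these topology-scale limits is the new ingredient that would be needed to complete the argument.
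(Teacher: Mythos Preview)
This statement is a \emph{conjecture} in the paper, not a theorem; there is no proof in the paper to compare your proposal against. The paper's discussion is limited to the paragraph immediately preceding Conjecture~\ref{conjinjradius}: it asserts the forward implication (finite topology $\Rightarrow$ linear growth of $I_M$) as a consequence of Theorems~\ref{thmCM} and~\ref{ttmr}, and then states that work in~\cite{mpr14,mpr10} merely \emph{indicates} that the converse should hold, in analogy with the Quadratic Curvature Decay Theorem~\ref{thm1introd}. You have correctly read the situation and explicitly flag the converse as open.

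Your forward implication matches the paper's one-line justification, but one phrase needs correction. You write ``a helicoidal end is simply connected, so $I_M\equiv+\infty$ there.'' For a genus-$g$ helicoid with $g\geq 1$, the single end is an \emph{annulus}, not a disk, so essential loops do exist. The correct reason the injectivity radius still grows linearly is that the end is asymptotic to a (simply connected) helicoid: any essential loop through a point $p$ far out on the end cannot be contained in the asymptotically helicoidal region (it would bound a disk there), hence must travel back to the compact core, forcing its length to be at least $2\,d_M(p,\text{core})\geq 2\,d_{\rth}(p,\text{core})$, which grows linearly in $\|p\|$. With this fix your forward argument is fine and is exactly what the paper has in mind.

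On the converse, your reduction of the finite-genus case via properness plus Theorem~\ref{classthm2} is a genuine observation that the paper does not spell out; note however that your appeal to Remark~\ref{remarkENDS} for properness is not quite on the nose, since that remark concerns surfaces with compact boundary. A cleaner route is: linear growth gives $I_M$ bounded below by a positive constant on $M-\B(R_0)$; if $M$ were not proper, any accumulation point $q$ lies in some $\overline{\B(R_0)}$, and a local application of the Minimal Lamination Closure Theorem~\ref{thmmlct} around $q$ (or the argument of Corollary~\ref{corolMLCTR3}) still yields the contradiction. Your infinite-genus program---replacing the curvature scale by the topology scale and invoking Theorem~\ref{tthm3introd} together with a dynamics argument---is precisely the analogy the paper gestures at when it cites~\cite{mpr14,mpr10}, but as you yourself say, the compactness and recurrence ingredients needed to close that loop are not presently available. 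That is the honest state of affairs, and your write-up reflects it accurately.
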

The results in~\cite{mpr14,mpr10} and the earlier described
Theorems~\ref{thmlimitlaminCM} and~\ref{t:t5.1CM} also motivated
several conjectures concerning the limits  of locally simply
connected sequences of minimal surfaces in  $\rth$, like the
following one, which in the case that $M$
is allowed to have compact boundary represents the
necessary implication in the embedded Calabi-Yau Conjecture~\ref{CY} below.
%s discussed in Chapter~\ref{seccy}.
\begin{conjecture}\label{conjFTPC} {\bf (Finite Genus Properness Conjecture, \
Meeks-P\'erez-Ros)}

\noindent If  $M \in {\mathcal C}$ and $M$ has finite genus, then $M \in {\mathcal P}$.
\end{conjecture}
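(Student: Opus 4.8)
The statement is Conjecture~\ref{conjFTPC}, so what follows is a strategy rather than a complete argument. The plan is to argue by contradiction: suppose $M\in\mathcal{C}$ has finite genus but is not properly embedded, and derive a contradiction. The natural reduction is Theorem~\ref{thmCYMPR}, which asserts that a finite-genus complete minimal surface is proper if and only if it has a countable space of ends ${\cal E}(M)$. Thus it suffices to prove that ${\cal E}(M)$ is countable; equivalently, one must rule out an uncountable (hence perfect, Cantor-type) set of ends. Note that the finite-\emph{topology} case is already settled by Theorem~\ref{thmCM}, so the entire difficulty lies in the presence of infinitely many ends.

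First I would show that non-properness forces $M$ to have both zero injectivity radius and unbounded second fundamental form. If the injectivity radius of $M$ were positive, Corollary~\ref{corolMLCTR3} would give properness immediately. If instead $|A_M|$ were bounded, then by the bounded-curvature version of the minimal lamination closure theorem of Meeks and Rosenberg~\cite{mr8} (compare Theorem~\ref{thmmlct}) the closure $\overline{M}$ would carry a minimal lamination structure; since $M$ is not closed in $\rth$, the limit set $\lim(M)=\overline{M}\setminus M$ would contain a leaf $L$, necessarily a limit leaf. By the Stable Limit Leaf Theorem~\ref{thmstable}, $L$ is stable, hence a plane by Theorem~\ref{thmstablecompleteplane}; but then $M$ would have bounded curvature in a neighborhood of the plane $L$, contradicting Lemma~1.3 in~\cite{mr8}. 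So a non-proper finite-genus $M$ satisfies $\inf I_M=0$ and $\sup|A_M|=\infty$.

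Next I would apply the Local Picture Theorem on the Scale of Topology~\ref{tthm3introd} at points $p_n$ of almost-minimal injectivity radius, producing the trichotomy of items~4, 5 and 6. Here the finite-genus hypothesis does essential work: item~6 cannot occur, because its conclusion~(6.3) asserts that the rescaled surfaces $\lambda_n M_n$ have unbounded genus on fixed-size intrinsic balls, whereas the genus of any such blow-up limit is dominated by the (finite) genus of $M$. Hence only item~4 (a proper limit $M_\infty$ of positive injectivity radius and finite genus) or item~5 (a two-column parking-garage structure, the two columns oppositely handed by~(5.3.1) under the genus bound) survives. In either case the blow-up produces a precise local model for how the sheets of $M$ wind near the points of collapsing injectivity radius.

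The main obstacle is converting these local models into the global conclusion that ${\cal E}(M)$ is countable. In the proper setting, Theorem~\ref{classthm2} shows that a finite-genus surface of infinite topology has exactly two limit ends, but its proof presupposes properness (and, for genus zero, the Shiffman function and integrable-systems machinery behind Theorem~\ref{classthm}), so it cannot be invoked here without circularity. The heart of the matter is that the regime we are forced into --- infinitely many ends, unbounded curvature, and no a priori lamination closure --- is exactly the one in which none of the available structural theorems apply directly; one must instead promote the scale-of-topology picture to a uniform description of \emph{all} the ends, showing that the limit ends accumulate only finitely often and are therefore countable. Establishing this uniform control, equivalently ruling out a Cantor set of limit ends under the sole hypothesis of finite genus, is precisely the gap that keeps the statement a conjecture, and it is where I would expect genuinely new ideas to be required.
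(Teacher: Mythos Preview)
The paper does not contain a proof of this statement: it is explicitly presented as an open conjecture (Conjecture~\ref{conjFTPC}), and you correctly identify it as such. Your proposal is honest about being a strategy rather than a proof, and your final paragraph accurately names the genuine obstruction---ruling out an uncountable set of ends---as the step where new ideas are needed.

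Your reduction is exactly the one the paper implicitly endorses: Theorem~\ref{thmCYMPR} (from~\cite{mpr9}) shows that finite genus plus countably many ends implies properness, so the conjecture is equivalent to proving that ${\cal E}(M)$ is countable. Your preliminary observations (positive injectivity radius or bounded curvature each already force properness) and your invocation of the Local Picture Theorem on the Scale of Topology, with item~6 excluded by the finite-genus hypothesis via~(6.3), are all sound and consistent with the machinery developed in the paper.

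One point of comparison: the paper suggests a different route, namely that Conjecture~\ref{conjFTPC} would follow from the more general structure Conjecture~\ref{conjlocfing} (the Finite Genus Conjecture in 3-manifolds). That conjecture posits a lamination dichotomy for $\overline{M}$ which, if established, would bypass the end-counting argument you outline. Neither approach is currently complete, and both founder at essentially the same place: controlling the global structure of $M$ when the local pictures do not assemble into a lamination closure. Your identification of the gap is accurate.
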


In~\cite{mpr9}, Meeks, P\'erez and Ros proved Conjecture~\ref{conjFTPC}
under the additional hypothesis that $M$ has a countable number of ends
(this assumption is necessary for $M$ to be proper
in $\R^3$ by work in~\cite{ckmr1}).

 Conjecture~\ref{conjFTPC}
can be shown to follow from the next beautiful structure conjecture,
which we include here in spite of the fact that it is stated when the ambient space is a general
Riemannian manifold.

\begin{conjecture}\label{conjlocfing} {\bf (Finite Genus Conjecture in 3-manifolds,
 Meeks-P\'erez-Ros)}

\noindent Suppose $M$ is a connected, complete, embedded minimal surface with empty boundary
and finite genus in a Riemannian 3-manifold $N$. Let $\overline{M}=M\cup {\mbox{\rm lim}(M)}$,
where $\mbox{\rm lim}(M)$ is the set of limit points\footnote{See the paragraph just before Theorem~\ref{thmmlct}
for the definition of $\mbox{\rm lim}(M)$.}
of $M$. Then, one of the following possibilities holds.
\begin{enumerate}[\rm 1.]
\item $\overline{M}$ has the structure of a minimal lamination of $N$.
\item $\overline{M}$ fails to have a minimal lamination  structure,
$\mbox{\rm lim}(M)$ is a non-empty minimal lamination of $N$ consisting of stable leaves
and $M$ is properly embedded in $N-\mbox{\rm lim}(M)$.
\end{enumerate}
\end{conjecture}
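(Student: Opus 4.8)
The plan is to prove the dichotomy locally and to organize the entire argument around the injectivity radius function $I_M$ of $M$. First I would dispose of the easy alternative: if $M$ has positive injectivity radius, then the Minimal Lamination Closure Theorem~\ref{thmmlct} applies directly and gives that $\overline{M}$ has the structure of a minimal lamination of $N$, which is alternative~1 (the sub-cases 2.a--2.c of Theorem~\ref{thmmlct} all produce a lamination structure on $\overline{M}$). Thus the entire difficulty lies in the case where $I_M$ has infimum zero, i.e. there is an intrinsically divergent sequence $p_n\in M$ along which $I_M(p_n)\to 0$. Such sequences can only accumulate in $N$ at points of $\lim(M)$, so I would let $S\subset \lim(M)$ denote the set of these accumulation points and aim to prove that alternative~2 holds whenever $S\neq\emptyset$.

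The first substantive step is to blow $M$ up on the scale of topology around the points of $S$, as in the Local Picture Theorem~\ref{tthm3introd}, choosing the $p_n$ to be points of almost-minimal injectivity radius. Here the finite-genus hypothesis does the decisive work: each rescaled piece $\lambda_n M_n$ is a subsurface of $M$, hence has genus at most the genus of $M$, so the unbounded-genus condition~(6.3) fails and the pathological case~6 of Theorem~\ref{tthm3introd} is excluded. Consequently every blow-up limit is modelled either by a properly embedded minimal surface of positive injectivity radius (case~4) or by a two-column, oppositely-handed parking-garage structure over a foliation of $\R^3$ by planes (case~5). In the parking-garage model the second fundamental form blows up only along the two helicoidal columns, while away from the columns the limit is a genuine flat minimal lamination; translated back to the original scale, this says that near each point of $S$ the sheets of $M$ accumulate onto a local minimal lamination carrying the accumulation, together with a codimension-two set along which $M$ spirals helicoidally.

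The next step is to globalize these local models into a statement about $\lim(M)$. By the Stable Limit Leaf Theorem~\ref{thmstable}, any leaf appearing in the accumulation and carrying a local lamination structure is stable, and stability together with the curvature estimates of Theorem~\ref{thmcurvestimstable} forces such leaves to have uniformly bounded geometry; in particular they satisfy the bound $|A|\, d_N<c$ demanded by the Local Removable Singularity Theorem~\ref{tt2}. I would then assemble the stable accumulation leaves into a non-empty minimal lamination $\lim(M)$, using Theorem~\ref{tt2} and its countable-set strengthening in Corollary~\ref{corrs} to extend the stable sublamination of $\overline{M}$ defined on $N-S$ across $S$. Properness of $M$ in $N-\lim(M)$ should then follow as in the deduction of Corollary~\ref{corolMLCTR3} from Theorem~\ref{thmmlct}: on $N-\lim(M)$ the surface has no limit points and locally bounded curvature, hence is a properly embedded leaf of a lamination there. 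Finally, since $S\neq\emptyset$ the helicoidal spiralling prevents $\overline{M}$ from being a lamination, which is exactly the remaining assertion of alternative~2.

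I expect the main obstacle to be precisely the globalization step, and within it the extension of the stable sublamination across $S$. The available removable-singularity tools (Theorem~\ref{tt2}, Corollary~\ref{corrs}) apply to isolated or countable singular sets, whereas the blow-up models suggest $S$ may be one-dimensional, so this step is genuinely entangled with the still-open Fundamental Singularity Conjecture~\ref{FSC}. Compounding this is a chicken-and-egg difficulty: the Stable Limit Leaf Theorem needs a weak lamination structure in order to identify limit leaves, while the removable-singularity machinery needs the stability-derived curvature estimate in order to extend that structure across $S$. Breaking this circularity---using finite genus not merely to exclude case~6 but, through a concentration-of-genus argument, to confine the handles of $M$ to finitely many points of each compact region of $\lim(M)$ and thereby keep $S$ small enough for Theorem~\ref{tt2} to apply---is the crux of the problem and the reason the conjecture remains open.
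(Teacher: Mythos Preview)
The statement you are addressing is a \emph{conjecture} in the paper, not a theorem; it appears in Section~\ref{sec:conj} among the outstanding open problems, and the paper offers no proof. So there is no ``paper's own proof'' to compare your proposal against.

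Your write-up is not really a proof either, and you say as much: the final paragraph correctly identifies the extension of the stable sublamination across the singular set $S$ as the genuine obstruction, and you explicitly note that this step is entangled with the open Fundamental Singularity Conjecture~\ref{FSC}. That diagnosis is accurate. The strategy you outline---reduce to the zero-injectivity-radius case, blow up on the scale of topology via Theorem~\ref{tthm3introd}, use finite genus to rule out case~6, invoke the Stable Limit Leaf Theorem and removable-singularity results to assemble $\lim(M)$---is a reasonable skeleton and reflects the circle of ideas the paper develops. But as you yourself observe, the step where you ``assemble the stable accumulation leaves into a non-empty minimal lamination $\lim(M)$'' using Theorem~\ref{tt2} and Corollary~\ref{corrs} does not go through with current tools: those results require an isolated or closed countable singular set, whereas the parking-garage blow-up models produce a one-dimensional $S$, and nothing in the argument forces $S$ to be small. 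The ``concentration-of-genus'' idea you float at the end is suggestive but not a proof.

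In short: your outline is a fair summary of why the conjecture is plausible and where it gets stuck, but it is not a proof, and the paper does not claim one.
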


The next conjecture is a more ambitious version of the previously stated Conjecture~\ref{CY1}.

\begin{conjecture}\label{CY} {\bf (Embedded Calabi-Yau Conjectures, \ Mart\'\i n, Meeks,
Nadirashvili, P\'erez, Ros)} \vspace{-.2cm}
\begin{enumerate}[1.]
\item There exists an $M \in {\mathcal C}$ contained in a bounded domain in
$\rth$.
In particular, ${\mathcal P}\neq {\mathcal C}$.
\item There exists an $M \in {\mathcal C}$ whose closure $\overline{M}$
has the structure of a minimal lamination of a slab, with $M$ as a leaf and with two planes as
limit leaves.
\item A necessary and sufficient condition for a
connected, open topological surface $M$ to admit a
complete bounded minimal embedding in $\R^3$ is that every end of
$M$ has infinite genus.
\item A necessary and sufficient condition for a
connected, open topological surface $M$ to admit a proper minimal
embedding in every smooth bounded domain ${\mathcal D}\subset \R^3$ as a complete
surface is that $M$ is orientable and every end of $M$
has infinite genus.
\item A necessary and sufficient condition for a
connected, non-orientable open topological surface $M$ to admit a
proper minimal embedding in some bounded domain
${\mathcal D}\subset \R^3$ as a complete surface is that every end of
$M$ has infinite genus.
\end{enumerate}
\end{conjecture}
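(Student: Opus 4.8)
The plan is to split Conjecture~\ref{CY} into its \emph{necessity} and \emph{sufficiency} halves, since these require completely different machinery. For the necessity directions in items~3--5 (a complete embedded example that is bounded, or proper in a bounded domain, forces \emph{every} end to have infinite genus), I would reduce everything to the Finite Genus Properness Conjecture~\ref{conjFTPC} and its localized refinement, the Finite Genus Conjecture in 3-manifolds (Conjecture~\ref{conjlocfing}). The guiding principle is that a complete embedded minimal surface which is proper in $\R^3$ must exit every compact set and hence cannot lie in a bounded region; so if a single end $E$ of $M$ had finite genus, a local version of the lamination-closure analysis behind Theorem~\ref{thmmlct} and Conjecture~\ref{conjlocfing} would exhibit $E$ as properly embedded in the complement of its limit set, whose leaves are stable and hence planar by Theorem~\ref{thmstablecompleteplane}, contradicting boundedness. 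First I would therefore establish the end-local statement: an annular or finite-genus end of a complete embedded minimal surface contained in a bounded domain cannot exist. This is where the one-sided curvature estimate (Theorem~\ref{thmcurvestimCM}) and the chord-arc machinery enter, exactly as in the proof of Theorem~\ref{thmCYMPR}, which already settles the finite-genus, countably-many-ends case.

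The sufficiency directions, together with items~1 and~2, are the genuinely constructive core. Here the strategy is to upgrade the immersed bounded or proper examples produced by Nadirashvili-type methods (Theorem~\ref{calabi} of Ferrer, Mart\'\i n and Meeks) to \emph{embedded} ones by spending the infinite genus. Concretely I would build the surface as an infinite connected sum, adding a handle at each stage in a region of $\R^3$ where two almost-flat sheets come dangerously close, so that each handle both separates the sheets, restoring embeddedness, and can be made arbitrarily small, preserving completeness and keeping the surface inside the prescribed bounded domain. The flux and the genus would serve as the free parameters that simultaneously kill the period problem and force embeddedness, in the spirit of the gluing constructions of Traizet and of Morabito--Traizet that already realize infinite genus with one or two limit ends. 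For item~2 one would additionally arrange the handles to accumulate on two parallel planes, so that the closure is a minimal lamination of a slab with the two planes as its stable limit leaves, precisely the second alternative of Conjecture~\ref{conjlocfing}.

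The hard part will be making the handle-addition \emph{converge}: each inserted handle perturbs the Weierstrass data globally, and one must show that infinitely many such perturbations can be carried out with geometrically decaying size while keeping the surface complete, embedded, and confined to the fixed bounded domain. Controlling embeddedness in the limit is the crux. The one-sided curvature estimate (Theorem~\ref{thmcurvestimCM}) and the limit-lamination theorems (Theorems~\ref{thmlimitlaminCM} and~\ref{t:t5.1CM}) supply the tools needed to rule out unwanted tangential accumulation of sheets, but converting these a-priori estimates into a convergent recursive construction, rather than merely into an obstruction ruling out bad limits, is exactly the step that has resisted previous attempts.

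For this reason I expect the natural order of attack to be: first settle items~1 and~3, i.e.\ the existence of \emph{some} bounded, complete, embedded minimal surface of infinite genus, since these ask only for one example and can absorb a great deal of geometric slack in the construction; and only afterwards the sharp necessary-and-sufficient characterizations in items~4 and~5, where the orientability hypothesis and the ``every end'' quantifier demand the full end-by-end localization of the necessity argument described in the first paragraph. In short, the necessity halves are essentially a consequence of the structural conjectures~\ref{conjFTPC} and~\ref{conjlocfing} on finite-genus surfaces, while the sufficiency halves hinge on a single hard analytic advance: a recursive Nadirashvili--Traizet construction whose embeddedness survives passage to the infinite-genus limit.
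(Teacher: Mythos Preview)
The statement you are attempting to prove is a \emph{conjecture}, not a theorem: the paper states Conjecture~\ref{CY} as an open problem and provides no proof whatsoever. There is therefore nothing in the paper to compare your proposal against. What you have written is not a proof but a research program, and indeed you yourself acknowledge this when you write that the embeddedness-in-the-limit step ``is exactly the step that has resisted previous attempts.''

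More specifically, your necessity argument for items~3--5 is explicitly conditional on Conjectures~\ref{conjFTPC} and~\ref{conjlocfing}, both of which are themselves open conjectures in the paper. Reducing one conjecture to another is a legitimate contribution, but it is not a proof. Your sufficiency argument sketches a plausible gluing strategy (Nadirashvili--Traizet-type handle addition) but, as you correctly identify, the crux---controlling embeddedness through an infinite recursive construction---is precisely the unsolved analytic obstacle. Items~1 and~2 in particular assert the \emph{existence} of bounded complete embedded minimal surfaces, and no such example is currently known; producing one would be a major result, not a routine construction. In short: your outline is a reasonable summary of how experts think about attacking this problem, but it should be labeled as a strategy or heuristic discussion, not as a proof.
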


We now discuss two conjectures related
to the underlying conformal structure of a minimal surface.
\begin{conjecture} [Liouville Conjecture, \ Meeks-Sullivan]
\label{conjLiouv}
If $M \in {\mathcal P}$ and $h\colon M \rightarrow{\mathbb{R}}$
is a positive harmonic function, then $h$ is constant.
\end{conjecture}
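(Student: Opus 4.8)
The plan is to deduce the conjecture from the stronger assertion that every $M\in\mathcal{P}$ is \emph{recurrent} (equivalently, parabolic as a Riemannian manifold without boundary). Once recurrence is known, the statement is immediate: on a recurrent Riemannian manifold every positive superharmonic function---and in particular every positive harmonic function $h$---is constant, by classical potential theory. Thus the entire problem reduces to a conformal-type (parabolicity) question for properly embedded minimal surfaces, and the harmonicity of $h$ plays no further role beyond this reduction.

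To attack recurrence I would first dispose of the finite-topology case, where the structure is completely understood. If $M$ has finite topology and at least two ends, then by Collin's theorem~\cite{col1} it has finite total curvature, so each end is conformally a punctured disk asymptotic to a plane or a catenoid; if $M$ has one end it is asymptotic to a helicoid by Theorem~\ref{ttmr}. In every case the ends are conformally parabolic, and a surface obtained by attaching finitely many parabolic ends to a compact core is parabolic; hence $M$ is recurrent and $h$ is constant. This already recovers the conjecture for all $M$ of finite topology.

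The substantive step is the infinite-topology case. Here $M$ has at most two limit ends, and by Theorem~\ref{classthm2} a proper finite-genus example has two limit ends with all middle ends planar, each limit end being exponentially asymptotic to a translated Riemann minimal example. I would exploit this asymptotic control together with the universal superharmonic functions of Collin, Kusner, Meeks and Rosenberg~\cite{ckmr1}---for instance the restriction to $M$ of $f(x_1,x_2,x_3)=-x_3^2+\ln\sqrt{x_1^2+x_2^2}$---to build a proper, positive superharmonic exhaustion function on each end, and then patch these against a superharmonic function on the compact core. The existence of a proper positive superharmonic function on $M$ forces parabolicity, hence recurrence, hence constancy of $h$.

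The main obstacle is the case of \emph{infinite genus}, where neither the conformal structure of the limit ends nor the asymptotic geometry is controlled by any current classification, so the explicit asymptotic patching above is unavailable. Constructing a global proper positive superharmonic function---or directly establishing recurrence---without such control appears to require genuinely new input, for example a replacement for the finite-genus asymptotics of Theorem~\ref{classthm2} valid in the infinite-genus setting, or a curvature and area estimate strong enough to feed into a probabilistic recurrence criterion. This is precisely the gap that keeps the Liouville Conjecture open, and I expect it to be the decisive difficulty rather than any of the potential-theoretic reductions.
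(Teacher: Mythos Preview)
This statement is an open \emph{conjecture} in the paper, not a theorem; the paper offers no proof, only contextual remarks. So the relevant question is whether your proposed strategy could in principle succeed, and here there is a genuine obstruction that the paper itself flags.

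Your plan is to prove that every $M\in\mathcal{P}$ is recurrent, and then invoke the standard fact that recurrent manifolds carry no non-constant positive harmonic functions. But recurrence is strictly stronger than the Liouville property, and the paper gives explicit examples in $\mathcal{P}$ that are transient yet satisfy the Liouville conclusion: doubly and triply-periodic minimal surfaces with finite-topology quotient (which have one end and infinite genus by Callahan--Hoffman--Meeks) are known by work of Meeks, P\'erez and Ros~\cite{mpr13} to have the Liouville property while being transient for Brownian motion. Hence the assertion ``every $M\in\mathcal{P}$ is recurrent'' is simply false, and your reduction cannot close the infinite-genus, one-ended case---not because of missing technical input, but because the target statement fails there.

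The paper's own discussion reflects this: it formulates the weaker Multiple-End Recurrency Conjecture~\ref{conjmultipleendrec} (recurrence only for $M\in\mathcal{M}$, i.e., with more than one end), and observes that even granting that conjecture, the Liouville Conjecture would still remain open precisely for the one-ended infinite-genus surfaces, where a method not based on recurrence is required. Your finite-topology and finite-genus paragraphs are fine as partial results, but you should revise the overall strategy to separate the recurrence route (valid at best for $\mathcal{M}$) from whatever new idea is needed in the transient one-ended case.
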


The above conjecture is closely related to work in~\cite{ckmr1,mpr13,mr7}.  We also remark
that Neel~\cite{nee1} proved that if a surface $M\in {\mathcal P}$
has bounded Gaussian curvature, then $M$ does not admit non-constant bounded
harmonic functions. A related conjecture is the following one:

\begin{conjecture} {\bf (Multiple-End Recurrency Conjecture,  Meeks)}
\label{conjmultipleendrec}
If $M \in {\mathcal M}$, then $M$ is recurrent for Brownian motion.
\end{conjecture}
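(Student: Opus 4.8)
The plan is to prove the conjecture by establishing that every $M\in\mathcal{M}$ is \emph{parabolic}, which for a complete surface without boundary is equivalent to recurrence of Brownian motion (equivalently, $M$ carries no positive nonconstant superharmonic function and compact sets have zero capacity). Two standard tools drive the reduction: by Grigor'yan's criterion, recalled after Definition~\ref{defparabolic} (see \cite{gri1}), a Riemannian surface with boundary is parabolic as soon as it admits a \emph{proper positive superharmonic function}; and parabolicity of a surface without boundary follows once it is covered by finitely many parabolic subsurfaces-with-boundary meeting along compact arcs. I would therefore slice $M$ along a finite family of compact cycles into a compact core together with representatives of its ends, and try to prove each end representative parabolic.

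First I would dispose of the finite genus case, where the statement is already essentially known, so as to isolate the genuine difficulty. If $M$ has finite topology, then since it has more than one end it has finite total curvature by Theorem~\ref{thmCM2}; hence $M$ is conformally a compact Riemann surface with finitely many punctures and is parabolic. If $M$ has finite genus but infinite topology, the work of Meeks, P\'erez and Ros in \cite{mpr3,mpr4} shows that $M$ has bounded curvature and is recurrent. Together these settle the entire finite genus case, so the real content of the conjecture lies in the \emph{infinite genus} case.

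For infinite genus I would use the structural fact, due to Collin, Kusner, Meeks and Rosenberg \cite{ckmr1} (invoked in the discussion preceding Conjecture~\ref{conj:meekstop}), that every $M\in\mathcal{M}$ has at most two limit ends. Ordering the ends by height (Frohman--Meeks), one expects each simple (isolated) middle end to admit a representative trapped in a horizontal slab $\{a\le x_3\le b\}$, and the extremal top and bottom simple ends to admit representatives in half-spaces; on each such piece the universal superharmonic function $f=-x_3^2+\log\sqrt{x_1^2+x_2^2}$ of \cite{ckmr1} restricts to a proper positive superharmonic function, yielding parabolicity exactly as in the proof sketch of Theorem~\ref{Meeks-flux} (where the half-space case is stated, and the slab case follows since a slab lies in a half-space). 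After this, parabolicity of $M$ would reduce to parabolicity of the at most two \emph{limit end} representatives: each is a single connected subsurface with compact boundary that already contains infinitely many simple ends, glued to the parabolic core along compact curves.

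The hard part is precisely the parabolicity of a limit end representative of infinite genus. In the finite genus case one controls the asymptotic geometry—the limit end has quadratic area growth and bounded curvature, so a Grigor'yan--Karp--Varopoulos volume test or the universal superharmonic function applies—but none of this is available without a genus bound, since the curvature estimates and lamination structure of Sections~\ref{sec4}--\ref{ttttmr} rely on genus bounds, and Traizet's example \cite{tra8} exhibits an infinite-genus limit end with unbounded Gaussian curvature. The crux is thus to produce, for an arbitrary infinite-genus limit end $E$ with compact boundary, either a proper positive superharmonic function on $E$ or an a priori quadratic area bound $\mathrm{Area}(E\cap\B(R))\le C R^2$; I expect the latter, which is the embedded Calabi--Yau area estimate of Conjecture~\ref{CYconj1} restricted to limit ends, to be the genuine obstacle, as it would simultaneously force properness and recurrence. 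A plausible intermediate target is to first prove the conjecture under the extra hypothesis that the limit ends have linear injectivity radius growth in the sense of Conjecture~\ref{conjinjradius}, and then to attempt to remove this hypothesis by analyzing blow-up limits via the Local Picture Theorem on the Scale of Topology (Theorem~\ref{tthm3introd}).
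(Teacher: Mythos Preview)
The statement you are attempting to prove is presented in the paper as a \emph{conjecture}, not a theorem; the paper offers no proof and explicitly treats it as open (see the discussion immediately following Conjecture~\ref{conjmultipleendrec}, where the authors write ``Assuming that one can prove the last conjecture\ldots''). So there is no ``paper's own proof'' to compare against.

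Your proposal is accordingly not a proof but a strategy outline, and you are candid about this. The finite genus reduction is correct and matches what is known: finite topology is handled by Theorem~\ref{thmCM2} and Huber/Osserman, while finite genus with infinite topology is exactly the result of \cite{mpr3,mpr4} that the paper cites before Theorem~\ref{t:t5.1CM}. Your decomposition for the infinite genus case is also sound: the \cite{ckmr1} structure theory gives at most two limit ends, middle ends sit in slabs and are therefore parabolic by the universal superharmonic function argument (as in the discussion of Theorem~\ref{Meeks-flux}), and parabolicity of $M$ then reduces to parabolicity of the at most two limit end representatives.

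The genuine gap, which you yourself identify, is that for an \emph{infinite genus} limit end representative $E$ there is no known proper positive superharmonic function and no known quadratic area bound. Your proposed route through an area estimate $\mathrm{Area}(E\cap\B(R))\le CR^2$ is essentially a special case of Conjecture~\ref{CYconj1}, itself open; and your fallback hypothesis of linear injectivity radius growth is Conjecture~\ref{conjinjradius}, also open. The Local Picture Theorem on the Scale of Topology (Theorem~\ref{tthm3introd}) gives local information about blow-up limits but does not by itself control the global area or capacity of $E$, so invoking it here is suggestive rather than decisive. In short: your outline correctly isolates the obstruction and is consistent with the paper's discussion, but it does not close the gap, and no one currently knows how to.
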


Assuming that one can prove the last conjecture,
the proof of the Liouville Conjecture would reduce to the case where
$M\in {\mathcal P}$ has
infinite genus and one end. Note that in this setting, a surface
could satisfy Conjecture~\ref{conjLiouv} while at the same time
being transient. For example, by work of Meeks, P\'erez and Ros~\cite{mpr13}
every doubly or triply-periodic minimal
surface with finite topology quotient satisfies the Liouville Conjecture,
and these minimal surfaces are never recurrent.
On the other hand, every doubly or triply-periodic minimal
surface has exactly one end (Callahan, Hoffman and Meeks~\cite{chm3}), which implies that
the assumption in Conjecture~\ref{conjmultipleendrec} that $M \in {\mathcal M}$,
not merely $M \in {\mathcal P}$, is a
necessary one. It should be also noted that the previous two conjectures
need the hypothesis of global embeddedness, since there exist properly
immersed minimal surfaces with two embedded ends and which admit bounded
non-constant harmonic functions~\cite{ckmr1}.

\begin{conjecture}  {\bf (Scherk Uniqueness Conjecture,  Meeks-Wolf)}
\label{conjScherk}
 If $M$ is a connected, properly immersed minimal surface in $\rth$ and
$\mbox{\rm Area}(M \cap \B(R)) \leq 2\pi R^2$ holds in extrinsic balls $\B(R)$ of radius $R$,
then $M$ is a
plane, a catenoid or one of the singly-periodic Scherk minimal surfaces.
\end{conjecture}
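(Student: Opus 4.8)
The plan is to analyze $M$ through its tangent cone at infinity, exploiting that $2\pi R^2$ is exactly the density threshold realized by all three model families. First I would apply the monotonicity formula: for each $p\in M$ the ratio $R\mapsto \text{Area}(M\cap \B(p,R))/(\pi R^2)$ is nondecreasing, so the center-independent density at infinity $\Theta_\infty=\lim_{R\to\infty}\text{Area}(M\cap \B(R))/(\pi R^2)$ exists and the hypothesis forces $1\le \Theta_\infty\le 2$. The uniform area bound then lets the blow-downs $\frac1t M$ converge, after passing to a subsequence and as stationary integral varifolds, to a minimal cone $C$ of density $\Theta_\infty$. In $\rth$ such cones are cones over geodesic networks in $\esf^2$, and with density at most $2$ the only possibilities are a single plane with multiplicity one ($\Theta_\infty=1$), a plane with multiplicity two or a pair of distinct planes meeting along a line (both $\Theta_\infty=2$), and a triple-junction cone of density $\tfrac32$. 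The first task is to match the admissible plane-cones with the model surfaces and to exclude the triple-junction cone.

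The case $\Theta_\infty=1$ is immediate from the equality discussion in the monotonicity formula: since the ratio is nondecreasing, starts at $1$ at small scales about any $p\in M$, and tends to $1$, it is identically $1$; hence $M$ is a minimal cone and therefore a plane. When the tangent cone is a plane of multiplicity two, the density bound propagates to quadratic decay of the Gaussian curvature, $|K_M|\,\|q\|^2\le c$, so by the Quadratic Curvature Decay Theorem~\ref{thm1introd} the surface has finite total curvature; it then has exactly two ends, each a multiplicity-one graphical end asymptotic to the common plane, and Schoen's characterization (Theorem~\ref{schth2}) together with the L\'opez--Ros Theorem~\ref{thmlr} identifies $M$ as a catenoid.

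The decisive case is the pair of transverse planes $\Pi_1,\Pi_2$, which must be matched with the singly-periodic Scherk family. My strategy is to first prove that $M$ is invariant under a translation and thereby reduce to the theorem of Meeks and Wolf in the presence of an infinite symmetry group. To produce the period I would study $M$ near the axis $\ell=\Pi_1\cap\Pi_2$: away from $\ell$ the surface splits into nearly-flat graphical sheets over the four half-planes, while along $\ell$ the Colding--Minicozzi lamination structure (Theorems~\ref{thmlimitlaminCM} and~\ref{t:t5.1CM}) together with the flux invariants of Section~\ref{sec:flux} govern how these sheets are glued. The goal is to show that the angle $\theta$ between the planes and the flux across a linking curve determine a periodic gluing, so that a translational period $T$ can be extracted; passing to $M/T$ yields a finite-total-curvature surface whose Gauss map omits the four directions $\pm n_1,\pm n_2$ dual to $\{\Pi_1,\Pi_2\}$, which by the symmetric Meeks--Wolf theorem is a Scherk quotient.

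The main obstacle I expect is exactly this extraction of periodicity without assuming symmetry a priori: monotonicity and blow-down control $M$ only at the coarse scale of its tangent cone and do not by themselves produce a translation. Establishing that the asymptotically-planar sheets are glued in a translationally-periodic pattern along $\ell$ appears to require a new asymptotic rigidity near the axis, most plausibly through a Shiffman-type Jacobi function in the spirit of the proof of Theorem~\ref{classthm}, used to deform $M$ toward and then detect the periodic model. A secondary obstacle, peculiar to the immersed setting, is excluding the density-$\tfrac32$ triple-junction tangent cone, which is permitted by the area bound; ruling it out should follow from a sheeting analysis at the junction, but for merely immersed $M$ this requires care.
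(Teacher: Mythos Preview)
This statement is a \emph{conjecture}, not a theorem: the paper lists it as an open problem in Section~\ref{sec:conj} and gives no proof. There is therefore no ``paper's own proof'' to compare against. What the paper does offer, in the discussion around Conjectures~\ref{conjScherk} and~\ref{conjlimittgtcone}, is a conjectural roadmap that differs from yours: first prove the Unique Limit Tangent Cone at Infinity Conjecture~\ref{conjlimittgtcone} (itself open); from uniqueness of the tangent cone, unpublished work of Meeks and Ros would produce two Alexandrov-type planes of reflectional symmetry for $M$; with those symmetries the Weierstrass data becomes tractable, and Meeks and Wolf claim (unpublished) that this suffices. Your route instead tries to extract a translational period directly from the sheeting structure near the axis $\ell$ and then invoke the periodic Meeks--Wolf theorem. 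Both programs begin with the tangent cone, but the paper trades your sought-for period for reflectional symmetry as the key intermediate structure.

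Your proposal is not a proof, and two gaps beyond those you acknowledge deserve mention. First, in the multiplicity-two plane case you assert that ``the density bound propagates to quadratic decay of the Gaussian curvature,'' but no mechanism is given: the area bound $\mathrm{Area}(M\cap\B(R))\le 2\pi R^2$ does not by itself yield $|K_M|\,\|q\|^2\le c$, so Theorem~\ref{thm1introd} is not available without further argument. Second, and more fundamentally, you work with subsequential blow-down limits, whereas the case split you perform requires \emph{uniqueness} of the tangent cone at infinity---exactly the content of the open Conjecture~\ref{conjlimittgtcone}. Without uniqueness, different sequences $t_n\to\infty$ could give different cones and your trichotomy is not well-posed. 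One simplification you can use: the paper notes (just after Conjecture~\ref{conjScherk}) that monotonicity already forces any such $M$ to be embedded, so your ``immersed setting'' worry about the triple-junction cone is less severe than stated, though still not resolved.
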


By the Monotonicity Formula
(see e.g., \cite{cmCourant}),
any connected, properly immersed minimal surface in $\R^3$ with
\[
\lim_{R\to \infty }R^{-2}\mbox{Area}(M\cap \B (R))\leq 2\pi ,
\]
 is actually embedded.
A related conjecture on the uniqueness of the doubly-periodic Scherk
minimal surfaces was solved by Lazard-Holly and Meeks~\cite{lm2}; they proved that if
$M \in {\mathcal P}$ is doubly-periodic and its quotient surface has genus zero, then
$M$ is one of the doubly-periodic Scherk minimal surfaces.  The basic approach used in \cite{lm2}
was adapted later on by Meeks and Wolf~\cite{mrw1} to prove that Conjecture~\ref{conjScherk}
holds under the assumption that the surface is singly-periodic. We recall that Meeks and Wolf's proof
uses that the Unique Limit Tangent Cone Conjecture below holds
in their periodic setting;
this approach suggests that a good way to solve the general Conjecture~\ref{conjScherk} is
first to prove Conjecture~\ref{conjlimittgtcone} on the uniqueness of the limit tangent cone of $M$,
from which it follows (unpublished work of Meeks and Ros) that $M$ has
two Alexandrov-type planes of symmetry.  Once $M$ is known to have these planes of
symmetry, one can describe the Weierstrass representation of $M$,
which Meeks and Wolf (unpublished) claim would be sufficient to complete
the proof of the
conjecture.

\begin{conjecture}  {\bf (Unique Limit Tangent Cone at Infinity Conjecture, \ Meeks)}
\label{conjlimittgtcone}
 If $M\in {\mathcal P}$ is not a plane and has
extrinsic quadratic area growth, then
  $\lim_{t \rightarrow \infty}
  \frac{1}{t} M$ exists and is a minimal, possibly non-smooth cone over a finite
  balanced configuration
of geodesic arcs in the unit sphere, with common ends points and integer multiplicities.
Furthermore, if $M$ has  area not greater than $2\pi
  R^2$ in
extrinsic balls of radius $R$, then the limit tangent cone of $M$ is either the union of
two planes or consists of a single
  plane with multiplicity two passing through the origin.
 \end{conjecture}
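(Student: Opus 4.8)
The plan is to split the statement into subsequential existence of tangent cones at infinity, their cone/network structure, and the genuinely delicate uniqueness of the limit, and then to specialize to the density bound. First I would invoke the Monotonicity Formula for minimal surfaces (see \cite{cmCourant}): for $M\subset\rth$ minimal, the density ratio $R\mapsto \mbox{Area}(M\cap \B(R))/(\pi R^2)$ is monotone non-decreasing. Extrinsic quadratic area growth bounds this ratio, so the density at infinity $\Theta_\infty=\lim_{R\to\infty}\mbox{Area}(M\cap \B(R))/(\pi R^2)$ exists and is finite. The rescalings $M_t=\frac1t M$ are minimal and, by the area bound, have uniform local area bounds, so by Allard's integral varifold compactness theorem every sequence $t_n\to\infty$ has a subsequence along which $M_{t_n}$ converges as an integral varifold to a stationary integral varifold $C$ of density $\Theta_\infty$ at $\vec 0$. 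Since the density of $C$ is the constant $\Theta_\infty$ on every ball (being the monotone limit of that of $M$), the equality case of the Monotonicity Formula forces $C$ to be dilation invariant, i.e.\ a minimal cone.

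Next I would analyze the structure of such a limit $C$. A $2$-dimensional stationary integral cone in $\rth$ is determined by its link $C\cap\esf^2$, a stationary $1$-dimensional integral varifold in $\esf^2$, hence a finite union of great-circle arcs carrying positive integer multiplicities and meeting at finitely many vertices at which stationarity forces the weighted unit tangents of the incident arcs to sum to zero. This is exactly the asserted ``finite balanced configuration of geodesic arcs with common end points and integer multiplicities,'' and $C$ is the corresponding, possibly singular, union of planar sectors. Here I would use the embeddedness of $M$: the generic cross-sections $M\cap\{|x|=r\}$ are embedded $1$-manifolds, and their blow-down limits cannot develop genuine $Y$-type (three-valent) junctions; the vertices of the network can only be transverse crossings, so the support of the link is in fact a union of complete great circles.

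The hard part will be uniqueness, that is, showing $C$ is independent of the chosen sequence $t_n$. The natural strategy is the \L{}ojasiewicz--Simon scheme: writing $s=\log t$ and viewing the rescaled cross-sections of $M$ as graphs over the link of a limiting cone, one obtains a gradient-type evolution in $s$ for the energy functional whose critical points are the stationary geodesic networks, and a \L{}ojasiewicz inequality near $C$ would give integrable decay of the velocity and hence convergence to a single $C$. The obstruction, and the reason the conjecture is open, is that the link of $C$ is in general \emph{singular}: it has crossing points where great circles meet. Simon's classical theory applies when the link is smooth, which covers the multiplicity-two plane (whose link is a smooth great circle), but establishing a \L{}ojasiewicz inequality across the crossing singularities is the crux. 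A promising reduction is to first treat the case with extra symmetry or periodicity, where compactness of the relevant moduli space already yields uniqueness; this is precisely the mechanism behind the Meeks--Wolf resolution of Conjecture~\ref{conjScherk} in the periodic setting \cite{mrw1}.

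Finally, for the density-bounded statement I would use that $\mbox{Area}(M\cap\B(R))\le 2\pi R^2$ gives $\Theta_\infty\le 2$, equivalently that the total length of the link of $C$, counted with multiplicity, is at most $4\pi$. Because embeddedness has already reduced the link to a union of complete great circles (each of length $2\pi$ and density $1$) with transverse crossings, the length bound permits at most two great circles counted with multiplicity. This leaves exactly two configurations: two distinct great circles, whose cone is the union of two planes, or a single great circle of multiplicity two, whose cone is a plane with multiplicity two through the origin. It is worth stressing that even in this special case the phrase ``the limit tangent cone'' still requires the uniqueness conclusion of the previous paragraph to pin down the actual pair of planes (or the doubled plane); the doubled-plane subcase follows from Simon's smooth-link theorem, while the two-plane subcase again runs into the crossing-singularity obstruction that I expect to be the main difficulty of the whole problem.
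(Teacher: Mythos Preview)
The statement you are attempting to prove is labeled and presented in the paper as a \emph{conjecture} (the Unique Limit Tangent Cone at Infinity Conjecture, attributed to Meeks), not as a theorem. The paper provides no proof whatsoever; it merely states the conjecture and remarks that its validity, together with unpublished work of Meeks and Ros, would yield Alexandrov-type symmetry planes useful for attacking the Scherk Uniqueness Conjecture. So there is no ``paper's own proof'' to compare your proposal against.

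Your write-up is not a proof either, and to your credit you say so explicitly: you correctly isolate the uniqueness of the tangent cone as the genuine obstruction and point to the failure of the standard \L{}ojasiewicz--Simon machinery when the link has crossing singularities. The parts you do carry out --- monotonicity giving a finite density at infinity, Allard compactness producing subsequential stationary integral-varifold limits, dilation invariance from constancy of the density ratio, and the classification of stationary 1-varifolds in $\esf^2$ as balanced geodesic networks --- are all standard and correct, and they do establish the \emph{subsequential} version of the first sentence of the conjecture. Your reduction of the $\mathrm{Area}\le 2\pi R^2$ case to ``two great circles or one doubled great circle'' is also sound, modulo the same uniqueness issue you flag. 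But since the conjecture is open and the paper treats it as such, what you have produced is a reasonable research outline, not a proof, and there is nothing in the paper to check it against.
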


By unpublished work of Meeks and Wolf, the above conjecture is closely
tied to the validity of the next classical one.

\begin{conjecture}{\bf (Unique Limit Tangent Cone at Punctures Conjecture)}
\label{uniqueness}
Let $f\colon M\to \B -\{ \vec{0}\} $ be a proper immersion of a surface
with compact boundary in the punctured unit ball, such that
$f(\partial M)\subset \partial \B $ and whose mean curvature function
is bounded. Then, $f(M)$ has a {\em unique} limit tangent cone at the origin
under homothetic expansions.
\end{conjecture}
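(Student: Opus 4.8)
The plan is to follow the classical two-step paradigm for tangent cones: first extract subsequential tangent cones via a monotonicity formula together with Allard compactness, and then upgrade subsequential convergence to a \emph{unique} limit by means of a \L ojasiewicz--Simon inequality. Throughout I would work with $f(M)$ as a rectifiable varifold so that the argument is insensitive to the self-intersections allowed by the immersion hypothesis, and I would exploit that $\partial M$ is compact, hence disjoint from a punctured ball $\B(r_0)-\{\vec 0\}$, so that near the origin the surface is boundaryless.

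First I would establish a monotonicity formula adapted to the bounded mean curvature hypothesis $|H|\le \Lambda$. The first variation formula gives that $r\mapsto e^{\Lambda r}\,r^{-2}\,\mbox{Area}(f(M)\cap \B(r))$ is non-decreasing for $0<r<r_0$. Properness of $f$ in $\B-\{\vec 0\}$ makes the area in each annulus $\B(r_2)-\overline{\B(r_1)}$ finite, and summing the resulting quadratic bounds over dyadic annuli shows that $\mbox{Area}(f(M)\cap \B(r_0))<\infty$; hence the density $\Theta(0)=\lim_{r\to 0^+}\pi^{-1}r^{-2}\mbox{Area}(f(M)\cap\B(r))$ exists and is finite. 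Passing to the homothetic expansions $M_\lambda=\lambda\, f(M)$ with $\lambda\to\infty$, monotonicity yields uniform local area ratios, while the generalized mean curvature of $M_\lambda$ satisfies $|H_{M_\lambda}|\le \Lambda/\lambda\to 0$ so that $\|H_{M_\lambda}\|_{L^p}\to 0$ locally for $p>2$. Allard's compactness theorem then produces, along subsequences $\lambda_n\to\infty$, a stationary integral varifold $C$ in $\rth$, and constancy of the density ratio inherited from the monotonicity forces $C$ to be a minimal cone.

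The next step is to record which cones can occur. A two-dimensional stationary integral cone in $\rth$ has for its link $\Gamma=C\cap \esf^2$ a stationary geodesic $1$-varifold, a balanced configuration of great-circle arcs; for a boundaryless immersed surface this is a finite union of great circles with integer multiplicities, so $C$ is a finite union of planes through the origin crossing along lines, exactly the limiting object anticipated in Conjecture~\ref{conjlimittgtcone}. In the low-density regime $\Theta(0)<2$ the multiplicities force a single plane of multiplicity one, and Allard's regularity theorem applied to $M_\lambda$ shows that $f(M)$ is a smooth graph near the origin; the singularity is then removable (cf.\ Conjecture~\ref{conjisolsing} and the related Theorem~\ref{tt2}) and uniqueness is immediate. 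The substantive case is $\Theta(0)\ge 2$, where the cone genuinely consists of several planes meeting along lines. Since such a $k$-plane cone is determined by the $k$ poles of its great circles, the candidate cones form a finite-dimensional real-analytic parameter space, which is the setting in which uniqueness should be provable: fixing one subsequential cone $C$ with link $\Gamma$, I would pass to logarithmic-cylindrical coordinates $t=-\log r$, write the rescaled surface as a normal graph of $u(\cdot,t)$ over $\Gamma$ away from the crossing lines, observe that $t\mapsto u(\cdot,t)$ obeys a gradient-type system for the area functional $\mathcal{A}$ of the link, and invoke Simon's \L ojasiewicz inequality (here reducible to the classical finite-dimensional one on the analytic space of pole configurations) to bound the energy deficit $\mathcal{A}(u(\cdot,t))-\mathcal{A}(\Gamma)$ and make the radial velocity $\|\partial_t u\|$ integrable in $t$. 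Integrability rules out rotation or drift of the plane configuration as $r\to 0$ and forces $M_\lambda\to C$ for \emph{every} sequence $\lambda\to\infty$.

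The hard part will be the \L ojasiewicz--Simon step in the presence of the singular set of $C$. When $\Gamma$ is a disjoint union of great circles the cone has an isolated singularity and the argument is essentially Allard--Almgren, but as soon as two planes cross, $C$ carries a one-dimensional singular set along the crossing lines, so one must set up Fermi coordinates and prove the relevant gradient inequality for the area functional of crossing geodesics near their intersection points, where the correct function space and the integrability (or a suitable non-integrable \L ojasiewicz exponent) are delicate. A second difficulty, special to the hypotheses here, is that one must control the local multiplicity and the crossing structure of $\Gamma$ to bounded complexity before the machinery can even be started. Controlling the interaction near a crossing line of sheets whose mean-curvature vectors point in opposite directions --- the very phenomenon that obstructs the removable-singularity and embedded Calabi--Yau results elsewhere in this survey --- is what I expect to be the decisive obstacle.
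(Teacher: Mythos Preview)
The statement you have attempted is not a theorem in the paper but an \emph{open conjecture}: it appears in Section~\ref{sec:conj} (``Outstanding problems and conjectures'') precisely because no proof is known. There is therefore no ``paper's own proof'' to compare against, and what you have written is not a proof but a program.

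As a program it follows the natural paradigm --- monotonicity with bounded $H$, Allard compactness to extract subsequential minimal tangent cones, then a \L ojasiewicz--Simon argument for uniqueness --- and the first two steps are essentially routine. But the last step is exactly where the problem is open, and you correctly flag it yourself: once the subsequential cone is a union of at least two planes through the origin, its singular set is one-dimensional (the crossing lines), and Simon's uniqueness theorem for tangent cones applies only to cones with an isolated singularity. Establishing a \L ojasiewicz inequality for the link functional near a geodesic net with vertices, and controlling the surface in Fermi coordinates near the crossing lines, is not a technicality but the whole difficulty; no general result of this type is available. A secondary gap is the assertion that for an immersed surface the link must be a union of full great circles rather than a more general balanced geodesic network (e.g.\ triple-junction half-planes): varifold tangent cones need not inherit the local-disk structure of the immersion, so this requires justification.

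In short: your outline identifies the right obstacles, but resolving them would amount to settling the conjecture.
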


 A classical
result of Fujimoto~\cite{fu1} establishes that the
 Gauss map of any orientable, complete, non-flat,  immersed $0$-surface in $\rth$
cannot exclude more than 4 points, which improved the earlier result of Xavier~\cite{xa1} that the Gauss
map of such a surface cannot miss more than 6 points.
If one assumes that a surface $M\in {\mathcal C}$ is periodic
with finite topology quotient, then Meeks, P\'erez and Ros solved the  first item in the
next conjecture~\cite{mpr16}.
Also see Kawakami, Kobayashi and Miyaoka~\cite{KaKoMi1} for related results on this problem,
including some partial results on the conjecture of Osserman that states
that the Gauss map of an orientable, complete, non-flat, immersed $0$-surface
with finite total curvature in $\R^3$ cannot miss 3 points of $\esf^2$.

\begin{conjecture} [Four Point Conjecture, \ Meeks, P\'erez, Ros]
\label{conjFP}
\mbox{}\newline
Suppose $M \in {\mathcal C}$. If the Gauss map of $M$ omits 4 points on $\esf^2$, then $M$ is a
singly or doubly-periodic Scherk minimal surface.
\end{conjecture}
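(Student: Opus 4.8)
The plan is to reduce the conjecture to the already-established periodic case of Meeks, P\'erez and Ros~\cite{mpr16} by first extracting strong rigidity from the hypothesis that the Gauss map $g\colon M\to \C\cup\{\infty\}$ omits four points $a_1,a_2,a_3,a_4$, and then proving that $M$ must be invariant under a translation. The starting observation is that omitting four values is the \emph{extremal} case of Fujimoto's theorem~\cite{fu1}, so one expects no further ramification to be available. Concretely, the first step is to prove a sharpened defect relation: if a complete, non-flat $0$-surface has a Gauss map omitting four values, then $g$ is \emph{totally unramified}, i.e.\ $dg$ has no zeros, and hence $K<0$ everywhere. This places $M$ in the setting of the Negative Curvature Conjecture~\ref{NCC} and makes $g$ a local diffeomorphism onto the four-punctured sphere $S=(\C\cup\{\infty\})-\{a_1,a_2,a_3,a_4\}$.

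Given this, the second step is to analyze $g$ as a map into the \emph{hyperbolic} surface $S$. I would lift $g$ through the universal cover $\D\to S$ to obtain a holomorphic developing map on the universal cover $\wt{M}$ of $M$, and use the completeness of the induced metric $ds=\frac12(|g|+|g|^{-1})\,|dh|$ together with the conformality conditions of Theorem~\ref{thm3.1} to control the conformal type of $\wt{M}$ and the position of the omitted values. The goal here is to show that the four omitted values lie on a common round circle in $\C\cup\{\infty\}$ (equivalently, the four limiting normal directions are coplanar after a rotation), and that each end of $M$ is asymptotic to a half-plane whose normal is one of the $a_i$; i.e.\ the ends are genuine Scherk-type ends. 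A useful intermediate assertion is that these flat ends force the extrinsic area bound $\mathrm{Area}(M\cap \B(R))\leq 2\pi R^2$, so that $M$ falls under the hypotheses of the Scherk Uniqueness Conjecture~\ref{conjScherk}.

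The third and decisive step is to produce the symmetries. Following the strategy of Meeks and Wolf~\cite{mrw1} for the singly-periodic Scherk uniqueness result, I would first prove the Unique Limit Tangent Cone Conjecture~\ref{conjlimittgtcone} for $M$: the area bound from Step~2 together with the fixed set of four normal directions should force $\frac1t M$ to converge to the cone over a balanced geodesic configuration on $\esf^2$ determined by $a_1,\dots,a_4$. From the uniqueness of the limit tangent cone one obtains, by the Alexandrov-type reflection argument of Meeks and Ros, two planes of reflective symmetry of $M$. These symmetries pin down the Weierstrass data $(g,dh)$ up to the normalizations appearing in the definition of $\mathcal{S}_\theta$, which yields a translational period and reduces $M$ to a surface with infinite symmetry group whose quotient has finite topology; the classification in~\cite{mpr16} then identifies $M$ as a singly- or doubly-periodic Scherk surface.

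The main obstacle will be Step~3, and within it the passage from ``four omitted values'' to honest periodicity without any a priori assumption of finite topology, properness, or symmetry. Unlike the finite-total-curvature theory, Scherk surfaces have infinite total curvature and infinitely many ends, so the usual compactification and Weierstrass-on-a-compact-surface techniques are unavailable; all asymptotic control must be extracted directly from the value-distribution input. Establishing the unique limit tangent cone (Conjecture~\ref{conjlimittgtcone}) in this non-periodic generality is itself open and is the crux, being precisely the place where the four fixed normal directions must be shown to organize the surface globally rather than merely locally. I expect Steps~1 and~2 to be technically demanding but within reach of existing Nevanlinna-theoretic and Weierstrass-representation methods, whereas Step~3 requires genuinely new global arguments; I note moreover that once $K<0$ is established, the desired conclusion is at least as accessible as Conjecture~\ref{NCC}, since among the surfaces in its conjectured list only the Scherk examples omit four values.
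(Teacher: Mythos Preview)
The statement you are attempting to prove is presented in the paper as an \emph{open conjecture}, not as a theorem; the paper provides no proof, only the remark that Meeks, P\'erez and Ros~\cite{mpr16} settled it under the additional hypothesis that $M$ is periodic with finite-topology quotient. So there is no ``paper's own proof'' to compare against, and your proposal should be read as a research strategy rather than a proof.

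Viewed as a strategy, your reduction is honest but circular in the sense that it trades one open conjecture for others of comparable difficulty. Your Step~3 explicitly invokes the Unique Limit Tangent Cone Conjecture~\ref{conjlimittgtcone}, which is open in the generality you need, and you yourself identify this as ``the crux.'' You also note that once $K<0$ is obtained the conclusion is ``at least as accessible as Conjecture~\ref{NCC},'' but that conjecture is likewise open. The paper's own discussion around Conjectures~\ref{conjScherk} and~\ref{conjlimittgtcone} describes exactly the same web of implications (Meeks--Wolf reduce~\ref{conjScherk} to~\ref{conjlimittgtcone}, which yields Alexandrov planes of symmetry, which in turn pin down the Weierstrass data), so your outline is essentially a restatement of the known conditional landscape rather than a new route through it.

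There is also a gap earlier than Step~3. Your Step~2 asserts that the four omitted values must lie on a common circle and that the ends are Scherk-type half-planes giving the area bound $\mathrm{Area}(M\cap\B(R))\leq 2\pi R^2$. Neither assertion is justified: you have not established that $M$ is proper, that it has well-defined planar ends, or even that the omitted values are coplanar; these are substantial conclusions that do not follow from value-distribution input alone without some global control (finite topology, periodicity, or properness). Indeed, establishing the quadratic area bound would already put $M$ under Conjecture~\ref{conjScherk}, which is itself open. So the proposal is a reasonable map of what one would \emph{like} to prove, but it does not constitute progress on the conjecture.
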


We next deal with the question of when a surface $M\in {\mathcal C}$
has strictly negative Gaussian curvature.
Suppose again that a surface $M\in {\mathcal C}$ has finite topology, and so, $M$
either has finite total curvature or is a helicoid with handles.
It is straightforward to check that
such a surface has negative curvature if and only if it is a catenoid
or a helicoid (note that if $g\colon M\to \C \cup \{ \infty \} $ is the
stereographically projected Gauss map of $M$, then after a suitable
rotation of $M$ in $\R^3$, the meromorphic
differential $\frac{dg}{g}$ vanishes exactly at the zeros of the
Gaussian curvature of $M$; from here one deduces easily that
if $M$ has finite topology and strictly negative Gaussian curvature, then the genus
of $M$ is zero). More generally, if we allow
a surface $M\in {\mathcal C}$ to be invariant under a proper discontinuous group $G$ of isometries of $\R^3$,
with $M/G$
having finite topology, then $M/G$ is properly embedded in $\R^3/G$ by an elementary application of the
Minimal Lamination
Closure Theorem (see Proposition~1.3 in~\cite{PeTra1}). Hence, in this case $M/G$ has finite total
curvature by a result of Meeks and Rosenberg~\cite{mr3,mr2}.
Suppose additionally that $M/G$ has negative curvature, and we will discuss which surfaces are possible.
If the ends of $M/G$ are helicoidal or planar, then
a similar argument using $\frac{dg}{g}$ gives that $M$ has genus zero, and so,
it is a helicoid. If $M/G$ is doubly-periodic,
then $M$ is a Scherk minimal surface, see~\cite{mpr16}.
In the case $M/G$ is singly-periodic,
then $M$ must have Scherk-type ends but we still do not
know if the surface must be a Scherk singly-periodic minimal surface.
These considerations motivate the following conjecture.

\begin{conjecture}
\label{NCC} {\bf (Negative Curvature Conjecture, \, Meeks, P\'erez, Ros)}
If $M\in {\mathcal C}$ has negative curvature, then $M$ is a catenoid, a helicoid
or one of the singly or doubly-periodic Scherk minimal surfaces.
\end{conjecture}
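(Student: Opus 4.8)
The plan is to convert the curvature hypothesis into a statement about the Gauss map and then split the argument by the topology of $M$, reducing the tractable cases to the classification and uniqueness results already available and the hard cases to two of the open conjectures above. First I would record the basic equivalence visible from the Weierstrass curvature formula~\eqref{eq:K}: for $M\in{\mathcal C}$ the Gaussian curvature vanishes at a point precisely where the meromorphic Gauss map $g$ fails to be a local diffeomorphism, i.e.\ where $dg=0$. After a rotation of $M$ placing the two vertical directions off the image of $g$, the one-form $dg/g$ is holomorphic and, as noted in the discussion preceding the conjecture, vanishes exactly at the zeros of $K$. Hence the hypothesis $K<0$ everywhere is \emph{equivalent} to $dg/g$ being a nowhere-zero holomorphic one-form, i.e.\ to the Gauss map being an unbranched immersion of $M$ into $\esf^2$. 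The whole problem is to classify such $M$.

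Next I would dispose of the finite-topology case, which is essentially complete. Such an $M$ is proper by Theorem~\ref{thmCM}, so $g$, $dg/g$ extend meromorphically to the conformal compactification $\overline M$ of genus $\gamma$ (finite total curvature via Theorem~\ref{thmCM2} when there are at least two ends, finite type via Theorem~\ref{ttmr} when there is one). Since $K<0$ forbids interior zeros of $dg/g$ and forces every interior vertical point of $g$ to be simple (an order-$k$ vertical point gives $K\sim|z|^{2(k-1)}$), the divisor of $dg/g$ on $\overline M$ consists only of poles: simple at catenoidal/planar ends and at interior vertical points, and double at a helicoidal end. The degree relation $\deg(\operatorname{div}(dg/g))=2\gamma-2$ then reads $-(\text{total pole order})=2\gamma-2$, so the total pole order equals $2-2\gamma\geq(\text{number of ends})\geq 1$, forcing $\gamma=0$, total pole order exactly $2$, and no interior vertical points. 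Thus $M$ has genus zero, so a multi-ended $M$ is a catenoid by Theorem~\ref{thmlr} (the plane being excluded by $K<0$) and a one-ended non-flat $M$ is a helicoid by Theorem~\ref{ttmr}.

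The genuinely hard case is infinite topology, where $M$ has infinite genus or infinitely many ends and the target examples are the Scherk surfaces. The plan is first to treat the periodic situation: if $M$ is invariant under a rank-one or rank-two translation group $G$ with $M/G$ of finite topology, then $M/G$ is properly embedded of finite total curvature in $\rth/G$ by Meeks--Rosenberg~\cite{mr3,mr2}, and the nowhere-vanishing of $dg/g$ together with embeddedness constrains the ends to be of Scherk type (respectively planar). In the doubly-periodic genus-zero case this already yields a doubly-periodic Scherk surface by~\cite{mpr16}. The crux is the singly-periodic case, which I expect to be the main obstacle: here one controls only the asymptotics of the ends, and to conclude one needs $M$ to possess two Alexandrov-type planes of reflective symmetry. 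This is exactly the point where the argument must borrow from the Scherk uniqueness program. I would establish the Unique Limit Tangent Cone property (Conjecture~\ref{conjlimittgtcone}) for such a surface, deduce the two symmetry planes from it, and then read off the Weierstrass data to identify the singly-periodic Scherk family, following the strategy of Meeks and Wolf in~\cite{mrw1}.

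Finally, to obtain the statement for \emph{all} infinite-topology $M\in{\mathcal C}$ one must rule out non-periodic negatively curved examples. The plan is to show that an unbranched Gauss map is incompatible with the double-spiral-staircase (parking-garage) local pictures and the curvature blow-up of Theorems~\ref{thmlimitlaminCM} and~\ref{t:t5.1CM}, so that the lamination structure theory confines $M$ to the finite-total-curvature or periodic regimes already handled; a connection to the Four Point Conjecture~\ref{conjFP} is also suggested, since each model surface has Gauss map omitting either two or four points. The single hardest step throughout remains the singly-periodic reduction, whose resolution is effectively equivalent to establishing the limit-tangent-cone and Scherk uniqueness conjectures, and this is why the full conjecture is still open.
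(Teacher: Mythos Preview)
This statement is an \emph{open conjecture}, not a theorem: the paper does not prove it. What the paper offers, in the paragraph immediately preceding Conjecture~\ref{NCC}, is a discussion of the partial results that motivate it. So there is no ``paper's own proof'' to compare against; the relevant comparison is between your strategy and the paper's account of what is known.

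On the known cases your outline tracks the paper closely. For finite topology you run the same $dg/g$ argument the paper sketches (zeros of $dg/g$ equal zeros of $K$, hence none; a divisor count forces genus zero; then Theorems~\ref{thmlr} and~\ref{ttmr} finish). Your version is more explicit about the divisor bookkeeping, but the idea is identical. For the periodic case with finite-topology quotient you again follow the paper: properness of $M/G$, finite total curvature via Meeks--Rosenberg, genus zero from $dg/g$, and the doubly-periodic classification from~\cite{mpr16}. You also correctly flag the singly-periodic case as the open obstruction, exactly as the paper does (``we still do not know if the surface must be a Scherk singly-periodic minimal surface'').

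Where your proposal goes beyond the paper is the final paragraph: the plan to handle \emph{non-periodic} infinite-topology $M$ by arguing that an unbranched Gauss map is incompatible with parking-garage limits, thereby forcing $M$ into the periodic or finite-topology regimes. This is not in the paper and is genuinely speculative. There is no known mechanism by which $K<0$ alone prevents the Colding--Minicozzi singular behavior or produces periodicity, and you have not supplied one; this step is a wish, not an argument. Likewise, your route through Conjectures~\ref{conjlimittgtcone} and~\ref{conjScherk} for the singly-periodic case is a plausible program (and is indeed the one the paper alludes to just before Conjecture~\ref{conjlimittgtcone}), but it presupposes two other open conjectures. In short: your write-up is an accurate map of the known terrain and the expected route forward, and you are right to call the conjecture open, but it is a strategy document rather than a proof, and the decisive steps remain unproved.
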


We end this section of conjectures about $H$-surfaces in $\R^3$ by reminding the reader the already stated
Conjecture~\ref{CYconj1} about properness of complete $H$-surfaces in $\R^3$ with finite genus.

\subsection{Open problems in homogeneous 3-manifolds.}
%Suppose that $Y$ is  a homogeneous 3-manifold; recall the $H(Y)$ denotes
%the critical mean curvature and Ch$(Y)$ is its Cheeger constant, as defined in Definition~\ref{Crit-Cheg}.
%
%
%\begin{conjecture}[Meeks, P\'erez, Ros, Tinaglia] \label{CYconj2}
%A complete  $H$-surface in $\rth$ of finite genus
%is proper. More generally, for every such surface $M$, there exists $C_M>0$
%such that for any ball $\B(p,R)$ in $\rth$ with radius $R\geq 1$,
%$\mbox{\rm Area}(M\cap \B(p,R))\leq C_{M} \, R^3$.
%\end{conjecture}

In all of the conjectures below,  $X$ will denote a simply-connected,
3-dimensional metric Lie group.

%\begin{conjecture}[Embedded Calabi-Yau Problem, \,Meeks, P\'erez, Ros] \label{CYconj8}
%There exists a complete, non-compact surface of
%any fixed constant mean curvature that is embedded in any given smooth compact subdomain $\Omega$ of $X$.
%\end{conjecture}

\begin{conjecture}[Isoperimetric Domains Conjecture] \label{iso:conj2}
Let  ${ X}$ denote a metric Lie group diffeomorphic to $\rth$. Then:
 \begin{enumerate}[1.]
\item  {Isoperimetric domains (resp. surfaces) in ${ X}$ are topological balls
(resp. spheres).}
\item Immersed $H$-spheres in $X$ are embedded, and the balls that
they bound are isoperimetric domains.
\item For each fixed volume ${ V}_0$, solutions to
the isoperimetric problem in  ${X}$
for volume ${ V}_0$ are unique up to left translations in ${ X}$.
\end{enumerate}
\end{conjecture}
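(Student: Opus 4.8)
The plan is to deduce all three items from a single structural result, namely the extension of Theorem~\ref{main5} to the case that $X$ is diffeomorphic to $\rth$: for every $H\in(H(X),\infty)$ there should be an index-one immersed $H$-sphere $S_H$ in $X$, unique up to left translation, which is Alexandrov embedded and carries a well-defined center of symmetry, while no immersed $H$-sphere exists for $H\le H(X)$. Following the program of \cite{mmpr1} outlined after Definition~\ref{Crit-Cheg}, I would first reprove openness and local uniqueness exactly as in the compact case, applying the implicit function theorem to the conformal PDE satisfied by the stereographically projected left invariant Gauss map (Theorem~\ref{thm:index1} and Remark~\ref{remark-integration}), so that the set $\cH\subset(H(X),\infty)$ of values admitting an index-one $H$-sphere is open and the corresponding spheres form a real-analytic one-parameter family. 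Non-emptiness of $\cH$ comes from isoperimetric spheres of small volume, whose mean curvatures tend to $+\infty$; with connectedness of the interval, it then remains only to prove that $\cH$ is closed in $(H(X),\infty)$.

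The main obstacle is this closedness, which by elliptic theory reduces to the a priori area estimate $(\star)$ (the accompanying curvature estimate for a fixed upper bound on $H$ should follow from a rescaling argument combined with the compact-case bounds and the Local Picture Theorem on the Scale of Curvature, Theorem~\ref{thm3introd}). To prove $(\star)$ I would argue by contradiction: if the areas of index-one $H$-spheres with $H\in(H(X)+\ve,\infty)$ were unbounded, a limit argument should produce a complete, weakly stable constant mean curvature surface in $X$ realized as the lift, via the canonical fibration $\Pi\colon X\to\esf^2$ attached to the metric Lie group structure, of an immersed curve in $\esf^2$; one would then show that such a lifted surface cannot be stable, contradicting the stability forced in the limit. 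This is an essentially case-by-case analysis over the metric Lie group structures on a manifold diffeomorphic to $\rth$, with $X=\sl$ the most delicate case, and it is where I expect the real work to lie.

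Granting the classification, item 2 follows by promoting Alexandrov embeddedness to genuine embeddedness. Here I would imitate the Alexandrov reflection argument in the proof of Theorem~\ref{thmAlex}: the center of symmetry of $S_H$ is fixed by the full isotropy subgroup, so $S_H$ inherits the ambient reflective symmetries (the isotropy contains $\Z_2\times\Z_2$ for a generic left invariant metric), and running the reflection-and-tilting scheme of that proof along each symmetry should show that $S_H$ bounds a mean-convex ball on which the defining immersion $F$ is injective, exactly as in the embedded minimal case treated in Theorem~\ref{embed:su2}. The balls so obtained are the candidate isoperimetric domains.

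For items 1 and 3 I would invoke the isoperimetric problem directly. Since $X$ is homogeneous its geometry is uniformly bounded and mass cannot escape to infinity, so isoperimetric regions exist for every volume $V_0$; their boundaries are smooth, embedded, weakly stable $H$-surfaces in the sense of Definition~\ref{defwstable}. The plan is to show such a boundary must be one of the embedded spheres $S_H$: one first rules out higher genus and disconnectedness using the structure of weakly stable closed constant mean curvature surfaces together with the uniqueness clause of the classification, and then identifies the boundary with $S_H$ for the value of $H$ determined by $V_0$, giving item~1. Uniqueness of the isoperimetric solution up to left translation (item~3) is then immediate from the uniqueness of $S_H$. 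The relation $2\,H(X)=\mbox{\rm Ch}(X)$ and the large-volume asymptotics (mean curvatures of isoperimetric boundaries converging to $H(X)$, radii diverging) provide the consistency check that $V_0\mapsto H$ inverts the analytic parameterization $H\mapsto S_H$, closing the loop. Beyond $(\star)$, the hardest remaining point is the topological classification of weakly stable isoperimetric boundaries; lacking a general \emph{stable implies spherical} theorem in this setting, this step too may ultimately rely on the same fibration-and-stability analysis used to establish $(\star)$.
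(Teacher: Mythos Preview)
This statement is a \emph{conjecture} in the paper, not a theorem: the paper offers no proof and explicitly presents it as an open problem. So there is no ``paper's proof'' to compare against, and your proposal should be read as a proof \emph{program}, not a proof.

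As a program it has the right overall shape---reduce everything to the noncompact analogue of Theorem~\ref{main5}---but two of the steps you treat as routine are themselves open conjectures in the paper. First, your embeddedness argument for item~2 does not go through. You invoke an Alexandrov moving-plane argument using the isotropy symmetries, but for a generic left invariant metric on a Lie group diffeomorphic to $\rth$ the isotropy is only the finite group $\Z_2\times\Z_2$; you do not get a one-parameter family of reflection hyperplanes to sweep with, so the mechanism of Theorem~\ref{thmAlex} is unavailable. The paper says exactly this after Conjecture~\ref{conjAlex}: the reflection method is known to work only when $X$ admits two orthogonal foliations by planes of reflectional symmetry, as in the standard $\sol$. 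Note also that in the compact case Torralbo's examples show Alexandrov embedded $H$-spheres need not be embedded, so ``Alexandrov embedded $+$ symmetries $\Rightarrow$ embedded'' is not automatic.

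Second, for items~1 and~3 you assert that a weakly stable closed $H$-boundary of an isoperimetric region must be a sphere, appealing to ``the structure of weakly stable closed constant mean curvature surfaces together with the uniqueness clause of the classification.'' But the classification you are granting only parameterizes \emph{spheres}; it says nothing about whether a weakly stable closed $H$-surface of positive genus can exist. Ruling those out is precisely the content of the Alexandrov Uniqueness Conjecture~\ref{conjAlex}, which the paper lists as open. Your final sentence concedes this (``lacking a general \emph{stable implies spherical} theorem\ldots''), which is honest, but it means the argument for items~1 and~3 is circular: you need Conjecture~\ref{conjAlex} to get Conjecture~\ref{iso:conj2}, and the paper treats both as open.
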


In reference to the following open problems and conjectures,
the reader should note that Meeks, Mira, P\'erez and Ros are in
the final stages of completing paper~\cite{mmpr1} that solves
some of them;
this work should give complete solutions to
Conjectures~\ref{conj3.16*} and~\ref{conj3.19*} below. Their claimed
results would also demonstrate that every $H$-sphere in $X$ has
index one (see the first statement of Conjecture~\ref{conj3.17*}).
In~\cite{mmpr2}, in the case that
$X$ is diffeomorphic to $\rth$, it is shown that as the volumes
of isoperimetric domains in $X$ go to infinity,
their radii\footnote{The radius  of a compact
Riemannian manifold $M$ with boundary is the maximum
distance of points in $M$ to its boundary.} go to infinity  and the
mean curvatures of their boundaries converge to
the critical mean curvature $H(X)$ of $X$ (introduced in Definition~\ref{Crit-Cheg}).
We expect that by the time this survey is
published, \cite{mmpr1} will be available and
consequentially, some parts of this section on open problems
should be updated by the reader to include these new results.

%We start with the following strengthening of Conjecture~\ref{conj3.16*}.

\begin{conjecture}[Hopf Uniqueness Conjecture, Meeks-Mira-P\'erez-Ros]
\label{conj3.16*} \mbox{}\\
 For every $H\geq 0$, any two $H$-spheres
immersed in $X$ differ by a left translation of~$X$.
\end{conjecture}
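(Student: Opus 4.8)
The plan is to reduce the conjecture to the single remaining geometry and then run the continuity method that already succeeded in the compact case. Since $X$ is a simply-connected $3$-dimensional metric Lie group, it is either compact (diffeomorphic to $\esf^3$), in which case Theorem~\ref{main5} already yields the conclusion, or diffeomorphic to $\rth$; within the latter family the Thurston geometry $\sol$ is covered by the combined work of Daniel and Mira~\cite{dm2} and Meeks~\cite{me34}. So the substance of the conjecture lies in establishing the analogue of Theorem~\ref{main5} when $X$ is diffeomorphic to $\rth$. First I would fix a left invariant metric on $X$ and take as the central object the left invariant Gauss map $G\colon S\to\esf^2\subset T_eX$ of an immersed $H$-sphere $S$: by Theorem~\ref{thm:index1}, as soon as $S$ has index one the map $G$ is an orientation-preserving diffeomorphism onto $\esf^2$, and $S$ is then unique up to left translation among $H$-spheres of the same mean curvature. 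Thus it suffices to prove that for each admissible value of $H$ there exists an index-one immersed $H$-sphere.

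The admissible range is $H\in(H(X),\infty)$, where $H(X)$ is the critical mean curvature of Definition~\ref{Crit-Cheg}: no closed immersed surface, hence no $H$-sphere, can occur for $H<H(X)$, and the borderline value $H=H(X)$ is ruled out separately using the identity $2\,H(X)=\mathrm{Ch}(X)$ established in~\cite{mmpr2} together with the observation that an $H(X)$-sphere would bound a region violating the Cheeger comparison. To produce index-one spheres on the open interval I would show that the set $\cH\subset(H(X),\infty)$ of values admitting an index-one immersed $H$-sphere is non-empty, open and closed. Non-emptiness follows from the existence of isoperimetric regions of small volume, whose boundaries are embedded, weakly stable, index-one spheres of large mean curvature. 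Openness, together with the real-analytic structure of the one-parameter family $\{S(t)\}$ modulo left translation, comes from the implicit function theorem applied to the $H$-sphere equation, where the index-one and nullity-three properties of the Jacobi operator guarantee that the kernel is exactly the expected three-dimensional space generated by the left translations of $X$.

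Closedness is where the analysis concentrates, and it reduces to a priori curvature and area estimates for index-one $H$-spheres with $H$ bounded above and bounded below away from $H(X)$. The curvature estimate is obtained by a rescaling/blow-up argument: a failure would produce, after dilation and using the homogeneity of $X$ to pass to a Euclidean model, a complete non-flat limit surface contradicting the bounded-curvature control. The genuine obstacle is the area estimate labelled $(\star)$ in the text, namely that for every $\ve>0$ there exists $A(X,\ve)$ bounding the area of every index-one $H$-sphere with $H\ge H(X)+\ve$. Following the compact template, I would argue by contradiction: if areas were unbounded for some such $\ve$, then rescalings anchored at points of the spheres would converge to a complete \emph{stable} $H$-surface in $X$; one then shows this limit is the lift, via a fibration $\Pi\colon X\to\esf^2$ adapted to the Lie-group structure, of an immersed curve in $\esf^2$, and finally that such a cylinder-like stable surface cannot exist, in contradiction with the stable-surface curvature estimates of Theorems~\ref{thmcurvestimstable} and~\ref{thmstablecompleteplane}.

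The hard part will be making this limit extraction and the stability contradiction uniform across the several distinct isomorphism types of metric Lie groups diffeomorphic to $\rth$, in particular the various solvable groups and $\EE$, and most delicately $\sl$, where $\mathrm{Ch}(X)$ is not computable directly from the metric Lie algebra so that even locating $H(X)$ requires extra work. For this reason I expect the verification of $(\star)$ to proceed by an essentially case-by-case study of the admissible $X$, exactly as envisaged in the work in progress~\cite{mmpr1}; once $(\star)$ is in hand for each $X$, closedness of $\cH$ follows by elliptic theory, giving $\cH=(H(X),\infty)$, and the diffeomorphism property of $G$ from Theorem~\ref{thm:index1} then upgrades existence to the desired uniqueness up to left translation.
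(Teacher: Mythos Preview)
The statement you are asked to prove is presented in the paper as an open \emph{conjecture}, not as a theorem with a proof. The paper explicitly says that a full solution is expected from the work in progress~\cite{mmpr1}, and the surrounding discussion (the outline of the proof of Theorem~\ref{main5}, the paragraph containing~$(\star)$, and the remarks after Conjecture~\ref{conj:stable}) lays out exactly the continuity-method strategy you have written down: reduce to index-one spheres via Theorem~\ref{thm:index1}, show the set $\cH$ of admissible $H$-values is non-empty, open and closed in $(H(X),\infty)$, obtain curvature bounds by rescaling, and reduce closedness to the area estimate~$(\star)$. Your outline is therefore not an alternative approach but a faithful summary of the program the paper already describes.

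What your write-up does not supply, and what the paper identifies as the genuine obstruction, is a proof of~$(\star)$. You defer it to~\cite{mmpr1} with a case-by-case remark, which is honest but means your proposal is a strategy, not a proof. Two small inaccuracies in the transplanted compact argument: the fibration $\Pi\colon X\to\esf^2$ you invoke is specific to $X\cong\su$; for $X$ diffeomorphic to $\rth$ the relevant fibrations have a different $2$-dimensional base, so the structure of the limiting stable surface and the contradiction must be reworked. Also, Theorem~\ref{thmstablecompleteplane} is a statement about $\rth$, so it cannot be cited directly to rule out stable $H$-surfaces in a general metric Lie group; the paper instead points to Conjecture~\ref{conj:stable} and the computation of $H(X)=\widehat{H}(X)$ as the mechanism that would close this gap.
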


It is easy to see that the index of an $H$-sphere
$S_H$ immersed in $X$ is at least one; indeed, if $\{F_1,F_2,F_3\}$ denotes a
basis of right invariant vector fields of $X$ (that are Killing vector fields for
the left invariant metric of $X$), then the functions $u_i=\langle F_i,N\rangle$, $i=1,2,3$, are
Jacobi functions on $S_H$ (see Definition~\ref{defJacobif}, here $N$ is a unit normal vector field to $S_H$).
Since right invariant vector fields on $X$ are identically zero or never
zero and spheres do not admit a nowhere zero tangent vector field, then
the functions $u_1,u_2,u_3$ are linearly
independent. Hence, $0$ is an eigenvalue of the Jacobi operator of $S_H$ of multiplicity
at least three. As the first eigenvalue is simple, then $0$ is not
the first eigenvalue of the Jacobi operator and thus, the index of $S_H$ is at
least one. Moreover, if the index of $S_H$ is exactly one, then it
follows from Theorem~3.4 in Cheng~\cite{cheng1} (see
also~\cite{dm2,ross2}) that the nullity of $S_H$ is three. Finally,
recall that every weakly stable compact $H$-surface has index at most one
(two eigenfunctions associated to different negative eigenvalues are $L^2$-orthogonal,
and thus produce a linear combination with zero mean, that contradicts weak stability).
Therefore, a weakly stable $H$-sphere in $X$ has index one and nullity three.
The next conjecture claims that this index-nullity property
does not need the hypothesis on weak stability, and that weak stability holds whenever
$X$ is non-compact.

%Recall that every immersed $H$-sphere $\Sigma \looparrowright X$ has nullity three
%(Cheng~\cite
%and  $\Sigma $ has index one provided that
%it is stable. The next conjecture claims that this index property
%does not need the hypothesis on stability, and that stability holds whenever
%$X$ is non-compact.

\begin{conjecture}[Index-one Conjecture, Meeks-Mira-P\'erez-Ros]
\label{conj3.17*}\mbox{}\\ Every $H$-sphere in $X$ has index one.
Furthermore, when $X$ is diffeomorphic to $\rth$,
then every $H$-sphere in $X$ is
{weakly}
 stable.
\end{conjecture}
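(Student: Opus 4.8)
The plan is to treat the two asserted properties separately and to reduce everything to the case that $X$ is diffeomorphic to $\rth$. When $X$ is compact, i.e.\ isometric to $\su$ with a left invariant metric, the index-one statement is already contained in item~\ref{index1} of Theorem~\ref{main5}, so nothing new is required there; moreover, weak stability genuinely fails for certain Berger spheres, which is why the second assertion is restricted to the non-compact case. Thus both claims reduce to showing, for $X$ diffeomorphic to $\rth$, that (a) every $H$-sphere in $X$ has index one, and (b) every such sphere is weakly stable in the sense of Definition~\ref{defwstable}. Since a weakly stable compact $H$-surface has index at most one, while an $H$-sphere always has index at least one by the three linearly independent Jacobi functions $u_i=\langle F_i,N\rangle$ coming from the right invariant Killing fields, statement (b) in fact implies (a) in the non-compact case. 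Hence the heart of the matter is weak stability, with index one as a corollary.

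First I would establish (a) directly via the existence-and-uniqueness scheme of the program of Meeks, Mira, P\'erez and Ros in~\cite{mmpr1}. The set $\cH$ of values $H$ for which an index-one $H$-sphere exists is open in $(H(X),\infty)$ by the implicit function theorem and carries the structure of a real-analytic one-parameter family by Theorem~\ref{thm:index1}, whose left invariant Gauss map is a diffeomorphism onto $\esf^2$; non-emptiness follows from the existence of isoperimetric spheres of small volume, whose boundaries have large constant mean curvature and index one. Closedness of $\cH$ in $(H(X),\infty)$ is reduced by elliptic theory to a priori area and curvature estimates: the curvature estimates come from a rescaling argument together with the rigidity in Theorem~\ref{thmstablecompleteplane}, while the area estimates are precisely the statement $(\star)$. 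Granting closedness, $\cH=(H(X),\infty)$, and combining the uniqueness in Theorem~\ref{thm:index1} with the Hopf uniqueness of Conjecture~\ref{conj3.16*} shows that every $H$-sphere is a left translate of the index-one one, which proves (a).

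For (b) I would propagate weak stability along the analytic family $\{S_H\}_{H\in(H(X),\infty)}$, starting from large $H$. For small enclosed volume the unique $H$-sphere coincides, by Hopf uniqueness, with the boundary of an isoperimetric region that is nearly a round metric ball, and isoperimetric boundaries are weakly stable; hence $S_H$ is weakly stable for all sufficiently large $H$. Since the family has nullity three generated by the Killing directions $u_i$, which have zero mean, the constant function $1$ lies outside the Jacobi kernel, and weak stability of the index-one sphere $S_H$ is governed by the sign of $V'(H)$, where $V(H)$ is the volume enclosed by $S_H$: this is the first-variation identity $A'(H)=2H\,V'(H)$ combined with the classical criterion of Barbosa and do Carmo relating volume-preserving stability to the derivative of the enclosed volume. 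It therefore suffices to show that $V$ is monotonically decreasing on $(H(X),\infty)$, so the propagation reduces to excluding a first value $H_0$ at which this monotonicity, equivalently weak stability, is lost.

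The main obstacle is twofold, and both parts are genuinely hard. For (a) it is the area estimate $(\star)$: bounding the areas of index-one $H$-spheres with $H\in(H(X)+\ve,\infty)$, which in the $\rth$ case is carried out by a delicate case-by-case analysis of the possible metric Lie groups and requires showing that unbounded areas would force a complete stable constant mean curvature surface arising as a lift under the fibration $X\to\esf^2$, contradicting the stability rigidity of Theorem~\ref{thmstablecompleteplane}. For (b) the obstacle is the global monotonicity of $V(H)$, that is, excluding a weak-stability-to-instability transition: at a hypothetical first failure $H_0$ one must rule out, using analyticity of the family, the nullity-three structure, and the specific geometry of metric Lie groups diffeomorphic to $\rth$, the appearance of a mean-zero direction of negative second variation, which is exactly the phenomenon that does occur in the compact Berger examples and so cannot be avoided by a purely formal argument.
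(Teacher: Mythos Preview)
The statement you are attempting to prove is a \emph{conjecture} in the paper, not a theorem: the paper offers no proof, and explicitly lists it among the open problems of Section~\ref{sec:conj}. The surrounding discussion says only that the first assertion (index one) is known when $X\cong\su$ by Theorem~\ref{main5}, and that the authors' forthcoming work~\cite{mmpr1} is expected to yield the index-one statement in the non-compact case as well; the weak-stability assertion is not claimed even there. So there is no ``paper's own proof'' to compare against, and your write-up should be read as a proposed strategy rather than a proof.

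As a strategy for the index-one part, your outline is faithful to what the paper sketches: the open/closed continuity method on $\cH\subset(H(X),\infty)$, with openness from the implicit function theorem (Theorem~\ref{thm:index1}), non-emptiness from small isoperimetric spheres, and closedness reduced to the area estimate $(\star)$. You correctly flag $(\star)$ as the essential gap. Two technical slips, however: the fibration $\Pi\colon X\to\esf^2$ that you invoke belongs to the \emph{compact} ($\su$) argument in the proof of Theorem~\ref{main5}, and there is no such fibration when $X$ is diffeomorphic to $\rth$; the paper says the non-compact area estimates proceed by a case-by-case analysis of metric Lie groups instead. Likewise, Theorem~\ref{thmstablecompleteplane} is a rigidity statement in Euclidean $\rth$, and does not directly rule out complete stable $H$-surfaces in a general metric Lie group $X$; the relevant open question here is Conjecture~\ref{conj:stable}.

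For weak stability, your proposal (propagate along the analytic family, reduce to monotonicity of the enclosed volume $V(H)$ via a Barbosa--do~Carmo type criterion) is reasonable and is in the same circle of ideas the paper uses \emph{conditionally}: after Conjecture~\ref{conj:product} the paper shows that if a CMC product foliation by spheres exists, then Koiso's criterion yields weak stability of every leaf, which is essentially your $V'(H)<0$ argument in disguise. But the paper does not claim to know how to prove either the product-foliation hypothesis or your volume monotonicity unconditionally, and your own final paragraph concedes that excluding a first failure $H_0$ is a genuine obstacle with no mechanism offered. In short, your outline is consistent with the expected line of attack, but it is not a proof: both $(\star)$ and the monotonicity of $V$ remain open, exactly as the paper presents them.
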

Note that by Theorem~\ref{main5}, the first statement in
Conjecture~\ref{conj3.17*} holds in the case $X$ is $\mbox{\rm SU}(2)$ with a
left invariant metric. Also note that the hypothesis that $X$ is
diffeomorphic to $\R^3$ in the second statement of
Conjecture~\ref{conj3.17*} is necessary since the second statement
fails to hold in certain Berger spheres, see Torralbo and
Urbano~\cite{tou1}. By Theorem~4.1 in~\cite{mmpr4}, the validity of
the first statement in Conjecture~\ref{conj3.17*} implies
Conjecture~\ref{conj3.16*} holds as well.

Hopf~\cite{hf1} proved that the moduli space of
non-congruent $H$-spheres in $\R^3$ is the interval $(0,\infty)$
(parameterized by their mean curvatures $H$) and all of these
$H$-spheres are embedded and stable, hence of index one; these
results and arguments of Hopf readily extend to the case of $\Hip
^3$ with the interval being $(1,\infty )$ and $\esf^3$ with interval
$[0,\infty )$, both $\Hip ^3$ and $\esf^3$ endowed with their
standard metrics; see Chern~\cite{che5}. By Theorem~\ref{main5},
if $X$ is a metric Lie group diffeomorphic to $\esf^3$, then the
moduli space of non-congruent $H$-spheres in $X$ is the interval
$[0,\infty)$, again parameterized by their mean curvatures $H$.
However, Torralbo~\cite{tor1} proved that some $H$-spheres fail to
be embedded in certain Berger spheres. These results motivate the
next two conjectures.

\begin{conjecture}[Hopf Moduli Space Conjecture, Meeks-Mira-P\'erez-Ros]
\label{conj3.19*} \mbox{}\\
When $X$ is diffeomorphic to $\rth$, then the moduli space of
non-congruent $H$-spheres in $X$ is the interval $ (H(X),\infty)$,
which is parameterized by their mean curvatures $H$. In particular,
every $H$-sphere in $X$ is Alexandrov embedded and $H(X)$ is the
infimum of the mean curvatures of $H$-spheres in $X$.
\end{conjecture}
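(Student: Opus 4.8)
The plan is to adapt the continuity-method argument that establishes Theorem~\ref{main5} in the compact case to the setting where $X$ is diffeomorphic to $\rth$, the essential new feature being that the admissible range of mean curvatures is now the open half-line $(H(X),\infty)$ rather than all of $[0,\infty)$. First I would identify $X$ isometrically with a three-dimensional metric Lie group and study the set
\[
\cH=\{\, H\in\R \mid \text{there exists an index-one immersed }H\text{-sphere in }X\,\}.
\]
By Theorem~\ref{thm:index1}, any index-one $H$-sphere has left invariant Gauss map an orientation-preserving diffeomorphism onto $\esf^2$, is unique up to left translation, and lies in a real-analytic one-parameter family of index-one spheres; in particular $\cH$ is open in $\R$ and the assignment $H\mapsto S_H$ is well-defined and analytic on $\cH$. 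Non-emptiness of $\cH$ for large $H$ would follow from the existence of isoperimetric spheres of small volume, since isoperimetric boundaries are weakly stable and hence of index one, together with the result of~\cite{mmpr2} that the boundary mean curvatures of isoperimetric domains sweep out values approaching $H(X)$ from above as their volumes increase, and large values of $H$ for small volumes.

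The heart of the matter is to show that $\cH$ is relatively closed in $(H(X),\infty)$, which by elliptic theory reduces to a priori area and curvature estimates for index-one $H$-spheres with mean curvature bounded below by $H(X)+\ve$ and bounded above by any fixed constant. The curvature estimates I would obtain by a rescaling/blow-up argument: were the norms of the second fundamental forms to blow up, one would extract in the limit a nonflat complete constant mean curvature surface in the rescaled homogeneous geometry violating stability-type restrictions, where the Stable Limit Leaf Theorem~\ref{thmstable} and the curvature estimates for stable surfaces of Theorem~\ref{thmcurvestimstable} play the decisive role. The genuinely hard step, and the one I expect to be the main obstacle, is the area bound $(\star)$: that for each $\ve>0$ there is a uniform bound $A(X,\ve)$ on the areas of index-one $H$-spheres with $H\in(H(X)+\ve,\infty)$. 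The strategy I would follow is to argue by contradiction: if the areas were unbounded, rescale and pass to a limit to produce a complete, stable constant mean curvature surface in $X$ that, in the geometries where $X$ carries a suitable Riemannian fibration $\Pi\colon X\to B$ over a homogeneous surface $B$, can be realized as the lift of an immersed curve in $B$; one then derives a contradiction by showing that such a lifted surface cannot be stable. This is the same mechanism that closes the compact case, but it must now be carried out in an essentially case-by-case analysis of the possible metric Lie group structures on a manifold diffeomorphic to $\rth$ (the geometries $\EE$, $\sl$, $\sol$, $\HH^2\times\R$, and the non-unimodular groups), since the fibration $\Pi$ and the nonstability argument take a different form in each.

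Once $\cH=(H(X),\infty)$ is established, the remaining conclusions follow formally. Uniqueness up to left translation is exactly the content of Theorem~\ref{thm:index1}, and combined with the existence of index-one spheres for every $H>H(X)$ this yields the stated parameterization of the moduli space by the mean curvature value. That $H(X)$ is the infimum of the mean curvatures of $H$-spheres is then immediate from $\cH=(H(X),\infty)$ together with Definition~\ref{Crit-Cheg}, once the endpoint value $H=H(X)$ has been excluded, which I expect can be done by a direct maximum-principle argument ruling out a closed $H$-surface of the critical mean curvature. Finally, Alexandrov embeddedness of each $S_H$ I would prove by a deformation argument: the property holds trivially for the small isoperimetric spheres of large mean curvature, the family $\{S_H\}$ is analytic in $H$, and the interior maximum principle of Theorem~\ref{thmintmaxprin} prevents the Alexandrov immersion of the bounding Riemannian ball from losing its mean convexity as $H$ decreases toward $H(X)$.
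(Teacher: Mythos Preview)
The statement you are attempting to prove is stated in the paper as an open \emph{conjecture}, not as a theorem; the paper contains no proof of it. What the paper does contain (in Section~\ref{sec:Hopf} and in the discussion surrounding the area estimate $(\star)$) is an outline of the strategy being pursued in the work-in-progress~\cite{mmpr1}, and your proposal matches that outline essentially point for point: the continuity method on the set $\cH$ of mean curvatures of index-one spheres, non-emptiness via small isoperimetric domains, openness via Theorem~\ref{thm:index1}, closedness reduced to curvature and area estimates with the area estimate $(\star)$ identified as the crux, the contradiction argument producing a stable limit surface realized as the lift of a curve under a fibration, the case-by-case treatment of the metric Lie group structures on $\rth$, and finally Alexandrov embeddedness by deformation from small spheres using the maximum principle.

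So your proposal is not wrong as a strategy, and it is the same strategy the paper describes; but you should be aware that it is precisely a strategy and not a proof. The paper is explicit that the area estimate $(\star)$ is the ``main difficulty'' and that its resolution requires an ``essentially case-by-case study'' of the possible metric Lie groups diffeomorphic to $\rth$ --- work the authors say is still being written up. Your paragraph on the area bound acknowledges this obstacle but does not supply the missing argument; in particular, the claim that the limit stable surface ``can be realized as the lift of an immersed curve in $B$'' and the subsequent nonstability contradiction are exactly the steps whose details differ across geometries and which, at the time of the survey, had not been completed in print. Until that gap is closed, what you have written is a faithful reproduction of the intended proof outline rather than a proof.
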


The results of Abresch and Rosenberg~\cite{AbRo1,AbRo2} and previous
classification results for rotationally symmetric $H$-spheres demonstrate
that Conjecture~\ref{conj3.19*} holds when $X$ is some
$\E(\kappa,\tau)$-space
{(see e.g., \cite{dhm1} for a description of these spaces).
Work of Daniel and
Mira~\cite{dm2} and of Meeks~\cite{me34} imply that
{Conjectures~\ref{iso:conj2}, \ref{conj3.16*}, \ref{conj3.17*} and~\ref{conj3.19*}
}
%(and the other first five conjectures in our listing here)
hold for $\sol$ with its standard metric.

%By the main theorem in~\cite{mmp2}, the next conjecture can only fail if
%the underlying group structure of $X$ is that of a non-unimodular  Lie group
%with Milnor $D$-invariant greater than 1 or if it is isomorphic to the unimodular
%Euclidean group $\mathbb{E}^2$.
%
%\begin{conjecture}[Hopf Embeddedness Conjecture, Meeks-Mira-P\'erez-Ros]
%\label{conj3.18*} \mbox{}\\
%If $X$ is diffeomorphic to $\rth$, then $H$-spheres in $X$ are embedded.
%\end{conjecture}
%
%We recall that this manuscript contains some new results towards the
%solution of the last conjecture, e.g., see Theorem~\ref{main5}.

The next conjecture is
known to hold in the flat $\R^3$ as proved by Alexandrov~\cite{aa1}
and subsequently extended to $\Hip ^3$ and to a hemisphere of
$\esf^3$.

\begin{conjecture}[Alexandrov Uniqueness Conjecture] \label{conjAlex}
If $X$ is diffeomorphic to $\R^3$, then the only
compact, Alexandrov embedded $H$-surfaces in $X$ are topologically
spheres.
\end{conjecture}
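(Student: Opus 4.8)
The plan is to adapt Alexandrov's reflection method, as carried out in the excerpt's proof of Theorem~\ref{thmAlex}, to the setting of a metric Lie group $X$ diffeomorphic to $\rth$, and to fall back on the classification of immersed $H$-spheres when reflection fails. Let $f\colon M\looparrowright X$ be a compact Alexandrov embedded $H$-surface, so that by Definition~\ref{Strong-Alex} there is an immersion $F\colon W\to X$ of a compact mean convex $3$-manifold with $\partial W=M$ and $F|_M=f$; the goal is to prove that $M$ is a topological sphere. The guiding principle, taken verbatim from the proof of Theorem~\ref{thmAlex}, is that a continuum of reflective symmetries forces $M$ to be a bigraph over a disk, hence of genus zero: if for each normal direction in some open set of directions one can produce a mirror of Alexandrov symmetry for $M$, the resulting large symmetry group leaves no room for handles.

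First I would treat the geometries whose isometry group is four-dimensional; the Berger spheres are compact and excluded here, so the relevant ones are the $\E(\kappa,\tau)$-spaces diffeomorphic to $\rth$. There a one-parameter group of ambient rotations together with the vertical translations supplies, for each horizontal direction, a foliation $\{P_s\}$ of $X$ by totally geodesic or rotationally symmetric vertical mirror surfaces, each the fixed-point set of an isometric involution $R_s$. I would then run the moving-mirror scheme exactly as in the excerpt: slide $P_s$ from a position disjoint from the compact set $F(W)$ until first contact and push on, and at the first parameter $s_0$ where the reflected cap fails to remain inside $W$, the Interior and Boundary Maximum Principles (Theorems~\ref{thmintmaxprin} and~\ref{thmintmaxprinb}) force $R_{s_0}$ to be a symmetry of $M$. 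Varying the horizontal direction produces the needed continuum of mirrors and hence rotational symmetry, recovering in particular the Abresch--Rosenberg conclusion that $M$ is a rotationally symmetric sphere; this reproves the statement in the four-dimensional isometry case and is the model for what follows.

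The hard part will be exactly those $X$ --- most notably $\sl$ and the non-unimodular groups --- whose isometry group is only three-dimensional, so that the pointwise isotropy $\Z_2\times\Z_2$ together with left translations yields at best a discrete set of mirror directions rather than a continuum; the naive moving-mirror argument then produces no symmetry strong enough to kill handles, which is precisely why the statement is still a conjecture. In this regime I would abandon reflection and instead attempt a continuity and comparison argument anchored on the uniqueness and smooth dependence of index-one $H$-spheres $S_H$ furnished by Theorem~\ref{thm:index1} and the expected moduli description of Conjecture~\ref{conj3.19*}: one deforms the mean convex filling $F$ and tries to show that a positive-genus Alexandrov embedded $H$-surface cannot persist, with all degenerations controlled through the Stable Limit Leaf Theorem~\ref{thmstable}, which guarantees that any limit leaf of the resulting CMC laminations is stable and hence too rigid to bound a handlebody on its mean convex side. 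The decisive ingredient --- and the main obstacle --- is a treatment of $\sl$; I expect this to proceed through the conformal PDE satisfied by the stereographic projection of the left invariant Gauss map (Remark~\ref{remark-integration}), combined with the a priori area estimates $(\star)$ for $H$-spheres, exactly the package being developed in the forthcoming work~\cite{mmpr1}.
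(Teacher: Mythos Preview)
This statement is labeled as a \emph{conjecture} in the paper, not a theorem; there is no proof to compare against. The paper's only commentary is that the conjecture is known to hold when $X$ admits two orthogonal foliations by planes of reflectional symmetry --- as in $\sol$ with its standard metric --- where the Alexandrov reflection method applies directly, with a reference to~\cite{dm2} for that special case. No argument is offered for general $X$ diffeomorphic to $\rth$.

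Your proposal is accordingly not a proof but a program, and you say as much: you note that when the isometry group is three-dimensional the moving-mirror argument stalls, ``which is precisely why the statement is still a conjecture.'' The deformation/continuity strategy you sketch for that regime --- invoking the moduli of index-one $H$-spheres, the Stable Limit Leaf Theorem, and the left invariant Gauss map PDE --- is a plausible inventory of tools, but nothing in it explains how a positive-genus compact Alexandrov embedded $H$-surface is actually ruled out; the step from ``limit leaves are stable'' to ``no handles survive'' is where all the content would have to live, and it is absent. One smaller issue: in your treatment of the four-dimensional isometry case you invoke Abresch--Rosenberg as if it closed the loop, but that theorem takes genus zero as a \emph{hypothesis} (it classifies immersed $H$-spheres), whereas here the task is to \emph{deduce} genus zero for an Alexandrov embedded $H$-surface of unknown topology; it is the reflection argument itself, when enough mirrors exist, that would do this work, not Abresch--Rosenberg.
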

In the case
there exist two orthogonal foliations of $X$ by planes of
reflectional symmetry, as is the case of $\sol$ with
 its standard metric,
then using the Alexandrov reflection method, the last
conjecture is known to hold;
{see~\cite{dm2} for
details in the special case of the standard metric on Sol$_3$.
}

If Conjecture~\ref{conjAlex} holds, then the unique
compact $H$-surfaces which bound regions in $X$ are
constant mean curvature spheres.
In particular, one would have the validity of
items~1 and 3 of Conjecture~\ref{iso:conj2}.

Although we do not state it as a conjecture, it is generally
believed that for any value of $H>H(X)$ and $g\in \N$, there exist
compact, genus-$g$, immersed, non-Alexandrov embedded $H$-surfaces
in $X$, as is the case in classical $\R^3$ setting (Wente~\cite{we1}
and Kapouleas~\cite{kap2}).

\begin{conjecture}[Stability Conjecture for $\mbox{\rm SU}(2)$, Meeks-P\'erez-Ros]
\label{conjstab} \mbox{}\\
 If $ X$ is diffeomorphic to $\esf^3$, then $X$ contains no strongly
 stable (the 2-sided cover admits a positive Jacobi function) complete $H$-surfaces.
\end{conjecture}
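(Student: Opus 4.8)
The plan is to first use the curvature estimates for stable $H$-surfaces to collapse the full CMC problem to a rigid, purely geometric one, and then to extract a contradiction from the ambient geometry of $X$. The first step is to invoke Theorem~\ref{thmcurvestimstable} (Schoen--Ros). Since $X\cong\esf^3$ is compact, it is homogeneously regular, so there is a universal $c>0$ with $|A_M(p)|\,d_X(p,\partial M)^2\le c$ for every stable immersed $H$-surface. Replacing a one-sided $M$ by its two-sided cover (strong stability is exactly stability of this cover, by the Fischer--Colbrie criterion in Definition~\ref{defstable}), I may assume $M$ is two-sided and stable. A complete $M$ has $\partial M=\emptyset$, hence $d_X(p,\partial M)=\infty$ for every $p$, which forces $|A_M|\equiv 0$. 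Thus $M$ is totally geodesic, and since $2H=\mathrm{trace}\,A_M=0$ we get $H=0$: any complete strongly stable $H$-surface in $X$ must be a complete, totally geodesic, minimal surface.

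Next I would analyze the stability of such totally geodesic leaves through the ambient curvature. For $A_M\equiv 0$ the Jacobi operator in (\ref{jacobi2}) reduces to $L=\Delta+\Ric(\eta)=\Delta+\tfrac12 S-K_M$, where $S$ is the scalar curvature of $X$ and $K_M$ the Gauss curvature of $M$. If $M$ is closed, the constant test function is admissible, and stability together with Gauss--Bonnet forces $\int_M S\le 4\pi\chi(M)$; one then wants to contradict this. In the round case $X=\esf^3(c)$ the only complete totally geodesic surfaces are the great spheres, for which $\int_M S=24\pi>8\pi=4\pi\chi(\esf^2)$, so they are unstable and the conjecture holds. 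To handle non-proper leaves I would pass to the closure: $M$ has bounded (indeed zero) second fundamental form, so its closure carries a minimal lamination structure by the Minimal Lamination Closure Theorem~\ref{thmmlct}, whose limit leaves are stable by the Stable Limit Leaf Theorem~\ref{thmstable}. This reduces every case to the stability of a single totally geodesic minimal leaf, either $M$ itself (when proper) or one of its limit leaves.

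The heart of the matter is then to understand totally geodesic surfaces in an arbitrary left invariant metric on $\su$. By homogeneity, every complete totally geodesic surface is a left translate of $\exp_e(\Pi)$ for a curvature-invariant $2$-plane $\Pi\subset\mathfrak{su}(2)$ (a Lie triple system for the Levi-Civita connection of the metric). Working in a Milnor orthonormal frame $\{e_1,e_2,e_3\}$ with $[e_i,e_j]=\lambda_k e_k$ and all $\lambda_i>0$, I would classify these planes as a function of $(\lambda_1,\lambda_2,\lambda_3)$. My expectation is that for generic metrics (three distinct $\lambda_i$, isotropy group only $\Z_2\times\Z_2$) no such plane exists, so no complete totally geodesic surface exists and the conjecture is vacuously true; while in the remaining symmetric cases ($\esf^3(c)$ and the Berger spheres) the totally geodesic surfaces that do occur turn out to be unstable.

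The hard part will be precisely the Berger-sphere regime. There the normal Ricci curvature $\Ric(\eta)$ can be nonpositive and the scalar curvature can even be negative — for $\lambda_1=\lambda_2=1$, $\lambda_3=t$ one computes $S=2t-\tfrac12 t^2<0$ for large $t$ — so the crude constant-function test $\int_M S>4\pi\chi(M)$ may fail, and the sign of $\int_M(\tfrac12 S-K_M)$ is genuinely delicate. The fallback would be either to show directly, via the Lie-triple-system condition, that no complete totally geodesic surface survives the Berger deformation, or to build a non-constant test function adapted to the left invariant Gauss map $G\colon M\to\esf^2$ of Definition~\ref{defG}: the three functions $\langle F_i,N\rangle$ arising from right invariant Killing fields $F_i$ are Jacobi functions and can be recombined to exhibit a negative direction of $L$. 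Controlling $\int_M(\tfrac12 S-K_M)$ uniformly across the two-parameter family of metrics, with no curvature positivity to rely on, is where I expect the real difficulty to lie.
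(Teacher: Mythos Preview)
First, note that Conjecture~\ref{conjstab} is stated in the paper as an \emph{open problem}; the paper gives no proof, only the two partial cases listed immediately after it (Berger spheres with non-negative scalar curvature, and left invariant metrics on $\su$ of positive scalar curvature). So there is no ``paper's proof'' to match; the relevant question is whether your outline is a viable attack.

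It is not, and the breakdown is already at the first step. You read Theorem~\ref{thmcurvestimstable} literally, plug in $\partial M=\emptyset$, get $d_N(p,\partial M)=\infty$, and conclude $|A_M|\equiv 0$. That conclusion is false: complete stable $H$-surfaces in homogeneously regular $3$-manifolds are \emph{not} forced to be totally geodesic. The scale-invariant form $|A|\cdot d\le c$ (which is what forces flatness when $\partial M=\emptyset$) is a feature of $\R^3$, because rescaling maps $\R^3$ to itself; in a curved ambient the blow-up limit lands in the flat tangent space and the Schoen--Ros argument yields only a \emph{uniform bound} $|A_M|\le C(N)$ at points a definite distance from $\partial M$, not decay. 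A clean counterexample to your reading is a horosphere in $\HH^3$: it is a complete, two-sided, stable $(H=1)$-surface (its Jacobi operator is $\Delta+|A|^2+\Ric(\eta)=\Delta+2-2=\Delta$), yet $|A|\equiv\sqrt{2}\neq 0$. The inequality in the paper should therefore be understood with the distance capped at a scale set by the geometry of $N$ (or, equivalently, as a bounded-curvature statement when $\partial M=\emptyset$); taking it at face value produces a statement that is simply false.

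Once the reduction to totally geodesic surfaces evaporates, the rest of your outline (Lie triple systems, Milnor frames, the Berger-sphere case analysis) no longer addresses the problem: one must rule out \emph{all} complete stable $H$-surfaces, not just totally geodesic ones. This is exactly why the paper's known partial cases proceed by a completely different mechanism --- positivity of scalar curvature is used (via~\cite{mpr19}) to force a hypothetical stable $H$-surface to be a compact sphere or projective plane, and then a right invariant Killing field on $\su$ produces a sign-changing Jacobi function, contradicting stability. That argument genuinely needs $S>0$ (or the Berger non-negative scalar curvature hypothesis), and finding a replacement when $S$ is allowed to be negative is precisely the open content of the conjecture.
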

Conjecture~\ref{conjstab} holds when the metric Lie group $X$ is in one of the following
two cases:
\begin{itemize}
\item $X$ is a Berger sphere with non-negative scalar curvature (see item~(5) of Corollary~9.6 in
Meeks, P\'erez and Ros~\cite{mpr19}).
\item $X$ is SU$(2)$ endowed with a left invariant metric of positive scalar curvature
(by item~(1) of Theorem~2.13 in~\cite{mpr19}, a complete
stable $H$-surface $\Sigma $
in $X$ must be compact, in fact must be topologically a two-sphere or a projective plane; hence one
could find a right invariant Killing field on $X$ which is not
tangent to $\Sigma$ at some point of $\Sigma $, thereby inducing
a Jacobi function which changes sign on $\Sigma $, a contradiction).
\end{itemize}
It is also proved in~\cite{mpr19} that
if $Y$ is a 3-sphere with a Riemannian metric (not necessarily a
left invariant metric) such that it admits no  stable
complete minimal surfaces, then for each integer $g\in \N\cup\{0\}$,
the space of compact embedded minimal surfaces of genus $g$ in $Y$
is compact, a result which is known to hold for Riemannian metrics
on $\esf^3$ with positive Ricci curvature (Choi and
Schoen~\cite{cs1}).

\begin{conjecture}[Stability Conjecture, Meeks-Mira-P\'erez-Ros]
\label{conj:stable}
Suppose $X$ is diffeomorphic to $\R^3$. Then
\begin{equation}
\label{eq:conj:stable}
H(X)=\sup \{ \mbox{mean curvatures of complete  stable
$H$-surfaces in $X$}\} .
\end{equation}
\end{conjecture}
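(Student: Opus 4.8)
Write $S(X)$ for the right-hand side of~\eqref{eq:conj:stable}, so the goal is the pair of inequalities $S(X)\le H(X)$ and $S(X)\ge H(X)$. I would organize the whole argument around the identity $2\,H(X)=\mbox{\rm Ch}(X)$ of Meeks, Mira, P\'erez and Ros~\cite{mmpr2}, together with their companion result that the boundaries $\partial\Delta_n$ of isoperimetric domains $\Delta_n\subset X$ of diverging volume are weakly stable $H_n$-surfaces whose mean curvatures $H_n$ converge to $H(X)$ and whose inradii $R_n$ diverge. First I would dispose of the degenerate case $H(X)=0$ (vanishing Cheeger constant) separately: there one exhibits a single complete stable minimal surface to obtain $S(X)\ge 0$, and all of the content migrates into the upper bound discussed below.

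For the lower bound $S(X)\ge H(X)$ with $H(X)>0$, the plan is to manufacture a complete stable $H(X)$-surface as a limit of the isoperimetric boundaries $\Sigma_n=\partial\Delta_n$. Fix $\mathcal H=H(X)/2$; for $n$ large each $\Sigma_n$ is locally an $H_n$-disk with $H_n\ge\mathcal H$, so the Meeks--Tinaglia curvature estimate in homogeneous $3$-manifolds (Theorem~\ref{cest3}) bounds $|A_{\Sigma_n}|$ uniformly at points whose intrinsic distance to the boundary of the ambient disk is at least $\delta$. Since $R_n\to\infty$, I would choose base points $p_n\in\Sigma_n$ around which $\Sigma_n$ is a disk of intrinsic radius tending to infinity, so that after left translating $p_n$ to the identity $e\in X$ the homogeneity of the metric and these uniform bounds yield a subsequence converging in $C^2$ on compact sets to a complete $H(X)$-surface $\Sigma_\infty$ through $e$. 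The closure of $\Sigma_\infty$ is a weak $H(X)$-lamination of $X$, from which I would extract a \emph{limit} leaf; such a leaf is automatically stable by the Stable Limit Leaf Theorem~\ref{thmstable}, and is the desired complete stable $H(X)$-surface. The delicate point here is promoting the \emph{weak} (volume-preserving) stability that passes to the limit from the isoperimetric surfaces to the genuine stability demanded by the conjecture; working with a limit leaf rather than with $\Sigma_\infty$ itself is exactly what resolves this, provided the base points are arranged so that sheets of the translated surfaces accumulate.

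The upper bound $S(X)\le H(X)$ is the crux, and should be viewed as the precise analog, with $H(X)$ in place of $0$, of the rigidity ``a complete stable $H$-surface in $\rth$ is a plane'' (Theorem~\ref{thmstablecompleteplane}). Suppose for contradiction that $\Sigma$ is a complete stable $H$-surface with $H>H(X)$. Because $H>H(X)=\mbox{\rm Ch}(X)/2$ equals the infimum appearing in Definition~\ref{Crit-Cheg}, there is a closed surface $M_0\subset X$ with $\max|H_{M_0}|<H$. The plan is a sweep-out/barrier argument: left translate $M_0$ far from $\Sigma$, move it back until first contact, and at the contact point compare mean curvatures through the interior maximum principle (Theorem~\ref{thmintmaxprin}) to force $\Sigma$ and a piece of $M_0$ to coincide, contradicting $\max|H_{M_0}|<H$. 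Equivalently, one could sweep the mean-convex side of $\Sigma$ by equidistant surfaces and run a Cheeger-type volume comparison to contradict $2H>\mbox{\rm Ch}(X)$.

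I expect Part B to be the main obstacle. The barrier argument is clean when $\Sigma$ is properly embedded with bounded curvature, but neither property may be assumed a priori: stability does not by itself bound $|A_\Sigma|$ once $H\neq 0$, there is no ambient scaling to imitate the $\rth$ proof, and complete stable $H$-surfaces in homogeneous spaces can even fail to be proper (witness $\Hip^3$, Theorem~\ref{mainBaris}). Thus the hard work is to show that a hypothetical complete stable $H$-surface with $H>H(X)$ can neither hide in the interior of a compact region nor spiral without bound, so that the first contact with the translated barrier occurs at an interior point, or else to convert stability directly into an ambient isoperimetric inequality violating $2H\le\mbox{\rm Ch}(X)$. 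A promising route is to play the positive Jacobi function granted by stability against the normal components $\langle F_i,N\rangle$ of the right-invariant Killing fields $F_i$ of $X$, which are themselves Jacobi functions and generically change sign, in the spirit of the index computations underlying Theorem~\ref{main5}.
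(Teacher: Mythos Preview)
The statement you are trying to prove is listed in the paper as an \emph{open conjecture}, not as a theorem; there is no ``paper's own proof'' to compare against. What the paper does do, in the paragraph immediately following Conjecture~\ref{conj:stable}, is establish one of your two inequalities and leave the other as the content of the conjecture. So your write-up should be read as a proof \emph{strategy} for an open problem, and assessed as such.

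\textbf{Lower bound $H(X)\le S(X)$.} Here the paper already gives a complete argument, and it is different from yours. Writing $\widehat H(X)$ for your $S(X)$, the paper invokes Theorem~1.5 of~\cite{mmpr2}: there exists a properly embedded, complete \emph{stable} $H(X)$-surface in $X$ arising as a leaf of an $H(X)$-foliation of $X$. This immediately gives $H(X)\le\widehat H(X)$. The point is that in a foliation every leaf is a limit leaf, so the Stable Limit Leaf Theorem applies cleanly and yields genuine (not merely weak) stability for free. Your alternative route through limits of isoperimetric boundaries $\Sigma_n=\partial\Delta_n$ is more circuitous and has a real gap exactly where you flag it: you need the limiting lamination to possess a \emph{limit leaf} in order to invoke Theorem~\ref{thmstable}, and ``arranging base points so that sheets accumulate'' is not something you can simply choose to do. If the translated $\Sigma_n$ converge with multiplicity one to a single properly embedded leaf, you are stuck with only weak stability inherited from the isoperimetric problem. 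The paper's foliation approach sidesteps this entirely.

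\textbf{Upper bound $S(X)\le H(X)$.} This is the genuine content of the conjecture, and the paper does not claim it. Your barrier/sweep-out sketch and your Cheeger-type alternative are reasonable heuristics, and you are right to identify the core obstruction: a hypothetical complete stable $H$-surface with $H>H(X)$ need not be proper and need not have bounded curvature (as Theorem~\ref{mainBaris} already warns for $\Hip^3$), so the first-contact argument with a translated closed barrier $M_0$ has no reason to produce an interior tangency. Nothing in your proposal closes this gap; you have correctly located the difficulty but not resolved it. The suggestion to play a positive Jacobi function against the sign-changing functions $\langle F_i,N\rangle$ is appealing but, as stated, is only a direction rather than an argument.

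In summary: the inequality $H(X)\le S(X)$ is known, by a cleaner mechanism than the one you propose; the reverse inequality remains open and your outline does not change that.
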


Regarding Conjecture~\ref{conj:stable}, define $\widehat{H}(X)$ to be the
supremum in the right-hand-side of (\ref{eq:conj:stable}).
By Theorem~1.5 in~\cite{mmpr2}, there  exists a
properly embedded, complete stable $H(X)$-surface in $X$
that is part of an $H(X)$-foliation of $X$. Thus, $H(X)\leq \widehat{H}(X)$.
}

 Remember from
the discussion after Definition~\ref{Crit-Cheg} that the main difficulty in completing
the proof of the generalization of Theorem~\ref{main5} to the case that $X$ is
diffeomorphic to $\R^3$, is to obtain the area estimates $(\star)$ for index-one
$H$-spheres in $X$. We next explain why the validity of Conjecture~\ref{conj:stable}
would imply that the area estimates $(\star )$ hold. To see this, consider
a sequence of index-one spheres $S_{H_n}$ immersed in $X$
with $H_n\searrow H_{\infty }\geq 0$ and with areas diverging to infinity.
In~\cite{mmpr4} it is proved that one can produce an appropriate limit of
left translations of $S_{H_n}$ which is a  stable $H_{\infty }$-surface
in $X$. Therefore, $H_{\infty }\leq \widehat{H}(X)$. As by definition $H(X)\leq
H_n$ for all $n\in \N$, then $H(X)\leq H_{\infty }$. As $H(X)=\widehat{H}(X)$
because we are assuming the validity of Conjecture~\ref{conj:stable}, then
we conclude that $H_{\infty }=H(X)$,
which proves the area estimates $(\star )$. This argument also shows that
the validity of Conjecture~\ref{conj:stable} would imply that
both Conjecture~\ref{conj3.16*}
and the first statement in Conjecture~\ref{conj3.17*} hold.

%By the work in~\cite{mmpr4},
%aaa
%the validity of
%Conjecture~\ref{conj:stable} would imply Conjecture~\ref{conj3.16*}
%and the first statement in Conjecture~\ref{conj3.17*} (essentially, this is
%because if a sequence of index-one spheres $S_{H_n}\looparrowright X$
%with $H_n\searrow H_{\infty }\geq 0$ have
%areas diverging to infinity, then one can produce an appropriate limit of
%left translations of $S_{H_n}$ which is a stable $H_{\infty }$-surface
%in $X$, which in turn implies that $H_{\infty }=H(X)$ and this is enough to
%conclude both Conjecture~\ref{conj3.16*}
%and the first statement in Conjecture~\ref{conj3.17*}.

\begin{conjecture}[CMC Product Foliation Conjecture, Meeks-Mira-P\'erez-Ros]
\label{conj:product}
\mbox{} \vspace{-.6cm}
\ben
\item
If $X$ is diffeomorphic to $\R^3$, then given $p\in X$ there exists
a smooth CMC product foliation of $X-\{ p \} $ by spheres.
\item Let $\mathcal{F}$ be a CMC foliation of $X$, i.e., a foliation all
whose leaves have constant mean curvature (possibly varying from
leaf to leaf). Then
$\mathcal{F}$ is a product foliation by topological planes with absolute
mean curvature function bounded from above by $H(X)$.
\end{enumerate}
\end{conjecture}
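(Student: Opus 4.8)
For the first statement, the plan is to assemble the foliation from the moduli space of immersed $H$-spheres in $X$, granting the extension of Theorem~\ref{main5} to the case $X\cong\rth$ that is announced as work in progress: for each $H\in(H(X),\infty)$ there is a unique-up-to-left-translation, Alexandrov embedded $H$-sphere $S_H$ admitting a center of symmetry. First I would left-translate each $S_H$ so that its center is the prescribed point $p$, obtaining an analytic family $\{S_H\}_{H>H(X)}$. Next I would prove the spheres are nested: if $H(X)<H'<H$ and $S_H$, $S_{H'}$ met, they would be tangent with distinct constant mean curvatures, and the interior maximum principle (Theorem~\ref{thmintmaxprin}) applied on the mean convex side would force them to coincide, a contradiction; analyticity in $H$ together with this gives a strict nesting, the ball bounded by $S_H$ lying inside that bounded by $S_{H'}$. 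The endpoint behavior is then pinned down by curvature and comparison estimates: as $H\to\infty$ the sphere $S_H$ must collapse to $p$ (small geodesic spheres about $p$ have mean curvature $\sim 1/r$, and by uniqueness $S_H$ approximates one of radius $\sim 1/H$), while as $H\to H(X)^+$ the area estimates of Meeks--Mira--P\'erez--Ros force the inradius of $S_H$ to diverge, so $S_H$ eventually leaves every compact set. Combining nesting with these two limits shows $\bigcup_{H>H(X)}S_H=X-\{p\}$, and the smoothness and transversality of the family upgrade this to a smooth CMC product foliation $X-\{p\}\cong\esf^2\times(H(X),\infty)$. The main obstacle here is not the assembly but the input: the existence and uniqueness of $H$-spheres for all $H>H(X)$ is exactly the a priori area estimate $(\star)$ that remains to be established.

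For the second statement I would first establish the structure of $\cF$ and only afterwards the mean curvature bound. Since $X$ is a metric Lie group it is homogeneous, hence of bounded geometry, so the curvature estimate for CMC foliations (Theorem~\ref{thm5.7}) applied on balls of fixed radius yields a uniform bound $\sup_X|A_{\cF}|<\infty$. Each leaf is therefore complete with bounded second fundamental form; for leaves with $H\neq0$ this gives positive injectivity radius and hence proper embeddedness in $X$ (Corollaries~\ref{cor10.6} and~\ref{corinj2} in their homogeneous-ambient versions), and minimal leaves are proper as well (Corollary~\ref{corolMLCTR3}). I would then rule out compact leaves and Reeb components: a sphere leaf is excluded by Reeb global stability (it would force $X\cong\esf^2\times\R$), while a Reeb component is excluded because its interior plane leaves would accumulate on the boundary torus, contradicting the proper embeddedness just obtained. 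Thus $\cF$ is Reebless with no compact leaves, so by Novikov theory every leaf of this codimension-one foliation of the simply connected $X\cong\rth$ is a topological plane, and a classical theorem of Palmeira gives that $\cF$ is a product foliation with leaf space $\R$.

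It remains to bound the mean curvature, and this is where I expect \textbf{the main difficulty}. The cleanest route is a barrier comparison against the critical foliation: by Meeks--Mira--P\'erez--Ros there is an $H(X)$-foliation $\mathcal{G}$ of $X$ whose leaves are properly embedded planes of constant mean curvature exactly $H(X)$. If some leaf $L$ of $\cF$ had $H_L>H(X)$ in the transverse orientation, I would slide the leaves of $\mathcal{G}$ toward $L$ to a first point of contact, where tangency with $H_L>H(X)$ would contradict Theorem~\ref{thmintmaxprin}. A complementary intrinsic estimate comes from the transverse unit field $N$, for which $\Div_X N=2H_{\cF}$; integrating over compact exhausting regions bounds the $H_{\cF}$-average over near-optimal Cheeger regions by $\tfrac12\,\mathrm{Ch}(X)=H(X)$. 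Upgrading either of these to a pointwise bound on \emph{every} leaf is the crux: the comparison needs a maximum principle at infinity to guarantee a finite first contact between two complete plane foliations, and the averaging must be localized near an extremal leaf. I expect the decisive step to be the stability theory, namely that the transverse variation of $\cF$ yields, on a leaf where $H_{\cF}$ is transversally extremal, a positive supersolution of the Jacobi operator, so that leaf is stable by the Fischer--Colbrie criterion; together with the Stable Limit Leaf Theorem (Theorem~\ref{thmstable}) and the identity $H(X)=\widehat{H}(X)$ from the Stability Conjecture~\ref{conj:stable}, this would give $|H_{\cF}|\le H(X)$ everywhere. Thus establishing $H(X)=\widehat{H}(X)$, and the attainment of the transverse extrema, is the hard part on which the second statement hinges.
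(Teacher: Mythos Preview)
The statement you are attempting to prove is a \emph{conjecture}, listed in the paper's final section of open problems; the paper gives no proof of it. What the paper does provide is: (a) the remark that item~(1) holds when the isometry group of $X$ has dimension at least four, (b) the observation (via Novikov's Reeb-component theorem and the nonexistence of stable compact two-sided $H$-surfaces in $\mbox{SU}(2)$) that the mere existence of a CMC foliation forces $X$ to be diffeomorphic to $\R^3$, and (c) a list of consequences that \emph{would} follow from item~(1). So there is no ``paper's own proof'' to compare against; your proposal is a strategy for an open problem.

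Your outline correctly identifies where the real obstructions lie---the area estimates~$(\star)$ for item~(1), and the equality $H(X)=\widehat{H}(X)$ of Conjecture~\ref{conj:stable} for item~(2)---and is honest that these are unproved inputs. But there is a further genuine gap in your approach to item~(1): the extension of Theorem~\ref{main5} to $X\cong\R^3$ only yields \emph{Alexandrov embedded} $H$-spheres, not embedded ones, and an immersed sphere that is merely Alexandrov embedded cannot be a leaf of a foliation of $X-\{p\}$. Your nesting argument via the interior maximum principle likewise presupposes that the $S_H$ are embedded hypersurfaces bounding honest balls in $X$. Embeddedness of $H$-spheres in $X\cong\R^3$ is precisely item~2 of Conjecture~\ref{iso:conj2}, another open problem; without it the assembly step does not go through even granting~$(\star)$.

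For item~(2), your invocation of Corollaries~\ref{cor10.6} and~\ref{corolMLCTR3} ``in their homogeneous-ambient versions'' is optimistic: the properness results for $H=0$ in $\R^3$ rely on the specific structure of limit leaves being planes (Theorem~\ref{thmstablecompleteplane}), and the analogue in a general metric Lie group is not available. Your Palmeira/Novikov portion is the one piece that is on solid ground, and it matches in spirit the paper's own partial argument ruling out $X\cong\esf^3$.
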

Since spheres of radius $R$ in $\rth$ or in $\Hip^3$ have constant
mean curvature, item~(1) of the above conjecture holds in these
spaces. In fact it can be shown that the conjecture holds if
the isometry group of $X$ is at least four-dimensional.

%\textcolor{blue}{
%\marginpar{\footnotesize I don't understand the argument. Remove?}
%More generally, by work of Meeks, P\'erez and Ros~\cite{mpr19} on CMC
%foliations ${\mathcal F}$ of complete, homogeneously regular
%Riemannian 3-manifolds with a given bound on the absolute
%sectional curvature (not necessarily a metric Lie group), the
%supremum $\Delta$ of the mean curvature of the leaves of ${\mathcal
%F}$ is uniformly bounded independently of the choice of the CMC
%foliation ${\mathcal F}$. In the case of a simply-connected,
%3-dimensional metric Lie group $X$, the related supremum
%$\Delta$(X) can be proven to be achieved by a complete, strongly
%stable $H$-surface with $H=\Delta(X)$. Hence, the part of the conjecture
%that the mean curvature
%of the leaves of ${\mathcal F}$ in
%Conjecture~\ref{conj:product} is at most $H(X)$ would follow from the validity of
%Conjecture~\ref{conj:stable}; it can be shown that the leaves of ${\mathcal F}$ in
%Conjecture~\ref{conj:product} is at most $H(X)$ would follow from the validity of
%Conjecture~\ref{conj3.19*}.
%}

Regarding item~(2) of Conjecture~\ref{conj:product}, we remark that the existence of
a CMC foliation in $X$ implies that $X$ is diffeomorphic to $\R^3$. To see this, we argue
by contradiction: suppose that $\mathcal{F}$ is a CMC foliation of a
metric Lie group diffeomorphic to $\esf^3$. Novikov~\cite{nov1}
proved that any foliation of $\esf^3$ by surfaces has a Reeb
component $C$, which is topologically a solid doughnut with a
boundary torus leaf $\partial C$ and the other leaves of
$\mathcal{F}$ in $C$  all have $\partial C$ as their limits sets.
Hence, all of leaves of $\mathcal{F}$ in $C$ have the same mean
curvature as $\partial C$. By the Stable Limit Leaf Theorem for
$H$-laminations, $\partial C$ is  stable. But an embedded
compact, two-sided $H$-surface in SU$(2)$ is never  stable,
since some right invariant Killing field induces a Jacobi function
which changes sign on the surface.

Suppose for the moment that item~(1) in
Conjecture~\ref{conj:product} holds and we will point out some
important consequences. Suppose $\mathcal{F}$ is a smooth CMC
product foliation of $X-\{ p\}$ by spheres, $p$ being a point in
$X$. Parameterize the space of leaves of $\mathcal{F}$ by their mean
curvature; this can be done by the maximum principle for
$H$-surfaces, which shows that the spheres in $\mathcal{F}$ decrease
their positive mean curvatures at the same time that the volume of
the enclosed balls by these spheres increases. Thus, the mean
curvature parameter for the leaves of $\mathcal{F}$ decreases from
$\infty $ (at $p$) to some value $H_0\geq 0$.
We claim that
\begin{quote}
{\it $H_0=H(X)$ and every compact $H$-surface in $X$
satisfies $H>H(X)$.}
\end{quote}
To see the claim, we argue by contradiction. Suppose that
there exists an immersed closed surface $M$ in $X$ such that the maximum value of the
absolute mean curvature function of $M$ is less than or equal to
$H_0$. Since $M$ is compact, then $M$ is contained in the ball
enclosed by some leaf $\Sigma $ of $\mathcal{F}$. By left
translating $M$ until touching $\Sigma $ at a first time, we obtain a
contradiction to the usual comparison principle for the mean
curvature, which finishes the proof of the claim. With this property in mind,
we now list some consequences of item~(1) in
Conjecture~\ref{conj:product}.
\begin{enumerate}
\item
All leaves of $\mathcal{F}$ have index one. This is because
the leaves of $\mathcal{F}$ bounding balls of small volume
have this property and as the volume increases, the multiplicity of
zero as an eigenvalue of the Jacobi operator of the corresponding
boundary sphere cannot exceed three by Cheng's theorem~\cite{cheng1}.
\item
All leaves of $\mathcal{F}$ are weakly stable. To see this,
note that every function $\phi $ in the nullity of a leaf $\Sigma $
of $\mathcal{F}$ is induced by a right invariant Killing field on
$X$ (this is explained in the paragraph just before Conjecture~\ref{conj3.17*}),
and hence, $\int _{\Sigma }\phi =0$ by the Divergence Theorem
applied to the ball enclosed by $\Sigma $. In this situation,
Koiso~\cite{ko1} proved that the weak stability of $\Sigma $ is
characterized by the non-negativity of the integral $\int _{\Sigma}
u$, where $u$ is any smooth function on $\Sigma$ such that $Lu=1$ on
$\Sigma $ (see also Souam~\cite{so3}). Since the leaves of
$\mathcal{F}$ can be parameterized by their mean curvatures, the
corresponding normal part $u$ of the associated variational field satisfies
$u>0$ on $\Sigma $, $Lu=1$ and $\int _{\Sigma }u>0$. Therefore,
$\Sigma $ is weakly stable.

\item The leaves of $\mathcal{F}$ are the unique
$H$-spheres in $X$ (up to left translations), by
Theorem~\ref{thm:index1}.
\end{enumerate}

If additionally the Alexandrov Uniqueness Conjecture~\ref{conjAlex}
holds, then the constant mean curvature spheres in $\mathcal{F}$ are
the unique (up to left translations) compact $H$-surfaces in $X$
which bound regions. As explained in the second paragraph just after Conjecture~\ref{conjAlex},
this would also imply that the leaves of ${\cal F}$ are the unique (up to left translations)
solutions to the isoperimetric problem in $X$.
%
% Since the volume of these regions is determined
%by the boundary spheres, one would have the validity of the next
%conjecture.
%
%\begin{conjecture}[Isoperimetric Domains Conjecture, Meeks-Mira-P\'erez-Ros]
%\label{conj:isoper}
%\mbox{}
%Suppose $X$ is diffeomorphic to $\R^3$. Then:
%\ben
%%\item $ H(X)=\inf \{ \mbox{mean curvatures of isoperimetric surfaces
%%in $X$}\} $.
%\item  Isoperimetric surfaces in $X$ are spheres.
%\item For each fixed volume $V_0$, solutions to the isoperimetric problem in $X$
%for volume $V_0$ are unique up to left translations in $X$. \een
%\end{conjecture}

%Recall by Proposition~\ref{prop:CH} that in the case the metric Lie
%group $X$ is of the form $\R^2\rtimes _A\R $ for some matrix $A\in
%\mathcal{M}_2(\R )$, then item~(1) in the previous conjecture holds.

The next conjecture is motivated by the isoperimetric inequality of
White~\cite{wh13}.

\begin{conjecture}[Isoperimetric Inequality Conjecture, Meeks-Mira-P\'erez-Ros]
\label{conj:isoper}
Suppose that $X$ is diffeomorphic to $\rth$. Given any $\ve>0$ and $L_0>0$, there
exists  $C(\ve,L_0)$ such that for any compact immersed surface $\Sigma$
in $X$ with connected boundary of length at most $L_0$ and which is minimal or has
absolute mean curvature function bounded from above by $\mathrm{Ch}(X)-\ve$, then
$$\mathrm{Area}(\Sigma)\leq C(\ve,L_0).$$
\end{conjecture}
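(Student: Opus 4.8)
The plan is to prove the estimate as a constant mean curvature refinement of the isoperimetric inequality for minimal surfaces, using as the essential global input the characterization $\mathrm{Ch}(X)=2H(X)$ of Meeks, Mira, P\'erez and Ros~\cite{mmpr2} and the existence, guaranteed by that same work, of a stable constant mean curvature foliation $\mathcal F_0$ of $X$ by leaves of mean curvature $H(X)$. First I would make two reductions. Since a compact surface with connected boundary may still be disconnected, I would discard the closed components (these are closed $H$-surfaces with $|H|\le\mathrm{Ch}(X)-\ve$, each of uniformly bounded area by the sphere area estimates of~\cite{mmpr1,mmpr4} and bounded in number by a separate packing argument) and assume $\Sigma$ is connected with $\partial\Sigma$ connected. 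I would then fix the unit normal field $\nu$ of $\mathcal F_0$, which by homogeneity has uniformly bounded second fundamental form $c_0$ and satisfies $\mathrm{div}_X\nu\equiv\mathrm{Ch}(X)$, since in the average convention each leaf has mean curvature $H(X)=\tfrac12\mathrm{Ch}(X)$.

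The direct step is a flux computation. Writing $\nu^T$ for the tangential part of $\nu$ along the immersion and applying the divergence theorem on $\Sigma$, one gets \[ \int_\Sigma\big(\mathrm{div}_X\nu-\langle\nabla_N\nu,N\rangle+2H_\Sigma\langle\nu,N\rangle\big)\,dA=\int_{\partial\Sigma}\langle\nu^T,\eta\rangle\,ds\le \mathrm{Length}(\partial\Sigma)\le L_0, \] where $\eta$ is the outward conormal and $N$ the unit normal of $\Sigma$. Since $|\langle\nabla_N\nu,N\rangle|\le c_0$ and $|2H_\Sigma\langle\nu,N\rangle|\le 2\sup_\Sigma|H_\Sigma|$, the integrand is bounded below by $\mathrm{Ch}(X)-c_0-2\sup_\Sigma|H_\Sigma|$, so one obtains the linear isoperimetric inequality $\mathrm{Area}(\Sigma)\le L_0/\delta$ whenever $\sup_\Sigma|H_\Sigma|\le\tfrac12(\mathrm{Ch}(X)-c_0)-\delta$. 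Replacing $\nu$ by the gradient flow of a uniformly convex exhaustion adapted to $\mathcal F_0$ (a Busemann-type function in the hyperbolic-like geometries) removes the $c_0$ term and pushes the threshold up to $|H_\Sigma|<H(X)$; in particular this settles the full subcritical range $|H_\Sigma|\le H(X)-\delta$, and it settles the minimal case whenever $\mathrm{Ch}(X)>0$. When $\mathrm{Ch}(X)=0$ (so $X=\rth$ or a nilpotent group) the minimal case instead follows from the classical isoperimetric inequality for minimal surfaces with connected boundary, which is exactly the proven part of Conjecture~\ref{isop}.

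The hard part will be closing the factor-of-two gap between the calibration threshold $H(X)$ and the conjectured threshold $\mathrm{Ch}(X)=2H(X)$; this is the same obstruction as the area estimates $(\star)$ for $H$-spheres described after Definition~\ref{Crit-Cheg}, and I expect it to require a compactness-and-contradiction argument rather than a pointwise calibration. Assuming the estimate fails, I would take a sequence $\Sigma_n$ of connected surfaces with $\mathrm{Length}(\partial\Sigma_n)\le L_0$, $\sup|H_{\Sigma_n}|\le\mathrm{Ch}(X)-\ve$, and $\mathrm{Area}(\Sigma_n)\to\infty$, and use left translations of $X$ to recenter at interior points $p_n$ of large area density lying at fixed intrinsic distance from $\partial\Sigma_n$. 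The curvature estimates of Meeks and Tinaglia for $H$-disks in homogeneous $3$-manifolds (Theorem~\ref{cest3}) bound $|A_{\Sigma_n}|$ uniformly away from the boundary, so by the Minimal Lamination Closure Theorem~\ref{thmmlct} a subsequence converges to a complete (weak) $H_\infty$-lamination $\mathcal L$ of $X$ with $H_\infty\le\mathrm{Ch}(X)-\ve$ and with a leaf through the origin; because the boundaries recede to infinity and $\mathrm{Area}(\Sigma_n)\to\infty$, $\mathcal L$ carries a noncompact leaf of infinite area that is a limit leaf, hence stable by the Stable Limit Leaf Theorem~\ref{thmstable}. The contradiction I would aim for is that $X$ admits no complete stable $H$-surface with $H$ in the range $(H(X),\mathrm{Ch}(X))$ traversed here: this is the content of the Stability Conjecture~\ref{conj:stable} ($H(X)=\widehat H(X)$), together with the flux-degeneracy analysis above showing that the only way a subcritical surface can have unbounded area with bounded connected boundary is to become asymptotically tangent to $\mathcal F_0$ along a region of infinite leaf-area, which the intrinsic isoperimetric inequality of the $H(X)$-leaves forbids. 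Thus the crux is twofold: establishing the nonexistence of these complete stable surfaces (equivalently, the relevant case of Conjecture~\ref{conj:stable}), and ruling out that the limit lamination splits off spurious components, for which the connectedness of $\partial\Sigma_n$ is essential.
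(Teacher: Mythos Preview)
The statement you are trying to prove is Conjecture~\ref{conj:isoper} in the paper, and the paper offers no proof of it: it is listed as an open problem attributed to Meeks, Mira, P\'erez and Ros, motivated by an isoperimetric inequality of White. There is therefore no ``paper's own proof'' to compare your proposal against.

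That said, your proposal is not a proof but a partial reduction, and it is worth being explicit about where the genuine gaps lie. Your flux computation against the unit normal $\nu$ of the $H(X)$-foliation $\mathcal F_0$ is sound and does give an area bound, but only in the range $\sup_\Sigma|H_\Sigma|<H(X)=\tfrac12\mathrm{Ch}(X)$; your claim that one can ``remove the $c_0$ term'' by replacing $\nu$ with the gradient of a Busemann-type function is unsubstantiated (there is no reason a general metric Lie group diffeomorphic to $\rth$ admits a function whose gradient has divergence exactly $\mathrm{Ch}(X)$ and vanishing normal derivative in every direction), and even granting it you still only reach $|H_\Sigma|<H(X)$. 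The conjecture asks for the full range $|H_\Sigma|\le\mathrm{Ch}(X)-\ve=2H(X)-\ve$, which is twice as large.

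For that remaining range you explicitly invoke Conjecture~\ref{conj:stable} (the Stability Conjecture, $H(X)=\widehat H(X)$), which is itself open. Your contradiction argument also has a real hole: the curvature estimates of Theorem~\ref{cest3} require a fixed \emph{positive} lower bound $\cH$ on the mean curvature, so they give nothing when $H_\Sigma$ is allowed to be small or zero, and in any case they apply to $H$-disks, not to surfaces with merely bounded absolute mean curvature function. Finally, the minimal case when $\mathrm{Ch}(X)=0$ does not reduce to the classical $4\pi A\le L^2$ inequality: that inequality is only known in $\rth$ (and is itself Conjecture~\ref{isop} in the general Hadamard setting), whereas you would need it in $\mathrm{Nil}_3$ or other metric Lie groups with $H(X)=0$. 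In short, your calibration step is a reasonable first observation, but the proposal does not close the factor-of-two gap without assuming other conjectures in the paper.
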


The next conjecture exemplifies another aspect of the special role
that the critical mean curvature $H(X)$  of $X$ might play in the
geometry of $H$-surfaces in $X$.

\begin{conjecture}[Stability Conjecture, Meeks-Mira-P\'erez-Ros]
 A complete,  stable $H$-surface $\Sigma $ in $X$ with
$H=H(X)$ is
an entire graph with respect to some Killing field $V$,
i.e., every integral curve of $V$ intersects exactly once to $\Sigma $ (transversely).
%projection, i.e., the projection of $X$ to the quotient space of
%integral curves of $V$.
In particular, if $H(X)=0$, then any
complete,  stable minimal surface $\Sigma$ in $X$ is a leaf
of a minimal foliation of $X$ and so $\Sigma$ is actually
homologically area-minimizing in $X$.
\end{conjecture}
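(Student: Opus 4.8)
The plan is to reduce the conjecture to the already-known existence, from Theorem~1.5 in \cite{mmpr2} (recalled in the discussion preceding Conjecture~\ref{conj:stable}), of a stable $H(X)$-foliation $\cF$ of $X$, and then to prove that every complete stable $H(X)$-surface must coincide with a leaf of a left translate of $\cF$. First I would record that $\Sigma$ has uniformly bounded geometry: since $X$ is homogeneous it is homogeneously regular with a global bound $\Lambda$ on its absolute sectional curvature, so the stability curvature estimates of Theorems~\ref{thmcurvestimstable} and~\ref{thmcurvestimstable1}---whose effective scale saturates at the ambient curvature scale $\pi/\sqrt{\Lambda}$, exactly as in Theorem~\ref{thm5.7}---yield $\sup_{\Sigma}|A_{\Sigma}|<\infty$. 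Bounded second fundamental form together with the CMC generalization of the Minimal Lamination Closure Theorem~\ref{thmmlct} (using the curvature estimates of Meeks and Tinaglia) then shows that the closure $\overline{\Sigma}$ has the structure of a weak $H(X)$-lamination $\cL$ of $X$, and that $\Sigma$ is either properly embedded or limits onto a nonempty sublamination all of whose leaves are stable by the Stable Limit Leaf Theorem~\ref{thmstable}. I would also first establish, for the model foliation $\cF$, that its leaves are entire graphs with respect to the Killing field $V$ transverse to $\cF$, since this is the property that the general surface is supposed to inherit.

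The heart of the argument is a \emph{first-contact} comparison of $\Sigma$ against $\cF$. The foliation sweeps out all of $X$, and its leaves can be displaced through $X$ by the flow of $V$, equivalently by the one-parameter group of left translations adapted to $\cF$. Given $\Sigma$, I would slide a leaf $L_s$ of $\cF$ from ``infinity'' toward $\Sigma$ until a first interior point of contact $p$ occurs, with $\Sigma$ lying on one side of $L_s$ near $p$. Since both surfaces have the same constant mean curvature $H(X)$ and comparably oriented mean curvature vectors, the Interior Maximum Principle (Theorem~\ref{thmintmaxprin}) forces $\Sigma=L_s$ in a neighborhood of $p$, and unique continuation (both surfaces are real analytic, being solutions of the elliptic CMC equation in the real-analytic left invariant metric) then propagates the coincidence over the whole connected surface. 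Hence $\Sigma$ is a leaf of a translate of $\cF$, and in particular an entire graph with respect to $V$, which is the assertion. The ``in particular'' clause is immediate afterwards: when $H(X)=0$ the foliation $\cF$ is a minimal foliation, so its unit normal $2$-form $\alpha=\iota_{N}\,dV_X$ is closed precisely because the leaves are minimal and calibrates each leaf with comass one; therefore every leaf---and hence $\Sigma$---is homologically area-minimizing in $X$.

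The main obstacle, and the reason the statement is still a conjecture, is the control of $\Sigma$ at infinity needed to make the sliding argument close up. A priori $\Sigma$ could fail to be proper (the embedded Calabi--Yau pathology), in which case it accumulates on the stable sublamination of $\cL$ and a genuine ``first'' point of contact with an incoming leaf may never be reached; alternatively $\Sigma$ could be proper but asymptotically ``larger'' than every leaf of $\cF$, so that the leaves approach it only in the limit without ever touching. Ruling these out amounts to the trapping statement that a complete stable $H(X)$-surface is confined between two consecutive translated leaves of $\cF$. I expect this to require genuinely new a priori area and separation estimates tailored to the critical value $H=H(X)$, of the same nature as the estimates $(\star)$ discussed after Definition~\ref{Crit-Cheg} and the sweepout estimates of \cite{mmpr2}; and, because both $V$ and the geometry of $\cF$ depend on the Lie-group type of $X$, the verification will most likely proceed case by case through the classification of metric Lie groups diffeomorphic to $\rth$, paralleling the program of \cite{mmpr1}. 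A secondary subtlety is that when $\Sigma$ is only immersed one must upgrade the local first-contact conclusion to a global one using that $\Sigma$ lies on the mean convex side of $\cF$, for which the local lamination structure on the mean convex side of a weak $H$-lamination (Proposition~\ref{prop10.2}) is the appropriate tool.
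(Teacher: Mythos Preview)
The statement is a \emph{conjecture}; the paper offers no proof, so there is nothing to compare your attempt against. What you have written is a plausible strategic outline, and you yourself correctly flag in the third paragraph that the argument does not close up and that this is why the problem remains open. The sliding/first-contact approach you sketch is indeed the natural one, and the paper implicitly points in the same direction by linking this conjecture to the Strong-Halfspace Conjecture in $\mathrm{Nil}_3$ that immediately follows it; halfspace theorems are proved by exactly such barrier arguments.

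Beyond the trapping/properness obstacle you identify, there are two further genuine gaps in the sliding step that you gloss over. First, to start the slide you need \emph{some} leaf of $\cF$ disjoint from $\Sigma$; establishing this is itself a halfspace-type statement and is not available a priori. Second, at a first point of contact you assert the mean curvature vectors are ``comparably oriented,'' but this is not automatic: if $\Sigma$ lies locally on the mean-concave side of $L_s$, the Interior Maximum Principle (Theorem~\ref{thmintmaxprin}) yields nothing, and weak $H$-laminations explicitly allow such one-sided tangencies with opposite mean curvature vectors (Proposition~\ref{prop10.2}). Controlling this orientation is part of the difficulty. Your calibration argument for the ``in particular'' clause is correct \emph{conditionally} on the first part: once $\Sigma$ is a leaf of a minimal foliation, the Rummler--Sullivan closed $2$-form calibrates it and homological area-minimization follows.
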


The previous conjecture is closely related to the next conjecture,
which in turn is closely tied to recent work of Daniel, Meeks and
Rosenberg~\cite{dmr2,dmr1} on halfspace-type theorems in
simply-connected, 3-dimensional metric semidirect products.

\begin{conjecture}[Strong-Halfspace Conjecture in Nil$_3$, Daniel-Meeks-Rosenberg]
A complete,  stable minimal surface in {\em Nil}$_3$ is
{either an entire}
graph with respect to the Riemannian submersion $\Pi \colon
\mbox{\em Nil}_3 \to \R^2$ or a vertical plane $\Pi ^{-1}(l)$,
where $l$ is a line in $\R^2$. In particular, by the results
in~\cite{dmr1}, any two properly immersed disjoint minimal surfaces
in {\em Nil}$_3$ are parallel vertical planes or they are entire
graphs $F_1, F_2$ over $\R^2$, where $F_2$ is  a vertical
translation of $F_1$.
\end{conjecture}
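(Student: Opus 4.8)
The plan is to follow the blueprint that proves complete stable minimal surfaces in $\R^3$ are planes (Theorem~\ref{thmstablecompleteplane}), replacing the flat geometry by the fibered structure of $\mathrm{Nil}_3$ and replacing the normal component of a translational Killing field by the \emph{angle function}. Let $\Sigma$ be a complete stable minimal surface in $\mathrm{Nil}_3$; after passing to the two-sided cover if necessary, stability in the sense of Definition~\ref{defstable} supplies a positive Jacobi function $w>0$ on $\Sigma$. Let $\xi$ be the unit Killing field tangent to the fibers of $\Pi$ and set $\nu=\langle N,\xi\rangle$, where $N$ is the unit normal. Since $\xi$ is Killing, $\nu$ is a Jacobi function, $L\nu=0$, for the operator $L$ in~\eqref{Jacobiop}, and $|\nu|\le 1$. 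The surface is transverse to the fibers exactly where $\nu\neq 0$, so $\Sigma$ is a local $\Pi$-graph near every such point and is tangent to $\xi$ along $\{\nu=0\}$. The whole proof reduces to the dichotomy: either $\nu\equiv 0$ or $\nu$ is nowhere zero.

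First I would dispose of the two endpoints of this dichotomy, which are the routine part. If $\nu\equiv 0$ then $\xi$ is everywhere tangent to $\Sigma$, so $\Sigma$ is invariant under the vertical flow and hence $\Sigma=\Pi^{-1}(\gamma)$ for a curve $\gamma\subset\R^2$; minimality forces $\gamma$ to be a geodesic of $\R^2$, i.e.\ a straight line, so $\Sigma$ is a vertical plane $\Pi^{-1}(l)$. If instead $\nu$ is nowhere zero, then $\Pi|_\Sigma$ is a local diffeomorphism, and using the uniform curvature bound of Step~1 together with completeness one shows by a standard monodromy/covering argument that $\Pi|_\Sigma\colon\Sigma\to\R^2$ is a covering map, hence (as $\R^2$ is simply connected) a diffeomorphism, so $\Sigma$ is an entire graph. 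The curvature bound itself is Step~1: since $\mathrm{Nil}_3$ is homogeneous, hence homogeneously regular and of bounded absolute sectional curvature $k_0$, the estimates of Theorem~\ref{thmcurvestimstable1} (and Theorem~\ref{thmcurvestimstable}) apply to the boundaryless stable surface $\Sigma$ and give a uniform bound $|A_\Sigma|\le C$, the distance to the boundary in these estimates being effectively capped at the scale $\pi/\sqrt{k_0}$ set by $k_0$. This bound also lets one take limits: by the Minimal Lamination Closure Theorem~\ref{thmmlct} (adapted to $\mathrm{Nil}_3$) the closure $\overline{\Sigma}$ is a minimal lamination whose limit leaves are stable by the Stable Limit Leaf Theorem~\ref{thmstable}.

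The crux --- and the step I expect to be the main obstacle --- is proving that $\nu$ cannot change sign. A direct computation with $L\nu=Lw=0$ shows that $\phi=\nu/w$ satisfies the divergence-form equation
\[
\Div\!\left(w^{2}\,\nabla\phi\right)=0
\]
on $\Sigma$. If one can prove that the weighted surface $(\Sigma,w^{2}\,dA)$ is parabolic, then $\phi$ must be constant, giving $\nu=c\,w$ and hence a fixed sign for $\nu$ (the case $c=0$ returning the vertical-plane alternative). The difficulty is precisely this Liouville/parabolicity statement: unlike the $\R^3$ situation, a complete stable minimal surface in $\mathrm{Nil}_3$ is not known a priori to have quadratic area growth, and $\phi=\nu/w$ need not be bounded since $w$ may decay. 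I would attack this on two fronts: (i) prove an area-growth or parabolicity estimate for complete stable minimal surfaces of bounded curvature in $\mathrm{Nil}_3$, possibly by constructing a universal superharmonic function on $\mathrm{Nil}_3$ in the spirit of Collin-Kusner-Meeks-Rosenberg; and (ii) in parallel, run a vertical-translation limit argument, studying limits of $\psi_{t_n}(\Sigma)$ under the vertical isometry flow $\psi_t$, and use the Stable Limit Leaf Theorem~\ref{thmstable} together with the interior maximum principle (Theorem~\ref{thmintmaxprin}) to rule out sign changes by a first-touching/nodal-domain comparison: a relatively compact nodal domain of $\nu$ would force the first Dirichlet eigenvalue of $L$ to vanish on a proper subdomain and hence to be negative on a slightly larger one, contradicting stability.

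Finally, granting the main statement, the ``in particular'' clause follows by the same mechanism as the Strong Half-space Theorem~\ref{thmhalf} in $\R^3$: given two disjoint properly immersed minimal surfaces in $\mathrm{Nil}_3$, the Meeks-Simon-Yau barrier construction~\cite{msy1} produces a complete stable minimal surface $\Sigma$ in the region between them, to which the main statement applies. If $\Sigma$ is a vertical plane, the two given surfaces lie on opposite sides of it and the half-space results of~\cite{dmr1} force them to be parallel vertical planes; if $\Sigma$ is an entire graph, the ordering and translation-rigidity of disjoint entire minimal graphs established in~\cite{dmr1} shows the two surfaces are entire graphs differing by a vertical translation.
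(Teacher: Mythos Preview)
The statement you are trying to prove is listed in the paper as a \emph{conjecture}, not a theorem: the paper provides no proof of it, and to the authors' knowledge it is an open problem. So there is nothing in the paper to compare your argument against.

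Your write-up is a reasonable outline of the standard strategy one would attempt (angle function $\nu$, stability Jacobi function $w$, the quotient $\phi=\nu/w$ satisfying a divergence-form equation, then a Liouville/parabolicity argument), and the endpoint cases $\nu\equiv 0$ and $\nu$ nowhere zero are handled correctly. But you yourself identify the gap: the step ``$(\Sigma,w^2\,dA)$ is parabolic, hence $\phi$ is constant'' is precisely what is not known. Neither of your two proposed attacks closes it. For (i), no universal superharmonic function on $\mathrm{Nil}_3$ adapted to this problem is known, and there is no a priori area-growth bound for complete stable minimal surfaces there. For (ii), the nodal-domain argument only rules out \emph{relatively compact} nodal domains of $\nu$; in a complete non-compact surface the nodal set $\{\nu=0\}$ can be non-compact, and then no contradiction with stability arises. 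The translation-limit idea via Theorem~\ref{thmstable} and Theorem~\ref{thmintmaxprin} does not obviously force a sign either, since limits of translates may well be vertical planes without constraining the original $\nu$. In short, your proposal reproduces exactly the known reduction and stops at exactly the obstacle that makes this a conjecture rather than a theorem.
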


\begin{conjecture}[Positive Injectivity Radius,
Meeks-P\'erez-Tinaglia] \label{conj:positive}
A complete embedded $H$-surface of finite topology in $X$  has
positive injectivity radius.
\end{conjecture}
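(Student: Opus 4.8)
The plan is to prove the statement by ruling out injectivity radius zero, reducing everything to a blow-up on the scale of topology combined with a flux computation. First I would dispose of the case in which $X$ is diffeomorphic to $\esf^3$: then $X$ is compact, so a complete surface is compact and trivially has positive injectivity radius. Hence I may assume $X$ is diffeomorphic to $\rth$, so that $H_3(X)=H_2(X)=0$ and the CMC Flux Formula (Theorem~\ref{fluxK}) produces a well-defined homological flux invariant for $1$-cycles on any $H$-surface in $X$, paired against the three right-invariant (hence Killing) vector fields of $X$. Since $X$ is homogeneous it is homogeneously regular (Definition~\ref{defhomgreg}), so the curvature estimates for $H$-disks in $X$ (Theorem~\ref{cest3}) are available.

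Let $M$ be a complete embedded $H$-surface of finite topology in $X$ and suppose, for contradiction, that $\inf_M I_M=0$. Finite topology means $M$ is a compact surface with finitely many punctures, so all of its ends are annular, and the infimum of $I_M$ cannot be attained on a compact subset. Thus there is a divergent sequence realizing the vanishing injectivity radius, and after passing to a subsequence it lies in a single annular end $E$ of $M$. The heart of the argument is the blow-up on the scale of topology at these points. I would invoke the Local Picture Theorem on the Scale of Topology (Theorem~\ref{tthm3introd}) --- in the CMC generalization of Meeks and Tinaglia for sequences of $H_n$-surfaces of fixed finite genus in a homogeneously regular $3$-manifold --- applied to $M$ inside $E$. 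Because $E$ is annular its genus is eventually zero, so case~6 (which forces unbounded genus by item~(6.3)) is excluded, and the rescalings $\lambda_n(M_n-p_n)$ with $\lambda_n=1/I_M(p_n)\to\infty$ converge, in the $C^k$ sense off a singular set, to a genus-zero minimal object in $\rth$: either a properly embedded minimal surface $M_\infty$ with $I_{M_\infty}\geq 1$ (case~4) or a two-column parking garage structure (case~5). The limit is minimal because the mean curvature rescales as $H/\lambda_n\to 0$, and the rescaled ambient $C^k$-converges to flat $\rth$.

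The contradiction comes from flux. The theorem produces homotopically nontrivial loops $\gamma_n\subset\lambda_n M_n$ whose rescaled lengths converge to $2$, so in the original metric $\mathrm{length}_g(\gamma_n)\to 0$; being nontrivial simple loops in the annular end, all $\gamma_n$ are homologous to the core of $E$ and hence share one flux value $F$ for each right-invariant Killing field $K$. I would show $F=0$: since $H_2(X)=0$, each short $\gamma_n$ bounds an ambient $2$-chain $\Sigma_n$ of area $O(\mathrm{length}_g(\gamma_n)^2)$ (isoperimetric filling of short curves), and the invariance in Theorem~\ref{fluxK} together with
\[
\mathrm{Flux}(\gamma_n,\Sigma_n,K)=\int_{\gamma_n} g(\eta_{\gamma_n},K)+2H\int_{\Sigma_n}g(N_{\Sigma_n},K)
\]
forces $|F|\leq \|K\|_\infty\,\mathrm{length}_g(\gamma_n)+2|H|\,\|K\|_\infty\,\mathrm{Area}(\Sigma_n)\to 0$. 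Consequently the conormal integral satisfies $\int_{\gamma_n}\eta_{\gamma_n}\,ds_g=-2H\int_{\Sigma_n}N_{\Sigma_n}$, of magnitude $O(|H|\,\mathrm{length}_g(\gamma_n)^2)$, and after rescaling (the conormal direction is homothety-invariant while arclength scales by $\lambda_n$) the rescaled conormal integral is $O(|H|/\lambda_n)\to 0$. In cases~5 and~6 this contradicts items~(5.2) and~(6.4), which assert that these rescaled conormal integrals converge to a horizontal vector of length $2$. In case~4 the loops $\gamma_n$ converge with multiplicity one to a homotopically nontrivial, zero-flux geodesic loop $\gamma_\infty$ on the genus-zero properly embedded minimal surface $M_\infty$; but by the classification in Theorem~\ref{classthm} a properly embedded minimal planar domain carrying a homotopically nontrivial loop is a catenoid or a Riemann minimal example, each of which has nonzero flux, a contradiction. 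Hence $\inf_M I_M>0$. For $H>0$ one then recovers bounded second fundamental form from the $X$-version of the equivalence in Corollary~\ref{cor10.6}, and for $H=0$ the positive injectivity radius is already the assertion.

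I expect the main obstacle to be the first ingredient: establishing the CMC version of the Local Picture Theorem on the Scale of Topology in the homogeneous group $X$, with the precise flux/conormal-convergence statements of items~(5.2) and~(6.4). In $\rth$ for minimal surfaces this is exactly Theorem~\ref{tthm3introd}, but promoting it to $H_n$-surfaces in $X$ requires the Meeks--Tinaglia CMC curvature and lamination machinery (Theorem~\ref{cest3} and the results in~\cite{mt14,mt1}) together with control of the rescaled ambient metrics, and one must check that the flux-convergence conclusions survive passage to the blow-up limit through its singular set. A secondary, more technical point is the uniform isoperimetric filling used to prove $F=0$, which must be carried out in $X$ rather than $\rth$; this is routine given the homogeneity of $X$.
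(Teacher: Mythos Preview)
The statement you are attempting to prove is presented in the paper as an \emph{open conjecture}; the paper gives no proof. Immediately after stating it, the authors note that the minimal case $H=0$ has been settled by Meeks and P\'erez in~\cite{mpe17}, and that Meeks and Tinaglia have unpublished proofs when the isometry group of $X$ has dimension four or six, but the general case remains open. So there is no ``paper's own proof'' to compare against.

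Your strategy is a reasonable program, and you correctly identify its principal gap: Theorem~\ref{tthm3introd} is stated and proved only for $0$-surfaces, and the CMC extension you need --- complete with the conormal-convergence conclusions (5.2) and (6.4) --- is not established in the paper nor, as far as I know, available in the literature in that form. Since the entire contradiction mechanism rests on those items, the proposal is at present a sketch, not a proof. Two further points deserve care. First, the conormal integrals in items~(5.2) and~(6.4) are computed in the rescaled exponential $\rth$-coordinates of $\lambda_n B_N(p_n,\ve_n)$, whereas your vanishing argument bounds the flux paired with right-invariant Killing fields of $X$; passing between these two quantities requires controlling how the right-invariant frame behaves under blow-up, which you do not address. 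Second, in case~4 the theorem does not furnish the loops $\gamma_n$ (they appear only in cases~5 and~6); you would have to construct them yourself from the length-$2$ geodesic loop on $M_\infty$ realizing $I_{M_\infty}(\vec{0})=1$ together with the multiplicity-one $C^k$ convergence, and then argue that the limit loop has zero flux on $M_\infty$ --- this last step is plausible via your filling estimate but needs the frame-comparison just mentioned.
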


Conjecture~\ref{conj:positive} is motivated by the partial result of
Meeks and P\'erez~\cite{mpe17}
that the injectivity radius of a
complete, embedded {\it minimal} surface of finite topology in a
homogeneous 3-manifold is positive (hence Conjecture~\ref{conj:positive}
holds for $H=0$).
A related result of Meeks and Per\'ez~\cite{mpe17}
when $H=0$ and of Meeks and Tinaglia (unpublished) when
$H>0$, is that if $Y$ is a complete locally homogeneous
3-manifold {with positive injectivity radius}
and $\Sigma$ is a complete embedded $H$-surface in
$Y$ with finite topology, then the injectivity radius function of
$\Sigma$ is {bounded away from zero on compact domains in $Y$.
Meeks and Tinaglia
(unpublished) have also shown that Conjecture~\ref{conj:positive}
holds for complete embedded $H$-surfaces of finite topology
in metric Lie groups $X$ with four or six-dimensional isometry
group.

\begin{conjecture}[Bounded Curvature Conjecture,
Meeks-P\'erez-Tinaglia] A complete embedded $H$-surface of finite
topology in $X$ with $H>0$ has bounded second
fundamental form.
\end{conjecture}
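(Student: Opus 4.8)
The first step is to observe that the statement is equivalent to a property of the injectivity radius. Since $X$ is homogeneous it is in particular homogeneously regular, so the equivalence stated above between positive injectivity radius and bounded second fundamental form for complete $(H>0)$-surfaces applies; it therefore suffices to prove that a complete embedded $H$-surface $M$ of finite topology in $X$ has positive injectivity radius, which is precisely Conjecture~\ref{conj:positive}. Indeed, positive injectivity radius $r_0$ gives bounded curvature at once: each $p\in M$ is then the center of an embedded geodesic $H$-disk $B_M(p,r_0)\subset M$, and Theorem~\ref{cest3} applied with $\delta=r_0/2$ and $\cH=H$ yields $|A_M|(p)\le K(r_0/2,H,X)$. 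So the entire difficulty is to exclude unbounded curvature (equivalently zero injectivity radius). Using that $M$ has finite topology, I would write it as the union of a compact core and finitely many annular ends $E_1,\dots,E_k$; the core has bounded curvature trivially, and after passing to a subsequence any divergent sequence of points at which $|A_M|$ blows up lies eventually in a single end $E=E_j$. The problem thus reduces to a curvature estimate on an \emph{annular end} in $X$, exactly as Theorem~\ref{annulus} reduces the classical $\rth$ case to the Meeks--Tinaglia curvature estimates for $H$-annuli.

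The plan for the end estimate is to imitate the $\rth$ argument of Meeks and Tinaglia, in which the disk estimate (Theorem~\ref{cest2}) is upgraded to a curvature estimate for $H$-annuli whose constant depends on the length of the CMC flux of the generator of the first homology of the annulus. In $X$ the controlling invariant is available: each right invariant Killing field $F$ of $X$ pairs with the generating $1$-cycle $\gamma$ of an end $E$ through the quantity $\mathrm{Flux}(\gamma,\Sigma,F)$ of the CMC Flux Formula~\ref{fluxK}, and this pairing depends only on the homology class of $\gamma$; hence $E$ carries a well-defined, fixed flux vector. I would first prove the analogue of the \emph{extrinsic} curvature estimate for $H$-annuli in $X$ by contradiction: if curvature blew up along a divergent sequence $p_n\in E$, then blowing up on the scale of curvature and invoking the Local Picture Theorem~\ref{thm3introd} (with $N=X$) produces a properly embedded, non-flat \emph{minimal} surface $M_\infty\subset\rth$ with $|A_{M_\infty}|\le 1$ and $|A_{M_\infty}|(\vec 0)=1$, whose genus and homological complexity are bounded by those of $M$. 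The model surface $M_\infty$ governs the local geometry of $M$ near the $p_n$, and the fixed flux of $E$ must then be reconciled with this model.

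The final step, and the main obstacle, is to turn the fixed flux of $E$ against the existence of such a curvature blow-up. In $\rth$ this is done through translation limits, using the CMC Dynamics Theorem~\ref{T} and the Minimal Element Theorem~\ref{sp2}: translates of subdomains of $E$ subconverge to a strongly Alexandrov embedded $H$-surface in $\rth$, whose minimal elements are Delaunay surfaces, and the non-zero flux of a Delaunay surface contradicts the flux forced by the annular model (this is exactly how Step~2 in the sketch of Theorems~\ref{rest2} and~\ref{cest2} produces a contradiction, and how Theorem~\ref{annulus} follows). To carry this out in $X$ one must build the analogues of the Dynamics Theorem~\ref{T} and the Minimal Element Theorem~\ref{sp2} for strongly Alexandrov embedded $H$-surfaces in a metric Lie group, and then classify the minimal elements and show their flux cannot vanish. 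The hard part is this last identification: when the isometry group of $X$ is only three-dimensional there are no ambient rotations, so neither the existence and classification of the Delaunay-type rotationally invariant comparison CMC surfaces nor the non-vanishing of their flux is known, and the Minimal Element Theorem has no established counterpart. I would treat the symmetric geometries first---indeed Conjecture~\ref{conj:positive}, and therefore the bounded curvature statement, is already known for metric Lie groups with four- or six-dimensional isometry group, which provide the rotational comparison surfaces---and concentrate the essential difficulty on the three-dimensional isometry-group geometries (such as $\sl$, generic left invariant metrics on $\su$, $\sol$, $\mathrm{Nil}_3$ and $\EE$), where a substitute flux obstruction adapted to the left invariant geometry of $X$ must be found.
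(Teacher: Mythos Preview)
The statement is a \emph{conjecture}, not a theorem; the paper offers no proof of it. What the paper does say, in the paragraph immediately following the statement, is precisely the reduction you carry out in your first paragraph: the Meeks--Tinaglia curvature estimates for embedded $H$-disks in homogeneously regular 3-manifolds (Theorem~\ref{cest3}) make bounded second fundamental form equivalent to positive injectivity radius, so the Bounded Curvature Conjecture is equivalent to the Positive Injectivity Radius Conjecture~\ref{conj:positive}. You have also correctly flagged the known partial case (isometry group of dimension four or six), which the paper attributes to unpublished work of Meeks and Tinaglia. So on the one point where a comparison with the paper is possible, you and the paper agree.

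The rest of your proposal is not a proof but a research programme, and you are candid about this (``the main obstacle'', ``the hard part'', ``must be found''). The strategy---transplant the $\rth$ machinery of Theorem~\ref{annulus}, the CMC Dynamics Theorem~\ref{T}, and the Minimal Element Theorem~\ref{sp2} to a general metric Lie group, with right-invariant Killing fluxes replacing the Euclidean CMC flux---is a natural one, but the decisive step in $\rth$ is that minimal elements of $\mathcal{T}(M)$ are Delaunay surfaces with \emph{nonzero} flux, and this is exactly what has no known analogue when $X$ has three-dimensional isometry group. Producing that analogue would essentially be proving the conjecture. So your outline correctly localizes the difficulty rather than resolving it; it should be read as a plausible plan of attack on an open problem, not as a proof.
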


The previous two conjectures are related as follows. Curvature
estimates of Meeks and Tinaglia ~\cite{mt1} for embedded $H$-disks
imply that every complete embedded $H$-surface with $H>0$ in a
homogeneously regular 3-manifold has bounded second fundamental
form if and only if it has positive injectivity radius.

\begin{conjecture}[Calabi-Yau Properness Problem, Meeks-P\'erez-Tinaglia]
 \label{conj:CY}
\mbox{}\\
A complete, connected, embedded $H$-surface of positive injectivity
radius in $X$ with $H\geq H(X)$ is always proper.
\end{conjecture}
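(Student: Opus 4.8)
The plan is to follow the template that proves Corollary~\ref{corolMLCTR3} in $\rth$, upgrading each ingredient to the homogeneous setting and to $H>0$. Let $M$ be a complete, connected, embedded $H$-surface of positive injectivity radius in $X$ with $H\geq H(X)$. The first step is to pass from positive injectivity radius to bounded geometry: by the curvature estimates of Meeks and Tinaglia (Theorem~\ref{cest3}) and the equivalence, valid in any homogeneously regular 3-manifold, between positive injectivity radius and bounded second fundamental form (the unlabeled theorem following Theorem~\ref{cest3}), we may assume $|A_M|\leq A_0$ for some $A_0>0$. The second step is to produce a closure structure: I would establish the analog of the Minimal Lamination Closure Theorem~\ref{thmmlct} for $H$-surfaces in a homogeneous $X$, so that $\overline{M}=M\cup\operatorname{lim}(M)$ carries the structure of a weak $H$-lamination $\mathcal{L}$ of $X$, with the connected components of $M$ among its leaves. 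This generalization should follow by adapting the manifold techniques of Meeks and Rosenberg~\cite{mr13} using the one-sided curvature estimate (Theorem~\ref{th}) and its homogeneous counterpart from~\cite{mt1} in place of the Colding--Minicozzi estimate.

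With the lamination closure in hand, the trichotomy of Theorem~\ref{thmmlct} applies. If $\operatorname{lim}(M)=\emptyset$, then $M$ is properly embedded and we are done. Otherwise $\operatorname{lim}(M)$ is a nonempty sublamination containing some limit leaf $L$. By the Stable Limit Leaf Theorem~\ref{thmstable}, every two-sided cover of $L$ is stable, so $L$ gives a complete stable $H$-surface in $X$, necessarily with the same mean curvature value $H\geq H(X)$. This is exactly the configuration ruled out by the Stability Conjecture~\ref{conj:stable}: writing $\widehat{H}(X)$ for the supremum of mean curvatures of complete stable $H$-surfaces in $X$, that conjecture asserts $\widehat{H}(X)=H(X)$, whence no complete stable $H$-surface exists with $H>H(X)$. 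Thus, conditional on Conjecture~\ref{conj:stable}, the case $H>H(X)$ produces a contradiction and $M$ must be proper. This mirrors the $\rth$ argument of Corollary~\ref{corolMLCTR3}, where the stable limit leaf is forced to be a plane by Theorem~\ref{thmstablecompleteplane} and a one-sided bounded-curvature accumulation is then excluded by Lemma~1.3 of~\cite{mr8}.

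The main obstacle will be the borderline case $H=H(X)$, where stable $H$-surfaces genuinely exist (by~\cite{mmpr2} there is a complete stable $H(X)$-surface arising as a leaf of an $H(X)$-foliation $\mathcal{F}$ of $X$), so the stability contradiction above is unavailable. Here the limit leaf $L$ is a complete, stable $H(X)$-surface, conjecturally an entire graph and a leaf of such a foliation $\mathcal{F}$. My proposed resolution is to use $\mathcal{F}$ as a barrier family: the One-sided Regular Neighborhood Theorem~\ref{cor*} equips $L$ with a product neighborhood on its mean convex side, and the maximum principle for $H$-surfaces (Theorem~\ref{thmintmaxprin}) prevents $M$, which has the same mean curvature $H(X)$, from crossing the nearby leaves of $\mathcal{F}$. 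One would then argue, exactly as in the final contradiction step of Corollary~\ref{corolMLCTR3}, that a complete $H(X)$-surface of bounded curvature cannot accumulate one-sidedly onto a stable critical leaf inside such a foliated neighborhood. Making this precise — namely proving the sharp $H$-version of the lamination closure theorem for general homogeneous $X$, and establishing the one-sided accumulation obstruction at the critical value $H=H(X)$ — is where the genuine difficulty lies; the $H>H(X)$ portion is essentially a consequence of the Stability Conjecture, while the equality case appears to require new barrier and removable-singularity arguments built on Theorems~\ref{cor*} and~\ref{tt2}.
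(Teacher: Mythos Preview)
The statement you are trying to prove is a \emph{conjecture}, not a theorem: the paper presents Conjecture~\ref{conj:CY} as an open problem and offers no proof of it. The surrounding discussion only records the known special cases ($X=\rth$ via Meeks--Rosenberg and Meeks--Tinaglia; hyperbolic $3$-manifolds with $H\geq 1$ via~\cite{mt11}), so there is no ``paper's own proof'' against which to compare your proposal.

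Your outline is a reasonable research program, and you are candid about its conditional nature, but it is not a proof. Two structural gaps stand out. First, for $H>H(X)$ your argument explicitly assumes Conjecture~\ref{conj:stable} (the Stability Conjecture), which is itself open; even granting it, that conjecture only asserts $\widehat{H}(X)=H(X)$ as a supremum, not that no stable $H$-surface exists with $H>H(X)$, so you would still need to rule out the supremum being attained from above. Second, for $H=H(X)$ you correctly identify that stable critical surfaces genuinely exist, and your proposed barrier argument via an $H(X)$-foliation is only a sketch: the ``one-sided accumulation obstruction'' you invoke is precisely the missing ingredient, and the analogue of Lemma~1.3 in~\cite{mr8} does not obviously carry over since in $\rth$ that step uses the specific flat geometry and the half-space theorem. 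In short, your proposal identifies the natural strategy (closure $\to$ stable limit leaf $\to$ contradiction) but each arrow rests on results not yet available in the generality of an arbitrary metric Lie group $X$.
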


In the classical setting of $X=\rth$, where $H(X)=0$,
Conjecture~\ref{conj:CY} was proved by Meeks and
Rosenberg~\cite{mr13} for the case $H=0$. This result was based on
work of Colding and Minicozzi~\cite{cm35} who demonstrated that
complete embedded minimal surfaces in $\rth$ with  finite topology are
proper, thereby proving what is usually referred to as the {\it
classical embedded Calabi-Yau problem} for finite topology minimal
surfaces. Recently, Meeks and Tinaglia~\cite{mt7} proved
Conjecture~\ref{conj:CY}  in the case $X=\rth$  and $H>0$, which
completes the proof of the conjecture in the classical setting.

As we have already mentioned, Meeks and P\'erez~\cite{mpe17} have
shown that every complete embedded minimal surface $M$ of finite
topology in  $X$ has positive injectivity radius;  hence $M$ would
be proper whenever $H(X)=0$ and Conjecture~\ref{conj:CY}  holds for
$X$. Meeks and Tinaglia~\cite{mt11} have shown that any complete
embedded $H$-surface $M$ in a complete 3-manifold $Y$ with
constant sectional curvature $-1$  is proper provided that $H\geq 1$
and $M$ has injectivity radius function bounded away from zero on
compact domains of in $Y$;  they also proved that any complete,
embedded, finite topology $H$-surface in such a $Y$  has bounded
second fundamental form. In particular, for $X=\Hip^3$ with its
usual metric, an annular end of any complete, embedded, finite
topology $H$-surface in $X$ with $H\geq H(X)=1$ is asymptotic to an
annulus of revolution by the classical results of Korevaar, Kusner,
Meeks and Solomon~\cite{kkms1} when $H>1$ and of Collin, Hauswirth
and Rosenberg~\cite{chr1} when $H=1$.

The next conjecture is motivated by the classical results of Meeks
and Yau~\cite{my5} and of Frohman and Meeks~\cite{fme4} on the
topological uniqueness of minimal surfaces in $\rth$ and partial
unpublished results by Meeks.

\begin{conjecture}[Topological Uniqueness Conjecture, Meeks]
\label{CT}
If $M_1,M_2$ are two  diffeomorphic, connected, complete embedded
$H$-surfaces of finite topology in $X$ with $H=H(X)$, then there
exists a diffeomorphism $f\colon X\to X$ such that $f(M_1)=M_2$.
\end{conjecture}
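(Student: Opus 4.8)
The plan is to follow the strategy of Frohman and Meeks~\cite{fme4} (building on Meeks and Yau~\cite{my5}) for the topological uniqueness of properly embedded minimal surfaces in $\rth$, adapting it to the metric Lie group setting and to the critical mean curvature value $H=H(X)$. Since every simply-connected $3$-dimensional metric Lie group is diffeomorphic to $\rth$ or to $\esf^3$, and in the compact case $X\cong\esf^3$ the surfaces $M_1,M_2$ are closed so that the statement reduces to the isotopy classification of embedded closed surfaces in $\esf^3$, I would concentrate on the essential case $X\cong\rth$, where the ends carry the content of the problem.

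First I would establish that $M_1$ and $M_2$ are properly embedded. By Conjecture~\ref{conj:positive} (known for $H=0$ by Meeks and P\'erez~\cite{mpe17}, and for metric Lie groups with four- or six-dimensional isometry group by Meeks and Tinaglia), a complete embedded $H$-surface of finite topology in $X$ has positive injectivity radius; combined with Conjecture~\ref{conj:CY} (proved by Meeks and Tinaglia~\cite{mt7} for $X=\rth$, $H>0$, and by Meeks and Rosenberg~\cite{mr13} for $H=0$) this yields properness for $H\geq H(X)$. In the cases where these inputs are available, $M_1,M_2$ are proper, separate $X$ into components, and each has finitely many annular ends, which moreover carry a natural linear ordering as in Frohman and Meeks~\cite{fme2}.

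Next I would analyze the asymptotic geometry of the ends, and this is where I expect the main obstacle to lie. At the critical value $H=H(X)$ the key structural fact one needs is that each annular end of $M_i$ should be asymptotic to an end of a leaf of the canonical $H(X)$-foliation of $X$ guaranteed by Theorem~1.5 in~\cite{mmpr2}; in the classical case $X=\rth$, $H(X)=0$, this recovers that annular ends are asymptotic to planes, catenoids, or helicoids (Theorems~\ref{kks2} and~\ref{ttmr}). Establishing such an asymptotic normal form requires curvature estimates together with a removable-singularity and maximum-principle analysis analogous to the Quadratic Curvature Decay Theorem~\ref{thm1introd}, but now precisely at the threshold where complete stable $H(X)$-surfaces exist. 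This delicate borderline behavior is exactly what makes the end analysis hard, and it is not yet available in the required generality.

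Granting the end normal form, the topological conclusion would be assembled in two stages. Using the asymptotic models I would first build a diffeomorphism of a neighborhood of infinity in $X$ carrying the ends of $M_1$ to those of $M_2$, after matching their ordering. The complements of these standardized ends are then compact surfaces with boundary embedded in a large metric ball $B\subset X$, and the remaining task is to show that these two embeddings are ambiently isotopic rel boundary. Here one invokes the topological fact that a compact orientable surface with boundary in a ball is determined up to isotopy by its genus and the isotopy class of its boundary, \emph{provided} the embeddings are unknotted; the role of the $H$-surface hypothesis, exactly as in Frohman and Meeks, is to rule out knotting by a least-area/barrier argument in the complementary handlebody components. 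Finally I would glue the near-infinity diffeomorphism to this isotopy over $B$ using the isotopy extension theorem, producing the desired global diffeomorphism $f\colon X\to X$ with $f(M_1)=M_2$, which proves Conjecture~\ref{CT}.
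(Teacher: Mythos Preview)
The statement you are attempting to prove is a \emph{conjecture} in the paper, not a theorem: the paper offers no proof. The only progress it records is the special case where $X$ is diffeomorphic to $\esf^3$, attributed to Meeks and P\'erez (Corollary~4.19 in~\cite{mpe11}), together with the classical motivating results of Meeks--Yau and Frohman--Meeks in $\rth$ and the unknottedness theorems of Lawson and Meeks--Simon--Yau in $\esf^3$. There is therefore no ``paper's own proof'' to compare against.

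Your proposal is not a proof but a conditional strategy, and you say so yourself: you invoke Conjecture~\ref{conj:positive} and Conjecture~\ref{conj:CY} to obtain properness (both are open in the generality needed), and you explicitly state that the asymptotic end normal form at the critical value $H=H(X)$ ``is not yet available in the required generality'' before writing ``Granting the end normal form\ldots''. Each of these is a genuine gap, not a technicality. In particular, without an asymptotic description of annular ends of $H(X)$-surfaces in a general metric Lie group $X\cong\rth$, the Frohman--Meeks machinery cannot even be set in motion: there is no ordering theory of ends, no standard neighborhood at infinity to match, and no barrier/least-area argument available to rule out knotting in the compact core. Your final sentence ``which proves Conjecture~\ref{CT}'' is therefore unwarranted; what you have written is a plausible research outline whose main steps coincide with several other open conjectures in the same section of the paper.
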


We recall that Lawson~\cite{la2} proved a beautiful unknottedness
result for minimal surfaces in $\esf^3$ equipped with a metric of
positive Ricci curvature. He demonstrated that whenever $M_1, M_2$
are compact, embedded, diffeomorphic minimal surfaces in such a
Riemannian 3-sphere, then $M_1$ and $M_2$ are ambiently
isotopic. His result was generalized by Meeks, Simon and
Yau~\cite{msy1} to the case of metrics of non-negative scalar
curvature on $\esf^3$.
Meeks and P\'erez  proved the above conjecture
in the case that $X$ is diffeomorphic to $\esf^3$; see Corollary~4.19 in~\cite{mpe11}.

The next conjecture is motivated by the classical case of $X=\rth$,
where it was proved by Meeks~\cite{me17}, and in the case of
$X=\Hip^3$ with its standard constant $-1$ curvature metric, where
it was proved by Meeks and Tinaglia~\cite{mt11}.

\begin{conjecture}[One-end / Two-ends Conjecture, Meeks-Tinaglia]
\mbox{}\\
Suppose that $M$ is  a connected, non-compact, properly embedded
$H$-surface  of finite topology in $X$ with $H> H(X)$.  Then:
\ben
\item $M$ has more than one end.
\item If $M$ has two ends, then $M$ is an annulus.
\een
\end{conjecture}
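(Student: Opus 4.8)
The plan is to follow the strategy used by Meeks~\cite{me17} in $\rth$ and by Meeks and Tinaglia~\cite{mt11} in $\Hip^3$, recasting the essential steps in terms of the CMC flux of Killing fields (Theorem~\ref{fluxK}) so that they make sense in a general metric Lie group $X$. The first reduction is to pass from properness to bounded geometry. Since $M$ has finite topology and $H>H(X)\geq 0$ (so $H>0$), I would first invoke the curvature estimates of Theorem~\ref{cest3} together with their finite-topology consequence (the analogue of Theorem~\ref{annulus}, established in $X$ by the manifold techniques that the authors describe in~\cite{mt1}) to conclude that $M$ has bounded second fundamental form and positive injectivity radius. This is exactly what makes the dynamics machinery of Theorems~\ref{T} and~\ref{sp2} available along each annular end of $M$.

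For statement~1, suppose for contradiction that $M$ had a single end. Finite topology then forces $M$ to be homeomorphic to a compact genus-$g$ surface with one puncture, so the $1$-cycle $\gamma$ winding once around the end is null-homologous in $M$. By the CMC Flux Formula (Theorem~\ref{fluxK}), $\mathrm{Flux}(\gamma,\S,K)=0$ for \emph{every} Killing field $K$ of $X$ --- precisely the mechanism that, in the minimal case, shows a one-ended $0$-surface has vanishing flux (cf.\ the remark following Theorem~\ref{th9.8}). The heart of the argument is then to contradict this vanishing. Applying the CMC Dynamics Theorem~\ref{T} to a representative annular end $E$ (using divergent left translations of $X$, which are isometries), I would extract a minimal element $\S\in\mathcal{T}(E)$, whose flux is inherited as $0$ from $\gamma$. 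The point where $H>H(X)$ enters is the upgrade of the Minimal Element Theorem~\ref{sp2} to $X$: for $H>H(X)$ a minimal element should be invariant under a one-parameter subgroup of isometries --- a ``generalized Delaunay'' surface along the integral curve of a Killing field --- and such a surface necessarily has nonzero flux, contradicting $\mathrm{Flux}(\S,K)=0$. The strict inequality is essential, since $H(X)$ is the threshold below which stable, zero-flux CMC graphs (leaves of the critical $H(X)$-foliation of $X$, see Theorem~1.5 in~\cite{mmpr2}) appear; above it the minimal elements are expected to be the nondegenerate Delaunay-type objects. This would rule out the one-ended case.

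For statement~2, assume $M$ has exactly two ends and genus $g$, with annular end-loops $\gamma_1,\gamma_2$. Here $H_1(M;\Z)\cong\Z^{2g+1}$ and $[\gamma_1]+[\gamma_2]=0$, so Theorem~\ref{fluxK} gives $\mathrm{Flux}(\gamma_1,K)=-\mathrm{Flux}(\gamma_2,K)$ for all $K$; by the (now nonzero) flux analysis of statement~1, each end is a nondegenerate, generalized-Delaunay-type end. The goal is the rigidity conclusion $g=0$. The plan is a dynamics-and-comparison argument: a minimal element of $\mathcal{T}(M)$ is forced, by the matching nonzero fluxes of the two ends, to be one of these one-parameter-invariant surfaces, and then to propagate this symmetry back to $M$ itself by a maximum-principle foliation argument based on Theorem~\ref{thmintmaxprin}, concluding that $M$ is itself the invariant surface and hence an annulus. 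In the presence of reflective symmetries of $X$ (as for $\sol$), this last step can instead be carried out by Alexandrov reflection, exactly as in~\cite{mt11}.

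The step I expect to be the main obstacle is the classification of minimal elements in a general metric Lie group $X$. Unlike $\rth$ or $\Hip^3$, a generic $X$ has only a three-dimensional isometry group and no rotational symmetries, so there is no explicit family of Delaunay surfaces to compare against; the existence, uniqueness, and nonvanishing-flux property of the ``generalized Delaunay'' invariant ends must be built essentially from scratch. Establishing this --- and, in statement~2, transferring the invariance of the limit back to $M$ without an ambient reflection --- is where the genuinely new work lies. By contrast, the flux bookkeeping and the reduction to bounded geometry should be routine once the generalizations of Theorems~\ref{cest3}, \ref{T} and~\ref{sp2} to $X$ announced in~\cite{mt1} are in hand.
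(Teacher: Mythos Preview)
The statement you are attempting to prove is labeled a \emph{Conjecture} in the paper, and the paper does not provide any proof of it. The text immediately preceding the conjecture explicitly says it is ``motivated by the classical case of $X=\rth$, where it was proved by Meeks~\cite{me17}, and in the case of $X=\Hip^3$\dots where it was proved by Meeks and Tinaglia~\cite{mt11}''; for a general metric Lie group $X$ it remains open. There is therefore no ``paper's own proof'' to compare against.

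Your proposal is honest about its own gaps, and those gaps are exactly why the statement is a conjecture rather than a theorem. The crucial step---that for $H>H(X)$ a minimal element of $\mathcal{T}(E)$ must be a ``generalized Delaunay'' surface invariant under a one-parameter subgroup of isometries, with nonzero flux---is not available in the literature for general $X$. Theorem~\ref{sp2} (the Minimal Element Theorem) is stated and proved only in $\rth$; its proof uses the explicit Delaunay family and the Euclidean scaling structure in essential ways, and no analogue is known when the isometry group of $X$ is only three-dimensional. Likewise, your rigidity step for statement~2 (propagating the invariance of the limit back to $M$ by a ``maximum-principle foliation argument'') is not a known result in this generality and would itself require substantial new work in the absence of Alexandrov reflection planes. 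So what you have written is a plausible strategic outline---essentially the one the authors presumably have in mind when attributing the conjecture to Meeks--Tinaglia---but it is not a proof, and the missing ingredients are precisely the open problems.
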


The previous conjecture also motivates the next one.

\begin{conjecture}[Topological Existence Conjecture, Meeks]
\label{conj:topconj}
\mbox{}\\
Suppose $X$ is diffeomorphic to $\R^3$. Then for every $H>H(X)$, $X$
admits connected properly embedded $H$-surfaces of every possible
orientable topology, except for connected finite genus surfaces with
one end or connected finite positive genus surfaces with 2 ends which
it never admits.
\end{conjecture}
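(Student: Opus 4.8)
The plan is to split the statement into its non-existence and existence halves, which demand entirely different techniques. For the non-existence half — that $X$ admits no connected properly embedded $H$-surface of finite genus with a single end, and none of finite positive genus with exactly two ends — note first that finite genus together with one or two ends forces finite topology, so this half is precisely the content of the One-end/Two-ends Conjecture for $X$. I would establish the latter from the curvature estimate Theorem~\ref{cest3} for $H$-disks in a homogeneous $X$, together with the quoted positive-injectivity-radius and bounded-curvature results for finite-topology $H$-surfaces: these give each annular end bounded geometry, so a blow-up/compactness argument feeding the CMC Dynamics machinery of Section~\ref{sec:dynamics} identifies each end as asymptotic to a Delaunay-type $H$-end of \emph{nonzero} CMC flux (Definition~\ref{def:flux}). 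A one-ended example then has a single end-loop $\gamma$ that bounds the compact core and is therefore nullhomologous, forcing its flux to vanish by the homological invariance in the CMC Flux Formula (Theorem~\ref{fluxK}) — contradicting the nonzero Delaunay flux, exactly as in Meeks' classical argument~\cite{me17}. For the two-ended case the same asymptotic analysis should, via a generalization of the Korevaar--Kusner--Solomon classification~\cite{kks1} (using Alexandrov reflection in whatever Killing directions $X$ provides), force the surface into the rotationally-invariant Delaunay family and hence into genus zero.

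For the existence half I would build the admitted surfaces by a Kapouleas-style gluing construction~\cite{kap2,kap3}, with building blocks the embedded $H$-spheres $S_H$ supplied for every $H>H(X)$ by the expected extension of Theorem~\ref{main5} to groups diffeomorphic to $\rth$ in~\cite{mmpr1}, together with embedded Delaunay-type $H$-annuli that I would first produce as the small-neck members of a one-parameter family bifurcating from a chain of mutually tangent copies of $S_H$. The scheme is to prescribe a finite (or, in the infinite-topology cases, a locally finite) configuration of spheres joined along short Delaunay necks: trees of spheres yield the genus-zero surfaces with any end count $k\ge 3$, inserting extra necks between already-joined spheres raises the genus, and locally finite configurations realize infinite genus and/or infinitely many ends. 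One assembles an approximate $H$-surface, writes the correction as a normal graph $u$, and solves the exact CMC equation by inverting the Jacobi operator $L$ of~\eqref{Jacobiop} in weighted H\"older spaces. The finite-dimensional obstructions are of two kinds: the approximate-kernel directions from translating the pieces (each $S_H$ contributes the three-dimensional nullity recorded in Theorem~\ref{thm:index1}) and the balancing conditions on the neck forces, which are precisely the vanishing of the total CMC flux at each vertex of the configuration by Theorem~\ref{fluxK}; these are matched against the neck-placement and neck-size parameters by an implicit-function or degree argument. The purely compact case contributes only the sphere, since an embedded closed $H$-surface bounds and is thus Alexandrov embedded, hence a sphere under Conjecture~\ref{conjAlex}.

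The main obstacle will be carrying out the gluing analysis in the \emph{absence of ambient rotational symmetry}: for a generic metric Lie group $X$ the isometry group is only three-dimensional, so one cannot reduce the neck profiles to an ODE as in the $\E(\kappa,\tau)$ or $\rth$ settings, and both the existence of embedded Delaunay-type $H$-annuli and the uniform invertibility of $L$ on the noncompact glued background must be built from scratch. Controlling the linearization ultimately rests on the a priori area and curvature estimates $(\star)$ for index-one spheres whose proof is the outstanding step of the Hopf-uniqueness program~\cite{mmpr1}, so in practice this existence theorem sits downstream of that work. A second serious difficulty is the infinite-topology range: there the gluing takes place over a non-compact base and one must pass to a limit, which requires configuration-uniform estimates together with a separation/properness bound on the necks to guarantee that the limit is complete, embedded, and \emph{properly} embedded in $X$ rather than merely a complete immersion with accumulation.
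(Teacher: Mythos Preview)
This statement is a \emph{conjecture}, not a theorem: the paper offers no proof, only a short paragraph of commentary indicating that the non-existence half is known in the classical settings $X=\rth$ and $\Hip^3$ (citing~\cite{kkms1,kks1,me17,mt11}) and that the existence half ``should follow from gluing constructions applied to collections of non-transversely intersecting embedded $H$-spheres appropriately placed in $X$, as in the constructions of Kapouleas~\cite{kap1}.'' There is therefore no paper proof to compare your attempt against.

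That said, your proposed strategy is in close agreement with the paper's informal remarks. For existence you invoke exactly the Kapouleas-style sphere-gluing the authors point to, and you go further by spelling out the linear analysis, the role of the three-dimensional nullity of $S_H$ from Theorem~\ref{thm:index1}, and the flux balancing conditions. For non-existence you correctly reduce to the One-end/Two-ends Conjecture and outline the classical flux obstruction from~\cite{me17} and the Delaunay-asymptotics/Alexandrov-reflection route of~\cite{kks1,kkms1}. You are also honest about the two genuine obstructions that keep this open: (i) the absence of rotational symmetry in a generic metric Lie group, which blocks both the construction of Delaunay-type necks and the Alexandrov reflection needed for the two-ended classification, and (ii) the dependence of the whole program on the still-incomplete area estimates $(\star)$ for index-one spheres in~\cite{mmpr1}.

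In short: your write-up is a reasonable research plan that matches the paper's own expectations, but it is not a proof --- nor does the paper claim one. The statement remains open in the generality asserted.
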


Conjecture~\ref{conj:topconj} is probably known in the classical
settings of $X=\rth \mbox{ and }\Hip^3$ but the authors do not have
a reference of this result for either of these two ambient spaces.
For the non-existence results alluded to in this conjecture in these
classical settings see~\cite{kkms1,kks1,me17,mt11}.  The existence
part of the conjecture should follow from gluing constructions
applied to  collections of non-transversely intersecting
embedded $H$-spheres appropriately placed in $X$, as in the
constructions of Kapouleas~\cite{kap1} in the case of $X=\rth$.

We end our discussion of open problems in $X$ with the following
generalization of the classical properly embedded Calabi-Yau problem
in $\rth$; see {item~3 of}
Conjecture~\ref{CY}.
Variations of this conjecture can be attributed  to many people but
in the formulation below, it is primarily due to Mart\'\i n, Meeks,
Nadirashvili, P\'erez and Ros and their related work.

\begin{conjecture}[Embedded Calabi-Yau Problem]
\label{conn:CY}
Suppose $X$ is diffeomorphic to $\rth$ and $\Sigma$ is a connected,
non-compact surface. A necessary and sufficient condition for
$\Sigma$ to be diffeomorphic to some complete, embedded bounded
minimal surface in $X$ is that every end of $\Sigma$ has infinite
genus.
\end{conjecture}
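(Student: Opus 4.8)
The plan is to treat the two implications separately, since they are of entirely different character: the necessity of the infinite-genus condition should follow from the established properness theory, whereas the sufficiency is a construction problem that, at present, is the genuinely open heart of the conjecture. Throughout I work in a metric Lie group $X$ diffeomorphic to $\rth$, and I first reduce to the classical case $X=\rth$, where the Embedded Calabi--Yau Conjecture~\ref{CY} records the expected answer and where the deepest partial results are available; the necessity direction in general $X$ then requires only the analog in $X$ of the properness input used below.

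For necessity, suppose $\Sigma$ is realized as a connected, complete, embedded minimal surface $M$ contained in a bounded domain of $X$. Since a non-compact minimal surface without boundary can never be both bounded and properly embedded in $X\cong\rth$, the surface $M$ is not proper. I would then argue end-by-end. Fix an end $e$ of $M$ and suppose, for contradiction, that $e$ has finite genus; choosing a small enough representative, one obtains a connected proper subdomain $E\subset M$ with compact boundary, one end, and finite genus, hence of finite topology. By Theorem~\ref{thmCM} together with Remark~\ref{remarkENDS}, such a finite-topology minimal surface with compact boundary is properly embedded in $X$. But a properly embedded non-compact surface with compact boundary in $X\cong\rth$ must leave every compact set and therefore be unbounded, contradicting that $E\subset M$ lies in a bounded domain. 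Hence every end of $M$ has infinite genus. One can alternatively route this through the Finite Genus Properness statements; the essential point is that boundedness is incompatible with the properness forced by the finite topology of an end.

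For sufficiency I would adopt a gluing-and-desingularization scheme in the spirit of the existence results of Nadirashvili, Morales, Mart\'\i n--Morales and Ferrer--Mart\'\i n--Meeks (Theorem~\ref{calabi}), upgraded from \emph{immersed} to \emph{embedded}. The obstruction to embeddedness is exactly Theorem~\ref{thmCM}: a complete embedded minimal surface of finite topology in $\rth$ is automatically proper, hence cannot be bounded, so any bounded complete embedded example \emph{must} carry infinite topology, and infinite genus per end is the mechanism that resolves the self-intersections which the classical bounded-domain constructions necessarily produce. Concretely, I would fix a topological model of $\Sigma$ with the prescribed end structure, distribute an accumulating sequence of handles toward each end, and build a sequence of compact minimal surfaces with boundary inside a fixed ball, converging to the boundary sphere along a labyrinth so that divergent intrinsic paths have infinite length (forcing completeness) while the whole configuration stays bounded. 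At each approximate self-crossing one performs a minimal handle (bridge) addition that trades a transverse intersection for an embedded handle, and one then passes to a limit. Transplanting this from $\rth$ to a general metric Lie group $X$ would, in a second stage, rely on the homogeneous regularity of $X$ and a perturbation of the Weierstrass/integrable-systems machinery, or on the conformal PDE for the left invariant Gauss map recorded in Remark~\ref{remark-integration}.

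The hard part will be controlling embeddedness simultaneously with the exact diffeomorphism type. Completeness in a bounded region demands unbounded intrinsic geometry (the surface must wind densely), while embeddedness forbids self-intersections, and these requirements pull in opposite directions --- which is precisely why the known bounded-domain constructions yield immersions rather than embeddings. Making the handle-addition converge while keeping the surface minimal and embedded at every stage, producing in the limit a \emph{geodesically complete} surface, and matching the prescribed topology $\Sigma$ end-by-end, is the central analytic difficulty; it is also the content of the still-open classical Conjecture~\ref{CY} for $X=\rth$, so one should not expect the general metric Lie group statement to be easier than that benchmark.
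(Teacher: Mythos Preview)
The statement is a \emph{conjecture}, and the paper offers no proof. Immediately after stating it, the paper only records the known partial evidence: in the special case $X=\rth$, the necessity (non-existence) implication was proved by Colding and Minicozzi for complete embedded minimal surfaces possessing an \emph{annular} end, with related extensions by Meeks--Rosenberg and Meeks--P\'erez--Ros. So there is nothing to compare your attempt against; you are, as you yourself seem to realize for the sufficiency direction, sketching a program for an open problem rather than reproducing an existing argument.

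That said, your necessity argument has a real gap even in $\rth$. You write that if an end $e$ has finite genus then ``choosing a small enough representative, one obtains a connected proper subdomain $E\subset M$ with compact boundary, \emph{one end}, and finite genus, hence of finite topology.'' The italicized claim is false in general: if $e$ is a \emph{limit} end, then every representative of $e$ has infinitely many ends, so $E$ does not have finite topology and neither Theorem~\ref{thmCM} nor Remark~\ref{remarkENDS} applies. Your argument therefore only handles simple (isolated) ends of finite genus --- which is essentially the Colding--Minicozzi input the paper cites (an annular end being the simplest instance). The genuine obstacle is precisely the finite-genus limit-end case, and this is why the paper points to the more delicate results in~\cite{mr13,mpr9} rather than claiming the full necessity direction.

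Finally, note that the conjecture is posed for an arbitrary metric Lie group $X$ diffeomorphic to $\rth$, whereas Theorem~\ref{thmCM} and Remark~\ref{remarkENDS} are $\rth$-specific. You acknowledge this, but ``the analog in $X$ of the properness input'' is itself not established in the paper; see Conjectures~\ref{conj:positive} and~\ref{conj:CY}, which are the relevant open statements in the general $X$ setting. So even the necessity implication remains conjectural outside $\rth$.
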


In the case of $X=\rth$ with its usual metric, the non-existence
implication in the last conjecture was proved by Colding and
Minicozzi~\cite{cm35} for complete embedded minimal surfaces with an
annular end; also see the related more general results of Meeks and
Rosenberg~\cite{mr13} and of Meeks, Per\'ez and Ros~\cite{mpr9}.% The
%non-existence implication in the last conjecture should follow from
%the next general conjecture.
%
%\begin{conjecture}[Finite Genus Lamination Closure Conjecture, Meeks-P\'erez]
%\label{conj5.21}
%%\mbox{}\\
%Suppose that $M$ is a complete, embedded non-compact minimal surface
%with compact boundary in a complete Riemannian 3-manifold $Y$.
%Then either $\overline{M}-\partial M$ is a minimal lamination of
%$Y-\partial M$, or the limit set\footnote{Let $M$ be a complete,
%embedded surface in a 3-manifold $Y$. A point $p\in Y$ is a {\it
%limit point} of $M$ if there exists a sequence $\{ p_n\} _n\subset
%M$ which diverges to infinity in $M$ with respect to the intrinsic
%Riemannian topology on $M$ but converges in $Y$ to $p$ as $n\to
%\infty $. In the statement of Conjecture~\ref{conj5.21}, $L(M)$
%denotes the set of all limit points of $M$ in $Y$, which is a closed
%subset of $Y$ satisfying $\overline{M}-M\subset L(M)$.}
%$L(M)-\partial M$ of $M$ is a minimal lamination of $X-\partial M$
%with every leaf in $L(M)-\partial M$ being stable.
%\end{conjecture}

%The above conjecture  would give  the  non-existence
%implication in Conjecture~\ref{conn:CY} by the following argument.
%Every metric Lie group $X$ diffeomorphic to $\rth$ admits
%a product $H$-foliation $\mathcal F$ for some $H\geq 0$. The
%existence of such a  foliation ${\mathcal F}$ of $X$ (see~\cite{mmpr2}) and the maximum
%principle  imply that $X$ cannot admit a minimal lamination
%contained in a bounded set of $X$.

\center{William H. Meeks, III at profmeeks@gmail.com\\
Mathematics Department, University of Massachusetts, Amherst, MA 01003}
\center{Joaqu\'\i n P\'{e}rez at jperez@ugr.es\\
Department of Geometry and Topology and Institute of Mathematics
(IEMath-GR), University of Granada, 18071 Granada, Spain}
\center{Giuseppe Tinaglia at  giuseppe.tinaglia@kcl.ac.uk\\
Department of Mathematics, Kings College London, London, WC2R 2LS,
U.K.}

\bibliographystyle{plain}

\bibliography{bill}

\end{document}